\newtheorem{theorem}{Theorem}[section]
\newtheorem{lemma}[theorem]{Lemma}
\newtheorem{corollary}[theorem]{Corollary}
\newtheorem{remarq}[theorem]{\it Remark\/}
\newtheorem{proposition}[theorem]{Proposition}
\newcounter{braid}
\newcounter{strands}
\def\cross{%
  \@ifnextchar^{\message{Got sup}\cross@sup}{\cross@sub}}
\def\cross@sup^#1_#2{\render@cross{#2}{#1}}
\def\cross@sub_#1{\@ifnextchar^{\cross@@sub{#1}}{\render@cross{#1}{1}}}
\def\cross@@sub#1^#2{\render@cross{#1}{#2}}
\def\render@cross#1#2{
  \def\strand{#1}
  \def\crossing{#2}
  \pgfmathsetmacro{\cross@y}{-\value{braid}*\braid@h}
  \pgfmathtruncatemacro{\nextstrand}{#1+1}
  \foreach \thread in {1,...,\value{strands}}
  {
    \pgfmathsetmacro{\strand@x}{\thread * \braid@w}
    \ifnum\thread=\strand
    \pgfmathsetmacro{\over@x}{\strand * \braid@w + .5*(1 - \crossing) * \braid@w}
    \pgfmathsetmacro{\under@x}{\strand * \braid@w + .5*(1 + \crossing) * \braid@w}
    \draw[braid] \pgfkeysvalueof{/tikz/braid start} +(\under@x pt,\cross@y pt) to[out=-90,in=90] +(\over@x pt,\cross@y pt -\braid@h);
    \draw[braid] \pgfkeysvalueof{/tikz/braid start} +(\over@x pt,\cross@y pt) to[out=-90,in=90] +(\under@x pt,\cross@y pt -\braid@h);
    \else
    \ifnum\thread=\nextstrand
    \else
     \draw[braid] \pgfkeysvalueof{/tikz/braid start} ++(\strand@x pt,\cross@y pt) -- ++(0,-\braid@h);
    \fi
   \fi
  }
  \stepcounter{braid}
}
\tikzset{braid/.style={double=\pgfkeysvalueof{/tikz/braid colour},double distance=1pt,line width=2pt,white}}
\newcommand{\braid}[2][]{%
  \begingroup
  \pgfkeys{/tikz/strands=2}
  \tikzset{#1}
  \pgfkeysgetvalue{/tikz/braid width}{\braid@w}
  \pgfkeysgetvalue{/tikz/braid height}{\braid@h}
  \setcounter{braid}{0}
  \let\dsigma=\cross
  #2
  \endgroup
}
\def\eps{\varepsilon}
\def\GL{\mathrm{GL}}
\def\C{\mathbf{C}}
\def\Z{\mathbf{Z}}
\def\kk{\mathbf{k}}
\def\F{\mathbf{F}}
\def\Q{\mathbf{Q}}
\def\C{\mathbf{C}}
\def\into{\hookrightarrow}
\def\Ad{\mathrm{Ad}\,}
\date{October 30, 2011.}
\begin{document}
\centerline{}

\title{The cubic Hecke algebra on at most 5 strands}
\author[I.~Marin]{Ivan Marin}
\address{Institut de Math\'ematiques de Jussieu, Universit\'e Paris 7}
\email{marin@math.jussieu.fr}
\subjclass[2010]{}
\dedicatory{\medskip \parbox{8cm}{To the memory of Johann Gustav Hermes, who worked 10 years on completing
the construction of the 65537-gon and on producing the corresponding beautiful artwork of drawings and numbers, these days known as `Der Koffer' in G\"ottingen's library.} \medskip }
\medskip

\begin{abstract} We prove that the quotient of the group algebra of the braid group on 5 strands by a generic
cubic relation has finite rank. This was conjectured in 1998 by Brou\'e, Malle and Rouquier and has for consequence that this algebra is a flat deformation of the group algebra of the complex reflection group $G_{32}$,
of order 155,520. 
\end{abstract}

\maketitle


\section{Introduction}

In 1957 H.S.M. Coxeter proved (see \cite{COXETER}) that the quotient of the braid group $B_n$ on $n \geq 2$ strands by the relations $s_i^k = 1$, where $s_1,\dots,s_{n-1}$
denote the usual Artin generators, is a finite group if and only if $\frac{1}{k} + \frac{1}{n} > \frac{1}{2}$. This means that, besides the obvious case $k=2$ , which leads
to the symmetric group, and the case $n = 2$, there is only a finite number of such groups. They all turn out to be irreducible complex reflection groups, namely finite subgroups of $\GL_n(\C)$
generated by endomorphisms which fix an hyperplane (so-called pseudo-reflections), and which leave no proper subspace invariant. In the classical classification of such objects,
due to Shephard and Todd, they are nicknamed as $G_4,G_8,G_{16}$ for $n=3$ and $k = 3,4,5$, $G_{25}, G_{32}$ for $n=4,5$ and $k = 3$.

In 1998, M. Brou\'e, G. Malle and R. Rouquier conjectured (see \cite{BMR}) that the group algebra of complex reflection groups admit flat deformations similar to
the Hecke algebra of a Weyl or Coxeter group. They actually introduced natural deformations of such group algebras, called them
the (generic) Hecke algebra associated to such a group,
and they conjectured  that these were flat deformations, and in particular that they have finite rank. For the groups we are interested in, this conjecture actually amounts to saying
that the quotients of the group algebra $R B_n$ by the relations $s_i^k + a_{k-1} s_i^{k-1} + \dots + a_1 s_i + a_0 = 0$,
where $R = \Z[a_{k-1},\dots,a_1,a_0,a_0^{-1}]$, is a flat deformation of the group algebra $R W$, where $W = B_n/s_i^k$ (note that we actually use
a slightly smaller ring than the one used in \cite{BMR} and \cite{BM}). This conjecture was proved in \cite{BM} for all the five groups above but the largest case $G_{32}$
(the proof for $G_{25}$ is however only sketched there).

According to \cite{BMR} (see the proof of theorem 4.24 there) only the following needs to be proved~: that the algebra is spanned over $R$ by $|W|$ elements. This is what we prove here.

\begin{theorem} The generic Hecke algebra associated to $W = G_{32}$ is spanned by $|W|$ elements,
and is thus a free $R$-module of rank $|W|$ which becomes isomorphic to the group algebra of $W$
after a suitable extension of scalars.
\end{theorem}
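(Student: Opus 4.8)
The plan is to build $H = H_5$ as a module over the subalgebra $H_4 \subset H$ generated by $s_1, s_2, s_3$ (the cubic Hecke algebra on $4$ strands, i.e.\ the $G_{25}$ case), and to leverage that the analogous statement there is already known, so that $H_4$ may be taken to be free of rank $|G_{25}| = 648$. Since $|G_{32}| = 240 \cdot 648$, it suffices to produce a set $\mathcal U$ of $240$ elements of $H$ with $H = \sum_{u \in \mathcal U} H_4\, u$; the $648 \cdot 240 = 155520 = |W|$ products then span $H$ over $R$. I would first fix the cubic relation $s_i^3 = \alpha s_i^2 + \beta s_i + \gamma$ with $\gamma = -a_0 \in R^\times$ (the unit $a_0$ also ensures each $s_i$ is invertible, so $H$ is a genuine quotient of $R B_5$), record that $s_4$ commutes with $s_1,s_2$ and braids with $s_3$, and pin down an explicit $R$-basis of $H_4$ coming from the $G_{25}$ case. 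Because \cite{BM} only sketches $G_{25}$, I would make sure to re-establish that $H_4$ is free of rank $648$ before relying on it, since the whole induction rests on this.

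To construct $\mathcal U$ I would run a saturation procedure: start from $\mathcal U = \{1\}$, and repeatedly right-multiply each current $u$ by each generator $s_1,\dots,s_4$, reducing the outcome modulo the left $H_4$-span $\sum_{u' \in \mathcal U} H_4 u'$ already produced, using the braid relations and the cubic relation; whenever a product cannot be absorbed, its new leading word is adjoined to $\mathcal U$. This is precisely a noncommutative vector enumeration (a module-theoretic analogue of Todd--Coxeter) over the ground ring $R$. If and when the process closes, then by construction $\sum_u H_4 u$ is stable under right multiplication by every generator; as it also contains $1$, it is a right ideal containing the identity, hence all of $H$.

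The hard part will be threefold. First, one must let the enumeration discover the correct $240$ coset-type representatives: if $\mathcal U$ is incomplete the span fails to be stable, while an undisciplined reduction strategy produces redundant generators and never certifies the sharp count $240$. Second, each of the roughly $240 \times 4$ products demands a genuine rewriting in the noncommutative algebra, expressing the result in the fixed basis of $H_4$ times the representatives with coefficients in $R$; these expressions blow up and must be handled by computer with exact arithmetic over $R$, and one must argue that the rewriting \emph{terminates}, e.g.\ via a length/degree ordering on words that the cubic relation strictly lowers. Third, the bookkeeping must be faithful: no identity may be invoked that does not already hold in $H$, so each reduction step has to be traced back to the defining relations of $H$.

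Once the enumeration closes at $240$ elements, $H$ is spanned by $155520 = |W|$ elements, giving $\mathrm{rank}_R H \le |W|$. The reverse inequality is automatic and is exactly the content of the cited reduction: specializing $a_2 = a_1 = 0,\ a_0 = -1$ turns the cubic relation into $s_i^3 = 1$ and yields a surjection of $H$ onto the group algebra of $W = G_{32}$, which is free of rank $|W|$, whence $\mathrm{rank}_R H \ge |W|$. Thus $H$ is free of rank exactly $|W|$, and by the proof of Theorem 4.24 of \cite{BMR} this is the asserted flat deformation, becoming isomorphic to the group algebra of $W$ after the suitable extension of scalars.
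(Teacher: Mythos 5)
Your global frame coincides with the paper's: reduce everything, via the argument of \cite{BMR} Theorem 4.24, to exhibiting $240$ elements generating $A_5$ as a left module over the $4$-strand algebra (itself re-proved free of rank $648$), so that $240\times 648=155{,}520=|W|$ elements span over $R$. The gap is that the only step carrying real mathematical content --- proving that $240$ generators suffice --- is delegated to a saturation/vector-enumeration whose closure is never established, and closure is exactly the finiteness statement at stake. Note that for $n\geq 6$ the analogous algebra is \emph{not} of finite rank (its specialization at $a=b=0,\ c=1$ is the group algebra of the infinite group $\Gamma_n=B_n/s_i^3$), so no purely formal termination mechanism of the kind you invoke can exist; any proof of closure must use something special to $n\leq 5$. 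Your proposed mechanism (a length/degree ordering strictly lowered by the cubic relation) does not do this: it only makes individual rewriting chains terminate, while the braid relations preserve length, the resulting rewriting system is far from confluent, and the words irreducible under the obvious rules are infinite in number. Consequently the enumeration closes only if the procedure \emph{discovers} the deep consequences of the relations that collapse these words back into the span --- and the decisive such identity in the paper is Lemma \ref{lemcrucial}, asserting that the cube of $\delta=c_5c_4^{-1}=s_4s_3s_2s_1^2s_2s_3s_4$ (which centralizes $A_4$) falls back into the lower filtration pieces; its derivation from the defining relations occupies all of Section \ref{sectcrucial}. Nothing in your scheme guarantees, or even makes plausible, that an uncontrolled search finds this; without it the filtration $A_5^{(2)}\subset A_5^{(3)}$ has no reason to stabilize and the enumeration simply keeps adjoining words.

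A second, subtler point: each certified reduction $us_i=\sum_{u'}h_{u'}u'$ must be a formal consequence of the braid and cubic relations over $R=\Z[a,b,c,c^{-1}]$. You cannot shortcut this by computing in matrix models of representations, as the paper does for identities in $A_3$ (Lemma \ref{lemdecomp212121}): that trick is legitimate there only because $A_3$ is already known to be free, hence embeds into $A_3\otimes_R K$; for $A_5$ the injectivity of $A_5\to A_5\otimes_R K$ is part of what is being proven, so a verification over the fraction field proves nothing. This is precisely why the existing computer proof cited in the paper (M\"uller, for $G_{29}$) only yields the statement over the field of fractions, which does not settle the BMR conjecture. In short, your proposal correctly identifies the target and would constitute a valid certificate \emph{if} the computation were carried out over $R$ and halted at $\leq 240$ generators; but it contains no argument that it halts, the heuristic offered in its place is incorrect, and the missing ingredient is exactly the central-element phenomenon that the paper isolates as the crux of its proof.
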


More precisely, according to \cite{MALLE} corollary 7.2, a convenient extension
of scalars would be $\Q(\zeta_3, (\zeta_3^{-r} u_r)^{\frac{1}{6}}, r= 0,1,2)$ where $\zeta_3$ is a
primitive 3rd root of $1$ and $X^3 + a_2 X^2 + a_1 X + a_0 = (X - u_0)(X-u_1)(X-u_2)$ or,
better, the algebraic extension of $\Q(\zeta_3)(u_0,u_1,u_2)$ generated by
$\sqrt{u_0 u_1}$ and $\sqrt[3]{u_0 u_1 u_2}$ (see \cite{MALLE} table 8.2 and proposition 5.1).

In the general setting of complex reflection groups, it is known that this conjecture is true
\begin{itemize}
\item for the general
series (usually denoted $G(de,e,r)$) of complex reflection groups (by work of Ariki and Ariki-Koike),
\item for most of the exceptional groups of rank 2 by \cite{BM} and \cite{JMUL}, which are numbered $G_4$ to $G_{22}$,
and by \cite{ETINGOFRAINS} for all exceptional groups of rank 2 over a larger ring than expected,
\item  for the Coxeter groups.
\end{itemize}
 The remaining cases are
in rank 4 the groups $G_{29}$ (\cite{JMUL} however proves it over the field of fraction by computer means), $G_{31}$, $G_{32}$, in rank 5 the group $G_{33}$ and in rank $6$ the group $G_{34}$. All
but $G_{32}$, whose case we settled here, have all their pseudo-reflections of order $2$.

In the case studied here, we actually prove more. Here and in the sequel we denote $A_n$ the quotient of $RB_n$
by the generic cubic relation $s_i^3 - a s_i^2 -b s_i -c = 0$. The usual embedding $B_n \into B_{n+1}$ induces
a natural morphism $A_n \to A_{n+1}$, hence a $A_n$-bimodule structure on $A_{n+1}$. For $n \leq 4$, we give a decomposition
of $A_{n+1}$ as $A_n$-bimodule. This immediately provides an explicit $R$-basis of $A_n$ for $n \leq 5$, made of images
of braids in $B_n$. Recall that the orders of $G_4$, $G_{25}$ and $G_{32}$ are $24$, $648$, $155520$.

The following theorem is a recollection of the main results of this article : 
see in particular theorems \ref{theodecA3}, \ref{theodecA4}, \ref{theodecA5raf} and \ref{theofinal} as well as corollary \ref{corA4B72}, and recall that the argument of \cite{BMR} theorem 4.24
(which involves a transcendantal monodromy construction) shows that proving that the Hecke algebra of type $W$ is $R$-generated by $|W|$ elements
ensures that this Hecke algebra is free as a $R$-module, with basis the given $|W|$ elements. Moreover, notice
that, if we have an inclusion of parabolic subgroups $W_0 \subset W$ with corresponding Hecke algebras
$H_0 \subset H$, knowing the conjecture for $H_0$ and that $H$ is generated by $|W/W_0|$ elements as
a $H_0$-module proves (1) the conjecture for $H$ and (2) that $H$ is free as a $H_0$-module, with basis these elements.
Indeed, letting $N = |W/W_0|$ the assumption provides a $H_0$-module morphism $H_0^N \to H$ ;
composing with $(R^{|W_0|})^N \simeq H_0^N$ this yields a surjective morphism $R^{|W|} \to H$ which
is an isomorphism by the argument of \cite{BMR}. This proves that the original morphism $H_0^N \to H$
has no kernel either, and so is an isomorphism.  

\begin{theorem}{\ } 
\begin{itemize}
\item Let $S_2 = \{ 1,s_1,s_1^{-1} \} \subset B_2$. One has $|S_2| = 3$ and $S_2$ provides an $R$-basis of $A_2$.
\item Let $S_3 = S_2 \sqcup  S_2  s_2^{\pm} S_2  \sqcup
S_2 s_2^{-1} s_1 s_2^{-1} \subset B_3$. One has $|S_3| = 24$ and  $S_3$ provides a $R$-basis of $A_3$.
\item 
$A_4$ is a free $A_3$-module of rank 27. A basis of this $A_3$-module is provided by elements of
the braid group (including $1$) which map to a system of representatives of $ G_{25} / G_4 $.
\item $A_4$ is a free $R$-module of rank 648. A basis of this $R$-module is provided by elements of
the braid group including $1$ which map to all $G_{25}$.
\item 
$A_4$ is a free $A_2 \otimes_R  A_2 \simeq \langle s_1,s_3 \rangle$-module of rank $72$.
 A basis of this $\langle s_1,s_3 \rangle$-module is provided by elements of
the braid group including $1$ which map to a system of representatives of $ G_{25} / (\Z/3\Z)^2 $.
\item 
$A_5$ is a free $A_4$-module of rank $240$. A basis is provided by 
elements of the
braid group including $1$ which map to a system of representatives of $G_{32} / G_{25} $.
\item $A_5$ is a free $R$-module of rank $155,520$. A basis of this $R$-module is provided by elements of
the braid group which include $1$ and which map to all $G_{32}$.
\end{itemize}
\end{theorem}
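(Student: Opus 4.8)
The plan is to prove the seven assertions together by induction on the number of strands: at each stage I establish that $A_{n+1}$ is spanned as a left $A_n$-module by an explicit finite set of braid images lifting a chosen system of coset representatives, and then upgrade each such spanning statement to the asserted freeness and exact rank. The base case $A_2$ is elementary: since $c$ is invertible, $s_1^{-1}=c^{-1}(s_1^2-a s_1-b)$ already lies in $R[s_1]$, so $A_2=R[s_1]/(s_1^3-a s_1^2-b s_1-c)$ is free of rank $3$ over $R$ with basis $\{1,s_1,s_1^2\}$, equivalently $S_2=\{1,s_1,s_1^{-1}\}$; this is the case $W=\Z/3\Z$. The determination of $A_3$ (theorem \ref{theodecA3}) is the same sort of finite rewriting problem, now using in addition the braid relation $s_1 s_2 s_1 = s_2 s_1 s_2$ to reduce arbitrary words in $s_1^{\pm},s_2^{\pm}$ to the $24$ listed elements of $S_3$.

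For the inductive step I would use the following closure mechanism. Given a candidate finite set $X\ni 1$ of braid images in $B_{n+1}$, put $M=\sum_{x\in X}A_n\,x$. Since the Artin generators $s_1,\dots,s_n$ together with $B_n$ generate $B_{n+1}$, and $1\in M$, it suffices to show that $M$ is stable under right multiplication by each $s_i$ and each $s_i^{-1}$: stability then forces $M$ to contain the image of every braid word, whence $M=A_{n+1}$. Checking this stability is the computational heart of the matter and is precisely the content of theorems \ref{theodecA4}, \ref{theodecA5raf} and \ref{theofinal}: for each representative $x$ and each generator one rewrites $x s_i^{\pm}$ as an explicit $A_n$-linear combination of elements of $X$, using the braid relations, the cubic relation, and the already-established module structure of $A_n$.

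The passage from spanning to freeness uses the two principles recalled just before the statement. First, the transcendental monodromy argument of \cite{BMR} (theorem 4.24) shows that once the Hecke algebra of $W$ is $R$-generated by $|W|$ elements it is automatically free of that rank; this yields the $R$-rank assertions $24$ for $G_4$, $648$ for $G_{25}$, and $155520$ for $G_{32}$. Second, the relative freeness principle shows that if $H$ is generated by $|W/W_0|$ elements over a subalgebra $H_0$ already known to be free of rank $|W_0|$, then the composite $R^{|W|}\to H$ is an isomorphism, so $H$ is free of rank $|W|$ over $R$ and simultaneously free of rank $|W/W_0|$ over $H_0$. Applying this to $(H_0,H)=(A_3,A_4)$ with $|G_{25}/G_4|=27$ gives the two $A_4$-statements; applying it with $H_0=\langle s_1,s_3\rangle\simeq A_2\otimes_R A_2$ (free of rank $9$) and $|G_{25}/(\Z/3\Z)^2|=72$ gives corollary \ref{corA4B72}; and applying it to $(A_4,A_5)$ with $|G_{32}/G_{25}|=240$ gives the two $A_5$-statements.

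The main obstacle is the final closure computation, $A_5$ over $A_4$. One must first exhibit $240$ explicit braids lifting a system of representatives of $G_{32}/G_{25}$, and then verify that their left $A_4$-span is stable under right multiplication by $s_1^{\pm},\dots,s_4^{\pm}$. Because $240$ is large and a single reduction can cascade through long braid words, this verification is feasible only with computer assistance and a carefully designed reduction strategy. The delicate point is to choose the representatives so that the rewriting terminates \emph{inside} the $240$-element span, rather than producing spurious extra terms whose elimination would itself require the full strength of the conjecture; arranging this is exactly the role of the refined decomposition of theorem \ref{theodecA5raf}.
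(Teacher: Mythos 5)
Your outer shell coincides with the paper's own reduction: the upgrade from ``spanned by the right number of elements'' to ``free with that basis'' via the monodromy argument of \cite{BMR} (theorem 4.24) together with the relative parabolic-freeness principle is stated and used in exactly the same way there, and your treatment of the base cases $A_2$ and $A_3$ is fine. The genuine gap is in the core spanning statement, namely that $A_5$ is generated as a left $A_4$-module by $240$ braid images. Your ``closure mechanism'' --- choose $240$ lifts of $G_{32}/G_{25}$ and verify, by rewriting with computer assistance, that their left $A_4$-span is stable under right multiplication by every $s_i^{\pm 1}$ --- is not a proof as it stands, and the appeal to a machine is circular: before the theorem is proved there is no normal form and no faithful finite model of $A_5$ in which membership of $x s_i^{\pm 1}$ in the span could be tested effectively (finiteness of the regular representation is precisely what is being proved). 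The paper's computer assistance is confined to identities inside $A_3$ (lemmas \ref{lemdecomp212121} and \ref{lemdecomp1212}), where matrix models are legitimate because the structure of $A_3$ has already been established by hand.

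What is missing from your proposal is the idea that makes the reduction terminate. The paper never checks closure of a one-sided generating set directly; it first decomposes $A_5$ as an $A_4$-\emph{bimodule} along the filtration $A_5^{(n+1)} = A_5^{(n)} u_4 A_4$ and proves that this filtration stabilizes, $A_5 = A_5^{(3)}$ (propositions \ref{propdroitegaucheA53} and \ref{propA5A53}). The stabilization is not achieved by a ``careful choice of representatives'': it is a structural fact resting on the observation that the top bimodule generators $s_4 w^- s_4 w^- s_4$, $s_4 w^+ s_4^{-1} w^+ s_4$, $s_4^{-1} w^- s_4 w^- s_4^{-1}$ are, modulo lower filtration terms, governed by the element $\delta = s_4 s_3 s_2 s_1^2 s_2 s_3 s_4 = c_5 c_4^{-1}$ which centralizes $B_4$. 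The crucial lemma \ref{lemcrucial} (proved by the long hand computation of section \ref{sectcrucial}) expresses $\delta^3$ in terms of these generators, and this centrality is exactly what converts right $A_4$-multiplication into left $A_4$-multiplication (lemma \ref{lemdroitegaucheA5}), collapsing the two-sided decomposition to a one-sided one with finitely many generators. Without this input your rewriting has no reason to terminate inside the $240$-element span. Note finally that attributing the closure verification to theorems \ref{theodecA4} and \ref{theodecA5raf} inverts the logic: those theorems are bimodule decomposition statements that are the \emph{output} of the analysis above, not a record of the one-sided closure check you propose to perform.
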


\begin{corollary} The natural map $A_n \to A_{n+1}$ is injective for $2 \leq n \leq 4$.
\end{corollary}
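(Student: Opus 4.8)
The plan is to deduce each injectivity directly from the module-freeness results already established, using the single structural feature that every one of those bases contains the identity braid $1$. I would first isolate the elementary principle on which everything rests: if $A_{n+1}$ is free as a left $A_n$-module, for the module structure induced by the natural map $\iota\colon A_n\to A_{n+1}$, and if $1$ occurs in a basis, then $\iota$ is split injective. Indeed, writing $A_{n+1}=\bigoplus_{b\in B}A_n\,b$ with $1\in B$, the summand $A_n\cdot 1$ is free of rank one, and the map $A_n\to A_n\cdot 1$, $a\mapsto a\cdot 1=\iota(a)$, is by definition the structural isomorphism of that summand. Composing with the projection $A_{n+1}\twoheadrightarrow A_n\cdot 1\cong A_n$ produces a left inverse of $\iota$, so $\iota$ is injective. (The same argument works verbatim with ``left'' replaced by ``right'', so the side of the module structure is irrelevant.)

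With this principle in hand the cases $n=4$ and $n=3$ are immediate. For $n=4$, the item of the preceding theorem asserting that $A_5$ is free over $A_4$ of rank $240$ gives a basis of braids \emph{including} $1$, so $A_4\to A_5$ is injective. For $n=3$, the item that $A_4$ is free over $A_3$ of rank $27$ likewise provides a basis including $1$, whence $A_3\to A_4$ is injective.

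It remains to treat $n=2$, which I would handle directly from the explicit $R$-basis $S_3=S_2\sqcup S_2 s_2^{\pm}S_2\sqcup S_2 s_2^{-1}s_1 s_2^{-1}$ of $A_3$. Since $S_2\subseteq S_3$ as sets of braids, and since $\iota\colon A_2\to A_3$ sends the class of a braid $w\in B_2$ to the class of the same braid in $B_3$, the map $\iota$ carries the $R$-basis $S_2=\{1,s_1,s_1^{-1}\}$ of $A_2$ onto three of the $24$ elements of the $R$-basis $S_3$ of $A_3$. These images are therefore $R$-linearly independent, and a homomorphism sending an $R$-basis to an $R$-linearly independent family is injective; hence $A_2\to A_3$ is injective. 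Equivalently, regrouping $S_3$ as $\bigsqcup_{b}S_2\,b$ over the eight right-coset representatives $b\in\{1,\,s_2,\,s_2 s_1,\,s_2 s_1^{-1},\,s_2^{-1},\,s_2^{-1}s_1,\,s_2^{-1}s_1^{-1},\,s_2^{-1}s_1 s_2^{-1}\}$ displays $A_3$ as a free left $A_2$-module with $1$ in a basis, so the general principle above applies as in the other two cases.

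There is no serious obstacle here: all the substantive content lies in the freeness theorems, and the corollary is a formal consequence of their common feature. The one point requiring care is precisely that freeness alone does not suffice — one must use that $1$ belongs to the basis — together with the minor bookkeeping, in the case $n=2$, of matching the given $R$-basis $S_3$ against the $A_2$-module structure.
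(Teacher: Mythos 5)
Your proof is correct and follows essentially the same route as the paper, which derives the corollary immediately from the freeness statements of the main theorem: since $A_{n+1}$ is free as an $A_n$-module with a basis of braid elements containing $1$ (this being exactly what the introduction's argument via the surjection $H_0^N \to H$ and the $R$-rank count of \cite{BMR} establishes), the structural map $a \mapsto a\cdot 1$ splits off $A_n$ as a direct summand, forcing injectivity. Your handling of $n=2$ by matching $S_2 \subset S_3$ (or regrouping $S_3$ into eight right translates of $S_2$) is likewise just the explicit form of the same observation at the level of $R$-bases.
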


We describe the plan of the proof.
Our method is inductive. We find generators of $A_{n+1}$ as a $A_n$-bimodule, and only
then as a $A_n$-module.  After some preliminaries in section \ref{sectprelim} we do the case of $A_3$ in section 
\ref{sectA3}. The structure of $A_4$ as a $A_3$-module is obtained in section \ref{sectA4}.
Before considering $A_5$, we provide in section \ref{sectA4B} an alternative description of $A_4$, this time
as a $\langle s_1, s_3 \rangle$-module. In addition to providing an alternative proof of the
conjecture for $A_4$, this is used in the decomposition of $A_5$ as a $A_4$-module.
This decomposition is obtained in section \ref{sectA5}. We first obtain a decomposition of
$A_5$ as a $A_4$-bimodule, and introduce a filtration of $A_5$ by simpler
$A_4$-bimodules. The latest step of the filtration has original generators originating
from the center of the braid group, and this turns out to be the crucial reason why
this filtration terminates, thus proving that $A_5$ is a $R$-module of finite rank.
For proving this crucial property one needs a lengthy calculation which is postponed in section
\ref{sectcrucial}. We conclude the section \ref{sectA5} and the proof of the main theorem
by studying the structure as $A_4$-modules of the $A_4$-bimodules involved there.

\subsection{Perspectives}

It seems likely that our methods can be used to attack the conjecture for other complex
reflection groups of higher rank. One indeed has the following standard inclusions
of parabolic subgroups (except for the dotted line, which is not a parabolic inclusion).
The number associated to the inclusion is the number of double classes. Note again that
the groups of rank at least $3$ for which the conjecture remains open have all
their reflections of order $2$.
$$
\xymatrix{
G(3,3,2) \ar@{^{(}->}[r]_4 & G(3,3,3) \ar@{^{(}->}[r]_4  &G(3,3,4)  \ar@{^{(}->}[r]_6 & G_{33} \ar@{^{(}->}[r]_{13} &G_{34} \\
 G(4,4,2) \ar@{^{(}->}[r]_5 & G(4,4,3) \ar@{^{(}->}[r]_9 & G_{29} \ar@{^{(}.>}[d]^2 \\
     & G(2,1,3) \ar@{^{(}->}[ur]_{16}  & G_{31} \\
}
$$
For instance, 8 of the 9 double classes of $W = G_{29} = \langle g_1,g_2,g_3,g_4\rangle$ with respect to $W_0 = G(4,4,3) = \langle g_2,g_3,g_4\rangle$
have for representatives $g_1^{\eps} z$ for $z \in Z(W)$ and $\eps \in \{0,1 \}$. If we had
a practical knowledge of the braid groups of type
$G_{29}$ and $G(4,4,3)$ of the same level than the one we have for the usual braid group, the methods used here would then probably
yield a proof of the conjecture for $G_{29}$ in the same way we managed to get one for $G_{32}$, as this kind of phenomenon
(that the most complicated double classes are mainly represented by central elements) is crucial in our proof. Similarly, if $G_{34} = \langle s_1,\dots,s_6\rangle$
with $G_{33} = \langle s_1,\dots,s_5 \rangle$, one can check that $12$ of the $13$ double classes have for
representative a term of the form $z s_6^{\eps}$ for $\eps \in \{0,1 \}$ and $z$ a central element of $G_{34}$.

Another natural question is whether similar deformations exist for a higher number of strands. Indeed, although it is known
that the groups $\Gamma_n = B_n / s_i^3$ are infinite for $n \geq 6$, it was proved in \cite{ASS}
(see also \cite{CABANESMARIN}) that $\Gamma_n^{(1)} = \Gamma_n/z_5^2$ and $\Gamma_n^{(2)} = \Gamma_n/z_5^3$ are finite for arbitrary $n \geq 5$,
and are related to symplectic group over $\F_3$ and to unitary groups over $\F_2$, respectively. Here $z_5$ denotes the image of
the generator $(s_1 s_2 s_3 s_4)^5$ of the braid group on 5 strands into $\Gamma_n, n \geq 5$, which has order $6$ in $\Gamma_5$.
It is thus tempting to look for deformations of the group algebras of $\Gamma_n^{(1)}$ and $\Gamma_n^{(2)}$ for arbitrary $n$
that would be quotients of the group algebra of the braid group
by a generic cubic relations \emph{and} other relations probably
involving $z_5$.

\subsection{Applications}
We mention the following consequences. A first one concerns the study of linear representations of the (usual) braid groups. A consequence of the
proof in \cite{BM} for the cases $G_4, G_8$ and $G_{16}$ was a classification of the linear representations of the braid group $B_3$
in which the image of $s_1$ (and thus of all $s_i$) is killed by a polynomial of degree at most $5$ : indeed, such a representation has to factorize through the
corresponding Hecke algebra. This proves that such representations have a very rigid structure, a result rediscovered in \cite{TW}. 
A similar consequence of this new result is a classification of the linear representations of the braid group $B_n$ for $n$ at most $5$
in which the image of $s_1$ is killed by a cubic polynomial.

A second one is about the cubic invariants of knots and links. 
The algebras connected to cubic invariants,
including the Kauffman polynomial and the Links-Gould polynomial, are quotients of $A_n$. Our result
gives the structure of $A_5$ ; in order to prove it, we actually establish its decomposition as a $A_4$-bimodule, which may be
useful in order to understand the possible Markov traces factorizing through $A_n$.

Specifically, in \cite{CABANESMARIN}, we used the representation theory of the
group $G_{32}$ to prove that an algebra $K_n(\gamma)$ introduced by L. Funar in \cite{FUNAR}
for studying knot invariants collapsed for large $n$ over a field of characteristic distinct from $2$,
and in characteristic $0$ for $n \geq 5$. An immediate consequence of the present
result is that our argument in characteristic $0$ applies verbatim to prove
that the deformation $K_n(\alpha,\beta)$ introduced by P. Bellingeri and L. Funar in \cite{BELFUN} also collapses for $n \geq 5$.
We provide the details below.

\begin{theorem} The generic algebra $K_n(\alpha,\beta)$ introduced in \cite{BELFUN} is zero for $n \geq 5$.
\end{theorem}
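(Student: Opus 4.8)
The plan is to use the fact, recorded in \cite{BELFUN}, that $K_n(\alpha,\beta)$ is a quotient of the cubic Hecke algebra $A_n$ by a small number of extra relations beyond the braid and cubic relations; once this is granted, the structure theorem for $A_5$ established above does all the work. First I would extend scalars from the base ring to a field $K$ of characteristic $0$ over which the cubic relation defining $K_5(\alpha,\beta)$ has three distinct roots. By the main theorem of this article, $A_5$ is a free module of rank $|G_{32}| = 155520$ that becomes isomorphic to the group algebra of $G_{32}$ after a suitable extension of scalars; combined with the distinctness of the roots this gives that $A_5 \otimes K \simeq K G_{32}$ is split semisimple, with simple factors indexed by $\Irr(G_{32})$.

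Next I would invoke the elementary fact that a quotient of a split semisimple algebra $\prod_{\rho \in \Irr(G_{32})} \End(V_\rho)$ by a two-sided ideal is the product of exactly those simple factors on which every generator of the ideal vanishes. Consequently $K_5(\alpha,\beta) \otimes K$, being the quotient of $A_5 \otimes K$ by the ideal generated by the extra relations, is zero precisely when, for each $\rho \in \Irr(G_{32})$, at least one of the finitely many defining relations of $K_5(\alpha,\beta)$ acts nontrivially in $\rho$. This reduces the case $n = 5$ to a finite verification over the irreducible representations of $G_{32}$.

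That verification is exactly the computation carried out in \cite{CABANESMARIN} for the undeformed algebra $K_n(\gamma)$ of \cite{FUNAR}: there it is checked that no irreducible representation of $G_{32}$ is compatible with the extra relations, so that the corresponding quotient collapses in characteristic $0$ (and, more generally, away from characteristic $2$). Since the relations defining $K_n(\alpha,\beta)$ form a two-parameter deformation of those defining $K_n(\gamma)$, and since the present structure theorem identifies $A_5 \otimes K$ with $K G_{32}$ for the \emph{full} generic cubic relation, the set of irreducible representations against which the relations must be tested is unchanged; the incompatibility therefore persists and the earlier verification applies verbatim, yielding $K_5(\alpha,\beta) \otimes K = 0$ and hence $K_5(\alpha,\beta) = 0$. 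Finally I would propagate the vanishing to all $n \geq 5$: the standard inclusion $B_5 \into B_n$ induces a unital algebra map $K_5(\alpha,\beta) \to K_n(\alpha,\beta)$, so the relation $1 = 0$ already holding in $K_5(\alpha,\beta)$ forces $1 = 0$ in $K_n(\alpha,\beta)$ for every $n \geq 5$.

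The main obstacle I anticipate is not conceptual but one of bookkeeping, namely ensuring that the deformation parameters $\alpha,\beta$ do not reintroduce some representation of $G_{32}$ compatible with the deformed relations that was incompatible in the $\gamma$ case. This is precisely the point at which the new structure theorem is indispensable: because $A_5$ is now known to be a flat deformation of $R G_{32}$, the irreducible representations to be tested are genuinely those of $G_{32}$ and no others, so the nonvanishing of the relevant relation elements established in \cite{CABANESMARIN} remains valid after deformation. This is what makes the phrase ``applies verbatim'' literally correct rather than merely heuristic.
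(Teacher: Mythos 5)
Your overall strategy is the paper's own: specialize to $c=1$, extend scalars to a large field, use the new rank theorem to identify the specialized cubic Hecke algebra of $B_5$ with the group algebra of $G_{32}$, note that a quotient of a split semisimple algebra vanishes exactly when the ideal meets every simple factor, reduce to the branching computation of \cite{CABANESMARIN}, and propagate to $n\geq 5$ (your propagation via the unital map $K_5(\alpha,\beta)\to K_n(\alpha,\beta)$ is fine, and if anything cleaner than the paper's phrasing). However, there is a genuine gap at the step where you claim the verification of \cite{CABANESMARIN} ``applies verbatim''. The extra defining element $q$ of $K_5(\alpha,\beta)$ lives in $\Z[\alpha,\beta]B_3$, and what one actually knows about it (by its construction in \cite{BELFUN}, remark 1.3) is that the two-sided ideal it generates in $\kk A_3^0 \simeq \kk G_4 \simeq \kk^3\oplus\Mat_2(\kk)^3\oplus\Mat_3(\kk)$ is exactly the factor $\kk^3$. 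Hence the condition ``$\rho(q)\neq 0$ for every irreducible $\rho$ of $\kk A_5^0$'' translates into: every irreducible representation of $\kk A_5^0$ contains, in its restriction to the subalgebra $\kk A_3^0$, at least one one-dimensional constituent. To convert this into the statement checked in \cite{CABANESMARIN} --- that every irreducible character of $G_{32}$ restricted to $G_4$ contains a linear character --- you need the isomorphisms $\kk A_3^0\simeq \kk G_4$ and $\kk A_5^0\simeq \kk G_{32}$ to be chosen \emph{compatibly with the inclusions} $A_3^0\to A_5^0$ and $G_4\to G_{32}$. Branching data is an invariant of the pair (algebra, subalgebra), not of the two algebras taken separately; the flatness of each $A_n^0$ on its own, which is all your final paragraph invokes, identifies the two lists of irreducibles but says nothing about how restriction on one side matches restriction on the other.

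This compatibility is exactly what the paper supplies and what your proposal omits: the commutative square relating $\kk A_3^0\to\kk A_4^0\to\kk A_5^0$ to $\kk G_4\to\kk G_{25}\to\kk G_{32}$, which holds for a suitable choice of isomorphisms (possibly after enlarging $\kk$) by theorem 2.9 of \cite{KRAMCRG}, the point being that those isomorphisms come from a monodromy construction functorial with respect to parabolic subalgebras. With that ingredient added, your argument closes; without it, ``applies verbatim'' is an assertion rather than a proof.
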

\begin{proof}
We use the notations of \cite{BELFUN}. Let $\kk$ be an algebraically closed extension
of $\Q(\alpha,\beta)$. The $\Z[\alpha,\beta]$-algebra $K_n(\alpha,\beta)$ is defined as the
quotient of the group algebra $\Z[\alpha,\beta] B_n$ by the two-sided ideal generated by the
elements $s_i^3 - \alpha s_i^2 - \beta s_i - 1$ and another element $q \in \Z[\alpha,\beta] B_3 \subset \Z[\alpha,\beta] B_n$. We let $\varphi : \Z[a,b,c,c^{-1} ] \to
\Z[\alpha,\beta]$ be the specialization $a \mapsto \alpha$, $b \mapsto \beta$, $c \mapsto 1$,
and let $A_n^0$ denote $A_n \otimes_{\varphi} \Z[\alpha,\beta]$. Obviously $K_n(\alpha,\beta)$
is a quotient of $A_n^0$, more precisely the quotient of $A_n^0$ by the two-sided ideal
generated by (the canonical image of) $q$. Let $\kk$ denote an algebraically closed extension of $\Q(\alpha,\beta)$. We have $A_3^0 \otimes_{\Z[\alpha,\beta]} \kk \simeq \kk G_4 \simeq \kk^3 \oplus Mat_2(\kk)^3 \oplus
Mat_3(\kk)$, and the ideal generated by $q$ is by definition the factor $\kk^3$ in this decomposition (see remark 1.3 in \cite{BELFUN}). As a consequence,
the $\kk$-algebra $\kk K_5(\alpha,\beta)$ is the quotient of the semisimple algebra $\kk A_5^0 \simeq \kk G_{32}$ by the following two-sided ideal : make the direct sum of all the direct factors $Mat_N(\kk)$
whose corresponding irreducible representations have at least one 1-dimensional component in their
restriction to $\kk A_3^0$. Now, to the expense of possibly enlarging $\kk$,
the isomorphisms between the algebras $A_n^0$ and the corresponding group algebras
can be chosen in such a way that the following diagram commutes (e.g. by theorem 2.9 of \cite{KRAMCRG}
 -- see also remark 2.11 there).
$$
\xymatrix{
\kk A_3^0 \ar[r] \ar[d] & \kk A_4^0 \ar[r]  \ar[d] & \kk A_5^0  \ar[d] \\
\kk G_4 \ar[r] & \kk G_{25} \ar[r] & \kk G_{32}
}
$$
As in \cite{CABANESMARIN}, the induction table between the (ordinary) characters of $G_4$
of $G_{32}$ then shows that \emph{all} direct factors $Mat_N(\kk)$ satisfy this property,
and thus the two-sided ideal is all $A_5^0$. It follows that $K_5(\alpha,\beta) = 0$,
whence $K_n(\alpha,\beta) = 0$ for $n \geq 5$, as $K_n(\alpha,\beta)$ is generated by conjugates of the
image of $K_5(\alpha,\beta)$.

\end{proof}

\section{Preliminaries and notations}
\label{sectprelim}

We let $R = \Z[a,b,c,c^{-1}]$
and let $B_n$ denote the braid group on $n$ strands,
generated by the braids $s_1,\dots,s_{n-1}$ with relations $s_i s_{i+1} s_i = s_{i+1} s_i s_{i+1}$
and $s_i s_j = s_j s_i$ for $|j-i| \geq 2$. The cubic Hecke algebra $A_n$ for $n \geq 2$ is the quotient
of the group algebra $R B_n$ by the relations $s_i^3 = a s_i^2 + b s_i + c$. We identify $s_i$ to their images in $A_n$. Notice that, since
$c$ is invertible in $R$, $s_i$ is still invertible, and we have the equivalent relations
$s_i^2 = a s_i + b + c s_i^{-1}$, etc.

The group algebra $R B_n$ admits the automorphism  $s_i \mapsto s_{n-i}$, which
induces an automorphism of $A_n$, as a $R$-algebra. The automorphism
$s_i \mapsto s_i^{-1}$ of $B_n$ induces an automorphism $\Phi$ of $A_n$ as a $\Z$-algebra,
defined by $s_i \mapsto s_i^{-1}$, $a \mapsto -b c^{-1}$, $b \mapsto -a c^{-1}$, $c \mapsto c^{-1}$,
and similarly the skew-automorphism $\Psi$ of $B_n$ defined by $s_i \mapsto s_i^{-1}$ induces a
skew-automorphism of $A_n$ as a $\Z$-algebra.

In the sequel we will denote $u_i$ the $R$-subalgebra of $A_n$ generated by $s_i$ (or
equivalently by $s_i^{-1}$).

The following equalities hold in the braid group, and thus also in $A_n$. We state
them as a lemma because of their importance in the sequel. Notice that they
transform an element of the form $s_{i+1}^{\pm } s_i^{\epsilon} s_{i+1}^{\mp}$ into
an element of $u_i u_{i+1} u_i$. 
\begin{lemma} \label{lemsplusmoins} For $\alpha \in \{ -1,1\}$, we have
$s_{i+1}^{\alpha} s_{i}^{\alpha} s_{i+1}^{-\alpha} =s_{i}^{-\alpha} s_{i+1}^{\alpha} s_{i}^{\alpha}$
and
$s_{i+1}^{\alpha} s_{i}^{-\alpha} s_{i+1}^{-\alpha} =s_{i}^{-\alpha} s_{i+1}^{-\alpha} s_{i}^{\alpha}$, that is

$$\begin{array}{lcl}
s_{i+1} s_{i} s_{i+1}^{-1} &=& s_{i}^{-1} s_{i+1} s_{i} \\
s_{i+1} s_{i}^{-1} s_{i+1}^{-1} &=& s_{i}^{-1} s_{i+1}^{-1} s_{i} \\
s_{i+1}^{-1} s_{i} s_{i+1} &=& s_{i} s_{i+1} s_{i}^{-1} \\
s_{i+1}^{-1} s_{i}^{-1} s_{i+1} &=& s_{i} s_{i+1}^{-1} s_{i}^{-1} 
\end{array}$$
\end{lemma}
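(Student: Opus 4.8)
The plan is to derive every identity from the single braid relation $s_i s_{i+1} s_i = s_{i+1} s_i s_{i+1}$, which holds in $B_n$ and hence in $A_n$. Since the assertion for $\alpha \in \{-1,1\}$ is literally the list of the four displayed equations, it suffices to establish those four, and I would do so by reducing each to the braid relation through reversible manipulations of the invertible generators $s_i, s_{i+1}$.

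First I would prove the first displayed identity $s_{i+1} s_i s_{i+1}^{-1} = s_i^{-1} s_{i+1} s_i$ by clearing the inverses: multiplying on the left by $s_i$ and on the right by $s_{i+1}$ transforms it into $s_i s_{i+1} s_i = s_{i+1} s_i s_{i+1}$, which is exactly the braid relation; because $s_i$ and $s_{i+1}$ are invertible, this chain of equalities is reversible, so the original identity holds. The third identity $s_{i+1}^{-1} s_i s_{i+1} = s_i s_{i+1} s_i^{-1}$ is handled identically: multiplying on the left by $s_{i+1}$ and on the right by $s_i$ again reduces it to the braid relation.

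For the remaining two, rather than repeat the computation I would invoke the anti-automorphism of $B_n$ given by $g \mapsto g^{-1}$ (that is, take inverses of both sides and reverse the order of factors). Inverting the first identity yields exactly the second, $s_{i+1} s_i^{-1} s_{i+1}^{-1} = s_i^{-1} s_{i+1}^{-1} s_i$, and inverting the third yields the fourth, $s_{i+1}^{-1} s_i^{-1} s_{i+1} = s_i s_{i+1}^{-1} s_i^{-1}$. In fact all four identities form a single orbit under the group generated by this inversion together with the automorphism $s_i \mapsto s_i^{-1}$ of $B_n$ (which preserves the braid relation), so one genuinely new computation already suffices and the rest is symmetry.

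There is no real obstacle here: the only mathematical content is the braid relation itself, and everything else is the reversible handling of invertible generators together with the built-in symmetries of $B_n$. The single point to keep in mind is that these are identities in the braid group, valid before passing to $A_n$, which is precisely what licenses the free use of group inverses. The interpretive remark in the lemma — that these relations move an element of the form $s_{i+1}^{\pm} s_i^{\epsilon} s_{i+1}^{\mp}$ into $u_i u_{i+1} u_i$ — is then immediate, since each right-hand side is visibly a product of an element of $u_i$, an element of $u_{i+1}$, and an element of $u_i$.
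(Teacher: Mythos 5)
Your proof is correct, and it is exactly the argument the paper takes for granted: the paper states these identities without proof as immediate consequences of the braid relation $s_i s_{i+1} s_i = s_{i+1} s_i s_{i+1}$ in $B_n$, which is precisely what your reversible multiplications and inversion symmetry establish. Nothing is missing.
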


\begin{lemma} \label{lemspmgross}{\ }
\begin{enumerate}
\item $s_{i+1}^{\pm } s_i^{\epsilon} s_{i+1}^{\mp} \in u_i u_{i+1} u_i$
\item $s_{i+1}^{\pm } s_i^{\pm} s_{i+1}^{\epsilon} \in u_i u_{i+1} u_i$
\item $s_{i+1}^{\epsilon } s_i^{\pm} s_{i+1}^{\pm} \in u_i u_{i+1} u_i$
\end{enumerate}
\end{lemma}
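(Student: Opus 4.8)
The plan is to prove all three statements of Lemma \ref{lemspmgross} by reducing them, case by case on the signs $\pm$ and the exponent $\epsilon \in \{-1,1\}$, to the four braid identities collected in Lemma \ref{lemsplusmoins}. The key observation is that each identity in Lemma \ref{lemsplusmoins} rewrites a word of the shape $s_{i+1}^{\delta_1} s_i^{\delta_2} s_{i+1}^{\delta_3}$ (with two of the three exponents equal and one opposite) into a word lying entirely in $u_i u_{i+1} u_i$. So whenever the pattern of signs in the word I am handed matches one of those four shapes, I am done immediately. The work is therefore purely bookkeeping: enumerate the sign patterns appearing in (1), (2), (3), match each against Lemma \ref{lemsplusmoins}, and handle the few patterns that are \emph{not} directly covered.

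For item (1), the word is $s_{i+1}^{\pm} s_i^{\epsilon} s_{i+1}^{\mp}$, so the two outer exponents are always opposite. The four sign choices (outer $+,-$ times inner $\epsilon = \pm 1$) are \emph{exactly} the left-hand sides of the four identities in Lemma \ref{lemsplusmoins}, each of whose right-hand sides is visibly in $u_i u_{i+1} u_i$; this case is immediate. For items (2) and (3), which by the symmetry $s_i \mapsto s_{n-i}$ (an $R$-algebra automorphism of $A_n$) or by applying $\Psi$ are interchangeable, the word has its two $s_{i+1}$ factors carrying opposite positions but now with equal adjacent signs, e.g.\ $s_{i+1}^{+} s_i^{+} s_{i+1}^{\epsilon}$. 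When $\epsilon$ is chosen so that the outer exponents are opposite, Lemma \ref{lemsplusmoins} applies directly. The genuinely new patterns are those where \emph{all three} exponents agree in a way not covered, such as $s_{i+1} s_i s_{i+1}$ (pure braid relation, equal to $s_i s_{i+1} s_i \in u_i u_{i+1} u_i$) or mixed cases like $s_{i+1} s_i^{-1} s_{i+1}$.

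For those residual patterns I would proceed by first peeling off one outer generator using the cubic relation $s_{i+1}^2 = a s_{i+1} + b + c s_{i+1}^{-1}$ (equivalently $s_{i+1}^{\pm 1} = $ an $R$-linear combination of $1, s_{i+1}^{\mp 1}, s_{i+1}^{\mp 2}$) to convert an equal-sign outer pair into a word already handled by Lemma \ref{lemsplusmoins}, then absorb the resulting $u_{i+1}$-prefactor. Concretely, to treat $s_{i+1} s_i^{-1} s_{i+1}$ I would write the trailing $s_{i+1}$ as a combination involving $s_{i+1}^{-1}$, reducing to the already-solved $s_{i+1} s_i^{-1} s_{i+1}^{-1} = s_i^{-1} s_{i+1}^{-1} s_i \in u_i u_{i+1} u_i$ plus lower terms that stay inside $u_i u_{i+1} u_i$, and similarly for the pure-braid pattern one uses the braid relation directly. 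The main obstacle, such as it is, is not conceptual but organizational: making sure the sign enumeration in (2) and (3) is exhaustive and that each residual pattern is genuinely reduced to a word in $u_i u_{i+1} u_i$ rather than merely to another word of the same length with a different sign pattern. Since $u_i u_{i+1} u_i$ is manifestly stable under left or right multiplication by any power of $s_i$ (which multiplies the outer $u_i$ factors) this absorption step is automatic, so once the reduction lands in Lemma \ref{lemsplusmoins} the conclusion follows without further effort.
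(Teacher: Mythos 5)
Your treatment of the cases that actually occur in the lemma is correct and is essentially the paper's own proof: item (1) is exactly the four identities of Lemma \ref{lemsplusmoins}, and in items (2) and (3) the only pattern not covered by those identities is the one where all three exponents are equal, which is the braid relation $s_{i+1}^{\alpha}s_i^{\alpha}s_{i+1}^{\alpha}=s_i^{\alpha}s_{i+1}^{\alpha}s_i^{\alpha}$. However, you have misread the sign conventions in (2) and (3): the notation $s_{i+1}^{\pm}s_i^{\pm}s_{i+1}^{\epsilon}$ ties the first two exponents together (and likewise the last two in (3)), so every word in the statement has two \emph{adjacent} exponents equal. The ``mixed case'' $s_{i+1}s_i^{-1}s_{i+1}$, which you list among the residual patterns to be handled, does not occur in the statement at all.

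This matters because the extra claim you set out to prove is false, and your argument for it collapses exactly at the step you declare automatic. Expanding the trailing generator by the cubic relation, $s_{i+1}=a+bs_{i+1}^{-1}+cs_{i+1}^{-2}$, gives
$$
s_{i+1}s_i^{-1}s_{i+1}=a\,s_{i+1}s_i^{-1}+b\,s_i^{-1}s_{i+1}^{-1}s_i+c\,\bigl(s_i^{-1}s_{i+1}^{-1}s_i\bigr)s_{i+1}^{-1},
$$
and the last term lies in $u_iu_{i+1}u_iu_{i+1}$: to bring it back into $u_iu_{i+1}u_i$ you would need to absorb a power of $s_{i+1}$ on the \emph{right}, whereas your stability remark only covers multiplication by powers of $s_i$. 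No repair is possible, because $s_{i+1}s_i^{-1}s_{i+1}\notin u_iu_{i+1}u_i$ in general: applying $\Phi$, membership would give $s_2^{-1}s_1s_2^{-1}\in u_1u_2u_1=A_2+A_2s_2A_2+A_2s_2^{-1}A_2$, so $A_3$ would be generated over $R$ by at most $21$ elements, contradicting the fact that it is free of rank $24=|G_4|$; equivalently, it would make the fourth summand in Theorem \ref{theodecA3} redundant. This is also why Lemma \ref{leminverse} only expresses $s_{i+1}^{-1}s_is_{i+1}^{-1}$ in terms of $(s_{i+1}s_i^{-1}s_{i+1})s_i$ \emph{modulo} $u_iu_{i+1}u_i$ rather than eliminating such words. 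So keep your case analysis for the patterns genuinely in the lemma (which coincides with the paper's proof), and delete the treatment of $s_{i+1}s_i^{-1}s_{i+1}$.
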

\begin{proof}
The first item is a direct consequence of lemma \ref{lemsplusmoins},
and the latter two items are consequences of (1) and of the
braid relations $s_i^{\pm} s_{i+1}^{\pm} s_i^{\pm}=
s_i^{\pm} s_{i+1}^{\pm} s_i^{\pm}$.
\end{proof}
\begin{lemma} \label{lemquasicom} {\ }
\begin{enumerate}
\item For all $x \in u_i$, $(s_{i+1}^{-1} s_i s_{i+1}^{-1}) x \in x (s_{i+1}^{-1} s_i s_{i+1}^{-1}) + u_i u_{i+1} u_i$.
\item For all $x \in u_i$, $(s_{i+1} s_i^{-1} s_{i+1}) x \in x (s_{i+1} s_i^{-1} s_{i+1}) + u_i u_{i+1} u_i$.
\end{enumerate}
\end{lemma}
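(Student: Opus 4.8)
The plan is to deduce both items simultaneously from a single computation, using that the two relevant elements $\tau = s_{i+1}^{-1} s_i s_{i+1}^{-1}$ (item 1) and $\sigma = s_{i+1} s_i^{-1} s_{i+1}$ (item 2) are exchanged by the automorphism $\Phi$, which preserves $u_i$ and the $R$-submodule $M = u_i u_{i+1} u_i$. First I would isolate the two structural facts that enable a reduction to a single generator. Since $u_i$ is generated as an $R$-algebra by $s_i$, and equally by $s_i^{-1}$, and since $u_i$ is closed under multiplication, one has $u_i M \subseteq M$ and $M u_i \subseteq M$. Hence for any fixed $w \in A_n$ the ``quasi-centralizer'' $C_w = \{ x \in u_i : wx - xw \in M \}$ is an $R$-subalgebra of $u_i$: it is an $R$-submodule containing $1$, and for $x,y \in C_w$ the identity $w(xy) - (xy)w = (wx - xw)\,y + x\,(wy - yw)$ exhibits the left-hand side as an element of $M u_i + u_i M \subseteq M$. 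Therefore, to obtain item (1) it suffices to check $s_i^{-1} \in C_\tau$, and to obtain item (2) it suffices to check $s_i \in C_\sigma$.

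The heart of the argument is the claim that $\sigma s_i$ and $s_i \sigma$ are both congruent to $c\,\tau$ modulo $M$. For the first, the first equality of Lemma \ref{lemsplusmoins}, in the form $s_i^{-1} s_{i+1} s_i = s_{i+1} s_i s_{i+1}^{-1}$, gives $\sigma s_i = s_{i+1}(s_i^{-1} s_{i+1} s_i) = s_{i+1}^2 s_i s_{i+1}^{-1}$; substituting the cubic relation in the form $s_{i+1}^2 = a s_{i+1} + b + c s_{i+1}^{-1}$ produces $\sigma s_i = a\, s_{i+1} s_i s_{i+1}^{-1} + b\, s_i s_{i+1}^{-1} + c\, s_{i+1}^{-1} s_i s_{i+1}^{-1}$, in which the first summand lies in $M$ by Lemma \ref{lemspmgross}(1), the second lies in $M$ trivially, and the third is exactly $c\,\tau$. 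Symmetrically, the third equality of Lemma \ref{lemsplusmoins}, in the form $s_i s_{i+1} s_i^{-1} = s_{i+1}^{-1} s_i s_{i+1}$, gives $s_i \sigma = (s_i s_{i+1} s_i^{-1}) s_{i+1} = s_{i+1}^{-1} s_i s_{i+1}^2$, and the same substitution yields $a\, s_{i+1}^{-1} s_i s_{i+1} + b\, s_{i+1}^{-1} s_i + c\,\tau$, whose first two terms again lie in $M$. This establishes $\sigma s_i \equiv c\,\tau \equiv s_i \sigma$ modulo $M$.

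Both items now follow. On the one hand $\sigma s_i - s_i \sigma \in M$, so $s_i \in C_\sigma$ and thus $C_\sigma = u_i$, which is item (2); equivalently this is the $\Phi$-image of item (1). On the other hand, multiplying $\sigma s_i \equiv c\,\tau$ on the right by $s_i^{-1}$ and $s_i \sigma \equiv c\,\tau$ on the left by $s_i^{-1}$ (each operation preserving $M$) gives $\sigma \equiv c\,\tau s_i^{-1}$ and $\sigma \equiv c\, s_i^{-1} \tau$ modulo $M$; since $c$ is invertible we deduce $\tau s_i^{-1} - s_i^{-1} \tau \in M$, so $s_i^{-1} \in C_\tau$ and $C_\tau = u_i$, which is item (1). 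The one genuinely delicate step is the computation of the previous paragraph: the point is to recognise the auxiliary element $\sigma$ and to choose the rewriting so that the parts of $\sigma s_i$ and of $s_i \sigma$ that are not visibly in $M$ coincide (both equal $c\,\tau$), which is precisely what couples the two items; all remaining terms are absorbed mechanically by Lemmas \ref{lemsplusmoins} and \ref{lemspmgross}.
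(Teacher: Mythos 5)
Your proof is correct, but its computational core is genuinely different from the paper's. The paper deduces item (2) from item (1) by applying $\Phi$, and proves (1) directly: it reduces to the generator $x = s_i^{-1}$ (the same reduction you make, except that you justify it explicitly by observing that the quasi-centralizer $C_w$ is an $R$-subalgebra of $u_i$ because $u_i u_iu_{i+1}u_i \subseteq u_iu_{i+1}u_i$ and $u_iu_{i+1}u_i u_i \subseteq u_iu_{i+1}u_i$ — the paper leaves this implicit) and then runs a long chain of manipulations on $(s_{i+1}^{-1}s_is_{i+1}^{-1})s_i^{-1}$, substituting $s_i = cs_i^{-2}+bs_i^{-1}+a$ and later $s_i^{-2} = c^{-1}s_i - ac^{-1}-bc^{-1}s_i^{-1}$, interleaved with braid relations. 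You replace that chain by the two short congruences $\sigma s_i \equiv c\,\tau \equiv s_i\sigma$ modulo $u_iu_{i+1}u_i$, where $\tau = s_{i+1}^{-1}s_is_{i+1}^{-1}$ and $\sigma = s_{i+1}s_i^{-1}s_{i+1}$. It is worth noting that your first congruence $\sigma s_i \equiv c\,\tau$ is, word for word, the statement and proof of the paper's Lemma \ref{leminverse}, which appears immediately after Lemma \ref{lemquasicom} and is proved there independently; your second congruence is its left-handed mirror. So your route shows that Lemma \ref{leminverse} together with its mirror implies Lemma \ref{lemquasicom} by pure bookkeeping (stability of $u_iu_{i+1}u_i$ under left and right multiplication by $u_i$, plus invertibility of $c$), whereas the paper carries out two independent computations for the two lemmas. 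Each approach has a payoff: the paper's is self-contained at the point where the lemma is stated, while yours is shorter overall and would allow the section to be reorganized so that the inverse-type congruences are proved once and the quasi-commutation comes for free.
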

\begin{proof}
(2) is a consequence of (1) up to applying an automorphism of $A_n$, so we restrict ourselves to proving (1).
Since $u_i$ is generated as a $R$-algebra by $s_i^{-1}$, we only need to prove
$(s_{i+1}^{-1} s_i s_{i+1}^{-1}) s_i^{-1} \in s_i^{-1} (s_{i+1}^{-1} s_i s_{i+1}^{-1}) + u_i u_{i+1} u_i$.
We use $s_i = c s_i^{-2} + b s_i^{-1} + a$, $s_i^{-2} = c^{-1} s_i - a c^{-1} - bc^{-1} s_i^{-1}$
and the braid relations,
and get
$$
\begin{array}{lcl}
(s_{i+1}^{-1} s_i s_{i+1}^{-1}) s_i^{-1} &=&  s_{i+1}^{-1} s_i s_{i+1}^{-1} s_i^{-1} \\
&=&  s_{i+1}^{-1} ( c s_i^{-2} + b s_i^{-1} + a) s_{i+1}^{-1} s_i^{-1} \\
&=&  c s_{i+1}^{-1}  s_i^{-2}s_{i+1}^{-1} s_i^{-1} + b s_{i+1}^{-1}s_i^{-1}s_{i+1}^{-1} s_i^{-1} + as_{i+1}^{-1} s_{i+1}^{-1} s_i^{-1} \\
&=& c s_{i+1}^{-1}  s_i^{-2}s_{i+1}^{-1} s_i^{-1} + b s_{i}^{-1}s_{i+1}^{-1}s_{i}^{-1} s_i^{-1} + as_{i+1}^{-1} s_{i+1}^{-1} s_i^{-1} \\
&\in & c s_{i+1}^{-1}  s_i^{-1}(s_i^{-1} s_{i+1}^{-1} s_i^{-1}) + u_i u_{i+1} u_i  \\
&\in & c (s_{i+1}^{-1}  s_i^{-1}s_{i+1}^{-1}) s_{i}^{-1} s_{i+1}^{-1} + u_i u_{i+1} u_i  \\
&\in & c s_{i}^{-1}  s_{i+1}^{-1}s_{i}^{-2}  s_{i+1}^{-1} + u_i u_{i+1} u_i  \\
&\in & c s_{i}^{-1}  s_{i+1}^{-1}( c^{-1} s_i - a c^{-1} - bc^{-1} s_i^{-1}) s_{i+1}^{-1} + u_i u_{i+1} u_i  \\
&\in &  s_{i}^{-1}  s_{i+1}^{-1}(  s_i - a  - b s_i^{-1}) s_{i+1}^{-1} + u_i u_{i+1} u_i  \\
&\in &  s_{i}^{-1}  s_{i+1}^{-1} s_i s_{i+1}^{-1}- as_{i}^{-1}  s_{i+1}^{-1}  s_{i+1}^{-1} -b s_{i}^{-1}  (s_{i+1}^{-1}  s_i^{-1}s_{i+1}^{-1}) + u_i u_{i+1} u_i  \\
&\in &  s_{i}^{-1}  s_{i+1}^{-1} s_i s_{i+1}^{-1}- as_{i}^{-1}  s_{i+1}^{-1}  s_{i+1}^{-1} -b s_{i}^{-1}  s_{i}^{-1}  s_{i+1}^{-1}s_{i}^{-1} + u_i u_{i+1} u_i  \\
&\in &  s_{i}^{-1} ( s_{i+1}^{-1} s_i s_{i+1}^{-1}) + u_i u_{i+1} u_i  \\
\end{array}
$$
\end{proof}

\begin{lemma} \label{leminverse}
$s_{i+1}^{-1} s_i s_{i+1}^{-1} \in c^{-1} (s_{i+1} s_i^{-1} s_{i+1}) s_i + u_i u_{i+1} u_i$
\end{lemma}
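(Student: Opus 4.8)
The plan is to take the right-hand side $c^{-1}(s_{i+1} s_i^{-1} s_{i+1}) s_i$, expand it using the cubic relation together with the braid identities of Lemma~\ref{lemsplusmoins}, and show that modulo the space $u_i u_{i+1} u_i$ it collapses to $s_{i+1}^{-1} s_i s_{i+1}^{-1}$. Throughout I work modulo $u_i u_{i+1} u_i$ and use freely that this space absorbs left and right multiplication by $u_i$ (in particular by $s_i$ and $s_i^{-1}$), so that recognizing a block of the shape $s_i^{\bullet} s_{i+1}^{\bullet} s_i^{\bullet}$ is enough to discard it.

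First I would use the second identity of Lemma~\ref{lemsplusmoins}, namely $s_{i+1} s_i^{-1} s_{i+1}^{-1} = s_i^{-1} s_{i+1}^{-1} s_i$, to rewrite $s_{i+1} s_i^{-1} s_{i+1} = s_i^{-1} s_{i+1}^{-1} s_i s_{i+1}^2$, so that
$$(s_{i+1} s_i^{-1} s_{i+1}) s_i = s_i^{-1} s_{i+1}^{-1} s_i \cdot s_{i+1}^2 s_i .$$
Substituting the cubic relation in the form $s_{i+1}^2 = a s_{i+1} + b + c s_{i+1}^{-1}$ splits this into three terms. In the $a$-term the subword $s_i s_{i+1} s_i$ can be turned into $s_{i+1} s_i s_{i+1}$ by the braid relation, after which everything cancels down to $a s_{i+1} \in u_{i+1}$. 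The $b$-term is $b\, s_i^{-1} s_{i+1}^{-1} s_i^2$, which already lies in $u_i u_{i+1} u_i$. The $c$-term is exactly $c\, s_i^{-1} (s_{i+1}^{-1} s_i s_{i+1}^{-1}) s_i$. Multiplying through by $c^{-1}$, the first two terms vanish modulo $u_i u_{i+1} u_i$ and we are left with $s_i^{-1}(s_{i+1}^{-1} s_i s_{i+1}^{-1}) s_i$.

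To finish I would invoke item (1) of Lemma~\ref{lemquasicom} with $x = s_i \in u_i$, which gives $(s_{i+1}^{-1} s_i s_{i+1}^{-1}) s_i \in s_i (s_{i+1}^{-1} s_i s_{i+1}^{-1}) + u_i u_{i+1} u_i$; multiplying on the left by $s_i^{-1}$ (which preserves $u_i u_{i+1} u_i$) yields $s_i^{-1}(s_{i+1}^{-1} s_i s_{i+1}^{-1}) s_i \in s_{i+1}^{-1} s_i s_{i+1}^{-1} + u_i u_{i+1} u_i$. Chaining this with the previous paragraph gives $c^{-1}(s_{i+1} s_i^{-1} s_{i+1}) s_i \in s_{i+1}^{-1} s_i s_{i+1}^{-1} + u_i u_{i+1} u_i$, which is the asserted inclusion. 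The only delicate point is to arrange the very first rewriting so that, after expanding $s_{i+1}^2$, precisely one of the three terms survives and comes out in the conjugated shape $s_i^{-1}(\cdots)s_i$ that Lemma~\ref{lemquasicom} was designed to handle, with its coefficient $c$ cancelling the $c^{-1}$ of the statement; everything else is routine bookkeeping to confirm that the remaining blocks genuinely sit inside $u_i u_{i+1} u_i$.
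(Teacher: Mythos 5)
Your proof is correct, and it is safe with respect to the paper's logical ordering: Lemma \ref{lemquasicom} is proved before Lemma \ref{leminverse}, using only the cubic relation and braid relations, so invoking it here introduces no circularity. The overall strategy coincides with the paper's — rewrite $(s_{i+1} s_i^{-1} s_{i+1}) s_i$ via Lemma \ref{lemsplusmoins}, expand the resulting square $s_{i+1}^2 = a s_{i+1} + b + c s_{i+1}^{-1}$, and absorb the $a$- and $b$-terms into $u_i u_{i+1} u_i$ — but the execution differs in which identity of Lemma \ref{lemsplusmoins} is applied. You use $s_{i+1} s_i^{-1} s_{i+1}^{-1} = s_i^{-1} s_{i+1}^{-1} s_i$, which places the square in the middle of the word, so your $c$-term comes out as the conjugate $c\, s_i^{-1}(s_{i+1}^{-1} s_i s_{i+1}^{-1})s_i$, and you must then spend Lemma \ref{lemquasicom} (with $x = s_i$) to strip off the conjugation. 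The paper instead writes $(s_{i+1} s_i^{-1} s_{i+1}) s_i = s_{i+1}(s_i^{-1} s_{i+1} s_i) = s_{i+1}^2 s_i s_{i+1}^{-1}$ using $s_i^{-1} s_{i+1} s_i = s_{i+1} s_i s_{i+1}^{-1}$, so the square sits at the left end of the word and the $c$-term is literally $c\, s_{i+1}^{-1} s_i s_{i+1}^{-1}$; no quasi-commutation is needed, and the whole argument rests on Lemma \ref{lemsplusmoins} alone. Both routes are valid: the paper's choice of rewriting buys a shorter, self-contained proof, while yours shows that the approximate $u_i$-centrality of $s_{i+1}^{-1} s_i s_{i+1}^{-1}$ modulo $u_i u_{i+1} u_i$ can equally well carry the final step, at the cost of one extra lemma in the dependency chain.
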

\begin{proof}
We have $(s_{i+1} s_i^{-1} s_{i+1}) s_i = s_{i+1} (s_i^{-1} s_{i+1} s_i)  = s_{i+1}  s_{i+1} s_i s_{i+1}^{-1} =  s_{i+1}^2 s_i s_{i+1}^{-1} $
by lemma \ref{lemsplusmoins}. Since $s_{i+1}^2 = a s_{i+1} + b + c s_{i+1}^{-1}$ we get
$(s_{i+1} s_i^{-1} s_{i+1}) s_i =(a s_{i+1} + b + c s_{i+1}^{-1})s_i s_{i+1}^{-1} 
= a s_{i+1} s_i s_{i+1}^{-1}+ b s_i s_{i+1}^{-1}+ c s_{i+1}^{-1}s_i s_{i+1}^{-1} \in c s_{i+1}^{-1}s_i s_{i+1}^{-1}  + u_i u_{i+1} u_i$
since $s_{i+1} s_i s_{i+1}^{-1} \in u_i u_{i+1} u_i$ by lemma \ref{lemsplusmoins}. 
\end{proof}

\section{The algebra $A_3$}
\label{sectA3}

We identify $A_2$ with its image in $A_3$ under $s_i \mapsto s_i$,
that is with the subalgebra of $A_3$ generated by $s_1$.
Lemma \ref{lemsplusmoins} provides the following equalities
$$
\begin{array}{lcl}
s_2 s_1 s_2^{-1} &=& s_1^{-1} s_2 s_1 \\
s_2 s_1^{-1} s_2^{-1} &=& s_1^{-1} s_2^{-1} s_1 \\
s_2^{-1} s_1 s_2 &=& s_1 s_2 s_1^{-1} \\
s_2^{-1} s_1^{-1} s_2 &=& s_1 s_2^{-1} s_1^{-1} 
\end{array}
$$

\begin{lemma}
$s_2^{-1} s_1 s_2^{-1} A_2 \subset A_2 s_2^{-1} s_1 s_2^{-1} + u_2 u_1 u_2$ and
$s_2 s_1^{-1} s_2 A_2 \subset A_2 s_2 s_1^{-1} s_2 + u_2 u_1 u_2$
\end{lemma}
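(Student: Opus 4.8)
Write $\omega = s_2^{-1}s_1 s_2^{-1}$ and $\omega' = s_2 s_1^{-1}s_2$; under the identification $A_2 = u_1$ the two assertions read $\omega u_1 \subseteq u_1\omega + u_2 u_1 u_2$ and $\omega' u_1 \subseteq u_1 \omega' + u_2 u_1 u_2$. The strategy is the same as for Lemma \ref{lemquasicom}, but with one essential difference: there the error term $u_i u_{i+1} u_i$ is stable under right multiplication by $u_i$, so it is enough to test a single algebra generator; here the error term $u_2 u_1 u_2$ is \emph{not} stable under right multiplication by $u_1$, so I cannot propagate from one generator. Instead I will use that $u_1$ is spanned \emph{as an $R$-module} by $1$, $s_1$, $s_1^{-1}$ (because $s_1^2 = a s_1 + b + c s_1^{-1}$), and that both right-hand sides are $R$-submodules; thus it suffices to show $\omega x \in u_1\omega + u_2 u_1 u_2$ for $x \in \{1, s_1, s_1^{-1}\}$. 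Moreover a direct check gives $\omega' \omega = 1$, i.e. $\omega' = \omega^{-1}$, and the automorphism $\Phi$ of Section \ref{sectprelim} sends $\omega$ to $\omega'$ while fixing $u_1$ and $u_2 u_1 u_2$; hence the second inclusion is the image of the first under $\Phi$, and I only need to treat $\omega$.

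The case $x = 1$ is immediate since $\omega \in u_1 \omega$. For $x = s_1^{-1}$ I will simplify both $\omega s_1^{-1}$ and $s_1^{-1}\omega$ using Lemma \ref{lemsplusmoins}: the braid moves collapse $\omega s_1^{-1}$ to $s_2^{-2} s_1^{-1} s_2$ and $s_1^{-1}\omega$ to $s_2 s_1^{-1} s_2^{-2}$. Replacing $s_2^{-2}$ by $c^{-1}s_2 - a c^{-1} - b c^{-1}s_2^{-1}$ via the cubic relation, each of the two expressions becomes $c^{-1}\omega'$ plus terms visibly of the shape $s_2^{\bullet}s_1^{\bullet}s_2^{\bullet}$, that is, plus elements of $u_2 u_1 u_2$. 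Hence $\omega s_1^{-1}$ and $s_1^{-1}\omega$ are congruent modulo $u_2 u_1 u_2$, which yields $\omega s_1^{-1}\in s_1^{-1}\omega + u_2 u_1 u_2 \subseteq u_1\omega + u_2 u_1 u_2$.

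The case $x = s_1$ is the main obstacle. Here no cancellation occurs, and since every relation of Lemma \ref{lemsplusmoins} is a braid identity that leaves $\omega s_1$ unchanged, any attempt to reduce $\omega s_1$ using those relations alone simply runs in a circle; genuine progress must come from the cubic relation. The device I will use is Lemma \ref{leminverse}, read through the automorphism $s_1 \leftrightarrow s_2$, which gives $s_1 s_2^{-1}s_1 \in c\, s_1^{-1}s_2 s_1^{-1}s_2^{-1} + u_2 u_1 u_2$. Left-multiplying by $s_2^{-1}$ (which preserves $u_2 u_1 u_2$) and rewriting $s_2^{-1}s_1^{-1}s_2 = s_1 s_2^{-1}s_1^{-1}$ by Lemma \ref{lemsplusmoins}, I reach $c\, s_1 s_2^{-1}s_1^{-2}s_2^{-1}$ modulo $u_2 u_1 u_2$. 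Expanding $s_1^{-2} = c^{-1}s_1 - a c^{-1} - b c^{-1}s_1^{-1}$ then splits this into the single term $s_1 s_2^{-1}s_1 s_2^{-1} = s_1\omega$ together with two tails which, after one further application of Lemma \ref{lemsplusmoins}, lie in $u_2 u_1 u_2$. This proves $\omega s_1 \in s_1\omega + u_2 u_1 u_2 \subseteq u_1\omega + u_2 u_1 u_2$, which completes the three generators and hence the first inclusion; the second then follows by applying $\Phi$ as above.
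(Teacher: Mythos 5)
Your proof is correct, and it takes a genuinely different route from the paper's, whose entire proof is the sentence ``straightforward consequences of Lemma \ref{lemquasicom}''. The difference is substantive: Lemma \ref{lemquasicom} (taken at $i=1$) gives the commutation statement with error term $u_1u_2u_1$, not the $u_2u_1u_2$ printed in this lemma, and the two modules are incomparable --- for instance $s_1s_2^{-1}s_1\in u_1u_2u_1$ does not lie in $A_2\,s_2^{-1}s_1s_2^{-1}+u_2u_1u_2$ (specialize $a=b=0$, $c=1$, so $A_3$ becomes the group algebra of $G_4$; there $s_1s_2^{-1}s_1=zs_2$ with $z$ the central involution, and this group element is distinct from every element spanning the right-hand side). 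So the paper's citation really proves the variant with error $u_1u_2u_1$, which is moreover the variant needed to deduce (3) and (4) of Theorem \ref{theodecA3} from (1) and (2), since there the error must fall inside $A_2+A_2s_2A_2+A_2s_2^{-1}A_2=u_1u_2u_1$; your argument instead proves the statement exactly as printed. Your opening diagnosis identifies precisely why the paper's scheme cannot be transplanted: $u_2u_1u_2$ is not stable under right multiplication by $u_1$, so one cannot test only the algebra generator $s_1^{-1}$ and propagate, and your replacement --- test the $R$-module generators $1$, $s_1$, $s_1^{-1}$, handle $s_1^{-1}$ via Lemma \ref{lemsplusmoins} and the cubic relation, handle $s_1$ via Lemma \ref{leminverse} transported by the flip $s_1\leftrightarrow s_2$, then apply $\Phi$ for the $s_2s_1^{-1}s_2$ half --- is sound. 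I checked the identities $s_2^{-1}s_1s_2^{-1}s_1^{-1}=s_2^{-2}s_1^{-1}s_2$, $s_1^{-1}s_2^{-1}s_1s_2^{-1}=s_2s_1^{-1}s_2^{-2}$, $s_1s_2^{-1}s_1\in c\,s_1^{-1}s_2s_1^{-1}s_2^{-1}+u_2u_1u_2$, and the final expansion of $s_1^{-2}$; none of them appeals to results proved later in the paper, so there is no circularity.

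One improvement worth recording: every tail produced by your computation lies already in $u_1u_2+u_2u_1$, so you have in fact established the sharper inclusion $s_2^{-1}s_1s_2^{-1}A_2\subset A_2\,s_2^{-1}s_1s_2^{-1}+u_1u_2+u_2u_1$ (and its $\Phi$-image). Since $u_1u_2+u_2u_1$ is contained in both $u_1u_2u_1$ and $u_2u_1u_2$, stating your conclusion in this form simultaneously yields the lemma as printed and the $u_1u_2u_1$ variant that the remainder of Section \ref{sectA3} actually uses, at no extra cost.
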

\begin{proof}
Straightforward consequences of lemma \ref{lemquasicom}
\end{proof}

\begin{theorem} {\ } \label{theodecA3}
\begin{enumerate}
\item $A_3 = A_2 + A_2 s_2 A_2 + A_2 s_2^{-1} A_2 + A_2 s_2^{-1} s_1 s_2^{-1} A_2$
\item $A_3 = A_2 + A_2 s_2 A_2 + A_2 s_2^{-1} A_2 + A_2 s_2 s_1^{-1} s_2 A_2$
\item $A_3 = A_2 + A_2 s_2 A_2 + A_2 s_2^{-1} A_2 + A_2 s_2 s_1^{-1} s_2 = A_2 + A_2 s_2 A_2 + A_2 s_2^{-1} A_2 + s_2 s_1^{-1} s_2A_2$
\item $A_3 = A_2 + A_2 s_2 A_2 + A_2 s_2^{-1} A_2 + A_2 s_2^{-1} s_1 s_2^{-1} = A_2 + A_2 s_2 A_2 + A_2 s_2^{-1} A_2 + s_2^{-1} s_1 s_2^{-1} A_2$
\end{enumerate}
\end{theorem}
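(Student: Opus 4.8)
The plan is to establish the two-sided equality (1) first, to deduce (2) from it by the automorphism $\Phi$, and then to obtain the sharper one-sided equalities in (3)--(4) by absorbing the trailing $A_2$-factor, the four of them being permuted by $\Phi$ and the skew-automorphism $\Psi$ (both of which fix $A_2=u_1$, preserve the bimodule $D:=A_2+A_2 s_2 A_2+A_2 s_2^{-1} A_2=u_1 u_2 u_1$, and interchange $s_2 s_1^{-1} s_2$ with $s_2^{-1} s_1 s_2^{-1}$). Write $M_1$ for the right-hand side of (1), so $M_1=D+A_2 s_2^{-1} s_1 s_2^{-1} A_2$; it is an $A_2$-sub-bimodule of $A_3$ containing $1$. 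Since $A_3$ is generated as an algebra by $A_2$ and $s_2^{\pm 1}$, once I show $M_1 s_2\subseteq M_1$ and $M_1 s_2^{-1}\subseteq M_1$, the bimodule $M_1$ will be a right ideal containing $1$, hence equal to $A_3$.

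The computation hinges on the intermediate fact $u_2 u_1 u_2\subseteq M_1$. Using $u_2=R+R s_2+R s_2^{-1}$ and $A_2=R+R s_1+R s_1^{-1}$, the only triple products $s_2^{\pm} s_1^{\pm} s_2^{\pm}$ that do not already lie in $D$ by Lemma \ref{lemspmgross} (via the braid moves of Lemma \ref{lemsplusmoins}) are $s_2 s_1^{-1} s_2$ and $s_2^{-1} s_1 s_2^{-1}$; the second is in $M_1$ by definition, and Lemma \ref{leminverse} gives $s_2^{-1} s_1 s_2^{-1}\in c^{-1}(s_2 s_1^{-1} s_2)s_1+D$, whence $(s_2 s_1^{-1} s_2)s_1\in M_1$ and therefore $s_2 s_1^{-1} s_2\in M_1$ by right $A_2$-stability. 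Granting $u_2 u_1 u_2\subseteq M_1$, I would check stability summand by summand: on $A_2$, $A_2 s_2 A_2$ and $A_2 s_2^{-1} A_2$ the relevant products $s_2^{\epsilon}\beta s_2^{\pm}$ with $\beta\in\{1,s_1,s_1^{-1}\}$ fall into $u_2 u_1 u_2\subseteq M_1$ (using $s_2^{\pm 2}\in u_2$ for the scalar $\beta$), while on the last summand the crucial input is the lemma immediately preceding the theorem, $s_2^{-1} s_1 s_2^{-1} A_2\subseteq A_2 s_2^{-1} s_1 s_2^{-1}+u_2 u_1 u_2$: after right multiplication by $s_2^{\pm 1}$ one lands in $A_2 s_2^{-1} s_1 s_2^{-1} s_2^{\pm 1}+u_2 u_1 u_2 s_2^{\pm 1}\subseteq M_1$. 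This proves (1), and (2) is then $\Phi$ applied to (1).

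For the refinements I would isolate the single statement $A_3=D+s_2 s_1^{-1} s_2 A_2$ (the second equality of (3)) and deduce the remaining three from it by $\Phi$, $\Psi$ and their composite. Setting $N':=D+s_2 s_1^{-1} s_2 A_2$, a right $A_2$-module, the point is that the trailing factor on the left is redundant: Lemma \ref{lemquasicom}(2), whose content is that $s_2 s_1^{-1} s_2$ commutes with $A_2$ up to an element of $u_1 u_2 u_1=D$, gives $A_2 s_2 s_1^{-1} s_2\subseteq s_2 s_1^{-1} s_2 A_2+D\subseteq N'$, hence $A_2 s_2 s_1^{-1} s_2 A_2\subseteq N'$; combining this with (2) forces $N'=A_3$. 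Applying $\Phi$, $\Psi$ and their composite to this equality then yields $A_3=D+s_2^{-1} s_1 s_2^{-1} A_2$, $A_3=D+A_2 s_2^{-1} s_1 s_2^{-1}$ and $A_3=D+A_2 s_2 s_1^{-1} s_2$, which are the other equalities of (4) and (3).

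The main obstacle is the stability check $M_1 s_2^{\pm 1}\subseteq M_1$ on the summand $A_2 s_2^{-1} s_1 s_2^{-1} A_2$: naively transporting an $s_2^{\pm 1}$ across $s_2^{-1} s_1 s_2^{-1}$ lengthens the word, and it is exactly the quasi-commutation lemmas (the lemma preceding the theorem, and Lemma \ref{lemquasicom}) that keep the reduction inside $M_1$. A second, more subtle point is that only one of the two one-sided modules naturally accommodates the conjugate generator through Lemma \ref{leminverse}, which is why I prove precisely $A_3=D+s_2 s_1^{-1} s_2 A_2$ by hand and let the symmetries $\Phi,\Psi$ supply the other three at no further cost.
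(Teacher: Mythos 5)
Your proposal is correct and takes essentially the same route as the paper's proof: the same generating submodule, the same reduction to $u_2u_1u_2$ via Lemmas \ref{lemspmgross} and \ref{leminverse}, deduction of (2) from (1) by $\Phi$, and the quasi-commutation Lemma \ref{lemquasicom} to absorb the trailing $A_2$-factor in (3) and (4). The only cosmetic differences are that you check stability under right rather than left multiplication by $s_2^{\pm 1}$, and that you obtain the four one-sided equalities from a single one via the symmetries $\Phi$, $\Psi$ and $\Phi\circ\Psi$ instead of citing the lemma preceding the theorem for (3) and (4) directly.
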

\begin{proof}
Up to applying $\Phi$,
 (2) is a consequence of (1). Then (3) and (4) are consequences
of (1) and (2) by the above lemma. We now prove (1), and let $U$ denote its RHS. It is clearly a $A_2$-submodule of $A_3$ which contains $1$,
so we only need to prove $s_2 U \subset U$. Note that, clearly, $u_1 u_2 u_1 \subset U$.
We first prove $u_2 u_1 u_2 \subset U$. Since we know $u_1 u_2 \subset U$, $u_2 u_1 \subset U$, 
this means that $w= s_2^{\alpha} s_1^{\beta} s_2^{\gamma} \in U$ for all $\alpha,\beta,\gamma \in \{ -1 , 1 \}$. If $\alpha$
and $\beta$ have opposite signs this element belongs to $u_1 u_2 u_1 \subset U$ by lemma \ref{lemsplusmoins},
so we can assume $\alpha = \beta$. If $\alpha = \beta = \gamma$, then the braid relations imply $w \in u_2 u_1 u_2 \subset U$.
Thus only remains $w \in \{ s_2^{-1} s_1 s_2{-1},s_2 s_1^{-1} s_2 \}$. Clearly  $s_2^{-1} s_1 s_2^{-1} \in U$,
and $s_2 s_1^{-1} s_2 \in c(s_2^{-1} s_1 s_2^{-1})s_1^{-1} + u_1 u_2 u_1 \subset u_1 U u_1 = U$
by lemma \ref{leminverse}. We thus proved $u_2 u_1 u_2 \subset U$. We now prove $s_2 U \subset U$.
Clearly $s_2(A_2 + A_2 s_2 A_2 + A_2 s_2^{-1} A_2) \subset u_2 u_1 u_2 u_1 \subset U u_1 \subset U$,
so we need to prove $s_2 u_1  s_2^{-1} s_1 s_2^{-1} \subset U$. But $s_2 u_1  s_2^{-1} \subset u_1 u_2 u_1$ by
 lemma \ref{lemsplusmoins} hence $s_2 u_1  s_2^{-1} s_1 s_2^{-1} \subset u_1 u_2 u_1  u_2 \subset u_1 U \subset U$.  This proves
 the claim.

\end{proof}

\begin{corollary} We have  $A_3 = u_1 u_2 u_1 u_2 = u_2 u_1 u_2 u_1$. Moreover,
$$
\begin{array}{lclcl}
A_3 
&=& u_1 u_2 u_1 + u_2 u_1 u_2 + R s_1^{-1} s_2 s_1^{-1} s_2
&=& u_1 u_2 u_1 + u_2 u_1 u_2 + R s_2^{-1} s_1 s_2^{-1} s_1 \\
&=& u_1 u_2 u_1 + u_2 u_1 u_2 + R s_1 s_2^{-1} s_1 s_2
&=& u_1 u_2 u_1 + u_2 u_1 u_2 + R s_2 s_1^{-1} s_2 s_1
\end{array}
$$
\end{corollary}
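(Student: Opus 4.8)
The plan is to deduce the corollary directly from Theorem~\ref{theodecA3}, exploiting the quasi-commutation lemmas that let me slide the $A_2$-factors past the special words $s_2^{-1}s_1s_2^{-1}$ and $s_2s_1^{-1}s_2$. First I would establish the two identities $A_3 = u_1u_2u_1u_2 = u_2u_1u_2u_1$. Recall from the proof of Theorem~\ref{theodecA3} that we already know $u_2u_1u_2 \subset U$, and by the symmetry $s_i \mapsto s_{3-i}$ also $u_1u_2u_1$ is controlled. The key point is that part~(3) of the theorem gives $A_3 = A_2 + A_2 s_2 A_2 + A_2 s_2^{-1}A_2 + s_2 s_1^{-1}s_2 A_2$, where the last term has been collapsed to a \emph{one-sided} module. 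Since $A_2 = u_1$ and $s_2^{\pm} A_2 \subset u_2 u_1$, the first three summands already lie in $u_2 u_1 u_2 u_1$; for the last, $s_2 s_1^{-1} s_2 A_2 = s_2 s_1^{-1} s_2 u_1 \subset u_2 u_1 u_2 u_1$ trivially. This gives $A_3 \subset u_2 u_1 u_2 u_1$, and the reverse inclusion is clear, so $A_3 = u_2 u_1 u_2 u_1$; applying the automorphism $s_i \mapsto s_{3-i}$ yields $A_3 = u_1 u_2 u_1 u_2$ as well.

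For the four displayed refinements, the strategy is to reduce each $u_2u_1u_2u_1$-type product to $u_1u_2u_1 + u_2u_1u_2$ plus a single rank-one correction. I would start from $A_3 = u_2u_1u_2u_1$ and analyze a monomial $s_2^{\alpha}s_1^{\beta}s_2^{\gamma}s_1^{\delta}$. By Lemma~\ref{lemspmgross}, any such word in which not all the signs on the $s_2$'s and the inner $s_1$ agree reduces into $u_1u_2u_1$ or $u_2u_1u_2$; after multiplying on the right by the remaining $s_1^{\delta} \in u_1$ and using $u_1u_2u_1 \cdot u_1 = u_1u_2u_1$ together with the fact (from the theorem's proof) that $u_2u_1u_2 \cdot u_1 \subset A_3 = u_1u_2u_1 + u_2u_1u_2 + (\text{correction})$, the only genuinely new contributions come from the fully-aligned words $s_2^{-1}s_1 s_2^{-1}s_1$ and $s_2 s_1^{-1}s_2 s_1$ (and their mirror images under $s_1 \leftrightarrow s_2$). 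These are exactly the four rank-one generators appearing in the statement. Lemma~\ref{leminverse} supplies the link $s_2^{-1}s_1s_2^{-1} \in c^{-1}(s_2s_1^{-1}s_2)s_1 + u_1u_2u_1$, which both shows these corrections are interchangeable up to the bulk $u_1u_2u_1 + u_2u_1u_2$ and lets me convert between the $s_2$-leading and $s_1$-leading forms; the two remaining displayed expressions then follow by applying $s_i \mapsto s_{3-i}$.

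The main obstacle I anticipate is bookkeeping the absorption of the extra $s_1^{\delta}$ factor: showing that $\big(u_1u_2u_1 + u_2u_1u_2\big)\cdot u_1$ does not escape $u_1u_2u_1 + u_2u_1u_2 + Rw$ for the relevant correction word $w$. Concretely, one must check that multiplying $s_2^{\varepsilon}s_1 s_2^{\eta}$ on the right by $s_1^{-1}$ (the nontrivial generator of $u_1$) and reducing via the cubic relation $s_1^2 = as_1 + b + cs_1^{-1}$ keeps everything inside the claimed span, and in particular that the $s_2^{-1}s_1s_2^{-1}s_1$ term is the unique irreducible residue. This is precisely where Lemma~\ref{leminverse} does the heavy lifting, turning a potential $u_2u_1u_2u_1$-overflow back into a single $R$-multiple of one fixed length-four word modulo the two bulk triple-products. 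Once that single reduction is verified, all four equalities in the corollary are immediate consequences together with the symmetry automorphism, and no further computation beyond routine rewriting is required.
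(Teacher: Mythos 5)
Your first paragraph is correct: deducing $A_3=u_2u_1u_2u_1$ from the one-sided decomposition in Theorem~\ref{theodecA3}(3) and then applying $s_i\mapsto s_{3-i}$ is exactly the right argument, and every inclusion you use there holds. The gap is in the second part, at the step where you declare that ``the only genuinely new contributions come from the fully-aligned words $s_2^{-1}s_1s_2^{-1}s_1$ and $s_2s_1^{-1}s_2s_1$.'' The second of these is not new at all: by Lemma~\ref{lemsplusmoins}, $s_1^{-1}s_2s_1=s_2s_1s_2^{-1}$, hence $s_2s_1^{-1}s_2s_1=s_2(s_1^{-1}s_2s_1)=s_2^2s_1s_2^{-1}\in u_2u_1u_2$, and symmetrically $s_1s_2^{-1}s_1s_2=s_1(s_2^{-1}s_1s_2)=s_1^2s_2s_1^{-1}\in u_1u_2u_1$. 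Among the monomials $s_2^{\alpha}s_1^{-\alpha}s_2^{\alpha}s_1^{\delta}$ that Lemma~\ref{lemspmgross} does not reduce, the ones that genuinely survive are the sign-alternating words $s_2^{-1}s_1s_2^{-1}s_1$ and $s_2s_1^{-1}s_2s_1^{-1}$ (whereas $s_2^{-1}s_1s_2^{-1}s_1^{-1}=s_2^{-2}s_1^{-1}s_2$ and $s_2s_1^{-1}s_2s_1$ reduce into $u_2u_1u_2$). Accordingly, Lemma~\ref{leminverse} interchanges $s_2^{-1}s_1s_2^{-1}s_1$ with $s_2s_1^{-1}s_2s_1^{-1}$ modulo $u_1u_2u_1+u_2u_1u_2$, not with $s_2s_1^{-1}s_2s_1$. (A smaller point: your appeal to ``$u_2u_1u_2\cdot u_1\subset A_3=u_1u_2u_1+u_2u_1u_2+\mbox{correction}$'' is circular as written, though also unnecessary, since after Lemma~\ref{lemspmgross} only the six words $s_2^{\pm}s_1^{\mp}s_2^{\pm}s_1^{\delta}$ require individual treatment.)

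This is not a repairable bookkeeping slip inside your scheme: with the words $s_1s_2^{-1}s_1s_2$ and $s_2s_1^{-1}s_2s_1$, the right-hand sides of the last two displayed equalities are contained in $u_1u_2u_1+u_2u_1u_2$, so those equalities would assert $A_3=u_1u_2u_1+u_2u_1u_2$, which is false. Indeed, specialize the cubic relation to $s_i^3=1$, so that $A_3$ surjects onto $\Z G_4$; then $u_1u_2u_1+u_2u_1u_2$ lands in the $\Z$-span of the group elements of the form $s_1^as_2^bs_1^c$ or $s_2^as_1^bs_2^c$, and the central element $z=(s_1s_2s_1)^2=s_2s_1^{-1}s_2s_1^{-1}$ of $G_4$ is of neither form (otherwise $s_2^{\pm 1}$, respectively $s_1^{\pm 1}$, would lie in the abelian subgroup generated by $z$ and the other generator, forcing $G_4$ to be abelian), so that span is a proper submodule of $\Z G_4$. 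What your analysis actually uncovers is that the corollary as printed contains a typo: the last two correction words should read $s_1s_2^{-1}s_1s_2^{-1}$ and $s_2s_1^{-1}s_2s_1^{-1}$, i.e.\ exactly the alternating words your reduction produces when carried out with the signs tracked correctly. With that correction your plan works essentially verbatim: the one-sided forms of Theorem~\ref{theodecA3}(3),(4) give $A_3=u_1u_2u_1+s_2s_1^{-1}s_2\,u_1=u_1u_2u_1+s_2^{-1}s_1s_2^{-1}u_1$ (and their left-handed analogues), expanding $u_1=R+Rs_1+Rs_1^{-1}$ leaves exactly one irreducible word in each case, and Lemma~\ref{leminverse} together with $s_i\mapsto s_{3-i}$ and the skew-automorphism $\Psi$ identifies the four resulting decompositions.
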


\begin{corollary} Let $n \geq 3$. For all $1 \leq i,j \leq n-1$, we have in $A_n$ the equality $u_i u_j u_i u_j = u_j u_i u_j u_i$.  
\end{corollary}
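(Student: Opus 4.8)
The plan is to prove the equality by a case analysis on $|i-j|$, reducing the only substantial case to the preceding corollary, which asserts $A_3 = u_1 u_2 u_1 u_2 = u_2 u_1 u_2 u_1$. I first note that the statement is symmetric under exchanging $i$ and $j$: swapping them interchanges the two sides $u_i u_j u_i u_j$ and $u_j u_i u_j u_i$, so it suffices to treat the cases $i = j$, $|i-j|\geq 2$, and $j = i+1$.

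The two easy cases I would dispose of directly. If $i = j$, then since $u_i$ is an $R$-subalgebra containing $1$ we have $u_i u_i = u_i$, so both products collapse to $u_i$. If $|i-j| \geq 2$, the braid relations give $s_i s_j = s_j s_i$, hence the subalgebras $u_i$ and $u_j$ commute elementwise; therefore $u_i u_j u_i u_j = u_i u_i u_j u_j = u_i u_j$ and likewise $u_j u_i u_j u_i = u_j u_i = u_i u_j$, so the two sides agree.

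The remaining case $j = i+1$ is where the content lies. Here I would use that the generators $s_i, s_{i+1}$ of $A_n$ satisfy the braid relation $s_i s_{i+1} s_i = s_{i+1} s_i s_{i+1}$ together with the cubic relations, so the assignment $s_1 \mapsto s_i$, $s_2 \mapsto s_{i+1}$ extends to an $R$-algebra homomorphism $\phi : A_3 \to A_n$ whose image is the subalgebra generated by $s_i$ and $s_{i+1}$. Under $\phi$ the subalgebra $u_1$ (resp.\ $u_2$) of $A_3$ is carried onto $u_i$ (resp.\ $u_{i+1}$), since each is generated by the corresponding generator. Consequently $\phi$ sends the set-theoretic product $u_1 u_2 u_1 u_2$ onto $u_i u_{i+1} u_i u_{i+1}$ and $u_2 u_1 u_2 u_1$ onto $u_{i+1} u_i u_{i+1} u_i$. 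Applying $\phi$ to the identity $u_1 u_2 u_1 u_2 = u_2 u_1 u_2 u_1$ of the preceding corollary then yields exactly $u_i u_{i+1} u_i u_{i+1} = u_{i+1} u_i u_{i+1} u_i$.

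The argument is essentially bookkeeping, and I do not expect a genuine obstacle: the entire nontrivial content sits in the preceding corollary's identity in $A_3$. The one point that deserves a moment's care is verifying that $\phi$ is well defined and that it maps $u_1, u_2$ \emph{onto} $u_i, u_{i+1}$, so that the products of submodules are transported correctly rather than merely mapped into one another; this is immediate once one observes that $\phi$ restricted to $u_1$ (resp.\ $u_2$) is surjective onto $u_i$ (resp.\ $u_{i+1}$).
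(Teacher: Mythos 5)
Your proof is correct and follows exactly the route the paper intends: the case $|i-j|=1$ is obtained by transporting the identity $u_1u_2u_1u_2=u_2u_1u_2u_1$ of the preceding corollary through the natural morphism $A_3\to A_n$, $s_1\mapsto s_i$, $s_2\mapsto s_{i+1}$ (the same kind of ``shift'' morphism the paper uses, e.g.\ for $sh(A_3)$), while the cases $i=j$ and $|i-j|\geq 2$ are trivial. The paper leaves this argument implicit, so there is nothing to add.
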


This theorem implies that $A_3$ is a free $R$-module of finite rank, consequently that $A_3 \subset A_3 \otimes_R K \simeq Mat_3(K) \oplus Mat_2(K)^3 \oplus K^3$
where $K$ is a sufficiently large extension of the quotient field of $R$, and the isomorphism is explicitely given by the matrix models of the irreducible
representations of $A_3$. From this it is simply a linear algebra matter to check equalities in $A_3$,
or to express a given element in a given basis.
We used this approach to get the following identities in $A_3$. 

\begin{lemma} \label{lemdecomp212121}
$$
\begin{array}{lcl}
s_2^{-1} s_1 s_2^{-1} s_1 s_2^{-1} s_1 &=&  
 \frac{-(c+ab)a}{c^2}
 s_1 + 
 
 \frac{a}{c} s_1 s_2 
 + \frac{a}{c} s_1^{-1} s_2 s_1
  
  \frac{-ab}{c}s_2^{-1} s_1 s_2^{-1}
  
 + 
 
 \frac{-ab}{c} s_1^{-1}
+ \frac{ab}{c^2}
s_2 s_1 \\ & & + s_1^{-1} s_2 s_1^{-1} 

- \frac{b}{c}
s_2^{-1} s_1 s_2^{-1} s_1 
- \frac{ab^2}{c^2} s_2^{-1} s_1
+ \frac{b}{c}
s_1^{-1} s_2 \\ & & 
-\frac{a}{c}s_1 s_2^{-1} s_1
 + \frac{b}{c} s_2 s_1^{-1}
 - \frac{b^2}{c} s_2^{-1} s_1^{-1}
 - b s_1^{-1} s_2^{-1} s_1^{-1}
 \\
\end{array}
$$
\end{lemma}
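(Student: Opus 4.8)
The plan is to verify the identity not by word reduction inside $A_3$ but by exploiting the explicit semisimple model announced immediately before the statement. First I would record that, by Theorem \ref{theodecA3} and its corollary, $A_3$ is a free $R$-module of finite rank $24$. Since $R = \Z[a,b,c,c^{-1}]$ is an integral domain with fraction field $F$, freeness makes $A_3$ torsion-free, so the natural map $A_3 \to A_3 \otimes_R K$ is injective for any field extension $K$ of $F$; taking $K$ a sufficiently large (e.g. algebraically closed) extension, one has the split semisimple decomposition $A_3 \otimes_R K \simeq Mat_3(K) \oplus Mat_2(K)^3 \oplus K^3$, the block sizes being the degrees $3,2,2,2,1,1,1$ of the irreducible representations of $G_4$ (note $3^2 + 3\cdot 2^2 + 3 = 24$). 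The upshot is that the product of the irreducible representations $\rho = \bigoplus_i \rho_i : A_3 \to \bigoplus_i Mat_{n_i}(K)$ is injective, so an equality in $A_3$ holds if and only if it holds after applying each $\rho_i$.

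Second, I would fix explicit matrix models $\rho_i(s_1), \rho_i(s_2)$ for each irreducible representation, for instance by specializing the known generic models for the Hecke algebra of $G_4$; concretely one only needs matrices satisfying the braid relation $\rho_i(s_1)\rho_i(s_2)\rho_i(s_1) = \rho_i(s_2)\rho_i(s_1)\rho_i(s_2)$ together with the cubic relation $\rho_i(s_j)^3 = a\,\rho_i(s_j)^2 + b\,\rho_i(s_j) + c$, and whose direct sum realizes the isomorphism above (this last point is exactly what makes $\rho$ faithful). With these in hand the verification becomes pure linear algebra over $K$: compute the matrix $\bigl(\rho_i(s_2)^{-1}\rho_i(s_1)\bigr)^3$ in each block, compute the matrices of the fourteen monomials appearing on the right-hand side, and check that the claimed $R$-linear combination reproduces $\rho_i\bigl((s_2^{-1}s_1)^3\bigr)$ entrywise in every block. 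Equivalently, since $\rho$ is an isomorphism onto $\bigoplus_i Mat_{n_i}(K)$, one may simply solve the linear system expressing $(s_2^{-1}s_1)^3$ in the chosen spanning family and read off the coefficients, which then coincide with those stated.

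Alternatively, and avoiding irreducible models, one could run the same computation in the regular representation: the reduction rules packaged in Lemma \ref{lemsplusmoins}, Lemma \ref{leminverse} and Theorem \ref{theodecA3} let one write the left multiplications by $s_1$ and $s_2$ on the rank-$24$ basis as explicit $24\times 24$ matrices over $R$, whence $(s_2^{-1}s_1)^3$ applied to the basis vector $1$ yields the desired coordinates directly. Either route reduces the lemma to a finite symbolic computation, and the two routes cross-check one another.

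The hard part is not conceptual but computational and book-keeping. One must pin down correct explicit matrix models (or the correct normal-form reduction) and carry out exact arithmetic over the function field $K$: the degree-six word $(s_2^{-1}s_1)^3$ produces rational expressions in $a,b,c$ that must match the stated coefficients such as $-ab^2/c^2$ and $ab/c^2$ exactly, with denominators only powers of $c$ (reflecting that $c$ is the inverted parameter). The $3$-dimensional block is where the expressions are heaviest and where sign or denominator slips are most likely, so in practice this is a computer-algebra verification; this also explains why the result is presented as an identity obtained by such means rather than by a hand reduction.
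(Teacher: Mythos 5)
Your proposal is correct and is essentially the paper's own argument: the paper likewise deduces from Theorem \ref{theodecA3} (together with the freeness argument of \cite{BMR}) that $A_3$ embeds into $A_3 \otimes_R K \simeq Mat_3(K) \oplus Mat_2(K)^3 \oplus K^3$ via explicit matrix models of the irreducible representations, and then verifies this identity as ``simply a linear algebra matter'' by computer-assisted computation. The only cosmetic difference is your suggested cross-check via the regular representation, which the paper does not mention.
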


\begin{lemma} \label{lemdecomp1212}
$$
\begin{array}{lcl}
s_1 s_2^{-1} s_1 s_2^{-1}  &=&  s_2^{-1} s_1 s_2^{-1} s_1 + 
\frac{a}{c} s_1 s_2 -
\frac{a}{c} s_2 s_1 
- \frac{ab}{c} s_1 s_2^{-1}
+ \frac{ab}{c} s_2^{-1} s_1 
+b s_2^{-1} s_1^{-1} 
-b s_1^{-1} s_2^{-1} \\
s_2 s_1^{-1} s_2 s_1^{-1} &= & s_2^{-1} s_1 s_2^{-1} s_1  
 +a (s_1^{-1} s_2 s_1^{-1} - s_2^{-1} s_1 s_2^{-1})
- \frac{ab}{c} s_1 s_2^{-1} 
 + \frac{ab}{c} s_1^{-1} s_2
 + \frac{b}{c} s_1 s_2^{-1} s_1 \\ & & 
 - \frac{b}{c} s_2 s_1^{-1} s_2 \\
s_1^{-1} s_2 s_1^{-1} s_2 &=& s_2^{-1} s_1 s_2^{-1} s_1 
+ \frac{a}{c} s_1 s_2  
- a s_2^{-1} s_1 s_2^{-1}
- \frac{a}{c} s_2 s_1
+as_1^{-1} s_2 s_1^{-1}  \\ & & 
- \frac{ab}{c} s_1 s_2^{-1} 
+ \frac{b}{c} s_1 s_2^{-1} s_1
+ \frac{ab}{c} s_2 s_1^{-1} 
-\frac{b}{c}  s_2 s_1^{-1} s_2 
 +b s_2^{-1} s_1^{-1} 
 - b s_1^{-1} s_2^{-1}
 \end{array} 
$$ 
\end{lemma}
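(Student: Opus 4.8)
The plan is to read off the three identities from a faithful representation of $A_3$, turning each into a finite matrix computation. By Theorem~\ref{theodecA3} the algebra $A_3$ is a finitely generated $R$-module, in fact free (of rank $24$), so the canonical map $A_3 \to A_3 \otimes_R K$ is injective for every field extension $K$ of the fraction field of $R$. Taking $K$ large enough that $A_3 \otimes_R K \simeq \Mat_3(K) \oplus \Mat_2(K)^3 \oplus K^3$ is split semisimple, the sum $\rho = \bigoplus_i \rho_i$ of the seven irreducible representations is an algebra isomorphism onto this product, hence injective. Therefore an equality $x = y$ between two explicit $R$-combinations of braids holds in $A_3$ as soon as $\rho_i(x) = \rho_i(y)$ for each $i$; since both sides of each displayed identity are such combinations, it suffices to verify them model by model.

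These seven models are the standard ones for the cubic Hecke algebra. Writing $u_0, u_1, u_2$ for the roots of $X^3 - aX^2 - bX - c$ in $K$, the three one-dimensional representations send every $s_i$ to a single root $u_j$; the three two-dimensional representations realise $s_1$ and $s_2$ as $2 \times 2$ matrices sharing a fixed pair of eigenvalues $\{u_j, u_k\}$, with off-diagonal entries determined by the braid relation $s_1 s_2 s_1 = s_2 s_1 s_2$; and the three-dimensional representation carries all three eigenvalues at once. Once these matrices are written down, each claimed identity becomes an equality of matrices whose entries are rational functions in $a,b,c$ with denominators a power of $c$; clearing $c$ reduces the check to polynomial identities over $R$, which are routine to confirm (and were confirmed by computer algebra). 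On the one-dimensional factors every monomial collapses to a power of a single $u_j$, so those factors are dispatched immediately.

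The genuine work, and the only real obstacle, is producing the two- and three-dimensional matrices explicitly and then pushing through the resulting polynomial comparisons; a symbolic computation is essentially unavoidable here. I would cut down the labour using the symmetries already at hand. The $R$-algebra automorphism $s_1 \leftrightarrow s_2$ carries the first displayed identity to a relation between $s_2 s_1^{-1} s_2 s_1^{-1}$ and $s_1^{-1} s_2 s_1^{-1} s_2$, and the $\Z$-algebra automorphism $\Phi$ (with $s_i \mapsto s_i^{-1}$, $a \mapsto -bc^{-1}$, $b \mapsto -ac^{-1}$, $c \mapsto c^{-1}$) does the same, so that the third identity follows from the first two by transport of structure; only the first two then need an independent verification. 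Finally, the qualitative shape of the first identity can also be obtained purely algebraically: applying Lemma~\ref{lemquasicom}(1) with $x = s_1$ gives $s_1 s_2^{-1} s_1 s_2^{-1} \equiv s_2^{-1} s_1 s_2^{-1} s_1 \pmod{u_1 u_2 u_1}$, and since every correction term on the right-hand side of that identity lies in $u_1 u_2 + u_2 u_1 \subset u_1 u_2 u_1$, what remains is only to pin down the exact coefficients, which Lemmas~\ref{lemsplusmoins} and~\ref{leminverse} together with $s_i^{-2} = c^{-1} s_i - a c^{-1} - b c^{-1} s_i^{-1}$ render explicit. The representation-theoretic route is the more reliable of the two for certifying the precise rational coefficients, and is the one I would follow.
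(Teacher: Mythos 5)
Your proposal is correct and follows essentially the same route as the paper: the text preceding Lemma~\ref{lemdecomp1212} states that, since $A_3$ is free of finite rank over $R$, it embeds into $A_3 \otimes_R K \simeq \Mat_3(K) \oplus \Mat_2(K)^3 \oplus K^3$, and the identities were obtained precisely by checking them in the explicit matrix models of the irreducible representations, i.e.\ by (computer-assisted) linear algebra. Your additional observations — using the automorphisms $s_1 \leftrightarrow s_2$ and $\Phi$ to deduce the third identity from the first two, and Lemma~\ref{lemquasicom} to predict the shape of the correction terms — are sound refinements but not needed beyond the paper's argument.
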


As a consequence, we get
\begin{lemma}
$$
\begin{array}{lcl}
s_1 s_2^{-1} s_1 s_2^{-1}  -  s_2^{-1} s_1 s_2^{-1} s_1 &=&  
\frac{a}{c} s_1 s_2 
- \frac{a}{c} s_2 s_1 +
-\frac{ab}{c} s_1 s_2^{-1}
+\frac{ab}{c} s_2^{-1} s_1
 +b s_2^{-1} s_1^{-1} 
 -b s_1^{-1} s_2^{-1} \\

s_2 s_1^{-1} s_2 s_1^{-1} - s_1^{-1} s_2 s_1^{-1} s_2 &=& 
\frac{ab}{c} 
 s_1^{-1} s_2 
 -\frac{a}{c}  s_1 s_2  
 + \frac{a}{c}  s_2 s_1
-\frac{ab}{c}
s_2 s_1^{-1} 
-b s_2^{-1} s_1^{-1} 
+b s_1^{-1} s_2^{-1} \\

\end{array}
$$

\end{lemma}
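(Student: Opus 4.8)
The plan is to derive both displayed identities directly from Lemma~\ref{lemdecomp1212}, so that no fresh computation inside $A_3$ is required.

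For the first identity I would simply transpose the term $s_2^{-1} s_1 s_2^{-1} s_1$ appearing on the right of the first formula of Lemma~\ref{lemdecomp1212} over to the left-hand side. The resulting expression is, term for term, exactly the claimed value of $s_1 s_2^{-1} s_1 s_2^{-1} - s_2^{-1} s_1 s_2^{-1} s_1$, so there is nothing further to verify.

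For the second identity the slickest route is to invoke the $R$-algebra automorphism of $A_3$ defined by $s_1 \mapsto s_2$, $s_2 \mapsto s_1$, which is the case $n=3$ of the involution $s_i \mapsto s_{n-i}$ recorded in Section~\ref{sectprelim}. Since this automorphism fixes the scalars $a,b,c$ and carries $s_1 s_2^{-1} s_1 s_2^{-1}$ to $s_2 s_1^{-1} s_2 s_1^{-1}$ and $s_2^{-1} s_1 s_2^{-1} s_1$ to $s_1^{-1} s_2 s_1^{-1} s_2$, applying it to the first identity produces precisely the left-hand side of the second. I would then check that the six terms on the right of the first identity are carried to the six terms on the right of the second under the same swap $s_1 \leftrightarrow s_2$, which confirms the claim with no extra algebra.

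As an alternative derivation, and as a useful consistency check, one can instead form the difference of the second and third formulas of Lemma~\ref{lemdecomp1212}. The common leading term $s_2^{-1} s_1 s_2^{-1} s_1$ cancels immediately, and — this is the one point deserving attention — the combination $a\,(s_1^{-1} s_2 s_1^{-1} - s_2^{-1} s_1 s_2^{-1})$ in the second formula coincides with the matching terms $a\,s_1^{-1} s_2 s_1^{-1} - a\,s_2^{-1} s_1 s_2^{-1}$ hidden in the third, so these length-three words cancel as well; the remaining length-three words $s_1 s_2^{-1} s_1$, $s_2 s_1^{-1} s_2$ and the term $\frac{ab}{c} s_1 s_2^{-1}$ cancel likewise in pairs. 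What survives is exactly the six length-two terms on the right of the second identity. The only possible obstacle is this bookkeeping of cancellations, which is entirely routine.
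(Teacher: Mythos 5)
Your proposal is correct. For the first identity you do exactly what the paper does (implicitly, behind its ``As a consequence''): transpose $s_2^{-1}s_1s_2^{-1}s_1$ across the first formula of lemma \ref{lemdecomp1212}. For the second identity, your primary argument — applying the $R$-algebra automorphism $s_1\leftrightarrow s_2$ of $A_3$, i.e.\ the case $n=3$ of $s_i\mapsto s_{n-i}$ recorded in section \ref{sectprelim}, to the first identity — is a genuinely different route from the paper's, which instead subtracts the third formula of lemma \ref{lemdecomp1212} from the second; you supply that subtraction too, and your cancellation bookkeeping is accurate (the terms $a(s_1^{-1}s_2s_1^{-1}-s_2^{-1}s_1s_2^{-1})$, $\frac{b}{c}s_1s_2^{-1}s_1$, $-\frac{b}{c}s_2s_1^{-1}s_2$ and $-\frac{ab}{c}s_1s_2^{-1}$ occur with identical coefficients in both formulas, so only the six length-two words survive). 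The symmetry argument buys you freedom from all computation: it shows the second identity is formally forced by the first, the one hypothesis being that this automorphism fixes $a,b,c$ — which is precisely why it works here while $\Phi$ or $\Psi$ would not, since those modify the scalars. The subtraction argument buys self-containedness: it uses nothing beyond the three stated formulas of lemma \ref{lemdecomp1212}, at the price of the routine check. Either route establishes the lemma.
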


\section{The algebra $A_4$ as a $A_3$ (bi)module}
\label{sectA4}

We identify $A_3$ with its image in $A_4$, and denote $sh(A_3)$ the $R$-subalgebra of
$A_4$ generated by $s_2,s_3,s_4$. It is the image of $A_3$ under the `shift' morphism $s_i \mapsto s_{i+1}$. The goal
of this section is to prove the following theorem.

\begin{theorem} {\ } \label{theodecA4}
\begin{enumerate}
\item $A_4  =A_3 + A_3 s_3 A_3 + A_3 s_3^{-1} A_3 + A_3 s_3 s_2^{-1} s_3 A_3 + A_3 s_3^{-1} s_2 s_1^{-1} s_2 s_3^{-1} A_3
+ A_3 s_3 s_2^{-1} s_1 s_2^{-1} s_3 A_3$
\item $A_4  =A_3 + A_3 s_3 A_3 + A_3 s_3^{-1} A_3 + A_3 s_3 s_2^{-1} s_3 A_3 + A_3 s_3^{-1} s_2 s_1^{-1} s_2 s_3^{-1}
+ A_3 s_3 s_2^{-1} s_1 s_2^{-1} s_3 $
\item $A_4  =A_3 + A_3 s_3 A_3 + A_3 s_3^{-1} A_3 + A_3 s_3 s_2^{-1} s_3 A_3 + s_3^{-1} s_2 s_1^{-1} s_2 s_3^{-1} A_3
+ s_3 s_2^{-1} s_1 s_2^{-1} s_3 A_3$
\end{enumerate}
\end{theorem}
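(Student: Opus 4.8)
The plan is to imitate, one index higher and with $A_3$-coefficients replacing the $A_2$-coefficients, the proof of Theorem \ref{theodecA3}. Write $U$ for the right-hand side of (1), set $V = A_3 + A_3 s_3 A_3 + A_3 s_3^{-1} A_3 + A_3 s_3 s_2^{-1} s_3 A_3$ for its first four summands, and abbreviate the two new generators as $w_1 = s_3^{-1} s_2 s_1^{-1} s_2 s_3^{-1}$ and $w_2 = s_3 s_2^{-1} s_1 s_2^{-1} s_3$, so that $U = V + A_3 w_1 A_3 + A_3 w_2 A_3$. Every summand has the shape $A_3 m A_3$, so $U$ is an $A_3$-sub-bimodule of $A_4$, and $1 \in A_3 \subseteq U$. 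Since $A_4$ is generated as an $R$-algebra by $A_3$ (which contains $s_1,s_2$) together with $s_3$, the multiplier set $\{x \in A_4 : xU \subseteq U\}$ is a unital subalgebra already containing $A_3$; hence, exactly as in Theorem \ref{theodecA3}, it suffices to prove the single closure relation $s_3 U \subseteq U$, for then this subalgebra is all of $A_4$ and $A_4 = A_4\cdot 1 \subseteq U$.

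Because $U$ is a right $A_3$-module, proving $s_3 U \subseteq U$ reduces to checking $s_3 A_3 m \subseteq U$ for $m$ running through the five words $s_3,\ s_3^{-1},\ s_3 s_2^{-1} s_3,\ w_1,\ w_2$ (the summand $s_3 A_3 \subseteq A_3 s_3 A_3$ being immediate). I would first record that the shifted subalgebra $\langle s_2,s_3\rangle$, the image of $A_3$ under $s_i \mapsto s_{i+1}$, already lies in $V$: applying this shift to Theorem \ref{theodecA3} writes $\langle s_2,s_3\rangle = \langle s_2\rangle + \langle s_2\rangle s_3 \langle s_2\rangle + \langle s_2\rangle s_3^{-1}\langle s_2\rangle + \langle s_2\rangle s_3 s_2^{-1} s_3 \langle s_2\rangle$, and $\langle s_2\rangle \subseteq A_3$. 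The substance is then the reduction of $s_3 A_3 s_3^{\pm}$ and of the longer products $s_3 A_3\, s_3 s_2^{-1} s_3$, $s_3 A_3\, w_1$, $s_3 A_3\, w_2$ into $U$: one expands the $A_3$-factor through the corollary $A_3 = u_1 u_2 u_1 u_2 = u_2 u_1 u_2 u_1$, commutes $s_3$ freely across $s_1$ and across $s_2^{\pm}$ by the shifted forms of Lemmas \ref{lemsplusmoins}, \ref{lemspmgross}, \ref{lemquasicom} and \ref{leminverse}, and applies the cubic relation $s_3^2 = a s_3 + b + c s_3^{-1}$ to lower any power $s_3^{\pm 2}$ that is produced. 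The main obstacle is precisely the termination of this process: the dangerous residues are the conjugates $s_3^{-1}(s_2^{\mp} s_1^{\pm} s_2^{\mp}) s_3$ left over after the cubic relation is used, and the point is that $w_1$ and $w_2$ are exactly the two words needed to absorb them, so that no further generator is forced and the finite list closes up. This is the analogue of the single extra generator $s_2^{-1}s_1 s_2^{-1}$ in Theorem \ref{theodecA3}, and verifying the closure is the long (but mechanical) part of the argument.

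With (1) established, statements (2) and (3) are the one-sided refinements in which one outer $A_3$-factor is dropped from the last two summands. For (2), since $V$ is already an $A_3$-bimodule and $A_3 w_i A_3 = A_3(w_i A_3)$, it is enough to prove the quasi-commutation relations $w_1 A_3 \subseteq V + A_3 w_1 + A_3 w_2$ and $w_2 A_3 \subseteq V + A_3 w_1 + A_3 w_2$; as $A_3 = \langle s_1,s_2\rangle$ these reduce to right multiplication by $s_1^{\pm}$ and $s_2^{\pm}$. The $s_1^{\pm}$ cases follow by commuting $s_1$ past the outer $s_3^{\mp}$, which turns the problem into controlling conjugates $s_3^{\mp}\xi s_3^{\mp}$ of $A_3$-words $\xi$ by the very reductions used to prove (1); the $s_2^{\pm}$ cases mirror Lemma \ref{lemquasicom} and the quasi-commutation lemma preceding Theorem \ref{theodecA3}, with all lower-order remainders falling into $V$. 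Finally, (3) follows from (2) by applying the skew-automorphism $\Psi$, which stabilises $R$ and $A_3$ setwise, interchanges $s_3 \leftrightarrow s_3^{-1}$, and sends $w_1 \mapsto w_2$, $w_2 \mapsto w_1$; since it also preserves $V$ (because $\Psi(s_3 s_2^{-1} s_3) = s_3^{-1} s_2 s_3^{-1}$ generates the same sub-bimodule as $s_3 s_2^{-1} s_3$ by the shifted equality of parts (1) and (2) of Theorem \ref{theodecA3}) and reverses products, it carries the left-module statement (2) verbatim onto the right-module statement (3).
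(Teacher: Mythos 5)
Your framing is correct and coincides with the paper's own first step: the right-hand side $U$ of (1) is a unital $A_3$-sub-bimodule of $A_4$, so it suffices to prove $s_3 U \subseteq U$; and your reductions of (2) to quasi-commutation relations $w_i A_3 \subseteq V + A_3 w_1 + A_3 w_2$ and of (3) to (2) via $\Psi$ (using lemma \ref{leminverse} to see that $A_3 s_3^{-1}s_2 s_3^{-1} A_3 \subseteq A_3 s_3 s_2^{-1} s_3 A_3 + A_3 u_3 A_3$) are sound in outline. The gap is that the substantive part of the theorem is asserted rather than proved. Your sentence claiming that the dangerous residues are the conjugates $s_3^{-1}(s_2^{\mp}s_1^{\pm}s_2^{\mp})s_3$ and that ``$w_1$ and $w_2$ are exactly the two words needed to absorb them, so that no further generator is forced and the finite list closes up'' \emph{is} the statement being proved; calling its verification ``long (but mechanical)'' does not discharge it, and it is in fact not mechanical.

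Concretely, when you expand $s_3\, A_3\, w_2 = s_3 A_3 s_3 s_2^{-1}s_1 s_2^{-1} s_3$ with $A_3 = u_1u_2u_1u_2$, you produce words containing three blocks of $s_3^{\pm}$ separated by arbitrary elements of $A_3$, and none of the shifted lemmas \ref{lemsplusmoins}--\ref{leminverse} applies across an $A_3$-factor; whether the rewriting terminates is exactly what is at stake. The paper resolves this with two lemmas for which your proposal has no substitute: lemma \ref{lemA4uAu}, stating $u_3 A_3 u_3 \subseteq U$, and the collapsing lemma \ref{lemA4troisvers2}, stating $u_3 A_3 u_3 A_3 u_3 \subseteq A_3 u_3 A_3 u_3 A_3$, from which $s_3 U \subseteq u_3 A_3 u_3 A_3 u_3 A_3 \subseteq A_3 (u_3 A_3 u_3) A_3 \subseteq A_3 U A_3 = U$. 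Lemma \ref{lemA4troisvers2} is proved by a double induction on the number of $u_1,u_2$-factors of the two interior $A_3$-elements, and its hardest case, $s_3^{\alpha}(s_2^{-1}s_1s_2^{-1})s_3^{\beta}(s_2^{-1}s_1s_2^{-1})s_3^{\gamma}$, is settled only after normalizing $\beta=-1$ by $\Phi$ and rewriting the middle via $s_2^{-1}s_3^{-1}s_2^{-1} = s_3^{-1}s_2^{-1}s_3^{-1}$ so that lemma \ref{lemspmgross} applies; nothing in your plan produces this reduction. A secondary but real omission: the quasi-commutation needed for (2) and (3) is \emph{twisted} -- the paper proves $w_1 x \in x^{s_1} w_1 + U_0$ with $x^{s_1}=s_1^{-1}xs_1$ and $U_0 = A_3 u_3 A_3 + A_3 s_3 s_2^{-1}s_3 A_3$, and the case $x = s_2^{-1}$ is itself a nontrivial computation -- so these relations do not follow ``by the very reductions used to prove (1)'' but require their own argument.
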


We denote $U$ the right-hand side of (1). We notice that $sh(A_3) \subset U$, because 
of theorem \ref{theodecA3} (2). Also notice that $\Psi(U) = U$ and $\Phi(U) = U$ because of lemma \ref{leminverse}.


\begin{lemma} \label{lemA4uAu} $u_3 A_3 u_3 \subset U$.
\end{lemma}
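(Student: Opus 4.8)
The plan is to use that $U$ is by construction an $A_3$-sub-bimodule of $A_4$ containing $1$ (it is a finite sum of terms $A_3 w A_3$ together with $A_3$ itself), so that $A_3 U A_3 = U$, and to cut the inclusion down to finitely many ``sandwich'' elements. Since $c$ is invertible, $u_3 = R\cdot 1 + R s_3 + R s_3^{-1}$, whence $u_3 A_3 u_3 = \sum_{\alpha,\beta \in \{-1,0,1\}} R\, s_3^{\alpha} A_3 s_3^{\beta}$, and the terms with $\alpha=0$ or $\beta=0$ lie in $A_3 + A_3 s_3 A_3 + A_3 s_3^{-1} A_3 \subset U$ immediately. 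For the remaining $\alpha,\beta \in \{-1,1\}$ I would use that $s_3$ commutes with $A_2 = u_1$ (as $s_1 s_3 = s_3 s_1$): writing $A_3$ via Theorem \ref{theodecA3} as $A_2 + A_2 s_2 A_2 + A_2 s_2^{-1} A_2 + A_2 w A_2$ with $w$ equal to $s_2^{-1}s_1 s_2^{-1}$ or to $s_2 s_1^{-1} s_2$, a typical spanning element has the form $a\,w'\,a'$ with $a,a' \in A_2$ and $w' \in \{1, s_2^{\pm 1}, w\}$, and then $s_3^{\alpha}(a w' a') s_3^{\beta} = a\,(s_3^{\alpha} w' s_3^{\beta})\,a'$. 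As $a,a' \in A_3$ and $U$ is an $A_3$-bimodule, it will suffice to show $s_3^{\alpha} w' s_3^{\beta} \in U$; moreover, for each fixed $(\alpha,\beta)$ I am free to expand $A_3$ by whichever of the two forms of Theorem \ref{theodecA3} is convenient, since both span all of $A_3$.

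First I would record two auxiliary facts. Since $1 \in A_3$ and $s_3^{\pm 1} \in A_3 s_3^{\pm 1} A_3$, we have $u_3 \subset U$, and therefore $u_2 u_3 u_2 \subset A_3 u_3 A_3 \subset U$. Secondly, among the sandwiches $s_3^{\alpha} s_2^{\pm 1} s_3^{\beta}$, all but two are flattened into $u_2 u_3 u_2 \subset U$ by the braid relation (for equal signs) and by Lemma \ref{lemsplusmoins} (for opposite signs); the two exceptions are $s_3 s_2^{-1} s_3$, which is one of the defining generators of $U$, and $s_3^{-1} s_2 s_3^{-1}$, for which the case $i=2$ of Lemma \ref{leminverse} gives $s_3^{-1} s_2 s_3^{-1} \in c^{-1}(s_3 s_2^{-1} s_3) s_2 + u_2 u_3 u_2 \subset U$. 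This settles the cases $w' \in \{1, s_2^{\pm 1}\}$.

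It then remains to treat $w' = w$, where the freedom to choose the form of $A_3$ is exploited. For $(\alpha,\beta)=(1,1)$ I would take $w = s_2^{-1} s_1 s_2^{-1}$, so that $s_3 w s_3 = s_3 s_2^{-1} s_1 s_2^{-1} s_3$ is exactly a generator of $U$; for $(\alpha,\beta)=(-1,-1)$ I would take instead $w = s_2 s_1^{-1} s_2$, so that $s_3^{-1} w s_3^{-1} = s_3^{-1} s_2 s_1^{-1} s_2 s_3^{-1}$ is the other generator. The mixed signs are the delicate point: using that $s_1$ commutes with $s_3^{\pm 1}$, inserting $s_3^{\mp 1} s_3^{\pm 1}=1$, and applying Lemma \ref{lemsplusmoins} repeatedly, one rewrites
$$
s_3 s_2^{-1} s_1 s_2^{-1} s_3^{-1} = s_2^{-1} s_1^{-1}\,(s_3^{-1} s_2 s_3^{-1})\, s_1 s_2,
\quad
s_3^{-1} s_2^{-1} s_1 s_2^{-1} s_3 = s_2 s_1\,(s_3^{-1} s_2 s_3^{-1})\, s_1^{-1} s_2^{-1},
$$
both of which lie in $A_3\,(s_3^{-1} s_2 s_3^{-1})\,A_3 \subset A_3 U A_3 = U$ by the previous paragraph. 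Collecting all cases yields $s_3^{\alpha} A_3 s_3^{\beta} \subset U$ for every $\alpha,\beta$, hence $u_3 A_3 u_3 \subset U$.

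The main obstacle is precisely the element $s_3^{-1} s_2 s_3^{-1}$ and the two mixed-sign sandwiches that feed into it. These are the sandwiches whose exponent pattern is incompatible with the braid relations of Lemma \ref{lemsplusmoins}, so they cannot be absorbed into $u_2 u_3 u_2$; one genuinely needs the cubic relation, through Lemma \ref{leminverse}, to express them via the generator $s_3 s_2^{-1} s_3$. Everything else is bookkeeping around the $A_3$-bimodule structure of $U$ and the commutation of $s_3$ with $A_2$.
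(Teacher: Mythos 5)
Your proof is correct, and it shares the paper's broad strategy — decompose $A_3$ via Theorem \ref{theodecA3}, push the $A_2$-factors past $s_3^{\pm 1}$ using $s_1s_3=s_3s_1$, then do a sign analysis on the finitely many remaining sandwiches — but two of your moves genuinely differ from the paper's. First, the paper works exclusively with the form $A_3 = u_1u_2u_1 + u_1 s_2^{-1}s_1s_2^{-1}$, so its diagonal case $(\alpha,\beta)=(-1,-1)$ is not a generator of $U$ on the nose and requires a second application of Lemma \ref{leminverse} to convert $s_2^{-1}s_1s_2^{-1}$ into a multiple of $s_2s_1^{-1}s_2$ (landing on the generator $s_3^{-1}s_2s_1^{-1}s_2s_3^{-1}$ plus lower terms); your trick of switching between forms (1) and (2) of Theorem \ref{theodecA3} makes the two diagonal cases exactly dual and both immediate. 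Second, the paper channels every "generic" term through the containment $sh(A_3)\subset U$ (Theorem \ref{theodecA3}(2) applied to $\langle s_2,s_3\rangle$), both for $u_3u_1u_2u_1u_3 = u_1u_3u_2u_3u_1$ and as the landing spot of its mixed-sign manipulations, whereas you never invoke $sh(A_3)$ at all: everything is routed through the two specific containments $u_2u_3u_2\subset U$ and $s_3^{-1}s_2s_3^{-1}\in U$ (the latter by Lemma \ref{leminverse}), with the mixed-sign cases reduced by the closed-form $B_4$ identities $s_3^{\mp}(s_2^{-1}s_1s_2^{-1})s_3^{\pm} = s_2^{\pm}s_1^{\pm}(s_3^{-1}s_2s_3^{-1})s_1^{\mp}s_2^{\mp}$ — which, incidentally, are precisely the formulas the paper records later (up to inversion) in the proof of Lemma \ref{lemreducspec}. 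The trade-off is modest: the paper's appeal to $sh(A_3)$ dispatches the whole $u_1u_2u_1$ part in one line, while your version is more self-contained (only Lemmas \ref{lemsplusmoins} and \ref{leminverse} plus a finite list of sandwiches) and isolates more explicitly the genuinely non-braid-theoretic input, namely the single element $s_3^{-1}s_2s_3^{-1}$ whose membership in $U$ needs the cubic relation.
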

\begin{proof}
By theorem \ref{theodecA3} we have $A_3 = u_1 u_2 u_1 + u_1 s_2^{-1} s_1 s_2^{-1}$
hence $u_3 A_3 u_3 \subset  u_3 u_1 u_2 u_1u_3 +u_3  u_1 s_2^{-1} s_1 s_2^{-1} u_3$.
But $ u_3 u_1 u_2 u_1u_3 =  u_1 u_3  u_2 u_3 u_1 \subset u_1 sh(A_3) u_1 \subset u_1 U u_1 \subset U$,
and $u_3  u_1 s_2^{-1} s_1 s_2^{-1} u_3= u_1 u_3   s_2^{-1} s_1 s_2^{-1} u_3$
so we need to prove $s_3^{\alpha}   s_2^{-1} s_1 s_2^{-1} s_3^{\beta} \in U$
for $\alpha, \beta \in \{ -1 ,1 \}$. The case $(\alpha,\beta) = (1,1)$ is clear by definition of $U$.
When $(\alpha,\beta) = (-1,-1)$, we have
$s_3^{-1}   (s_2^{-1} s_1 s_2^{-1}) s_3^{-1} \in c^{-1} s_3^{-1} s_2 s_1^{-1} s_2 s_1 s_3^{-1} + 
s_3^{-1} u_1 u_2 u_1 s_3^{-1}$
that is 
$s_3^{-1}   (s_2^{-1} s_1 s_2^{-1}) s_3^{-1}  \in 
 s_3^{-1} s_2 s_1^{-1} s_2 s_3^{-1} u_1 + 
u_1 s_3^{-1}  u_2  s_3^{-1}u_1 \subset U + u_1 sh(A_3) u_1 \subset U$.
When $(\alpha,\beta) = (1,-1)$ we get
$s_3  s_2^{-1} s_1 s_2^{-1} s_3^{-1}=s_3  s_2^{-1} s_1 (s_2^{-1} s_3^{-1}s_2^{-1} )s_2
= s_3  s_2^{-1} s_1 s_3^{-1} s_2^{-1}s_3^{-1}  s_2
= (s_3  s_2^{-1} s_3^{-1})  s_1  s_2^{-1}s_3^{-1}  s_2 = s_2^{-1}  s_3^{-1}( s_2  s_1  s_2^{-1})s_3^{-1}  s_2
\in  s_2^{-1}  s_3^{-1}u_1 u_2 u_1 s_3^{-1}  s_2 \subset s_2^{-1} u_1 s_3^{-1} u_2 s_3^{-1}  u_1 s_2
\subset A_3 sh(A_3) A_3 \subset U$. The case $(-1,1)$ is similar.

\end{proof}

\begin{lemma} \label{lemA4troisvers2}
$u_3 A_3 u_3 A_3 u_3 \subset A_3 u_3 A_3 u_3 A_3$
\end{lemma}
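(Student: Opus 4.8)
The plan is to prove the equivalent statement $u_3 A_3 u_3 A_3 u_3 \subset V$, where I write $V := A_3 u_3 A_3 u_3 A_3$. The set $V$ is an $(A_3,A_3)$-bimodule containing $1$, and by Lemma \ref{lemA4uAu} we have $u_3 A_3 u_3 \subset U \subset V$, since each of the six generators $1,\ s_3,\ s_3^{-1},\ s_3 s_2^{-1} s_3,\ s_3^{-1} s_2 s_1^{-1} s_2 s_3^{-1},\ s_3 s_2^{-1} s_1 s_2^{-1} s_3$ of the bimodule $U$ visibly lies in $V$. As $u_3 = R + R s_3 + R s_3^{-1}$, it suffices to treat products $s_3^{\alpha} x s_3^{\beta} y s_3^{\gamma}$ with $x,y\in A_3$ and $\alpha,\beta,\gamma\in\{-1,1\}$; whenever one exponent is trivial the element lies in $u_3 A_3 u_3$ (hence in $V$) or in $A_3 u_3 A_3 u_3 \subset V$, so these degenerate cases are already settled.

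First I would push from the left with Lemma \ref{lemA4uAu}, writing $u_3 A_3 u_3 A_3 u_3 \subset U A_3 u_3 = \sum_w A_3\, w\, A_3 u_3$, the sum over the six generators $w$ of $U$. For $w\in\{1,s_3,s_3^{-1}\}$ one gets $A_3 w A_3 u_3 \subset A_3 u_3 A_3 u_3 \subset V$ immediately, so only the three ``length two'' generators $w_1=s_3 s_2^{-1} s_3$, $w_2=s_3^{-1} s_2 s_1^{-1} s_2 s_3^{-1}$, $w_3=s_3 s_2^{-1} s_1 s_2^{-1} s_3$ remain. For these I absorb the inner $A_3$ into $U$ from the right: writing $w_i=s_3^{\pm} m_i s_3^{\pm}$ with $m_i\in A_3$, the tail satisfies $s_3^{\pm} A_3 u_3 \subset u_3 A_3 u_3 \subset U$, whence $w_i A_3 u_3 = s_3^{\pm} m_i (s_3^{\pm} A_3 u_3) \subset s_3^{\pm} m_i U \subset s_3^{\pm} U \subset u_3 U$, using that $U$ is a left $A_3$-module. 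Since $V$ is stable under left multiplication by $A_3$, the whole lemma thus reduces to the single containment $u_3 U \subset V$.

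The crux is therefore $u_3 U \subset V$. Expanding $U$ and discarding the easy generators exactly as above, this amounts to $u_3 A_3 w_i A_3 \subset V$ for $i=1,2,3$. Applying the skew-automorphism $\Psi$, which stabilizes $u_3$, $A_3$ and $V$ and, by Lemma \ref{leminverse}, also $U$, interchanges these with the mirror containments $A_3 w_i' A_3 u_3 \subset V$ for the sign-reversed generators $w_i'=\Psi(w_i)\in U$; so the left and right normalizations feed into one another and no further sliding lowers the number of $s_3$-syllables. A genuine reduction is unavoidable at this point, and to carry it out I would analyze $u_3 A_3 w_i A_3$ by decomposing the inner $A_3$ through Theorem \ref{theodecA3}(3) as $A_3 = u_1 u_2 u_1 + u_1 s_2 s_1^{-1} s_2$ and exploiting that $s_3$ commutes with $u_1$: the $u_1$-tails slide out to the flanking $A_3$'s, leaving an $s_1$-free core in $sh(A_3)=\langle s_2,s_3\rangle$. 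On such a core the shifted forms (under $s_1,s_2\mapsto s_2,s_3$) of Lemmas \ref{lemdecomp212121} and \ref{lemdecomp1212} rewrite any word with three $s_3$-syllables in terms of words with at most two, which lie in $A_3 u_3 A_3 u_3 A_3 = V$; the quasi-commutation of $s_3 s_2^{-1} s_3$ and of $s_3^{-1} s_2 s_3^{-1}$ through $u_2$ (Lemma \ref{lemquasicom}) and the inversion identity (Lemma \ref{leminverse}), both shifted to the pair $(2,3)$, are what bring the relevant $s_3$ adjacent to a boundary $u_3$-block, where $u_3 s_3 \subset u_3$ absorbs it.

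I expect the main obstacle to be the terms carrying a trapped $s_1^{\pm}$ between the two inner $s_3$'s, namely the contributions of $w_2,w_3$ and of the $u_1 s_2 s_1^{-1} s_2$ summand of the inner $A_3$. Because $s_1$ commutes with $s_3$ instead of braiding with it as $s_2$ does, this $s_1$ cannot be used to fuse two $s_3$-syllables and it blocks the purely $\langle s_2,s_3\rangle$-reductions; the resolution is to realign it, via the braid identities of Lemma \ref{lemsplusmoins}, into one of the admissible palindromic patterns $s_3^{\pm} s_2^{\mp} s_1^{\pm} s_2^{\mp} s_3^{\pm}$ already occurring as the generators $w_2,w_3$ of $U \subset V$, so that no third $s_3$-syllable survives. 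Checking that this realignment succeeds for every choice of the signs and of the flanking $u_1$-factors is the one genuinely computational point; the rest is bookkeeping with the bimodule $V$ and Lemma \ref{lemA4uAu}.
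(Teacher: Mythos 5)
Your opening reduction is correct and is organized differently from the paper's proof. The passage $u_3A_3u_3A_3u_3\subset UA_3u_3$ via Lemma \ref{lemA4uAu}, the absorption $w_iA_3u_3\subset s_3^{\pm}m_iU\subset u_3U$ for the long generators, and hence the reduction of the whole lemma to $u_3U\subset V$, i.e.\ to $u_3A_3w_iA_3\subset V$ for the three generators $s_3s_2^{-1}s_3$, $s_3^{-1}s_2s_1^{-1}s_2s_3^{-1}$, $s_3s_2^{-1}s_1s_2^{-1}s_3$, are all sound and legitimately use only material proved before this lemma. The paper instead runs a lexicographic induction on the minimal numbers of factors $(p,q)$ of the two inner elements of $A_3$; your reformulation pins down two of the three $u_3$-exponents and the element between them, which is a genuine, and rather clean, normalization.

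The gap is that the containment $u_3A_3w_iA_3\subset V$ to which you reduce is essentially the entire content of the lemma, and you do not prove it: you defer it as ``the one genuinely computational point,'' and the mechanism you sketch fails exactly where the difficulty sits. First, the $u_1$-factor adjacent to $w_i$ does not slide out to a flank, since $u_1$ commutes with $s_3$ but not with the $s_2^{\pm1}$ letters of $w_i$: for instance $u_3u_1u_2u_1w_1A_3=u_1u_3u_2s_3u_1s_2^{-1}s_3A_3$ still has a trapped $s_1$ and three $u_3$-syllables. Second, the shifted Lemmas \ref{lemdecomp212121} and \ref{lemdecomp1212} rewrite only pure alternating words in $s_2,s_3$; applying them, or shifted Theorem \ref{theodecA3}, to the words at hand leaves error terms that regenerate three-block configurations. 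Concretely, expanding $u_3A_3(s_3s_2^{-1}s_3)A_3$ with $A_3=u_1u_2u_1+u_1s_2s_1^{-1}s_2$ and $u_3u_2u_3\subset u_2u_3u_2+u_2s_3s_2^{-1}s_3$ produces the term $u_2\,s_3s_2^{-1}u_1s_3s_2^{-1}s_3\,A_3$, which is the configuration $u_3u_2u_1u_3u_2u_3$ with all $s_3$-exponents equal -- precisely the hard subcase of the paper's case $(p,q)=(2,1)$, which needs the flip of $s_3^{\alpha}s_2^{-\alpha}s_3^{\alpha}$ via Lemma \ref{leminverse} followed by Lemma \ref{lemsplusmoins}; and the configuration with $s_2^{-1}s_1s_2^{-1}$ on both sides of the middle block needs the braid identity $s_2^{-1}s_3^{-1}s_2^{-1}=s_3^{-1}s_2^{-1}s_3^{-1}$ plus Lemma \ref{lemspmgross} (the paper's case $(3,3)$). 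Each application of Lemma \ref{leminverse} or of the shifted decompositions can reproduce a word of undiminished complexity, so without a termination measure -- the role played by the paper's induction on $(p,q)$ -- or an exhaustive sign-by-sign verification, your rewriting loop is not known to close. As it stands you have a correct reformulation together with a plausible plan, not a proof.
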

\begin{proof}
For $x \in A_3$, we say that $x$ has at most $p$ factors if it belongs to
as $u_{\sigma(1)} \dots u_{\sigma(p)}$ for some $\sigma : [1,p] \to \{1,2 \}$. By
theorem \ref{theodecA3} the minimal number of factors for such an $x$ is at most 4.
We let $x,y \in A_3$, with minimal number of factors $p$ and $q$,
and prove that $u_3 x u_3 y u_3 \subset A_3 u_3 A_3 u_3 A_3$
by induction on $(p,q)$ in lexicographic order. Note that, since $\Psi(U) = U$,
we may assume $p \geq q$. Moreover, since $A_3 = u_1 u_2 u_1 u_2$ and
$u_3 (u_1 u_2 u_1 u_2) u_3 y u_3 = u_1 u_3 u_2 u_1 u_2 u_3 y u_3$,
we can assume $p \leq 3$ (hence $q \leq 3$).

 The case $q = 0$ is trivial. If $x \in u_1u_{\sigma(2)}\dots u_{\sigma(p)}$,
we have $u_3 x u_3 y u_3 \in u_3  u_1u_{\sigma(2)}\dots u_{\sigma(p)} u_3 y u_3
= u_1 u_3u_{\sigma(2)}\dots u_{\sigma(p)} u_3 y u_3$ and we are reduced to the case $(p-1,q)$.
Similarly, if $y \in u_{\tau(1)}\dots u_{\tau(q-1)} u_1$, we are reduced to $(p,q-1)$. As a consequence,
the only non-trivial case for $p \leq 1$ is $u_3 u_2 u_3 u_2 u_3 \subset sh(A_3) \subset A_3 u_3 A_3 u_3 A_3$
because of theorem \ref{theodecA3}.

We consider the case $(p,q) = (2,1)$. The only nontrivial case is $u_3 u_2 u_1 u_3 u_2 u_3$.
We need to prove $s_3^{\alpha} u_2 u_1 s_3^{\beta} u_2 s_3^{\gamma} \subset A_3 u_3 A_3 u_3 A_3$
for all $\alpha,\beta,\gamma \in \{ -1,1 \}$. Because $s_3^{\alpha} u_2 u_1 (s_3^{\beta} u_2 s_3^{\gamma} )
= (s_3^{\alpha} u_2 s_3^{\beta})u_1  u_2 s_3^{\gamma} $ this is clear by lemma \ref{lemsplusmoins}
unless $\alpha,\beta$ and $\gamma$ are all the same. We thus need to prove
$s_3^{\alpha} s_2^{\beta} u_1 s_3^{\alpha} s_2^{\gamma} s_3^{\alpha} \subset A_3 u_3 A_3 u_3 A_3$
for all $\beta, \gamma \in \{ -1,1 \}$. Since $s_3^{\alpha} s_2^{\alpha} s_3^{\alpha} =
s_2^{\alpha} s_3^{\alpha} s_2^{\alpha}$ we can assume $\beta = - \alpha$ and $\gamma = - \alpha$
and consider 
$s_3^{\alpha} s_2^{-\alpha} u_1 s_3^{\alpha} s_2^{-\alpha} s_3^{\alpha}$. By lemma \ref{leminverse} we
have $s_3^{\alpha} s_2^{-\alpha} s_3^{\alpha} \in s_3^{-\alpha} s_2^{\alpha} s_3^{-\alpha} u_2 + u_2 u_3 u_2$
hence 
$s_3^{\alpha} s_2^{-\alpha} u_1 s_3^{\alpha} s_2^{-\alpha} s_3^{\alpha} \subset
s_3^{\alpha} s_2^{-\alpha} u_1 s_3^{-\alpha} s_2^{\alpha} s_3^{-\alpha} u_2 + 
s_3^{\alpha} s_2^{-\alpha} u_1u_2 u_3 u_2 \subset 
s_3^{\alpha} s_2^{-\alpha} u_1 s_3^{-\alpha} s_2^{\alpha} s_3^{-\alpha} u_2 + 
u_3A_3 u_3 A_3$ and we already noticed 
$$s_3^{\alpha} s_2^{-\alpha} u_1 s_3^{-\alpha} s_2^{\alpha} s_3^{-\alpha} u_2
=( s_3^{\alpha} s_2^{-\alpha}  s_3^{-\alpha})u_1 s_2^{\alpha} s_3^{-\alpha} u_2
\subset u_2 u_3 u_2 u_1 u_2 u_3 u_2 \subset A_3 u_3 A_3 u_3 A_3.
$$

All cases for $(p,q) = (2,2)$ can be easily reduced to smaller values by commutation relations.
The only a priori irreducible case for $(p,q) = (3,1)$ is $u_3 u_2 u_1 u_2 u_3 u_2 u_3$.
Since $u_2 u_3 u_2 u_3 \subset u_2 u_3 u_2 + u_3 u_2 u_3$ by theorem \ref{theodecA3},
we are reduced to case $(2,1)$.

For the case $(p,q) = (3,2)$, we can use a similar argument : the only nontrivial case
is $u_3 u_2 u_1 u_2 u_3 u_1 u_2 u_3 = u_3 u_2 u_1 u_2 u_1 u_3  u_2 u_3$
and $u_2 u_1 u_2 u_1 \subset u_2 u_1 u_2 + u_1 u_2 u_1$, and we are reduced
to smaller cases.

The only remaining case is thus $(p,q) = (3,3)$. Since $x \in A_3= u_1 u_2 u_1 + u_1 s_2^{-1}s_1 s_2^{-1}$
and $y \in A_3 = u_1 u_2 u_1 + s_2^{-1}s_1 s_2^{-1} u_1$
we are reduced to considering $s_3^{\alpha} s_2^{-1}s_1 s_2^{-1} s_3^{\beta} s_2^{-1}s_1 s_2^{-1} s_3^{\gamma}$
for $\alpha,\beta,\gamma \in \{ -1,1 \}$. Up to applying $\Phi$ if necessary, we can assume $\beta = -1$.
Then $s_3^{\alpha} s_2^{-1}s_1 s_2^{-1} s_3^{-1} s_2^{-1}s_1 s_2^{-1} s_3^{\gamma}
= s_3^{\alpha} s_2^{-1}s_1 s_3^{-1} s_2^{-1} s_3^{-1}s_1 s_2^{-1} s_3^{\gamma}
= (s_3^{\alpha} s_2^{-1} s_3^{-1})s_1 s_2^{-1} s_1  (s_3^{-1}s_2^{-1} s_3^{\gamma})
\subset u_2 u_3 u_2 A_3 u_2 u_3 u_2 \subset A_3 u_3 A_3  u_3A_3$
by lemma \ref{lemspmgross}, 
and this concludes the proof.
\end{proof}

We let $U_0 = A_3 u_3 A_3 + A_3 s_3 s_2^{-1} s_3 A_3 =A_3 u_3 A_3 + A_3 s_3^{-1} s_2 s_3^{-1} A_3 = A_3 sh(A_3) A_3 \subset U$.

\begin{lemma} \label{lemA4droitegauche} {\ }
\begin{enumerate}
\item $s_3^{-1} s_2 s_1^{-1} s_2 s_3^{-1} A_3 \subset A_3 s_3^{-1} s_2 s_1^{-1} s_2 s_3^{-1}  + U_0$
\item $s_3^{-1} s_2 s_1^{-1} s_2 s_3^{-1} A_2 \subset A_2 s_3^{-1} s_2 s_1^{-1} s_2 s_3^{-1}  + U_0$ 
\item $s_3 s_2^{-1} s_1  s_2^{-1} s_3 A_3 \subset A_3 s_3 s_2^{-1} s_1  s_2^{-1} s_3  + U_0$
\item $s_3 s_2^{-1} s_1  s_2^{-1} s_3 A_2 \subset A_2 s_3 s_2^{-1} s_1  s_2^{-1} s_3  + U_0$ 
\end{enumerate}
\end{lemma}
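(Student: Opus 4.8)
The plan is to reduce everything to the single element $\xi := s_3^{-1} s_2 s_1^{-1} s_2 s_3^{-1}$, writing $\eta := s_2 s_1^{-1} s_2$ so that $\xi = s_3^{-1}\eta s_3^{-1}$, and to exploit that $U_0 = A_3\, sh(A_3)\, A_3$ is a sub-$(A_3,A_3)$-bimodule of $A_4$. First I would dispose of (3) and (4): the automorphism $\Phi$ stabilizes $A_3$, $A_2$ and $sh(A_3)$, hence $U_0$, and sends $\xi$ to $s_3 s_2^{-1} s_1 s_2^{-1} s_3$; so applying $\Phi$ to (1) and (2) yields (3) and (4) verbatim.

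For (1) and (2) the key observation is a module-stability argument. Set $M = A_3\xi + U_0$ (resp.\ $M = A_2\xi + U_0$). Since $U_0$ is an $A_3$-bimodule one has $U_0 g\subseteq U_0$ for every $g\in A_3$, so $Mg\subseteq M$ as soon as $\xi g\in M$; as $\xi = 1\cdot\xi\in M$, an induction on the length of a word in the generators then gives $\xi A_3\subseteq M$ (resp.\ $\xi A_2\subseteq M$). Thus it suffices to check $\xi g\in M$ for $g$ running through the algebra generators $s_1^{\pm 1}, s_2^{\pm 1}$ of $A_3$ (only $s_1^{\pm 1}$ for $A_2$). The case $g=s_1^{\pm 1}$ is immediate and \emph{simultaneously proves} (2): since $s_1$ commutes with $s_3$ and $\eta s_1^{\pm}\in s_1^{\pm}\eta + u_1 u_2 u_1$ by lemma \ref{lemquasicom}(2), one gets
$\xi s_1^{\pm} = s_3^{-1}\eta s_1^{\pm} s_3^{-1}\in s_1^{\pm} s_3^{-1}\eta s_3^{-1} + u_1 s_3^{-1} u_2 s_3^{-1} u_1 \subseteq A_2\xi + U_0$,
the last containment because $s_3^{-1} u_2 s_3^{-1}\subseteq sh(A_3)$. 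Restricting the induction to $A_2$ then yields (2), and $\Phi$ yields (4) for free.

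The remaining, and genuinely hard, case of (1) is $g=s_2^{\pm 1}$, i.e.\ showing $\xi s_2^{\pm 1}\in A_3\xi + U_0$. Here I would compute directly: rewrite the tail $s_2 s_3^{-1} s_2$ (a word in $s_2,s_3$), use lemma \ref{leminverse} to pull the inner block $s_3^{-1} s_2 s_3^{-1}$ across, and then invoke the cubic relation $s_2^2 = a s_2 + b + c s_2^{-1}$ to lower the surviving power of $s_2$. The resulting terms should fall into three types: those that become words in $s_2,s_3$ alone, which lie in $sh(A_3)\subseteq U_0$; those in which the unique $s_1$-letter is flanked so that the braid relation $s_2 s_1 s_2 = s_1 s_2 s_1$ frees it, allowing it to be commuted past the $s_3$'s out to the two flanking copies of $A_3$ (landing in $A_3\,sh(A_3)\,A_3 = U_0$ via lemmas \ref{lemsplusmoins} and \ref{lemspmgross}); and one residual term in which $s_1^{-1}$ stays locked inside $\eta$, which must reassemble to the required element of $A_3\xi$.

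The main obstacle is precisely this last computation. The point is that $\eta = s_2 s_1^{-1} s_2$ admits \emph{no} braid simplification (in contrast with $s_2 s_1 s_2$), so its $s_1^{-1}$ is rigid; this rigidity is exactly what makes $\xi$ a nontrivial bimodule generator, and it is what forces the cubic relation to intervene rather than pure braid moves. The delicate step will be to guarantee that every error term lands in the \emph{small} bimodule $U_0 = A_3\,sh(A_3)\,A_3$, and not merely in the larger module $U$: the problematic contributions are those in which a single $s_1$ sits trapped between two $s_2$'s, and controlling them is where the careful bookkeeping with lemma \ref{leminverse} and the cubic relation is unavoidable.
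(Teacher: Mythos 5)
Your reduction is sound and coincides with the paper's own strategy: $\Phi$ disposes of (3) and (4); the bimodule property of $U_0$ reduces (1) and (2) to checking right multiplication by generators; and your treatment of $g=s_1^{\pm1}$ (commute $s_1$ past $s_3$, apply lemma \ref{lemquasicom}, absorb the $u_1 s_3^{-1}u_2 s_3^{-1}u_1$ error into $U_0$) is exactly the paper's argument and does prove (2), hence (4). The genuine gap is the case $g=s_2^{\pm1}$, which is the entire content of (1) and (3): here you only describe what \emph{should} happen ("the resulting terms should fall into three types", the residual term "must reassemble to the required element of $A_3\xi$") and you yourself flag this computation as the main obstacle. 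Stating that the error terms have to land in $U_0$ and that the main term has to come back as an element of $A_3\xi$ is a restatement of the claim, not a proof of it; as it stands, (1) and (3) are unproved.

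For comparison, the paper's computation for this case never touches the tail $s_2 s_3^{-1}s_2^{\pm1}$ and never expands $s_2^{2}$ by the cubic relation (the cubic relation enters only through lemma \ref{leminverse}). It works on the inner $\{s_1,s_2\}$-block: by lemma \ref{leminverse}, $s_2 s_1^{-1}s_2 \in c\,(s_2^{-1}s_1 s_2^{-1})s_1^{-1}+u_1u_2u_1$; then lemma \ref{lemquasicom} slides $s_1^{\pm1}$ across the block, two applications of the braid relation $s_2^{-1}s_3^{-1}s_2^{-1}=s_3^{-1}s_2^{-1}s_3^{-1}$ reorganize the word, and a second application of lemma \ref{leminverse} converts back and cancels the factor $c$. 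The outcome is the precise identity
$$
(s_3^{-1}s_2 s_1^{-1}s_2 s_3^{-1})\, s_2^{-1} \in s_1^{-1}s_2^{-1}s_1\,(s_3^{-1}s_2 s_1^{-1}s_2 s_3^{-1}) + U_0 ,
$$
i.e.\ the element passes through $\xi$ \emph{conjugated by $s_1$}; more generally $\xi x \in x^{s_1}\xi + U_0$ for all $x\in A_3$, where $x^{s_1}=s_1^{-1}x s_1$. This twist is the actual content of the lemma, and your sketch does not anticipate it (nor is it clear that your proposed route through $s_2 s_3^{-1}s_2$ and the expansion of $s_2^2$ produces only $U_0$-errors). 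To complete your proof you would need to carry out a chain of this kind explicitly.
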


Statements (3) and (4) are consequences of (1) and (2)
by application of $\Phi$,
and (1) and (2) are immediate consequences of the more detailed
lemma below.

\begin{lemma} {\ }
\begin{enumerate}
\item For all $x \in A_2$,  $s_3^{-1} s_2 s_1^{-1} s_2 s_3^{-1} x \in x s_3^{-1} s_2 s_1^{-1} s_2 s_3^{-1} + U_0$.
\item $(s_3^{-1} s_2 s_1^{-1} s_2 s_3^{-1}) s_2^{-1}  \in  s_1^{-1}s_2^{-1} s_1 (s_3^{-1} s_2 s_1^{-1} s_2 s_3^{-1}) + U_0$
\item For all $x \in A_3$, $(s_3^{-1} s_2 s_1^{-1} s_2 s_3^{-1})x \in x^{s_1}(s_3^{-1} s_2 s_1^{-1} s_2 s_3^{-1}) + U_0$ (where
$x^{s_1} = s_1^{-1} x s_1$).
\end{enumerate}
\end{lemma}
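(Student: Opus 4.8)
The plan is to prove the three statements in the order (1), (2), (3), since (2) will use (1) and (3) will follow formally from (1) and (2). Throughout I write $w = s_3^{-1} s_2 s_1^{-1} s_2 s_3^{-1}$, and I will repeatedly use that $U_0 = A_3\, sh(A_3)\, A_3$ absorbs left and right multiplication by $A_3$, that $s_1$ commutes with $s_3$, and that $s_3^{\pm} u_2 s_3^{\pm}\subset sh(A_3)$.

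For (1), note that conjugation by $s_1$ fixes $A_2$ pointwise, so the claim is that $w$ commutes with every $x\in A_2$ modulo $U_0$. Since $A_2$ is generated over $R$ by $s_1^{-1}$, and the relation ``$wx\in xw+U_0$'' is stable under products (because $x_1 U_0\subset U_0$ and $U_0 x_2\subset U_0$ for $x_i\in A_2\subset A_3$) and under $R$-linear combinations, it suffices to treat $x=s_1^{-1}$. After commuting the trailing $s_1^{-1}$ past $s_3^{-1}$ one has $w s_1^{-1}=s_3^{-1}(s_2 s_1^{-1} s_2 s_1^{-1})s_3^{-1}$, and the difference $s_2 s_1^{-1} s_2 s_1^{-1}-s_1^{-1}s_2 s_1^{-1}s_2$ lies in $u_1 u_2+u_2 u_1$ by the identity deduced from Lemma \ref{lemdecomp1212}. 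Conjugating this difference by $s_3^{-1}$ on both sides lands in $u_1 s_3^{-1}u_2 s_3^{-1}+s_3^{-1}u_2 s_3^{-1}u_1\subset U_0$, while the main term reassembles as $s_1^{-1}w$; this gives $w s_1^{-1}\in s_1^{-1}w+U_0$.

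For (2) the computation is the genuine core. I would first replace the inner $s_2 s_1^{-1}s_2$ using Lemmas \ref{leminverse} and \ref{lemquasicom}, which yield $s_2 s_1^{-1}s_2\in c\, s_1^{-1}s_2^{-1}s_1 s_2^{-1}+u_1 u_2 u_1$; the $u_1 u_2 u_1$ part, once wrapped as $s_3^{-1}(\,\cdot\,)s_3^{-1}s_2^{-1}$, falls into $U_0$. Pulling the leading $s_1^{-1}$ out past $s_3^{-1}$ leaves $c\,s_1^{-1}\bigl(s_3^{-1}s_2^{-1}s_1 s_2^{-1}s_3^{-1}s_2^{-1}\bigr)$. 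A short braid manipulation (using $s_2^{-1}s_3^{-1}s_2^{-1}=s_3^{-1}s_2^{-1}s_3^{-1}$, the commutation of $s_1$ and $s_3$, and $s_3^{-1}s_2^{-1}s_3^{-1}=s_2^{-1}s_3^{-1}s_2^{-1}$) rewrites the bracket as $s_2^{-1}\bigl(s_3^{-1}s_2^{-1}s_1 s_2^{-1}s_3^{-1}\bigr)$, and Lemma \ref{leminverse} again turns $s_3^{-1}s_2^{-1}s_1 s_2^{-1}s_3^{-1}$ into $c^{-1}w s_1$ modulo $U_0$. Collecting the scalars gives $w s_2^{-1}\in s_1^{-1}s_2^{-1}(w s_1)+U_0$, and one final appeal to (1) with $x=s_1$ replaces $w s_1$ by $s_1 w$ modulo $U_0$, producing exactly $s_1^{-1}s_2^{-1}s_1\, w+U_0$.

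Finally (3) is obtained formally. The assignment $x\mapsto x^{s_1}=s_1^{-1}x s_1$ is an $R$-algebra automorphism of $A_3$, and the relation ``$wx\in x^{s_1}w+U_0$'' is stable under products: if it holds for $x_1,x_2$ then $w x_1 x_2\in x_1^{s_1}(w x_2)+U_0\subset x_1^{s_1}x_2^{s_1}w+U_0=(x_1 x_2)^{s_1}w+U_0$, using $x_1^{s_1}U_0\subset U_0$ and $U_0 x_2\subset U_0$. As $A_3$ is generated over $R$ by $s_1^{-1}$ and $s_2^{-1}$, for which the relation is precisely (1) (note $(s_1^{-1})^{s_1}=s_1^{-1}$) and (2) (note $(s_2^{-1})^{s_1}=s_1^{-1}s_2^{-1}s_1$), it holds for every $x\in A_3$ by induction on word length. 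I expect the only real difficulty to be the braid bookkeeping in (2): tracking the scalar $c$ and, above all, arranging the stray $s_1$ onto the correct side of $w$ so that the conjugating factor emerges as $s_1^{-1}s_2^{-1}s_1$ rather than merely something conjugate to it --- which is exactly what forces the appeal to part (1) at the very end.
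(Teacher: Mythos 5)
Your proof is correct and takes essentially the same route as the paper: reduce (1) to the generator $s_1^{-1}$, prove (2) by two applications of Lemma \ref{leminverse} sandwiching the braid manipulation $s_2^{-1}s_3^{-1}s_2^{-1}=s_3^{-1}s_2^{-1}s_3^{-1}$, and deduce (3) formally from the generators $s_1^{-1},s_2^{-1}$ using that $U_0$ is an $A_3$-bimodule. The only cosmetic deviations are that in (1) you invoke the identity following Lemma \ref{lemdecomp1212} where the paper uses Lemma \ref{lemquasicom}, and at the end of (2) you appeal to part (1) with $x=s_1$ where the paper applies Lemma \ref{lemquasicom} directly; both substitutions are valid.
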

\begin{proof}

We first prove (1).
We have
$$
\begin{array}{lcl}
(s_3^{-1} s_2 s_1^{-1} s_2 s_3^{-1}) s_1^{-1} &=& 
s_3^{-1} (s_2 s_1^{-1} s_2)  s_1^{-1} s_3^{-1}\\
& \in &s_3^{-1}  s_1^{-1} (s_2 s_1^{-1} s_2)  s_3^{-1} + s_3^{-1} u_1 u_2 u_1 s_3^{-1} \\
&\subset & s_3^{-1}  s_1^{-1} (s_2 s_1^{-1} s_2)  s_3^{-1} + u_1 s_3^{-1}  u_2 s_3^{-1} u_1 \\
&\subset & s_1^{-1}(s_3^{-1}   s_2 s_1^{-1} s_2  s_3^{-1}) + A_3 sh(A_3) A_3 \\
&\subset & s_1^{-1}(s_3^{-1}   s_2 s_1^{-1} s_2  s_3^{-1}) + U_0 \\
\end{array}
$$
by lemma \ref{lemquasicom}. Since $s_1^{-1}$ generates $A_2$ this proves (1).

We now prove (2).
We have 
$$
\begin{array}{lcl} 
(s_3^{-1} s_2 s_1^{-1} s_2 s_3^{-1}) s_2^{-1} &=&
 s_3^{-1} (s_2 s_1^{-1} s_2) s_3^{-1} s_2^{-1} \\
 &\in & c s_3^{-1} (s_2^{-1} s_1 s_2^{-1})s_1^{-1} s_3^{-1} s_2^{-1}
 + s_3^{-1} u_1 u_2 u_1  s_3^{-1} s_2^{-1}\\
 & \subset&
 c s_3^{-1} (s_2^{-1} s_1 s_2^{-1})s_1^{-1} s_3^{-1} s_2^{-1} + U_0
 \end{array}
 $$ by lemma \ref{leminverse}.
 By lemma \ref{lemquasicom} it follows that 
$$
\begin{array}{clcl} & (s_3^{-1} s_2 s_1^{-1} s_2 s_3^{-1}) s_2^{-1} &
 \in &
c s_3^{-1} s_1^{-1} (s_2^{-1} s_1 s_2^{-1}) s_3^{-1} s_2^{-1} + U_0 \\
= & c s_1^{-1}s_3^{-1}  s_2^{-1} s_1 (s_2^{-1} s_3^{-1} s_2^{-1}) + U_0 &
= & c s_1^{-1}s_3^{-1}  s_2^{-1} s_1 s_3^{-1} s_2^{-1} s_3^{-1} + U_0 \\
= &c s_1^{-1}(s_3^{-1}  s_2^{-1} s_3^{-1}s_1  s_2^{-1} s_3^{-1} + U_0
&=& c s_1^{-1}s_2^{-1}  s_3^{-1} (s_2^{-1}s_1  s_2^{-1}) s_3^{-1} + U_0\\
\subset  & c s_1^{-1}s_2^{-1}  s_3^{-1} c^{-1}(s_2s_1^{-1}  s_2)s_1 s_3^{-1} + U_0
&=&   s_1^{-1}s_2^{-1}  s_3^{-1} (s_2s_1^{-1}  s_2)s_1 s_3^{-1} + U_0\\
\end{array}$$
again by lemma \ref{leminverse}.  Since 
$$s_1^{-1}s_2^{-1}  s_3^{-1} (s_2s_1^{-1}  s_2)s_1 s_3^{-1}
\in s_1^{-1}s_2^{-1}  s_3^{-1} s_1 (s_2s_1^{-1}  s_2) s_3^{-1} + U_0
\subset s_1^{-1}s_2^{-1} s_1 s_3^{-1}  (s_2s_1^{-1}  s_2) s_3^{-1} + U_0$$
by lemma \ref{lemquasicom}, this proves (2)

Since $A_3$ is generated by $s_1^{-1}$ and $s_2^{-1}$ and $U_0$ is a $A_3-A_3$ submodule of $A_4$,
we need to check (3) only for $x = s_1^{-1}$ and $x = s_2^{-1}$, and we just did.
\end{proof}

\emph{Proof of theorem \ref{theodecA4}.}

Since $1 \in U$ and $U$ is a $A_3$-submodule of $A_4$, in order to prove
(1) one need to prove $s_3 U \subset U$. Clearly
$U \subset A_3 u_3 A_3 u_3 A_3$ hence
$s_3 U \subset u_3 A_3 u_3 A_3 u_3 A_3 \subset A_3 u_3 A_3 u_3 A_3$
by lemma \ref{lemA4troisvers2}, and $A_3 (u_3 A_3 u_3) A_3 \subset A_3 U A_3 = U$
by lemma \ref{lemA4uAu} which proves the claim. Then (2) and (3) are consequences
of (1) by lemma \ref{lemA4droitegauche}.  This concludes the proof of the theorem.

\medskip

We now let $w^+ = s_3 s_2^{-1} s_1 s_2^{-1} s_3$, and
$w^- = s_3^{-1} s_2 s_1^{-1} s_2 s_3^{-1} \in A_4$.
We recall that $U_0 = A_3 u_3 A_3 + A_3 u_3 u_2 u_3 A_3 \subset A_4$ is a sub-bimodule,
and let $U^+ = A_3 w^+ + U_0$.

Let $w_0 = s_3 s_2 s_1^2 s_2 s_3$. It is classical that, already in the braid group $B_4$, $w_0$ commutes
with $s_1$ and $s_2$. Thus clearly $A_3 w_0 A_3 = A_3 w_0$ and $A_3 w_0^{-1} A_3 = A_3 w_0^{-1}$.
The lemma below thus provides another explanation
of lemma \ref{lemA4droitegauche} above.

\begin{lemma} {\ } \label{lemauxA4w0}
\begin{enumerate}
\item $w_0 \in A_3^{\times} w^+ + U_0$, $w_0^{-1} \in A_3^{\times} w^- + U_0$
\item $U^+ = A_3 w_0 + U_0$
\item $s_3 A_3 s_3^{-1} \subset U_0$, $s_3^{-1} A_3 s_3 \subset U_0$
\item $s_3 A_3 s_3 \subset U^+$
\end{enumerate}
\end{lemma}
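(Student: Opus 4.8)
The plan is to prove the four statements of Lemma \ref{lemauxA4w0} in the order (1), then (3), then (2) and (4), since the computation of $w_0$ in terms of $w^+$ is the foundation for everything else.

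First I would prove (1). The element $w_0 = s_3 s_2 s_1^2 s_2 s_3$ and the element $w^+ = s_3 s_2^{-1} s_1 s_2^{-1} s_3$ differ only in the central block $s_2 s_1^2 s_2$ versus $s_2^{-1} s_1 s_2^{-1}$, so the idea is to rewrite $s_2 s_1^2 s_2$ using the cubic relation and Lemma \ref{leminverse}. Concretely, I would use $s_1^2 = a s_1 + b + c s_1^{-1}$ to expand $s_2 s_1^2 s_2 = a s_2 s_1 s_2 + b s_2^2 + c s_2 s_1^{-1} s_2$, and note that $s_2 s_1^{-1} s_2$ is (up to a unit and lower-complexity terms in $u_1 u_2 u_1$) related to $s_2^{-1} s_1 s_2^{-1}$ by Lemma \ref{leminverse}. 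Conjugating the whole identity by $s_3$ on both sides, the terms $s_3 (u_1 u_2 u_1) s_3 = u_1 s_3 u_2 s_3 u_1 \subset U_0$ are absorbed, and the terms $s_3 s_2 s_1 s_2 s_3$, $s_3 s_2^2 s_3$ land in $U_0$ as well since $s_3 s_2^{\pm} s_3 \in u_2 u_3 u_2$ by Lemma \ref{lemsplusmoins} and the braid relation. What survives modulo $U_0$ is a unit multiple of $s_3 s_2^{-1} s_1 s_2^{-1} s_3 = w^+$, whence $w_0 \in A_3^\times w^+ + U_0$ with the invertible scalar being (essentially) $c$. The statement for $w_0^{-1}$ and $w^-$ follows by applying the automorphism $\Phi$, which exchanges the two and preserves $U_0$.

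Next, (3) is a direct consequence of Lemma \ref{lemsplusmoins}: since $A_3 = u_1 u_2 u_1 + u_1 s_2^{-1} s_1 s_2^{-1}$, it suffices to conjugate each type of generator. For the $u_1 u_2 u_1$ part, $s_3 (u_1 u_2 u_1) s_3^{-1} = u_1 (s_3 u_2 s_3^{-1}) u_1 \subset u_1 (u_1 u_2 u_1) u_1 \subset sh(A_3)$, hence lies in $U_0$; here I use that $s_3$ commutes with $s_1$ and that $s_3 s_2^{\pm} s_3^{-1} \in u_1 u_2 u_1$-type expressions lie in $sh(A_3) = A_3 sh(A_3) A_3 \cap \cdots$, more precisely in $U_0$ by its very definition $U_0 = A_3 sh(A_3) A_3$. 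For the exceptional term $s_3 s_2^{-1} s_1 s_2^{-1} s_3^{-1}$, one commutes $s_1$ past $s_3$ and applies Lemma \ref{lemsplusmoins} to $s_3 s_2^{-1} s_3^{-1}$ exactly as in the $(1,-1)$ case of Lemma \ref{lemA4uAu}, landing in $A_3 sh(A_3) A_3 = U_0$. The other inclusion follows by symmetry (applying $\Psi$ or $\Phi$).

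Then (2) is immediate from (1): since $w_0 \in A_3^\times w^+ + U_0$ and $A_3^\times$ contains the invertible scalar relating them, we get $A_3 w_0 + U_0 = A_3 w^+ + U_0 = U^+$. Finally, for (4) the plan is to write $s_3 A_3 s_3$ using the decomposition $A_3 = u_1 u_2 u_1 + u_1 s_2^{-1} s_1 s_2^{-1}$ again. The $u_1 u_2 u_1$ part gives $s_3 u_1 u_2 u_1 s_3 = u_1 (s_3 u_2 s_3) u_1 \subset u_1 sh(A_3) u_1 \subset U_0 \subset U^+$. The exceptional part $s_3 s_2^{-1} s_1 s_2^{-1} s_3$ is precisely $w^+$ after commuting $s_1$ appropriately, so it lies in $A_3 w^+ + U_0 = U^+$; this is the same manipulation used to handle the $(\alpha,\beta)=(1,1)$ case in Lemma \ref{lemA4uAu}. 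I expect the main obstacle to be (1): getting the bookkeeping of the cubic-relation expansion right so that every unwanted monomial provably falls into $U_0$, and confirming that the surviving coefficient is genuinely a unit in $A_3$ (so that $w_0$ and $w^+$ generate the same $A_3$-module modulo $U_0$). The other three parts are then formal consequences.
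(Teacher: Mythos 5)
Your proposal is correct and follows essentially the same route as the paper: expand $s_1^2$ by the cubic relation, reduce $s_3(s_2 s_1^{-1} s_2)s_3$ to a unit times $w^+$ modulo $U_0$ via Lemma \ref{leminverse}, transport to $w_0^{-1}$ by $\Phi$, deduce (2) formally from (1), and read (3)--(4) off the computations already carried out in the proof of Lemma \ref{lemA4uAu}, exactly as the paper does. The one point to make explicit in (1) is that you also need Lemma \ref{lemquasicom} (not just Lemma \ref{leminverse}) to move the factor $s_1^{-1}$ to the left of $s_2^{-1}s_1s_2^{-1}$, so that the surviving coefficient $c^2 s_1^{-1}$ sits on the left of $w^+$ as the statement $w_0 \in A_3^{\times} w^+ + U_0$ requires.
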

\begin{proof}
We have
$w_0 = s_3 (s_2 s_1^2 s_2) s_3 \in c s_3 s_2 s_1^{-1} s_2 s_3
+ R s_3 s_2 s_1 s_2 s_3 + R s_3 s_2^2 s_3$. Clearly $s_3 s_2^2 s_3 \in U_0$
and $s_3 (s_2 s_1 s_2) s_3 = s_3 (s_1 s_2 s_1) s_3 = s_1s_3  s_2  s_3s_1 \in U_0$.
Moreover, by lemmas \ref{leminverse} and \ref{lemquasicom},
 $s_3 (s_2 s_1^{-1} s_2) s_3 \in c s_3 s_1^{-1} (s_2^{-1} s_1 s_2^{-1}) s_3 
+ s_3 u_1 u_2 u_1 s_3 \subset c s_1^{-1} w^+ + U_0$ and
thus $w_0 \in A_3^{\times} w^+ + U_0$.
As a consequence, $w_0^{-1} =  s_3^{-1} s_2^{-1} s_1^{-2} s_2^{-1} s_3^{-1}
= \Phi(w_0) \in \Phi(A_3^{\times}) \Phi(w^+) + \Phi(U_0) = A_3^{\times} w^- + U_0$,
and
this proves (1). By definition we have $U^+ = A_3 w^+ + U_0 \subset A_3 (A_3^{\times}w_0 + U_0) + U_0
\subset A_3 w_0 + U_0$, and conversely $A_3 w_0 + U_0 \subset A_3(A_3^{\times} w^+ + U_0) + U_0
\subset U^+$ ; this proves (2). (3) and (4) are given by the proof of lemma \ref{lemA4uAu}.

\end{proof}

An immediate consequence is the following variation on theorem \ref{theodecA4}.

\begin{theorem} {\ } \label{theodecA4variante}
\begin{enumerate}
\item $A_4  =A_3 + A_3 s_3 A_3 + A_3 s_3^{-1} A_3 + A_3 s_3 s_2^{-1} s_3 A_3 + A_3 w_0 A_3
+ A_3 w_0^{-1} A_3$
\item $A_4  =A_3 + A_3 s_3 A_3 + A_3 s_3^{-1} A_3 + A_3 s_3 s_2^{-1} s_3 A_3 + A_3 w_0
+ A_3 w_0^{-1} $
\item $A_4  =A_3 + A_3 s_3 A_3 + A_3 s_3^{-1} A_3 + A_3 s_3 s_2^{-1} s_3 A_3 + w_0 A_3
+ w_0^{-1} A_3$
\end{enumerate}
\end{theorem}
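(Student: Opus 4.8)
The plan is to read each of the three identities straight off Theorem \ref{theodecA4} by replacing the two exceptional terms $w^{\pm}$ with $w_0^{\pm 1}$, the whole content being that this replacement changes neither the relevant left-, right-, nor two-sided $A_3$-spans once we work modulo $U_0$. First I record that in all three parts of Theorem \ref{theodecA4} the first four summands are exactly $U_0 = A_3 u_3 A_3 + A_3 s_3 s_2^{-1} s_3 A_3$, so that its three parts read
$$A_4 = U_0 + A_3 w^- A_3 + A_3 w^+ A_3 = U_0 + A_3 w^- + A_3 w^+ = U_0 + w^- A_3 + w^+ A_3,$$
respectively.

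Next I exploit the fact, recalled before Lemma \ref{lemauxA4w0}, that $w_0$ commutes with $s_1$ and $s_2$ in $B_4$, hence in $A_4$; since $s_1,s_2$ generate $A_3$, the element $w_0$ is central relative to $A_3$, whence $A_3 w_0 = w_0 A_3 = A_3 w_0 A_3$, and likewise for $w_0^{-1}$. Consequently the three right-hand sides in the statement coincide, and the three formulations of the theorem merely record this left/right/two-sided symmetry; it therefore suffices to establish any one of them. For part (2) I use Lemma \ref{lemauxA4w0}(1)--(2) directly: since $w_0 \in A_3^{\times} w^+ + U_0$ with an invertible coefficient, left multiplication by $A_3$ gives $A_3 w^+ + U_0 = A_3 w_0 + U_0 = U^+$, and applying $\Phi$ (which fixes $U_0$ and exchanges $w^+ \leftrightarrow w^-$, $w_0 \leftrightarrow w_0^{-1}$) yields $A_3 w^- + U_0 = A_3 w_0^{-1} + U_0$; substituting these into the middle identity above gives (2). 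Part (1) is obtained the same way after multiplying on both sides: writing $w^+ = \lambda w_0 + u$ with $\lambda \in A_3^{\times}$ and $u \in U_0$, and using that $A_3\lambda = A_3$ and that $U_0$ is an $A_3$-bimodule, one gets $A_3 w^+ A_3 + U_0 = A_3 w_0 A_3 + U_0$, and likewise for the minus term; combined with the centrality $A_3 w_0 A_3 = A_3 w_0$ this gives (1).

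The only step needing a little care is part (3), which asks for a \emph{right}-module statement while the relation $w_0 = c s_1^{-1} w^+ + u$ ($u \in U_0$) extracted from the proof of Lemma \ref{lemauxA4w0} carries its scalar $c s_1^{-1}$ on the \emph{left}. Here I bridge left and right through the centrality of $w_0$: multiplying $w_0 = c s_1^{-1} w^+ + u$ on the left by $s_1$ and moving $s_1$ past $w_0$ gives $w_0 s_1 = c w^+ + s_1 u$, whence $w^+ = c^{-1} w_0 s_1 - c^{-1} s_1 u$ and thus $w^+ A_3 \subset w_0 A_3 + U_0$; the reverse inclusion follows symmetrically from $w_0 = c w^+ s_1^{-1} + s_1 u s_1^{-1}$, so $w^+ A_3 + U_0 = w_0 A_3 + U_0$, and the $\Phi$-image handles $w^- A_3 + U_0 = w_0^{-1} A_3 + U_0$. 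I expect no genuine obstacle: the entire argument is a bookkeeping consequence of Theorem \ref{theodecA4}, Lemma \ref{lemauxA4w0}, and the centrality of $w_0$ with respect to $A_3$, which is precisely why the statement is announced as an immediate consequence.
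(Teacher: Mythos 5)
Your proof is correct and follows exactly the route the paper intends: the paper states this theorem as an immediate consequence of Theorem \ref{theodecA4} and Lemma \ref{lemauxA4w0}, using precisely the substitution $A_3 w^{\pm} + U_0 = A_3 w_0^{\pm 1} + U_0$ (via $\Phi$ for the minus case) together with the centrality of $w_0$ over $A_3$, i.e. $A_3 w_0 A_3 = A_3 w_0 = w_0 A_3$, which you spell out. The only quibbles are cosmetic: the scalar you extract from the proof of Lemma \ref{lemauxA4w0} is $c^2 s_1^{-1}$ rather than $c s_1^{-1}$ (immaterial, since only its invertibility and the form $(\mbox{central unit})\cdot s_1^{-1}$ are used), and your separate arguments for (1) and (3) are redundant once you have observed that all three right-hand sides coincide.
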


From this one easily gets the following generating set of $A_4$ as $A_3$-module. Another generating
set can be found in \cite{BM} \S 4B.

\begin{proposition} \label{propA4A327} As a left $A_3$-module, 
$A_4$ is generated by the 27 elements
 $$\{ 1,s_3^{-1} s_2 s_1^{-1} s_2 s_3^{-1}, s_3 s_2^{-1} s_1 s_2^{-1} s_3, s_3, s_3^{-1}, s_3^{\pm} s_2^{\pm}, s_3^{\pm} s_2^{\pm} s_1^{\pm}, $$
 {} 
 $$ s_3^{\pm} s_2^{-1} s_1 s_2^{-1}, s_3 s_2^{-1} s_3 ,s_3 s_2^{-1} s_3 s_1^{\pm}, s_3 s_2^{-1} s_3 s_1 s_2^{-1} s_1, s_3 s_2^{-1} s_3 s_1^{\pm} s_2^{\pm} \}.
 $$
\end{proposition}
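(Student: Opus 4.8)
The plan is to feed the bimodule decomposition of Theorem \ref{theodecA4}(2),
$$A_4 = A_3 + A_3 s_3 A_3 + A_3 s_3^{-1} A_3 + A_3 s_3 s_2^{-1} s_3 A_3 + A_3 w^- + A_3 w^+,$$
into a description of $A_4$ as a left $A_3$-module, by replacing each summand with a finite set of left generators. The three summands $A_3$, $A_3 w^-$, $A_3 w^+$ are already cyclic, contributing $1$, $w^- = s_3^{-1}s_2 s_1^{-1}s_2 s_3^{-1}$ and $w^+ = s_3 s_2^{-1}s_1 s_2^{-1}s_3$ from the list. So everything comes down to producing left generators for the three $A_3$-bimodule summands $A_3 s_3 A_3$, $A_3 s_3^{-1}A_3$ and $A_3 s_3 s_2^{-1}s_3 A_3$; I expect the last of these to be the only genuine difficulty.

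For $A_3 s_3^{\pm}A_3$ the mechanism is a pull-through. Theorem \ref{theodecA3}(4) shows that $A_3$ is spanned as a left $u_1$-module by the eight elements $E = \{1,\, s_2^{\pm},\, s_2^{\pm}s_1^{\pm},\, s_2^{-1}s_1 s_2^{-1}\}$, i.e. $A_3 = u_1 E$. Writing the right-hand factor of $A_3 s_3^{\pm}A_3$ this way and using that $s_1$ commutes with $s_3$, so that $s_3^{\pm}u_1 = u_1 s_3^{\pm}$, one gets
$$A_3 s_3^{\pm}A_3 = A_3 s_3^{\pm}u_1 E = A_3 u_1 s_3^{\pm}E = \sum_{e \in E} A_3 s_3^{\pm}e.$$
Thus $A_3 s_3 A_3$ and $A_3 s_3^{-1}A_3$ are generated by $\{s_3 e\}_{e\in E}$ and $\{s_3^{-1}e\}_{e\in E}$, which are exactly the sixteen elements of $s_3$- and $s_3^{-1}$-type in the statement. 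In particular $A_3 u_3 A_3 = A_3 + A_3 s_3 A_3 + A_3 s_3^{-1}A_3$ is then already spanned by $\{1\}\cup s_3 E\cup s_3^{-1}E$, which I will use to close off the last summand.

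The heart of the proof is $A_3 s_3 s_2^{-1}s_3 A_3$; write $v = s_3 s_2^{-1}s_3$. By Theorem \ref{theodecA3}(3) together with the automorphism $s_i \mapsto s_{3-i}$ of $A_3$, the algebra is spanned as a left $u_2$-module by $E''' = \{1,\, s_1^{\pm},\, s_1^{\pm}s_2^{\pm},\, s_1 s_2^{-1}s_1\}$, i.e. $A_3 = u_2 E'''$, whence $A_3 v A_3 = A_3 (v u_2) E'''$. Everything then reduces to the pull-through inclusion $v u_2 \subset A_3 v + A_3 u_3 A_3$, which is the crux. Moving a \emph{positive} $s_2$ across $v$ is circular (it reproduces $s_3^{-1}s_2 s_3^{-1}$ and $v s_2$ in a closed loop via Lemma \ref{leminverse}), so instead I move $s_2^{-1}$: subtracting the second and third identities of Lemma \ref{lemdecomp1212} gives $s_2 s_1^{-1}s_2 s_1^{-1} - s_1^{-1}s_2 s_1^{-1}s_2 \in u_1 u_2 + u_2 u_1$, and shifting by $s_i \mapsto s_{i+1}$ yields $s_3 s_2^{-1}s_3 s_2^{-1} - s_2^{-1}s_3 s_2^{-1}s_3 \in u_2 u_3 + u_3 u_2 \subset A_3 u_3 A_3$. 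Hence $v s_2^{-1} \equiv s_2^{-1}v \pmod{A_3 u_3 A_3}$; since $u_2 = R\{1, s_2^{-1}, s_2^{-2}\}$, iterating this gives $v u_2 \subset A_3 v + A_3 u_3 A_3$.

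Feeding this back, $A_3 v A_3 = A_3(v u_2)E''' \subset A_3 v E''' + A_3 u_3 A_3 = \sum_{e\in E'''} A_3 v e + A_3 u_3 A_3$, and since $A_3 u_3 A_3$ is already covered by the earlier generators, the eight elements $\{v e\}_{e \in E'''}$ — namely $s_3 s_2^{-1}s_3$, $s_3 s_2^{-1}s_3 s_1^{\pm}$, $s_3 s_2^{-1}s_3 s_1^{\pm}s_2^{\pm}$ and $s_3 s_2^{-1}s_3 s_1 s_2^{-1}s_1$ — complete the list. Altogether this accounts for $1 + 1 + 1 + 8 + 8 + 8 = 27$ generators, matching the statement, the only real obstacle being the delicate pull-through $v u_2 \subset A_3 v + A_3 u_3 A_3$ and the observation that it must be carried out with $s_2^{-1}$ rather than $s_2$ to avoid circularity.
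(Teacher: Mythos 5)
Your proof is correct, and on the one genuinely hard summand it takes a different, more economical route than the paper. The skeleton coincides: the paper also starts from theorem \ref{theodecA4}(2), gets $A_3$, $A_3w^-$, $A_3w^+$ for free, and handles $A_3 s_3^{\pm 1} A_3$ exactly as you do, expanding the right-hand factor by theorem \ref{theodecA3}(4) and commuting $A_2=u_1$ past $s_3^{\pm 1}$. The divergence is in $A_3 s_3 s_2^{-1} s_3 A_3$. The paper keeps $u_1$ outermost in the right-hand factor, so it must treat all words $s_3 s_2^{-1} s_3\, s_1^{\alpha} s_2^{\beta} s_1^{\gamma}$ with $\alpha,\gamma \in \{0,\pm 1\}$, and it strips the trailing $s_1^{\gamma}$ through a lengthy case analysis on $(\alpha,\beta,\gamma)$, invoking braid relations and lemmas \ref{leminverse}, \ref{lemquasicom} and \ref{lemdecomp1212} case by case. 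You instead flip theorem \ref{theodecA3}(3) by the automorphism $s_1 \leftrightarrow s_2$ so that $u_2$ sits adjacent to $v = s_3 s_2^{-1} s_3$, writing $A_3 = u_2 E'''$, and then a single quasi-commutation $v s_2^{-1} \in s_2^{-1} v + u_2 u_3 + u_3 u_2$ --- the same consequence of lemma \ref{lemdecomp1212} that the paper invokes piecemeal, where it appears as $s_3 s_2^{-1} s_3 s_2^{-1} \in s_2^{-1} s_3 s_2^{-1} s_3 + u_2 u_3 + u_3 u_2$ --- iterated over $u_2 = R + R s_2^{-1} + R s_2^{-2}$, collapses the entire case analysis into one line (the iteration is legitimate because $A_3 u_3 A_3$ is a two-sided $A_3$-submodule). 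Both arguments rest on the same ingredients, so yours is not more general, but it is considerably shorter and it explains the shape of the generating set: the eight exceptional generators are precisely $v$ times the left $u_2$-module generators $E'''$ of $A_3$, a structure the paper's proof recovers only a posteriori. Your side remark on why one must transport $s_2^{-1}$ rather than $s_2$ (lemma \ref{leminverse} only trades $v s_2$ against $c\, s_3^{-1} s_2 s_3^{-1}$ and back) is also accurate.
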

\begin{proof}
We denote $S$ the set of 27 elements of the statement and $L$ its span as a $A_3$-module. We have $A_4 = A_3 + A_3s_3^{-1} s_2 s_1^{-1} s_2 s_3^{-1} + A_3 s_3 s_2^{-1} s_1 s_2^{-1} s_3 + A_3 s_3 A_3
+ A_3 s_3^{-1} A_3 + A_3 s_3 s_2^{-1} s_3 A_3$
by theorem \ref{theodecA4}, and clearly $A_3 + A_3s_3^{-1} s_2 s_1^{-1} s_2 s_3^{-1} + A_3 s_3 s_2^{-1} s_1 s_2^{-1} s_3 \subset L$. Moreover, since $A_3 = A_2 + A_2 s_2 A_2 + A_2 s_2^{-1} A_2 + A_2 s_2^{-1} s_1 s_2^{-1}$
we have 
$$A_3 s_3^{\alpha} A_3 = A_3 s_3^{\alpha}  + \sum_{\stackrel{\eps \in \{-1,0,1 \}}{\beta \in \{ -1,1 \}}}A_3 s_3^{\alpha}  s_2^{\beta} s_1^{\eps} + A_3 s_3^{\alpha}  s_2^{-1} s_1 s_2^{-1}\subset L
$$
for any $\alpha \in \{ -1,1 \}$.
It remains to prove $A_3 s_3 s_2^{-1} s_3 A_3 \subset L$.
Since $A_3 = A_2 + A_2 s_2 A_2 + A_2 s_2^{-1} A_2 +  s_2^{-1} s_1 s_2^{-1}A_2$, we have
$A_3 s_3 s_2^{-1} s_3 A_3
=A_3 s_3 s_2^{-1} s_3 A_2 +A_3 s_3 s_2^{-1} s_3 A_2 s_2^{-1} A_2 +A_3 s_3 s_2^{-1} s_3 A_2 s_2 A_2 +A_3 s_3 s_2^{-1} s_3  s_2^{-1} s_1 s_2^{-1}A_2$.
Clearly $A_3 s_3 s_2^{-1} s_3 A_2$ is $A_3$-spanned by the $s_3 s_2^{-1} s_3 s_1^{\eps}$ for $\eps \in \{ 0, 1, -1 \}$
hence $A_3 s_3 s_2^{-1} s_3 A_2\subset L$.
Now $s_3 s_2^{-1} s_3  s_2^{-1} \in s_2^{-1} s_3 s_2^{-1} s_3 + u_2 u_3 + u_3 u_2$ by lemma \ref{lemdecomp1212}, hence
$A_3 s_3 s_2^{-1} s_3  s_2^{-1} s_1 s_2^{-1}A_2 \subset A_3 s_3 s_2^{-1} s_3s_1 s_2^{-1}A_2 + A_3 u_3 u_2 s_1 s_2^{-1}A_2 + A_3 u_3 s_1 s_2^{-1}A_2$,
that is 
$$A_3 s_3 s_2^{-1} s_3  s_2^{-1} s_1 s_2^{-1}A_2 \subset A_3 s_3 s_2^{-1} s_3s_1 s_2^{-1}A_2 + A_3 u_3 A_3.
$$ 
We thus only need to show $A_3 s_3 s_2^{-1} s_3 A_2 s_2^{\beta} A_2 \subset L$ for $\beta \in \{ -1 ,1 \}$. This module is $A_3$-spanned by 
the $s_3 s_2^{-1} s_3 s_1^{\alpha} s_2^{\beta} s_1^{\gamma}$ for $\alpha, \gamma \in \{ 0, 1, -1 \}$. The elements belong to $S$
when $\gamma = 0$, so we can assume $\gamma \in \{-1,1 \}$. When $\alpha = 0$, in
case $\beta = 1$ we have $s_3 (s_2^{-1} s_3  s_2) s_1^{\gamma} = s_3 s_3 s_2 s_3^{-1} s_1^{\gamma} \in u_3 s_2 s_3^{-1} s_1^{\gamma}$.
This latter module is spanned by the $s_3^{\eps} s_2 s_3^{-1} s_1^{\gamma}$ for $\eps \in \{ -1,0,1 \}$. In case $\eps = 0$
such an element belongs to $A_3 u_3 A_3 \subset L$ ; when $\eps = 1$ we can use $(s_3 s_2 s_3^{-1}) s_1^{\gamma} =
s_2^{-1} s_3 s_2  s_1^{\gamma} \in A_3 s_3 s_2 s_1^{\gamma} \in L$ ; when $\eps = -1$ we have $s_3^{-1} s_2 s_3^{-1} s_1^{\gamma} \in 
A_3 s_3 s_2^{-1} s_3 s_1^{\gamma}$ by lemmas \ref{leminverse} and \ref{lemquasicom}, and $s_3 s_2^{-1} s_3 s_1^{\gamma} \in S$.
We can thus assume $\alpha \neq 0$.

We consider first the case $\gamma = - \alpha$. We have
$s_3 s_2^{-1} s_3 s_1^{\alpha} s_2^{\beta} s_1^{-\alpha}  = 
s_3 s_2^{-1} s_3 s_2^{-\alpha} s_1^{\beta} s_2^{\alpha}$. Then, either
$\alpha = 1$ and, by lemma \ref{lemdecomp1212},
$$(s_3 s_2^{-1} s_3 s_2^{-1}) s_1^{\beta} s_2 
\in s_2^{-1} s_3 s_2^{-1} s_3 s_1^{\beta} s_2 + 
u_2u_3s_1^{\beta} s_2
+
u_3u_2s_1^{\beta} s_2  \subset L,
$$
or $\alpha = -1$ and 
$s_3 (s_2^{-1} s_3 s_2) s_1^{\beta} s_2^{-1}=
s_3 s_3 s_2 s_3^{-1} s_1^{\beta} s_2^{-1}\in u_3 s_2 s_3^{-1} s_1^{\beta} s_2^{-1}$.
This latter module is spanned by the $s_3^{\eps} s_2 s_3^{-1} s_1^{\beta} s_2^{-1}$
which clearly belong to $L$ for $\eps = 0$ and, because of $s_3 s_2 s_3^{-1} = s_2^{-1} s_3 s_2$,
for $\eps = 1$ ; in case $\eps = -1$ it is readily shown to belong to $L$ by lemmas \ref{leminverse} and \ref{lemquasicom}
applied to $s_3^{-1} s_2 s_3^{-1}$.

We can now assume $\gamma = \alpha$. In case $\beta = \alpha = \gamma$, we have
$s_3 s_2^{-1} s_3 (s_1^{\alpha} s_2^{\alpha} s_1^{\alpha} ) 
=s_3s_2^{-1} s_3 s_2^{\alpha} s_1^{\alpha} s_2^{\alpha}  $
and, when $\alpha = 1$ we have $s_3(s_2^{-1} s_3 s_2) s_1 s_2 
= s_3s_3 s_2 s_3^{-1} s_1 s_2  \in u_3 s_2 s_3^{-1} s_1 s_2 \subset L$ by similar arguments as for 
$u_3 s_2 s_3^{-1} s_1^{\beta} s_2^{-1}$ ; when $\alpha = -1$,
we have, by lemma \ref{lemdecomp1212},
$$(s_3s_2^{-1} s_3 s_2^{-1}) s_1^{-1} s_2^{-1} 
\in s_2^{-1} s_3s_2^{-1} s_3  s_1^{-1} s_2^{-1} 
+ u_3u_2 s_1^{-1} s_2^{-1} 
+ u_2u_3 s_1^{-1} s_2^{-1} \subset L. $$

We thus
 only need to consider the $s_3 s_2^{-1} s_3 s_1^{\alpha} s_2^{-\alpha} s_1^{\alpha}$.
By lemmas \ref{leminverse} and \ref{lemquasicom}, we have
$ s_1^{-1} s_2 s_1^{-1} \in s_2 (s_1 s_2^{-1} s_1)  + u_2 u_1 u_2$, and $s_3 s_2^{-1} s_3 u_2 u_1 u_2$
belongs to the $A_3$-span of our list by our previous arguments. It follows that it only remains to consider
$s_3 s_2^{-1} s_3s_1 s_2^{-1} s_1$, which belongs to our list,
and $s_3 (s_2^{-1} s_3 s_2) s_1 s_2^{-1} s_1 = s_3^2 s_2 s_3^{-1} s_1 s_2^{-1} s_1$, which lies in the linear
span of the $s_3^{\eps} s_2 s_3^{-1} s_1 s_2^{-1} s_1$ for $\eps \in \{ -1,0,1 \}$.
Clearly this element belongs to $L$ in case $\eps = 0$, when $\eps = 1$
it also belongs to $L$ because of $(s_3 s_2 s_3^{-1}) s_1 s_2^{-1} s_1  
=s_2^{-1} s_3 s_2 s_1 s_2^{-1} s_1  \in A_3 u_3 A_3 \subset L$,
and when $\eps = -1$ lemmas \ref{leminverse} and \ref{lemquasicom} applied to
$s_3^{-1} s_2 s_3^{-1}$ show that 
$$
s_3^{-1} s_2 s_3^{-1} s_1 s_2^{-1} s_1 \in
A_3 s_3 s_2^{-1} s_3 s_1 s_2^{-1} s_1 + A_3 u_3 A_3 \subset L,
$$
and this concludes the proof.

\end{proof}

For subsequent use we prove here the following lemma.

\begin{lemma} \label{lemw0carre} $w_0^2 \in A_3^{\times} w_0^{-1} + U^+$.
\end{lemma}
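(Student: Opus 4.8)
The plan is to pin down the component of $w_0^2$ along $w_0^{-1}$ in the decomposition
$$A_4 = U^+ + A_3 w_0^{-1}$$
and to prove that its coefficient is invertible. That this decomposition holds follows from theorem \ref{theodecA4variante}(1): there $A_4 = A_3 + A_3 s_3 A_3 + A_3 s_3^{-1} A_3 + A_3 s_3 s_2^{-1} s_3 A_3 + A_3 w_0 A_3 + A_3 w_0^{-1} A_3$, where the first four summands lie in $U_0$, where $A_3 w_0 A_3 = A_3 w_0 \subset U^+$ and $A_3 w_0^{-1} A_3 = A_3 w_0^{-1}$ because $w_0^{\pm}$ commute with $A_3$. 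Hence $w_0^2 = u + \xi w_0^{-1}$ for some $u \in U^+$ and $\xi \in A_3$, and the content of the lemma is precisely that $\xi \in A_3^{\times}$.

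First I would reduce $w_0^2$ to a single term. Writing $g = s_2 s_1^2 s_2 \in A_3$ we have $w_0 = s_3 g s_3$, hence $w_0^2 = s_3 g\, s_3^2\, g s_3$. Using $s_3^2 = a s_3 + b + c s_3^{-1}$ I split $w_0^2 = a\, s_3 g s_3 g s_3 + b\, s_3 g^2 s_3 + c\, s_3 g s_3^{-1} g s_3$. In the first summand $s_3 g s_3 = w_0$, so it equals $a\, g\,(w_0 s_3)$, and $w_0 s_3 = s_3 g s_3^2 = a\,s_3 g s_3 + b\,s_3 g + c\, s_3 g s_3^{-1} \in U^+$ by lemma \ref{lemauxA4w0}(3),(4) (note $s_3 g \in s_3 A_3 \subset U_0$ and $s_3 g s_3^{-1}\in U_0$); thus the first summand lies in $A_3 U^+ = U^+$. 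The second summand is $b\, s_3 g^2 s_3 \in s_3 A_3 s_3 \subset U^+$ by lemma \ref{lemauxA4w0}(4). Therefore, modulo $U^+$,
$$w_0^2 \equiv c\, s_3 g s_3^{-1} g s_3,$$
and everything rests on this last term.

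To analyse it, recall $s_3 g s_3^{-1} \in U_0$ by lemma \ref{lemauxA4w0}(3); the task is to extract the contribution of $(s_3 g s_3^{-1})\,g s_3$ to the top layer $A_3 w_0^{-1}$. I would expand $g = a s_2 s_1 s_2 + b s_2^2 + c s_2 s_1^{-1} s_2$ and compute $s_3 g s_3^{-1}$ termwise with lemma \ref{lemsplusmoins}: one finds $s_3(s_2 s_1 s_2)s_3^{-1} = s_1 s_2^{-1} s_3 s_2 s_1$ and $s_3 s_2^2 s_3^{-1} = a s_2^{-1} s_3 s_2 + b + c s_2^{-1} s_3^{-1} s_2$, both visibly in $A_3 u_3 A_3 \subset U_0$, and, after right multiplication by $g s_3$, still in $U^+$ by the argument of lemma \ref{lemA4uAu}. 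Only the summand $c\, s_3(s_2 s_1^{-1} s_2) s_3^{-1}$ can feed the $w_0^{-1}$-layer; reducing it times $g s_3$ with lemmas \ref{leminverse} and \ref{lemquasicom} — exactly as in the proof of lemma \ref{lemauxA4w0}(1), but with $s_3^{-1}$ in place of $s_3$ — I expect to produce the element $w^- = s_3^{-1} s_2 s_1^{-1} s_2 s_3^{-1}$ multiplied by a power of $c$ times a word in the $s_i^{\pm 1}$, with all other contributions absorbed in $U^+$. Since such a coefficient is a product of units of $R$ and of invertible generators, it lies in $A_3^{\times}$; feeding in $w^- \in A_3^{\times} w_0^{-1} + U_0$ from lemma \ref{lemauxA4w0}(1) then gives $w_0^2 \equiv \xi w_0^{-1} \pmod{U^+}$ with $\xi \in A_3^{\times}$, as required.

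The genuine obstacle is neither the splitting nor locating $w^-$, but verifying that $\xi$ does \emph{not} degenerate, i.e. that the successive reductions do not annihilate the $w_0^{-1}$-contribution; this is the algebraic shadow of the fact that $w_0$ originates from the centre of $B_4$, and it is what will make the filtration of section \ref{sectA5} terminate. As a safeguard against sign and cancellation errors in the hand computation, I would note that at this stage $A_4$ is already known to be a free $R$-module of finite rank (proposition \ref{propA4A327} together with the argument of \cite{BMR}), hence embeds into its semisimple scalar extension $A_4 \otimes_R K \simeq \prod_i \Mat_{n_i}(K)$; one can therefore guess the explicit identity $w_0^2 = \xi w_0^{-1} + u$ with explicit $\xi \in A_3^{\times}$ and $u \in U^+$ and verify it block by block in the matrix models, exactly as the identities of lemmas \ref{lemdecomp212121} and \ref{lemdecomp1212} were checked, invertibility of $\xi$ amounting to $w_0$ (equivalently the central element $(s_1s_2s_3)^4$) acting by an invertible operator on each irreducible constituent.
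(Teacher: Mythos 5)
Your setup is correct and in fact streamlines the paper's opening: the decomposition $A_4 = U^+ + A_3 w_0^{-1}$ does follow from theorem \ref{theodecA4variante} and lemma \ref{lemauxA4w0}, the split $w_0^2 = a\, s_3 g s_3 g s_3 + b\, s_3 g^2 s_3 + c\, s_3 g s_3^{-1} g s_3$ (with $g = s_2 s_1^2 s_2$) is right, and your absorption of the first two summands, and then of the $s_2 s_1 s_2$- and $s_2^2$-parts of the remaining one, into $U^+$ checks out, using that $U^+$ is an $A_3$-bimodule together with lemma \ref{lemauxA4w0} (3),(4). But at the point where the entire difficulty of the lemma is concentrated you stop proving and start hoping: the claim that $s_3 (s_2 s_1^{-1} s_2) s_3^{-1} (s_2 s_1^2 s_2) s_3$ reduces, ``exactly as in the proof of lemma \ref{lemauxA4w0} (1)'', to a unit multiple of $w^-$ modulo $U^+$ \emph{is} the content of the lemma, and it is not a short analogue of that proof. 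In lemma \ref{lemauxA4w0} (1) the word $s_3 (s_2 s_1^{-1} s_2) s_3$ has matching outer exponents, and a single application of lemmas \ref{leminverse} and \ref{lemquasicom} lands directly on $w^+$. Here the exponent pattern on the $s_3$'s is $(+,-,+)$ and there is the extra middle factor $s_2 s_1^2 s_2$; after one application of the braid identity $s_3(s_2s_1^{-1}s_2)s_3^{-1} = s_2^{-1}s_1^{-1}(s_3s_2^{-1}s_3)s_1s_2$ and of lemma \ref{leminverse}, one is left with words such as $s_3^{-1} s_2 s_3^{-1}\, y\, s_3$ with $y \in A_3$, whose membership in $U^+$ is not given by any cited lemma and requires further rounds of decomposing $y$ by theorem \ref{theodecA3}, quasi-commutation, braid moves and expansion of squares --- this is exactly what the paper's page-long computation carries out, and what your proposal omits. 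Until that reduction is done, and done so that the surviving coefficient of $w^-$ is visibly a product of powers of $c$ and braid letters (hence a unit), you have reformulated the lemma, not proved it.

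Your fallback --- guess an explicit identity $w_0^2 = \xi w_0^{-1} + u$ and verify it in the matrix models of $A_4 \otimes_R K$ --- is legitimate in principle and not circular, since proposition \ref{propA4A327} and the argument of \cite{BMR} precede this lemma, so $A_4$ is already known to be $R$-free and to embed into its semisimple scalar extension; this is consistent with how lemmas \ref{lemdecomp212121} and \ref{lemdecomp1212} were obtained. But it is also only announced: no explicit $\xi \in A_3^{\times}$ and $u \in U^+$ are written down, and producing them essentially requires doing the computation you skipped (or a genuinely heavy calculation in $K G_{25}$, much larger than the $A_3$ checks). Worse, your criterion for invertibility is wrong: $w_0$, being the image of a braid, acts invertibly on every irreducible constituent no matter what, so ``$w_0$ acts by an invertible operator'' says nothing about $\xi$; invertibility must be read off from the explicit form of $\xi$ itself. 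Note also that $A_4 = U^+ + A_3 w_0^{-1}$ need not be a direct sum, so ``the coefficient of $w_0^{-1}$'' is not well defined --- the lemma asserts the existence of one decomposition with invertible $\xi$ --- which is a second reason the problem cannot be converted into a spectral statement about $w_0$.
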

\begin{proof}
We have $w_0^2 = s_3 s_2 (s_1^2) s_2 s_3^2 s_2 s_1^2 s_2 s_3
\in R^{\times} s_3 s_2 s_1^{-1} s_2 s_3^2 s_2 s_1^2 s_2 s_3
+ R s_3 s_2 s_1 s_2 s_3^2 s_2 s_1^2 s_2 s_3
+ R s_3 s_2^2 s_3^2 s_2 s_1^2 s_2 s_3$, and $s_3 (s_2 s_1 s_2) s_3^2 s_2 s_1^2 s_2 s_3
= s_3 s_1 s_2 s_1 s_3^2 s_2 s_1^2 s_2 s_3
= s_1 (s_3  s_2  s_3) s_3 s_1 s_2 s_1^2 s_2 s_3
= s_1 s_2  (s_3  s_2 s_3) s_1 s_2 s_1^2 s_2 s_3 
= s_1 s_2  s_2  s_3 s_2 s_1 s_2 s_1^2 s_2 s_3 
\in U_0^+$ by lemma \ref{lemauxA4w0},
while $s_3 s_2^2 (s_3^2) s_2 s_1^2 s_2 s_3 \in
R s_3 s_2^2 s_3^{-1} s_2 s_1^2 s_2 s_3
+ R s_3 s_2^2 s_3 s_2 s_1^2 s_2 s_3
+R s_3 s_2^3 s_2 s_1^2 s_2 s_3$,
clearly  $s_3 s_2^3 s_2 s_1^2 s_2 s_3 \in U^+$ by lemma \ref{lemauxA4w0},
$$
\begin{array}{clcl} & s_3 s_2^2 s_3 s_2 s_1^2 s_2 s_3
&=& s_3 s_2 (s_2  s_3 s_2) s_1^2 s_2 s_3 \\
= &s_3 s_2 s_3  s_2 s_3 s_1^2 s_2 s_3
&=& (s_3 s_2 s_3)  s_2  s_1^2 (s_3 s_2 s_3) \\
= & s_2 s_3 s_2  s_2  s_1^2 s_2 s_3 s_2 \in U^+
\end{array}$$ by lemma \ref{lemauxA4w0},
and finally 
$$(s_3 s_2^2 s_3^{-1}) s_2 s_1^2 s_2 s_3 = 
s_2^{-1} (s_3^2) s_2^2 s_1^2 s_2 s_3 \in 
R s_2^{-1} s_3^{-1} s_2^2 s_1^2 s_2 s_3 + R s_2^{-1} s_3 s_2^2 s_1^2 s_2 s_3 +
R  s_2 s_1^2 s_2 s_3 \subset U^+$$ by lemma \ref{lemauxA4w0}.

Thus, $w_0^2 \in R^{\times} s_3 s_2 s_1^{-1} s_2 s_3^2 s_2 s_1^2 s_2 s_3 + U^+$.
Now, $s_3 s_2 s_1^{-1} s_2 s_3^2 s_2 (s_1^2) s_2 s_3
\in R^{\times} 
s_3 s_2 s_1^{-1} s_2 s_3^2 s_2 s_1^{-1} s_2 s_3
+ R s_3 s_2 s_1^{-1} s_2 s_3^2 s_2 s_1 s_2 s_3
+ R s_3 s_2 s_1^{-1} s_2 s_3^2 s_2^2 s_3$.
We have $s_3 s_2 s_1^{-1} s_2 s_3^2 (s_2 s_1 s_2) s_3 =
s_3 s_2 s_1^{-1} s_2 s_3^2 s_1 s_2 s_1 s_3
= s_3 s_2 (s_1^{-1} s_2 s_1) s_3^2  s_2  s_3 s_1
= s_3 s_2^2  s_1 (s_2^{-1} s_3^2  s_2)  s_3 s_1
= s_3 s_2^2  s_1 s_3 s_2^2  s_3^{-1}  s_3 s_1
= s_3 s_2^2  s_1 s_3 s_2^2   s_1 \in U^+$   by lemma \ref{lemauxA4w0}.
Now $s_3 s_2 s_1^{-1} s_2 (s_3^2) s_2^2 s_3 
\in R s_3 s_2 s_1^{-1} s_2 s_3^{-1} s_2^2 s_3 
+ R s_3 s_2 s_1^{-1} s_2 s_3 s_2^2 s_3 
+ R s_3 s_2 s_1^{-1} s_2^3 s_3 $ ;
we have $s_3 s_2 s_1^{-1} s_2^3 s_3 \in U^+$ by lemma \ref{lemauxA4w0},
$s_3 s_2 s_1^{-1} s_2 s_3 s_2^2 s_3 
=s_3 s_2 s_1^{-1} (s_2 s_3 s_2) s_2 s_3  
=s_3 s_2 s_1^{-1} s_3 s_2 s_3 s_2 s_3  
=(s_3 s_2 s_3)s_1^{-1}  s_2 (s_3 s_2 s_3)  
=s_2 s_3 s_2 s_1^{-1}  s_2 s_2 s_3 s_2 \in U^+ $  by lemma \ref{lemauxA4w0},
and $s_3 s_2 s_1^{-1} s_2 (s_3^{-1} s_2^2 s_3)
= s_3 s_2 s_1^{-1} s_2 s_2 s_3^2 s_2^{-1}
\in s_3 s_2 s_1^{-1} s_2^2 s_2 (R + R s_3 + R s_3^{-1}) s_2^{-1} \subset U^+$
by lemma \ref{lemauxA4w0}.

Thus $w_0^2 \in R^{\times} 
s_3 s_2 s_1^{-1} s_2 s_3^2 s_2 s_1^{-1} s_2 s_3 + U^+$. Now,
$$
s_3 s_2 s_1^{-1} s_2 (s_3^2) s_2 s_1^{-1} s_2 s_3
\in R^{\times} s_3 s_2 s_1^{-1} s_2 s_3^{-1} s_2 s_1^{-1} s_2 s_3
+ R s_3 s_2 s_1^{-1} s_2 s_3 s_2 s_1^{-1} s_2 s_3
+ R s_3 s_2 s_1^{-1} s_2^2 s_1^{-1} s_2 s_3.$$
 We have 
$$
\begin{array}{lclcl}
s_3 s_2 s_1^{-1} (s_2 s_3 s_2) s_1^{-1} s_2 s_3 
&=& s_3 s_2 s_1^{-1} s_3 s_2 s_3 s_1^{-1} s_2 s_3
&=& (s_3 s_2  s_3) s_1^{-1} s_2  s_1^{-1}(s_3 s_2 s_3)\\ & 
=& s_2 s_3  s_2 s_1^{-1} s_2  s_1^{-1}s_2 s_3 s_2 & \in & U^+ \\
\end{array}
$$  by lemma \ref{lemauxA4w0},
$s_3 s_2 s_1^{-1} s_2^2 s_1^{-1} s_2 s_3 \in U^+$ by lemma \ref{lemauxA4w0},
hence $w_0^2 \in R^{\times} s_3 s_2 s_1^{-1} s_2 s_3^{-1} s_2 s_1^{-1} s_2 s_3 + U^+$.
Using $s_2 s_1^{-1} s_2 \in A_2^{\times} s_2^{-1} s_1 s_2^{-1} + u_1 u_2 u_1$
(see lemmas \ref{leminverse} and \ref{lemquasicom}),
we get 
$ s_3 (s_2 s_1^{-1} s_2) s_3^{-1} s_2 s_1^{-1} s_2 s_3
\in A_2^{\times} s_3 s_2^{-1} s_1 s_2^{-1} s_3^{-1} s_2 s_1^{-1} s_2 s_3 
+ s_3 u_1 u_2 u_1 s_3^{-1} s_2 s_1^{-1} s_2 s_3$.
Since 
$$s_3 u_1 u_2 u_1 s_3^{-1} s_2 s_1^{-1} s_2 s_3
= u_1 (s_3  u_2  s_3^{-1})u_1 s_2 s_1^{-1} s_2 s_3
\subset u_1 s_2^{-1} u_3 s_2u_1 s_2 s_1^{-1} s_2 s_3 \subset U^+$$ by  lemma \ref{lemauxA4w0},
we have $w_0^2 \in A_2^{\times} s_3 s_2^{-1} s_1 s_2^{-1} s_3^{-1} s_2 s_1^{-1} s_2 s_3  + U^+$.
Now 
$$
\begin{array}{llclcl}
& s_3 s_2^{-1} s_1 (s_2^{-1} s_3^{-1} s_2) s_1^{-1} s_2 s_3
&=& s_3 s_2^{-1} s_1 s_3 s_2^{-1} s_3^{-1} s_1^{-1} s_2 s_3
&=& s_3 s_2^{-1} s_1 s_3 s_2^{-1}  s_1^{-1}(s_3^{-1} s_2 s_3)\\
= &s_3 s_2^{-1} s_1 s_3 (s_2^{-1}  s_1^{-1}s_2) s_3 s_2^{-1}
&=& s_3 s_2^{-1} s_1 s_3 s_1  s_2^{-1}s_1^{-1} s_3 s_2^{-1}
&=& s_3 s_2^{-1} s_3  s_1^2   s_2^{-1}s_1^{-1} s_3 s_2^{-1}
\end{array}
$$
and, using $s_3 s_2^{-1} s_3 \in u_2^{\times} s_3^{-1} s_2 s_3^{-1} + u_2 u_3 u_2$,
we get 
$$s_3 s_2^{-1} s_3  s_1^2   s_2^{-1}s_1^{-1} s_3 s_2^{-1}
\in u_2^{\times} s_3^{-1} s_2 s_3^{-1}  s_1^2   s_2^{-1}s_1^{-1} s_3 s_2^{-1}
+ u_2 u_3 u_2  s_1^2   s_2^{-1}s_1^{-1} s_3 s_2^{-1}.$$
We have $u_2 u_3 u_2  s_1^2   s_2^{-1}s_1^{-1} s_3 s_2^{-1} \in U^+$ by lemma
 \ref{lemauxA4w0}, and 
 $$s_3^{-1} s_2 s_3^{-1}  s_1^2   s_2^{-1}s_1^{-1} s_3 s_2^{-1}
= s_3^{-1} s_2   s_1^2 (s_3^{-1}  s_2^{-1}s_3) s_1^{-1}  s_2^{-1}
= s_3^{-1} s_2   s_1^2 s_2  s_3^{-1}s_2^{-1} s_1^{-1}  s_2^{-1}.$$

Thus $w_0^2 \in A_3^{\times} s_3^{-1} s_2   s_1^2 s_2  s_3^{-1}(s_2^{-1} s_1^{-1}  s_2^{-1})
+ U^+$. Since $s_3^{-1} s_2   (s_1^2) s_2  s_3^{-1}
\in R^{\times} s_3^{-1} s_2   s_1^{-1} s_2  s_3^{-1}
+ R s_3^{-1} s_2   s_1 s_2  s_3^{-1} + R s_3^{-1} s_2^2  s_3^{-1}$
and clearly $s_3^{-1} s_2^2  s_3^{-1}\in U_0$,
$s_3^{-1} (s_2   s_1 s_2)  s_3^{-1} = s_3^{-1} s_1   s_2 s_1  s_3^{-1}
= s_1   s_3^{-1}  s_2   s_3^{-1}s_1 \in U_0$,
we have $w_0^2 \in A_3^{\times} w^- (s_2^{-1} s_1^{-1}  s_2^{-1}) + U^+$,
hence $w_0^2 \in A_3^{\times} w_0^{-1} (s_2^{-1} s_1^{-1}  s_2^{-1}) + U^+$
by lemma \ref{lemauxA4w0} (1). Since $w_0$ commutes with $s_1$ and $s_2$
this yields  $w_0^2 \in A_3^{\times} w_0^{-1}  + U^+$.

\end{proof}

\section{The algebra $A_4$ as a $\langle s_1,s_3 \rangle$ (bi)module}
\label{sectA4B}

Let $B = \langle s_1,s_3 \rangle$ denote the subalgebra (with $1$) of $A_4$
generated by $s_1$ and $s_3$. In order to describe $A_5$ as a $A_4$-module
we will need the description of $A_4$ as a $B$-module, that we do in this
section. Note that this will provide another proof of the conjecture
of \cite{BMR} for $A_4$.

First note that there are \emph{three} automorphisms or skew-automorphisms
of the pair $(A_4,B)$ : in addition to the automorphism $\Phi$ and the
skew-automorphism $\Psi$, there is the automorphism $\Ad \Delta : x \mapsto \Delta x \Delta^{-1}$,
where $\Delta$ is the image in $A_4$ of Garside's $\Delta$ in the braid group on 4 strands,
that is $\Delta = s_1 s_2 s_3 s_1 s_2 s_1 = s_1 (s_2 s_3 s_1 s_2) s_1$ ;  this
automorphism exchanges $s_1$ and $s_3$ and fixes $s_2$.

We denote $A_4^{[0]} = B$, $A_4^{[n+1]} = A_4^{[n]} u_2 B = A_4^{[n]} + A_4^{[n]} s_2 B + A_4^{[n]} s_2^{-1} B$,
and in particular $A_4^{[1]} = B + B s_2 B + B s_2^{-1} B$.

We first prove several lemmas.

\begin{lemma} {\ } \label{lemB1}
\begin{enumerate}
\item For $i,j \in \{ 1, 3 \}$ we have $u_2 u_i u_2 u_j u_2 \subset A_4^{[2]}$.
\item For $i,j,k \in \{ 1, 3 \}$ we have $u_2 u_i u_2 u_j u_k u_2 \subset A_4^{[2]}$
and $u_2 u_i u_j u_2  u_k u_2 \subset A_4^{[2]}$
\end{enumerate}

\end{lemma}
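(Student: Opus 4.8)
The plan is to exploit that $A_4^{[2]} = B u_2 B u_2 B$ is a $B$-sub-bimodule of $A_4$ which is moreover stable under the automorphisms $\Phi$ and $\Ad\Delta$ and under the skew-automorphism $\Psi$ (each preserves $B$ and $u_2$). Since $u_1, u_3 \subset B$ and $u_2 \subset B u_2 B = A_4^{[1]} \subset A_4^{[2]}$, every product in the statement is a word with at most three ``$s_2$-blocks'' separated by factors lying in $B$, and the whole content of the lemma is that such a word with \emph{three} blocks can always be rewritten inside $B u_2 B u_2 B$, i.e.\ with only two. First I would isolate one hard configuration and dispose of everything else by commutation.

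For part (1) the equal-index cases are immediate: $u_2 u_1 u_2 u_1 u_2 \subset \langle s_1,s_2\rangle = u_1 u_2 u_1 u_2 \subset A_4^{[2]}$ by Theorem \ref{theodecA3} and its first corollary, and the index-$3$ case follows by applying $\Ad\Delta$. For the mixed cases I would split $u_2 u_1 u_2$, via Theorem \ref{theodecA3}, into its ``good'' part $u_1 u_2 u_1$ plus the exceptional line $R\, s_2^{-1} s_1 s_2^{-1}$, together with $R\, s_2 s_1^{-1} s_2$ which Lemma \ref{leminverse} rewrites through $s_2^{-1} s_1 s_2^{-1}$. Multiplying $u_1 u_2 u_1$ by $u_3 u_2$ and commuting $s_1$ past $s_3$ lands at once in $B u_2 B u_2 B$, so the whole matter reduces to the exceptional term.

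The main obstacle is therefore the crux estimate $s_2^{-1} s_1 s_2^{-1}\, u_3 u_2 \subset A_4^{[2]}$, i.e.\ $s_2^{-1} s_1 s_2^{-1} s_3^{d} s_2^{e} \in A_4^{[2]}$ for $d,e \in \{1,-1\}$. The difficulty is that the inner block $s_2^{-1} s_3^{d} s_2^{e}$ may itself be the exceptional element of $\langle s_2,s_3\rangle$, so naively expanding it regenerates a three-block word. I would instead split on $e$. When $e=+1$, Lemma \ref{lemsplusmoins} gives $s_2^{-1} s_3^{d} s_2 = s_3 s_2^{d} s_3^{-1}$, whence $s_2^{-1} s_1 s_3 s_2^{d} s_3^{-1} = s_2^{-1} s_3\, (s_1 s_2^{d})\, s_3^{-1}$ (commuting $s_1$ and $s_3$) is visibly of the form $B\, s_2^{-1}\, B\, s_2^{d}\, B$. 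When $e=-1$ the block $s_2^{-1} s_3^{d} s_2^{-1}$ is doubly exceptional; for $d=1$ I would apply the reflected form of Lemma \ref{leminverse} (swap the roles of the two braiding generators) to replace it by a term $s_2 s_3^{-1} s_2$ times a power of $s_3$, modulo $u_3 u_2 u_3$, after which $s_2^{-1} s_1 s_2 = s_1 s_2 s_1^{-1}$ (Lemma \ref{lemsplusmoins}) and one more commutation of $s_1$ past $s_3$ again exhibit the result in $B u_2 B u_2 B$, the $u_3 u_2 u_3$ remainder being harmless. The index-$3$ and the remaining sign variants ($d=-1$, etc.) then follow from $\Ad\Delta$, $\Phi$ and $\Psi$.

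Finally, part (2) should reduce to part (1) and the crux. If the two adjacent indices coincide then $u_j u_k = u_j$ (resp.\ $u_i u_j = u_i$) and the word collapses to type (1). If they differ I would use $u_1 u_3 = u_3 u_1$ to absorb the pair into $B$, split the relevant $u_2 u_i u_2$ as above, and for the exceptional contribution $s_2^{-1} s_1 s_2^{-1}\, u_1 u_3 u_2$ pull the extra $u_1$ across the block by Lemma \ref{lemquasicom}, writing it as $u_1\,(s_2^{-1} s_1 s_2^{-1}\, u_3 u_2)$ plus a term in $u_1 u_2 u_1 u_3 u_2 \subset A_4^{[2]}$; the first summand is precisely the crux. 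The mirror configuration $u_2\,(u_1 u_3)\, s_2^{-1} s_1 s_2^{-1}$ occurring in $u_2 u_i u_j u_2 u_k u_2$ is then disposed of identically after applying $\Psi$. Thus the only genuinely new computation in the entire lemma is the $e=-1$ case of the crux.
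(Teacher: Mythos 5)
Your proposal is correct and follows essentially the same route as the paper: both reduce, via $\Ad \Delta$, $\Phi$, $\Psi$, the braid relations and the decomposition of $u_2 u_1 u_2$ coming from theorem \ref{theodecA3}, to a single exceptional configuration, which is then resolved by using lemma \ref{leminverse} (together with lemma \ref{lemquasicom}) to flip the sign pattern of one exceptional block so that lemma \ref{lemsplusmoins} applies at the junction. The only cosmetic difference is that the paper first normalizes the outer $s_2$-exponents and flips the $\langle s_1,s_2\rangle$ block in its hard case $s_2 s_1^{-1} s_2 s_3^{-1} s_2$, whereas you normalize the first block to $s_2^{-1} s_1 s_2^{-1}$ and, in your case $(d,e)=(1,-1)$, flip the $\langle s_2,s_3\rangle$ block instead.
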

\begin{proof} We prove (1). If $i=j$, up to applying $\Ad \Delta$
we can assume $i=j=1$ and the statement is a consequence of the
study of $A_3$, as $u_2 u_1 u_2 u_1 u_2 \subset A_3 \subset u_1 u_2 u_1 u_2 + u_1 u_2 u_1$.
Thus we can assume $i \neq j$, and by using $\Ad \Delta$ and $\Psi$
we only need to consider $X = s_2^{\alpha} s_1^{\beta} s_2^{\gamma} s_3^{\delta} s_2^{\eps}$
with $\alpha,\dots,\eps \in \{ -1, 1 \}$. If $\alpha = -\gamma$ or $\gamma = - \eps$,
then we get $X \in A_4^{[2]}$ by using $s_2^{\alpha} s_1^{\beta} s_2^{-\alpha} = s_1^{-\alpha} s_2^{\beta} s_1^{\alpha}$
and $s_2^{\gamma} s_3^{\delta} s_2^{-\gamma} = s_3^{-\gamma} s_2^{\delta} s_3^{\gamma}$. Up to applying $\Phi$
we can thus assume $\alpha = \gamma = \eps = 1$, that is $X = s_2 s_1^{\beta} s_2 s_3^{\delta} s_2$.
If $\beta = 1$ or $\delta = 1$ we get $X \in A_4^{[2]}$ by
$s_2 s_1 s_2 = s_1 s_2 s_1$ and $s_2 s_3 s_2 = s_3 s_2 s_3$. One can thus assume $X = s_2 s_1^{-1} s_2 s_3^{-1} s_2$.
By lemmas \ref{leminverse} and \ref{lemquasicom} we have $s_2 s_1^{-1} s_2 \in u_1^{\times} s_2^{-1} s_1 s_2^{-1} + u_1 u_2 u_1$
hence $X \in u_1^{\times} s_2^{-1} s_1 s_2^{-1} s_3 s_2 + A_4^{[2]}$ and $s_2^{-1} s_1 (s_2^{-1} s_3 s_2) =s_2^{-1} s_1 s_3 s_2 s_3^{-1}
\in A_4^{[2]}$, and this concludes the proof of (1).

We prove (2). Up to applying $\Psi$ we can confine ourselves to prove $u_2 u_i u_2 u_j u_k u_2 \subset A_4^{[2]}$. By (1)
and $u_j^2 = u_j$, $u_k^2 = u_k$ we can assume $j \neq k$, that is $\{j,k \} = \{ 1, 3 \}$. Up to applying $\Ad \Delta$ we can assume $i=1$,
hence we want to prove $u_2 u_1 u_2 u_1 u_3 u_2 \subset A_4^{[2]}$. We have
$u_2 u_1 u_2 u_1 \subset A_3 = u_1 u_2 u_1 u_2 + u_1 u_2 u_1$ hence
$u_2 u_1 u_2 u_1 u_3 u_2 \subset u_1 u_2 u_1 u_2 u_3 u_2 + u_1 u_2 u_1 u_3 u_2 \subset A_4^{[2]}$
by (1).
\end{proof}

\begin{lemma} \label{lemB2}
$$
A_4^{[3]} \subset A_4^{[2]} + \sum_{\alpha,\beta \in \{ -1,1 \}} B s_2^{\alpha} (s_1 s_3^{-1})^{\beta} s_2^{\alpha} (s_1 s_3^{-1})^{\beta} s_2^{\alpha} B
+ \sum_{\alpha,\beta \in \{ -1,1 \}} B s_2^{\alpha} (s_1 s_3)^{\beta} s_2^{-\beta} (s_1 s_3)^{\beta} s_2^{\eps} B
$$
\end{lemma}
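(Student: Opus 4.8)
The plan is to exploit the $B$-bimodule structure of $A_4^{[3]}$ and reduce the inclusion to a finite case analysis on the two ``middle'' factors. Since $A_4^{[3]} = A_4^{[2]} u_2 B$ and $A_4^{[2]} = A_4^{[1]} u_2 B$ with $A_4^{[1]} = B u_2 B$, the module $A_4^{[3]}$ equals $B u_2 B u_2 B u_2 B$, so it is spanned over $R$ by the products $b_0\, s_2^{a} M_1 s_2^{b} M_2 s_2^{c}\, b_3$ with $b_0,b_3,M_1,M_2 \in B$ and $a,b,c \in \{-1,0,1\}$; moreover whenever one of $a,b,c$ vanishes the corresponding product already lies in $A_4^{[2]}$. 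Because $s_1$ and $s_3$ commute, each $M_i$ may be written as $s_1^{p}s_3^{q}$ with $p,q \in \{-1,0,1\}$, and by the left/right $B$-module structure it suffices to treat $s_2^{a} M_1 s_2^{b} M_2 s_2^{c}$ with $a,b,c \in \{-1,1\}$.

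First I would dispose of the configurations in which at least one of $M_1,M_2$ is \emph{pure}, i.e. lies in $u_1$ or in $u_3$. If $M_1 \in u_i$ then the product lies in $u_2 u_i u_2 (u_1 u_3) u_2$, and if $M_2 \in u_k$ it lies in $u_2 (u_1 u_3) u_2 u_k u_2$; in both situations lemma \ref{lemB1} gives membership in $A_4^{[2]}$. This reduces the problem to the case where both middle factors are genuinely two-letter words, that is $M_1,M_2 \in \{ (s_1 s_3)^{\pm 1}, (s_1 s_3^{-1})^{\pm 1} \}$, which are exactly the products not covered by lemma \ref{lemB1}.

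These remaining configurations split according to whether each middle factor is \emph{diagonal}, of the form $(s_1 s_3)^{\pm 1}$, or \emph{anti-diagonal}, of the form $(s_1 s_3^{-1})^{\pm 1}$; this dichotomy is preserved by the three (skew-)automorphisms $\Phi$, $\Psi$ and $\Ad \Delta$ of the pair $(A_4,B)$, which I would use to cut down the number of cases. For two anti-diagonal factors I expect every product either to collapse into $A_4^{[2]}$ or, when the two factors coincide and the three exponents $a,b,c$ agree, to land in the first family $B s_2^{\alpha}(s_1 s_3^{-1})^{\beta} s_2^{\alpha}(s_1 s_3^{-1})^{\beta} s_2^{\alpha} B$; symmetrically, two diagonal factors should reduce to $A_4^{[2]}$ or to the second family, while the cross-type configurations collapse entirely into $A_4^{[2]}$. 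The actual reductions are carried out with the braid relations $s_2 s_1 s_2 = s_1 s_2 s_1$ and $s_2 s_3 s_2 = s_3 s_2 s_3$, with lemma \ref{lemsplusmoins} to flip the factors $s_2^{\pm} s_i^{\epsilon} s_2^{\mp}$, and above all with lemmas \ref{leminverse} and \ref{lemquasicom}, which replace the obstructing patterns $s_2^{-1} s_i s_2^{-1}$ by the opposite-sign pattern at the cost of an error in $u_i u_2 u_i$; such error terms are absorbed into $A_4^{[2]}$ by lemma \ref{lemB1}.

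The main obstacle is precisely this last case analysis: there is no single relation rewriting $s_2^{a} M_1 s_2^{b} M_2 s_2^{c}$ in one stroke, and one must track how the commutation of $s_1$ with $s_3$ interacts with the non-commuting pairs $(s_1,s_2)$ and $(s_2,s_3)$. The delicate point is that the two families are genuinely different in shape: a diagonal factor admits a partial braid-type simplification against $s_2$ that pins the middle exponent to $-\beta$ (explaining the form $s_2^{\alpha}(s_1 s_3)^{\beta} s_2^{-\beta}(s_1 s_3)^{\beta} s_2^{\epsilon}$), whereas an anti-diagonal factor does not, so that only the fully aligned configuration survives. Verifying that all non-aligned anti-diagonal configurations, and all mismatched or cross-type configurations, genuinely fall back into $A_4^{[2]}$ is where the bulk of the work lies; each individual step is routine given lemmas \ref{lemsplusmoins}, \ref{leminverse}, \ref{lemquasicom} and \ref{lemB1}, but the bookkeeping over the surviving cases is substantial.
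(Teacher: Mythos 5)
Your framework --- reduction to monomials $s_2^{a}M_1s_2^{b}M_2s_2^{c}$ with $M_i=s_1^{p}s_3^{q}$, elimination of pure middle factors via lemma \ref{lemB1}, and a case analysis on the remaining factors $(s_1s_3)^{\pm 1}$, $(s_1s_3^{-1})^{\pm 1}$ organized by the symmetries $\Phi$, $\Psi$, $\Ad\Delta$ --- is the same as the paper's, and the toolkit you invoke (braid relations, lemma \ref{lemsplusmoins}, lemmas \ref{leminverse} and \ref{lemquasicom}, absorption of error terms by lemma \ref{lemB1}) is the right one.

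The gap is that the predicted outcome of your case analysis is wrong, and wrong precisely at the configuration for which the first family in the statement is needed. You claim that two diagonal factors can only yield $A_4^{[2]}$ or the second family, and that all mismatched configurations fall back into $A_4^{[2]}$. Consider the mutually inverse diagonal pair
$$X \;=\; s_2\,(s_1s_3)^{-1}\,s_2\,(s_1s_3)\,s_2^{-1},$$
the general case $s_2^{-\beta}(s_1s_3)^{\beta}s_2^{\gamma}(s_1s_3)^{-\beta}s_2^{\beta}$ being equivalent to it under $\Phi$ and $\Psi$. This is exactly the surviving case $\beta_1=-\delta_1$ of the paper's proof: no braid collapse or conjugation from lemma \ref{lemsplusmoins} ever brings it to a pattern covered by lemma \ref{lemB1}. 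Instead, one rewrites $s_1^{-1}s_2s_1=s_2s_1s_2^{-1}$ to get $X=s_2s_3^{-1}s_2s_1s_2^{-1}s_3s_2^{-1}$ and then applies lemma \ref{leminverse} in the form $s_2^{-1}s_3s_2^{-1}\in s_2s_3^{-1}s_2\,u_3^{\times}+u_3u_2u_3$; modulo terms absorbed into $A_4^{[2]}$ by lemma \ref{lemB1}, this yields $s_2s_3^{-1}s_2s_1s_2s_3^{-1}s_2=s_2(s_1s_3^{-1})s_2(s_1s_3^{-1})s_2$, an element of the \emph{anti-diagonal} (first) family. So a diagonal--diagonal configuration genuinely produces a first-family generator, not a second-family one and not an element of $A_4^{[2]}$; your dichotomy (``diagonal goes to the second family, mismatched dies in $A_4^{[2]}$'') is false, and a proof organized around verifying that all mismatched cases land in $A_4^{[2]}$ would stall at $X$ with no available move. (First-family elements are eventually re-expressed through $A_4^{[2]}$ and the diagonal family, but that is the separate, unnamed lemma following lemma \ref{lemB3}, which requires its own computation; it cannot be presupposed inside the present lemma, and it still does not place $X$ in $A_4^{[2]}$ alone.)
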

\begin{proof}
We only need to prove that all the terms of the form $s_2^{\alpha} s_1^{\beta_1} s_3^{\beta_3} s_2^{\gamma} s_1^{\delta_1} s_3^{\delta_3} s_2^{\eps}$ belong
to the RHS, as all the over natural linear generators of $A_4^{[3]}$ belong to $A_4^{[2]}$ by lemma \ref{lemB1}. We remark
that the RHS is stable under $\Phi$, $\Psi$ and $\Ad \Delta$.

We first assume $\beta_1 = - \delta_1$. Then 
$$s_2^{\alpha} s_1^{\beta_1} s_3^{\beta_3} s_2^{\gamma} s_1^{\delta_1} s_3^{\delta_3} s_2^{\eps} = s_2^{\alpha} s_3^{\beta_3} (s_1^{\beta_1}  s_2^{\gamma} s_1^{-\beta_1}) s_3^{\delta_3} s_2^{\eps}
=s_2^{\alpha} s_3^{\beta_3} s_2^{-\beta_1}  s_1^{\gamma} s_2^{\beta_1} s_3^{\delta_3} s_2^{\eps}.$$
If $\alpha = \beta_1$ or $\eps = -\beta_1$, such a term belongs to $A_4^{[2]}$ by
$s_2^{\alpha} s_3^{\beta_3} s_2^{-\alpha} = s_3^{-\alpha} s_2^{\beta_3} s_3^{\alpha}$ or $s_2^{-\eps} s_3^{\delta} s_2^{\eps} =s_3^{\eps} s_2^{\delta} s_3^{-\eps}$
and lemma \ref{lemB1}. We thus only need to consider $X = s_2^{-\beta_1} s_3^{\beta_3} s_2^{-\beta_1}  s_1^{\gamma} s_2^{\beta_1} s_3^{\delta_3} s_2^{\beta_1}$.
Since $s_2^{-\beta_1} s_3^{-\beta_1} s_2^{-\beta_1} = s_3^{-\beta_1} s_2^{-\beta_1} s_3^{-\beta_1}$ and $s_2^{\beta_1}s_3^{\beta_1}s_2^{\beta_1}=s_3^{\beta_1}s_2^{\beta_1}s_3^{\beta_1}$,
by lemma \ref{lemB1} we can assume $\beta_3 = \beta_1$ and $\delta = - \beta_1$, that is $X = s_2^{-\beta_1} s_3^{\beta_1} s_2^{-\beta_1}  s_1^{\gamma} s_2^{\beta_1} s_3^{-\beta_1} s_2^{\beta_1}$.
By applying $\Phi$ and $\Psi$ we can assume $X = s_2 s_3^{-1} s_2 s_1 s_2^{-1} s_3 s_2^{-1}$. By lemma \ref{leminverse} we have $s_2^{-1} s_3 s_2^{-1} \in s_2 s_3^{-1} s_2 u_3^{\times} + u_3 u_2 u_3$
hence $s_2 s_3^{-1} s_2 s_1 (s_2^{-1} s_3 s_2^{-1}) \in s_2 s_3^{-1} s_2 s_1 s_2 s_3^{-1} s_2 u_3^{\times} + s_2 s_3^{-1} s_2 s_1 u_3 u_2 u_3$.
We have $ s_2 s_3^{-1} s_2 s_1 u_3 u_2 u_3 \subset A_4^{[2]}$ by lemma \ref{lemB1} and $s_2 s_3^{-1} s_2 s_1 s_2 s_3^{-1} s_2$ belongs to the RHS, which concludes this case.

The case $\beta_3 = -\delta_3$ is a consequence of the previous case by applying $\Ad \Delta$. We thus only need
to consider $X = s_2^{\alpha} s_3^{\beta_3} s_1^{\beta_1} s_2^{\gamma} s_1^{\beta_1} s_3^{\beta_3} s_2^{\eps}$. First
assume $\gamma = \beta_1$. Then $X = s_2^{\alpha} s_3^{\beta_3} (s_1^{\gamma} s_2^{\gamma} s_1^{\gamma}) s_3^{\beta_3} s_2^{\eps}
=s_2^{\alpha} s_3^{\beta_3} s_2^{\gamma} s_1^{\gamma} s_2^{\gamma} s_3^{\beta_3} s_2^{\eps}$ belongs as before to $A_4^{[2]}$
by lemma \ref{lemB1} and elementary transformations, unless $\eps = \gamma$, $\alpha = \gamma$, and then $\beta_3 = - \gamma$. In
that case $X = s_2^{\alpha} s_3^{-\alpha} (s_2^{\alpha} s_1^{\alpha} s_2^{\alpha}) s_3^{-\alpha} s_2^{\alpha}
= s_2^{\alpha} s_3^{-\alpha} s_1^{\alpha} s_2^{\alpha} s_1^{\alpha} s_3^{-\alpha} s_2^{\alpha}$
belongs to the RHS. We can thus assume $\gamma \neq \beta_1$ and, applying $\Ad \Delta$, $\gamma \neq \beta_3$,
hence we can assume $\beta_1 = \beta_3 = - \gamma$. Then $X = s_2^{\alpha} s_3^{\gamma} s_1^{\gamma} s_2^{-\gamma} s_1^{\gamma} s_3^{\gamma} s_2^{\eps}$,
which belongs to the RHS, and this concludes the proof.

\end{proof}
\begin{lemma} \label{lemB3}Let $\alpha, \beta, \eps \in \{ -1, 1 \}$. Then
$s_2^{\alpha} (s_1 s_3)^{\beta} s_2^{-\beta} (s_1 s_3)^{\beta} s_2^{\eps} $ belongs to
$$ A_4^{[2]} +
\sum_{\delta \in \{-1,1 \}} B s_2^{\delta} (s_1 s_3)^{\delta} s_2^{-\delta} (s_1 s_3)^{\delta} s_2^{\delta} B
+
\sum_{\delta \in \{-1,1 \}} B s_2^{-\delta} (s_1 s_3)^{\delta} s_2^{-\delta} (s_1 s_3)^{\delta} s_2^{-\delta} B
$$

\end{lemma}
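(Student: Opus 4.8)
The plan is to let $V$ denote the right-hand side of the asserted inclusion and to split according to whether $\alpha=\eps$ or not. First I would record that $V$ is stable under $\Phi$, $\Psi$ and $\Ad\Delta$: indeed $\Ad\Delta$ fixes $s_2$ and $s_1s_3$, hence fixes each summand, while $\Phi$ and $\Psi$ carry the summand indexed by $\delta$ in either sum to the one indexed by $-\delta$ (using $\Psi(s_1s_3)=(s_1s_3)^{-1}$ and the palindromic shape of the words). If $\alpha=\eps$ there is nothing to do: when $\alpha=\beta$ the element $s_2^{\alpha}(s_1s_3)^{\beta}s_2^{-\beta}(s_1s_3)^{\beta}s_2^{\eps}$ is the $\delta=\beta$ term of the first sum, and when $\alpha=-\beta$ it is the $\delta=\beta$ term of the second sum.

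There remains the case $\alpha\neq\eps$. Since $\Phi$ acts on the parameter triples by $(\alpha,\beta,\eps)\mapsto(-\alpha,-\beta,-\eps)$ and $\Psi$ by $(\alpha,\beta,\eps)\mapsto(-\eps,-\beta,-\alpha)$, the four triples with $\alpha\neq\eps$ form a single $\langle\Phi,\Psi\rangle$-orbit; by stability of $V$ it therefore suffices to treat one of them, and I would choose $X=s_2s_1s_3s_2^{-1}s_1s_3s_2^{-1}$. I claim that in fact $X\in A_4^{[2]}$, which of course lies in $V$.

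To prove the claim I would start from the identity $s_2(s_1s_3)s_2^{-1}=(s_2s_1s_2^{-1})(s_2s_3s_2^{-1})=s_1^{-1}s_2s_1s_3^{-1}s_2s_3$ furnished by lemma \ref{lemsplusmoins}, and then collect the tail using $s_3s_1s_3=s_1s_3^2$, $s_1s_2s_1=s_2s_1s_2$ and $s_1s_3=s_3s_1$ to obtain
$$X=s_1^{-1}s_2s_3^{-1}s_2s_1\,(s_2s_3^2s_2^{-1}).$$
Expanding $s_3^2=as_3+b+cs_3^{-1}$ and applying lemma \ref{lemsplusmoins} again turns the last factor into $as_3^{-1}s_2s_3+b+cs_3^{-1}s_2^{-1}s_3$, so that $X$ becomes an $R$-combination of $s_1^{-1}s_2s_3^{-1}s_2s_1$ (two $s_2$'s, hence already in $Bs_2Bs_2B=A_4^{[2]}$) and the two words $s_1^{-1}s_2s_3^{-1}s_2s_1s_3^{-1}s_2^{\pm}s_3$. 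For each of the latter I would slide the central $s_1$ past the following $s_3^{-1}$, apply the $\{2,3\}$-shifted form of lemma \ref{lemdecomp1212} to the block $s_2s_3^{-1}s_2s_3^{-1}$ (replacing it by $s_3^{-1}s_2s_3^{-1}s_2$ plus words carrying a single $s_2$), and finish with the braid move $s_2s_1s_2=s_1s_2s_1$, respectively $s_2s_1s_2^{-1}=s_1^{-1}s_2s_1$, which brings the number of $s_2$'s down from three to two; the single-$s_2$ remainder words contribute words with only two $s_2$'s directly. Each resulting word then lies in $A_4^{[2]}$.

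The main obstacle is exactly this last bookkeeping: one has to verify that, after the substitution coming from lemma \ref{lemdecomp1212}, a braid relation genuinely applies so as to merge two $s_2$'s, and that the single-$s_2$ remainder words it produces do not silently reintroduce a third $s_2$. The computation is routine but error-prone. It is reassuring, however, that the conclusion is robust: were some three-$s_2$ word of the normalized shape $s_2^{\pm}(s_1s_3)^{\pm}s_2^{\mp}(s_1s_3)^{\pm}s_2^{\pm}$ to survive, it would simply be absorbed by one of the two sums in $V$, so that only the weaker membership in $V$ — and not the sharper $X\in A_4^{[2]}$ — would be needed.
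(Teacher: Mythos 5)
Your proposal is correct and follows essentially the paper's own route: the paper likewise disposes of the trivial triples, reduces by the symmetries $\Phi$ and $\Psi$, and lands the one hard case in $A_4^{[2]}$ via lemma \ref{lemsplusmoins}, the $\{2,3\}$-shifted lemma \ref{lemdecomp1212}, and absorption of the correction terms (the paper keeps $\beta$ generic and conjugates $s_2^{\eps}$ through $s_1^{\beta}$ instead of expanding $s_3^2$ cubically, but the toolkit and the conclusion that the hard element lies in $A_4^{[2]}$ are identical). The bookkeeping you flag does close: after the shifted substitution the main terms are $s_1^{-1}s_3^{-1}s_2s_3^{-1}(s_2s_1s_2^{\pm 1})s_3$, the moves $s_2s_1s_2=s_1s_2s_1$ and $s_2s_1s_2^{-1}=s_1^{-1}s_2s_1$ apply verbatim, and every term (main and remainder alike) ends up in $Bs_2Bs_2B \subset Bu_2Bu_2B = A_4^{[2]}$.
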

\begin{proof}
First assume $\alpha = \beta$. Then $X = s_2^{\beta} s_1^{\beta} s_3^{\beta} s_2^{-\beta} s_3^{\beta} (s_1^{\beta} s_2^{\eps} s_1^{-\beta}) s_1^{\beta}
= s_2^{\beta} s_1^{\beta} (s_3^{\beta} s_2^{-\beta} s_3^{\beta} s_2^{-\beta}) s_1^{\eps} s_2^{\beta} s_1^{\beta}
\in s_2^{\beta} s_1^{\beta}  s_2^{-\beta} s_3^{\beta} s_2^{-\beta}s_3^{\beta} s_1^{\eps} s_2^{\beta} s_1^{\beta}
+
s_2^{\beta} s_1^{\beta} u_2 u_3 s_1^{\eps} s_2^{\beta} s_1^{\beta}
+
s_2^{\beta} s_1^{\beta} u_3 u_2 s_1^{\eps} s_2^{\beta} s_1^{\beta}
$
by lemma \ref{lemdecomp1212}. Now
$s_2^{\beta} s_1^{\beta} u_2 u_3 s_1^{\eps} s_2^{\beta} s_1^{\beta} \subset A_4^{[2]}$
and 
$s_2^{\beta} s_1^{\beta} u_3 u_2 s_1^{\eps} s_2^{\beta} s_1^{\beta}  \subset A_4^{[2]}$
by lemma \ref{lemB1}. We thus only need to consider
$$X = 
(s_2^{\beta} s_1^{\beta}  s_2^{-\beta}) s_3^{\beta} s_2^{-\beta}s_3^{\beta} s_1^{\eps} s_2^{\beta} 
= s_1^{-\beta} s_2^{\beta}  s_1^{\beta} s_3^{\beta} s_2^{-\beta}s_3^{\beta} s_1^{\eps} s_2^{\beta} $$
hence, if $\eps = -\beta$, we get
$$X = s_1^{-\beta}. s_2^{\beta}   s_3^{\beta} (s_1^{\beta}s_2^{-\beta} s_1^{-\beta})s_3^{\beta} s_2^{\beta}
=  s_1^{-\beta}. s_2^{\beta}   s_3^{\beta} s_2^{-\beta}s_1^{-\beta} (s_2^{\beta}s_3^{\beta} s_2^{\beta})
=  s_1^{-\beta}. s_2^{\beta}   s_3^{\beta} s_2^{-\beta}s_1^{-\beta} s_3^{\beta}s_2^{\beta} s_3^{\beta}
\in A_4^{[2]}$$ by lemma \ref{lemB1}. We can thus assume $\eps = \beta$, in which case
$X$ clearly belongs to the space we want.
 
This concludes the case $\alpha = \beta$, hence also the case $\eps = \beta$ by application of
$\Phi$ and $\Psi$. Thus we can assume $\alpha = -\beta = \eps$, and the conclusion is clear in
this case.

\end{proof}
\begin{lemma} For $\alpha, \beta \in \{ -1,1 \}$, we have
$$
s_2^{\alpha} (s_1 s_3^{-1})^{\beta} s_2^{\alpha} (s_1 s_3^{-1})^{\beta} s_2^{\alpha} 
\in A_4^{[2]} + \sum_{\delta \in \{ -1,1 \}} B s_2^{\delta} (s_1 s_3)^{\delta} s_2^{-\delta} (s_1 s_3)^{\delta} s_2^{\delta} B
$$
\end{lemma}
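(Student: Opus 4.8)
The plan is to reduce to $\alpha=\beta=1$ by symmetry and then to apply lemma \ref{leminverse} to the central factor of the word; this is exactly the step that converts the ``bad'' pattern (built from $s_1s_3^{-1}$ with a positive central $s_2$) into the ``good'' pattern (built from $s_1s_3$ with a negative central $s_2$) occurring on the right-hand side.

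First I would check that the right-hand side is stable under $\Phi$, $\Psi$ and $\Ad\Delta$. The summand $A_4^{[2]}$ is visibly stable, and on the core words $s_2^\delta(s_1s_3)^\delta s_2^{-\delta}(s_1s_3)^\delta s_2^\delta$ one notes that $\Ad\Delta$ fixes each core (since $s_1,s_3$ commute, $(s_1s_3)^\delta$ is sent to $(s_3s_1)^\delta=(s_1s_3)^\delta$), while $\Phi$ and $\Psi$ interchange the two summands $\delta=\pm1$. Writing $X_{\alpha,\beta}$ for the left-hand side, one computes $\Ad\Delta(X_{\alpha,\beta})=X_{\alpha,-\beta}$ and $\Phi(X_{\alpha,\beta})=X_{-\alpha,-\beta}$, so every sign combination is reached from $(1,1)$ and it suffices to treat $X:=X_{1,1}=s_2s_1s_3^{-1}s_2s_1s_3^{-1}s_2$.

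Next, using $s_1s_3=s_3s_1$ to slide the middle $s_1$ past the adjacent $s_3^{-1}$, I would regroup $X=s_2s_1s_3^{-1}s_2s_3^{-1}s_1s_2=(s_2s_1)(s_3^{-1}s_2s_3^{-1})(s_1s_2)$ and apply lemma \ref{leminverse} with $i=2$, giving $s_3^{-1}s_2s_3^{-1}\in c^{-1}(s_3s_2^{-1}s_3)s_2+u_2u_3u_2$. For the main term I simplify the tail by the braid relation: $s_3s_2s_1s_2=s_3s_1s_2s_1=s_1s_3s_2s_1$, so that
$$
c^{-1}(s_2s_1)(s_3s_2^{-1}s_3)s_2(s_1s_2)=c^{-1}s_2s_1s_3s_2^{-1}s_1s_3s_2\,s_1=c^{-1}\bigl(s_2(s_1s_3)s_2^{-1}(s_1s_3)s_2\bigr)s_1,
$$
which is $c^{-1}$ times the $\delta=1$ core multiplied on the right by $s_1\in B$; hence it lies in $B\,s_2(s_1s_3)s_2^{-1}(s_1s_3)s_2\,B$, a summand of the right-hand side.

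It remains to absorb the remainder $(s_2s_1)\,(u_2u_3u_2)\,(s_1s_2)$ into $A_4^{[2]}$. Expanding $u_2u_3u_2$ on the spanning words $s_2^ps_3^qs_2^r$ ($p,q,r\in\{-1,0,1\}$), the case $q=0$ gives an element of $\langle s_1,s_2\rangle=u_1u_2u_1u_2\subset A_4^{[2]}$, while for $q\neq0$ I would first apply the braid relation to $s_2s_1s_2^{p}$ and then commute $s_1$ past $s_3^{q}$; the crucial point is that $s_1s_2^{r}s_1s_2$ collapses to $u_1u_2u_1$ (indeed $s_1s_2s_1s_2=s_1^2s_2s_1$ and $s_1s_2^{-1}s_1s_2=s_1^2s_2s_1^{-1}$ by lemma \ref{lemsplusmoins}), so that after the commutations the word has only two alternating $s_2$-blocks and lands in $B\,u_2\,B\,u_2\,B=A_4^{[2]}$. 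This last bookkeeping --- the same kind used in lemma \ref{lemB1} --- is the only fiddly part; once it is in place, $X$ lies in the asserted space, and the general case follows by the symmetry reduction of the first step.
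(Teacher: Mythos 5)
Your proof is correct and takes essentially the same route as the paper: reduce to $\alpha=\beta=1$ using $\Phi$ and $\Ad\Delta$, apply lemma \ref{leminverse} to the central factor $s_3^{-1}s_2s_3^{-1}$ so that the main term becomes the $\delta=1$ core up to factors in $B$, and absorb the $u_2u_3u_2$ remainder into $A_4^{[2]}$. The only differences are cosmetic: the paper moves the extra $s_2$ produced by lemma \ref{leminverse} to the left via lemma \ref{lemquasicom} (yielding three terms $Y(a)$ instead of your single clean term), and it cites lemma \ref{lemB1} for the remainder where you redo the bookkeeping by hand.
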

\begin{proof}
The RHS is stable under $\Ad \Delta$ and $\Phi$, hence we can assume
$\alpha = \beta = 1$, and thus we only need to consider $X = s_2 s_1 s_3^{-1} s_2 s_1 s_3^{-1} s_2 = 
s_2 s_1 (s_3^{-1} s_2 s_3^{-1}) s_1  s_2 \in
s_2 s_1 u_2 s_3 s_2^{-1} s_3  s_1  s_2 + s_2 s_1 u_2 u_3 u_2 s_1 s_2$
by lemmas \ref{leminverse} and \ref{lemquasicom}.  We have 
$$
s_2 s_1 u_2 u_3 u_2 s_1 s_2 \subset \sum_{a \in \{0,1,-1\}} 
s_2 s_1 s_2^a u_3 u_2 s_1 s_2
$$
and, 
\begin{itemize}
\item if $a=0$ we have 
$s_2 s_1  u_3 u_2 s_1 s_2 \subset A_4^{[2]}$ by lemma \ref{lemB1} ;
\item if $a=1$ we have 
$(s_2 s_1 s_2) u_3 u_2 s_1 s_2 = s_1 s_2 s_1 u_3 u_2 s_1 s_2 \subset A_4^{[2]}$ by lemma \ref{lemB1} ;

\item if $a=-1$ we have 
$(s_2 s_1 s_2^{-1}) u_3 u_2 s_1 s_2  = s_1^{-1} s_2 s_1 u_3 u_2 s_1 s_2  \subset A_4^{[2]}$ by lemma \ref{lemB1}
\end{itemize}
hence $X \in s_2 s_1 u_2 s_3 s_2^{-1} s_3 s_1 s_2 + A_4^{[2]}$. The
module $s_2 s_1 u_2 s_3 s_2^{-1} s_3 s_1 s_2$
is $R$-spanned by the $Y(a) = s_2 s_1 s_2^a s_3 s_2^{-1} s_3 s_1 s_2$
for $a \in \{ -1,0,1 \}$. We have $Y(0) = s_2 s_1 s_3 s_2^{-1} s_3 s_1 s_2 \in RHS$,
$Y(1) = (s_2 s_1 s_2) s_3 s_2^{-1} s_3 s_1 s_2
=s_1 s_2 s_1 s_3 s_2^{-1} s_3 s_1 s_2 \in RHS$
and $$Y(-1) = (s_2 s_1 s_2^{-1}) s_3 s_2^{-1} s_3 s_1 s_2
= s_1^{-1} s_2 s_1 s_3 s_2^{-1} s_3 s_1 s_2 \in RHS,$$ and this concludes the proof.
\end{proof}

In the braid group on $4$ strands , we have
$$
\Delta = (s_1 s_2 s_3)(s_1 s_2) s_1 = (s_1 s_3) (s_2 s_1 s_3 s_2) = (s_2 s_1 s_3 s_2) (s_1 s_3)
$$
hence the same equalities hold in $A_4$. In the remaining part of this section,
we let $s = s_2$, $p = s_1 s_3 = s_3 s_1$, hence $\Delta = spsp = psps$. Note
that $\Delta p  = p \Delta$, $\Delta s = s \Delta$.
It follows that $\Delta^2 = p. sps. \Delta = p. \Delta. sps = p(psps) sps = p^2 . sps^2 ps$,
$\Delta^3 = p^2.sps^2ps . \Delta = p^2 . \Delta. sps^2 ps = p^3 . sps^2 p s^2 p$,
and $\Delta^4 = p^4 . sps^2ps^2 p s^2 ps$.

We thus have $\Delta^2 = p^2.sps^2ps$ hence
$p^{-2} \Delta^2 \in R^{\times} sps^{-1}ps + R spsps + R sp^2 s$ and
we known $sp^2 s \in A_4^{[2]}$, $(spsp)s = psps^2 \in A_4^{[2]}$
by lemma \ref{lemB1}. It follows that
$$
\begin{array}{lccl}
(*) & p^{-2} \Delta^2 &\in& R^{\times} sps^{-1} ps + R psps^2 + R sp^2 s \\
 & p^{-2} \Delta^2& \in & R^{\times} sps^{-1} ps + A_4^{[2]}
\end{array}
$$
Applying $\Phi$, we have $\Phi(\Delta) = \Phi(s_1 s_2 s_3 s_1 s_2 s_1) = 
s_1^{-1} s_2^{-1} s_3^{-1} s_1^{-1} s_2^{-1} s_1^{-1} = (s_1 s_2 s_1 s_3 s_2 s_1)^{-1} = \Delta^{-1}$,
hence, since $\Phi(p) =p^{-1}$,
$$
\begin{array}{lccl}
(*)  & p^{-2} \Delta^2& \in & R^{\times} s^{-1}p^{-1}s p^{-1}s^{-1} + A_4^{[2]}
\end{array}
$$

\begin{lemma} {\ } \label{lemB5}
\begin{enumerate}
\item $s_2^{-1} p s_2^{-1} p s_2^{-1} . s_1^{-1} \in u_1^{\times} s_2 p^{-1} s_2 p^{-1} s_2 + A_4^{[2]} $
\item $s_2^{-1} p s_2^{-1} p s_2^{-1} B  \subset B s_2 p^{-1} s_2 p^{-1} s_2 + B s_2^{-1} p s_2^{-1} p s_2^{-1} + A_4^{[2]} $
\item $s_2 p^{-1} s_2 p^{-1} s_2 B  \subset B s_2 p^{-1} s_2 p^{-1} s_2 + B s_2^{-1} p s_2^{-1} p s_2^{-1} + A_4^{[2]} $
\end{enumerate}

\end{lemma}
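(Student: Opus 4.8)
The plan is to treat part (1) as the computational core and to deduce parts (2) and (3) from it by exploiting the three (skew-)automorphisms $\Phi$, $\Psi$, $\Ad\Delta$ of the pair $(A_4,B)$ together with the bimodule structure of $A_4^{[2]}$. I would first record the symmetry bookkeeping. Writing $X = s_2^{-1}ps_2^{-1}ps_2^{-1}$ and $Y = s_2 p^{-1}s_2 p^{-1}s_2$, one checks $\Phi(X)=\Psi(X)=Y$, and that $\Ad\Delta$ fixes both $X$ and $Y$ (it fixes $s_2$ and sends $p=s_1 s_3$ to $s_3 s_1 = p$); moreover $B$ and the sub-bimodule $A_4^{[2]}$ are stable under all three maps, and $\Phi(X)=Y$, $\Phi(Y)=X$. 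In particular part (3) is precisely the $\Phi$-image of part (2), so it suffices to prove (1) and (2).

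For part (1) I would expand $Xs_1^{-1}$ using $p=s_1 s_3 = s_3 s_1$ and first absorb the trailing $s_1^{-1}$ into the word by the braid identity $s_1 s_2^{-1}s_1^{-1} = s_2^{-1}s_1^{-1}s_2$ of lemma \ref{lemsplusmoins}. The heart of the computation is then to flip the two inner blocks of shape $s_2^{-1}(\cdot)s_2^{-1}$ into blocks of shape $s_2(\cdot)s_2$: this is exactly what lemmas \ref{leminverse} and \ref{lemquasicom} achieve, each flip producing the factor $c^{-1}$ and a trailing power of $s_1$ (hence the invertible coefficient in $u_1^\times$) together with a remainder in $u_1 u_2 u_1$ (or, after shifting, $u_2 u_3 u_2$). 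The main obstacle here is purely the bookkeeping: one must check that every discarded correction term, and every term in which an $s_2$-block fails to flip, is a product of one of the forms $u_2 u_i u_2 u_j u_2$ or $u_2 u_i u_2 u_j u_k u_2$ controlled by lemma \ref{lemB1}, so that it is legitimately absorbed into $A_4^{[2]}$.

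For part (2) I would reduce the inclusion $XB \subseteq M$, where $M := BX + BY + A_4^{[2]}$, to finitely many checks. Since $A_4^{[2]}$ is a sub-bimodule and $M$ is a left $B$-module containing $X$, and since $B$ is $R$-spanned by the nine elements $s_1^a s_3^b$ with $a,b \in \{-1,0,1\}$, it suffices to show that $M$ is stable under right multiplication by $s_1^{\pm 1}$ and $s_3^{\pm 1}$, that is, that the eight products $Xs_i^{\pm 1}$ and $Ys_i^{\pm 1}$ (for $i \in \{1,3\}$) lie in $M$. Part (1) gives $Xs_1^{-1} \in u_1^\times Y + A_4^{[2]}$, its $\Ad\Delta$-image gives $Xs_3^{-1}$, and applying $\Phi$ to these yields $Ys_1 \in u_1^\times X + A_4^{[2]}$ and $Ys_3$. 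The four remaining products do \emph{not} follow from these by symmetry alone: substituting the cubic relation $s_1^2 = a s_1 + b + c s_1^{-1}$ into the identities above couples $Xs_1$ with $Ys_1^{-1}$, and the resulting constraint turns out to be satisfied identically, so it cannot pin down either term (indeed a clean outcome such as $Xs_1 \equiv a X$ would force $ab+c=0$). I would therefore prove the single companion identity $Ys_1^{-1} \in M$ directly, by a word computation of exactly the same nature as part (1); its $\Phi$- and $\Ad\Delta$-images then deliver $Xs_1$, $Xs_3$ and $Ys_3^{-1}$, so that all eight products lie in $M$ and hence $XB \subseteq M$.

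Finally part (3) is obtained by applying $\Phi$ to part (2), using $\Phi(X)=Y$, $\Phi(Y)=X$, $\Phi(B)=B$ and $\Phi(A_4^{[2]})=A_4^{[2]}$, whence $YB = \Phi(XB) \subseteq \Phi(M) = M$. The difficulty of the whole lemma is thus concentrated in two word computations, part (1) and the companion $Ys_1^{-1} \in M$: both amount to flipping the nested $s_2^{\mp}$-blocks with lemmas \ref{leminverse} and \ref{lemquasicom} while verifying that every discarded term is absorbed by lemma \ref{lemB1} into $A_4^{[2]}$. I expect the positive-power closure in part (2) to be the conceptually subtle point, precisely because it is where a second genuine computation, rather than a formal symmetry argument, is unavoidable.
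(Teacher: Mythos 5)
Your part (1) follows the paper's own route: push the trailing $s_1^{-1}$ leftward with lemma \ref{lemsplusmoins}, commute the block $s_3s_2^{-1}s_3s_2^{-1}$ with lemma \ref{lemdecomp1212}, flip $s_2^{-1}s_1s_2^{-1}$ with lemmas \ref{leminverse} and \ref{lemquasicom}, expand $s_3^{2}$ by the cubic relation, and absorb every remainder into $A_4^{[2]}$ via lemma \ref{lemB1}. The real divergence is in parts (2) and (3). The paper disposes of (2) in one sentence: it follows from (1), its $\Ad\Delta$-image, and the fact that $B$ is generated as a unital $R$-algebra by $s_1^{-1}$ and $s_3^{-1}$, with (3) then obtained by applying $\Phi$. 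Your objection to this style of deduction is well founded. Writing $X=s_2^{-1}ps_2^{-1}ps_2^{-1}$, $Y=s_2p^{-1}s_2p^{-1}s_2$ and $M=BX+BY+A_4^{[2]}$, the induction over monomials $s_1^{-a}s_3^{-b}$ that the paper's sentence suggests stalls exactly where you say: after one application of (1) one is left needing $Ys_1^{-1}\in M$ (or $Ys_3^{-1}\in M$), and this is not a formal consequence of what is available. Indeed, if $Xs_1^{-1}=\lambda Y+E$ with $\lambda\in u_1^{\times}$, $E\in A_4^{[2]}$, then the only thing this relation says about $Ys_1^{-1}$ is $Ys_1^{-1}=\lambda^{-1}Xs_1^{-2}-\lambda^{-1}Es_1^{-1}$, and feeding the cubic relation (or the $\Phi$-image $Ys_1\in u_1^{\times}X+A_4^{[2]}$) back into this returns the same equation: the leading terms cancel identically, for any invertible $\lambda$. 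So one genuinely new input is required, and your organization — prove the single companion identity $Ys_1^{-1}\in M$ (equivalently $Xs_1\in M$, via $\Phi$), deduce the other missing products by $\Phi$ and $\Ad\Delta$, and conclude because $M$ is a left $B$-module now stable under right multiplication by $s_1^{\pm1},s_3^{\pm1}$ — is a correct and complete way to close both (2) and (3). On this point your write-up is more careful than the paper's, whose stated deduction of (2) is elliptical at best; the companion identity is of course true (it is a special case of (3)), but the paper never establishes it independently of the assertion it is meant to prove.

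The corresponding caveat is that this companion identity is not free: it is a second computation of the same difficulty as part (1), not a symmetry formality. For instance $Xs_1=s_2^{-1}ps_2^{-1}s_3\,(s_1s_2^{-1}s_1)$ involves the "hard" pattern $s_1s_2^{-1}s_1$, so it needs its own pass through lemmas \ref{leminverse}, \ref{lemquasicom} and \ref{lemdecomp1212}, together with a nontrivial verification that every discarded term is of a shape covered by lemma \ref{lemB1}. Until that computation (and the bookkeeping in part (1) itself) is written out in full, your proof has an unexecuted computational core at precisely the point where you have correctly located the difficulty.
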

\begin{proof}
$X= s_2^{-1} p s_2^{-1} p s_2^{-1} . s_1^{-1}  = s_2^{-1} p s_2^{-1} s_3 (s_1 s_2^{-1} . s_1^{-1} )
= s_2^{-1} p s_2^{-1} s_3 s_2^{-1} s_1^{-1}  s_2
= s_2^{-1} s_1 (s_3s_2^{-1} s_3 s_2^{-1}) s_1^{-1}  s_2
\in  s_2^{-1} s_1 s_2^{-1} s_3 s_2^{-1}s_3 s_1^{-1}  s_2 +  s_2^{-1} s_1u_2 u_3 s_1^{-1}  s_2+ s_2^{-1} s_1 u_3 u_2 s_1^{-1}  s_2$
by lemma \ref{lemdecomp1212}. We have $ s_2^{-1} s_1u_2 u_3 s_1^{-1}  s_2 \subset A_4^{[2]}$ and 
$s_2^{-1} s_1 u_3 u_2 s_1^{-1}  s_2\subset A_4^{[2]}$ by lemma \ref{lemB1}, hence
$$
\begin{array}{lcl}
X &\in& (s_2^{-1} s_1 s_2^{-1}) s_3 s_2^{-1}s_3 s_1^{-1}  s_2 + A_4^{[2]} \\
&\subset &u_1^{\times} s_2 s_1^{-1} (s_2 s_3 s_2^{-1}s_3 s_1^{-1}  s_2 +  u_1 u_2 u_1  s_3 s_2^{-1}s_3 s_1^{-1}  s_2 + A_4^{[2]}\\
&\subset &u_1^{\times} s_2 s_1^{-1} s_3^{-1} s_2 s_3^2 s_1^{-1}  s_2 + u_1 u_2 u_1  s_3 s_2^{-1}s_3 s_1^{-1}  s_2 + A_4^{[2]}\\
&\subset &u_1^{\times} s_2 s_1^{-1} s_3^{-1} s_2 p^{-1}  s_2 + u_1 s_2 s_1^{-1} s_3^{-1} s_2 s_3 s_1^{-1}  s_2+ u_1 s_2 s_1^{-1} s_3^{-1} s_2  s_1^{-1}  s_2 + u_1 u_2 u_1  s_3 s_2^{-1}s_3 s_1^{-1}  s_2 + A_4^{[2]}\\
\end{array}
$$
We know $s_2 s_1^{-1} s_3^{-1} s_2  s_1^{-1}  s_2 \in A_4^{[2]}$ by lemma \ref{lemB1}, $s_2 s_1^{-1} (s_3^{-1} s_2 s_3) s_1^{-1}  s_2 
= s_2 s_1^{-1} s_2 s_3 (s_2^{-1} s_1^{-1}  s_2) = 
= s_2 s_1^{-1} s_2 s_3 s_1 s_2^{-1}  s_1^{-1} \in A_4^{[2]}$ by lemma \ref{lemB1}, and  
$u_2 u_1  s_3 s_2^{-1}s_3 s_1^{-1}  s_2 = u_2   s_3 u_1 s_2^{-1}s_1^{-1} s_3   s_2 $ is the
sum of $u_2   s_3  s_2^{-1}s_1^{-1} s_3   s_2 \subset A_4^{[2]}$ (by lemma \ref{lemB1})
and of the $u_2   s_3 s_1^{\alpha} s_2^{-1}s_1^{-1} s_3   s_2 $ for $\alpha \in \{ -1,1 \}$. Since
$u_2   s_3 (s_1^{\alpha} s_2^{-1}s_1^{-1}) s_3   s_2 = u_2   s_3 s_2^{-1} s_1^{-1}(s_2^{\alpha} s_3   s_2)
= u_2   s_3 s_2^{-1} s_1^{-1}s_3 s_2   s_3^{\alpha} \subset A_4^{[2]}$ by lemma \ref{lemB1}, and this
proves (1). To get (2) from (1), we use $s_2^{-1} p s_2^{-1} p s_2^{-1} . s_3^{-1} \in u_3^{\times} s_2 p^{-1} s_2 p^{-1} s_2^{-1} + A_4^{[2]}$,
that we get from (1) by applying $\Ad \Delta$, and the fact that $B$ is generated as a unital $R$-algebra by $s_1^{-1} $ and $s_3^{-1}$. This
proves (2), and then (3) follows from (2) by a direct application of $\Phi$. 
\end{proof}

From all this we deduce the following.
\begin{lemma} {\ }
\begin{enumerate}
\item $A_4^{[3]} = A_4^{[2]} + \sum_{\delta \in \{ -1,1 \}} B s^{\delta} p^{\delta} s^{-\delta} p^{\delta} s^{\delta} +  \sum_{\delta \in \{ -1,1 \}} B s^{-\delta} p^{\delta} s^{-\delta} p^{\delta} s^{-\delta} $
\item $A_4 = A_4^{[3]}$
\end{enumerate}
\end{lemma}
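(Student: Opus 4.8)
The plan is to treat (1) and (2) separately. Write $W$ for the right-hand side of (1) (keeping $s=s_2$, $p=s_1s_3$ in force), and abbreviate its four module generators as $g_1=sps^{-1}ps$, $g_2=s^{-1}p^{-1}sp^{-1}s^{-1}$ (the $\delta=1,-1$ terms of the first sum) and $g_3=s^{-1}ps^{-1}ps^{-1}$, $g_4=sp^{-1}sp^{-1}s$ (the $\delta=1,-1$ terms of the second sum). Each $g_i$ is a word carrying exactly three $s^{\pm1}$-factors separated by elements of $B$, so $Bg_i\subseteq A_4^{[3]}$; together with $A_4^{[2]}\subseteq A_4^{[3]}$ this gives the easy inclusion $W\subseteq A_4^{[3]}$. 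The whole content of (1) is therefore the reverse inclusion $A_4^{[3]}\subseteq W$.

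For $A_4^{[3]}\subseteq W$ I would assemble the preceding lemmas. Lemma~\ref{lemB2} already confines $A_4^{[3]}$ to $A_4^{[2]}$ plus two families of \emph{two-sided} $B$-modules, generated by the words $s^{\alpha}(s_1s_3^{-1})^{\beta}s^{\alpha}(s_1s_3^{-1})^{\beta}s^{\alpha}$ and $s^{\alpha}p^{\beta}s^{-\beta}p^{\beta}s^{\gamma}$. The (unlabelled) lemma immediately preceding Lemma~\ref{lemB5} rewrites the first family inside $A_4^{[2]}+Bg_1B+Bg_2B$, and Lemma~\ref{lemB3} rewrites the second inside $A_4^{[2]}+Bg_1B+Bg_2B+Bg_3B+Bg_4B$. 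Hence $A_4^{[3]}\subseteq A_4^{[2]}+\sum_{i=1}^{4}Bg_iB$, and it only remains to absorb the right $B$-multiplication, i.e. to pass from these bimodules to the left modules appearing in $W$.

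This absorption is the heart of the matter, and it splits by the shape of $g_i$. For the constant-sign generators it is exactly Lemma~\ref{lemB5}(2),(3), which give $g_3B,g_4B\subseteq Bg_3+Bg_4+A_4^{[2]}$. For $g_1,g_2$ the decisive point is that they originate from the \emph{center} of the braid group: the displayed relations $(*)$ (the second obtained from the first by applying $\Phi$) express each of $g_1,g_2$, modulo $A_4^{[2]}$ and up to a unit $\lambda\in R^{\times}$, in the form $qz$ with $q\in B$ invertible (a power of $p$) and $z\in\{\Delta^2,\Delta^{-2}\}$ central in $A_4$. Given $b\in B$, I would then compute, modulo $A_4^{[2]}$ (which is a $B$-bimodule), $g_ib\equiv\lambda q z b=\lambda q b z$ by centrality, and substitute $z\equiv\lambda^{-1}q^{-1}g_i$ to obtain $g_ib\equiv(qbq^{-1})g_i$ with $qbq^{-1}\in B$; thus $g_1B,g_2B\subseteq Bg_1+Bg_2+A_4^{[2]}$. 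Combining the four cases yields $A_4^{[3]}\subseteq A_4^{[2]}+\sum_iBg_i=W$, which proves (1). I expect this centrality manipulation to be the only step beyond bookkeeping.

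For (2), the filtration $(A_4^{[n]})$ is increasing (since $1\in u_2B$) and exhausts $A_4$ (grouping the $s_1^{\pm},s_3^{\pm}$ of any word into $B$-blocks places it in some $A_4^{[k]}$), so it suffices to prove $A_4^{[4]}=A_4^{[3]}$. Using (1), $A_4^{[4]}=A_4^{[3]}u_2B=A_4^{[3]}+\sum_iBg_iu_2B$, and since $A_4^{[3]}$ is a $B$-bimodule it is enough to check $g_is\in A_4^{[3]}$ and $g_is^{-1}\in A_4^{[3]}$ for each $i$. In each case one of the two products simply cancels a trailing $s^{\pm1}$ and lands in $A_4^{[2]}$, while the other is expanded by the cubic relation in the forms $s^2=as+b+cs^{-1}$ and $s^{-2}=c^{-1}s-ac^{-1}-bc^{-1}s^{-1}$ into an $R$-combination of words with at most three $s^{\pm1}$-factors, each visibly in $A_4^{[3]}$. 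Hence $A_4^{[4]}=A_4^{[3]}$, so $A_4^{[n]}=A_4^{[3]}$ for all $n\geq 3$, and therefore $A_4=A_4^{[3]}$.
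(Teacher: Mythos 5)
Your proof is correct and takes essentially the same route as the paper's: you reduce via Lemmas \ref{lemB2}, \ref{lemB3} and the unlabelled lemma to the four bimodules $Bg_iB$, absorb the right $B$-action on $g_3,g_4$ by Lemma \ref{lemB5}, handle $g_1,g_2$ through $(*)$ and the centrality of $\Delta^{\pm 2}$ (the paper writes this as $\Delta^{2\delta}B=B\Delta^{2\delta}$ rather than your conjugation formula $g_ib\equiv(qbq^{-1})g_i$, but it is the same idea), and prove (2) by checking stability under right multiplication by $u_2$ via the cubic relation. The only cosmetic difference is that the paper phrases (2) as ``$A_4^{[3]}$ is a right ideal containing $1$'' instead of your filtration-stabilization argument; the computational content is identical.
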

\begin{proof}
As a consequence of lemmas \ref{lemB2} and \ref{lemB3}, we get
$$
A_4^{[3]} = A_4^{[2]} + \sum_{\delta \in \{ -1,1 \}} B s^{\delta} p^{\delta} s^{-\delta} p^{\delta} s^{\delta}B +  \sum_{\delta \in \{ -1,1 \}} B s^{-\delta} p^{\delta} s^{-\delta} p^{\delta} s^{-\delta} B.
$$
We know  $s^{-\delta} p^{\delta} s^{-\delta} p^{\delta} s^{-\delta} B \subset A_4^{[2]} +  \sum_{\eps \in \{-1,1 \}} B s^{-\eps} p^{\eps}s^{-\eps} p^{\eps}s^{-\eps} $
by lemma \ref{lemB5} hence
$$
A_4^{[3]} = A_4^{[2]} +  \sum_{\delta \in \{ -1,1 \}} B s^{-\delta} p^{\delta} s^{-\delta} p^{\delta} s^{-\delta}  + \sum_{\delta \in \{ -1,1 \}} B s^{\delta} p^{\delta} s^{-\delta} p^{\delta} s^{\delta}B  
$$
and finally $s^{\delta} p^{\delta} s^{-\delta} p^{\delta} s^{\delta} \in R^{\times} p^{-\delta} \Delta^{2 \delta} + A_4^{[2]}$ by (*),
hence $s^{\delta} p^{\delta} s^{-\delta} p^{\delta} s^{\delta}B \subset p^{-\delta} \Delta^{2 \delta}B + A_4^{[2]}
=  p^{-\delta}B \Delta^{2 \delta} + A_4^{[2]}= B \Delta^{2 \delta} + A_4^{[2]} \subset Bs^{\delta} p^{\delta} s^{-\delta} p^{\delta} s^{\delta} + A_4^{[2]}$
and this concludes the proof of (1). Now $A_4^{[3]}$ is a $R$-submodule of $A_4$ which contains 1, which is stable under right-multiplication
by $s_1$ and $s_3$ by definition. Moreover, in view of (1), we have
$$
A_4^{[3]} s_2 \subset A_4^{[2]}s + \sum_{\delta \in \{ -1,1 \}} B s^{\delta} p^{\delta} s^{-\delta} p^{\delta} s^{\delta}s +  \sum_{\delta \in \{ -1,1 \}} B s^{-\delta} p^{\delta} s^{-\delta} p^{\delta} s^{-\delta}s \subset
A_4^{[3]}
$$
hence $A_4^{[3]}$ is also stable under right multiplication by $s_2$, hence it is a right-ideal of $A_4$ containing $1$, hence (2).
\end{proof}

We let $x_{\pm} = s^{\pm} p^{\pm} s^{\mp} p^{\pm} s^{\pm}$ and 
$y_{\pm} = s^{\pm} p^{\mp} s^{\pm} p^{\mp} s^{\pm}$.

\begin{lemma}{\ } \label{lemB7}
\begin{enumerate}
\item $sBsps \subset A_4^{[2]}$
\item $sBs^{-1}ps \subset R sps^{-1} ps A_4^{[2]}$
\end{enumerate}
\end{lemma}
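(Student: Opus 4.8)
The plan is to reduce both statements to a finite spanning set of $B$ and then to quote Lemma \ref{lemB1}. Since $s_1$ and $s_3$ commute, $B = u_1 u_3 = u_3 u_1$ is $R$-spanned by the nine monomials $s_1^i s_3^j$ with $i,j \in \{-1,0,1\}$, and as $A_4^{[2]}$ and $R\,sps^{-1}ps + A_4^{[2]}$ are $R$-modules it suffices to treat $\beta = s_1^i s_3^j$. For (1) this means showing $s_2 s_1^i s_3^j s_2 s_1 s_3 s_2 \in A_4^{[2]}$, and for (2) showing $s_2 s_1^i s_3^j s_2^{-1} s_1 s_3 s_2 \in R\,sps^{-1}ps + A_4^{[2]}$ (I read the right-hand side of (2) as $R\,sps^{-1}ps + A_4^{[2]}$; note $A_4^{[2]}$ must be a summand, since the monomial $\beta = 1$ already yields $s_1 s_3 s_2 = ps_2 \in A_4^{[2]}$).

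For (1), the word $s_2 s_1^i s_3^j s_2 s_1 s_3 s_2$ lies in $u_2 u_1 u_3 u_2 u_1 u_3 u_2$; the two length-two blocks $u_1 u_3$ are the only feature preventing a direct appeal to Lemma \ref{lemB1}. Whenever $i$ or $j$ vanishes, or the relevant crossings have matching signs, the braid relations $s_2 s_1 s_2 = s_1 s_2 s_1$ and $s_2 s_3 s_2 = s_3 s_2 s_3$ together with $s_1 s_3 = s_3 s_1$ collapse one of these blocks and Lemma \ref{lemB1} applies. In the remaining cases I would push an inner $s_1^{\pm}$ through the first $s_2$ by a braid move, use $s_1^2 \in u_1$ and $s_3^2 \in u_3$ to shorten, and arrive at a word of the form $u_1 u_2 u_1 u_3 u_2 u_1$ (or $u_2 p s_2 p^{-1}$). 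Either way one finishes with the observation $u_1 u_2 u_1 u_3 = u_1 u_2 (u_1 u_3) \subset B u_2 B = A_4^{[1]}$, whence $u_1 u_2 u_1 u_3 u_2 u_1 \subset A_4^{[1]} u_2 B = A_4^{[2]}$; the alternative form lies in $u_2(B s_2 B) \subset A_4^{[1]} u_2 B = A_4^{[2]}$ as well.

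For (2), the decisive observation is that the monomial $\beta = s_1 s_3 = p$ produces exactly $sps^{-1}ps$, the generator on the right-hand side. For all other $\beta$ the central letter is the inverse crossing $s_2^{-1}$, which admits no braid simplification; here I would instead invoke Lemma \ref{leminverse} and Lemma \ref{lemquasicom} (in both their $(1,2)$- and $(2,3)$-index forms, and possibly Lemma \ref{lemdecomp1212} when a four-letter square must be straightened) to transport the inverse crossing. Each such step rewrites the word as a sum of a term falling under Lemma \ref{lemB1}, hence in $A_4^{[2]}$, and, after collecting the $c^{\pm 1}$ factors produced by Lemma \ref{leminverse}, a scalar multiple of $sps^{-1}ps$. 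The main obstacle is precisely this last bookkeeping: one must check that across all nine monomials the only contribution escaping $A_4^{[2]}$ is proportional to $sps^{-1}ps$, with coefficient in $R$ rather than in $B$. Conceptually this is guaranteed by identity $(*)$, which identifies $sps^{-1}ps$ modulo $A_4^{[2]}$ with $p^{-2}\Delta^2$ and hence, up to the central $\Delta^2$, with a fixed element of $B$; the role of the explicit computation is to confirm that no genuinely new $B$-multiple of this element is generated.
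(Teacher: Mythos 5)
Your reduction to the nine monomials $\beta = s_1^i s_3^j$, your reading of the misprint in (2) as $R\,sps^{-1}ps + A_4^{[2]}$, and your treatment of (1) are all in line with the paper: there, each case of (1) is collapsed by the identities of lemma \ref{lemsplusmoins} (e.g. $s_3^{-1}s_2s_3 = s_2s_3s_2^{-1}$ followed by $s_2^{-1}s_1s_2 = s_1s_2s_1^{-1}$), the case $\beta = p$ by $psps=spsp$, and the case $(i,j)=(-1,1)$ by applying $\Ad \Delta$ to $(1,-1)$, after which everything falls under lemma \ref{lemB1}; your sketch is vaguer but uses the same mechanism.

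The genuine gap is in (2). Your premise that for $\beta \neq p$ ``the central letter $s_2^{-1}$ admits no braid simplification'' is false, and the opposite observation is the entire content of the paper's proof. Writing $s_2\beta s_2^{-1}ps_2 = s_2s_1^i(s_3^js_2^{-1}s_3)s_1s_2$, whenever $j=-1$ the middle triple collapses by lemma \ref{lemsplusmoins}, namely $s_3^{-1}s_2^{-1}s_3 = s_2s_3^{-1}s_2^{-1}$, giving $s_2s_1^is_2s_3^{-1}s_1s_2s_1^{-1} \in u_2u_1u_2u_3u_1u_2u_1 \subset A_4^{[2]}$ by lemma \ref{lemB1}; the case $(i,j)=(-1,1)$ is the $\Ad \Delta$-image of $(1,-1)$; the cases $i=0$ or $j=0$ fall directly under lemma \ref{lemB1}. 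Hence only $\beta=p$ survives, with coefficient $1$, and there is no bookkeeping at all. Your substitute route via lemmas \ref{leminverse} and \ref{lemquasicom} does not merely complicate this, it breaks down: applying lemma \ref{leminverse} to the hard triple $s_3s_2^{-1}s_3$ yields $s_3s_2^{-1}s_3 \in c\,s_3^{-1}s_2s_3^{-1}s_2^{-1} + u_2u_3u_2$, so the word $s_2s_1^{-1}(s_3s_2^{-1}s_3)s_1s_2$ becomes a main term $c\,s_2s_1^{-1}s_3^{-1}s_2s_3^{-1}s_1s_2s_1^{-1}$ plus an error term $s_2s_1^{-1}u_2u_3u_2s_1s_2$, and \emph{both} are words with three $u_2$-blocks separated by letters of $B$. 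Such words are not covered by lemma \ref{lemB1}: a priori they lie only in $A_4^{[3]}$, and proving that words of this shape lie in $A_4^{[2]}$ (or in $R\,sps^{-1}ps + A_4^{[2]}$) is precisely what lemma \ref{lemB7} together with lemmas \ref{lemB2}, \ref{lemB3} and \ref{lemB5} is being set up to accomplish — so the verification you defer as ``bookkeeping'' is circular at this stage. Finally, the appeal to $(*)$ does not supply the missing step: $(*)$ identifies $sps^{-1}ps$ with $R^{\times}p^{-2}\Delta^2$ modulo $A_4^{[2]}$, but it says nothing about why the other eight monomials produce no $B$-multiples (as opposed to $R$-multiples) of this element; that fact only comes out of the direct computation you skipped.
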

\begin{proof}
The $R$-module $sBsps$ is spanned by $s^2 ps \in A_4^{[2]}$, the $s s_i sps \in A_4^{[2]}$ for $i \in \{1,3 \}$ by lemma \ref{lemB1},
$s(psps) = s(spsp)= s^2 psp \in A_4^{[2]}$, 
$s_2 s_1 (s_3^{-1} s_2 s_3) s_1 s_2
=s_2 s_1 s_2 s_3 (s_2^{-1} s_1 s_2 )
=s_2 s_1 s_2 s_3 s_1 s_2 s_1^{-1}  \in A_4^{[2]}$ by lemma \ref{lemB1},
the image of this latest one by $\Ad \Delta$, and by
$$s_2 s_1^{-1} s_3^{-1} s_2 s_3 s_1 s_2
=
s_2 s_1^{-1} (s_3^{-1} s_2 s_3) s_1 s_2
=
s_2 s_1^{-1} s_2 s_3 (s_2^{-1} s_1 s_2)
=
s_2 s_1^{-1} s_2 s_3 s_1 s_2 s_1^{-1} \in A_4^{[2]}$$ by lemma \ref{lemB1},
and this proves (1).

Now $sBs^{-1} p s$ is $R$-spanned by $sps^{-1}ps$ and
\begin{itemize}
\item the $s s^{-1} ps  = ps \in A_4^{[2]}$
\item the $s s_is^{-1} ps   \in A_4^{[2]}$ for $i \in \{1,3 \}$  by lemma \ref{lemB1}
\item $s_2 s_1 (s_3^{-1} s_2^{-1} s_3) s_1 s_2
= s_2 s_1 s_2 s_3^{-1} (s_2^{-1} s_1 s_2)
= s_2 s_1 s_2 s_3^{-1} s_1 s_2 s_1^{-1}
   \in A_4^{[2]}$ for $i \in \{1,3 \}$  by lemma \ref{lemB1}
\item $ \Delta. s_2 s_1 s_3^{-1} s_2^{-1} s_3 s_1 s_2 \Delta^{-1} \in A_4^{[2]}$ 
\item $s_2 s_1^{-1} (s_3^{-1} s_2^{-1} s_3) s_1 s_2
= s_2 s_1^{-1} s_2 s_3^{-1} (s_2^{-1} s_1 s_2)
= s_2 s_1^{-1} s_2 s_3^{-1} s_1 s_2 s_1^{-1}
   \in A_4^{[2]}$ for $i \in \{1,3 \}$  by lemma \ref{lemB1}
\end{itemize}
and this proves (2).
\end{proof}

We want to express $\Delta^3$ in terms of the $x_{\pm}$ and $y_{\pm}$.
We recall that $\Delta^2 \in R^{\times} p^2 sps^{-1} ps + R p^3 sps^2 + R p^2 sp^2 s$
hence $\Delta^3 \in R^{\times} p^2 sps^{-1} ps\Delta + R p^3 sps^2\Delta + R p^2 sp^2 s\Delta$.
We have
\begin{itemize}
\item $sps^{-1} ps\Delta \in R sp \Delta + R sps \Delta + R sps^{-1} \Delta$ and 
\begin{enumerate}
\item $sps^{-1} \Delta = sps^{-1} (spsp) = sp^2 sp \in A_4^{[2]}$,
\item $sps \Delta = sps (spsp) = sp s^2psp  \in  R^{\times} sps^{-1} psp + R spspsp + R sp^2sp$,
and we have $ (spsp)sp =  psps^2p \in A_4^{[2]}$, 
$sp^2sp \in A_4^{[2]}$, hence $sps \Delta \in  R^{\times} sps^{-1} psp + A_4^{[2]}$.
\item $sps^{-1} \Delta = sps^{-1} spsp = sp^2sp \in A_4^{[2]}$
\end{enumerate}
hence $sps^2 \Delta \in  R^{\times} sps^{-1} psp + A_4^{[2]}$.
\item $sp^2 s \Delta = sp^2 s^2 psp \in R^{\times} sp^2 s^{-1} psp + R sp^2 spsp + R sp^2 psp$, and
$sp^2 (spsp) = sp^2 psps = sp^3 sps$, $sp^2 psp \in A_4^{[2]}$.
\end{itemize}
It follows that
$$
\Delta^3 \in R^{\times} p^2 s p^2 s p^2 s + R p^3 sps^{-1} psp + R p^2 s p^2 s^{-1} psp +
R p^2 sp^3 sps + A_4^{[2]}
$$
From (*) we have $p^2 sps^{-1} ps \in \Delta^2 + A_4^{[2]}$ hence
$p^3 sps^{-1} psp \in p \Delta^2 p + A_4^{[2]} = p^2 \Delta^2 + A_4^{[2]}$
hence 
$p^3 sps^{-1} psp \in R^{\times} p^4. sps^{-1}ps + A_4^{[2]}$. By lemma \ref{lemB7},
we have $s p^2 s^{-1} p s \in R x_+ + A_4^{[2]}$ hence
$p^2 sp^2 s^{-1} psp \in R p^2 x_+ p + A_4^{[2]} \subset B x_+ + A_4^{[2]}$. Since $sp^3 sps \in A_4^{[2]}$
this leads to
$$
 \Delta^3 \in R^{\times} p^2 sp^2 s p^2 s + B x_+ + A_4^{[2]}.
$$
Since $s_i^2 = a s_i + b + c s_i^{-1}$ we have
$p^2 = s_1^2 s_3^2 = (a s_1 + b + c s_1^{-1})
(a s_3 + b + c s_3^{-1}) \in a^2 p + c^2 p^{-1} + W$
with $W$ the $R$-span of $s_1 s_3^{-1}, s_3 s_1^{-1}, s_1, s_3, s_1^{-1}, s_3^{-1}, 1$.
After easy applications of lemma \ref{lemB1} it follows that
$sp^2 sp^2 s \in c^4 sp^{-1} s p^{-1} s + R spsBs + R sBsps + A_4^{[2]}$. Since 
$spsBs + sBsps  \subset A_4^{[2]}$ by lemma \ref{lemB7} we get
$$
\Delta^3 \in c^4 p^2 y_+ + B x_+ + A_4^{[2]} 
$$
and
$$
\Delta^{-3} = \Phi(\Delta^3)  \in c^{-4} p^{-2} y_- + B x_+ + A_4^{[2]} 
$$
Now we have $\Delta^3 s_1 = s_3 \Delta^3 \in c^4 s_3 p^2 y_+ + B x_+ + A_4^{[2]}$
and $\Delta^3 s_1 \in c^4 p^2 y_+ s_1 + B x_+ B + A_4^{[2]}$, 
$\Delta^3 s_1 \in c^4 p^2 u_1^{\times} y_-  + B x_+ B + A_4^{[2]}$ by lemma \ref{lemB5} (1),
$\Delta^3 s_1 \in c^4 p^2 u_1^{\times} y_-  + B x_+  + A_4^{[2]}$ by
using $p^{-2} \Delta^2 \in R^{\times} x_+ + A_4^{[2]}$.
It follows that
$c^4 s_3 p^2 y_+ \in c^4 p^2 u_1^{\times} y_- + B x_+ + A_4^{[2]}$
hence
$$
\left\lbrace
\begin{array}{lcl} 
y_+ & \in & B y_- + B x_+ + A_4^{[2]} \\
y_- & \in & B y_+ + B x_+ + A_4^{[2]}
\end{array} \right.
$$
As a consequence we get the following.

\begin{proposition}
$$
A_4 = A_4^{[3]} = A_4^{[2]} + B x_+ + B x_- + B y_+
= A_4^{[2]} + B x_+ + B x_- + B y_-
$$
\end{proposition}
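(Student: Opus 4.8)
The plan is to deduce the statement directly from the last displayed lemma (the one asserting $A_4 = A_4^{[3]}$ and giving the expansion of $A_4^{[3]}$) together with the two inclusions established in the running computation just above. First I would match the four summands appearing in that lemma with the shorthand $x_{\pm}, y_{\pm}$: taking $\delta = \pm 1$ in the family $B s^{\delta} p^{\delta} s^{-\delta} p^{\delta} s^{\delta}$ produces exactly $B x_+$ and $B x_-$, while taking $\delta = \pm 1$ in the family $B s^{-\delta} p^{\delta} s^{-\delta} p^{\delta} s^{-\delta}$ produces $B y_-$ and $B y_+$. Hence the lemma reads $A_4 = A_4^{[3]} = A_4^{[2]} + B x_+ + B x_- + B y_+ + B y_-$, and the whole task reduces to deleting one of the two terms $B y_{\pm}$.

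For this I would use that each $A_4^{[n]}$ is a left $B$-module, which is immediate from the inductive definition $A_4^{[n]} = A_4^{[n-1]} u_2 B$ starting from $A_4^{[0]} = B$; in particular $B A_4^{[2]} = A_4^{[2]}$. Multiplying the inclusion $y_- \in B y_+ + B x_+ + A_4^{[2]}$ on the left by $B$ then gives $B y_- \subset B y_+ + B x_+ + A_4^{[2]}$, and substituting this into the five-term expression eliminates $B y_-$ and yields $A_4 = A_4^{[2]} + B x_+ + B x_- + B y_+$. Symmetrically, multiplying $y_+ \in B y_- + B x_+ + A_4^{[2]}$ on the left by $B$ gives $B y_+ \subset B y_- + B x_+ + A_4^{[2]}$, and substituting eliminates $B y_+$ to give $A_4 = A_4^{[2]} + B x_+ + B x_- + B y_-$. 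This establishes both claimed equalities at once.

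The substance of the argument lies entirely upstream, in the derivation of the symmetric relations $y_{\pm} \in B y_{\mp} + B x_+ + A_4^{[2]}$, which came from expressing $\Delta^3$ in terms of $x_+$ and $y_+$, from the expansion $p^2 = s_1^2 s_3^2 \in a^2 p + c^2 p^{-1} + W$, and from Lemma \ref{lemB5}(1) applied to $y_+ s_1$; once those are in hand, the proposition is pure bookkeeping with the left $B$-module structure. I therefore do not expect any genuine obstacle at this final step.
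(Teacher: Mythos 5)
Your proof is correct and is essentially the paper's own argument: the paper derives the proposition as an immediate consequence of the lemma $A_4 = A_4^{[3]} = A_4^{[2]} + \sum_{\delta} B s^{\delta} p^{\delta} s^{-\delta} p^{\delta} s^{\delta} + \sum_{\delta} B s^{-\delta} p^{\delta} s^{-\delta} p^{\delta} s^{-\delta}$ together with the relations $y_{\pm} \in B y_{\mp} + B x_+ + A_4^{[2]}$, exactly as you do. Your identification of the four summands with $B x_{\pm}$, $B y_{\pm}$ and the use of the left $B$-module structure of $A_4^{[2]}$ to delete one of $B y_{\pm}$ is precisely the bookkeeping the paper leaves implicit behind ``as a consequence we get the following.''
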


For $x \in A_4^{\times}$ , we let $[x]$ denote its class in
$B^{\times} \backslash A_4^{\times} / B^{\times}$, and
we write $x \sim y$ for $[x] = [y]$.

\begin{lemma} Let $E_2 = \{ s_2^{\alpha} s_1^{\beta} s_3^{\gamma} s_2^{\delta} \ | \ \alpha,\beta,\gamma,\delta \in \{ 0,1,-1 \} \} \subset A_4^{\times}$.
The image of $[E_2]$ of $E_2$ in
$B^{\times} \backslash A_4^{\times} / B^{\times}$ has cardinality at most 13, and
is equal to $\mathcal{S}_2$, with
$$
\begin{array}{lcl}
\mathcal{S}_2  &=& \{ [1], [s_2], [s_2^{-1}], [s_2 s_1^{-1} s_2], [s_2 s_3^{-1} s_2], [s_2 s_1 s_3 s_2], [s_2 s_1^{-1} s_3 s_2],
[s_2 s_1^{-1} s_3^{-1} s_2], [s_2 s_1 s_3^{-1} s_2^{-1}], \\ & &  [s_2 s_1^{-1} s_3 s_2^{-1}],
[s_2 s_1 s_3^{-1} s_2^{-1}], [s_2^{-1} s_1 s_3 s_2^{-1}], [s_2^{-1} s_1^{-1} s_3^{-1} s_2^{-1}] \} \\
\end{array}
$$
\end{lemma}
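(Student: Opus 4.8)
The plan is to run through the $81$ words $x = s_2^{\alpha} s_1^{\beta} s_3^{\gamma} s_2^{\delta}$ making up $E_2$ and to show that each $[x]$ already occurs among the listed representatives, using three devices: absorption of the units $s_1^{\pm}, s_3^{\pm} \in B^{\times}$ at the two ends, the braid-type rewritings of Lemma \ref{lemsplusmoins}, and the symmetries afforded by the three (anti)automorphisms $\Phi$, $\Psi$ and $\Ad\Delta$ of the pair $(A_4,B)$. Each of these stabilises $B$, hence permutes $B^{\times}\backslash A_4^{\times}/B^{\times}$; on the exponent vector $(\alpha,\beta,\gamma,\delta)$ the map $\Ad\Delta$ swaps $\beta\leftrightarrow\gamma$, the composite $\Psi\circ\Phi$ swaps $\alpha\leftrightarrow\delta$, and $\Phi$ negates all four entries. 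It therefore suffices to treat one word in each orbit of the resulting group of order $8$ and to transport the conclusion; this both shortens the bookkeeping and explains why $\mathcal{S}_2$ must be closed under these maps. I should stress at the outset that, for the steps below invoking Lemmas \ref{leminverse}, \ref{lemquasicom} or \ref{lemdecomp1212}, the relation $x \sim y$ is really an inclusion of $B$-bimodules $BxB \subseteq ByB + A_4^{[2]}$ modulo already-treated (shorter) classes, which is all that is needed downstream; only the rewritings coming from Lemma \ref{lemsplusmoins} are literal identities in $A_4^{\times}$.

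First I would dispose of the degenerate words. If $\alpha = 0$ or $\delta = 0$, then $x$ equals a unit of $B$ times $s_2^{\pm1}$, or $s_2^{\pm1}$ times a unit of $B$, so $[x] \in \{[1], [s_2], [s_2^{-1}]\}$. The same holds when $\beta = \gamma = 0$: the only new words are $s_2^{\pm2}$, and these add nothing, since $s_2^{2} = a s_2 + b + c s_2^{-1}$ yields $B s_2^{2} B \subseteq B s_2 B + B + B s_2^{-1} B$. This accounts for the three classes $[1], [s_2], [s_2^{-1}]$. Next are the three-letter words $s_2^{\alpha} s_1^{\beta} s_2^{\delta}$ (and, via $\Ad\Delta$, those in $s_2, s_3$). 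Here Lemma \ref{lemsplusmoins} is decisive: whenever the two outer signs are opposite, or all three signs agree, the word rewrites as $s_1^{\pm} s_2^{\pm} s_1^{\pm}$ and absorbs to $[s_2^{\pm1}]$, leaving $s_2 s_1^{-1} s_2$ as the only rigid survivor, hence the class $[s_2 s_1^{-1} s_2]$ together with its $\Ad\Delta$-image $[s_2 s_3^{-1} s_2]$. The conjugate pattern $s_2^{-1} s_1 s_2^{-1}$ admits no braid move; there I would use Lemmas \ref{leminverse} and \ref{lemquasicom} to rewrite it, up to a unit and modulo $u_1 u_2 u_1 \subseteq A_4^{[1]}$, as a translate of $s_2 s_1^{-1} s_2$. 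This contributes the classes $[s_2 s_1^{-1}s_2]$ and $[s_2 s_3^{-1}s_2]$ of $\mathcal{S}_2$.

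The main effort, and the main obstacle, is the sixteen four-letter words $s_2^{\alpha} s_1^{\beta} s_3^{\gamma} s_2^{\delta}$ with all exponents $\pm1$, where the inner block $s_1^{\beta} s_3^{\gamma}$ is locked between the two $s_2$'s and cannot be absorbed at once. The governing mechanism is that a braid move on one end propagates to the other: for instance $s_2 s_1 s_3 s_2^{-1} = s_2 s_3 s_1 s_2^{-1} = s_2 s_3 s_2^{-1} s_1^{-1} s_2 s_1 = s_3^{-1} s_2 s_3 s_1^{-1} s_2 s_1 \sim s_2 s_1^{-1} s_3 s_2$, using only Lemma \ref{lemsplusmoins} and end-absorption, so flipping the sign of an outer $s_2$ is paid for by a sign flip of the neighbouring $s_i$. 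Iterating such moves, combined with the order-$8$ symmetry, folds the sixteen words onto the seven rigid representatives $[s_2 s_1 s_3 s_2]$, $[s_2 s_1^{-1} s_3 s_2]$, $[s_2 s_1^{-1} s_3^{-1} s_2]$, $[s_2 s_1 s_3^{-1} s_2^{-1}]$, $[s_2 s_1^{-1} s_3 s_2^{-1}]$, $[s_2^{-1} s_1 s_3 s_2^{-1}]$ and $[s_2^{-1} s_1^{-1} s_3^{-1} s_2^{-1}]$; in the few cases where no braid move is available I would absorb a block $s_2^{\mp} s_i^{\pm} s_2^{\mp}$ into $A_4^{[2]}$ exactly as in the proof of Lemma \ref{lemB1}, using Lemmas \ref{leminverse}, \ref{lemquasicom} and \ref{lemdecomp1212}.

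Adding up — three classes from the degenerate words, two from the three-letter words, and at most seven from the four-letter words — gives $|[E_2]| \leq 13$ and the explicit list $\mathcal{S}_2$. The genuinely delicate point is precisely the four-letter stratum: one must keep scrupulous track of which of the sixteen words actually survive as new double cosets and which fold into a shorter class or into $A_4^{[2]}$, and apply the $\Phi, \Psi, \Ad\Delta$ reductions consistently so that no spurious thirteenth class is produced.
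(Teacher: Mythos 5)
Your proposal has a genuine gap, and it is one you flag yourself in your opening paragraph: a type mismatch between what the lemma asserts and what your tools can prove. The statement lives in $B^{\times}\backslash A_4^{\times}/B^{\times}$, so proving $[x]=[y]$ means exhibiting units $b_1,b_2\in B^{\times}$ with $x=b_1yb_2$ --- a \emph{multiplicative} identity in the unit group. The paper's proof consists exclusively of such identities, all valid already in the braid group $B_4$ with monomials $s_1^{\pm1},s_3^{\pm1}$ as the units: for instance $s_2 s_1^{-1} s_3 s_2 = s_1^{-1} s_3 (s_2 s_1 s_3^{-1} s_2) s_1^{-1} s_3$, $s_1(s_2 s_1 s_3 s_2^{-1}) s_3^{-1} = s_2 s_1 s_3^{-1} s_2$, $s_3(s_2 s_3^{-1} s_1 s_2^{-1})s_1^{-1} = s_2^{-1}s_3 s_1^{-1} s_2$, and a finite list of others covering the whole four-letter stratum. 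Your fallback for the rigid cases --- Lemmas \ref{leminverse}, \ref{lemquasicom}, \ref{lemdecomp1212} --- produces \emph{additive} relations such as $s_2^{-1}s_1s_2^{-1}\in c^{-1}(s_2s_1^{-1}s_2)s_1+u_1u_2u_1$; an inclusion of $R$-submodules $BxB\subseteq ByB+A_4^{[2]}$ says nothing about whether $x$ and $y$ lie in the same $(B^{\times},B^{\times})$-double coset. Redefining $\sim$ to mean this weaker bimodule relation changes the statement: what you would then establish is the module identity that Proposition \ref{propA4Bmodule} consumes downstream (your observation that this suffices for the application is correct), but it is not the lemma, which asserts equality of the finite sets $[E_2]$ and $\mathcal{S}_2$ inside the double coset space together with the bound $|[E_2]|\le 13$.

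The same mismatch undermines your symmetry reduction. The maps $\Phi$, $\Psi\circ\Phi$ and $\Ad \Delta$ do act on $B^{\times}\backslash A_4^{\times}/B^{\times}$ (the paper itself uses $\Ad\Delta$ once, to transport one proved equivalence into another), but transporting the conclusion ``$[x]\in\mathcal{S}_2$'' along a symmetry requires knowing that the symmetry carries the listed classes back into $\mathcal{S}_2$, and for $\Phi$ this is exactly equivalent to the contested identities: $\Phi$ sends $[s_2s_1^{-1}s_2]$ to $[s_2^{-1}s_1s_2^{-1}]$, which is not on the list, and no braid-group identity can place it there (project $B_4\to B_3$ by $s_1,s_3\mapsto s_1$, $s_2\mapsto s_2$, then into $\mathrm{PSL}_2(\mathbf{Z})$: no monomial units $s_1^a,s_1^b$ realize the equivalence). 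So for the three-letter and degenerate strata ($s_2^{-1}s_is_2^{-1}$, $s_2^{\pm2}$) your orbit argument collapses back onto precisely the cases you handle additively --- cases on which, it should be said, the paper's own text is also silent, but which your method cannot repair even in principle. Finally, your bookkeeping does not close: $3+2+7=12$, not $13$; your seven four-letter representatives omit $[s_2s_1^{-1}s_3^{-1}s_2^{-1}]$, which the paper needs (its printed list contains a typo, repeating $[s_2s_1s_3^{-1}s_2^{-1}]$ in its place). The one multiplicative chain you do write out, showing $[s_2s_1s_3s_2^{-1}]=[s_2s_1^{-1}s_3s_2]$, is correct and is exactly the paper's technique; what is missing is that same kind of explicit identity for every remaining case, rather than the additive surrogate.
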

\begin{proof} Clearly $\mathcal{S}_2 \subset [E_2]$, hence we only need to
prove $[E_2] \subset \mathcal{S}_2$.
In view of $s_2^{\alpha}s_i^{\alpha}s_2^{\alpha} = s_i^{\alpha}s_2^{\alpha}s_i^{\alpha}$,
$s_2^{-1} s_i^{\alpha} s_2 = s_i s_2^{\alpha} s_i^{-1}$,
$s_2 s_i^{\alpha} s_2^{-1} = s_i^{-1} s_2^{\alpha} s_i$ for $\alpha \in \{ -1,1 \}$ and $i \in \{1,3 \}$,
we have $[s_2^{\alpha} s_i^{\beta} s_2^{\gamma}] \in \mathcal{S}_2$ for all $\alpha,\beta,\gamma$.
Among the $s_2 s_1^{\alpha} s_3^{\beta} s_2$ for $\alpha,\beta \in \{ -1,1 \}$, we
have $[s_2 s_1^{\alpha} s_3^{\beta} s_2] \in \mathcal{S}_2$ because
$s_2 s_1^{\alpha} s_3^{\beta} s_2\sim s_2 s_1^{-1} s_3 s_2$ : indeed,
we have the identity
$s_2 s_1^{-1} s_3 s_2 = s_1^{-1} s_3 (s_2 s_1 s_3^{-1} s_2) s_1^{-1} s_3$ in
the braid group on $4$ strands (because $ s_1^{-1} s_3 s_2 s_1 s_3^{-1} s_2 s_1^{-1} s_3
=s_1^{-1} (s_3 s_2 s_3^{-1})s_1  s_2 s_1^{-1} s_3
=s_1^{-1} s_2^{-1} s_3 (s_2s_1  s_2) s_1^{-1} s_3
=s_1^{-1} s_2^{-1} s_3 s_1s_2  s_1 s_1^{-1} s_3
=s_1^{-1} s_2^{-1}  s_1(s_3s_2  s_3)
=(s_1^{-1} s_2^{-1}  s_1)s_2s_3  s_2
=s_2 s_1^{-1}  s_2^{-1}s_2s_3  s_2
=s_2 s_1^{-1}  s_3  s_2$).
Among the $s_2 s_1^{\alpha} s_3^{\beta} s_2^{-1}$ for $\alpha,\beta \in \{ -1,1 \}$,
we have $[s_2 s_1^{\alpha} s_3^{\beta} s_2^{-1}] \in \mathcal{S}_2$
because $s_2 s_1 s_3 s_2^{-1} \sim s_2 s_1 s_3^{-1} s_2$ : indeed,
we have $s_1(s_2 s_1 s_3 s_2^{-1}) s_3^{-1} = 
(s_1s_2 s_1) s_3 s_2^{-1} s_3^{-1} = 
s_2s_1 (s_2 s_3 s_2^{-1}) s_3^{-1} = 
s_2s_1 s_3^{-1} s_2 s_3 s_3^{-1} = 
s_2s_1 s_3^{-1} s_2$. 

Again for $\alpha,\beta \in \{-1,1 \}$, we have
$[s_2^{-1} s_1^{\alpha} s_3^{\beta} s_2] \in \mathcal{S}_2$ because of the following identities
\begin{enumerate}
\item $s_2^{-1} s_1^{-1} s_3 s_2 \sim s_2 s_1 s_3^{-1} s_2^{-1} $ 
\item $s_2^{-1} s_1 s_3 s_2 \sim s_2 s_1 s_3^{-1} s_2 $ 
\item $s_2^{-1} s_1 s_3^{-1} s_2 \sim s_2 s_1^{-1} s_3 s_2^{-1} $ 
\item $s_2^{-1} s_1^{-1} s_3^{-1} s_2 \sim s_2 s_1^{-1} s_3^{-1} s_2^{-1} $ 
\end{enumerate}
We prove these identities now. We have $s_3(s_2 s_3^{-1} s_1 s_2^{-1})s_1^{-1}
=(s_3s_2 s_3^{-1}) s_1 s_2^{-1}s_1^{-1}
=s_2^{-1}s_3 (s_2 s_1 s_2^{-1})s_1^{-1}
=s_2^{-1}s_3 s_1^{-1} s_2 s_1s_1^{-1}
=s_2^{-1}s_3 s_1^{-1} s_2$ hence $s_2 s_3^{-1} s_1 s_2^{-1} \sim s_2^{-1}s_3 s_1^{-1} s_2$
that is (1). By applying $\Ad \Delta$ this implies $s_2 s_1^{-1} s_3 s_2^{-1} \sim s_2^{-1}s_1 s_3^{-1} s_2$
that is (3). We have
$s_3^{-1}(s_2^{-1} s_1 s_3 s_2) s_1 = 
(s_3^{-1}s_2^{-1} s_3)s_1  s_2 s_1 = 
s_2s_3^{-1} (s_2^{-1}s_1  s_2) s_1 = 
s_2s_3^{-1} s_1s_2  s_1^{-1} s_1 = 
s_2s_3^{-1} s_1s_2 $ hence $s_2^{-1} s_1 s_3 s_2 \sim s_2s_3^{-1} s_1s_2$ that is (2).

We have $s_1(s_2^{-1} s_1^{-1} s_3 s_2^{-1})s_3^{-1} = 
(s_1s_2^{-1} s_1^{-1}) s_3 s_2^{-1}s_3^{-1} = 
s_2^{-1}s_1^{-1} (s_2  s_3 s_2^{-1})s_3^{-1} = 
s_2^{-1}s_1^{-1} s_3^{-1}  s_2 s_3s_3^{-1} = 
s_2^{-1}s_1^{-1} s_3^{-1}  s_2$ hence $s_2^{-1} s_1^{-1} s_3 s_2^{-1} \sim
s_2^{-1}s_1^{-1} s_3^{-1}  s_2$.
Moreover, we have $s_1(s_2 s_1^{-1} s_3^{-1} s_2^{-1}) s_3^{-1} 
=  s_1s_2 s_1^{-1} (s_3^{-1} s_2^{-1} s_3^{-1})
=  s_1(s_2 s_1^{-1} s_2^{-1}) s_3^{-1} s_2^{-1}
=  s_1s_1^{-1} s_2^{-1} s_1 s_3^{-1} s_2^{-1}
=  s_2^{-1} s_1 s_3^{-1} s_2^{-1}$ hence $s_2 s_1^{-1} s_3^{-1} s_2^{-1} \sim
s_2^{-1} s_1 s_3^{-1} s_2^{-1}$. Applying $\Delta$ we get
$s_2 s_1^{-1} s_3^{-1} s_2^{-1} \sim
s_2^{-1} s_3 s_1^{-1} s_2^{-1}$, hence
 $s_2 s_1^{-1} s_3^{-1} s_2^{-1} \sim
s_2^{-1} s_3 s_1^{-1} s_2^{-1} \sim 
s_2^{-1}s_1^{-1} s_3^{-1}  s_2$ hence (4).

Now, for $\alpha,\beta \in \{ -1,1 \}$, we have $[s_2^{-1} s_1^{\alpha} s_3^{\beta} s_2^{-1}] \in \mathcal{S}_2$
because $s_2^{-1} s_1 s_3^{-1} s_2^{-1} \sim s_2 s_1^{-1} s_3^{-1} s_2^{-1}$ 
and $s_2^{-1} s_1^{-1} s_3 s_2^{-1} \sim s_2 s_1^{-1} s_3^{-1} s_2^{-1}$ as we proved above,
and this concludes the proof.

\end{proof}

From this we get
$$
\begin{array}{lcl}
A_4 &=& \sum_{\sigma \in [E_2] }B \sigma B + B x_+ + B x_- + B y_- \\
&=& \sum_{\sigma \in \mathcal{S}_2} B \sigma B + B x_+ + B x_- + B y_- \\
\end{array}
$$

We write $\mathcal{S}_2 = \mathcal{S}_2^1 \cup \mathcal{S}_2^{\Delta} \cup \mathcal{S}_2^{\alpha} \cup \mathcal{S}_2^{\beta} \cup \mathcal{S}_2^0$
with
$$
\begin{array}{lcl}
\mathcal{S}_2^1&=& \{ [1],[s_2],[s_2^{-1}] \} \\
 \mathcal{S}_2^{\Delta} &=&\{ [s_2 s_1 s_3 s_2],[s_2^{-1} s_1^{-1} s_3^{-1} s_2^{-1}] \} \\ 
 \mathcal{S}_2^{\alpha} &=& \{ [s_2 s_1^{-1} s_2], [s_2 s_3^{-1} s_2 ] \} \\ 
 \mathcal{S}_2^{\beta}&=& \{ [s_2 s_1 s_3^{-1} s_2^{-1} ], [s_2 s_1^{-1} s_3 s_2^{-1} ] \} \\
  \mathcal{S}_2^0&=& \{ [s_2 s_1^{-1} s_3 s_2], [s_2 s_1^{-1} s_3^{-1} s_2], [s_2 s_1^{-1} s_3^{-1} s_2^{-1} ], [s_2^{-1} s_1 s_3 s_2^{-1}] \} \\
\end{array}
$$

Recall that $B = u_1 u_3 = u_3 u_1$, with $u_i$ the unital subalgebra generated by $s_i$.
We prove the following.
\begin{lemma} {\ } \label{lemB9}
\begin{enumerate}
\item $s_2 s_1 s_3 s_2 B \subset B s_2 s_1 s_3 s_2$, $s_2^{-1} s_1^{-1} s_3^{-1} s_2^{-1} B \subset B s_2^{-1} s_1^{-1} s_3^{-1} s_2^{-1}$
\item $s_2 s_1^{-1} s_2 B \subset B s_2 s_1^{-1} s_2 u_3 + A_4^{[1]}$, $s_2 s_3^{-1} s_2 B \subset B s_2 s_3^{-1} s_2 u_1+ A_4^{[1]}$
\item $s_2 s_1 s_3^{-1} s_2^{-1} B \subset B s_2 s_1 s_3^{-1} s_2^{-1} u_1$, $s_2 s_1^{-1} s_3 s_2^{-1} B \subset B s_2 s_1^{-1} s_3 s_2^{-1} u_3$
\end{enumerate}
\end{lemma}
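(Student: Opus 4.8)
The plan is to prove one identity in each of the three items directly and to deduce its companion by applying one of the (skew\nobreakdash-)automorphisms of the pair $(A_4,B)$ described above. Throughout I will use $B = u_1 u_3 = u_3 u_1$ (since $s_1$ and $s_3$ commute), the fact that $A_4^{[1]} = B + B s_2 B + B s_2^{-1} B$ is a $B$-bimodule, and that the automorphism $\Ad\Delta$ fixes $s_2$ while exchanging $s_1 \leftrightarrow s_3$ and hence $u_1 \leftrightarrow u_3$, stabilising $B$ and $A_4^{[1]}$.

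For item (1) the key point is that $s_2 s_1 s_3 s_2$ differs from Garside's $\Delta$ only by an element of $B$: from $\Delta = (s_1 s_3)(s_2 s_1 s_3 s_2) = (s_2 s_1 s_3 s_2)(s_1 s_3)$ and $p = s_1 s_3 \in B$ one gets $s_2 s_1 s_3 s_2 = p^{-1}\Delta = \Delta p^{-1}$. Since $\Ad\Delta$ preserves $B$, the element $\Delta$ normalises $B$, that is $\Delta B = B\Delta$, and moreover $\Ad\Delta(p) = p$. Therefore $s_2 s_1 s_3 s_2\, B = \Delta p^{-1} B = \Delta B = B\Delta = B\, s_2 s_1 s_3 s_2$, which is in fact an equality. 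The second identity of (1) then follows by applying the algebra automorphism $\Phi$, using $\Phi(\Delta) = \Delta^{-1}$, $\Phi(B) = B$ and $\Phi(s_2 s_1 s_3 s_2) = s_2^{-1} s_1^{-1} s_3^{-1} s_2^{-1}$.

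For item (2) I write $B = u_1 u_3$, so that $s_2 s_1^{-1} s_2\, B = (s_2 s_1^{-1} s_2)\, u_1\, u_3$, and I commute $u_1$ across the triple using lemma \ref{lemquasicom}(2): for $x \in u_1$ one has $(s_2 s_1^{-1} s_2)x \in x(s_2 s_1^{-1} s_2) + u_1 u_2 u_1$. As $u_1 u_2 u_1 = u_1 + u_1 s_2 u_1 + u_1 s_2^{-1} u_1 \subset A_4^{[1]}$, this gives $(s_2 s_1^{-1} s_2)\, u_1 \subset u_1 (s_2 s_1^{-1} s_2) + A_4^{[1]}$. Multiplying on the right by $u_3 \subset B$ and using that $A_4^{[1]}$ is a right $B$-module yields $s_2 s_1^{-1} s_2\, B \subset B\,(s_2 s_1^{-1} s_2)\, u_3 + A_4^{[1]}$, as wanted. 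The second inclusion of (2) is the image of the first under $\Ad\Delta$.

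For item (3) the crux is the exact braid identity $(s_2 s_1 s_3^{-1} s_2^{-1})\, s_3 = s_1\,(s_2 s_1 s_3^{-1} s_2^{-1})$, which I would check in $B_4$ using the relations of lemma \ref{lemsplusmoins} (concretely $s_3^{-1} s_2^{-1} s_3 = s_2 s_3^{-1} s_2^{-1}$ together with $s_2 s_1 s_2 = s_1 s_2 s_1$). Setting $g = s_2 s_1 s_3^{-1} s_2^{-1}$, this says $g s_3 g^{-1} = s_1$, hence also $g s_3^{-1} g^{-1} = s_1^{-1}$, so $g$ conjugates $u_3$ onto $u_1$, i.e. $g u_3 = u_1 g$. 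Writing $B = u_3 u_1$ we then obtain $g\, B = g\, u_3\, u_1 = u_1\, g\, u_1 \subset B\, g\, u_1$, which is (3); the companion statement follows from $\Ad\Delta$, sending $g$ to $s_2 s_1^{-1} s_3 s_2^{-1}$ and $u_1$ to $u_3$. I expect the main obstacle to be spotting and verifying this conjugation identity $g s_3 g^{-1} = s_1$: once it is in hand the module statement is immediate, whereas items (1) and (2) reduce respectively to the normalising behaviour of $\Delta$ and to a single application of the commutation lemma \ref{lemquasicom}.
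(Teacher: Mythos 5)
Your proof is correct and follows essentially the same route as the paper: item (1) via $s_2s_1s_3s_2 = p^{-1}\Delta$ and $\Delta B = B\Delta$ (companion by $\Phi$), item (2) by commuting $u_1$ across $s_2s_1^{-1}s_2$ modulo $u_1u_2u_1 \subset A_4^{[1]}$ and then applying $\Ad\Delta$, and item (3) via the conjugation identity $(s_2s_1s_3^{-1}s_2^{-1})s_3 = s_1(s_2s_1s_3^{-1}s_2^{-1})$ together with $B = u_3u_1$ and $\Ad\Delta$. The only immaterial difference is that in item (2) you cite lemma \ref{lemquasicom} for the needed commutation where the paper cites lemma \ref{lemdecomp1212}; both yield the same inclusion modulo $A_4^{[1]}$.
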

\begin{proof} We have $\Delta = s_1 s_3 (s_2 s_1 s_3 s_2) = (s_2 s_1 s_3 s_2) s_1 s_3$ and $\Delta B = B \Delta$
hence $s_2 s_1 s_3 s_2 B = (s_1 s_3)^{-1} \Delta B =  B (s_1 s_3)^{-1} \Delta= B (s_2 s_1 s_3 s_2)$. Applying
$\Phi$ (or considering $\Delta^{-1}$) we get $s_2^{-1} s_1^{-1} s_3^{-1} s_2^{-1} B = B s_2^{-1} s_1^{-1} s_3^{-1} s_2^{-1}$
hence (1).

By lemma \ref{lemdecomp1212} we have $(s_2 s_1^{-1} s_2) s_1^{-1} \in s_1^{-1}(s_2 s_1^{-1} s_2) + A_4^{[1]}$ hence
$(s_2 s_1^{-1} s_2) u_1 \subset u_1 (s_2 s_1^{-1} s_2) + A_4^{[1]}$ and
$(s_2 s_1^{-1} s_2) u_1 \subset B (s_2 s_1^{-1} s_2) + A_4^{[1]}$.
Since $B = u_1 u_3$ this yields
$(s_2 s_1^{-1} s_2) B = (s_2 s_1^{-1} s_2) u_1 u_3 \subset
B (s_2 s_1^{-1} s_2) u_3 + A_4^{[1]}$.
Using $\Ad \Delta$ this implies $(s_2 s_3^{-1} s_2) B \subset B (s_2 s_3^{-1} s_2) u_1 + A_4^{[1]}$,
hence (2). Finally, $(s_2 s_1 s_3^{-1} s_2^{-1})s_3 =
s_2 s_1 (s_3^{-1} s_2^{-1}s_3) = 
(s_2 s_1 s_2) s_3^{-1}s_2^{-1} = 
s_1 (s_2 s_1 s_3^{-1}s_2^{-1})$ hence
$(s_2 s_1 s_3^{-1} s_2^{-1}) u_3 \subset B (s_2 s_1 s_3^{-1} s_2^{-1})$
whence, using $B = u_3 u_1$, $(s_2 s_1 s_3^{-1} s_2^{-1}) B \subset B(s_2 s_1 s_3^{-1} s_2^{-1}) u_1$
and, applying $\Ad \Delta$, $(s_2 s_1^{-1} s_3 s_2^{-1})B \subset B(s_2 s_1^{-1}s_3 s_2^{-1}) u_3$, which proves (3). 
\end{proof}

\begin{proposition} {\ } \label{propA4Bmodule}
\begin{enumerate}
\item $A_4^{[1]} = B + B s_2 B + Bs_2^{-1} B$ is equal to
$$
B + \sum_{a,b \in \{0,1,-1\}} B s_2 s_1^a s_3^b +  \sum_{a,b \in \{0,1,-1\}} B s_2^{-1} s_1^a s_3^b 
$$
\item $A_4^{[2]} = B u_2 A_4^{[1]} = A_4^{[1]} u_2 B$ is equal to
$$
A_4^{[1]} + \sum_{x \in \mathcal{S}_2^{\Delta}} B x + \sum_{a \in \{0,1,-1 \}} B s_2 s_1^{-1} s_2 s_3^a 
+\sum_{a \in \{0,1,-1 \}} B s_2 s_3^{-1} s_2 s_1^a 
+ \sum_{a \in \{0,1,-1 \}} B s_2 s_1s_3^{-1} s_2^{-1} s_1^a $$ {} $$
+\sum_{a \in \{0,1,-1 \}} B s_2 s_1^{-1}s_3 s_2^{-1} s_3^a 
+ \sum_{\stackrel{x \in \mathcal{S}_2^0}{a,b \in \{0,1,-1\}}} B x s_1^a s_3^b 
$$
\item $A_4 = A_4^{[3]} = A_4^{[2]} + B x_+ + B x_- + B y_-$
\end{enumerate}

\end{proposition}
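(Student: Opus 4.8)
The plan is to handle the three items in turn, noting that item (3) is precisely the statement of the proposition proved immediately above, so it requires nothing new.

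For (1), I would appeal only to the ring structure of $B$ and of the $u_i$. Each $u_i$ is $R$-spanned by $1, s_i, s_i^{-1}$, and $B = u_1 u_3$, so the right-hand factor $B$ in $B s_2^{\pm} B$ is $R$-spanned by the monomials $s_1^a s_3^b$ with $a,b \in \{0,1,-1\}$. Therefore $B s_2^{\pm} B = \sum_{a,b} B s_2^{\pm} s_1^a s_3^b$, and substituting this into $A_4^{[1]} = B + B s_2 B + B s_2^{-1} B$ yields the claimed equality at once.

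For (2), I first record that $A_4^{[1]} = B u_2 B$, so by associativity $B u_2 A_4^{[1]} = B u_2 B u_2 B = A_4^{[1]} u_2 B$; this disposes of the two left-hand equalities and identifies $A_4^{[2]}$ with $B u_2 B u_2 B$. Expanding each $u_2$ as $R + R s_2 + R s_2^{-1}$ and the inner $B$ as $\sum R s_1^\beta s_3^\gamma$ gives $A_4^{[2]} = A_4^{[1]} + \sum B s_2^\alpha s_1^\beta s_3^\gamma s_2^\delta B$ over $\alpha,\delta \in \{1,-1\}$ (and $\beta,\gamma \in \{0,1,-1\}$), that is $A_4^{[2]} = \sum_{\sigma \in E_2} B\sigma B = \sum_{x \in \mathcal{S}_2} BxB$, the last equality because $[E_2] = \mathcal{S}_2$ (the preceding lemma) and $B\sigma B = BxB$ whenever $[\sigma]=[x]$.

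It then remains to replace each two-sided coset $BxB$ by the one-sided module listed in the statement, and this is exactly what Lemma \ref{lemB9} provides. For $x \in \mathcal{S}_2^1$ the contribution is $A_4^{[1]}$. For $x \in \mathcal{S}_2^{\Delta}$, item (1) gives $xB \subset Bx$, hence $BxB = Bx$. For $x \in \mathcal{S}_2^{\alpha}$, item (2) gives $xB \subset Bx\,u_3 + A_4^{[1]}$ (resp. $Bx\,u_1 + A_4^{[1]}$), producing $\sum_a B s_2 s_1^{-1} s_2 s_3^a$ and $\sum_a B s_2 s_3^{-1} s_2 s_1^a$. For $x \in \mathcal{S}_2^{\beta}$, item (3) gives $xB \subset Bx\,u_1$ (resp. $Bx\,u_3$), producing $\sum_a B s_2 s_1 s_3^{-1} s_2^{-1} s_1^a$ and $\sum_a B s_2 s_1^{-1} s_3 s_2^{-1} s_3^a$. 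For $x \in \mathcal{S}_2^0$ no reduction is available, so I simply write $BxB = Bx\,u_1 u_3 = \sum_{a,b} Bx\, s_1^a s_3^b$. Assembling these five families gives the displayed formula for $A_4^{[2]}$.

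The step I expect to be most delicate is this second half of (2): keeping straight which auxiliary factor $u_1$ or $u_3$ each one-sided reduction attaches, and confirming that the four cosets in $\mathcal{S}_2^0$ genuinely admit no further collapse, so that the generators listed there are exactly those needed. The conceptual content — that the two-sided cosets in $A_4^{[2]}$ are indexed by $\mathcal{S}_2$, and that the $\Delta$-type, $\alpha$-type and $\beta$-type cosets shrink to one-sided $B$-modules — is already contained in the lemma computing $[E_2]$ and in Lemma \ref{lemB9}, so the remaining work is organizational bookkeeping rather than fresh computation.
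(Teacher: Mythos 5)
Your proposal is correct and follows essentially the same route as the paper, whose proof is simply that (1) is clear, (3) was proved just before, and (2) follows from $A_4^{[2]} = A_4^{[1]} + \sum_{x \in \mathcal{S}_2} B x B$ together with Lemma \ref{lemB9}. Your write-up just makes explicit the bookkeeping (expansion of $B = u_1 u_3$, the double-coset reduction via $[E_2] = \mathcal{S}_2$, and which factor $u_1$ or $u_3$ each case of Lemma \ref{lemB9} attaches) that the paper leaves implicit.
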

\begin{proof}
(1) is clear, (3) has been proved before, and (2) is an immediate consequence of $A_4^{[2]} = A_4^{[1]} +
\sum_{x \in \mathcal{S}_2} B xB$ and of lemma \ref{lemB9}.

\end{proof}
\begin{corollary} \label{corA4B72}
As a $B$-module, $A_4$ is generated by $72$ elements, which are images of elements
of the braid group on $4$ strands.
\end{corollary}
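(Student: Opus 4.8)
The plan is to read the generating set off directly from Proposition \ref{propA4Bmodule} and count its members, verifying both that they total $72$ and that each is manifestly a word in $s_1^{\pm 1}, s_2^{\pm 1}, s_3^{\pm 1}$. Nothing new needs to be proved: the corollary is a bookkeeping consequence of the explicit $B$-module descriptions of $A_4^{[1]}, A_4^{[2]}$ and $A_4 = A_4^{[3]}$ already established.

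First I would count the level-one generators furnished by part (1). There $A_4^{[1]}$ is the left $B$-module generated by $1$ together with the elements $s_2 s_1^a s_3^b$ and $s_2^{-1} s_1^a s_3^b$ for $a,b \in \{0,1,-1\}$; this gives $1 + 9 + 9 = 19$ generators. Next I would count the generators newly appearing in part (2). On top of $A_4^{[1]}$, the module $A_4^{[2]}$ requires the $2$ elements of $\mathcal{S}_2^{\Delta}$, the four one-parameter families $s_2 s_1^{-1} s_2 s_3^a$, $s_2 s_3^{-1} s_2 s_1^a$, $s_2 s_1 s_3^{-1} s_2^{-1} s_1^a$ and $s_2 s_1^{-1} s_3 s_2^{-1} s_3^a$ with $a \in \{0,1,-1\}$ (contributing $4 \times 3 = 12$), and the products $x\, s_1^a s_3^b$ with $x \in \mathcal{S}_2^0$ and $a,b \in \{0,1,-1\}$ (contributing $4 \times 9 = 36$). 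This yields $2 + 12 + 36 = 50$ additional generators, so $A_4^{[2]}$ is $B$-generated by $19 + 50 = 69$ elements.

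Finally, part (3) adds the three elements $x_+$, $x_-$ and $y_-$, bringing the total to $69 + 3 = 72$. Since each listed generator is a finite product of the $s_i^{\pm 1}$, it is the image of an element of the braid group on $4$ strands, which is the remaining assertion of the corollary.

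There is no genuine obstacle here, as the entire content sits in Proposition \ref{propA4Bmodule}; the only point requiring care is to tally the combined family $\mathcal{S}_2^0 \cdot s_1^a s_3^b$ with its correct multiplicity $4 \times 9 = 36$, since this is where the bulk of the generators live and a miscount there is the one way the total could fail to land on $72$.
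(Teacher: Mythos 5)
Your proof is correct and is essentially identical to the paper's own argument: both read the generating set off Proposition \ref{propA4Bmodule}, counting $19$ generators for $A_4^{[1]}$, $50$ more for $A_4^{[2]}$ (namely $|\mathcal{S}_2^{\Delta}| + 4\times 3 + 9\times|\mathcal{S}_2^0| = 2+12+36$), and the final $3$ elements $x_+, x_-, y_-$, for a total of $72$, all visibly images of braids.
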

\begin{proof}
By proposition \ref{propA4Bmodule}, $A_4^{[1]}$ is generated by $1+9+9 = 19$
elements, $A_4^{[2]}$ by $A_4^{[1]}$ and $|\mathcal{S}_2^{\Delta}| + 4 \times 3 + 9 \times |\mathcal{S}_2^0| \leq 2 + 12 + 9 \times 4 = 50$
elements, and $A_4^{[3]}$ by $A_4^{[2]}$ and $3$ elements. Thus $A_4 = A_4^{[3]}$
is generated by $72$ elements, all originating from the braid group. 
\end{proof}

\section{The algebra $A_5$}
\label{sectA5}

Recall $w^+ = s_3 s_2^{-1} s_1 s_2^{-1} s_3$,
$w^- = s_3^{-1} s_2 s_1^{-1} s_2 s_3^{-1} \in A_4$. Our first goal in this section is to prove
the following theorem.

\begin{theorem} \label{theodecA5}
$$
\begin{array}{lcl}
A_5 &=& A_4 + A_4 s_4 A_4 + A_4 s_4^{-1} A_4 + A_4 s_4 s_3^{-1} s_4 A_4 +
A_4 s_4^{-1} s_3 s_2^{-1} s_3 s_4^{-1} A_4 +
A_4 s_4 s_3^{-1} s_2 s_3^{-1} s_4 A_4 \\
& & + 
A_4 s_4^{-1} w^+ s_4^{-1} A_4 +
A_4 s_4 w^-s_4 A_4 + A_4 s_4^{-1} w^- s_4^{-1} A_4
+ A_4 s_4 w^+ s_4 A_4 + A_4 s_4 w^- s_4 w^- s_4 A_4\\
& &  +
A_4 s_4 w^+ s_4^{-1} w^+ s_4 A_4 +
A_4 s_4^{-1} w^- s_4 w^- s_4^{-1} A_4
\end{array}
$$
\end{theorem}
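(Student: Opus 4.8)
The plan is to follow the pattern of the proofs of theorems \ref{theodecA3} and \ref{theodecA4}. Write $U$ for the right-hand side of the asserted equality. Each of its summands has the form $A_4\,X\,A_4$, so $U$ is a sub-$A_4$-bimodule of $A_5$, and since its first summand is $A_4$ we have $1\in U$. Because $A_5$ is generated as an $R$-algebra by $A_4$ together with $s_4$, and because $s_4^{-1}=c^{-1}(s_4^2-as_4-b)$ already lies in the subalgebra generated by $s_4$, it suffices to prove $s_4U\subseteq U$: then $U$ is a left ideal of $A_5$ containing $1$, whence $U=A_5$. Throughout I would exploit that $s_4$ commutes with $s_1$ and $s_2$, so that it interacts with $A_4$ only through $s_3$, governed by the braid relation together with the quasi-commutation and inversion identities of lemmas \ref{lemsplusmoins}, \ref{lemquasicom} and \ref{leminverse} applied with $i=3$; and I would record, as in the $A_4$ case, that $U$ is stable under $\Phi$ and $\Psi$ (using $\Phi(w^+)=w^-$, $\Psi(w^-)=w^+$ and $\Phi(s_4)=\Psi(s_4)=s_4^{-1}$), which roughly halves the number of cases.

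Next I would establish two foundational containments, the shifted analogues of the tools used for $A_4$. First, $\langle s_2,s_3,s_4\rangle=sh(A_4)\subseteq U$: applying the shift $s_i\mapsto s_{i+1}$ to theorem \ref{theodecA4} expresses $sh(A_4)$ as a sum of $sh(A_3)$-bimodules generated by $1$, $s_4^{\pm}$, $s_4s_3^{-1}s_4$ and the shifted exceptional elements $sh(w^-)=s_4^{-1}s_3s_2^{-1}s_3s_4^{-1}$ and $sh(w^+)=s_4s_3^{-1}s_2s_3^{-1}s_4$, each of which occurs among the generators of $U$ (while $sh(A_3)\subseteq A_4\subseteq U$). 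Second, playing the role of lemma \ref{lemA4uAu}, the workhorse $u_4A_4u_4\subseteq U$: here I would feed in a concrete description of $A_4$ — either the rank-$27$ left-$A_3$ generating set of proposition \ref{propA4A327} or, more economically, the $\langle s_1,s_3\rangle$-module description of proposition \ref{propA4Bmodule} — and verify $s_4^{\alpha}Xs_4^{\beta}\in U$ for each generator $X$ and $\alpha,\beta\in\{-1,1\}$, commuting the letters $s_1,s_2$ past $s_4$ and treating the $s_3$-interactions exactly as in lemma \ref{lemA4uAu}. The genuinely new inputs here are the terms with $X=w^{\pm}$, which either occur directly in $U$ or reduce to listed generators by lemma \ref{leminverse} with $i=3$.

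With these in hand I would grade the generators of $U$ by the number of occurrences of $s_4^{\pm}$: degree $0$ (the summand $A_4$), degree $1$ (the two summands $A_4s_4^{\pm}A_4$), degree $2$ (the six summands built from $s_4s_3^{-1}s_4$, $sh(w^{\pm})$ and $s_4^{\pm}w^{\pm}s_4^{\pm}$), and degree $3$ (the three summands $A_4s_4w^-s_4w^-s_4A_4$, $A_4s_4w^+s_4^{-1}w^+s_4A_4$ and $A_4s_4^{-1}w^-s_4w^-s_4^{-1}A_4$), and then prove $s_4U\subseteq U$ family by family. For degrees $\le 2$ the computation parallels the $A_4$ case: after replacing $s_4^2$ by $as_4+b+cs_4^{-1}$ one reduces, by means of $sh(A_4)\subseteq U$ and $u_4A_4u_4\subseteq U$, to generators already on the list. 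The delicate family is the degree-$3$ one: multiplying $s_4w^-s_4w^-s_4$ on the left by $s_4$ and rewriting $s_4^2$ produces, besides harmless lower-degree pieces, a term $c\,s_4^{-1}w^-s_4w^-s_4$; trying to absorb it into the listed generator $s_4^{-1}w^-s_4w^-s_4^{-1}$ by right multiplication by $s_4^2=as_4+b+cs_4^{-1}$ reproduces the same degree-$3$ monomials, so the reduction is circular and cannot be closed by the braid and cubic relations alone.

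Breaking this circularity is the heart of the matter and the step I expect to be hardest. The resolution is the exact analogue, one level up, of lemma \ref{lemw0carre}. Consider $W_0=s_4s_3s_2s_1^2s_2s_3s_4=s_4w_0s_4$; using that $w_0$ and $s_4$ both commute with $s_1,s_2$, together with $s_3s_4s_3=s_4s_3s_4$, one checks that $W_0$ commutes with $s_1,s_2$ and $s_3$, so it centralises $A_4$. It is the analogue for the parabolic $\langle s_1,s_2,s_3\rangle\subset B_5$ of the element $w_0$ centralising $\langle s_1,s_2\rangle\subset B_4$, and it originates from the centre of the braid group in the sense of the introduction. Since $w_0\in A_3^{\times}w^+ +U_0$ by lemma \ref{lemauxA4w0}, the product $W_0^2=s_4w_0s_4^2w_0s_4$ contains, after substituting $s_4^2=as_4+b+cs_4^{-1}$, precisely the degree-$3$ monomial $s_4w^+s_4^{-1}w^+s_4$ (and, via $\Phi$, $\Psi$ and $w_0^{-1}$, the other two degree-$3$ generators). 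The crucial relation, analogous to $w_0^2\in A_3^{\times}w_0^{-1}+U^+$ and proved by the lengthy computation of section \ref{sectcrucial}, is that $W_0^2$ lies in $A_4^{\times}W_0^{-1}$ plus the degree-$\le 2$ part of $U$. Because $W_0$ is central relative to $A_4$, this rewrites every incipient fourth block of $s_4$ in terms of $W_0^{-1}$, that is back in degree $3$, so the generating list closes up and $s_4U\subseteq U$ follows, completing the proof.
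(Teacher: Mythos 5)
Your overall architecture is genuinely the paper's: take $U$ to be the right-hand side, note it is an $A_4$-bimodule containing $1$, establish $sh(A_4)\subseteq U$ and $u_4A_4u_4\subseteq U$, and break the circularity at the three-block level by means of the element $\delta=W_0=s_4w_0s_4$, which centralizes $A_4$. Your diagnosis of where the naive reduction becomes circular is also accurate. But the relation you propose as the crux is false, and provably so. Since $w_0^{-1}\in A_4$, the element $W_0^{-1}=s_4^{-1}w_0^{-1}s_4^{-1}$ lies in $u_4A_4u_4\subseteq A_5^{(2)}$: it is a \emph{two}-block element, not a degree-three one as your last sentence asserts (this is exactly where the analogy with $A_4$ breaks: $w_0$ and $w_0^{-1}$ both have $s_3$-degree $2$, but $\delta$ and $\delta^{-1}$ do not have the same $s_4$-degree modulo $A_5^{(2)}$). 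Consequently your claimed relation $W_0^2\in A_4^{\times}W_0^{-1}+(\mbox{degree }\le 2\mbox{ part of }U)$ simply says $\delta^2\in A_5^{(2)}$. That cannot hold: one has $\delta^2\equiv s_4w^+s_4^{-1}w^+s_4 \bmod A_5^{(2)}$ (the remark following lemma \ref{lemdroitegaucheA5}), so your relation would push one of the three top generators into $A_5^{(2)}$ and $A_5$ would then be generated over $A_4$ by at most $237$ elements; but the argument of \cite{BMR} recalled in the introduction forces $A_5$ to be free over $A_4$ of rank $|G_{32}|/|G_{25}|=240$. Hence no computation in the style of section \ref{sectcrucial} can establish the relation you need, and your proof cannot be closed as written.

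The statement that actually breaks the circularity (lemma \ref{lemcrucial}) concerns $\delta^3$, not $\delta^2$, and it is not a degree-lowering relation: it identifies $\delta^3$, up to units and modulo $A_5^{(2)}$ \emph{and the other two degree-three generators}, with $s_4w^-s_4w^-s_4$. Its role is to transfer centrality rather than to reduce degree: since $\delta^3$ commutes with $A_4$, this identification converts the two-sided module $A_4s_4w^-s_4w^-s_4A_4$ into the one-sided module $A_4s_4w^-s_4w^-s_4$ (proposition \ref{propdroitegaucheA53}); the other two generators are handled likewise because they are congruent to $\delta^{2}$ and $\delta^{-2}$ (lemma \ref{lemdroitegaucheA5}). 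The filtration then terminates ($A_5^{(4)}\subseteq A_5^{(3)}$, proposition \ref{propA5A53}) for the elementary reason that each one-sided generator ends in $s_4^{\pm1}$, so right multiplication by $u_4$ never creates a fourth block; no relation expressing a power of $\delta$ in lower degree is needed anywhere, and none is true. Two further inaccuracies in your sketch: the third generator cannot be reached "via $\Phi$, $\Psi$ and $w_0^{-1}$" from $\delta^2$, since $\Phi$ and $\Psi$ both carry $\delta^{2}$-type elements to $\delta^{-2}$-type ones (the cube is unavoidable); and the assertion that degrees $\le 2$ "parallel the $A_4$ case" hides most of the actual work, because $s_4$ times a two-block element already produces arbitrary elements of $u_4A_4u_4A_4u_4$, and reducing those to the three listed degree-three generators (propositions \ref{propmoinsde55} through \ref{propdecomp1bimoduuu}) is the bulk of the paper.
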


We denote again $U$ the right-hand side. We let $A_5^{(0)} = A_4$ and
$A_5^{(n+1)} = A_5^{(n)} u_4 A_4$. This defines an increasing sequence
of $A_4$ sub-bimodules of $A_5$. An immediate consequence
of theorem \ref{theodecA4} is $sh(A_4) \subset U$. Also, we have $u_4 \subset U$
hence $A_5^{(1)} = A_4 u_4 A_4 \subset U$.

\begin{lemma} \label{lemuAuDansU} $u_4 A_4 u_4 \subset U$, hence $A_5^{(2)} = A_4 u_4 A_4 u_4 A_4 \subset U$.
\end{lemma}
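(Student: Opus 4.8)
The plan is to reduce the statement to a finite check on the six ``middle'' terms produced by Theorem \ref{theodecA4}, exploiting that $s_4$ commutes with the subalgebra $A_3 = \langle s_1,s_2\rangle$. First I would invoke Theorem \ref{theodecA4}(1) to write
$A_4 = A_3 + A_3 s_3 A_3 + A_3 s_3^{-1} A_3 + A_3 s_3 s_2^{-1} s_3 A_3 + A_3 w^- A_3 + A_3 w^+ A_3$,
where every outer factor $A_3$ is generated by $s_1$ and $s_2$ and hence commutes with $s_4$. Since $u_4 A_3 = A_3 u_4$, this yields $u_4 A_4 u_4 = \sum_m A_3\,(u_4 m u_4)\,A_3$, the sum running over $m \in \{1, s_3, s_3^{-1}, s_3 s_2^{-1} s_3, w^-, w^+\}$. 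As $U$ is visibly an $A_4$-sub-bimodule of $A_5$ and $A_3 \subset A_4$, it then suffices to prove $u_4 m u_4 \subset U$ for each such $m$: indeed $u_4 A_4 u_4 \subset \sum_m A_3 U A_3 = U$, and the second assertion follows from $A_5^{(2)} = A_4 u_4 A_4 u_4 A_4 = A_4\,(u_4 A_4 u_4)\,A_4 \subset A_4 U A_4 = U$.

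For the four terms $m \in \{1, s_3, s_3^{-1}, s_3 s_2^{-1} s_3\}$ the inclusion is immediate, since each such $m$ lies in $\langle s_2,s_3\rangle$, whence $u_4 m u_4 \subset \langle s_2,s_3,s_4\rangle = sh(A_4) \subset U$, the last inclusion being the consequence of Theorem \ref{theodecA4} already noted. The remaining terms $m = w^\pm$ are the heart of the matter. Here $u_4 w^\pm u_4$ is $R$-spanned by the $s_4^\alpha w^\pm s_4^\beta$ with $\alpha,\beta \in \{0,1,-1\}$. When $\alpha = 0$ or $\beta = 0$ the element lies in $u_4 A_4 + A_4 u_4 \subset A_5^{(1)} \subset U$; when $\alpha = \beta$ it is one of the distinguished generators $s_4^{\pm} w^\pm s_4^{\pm}$ appearing directly in the definition of $U$. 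This leaves exactly the four off-diagonal words $s_4^{\mp} w^\pm s_4^{\pm}$, which constitute the main obstacle.

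I would treat these by explicit braid manipulations in the spirit of Lemma \ref{lemA4uAu}. Because the inner factor $s_2 s_1^{-1} s_2$ of $w^- = s_3^{-1}(s_2 s_1^{-1} s_2) s_3^{-1}$ commutes with $s_4$, conjugation by $s_4$ only moves the outer frame, giving the clean identity $s_4 w^- s_4^{-1} = (s_4 s_3^{-1} s_4^{-1})(s_2 s_1^{-1} s_2)(s_4 s_3^{-1} s_4^{-1})$. Using Lemma \ref{lemsplusmoins} in the form $s_4 s_3^{-1} s_4^{-1} = s_3^{-1} s_4^{-1} s_3$, then $s_3 s_2 s_3^{-1} = s_2^{-1} s_3 s_2$ and the braid relation $s_2 s_1^{-1} s_2^{-1} = s_1^{-1} s_2^{-1} s_1$, and finally the commutation of $s_1$ with $s_3$ and $s_4$ to push $s_1^{\pm}$ to the outside, one collapses the core into $s_4^{-1} s_3 s_2^{-1} s_3 s_4^{-1} \in sh(A_4)$, obtaining $s_4 w^- s_4^{-1} \in A_4\, sh(A_4)\, A_4 \subset U$. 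The three other off-diagonal words then follow formally by applying the automorphism $\Phi$ and the skew-automorphism $\Psi$, which satisfy $\Phi(w^-) = \Psi(w^-) = w^+$ and $\Phi(s_4) = \Psi(s_4^{-1})^{-1}... $ more precisely send $s_4 w^- s_4^{-1}$ respectively to $s_4^{-1} w^+ s_4$ and $s_4 w^+ s_4^{-1}$ (and their composite to $s_4^{-1} w^- s_4$), together with the stabilities $\Phi(U) = \Psi(U) = U$ checked exactly as in the $A_4$ setting via Lemma \ref{leminverse}. The delicate part throughout is the bookkeeping: one must verify that each rewritten word genuinely lands in $sh(A_4)$ (equivalently in $A_4\, sh(A_4)\, A_4$) rather than generating new crossings that would force an appeal to the ``double'' generators such as $s_4 w^- s_4 w^- s_4$; the computation above shows that at this stage it does not.
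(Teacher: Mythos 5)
Your overall strategy is the same as the paper's: decompose $A_4$ by theorem \ref{theodecA4}, absorb the four terms lying in $\langle s_2,s_3\rangle$ into $sh(A_4)\subset U$, observe that the diagonal words $s_4^{\pm}w^{\pm}s_4^{\pm}$ are generators of $U$, and attack the four off-diagonal words. Your explicit computation for $s_4 w^- s_4^{-1}$ is correct: one indeed gets $s_4 w^- s_4^{-1}=(s_3^{-1}s_4^{-1}s_3)(s_2s_1^{-1}s_2)(s_3^{-1}s_4^{-1}s_3)=s_3^{-1}s_2^{-1}s_1^{-1}\,(s_4^{-1}s_3s_2^{-1}s_3s_4^{-1})\,s_1s_2s_3\in A_4\,sh(A_4)\,A_4\subset U$. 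The gap is in your final symmetry step. You invoke the stabilities $\Phi(U)=\Psi(U)=U$, ``checked exactly as in the $A_4$ setting via Lemma \ref{leminverse}.'' That check cannot be carried out at this point: unlike the $A_4$ case, the present $U$ contains the three double generators $A_4 s_4 w^- s_4 w^- s_4 A_4$, $A_4 s_4 w^+ s_4^{-1} w^+ s_4 A_4$ and $A_4 s_4^{-1} w^- s_4 w^- s_4^{-1} A_4$, and both $\Phi$ and $\Psi$ send the first of these to $A_4 s_4^{-1} w^+ s_4^{-1} w^+ s_4^{-1} A_4$, which is \emph{not} among the generators of $U$. That this image lies in $U$ is exactly the content of the later lemma \ref{lemsymswsws}, whose proof requires proposition \ref{propmoinsde55}, corollary \ref{correducspec}, lemma \ref{lemdecomp212121}, and the inclusion $A_5^{(2)}\subset U$ — the very statement you are proving. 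So the step as written is unjustified, indeed circular.

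The repair is easy and stays inside your argument: apply $\Phi$ and $\Psi$ not to the containment $s_4 w^- s_4^{-1}\in U$ but to the intermediate containment $s_4 w^- s_4^{-1}\in A_4\,sh(A_4)\,A_4$. This last set is manifestly stable under $\Phi$ and $\Psi$ (both preserve $A_4$ and $sh(A_4)$), so all four off-diagonal words lie in $A_4\,sh(A_4)\,A_4\subset A_4 U A_4=U$, with no appeal to stability of $U$ itself. Alternatively, you can avoid symmetry altogether, which is what the paper does: writing $w^{\pm}\in s_3^{\alpha}A_3 s_3^{\alpha}$ and using that the core commutes with $u_4$, lemma \ref{lemsplusmoins} converts each off-diagonal word $s_4^{\eps}w^{\pm}s_4^{-\eps}$ into an element of $A_4\,(s_4^{\alpha}A_4 s_4^{\alpha})\,A_4$ for a suitable sign $\alpha$, and the diagonal inclusions $s_4^{\alpha}A_4 s_4^{\alpha}\subset U$ — which you have already established from the decomposition and the definition of $U$ — finish the proof. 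Either fix makes your argument complete.
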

\begin{proof}
According to theorem \ref{theodecA4}, we have $A_4 = A_3 + A_3 s_3 A_3+
A_3 s_3^{-1} A_3 + A_3 s_3 s_2^{-1} s_3 A_3 + A_3 w^- + A_3 w^+$,
hence $u_4 A_4 u_4 \subset A_3 u_4 + A_4 u_4 u_3 u_4 A_4 + 
A_4 u_4 s_3 s_2^{-1} s_3 u_4 A_3 + A_3 u_4 w^- u_4 + A_3 u_4 w^+ u_4.$
We have $A_3 u_4 + A_4 u_4 u_3 u_4 A_4 + 
A_4 u_4 s_3 s_2^{-1} s_3 u_4 A_3 \subset A_4 sh(A_4) A_4 \subset A_4 U A_4 \subset U$.
Moreover, since by definition $s_4^{\alpha} w^{\beta} s_4^{\alpha} \in U$
for all $\alpha,\beta \in \{ -1,1 \}$,
we have $s_4 A_4 s_4 \subset U$, $s_4^{-1} A_4 s_4^{-1} \subset U$,
and we only need to prove $s_4 w^{\pm} s_4^{-1} \in U$
and $s_4^{-1} w^{\pm} s_4 \in U$. We have $w^{\pm} \in s_3^{\alpha} A_3 s_3^{\alpha}$ for some $\alpha \in \{ -1, 1 \}$,
hence $s_4^{\alpha} w^{\pm} s_4^{-\alpha} \in s_4^{\alpha} s_3^{\alpha} A_3 s_3^{\alpha} s_4^{-\alpha} 
= s_3^{-\alpha} (s_3^{\alpha} s_4^{\alpha} s_3^{\alpha}) A_3 s_3^{\alpha} s_4^{-\alpha} 
= s_3^{-\alpha} s_4^{\alpha} s_3^{\alpha} s_4^{\alpha} A_3 s_3^{\alpha} s_4^{-\alpha} 
\subset A_4 s_4^{\alpha} s_3^{\alpha}  A_3 (s_4^{\alpha} s_3^{\alpha} s_4^{-\alpha}) 
\subset A_4 s_4^{\alpha} s_3^{\alpha}  A_3 s_3^{-\alpha} s_4^{\alpha} s_3^{\alpha} $
by lemma \ref{lemsplusmoins}. Now 
$$A_4 s_4^{\alpha} s_3^{\alpha}  A_3 s_3^{-\alpha} s_4^{\alpha} s_3^{\alpha}  \subset
A_4 (s_4^{\alpha} A_4 s_4^{\alpha}) A_4 \subset A_4 U A_4 \subset U,$$
as we already proved. 
\end{proof}

\subsection{The $A_4$-bimodule $A_5^{(3)}/A_5^{(2)}$ : first reduction.}

\begin{proposition}\label{propmoinsde55} If $p \leq 5$, $q \leq 5$ and $(p,q) \neq (5,5)$, then
for all $x \in u_4 u_{i_1}\dots u_{i_p} u_4 u_{j_1}\dots u_{j_q} u_4$ we have $x \in  A_4 u_4 A_4 u_4 A_4$,
for all choices of $i_1,\dots,i_p,j_1,\dots,j_q \in \{ 1, 2,3 \}$,
unless $(p,q) \in \{ (5,4), (4,5) \}$ and $x \in s_4 u_3 u_2 u_1 u_3 u_2 s_4 u_1 u_3 u_2 u_3 s_4
\cup s_4^{-1} u_3 u_2 u_1 u_3 u_2 s_4^{-1} u_1 u_3 u_2 u_3 s_4^{-1}$.
\end{proposition}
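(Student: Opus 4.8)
The plan is to prove the statement by induction on the pair $(p,q)$, ordered lexicographically after using the skew-automorphism $\Psi$ to arrange $p \geq q$. Indeed $\Psi$ fixes each $u_i$ and sends a product $s_4 u_{i_1}\cdots u_{i_p} s_4 u_{j_1}\cdots u_{j_q} s_4$ to one of the same shape with the two blocks reversed and their lengths exchanged; it also preserves $A_4 u_4 A_4 u_4 A_4 = A_5^{(2)}$. Similarly $\Phi$ fixes each $u_i$, preserves $A_5^{(2)}$, and carries the single element $s_4 u_3 u_2 u_1 u_3 u_2 s_4 u_1 u_3 u_2 u_3 s_4$ to its $s_4^{-1}$-counterpart, so the exceptional set of the statement is exactly the $\Phi$-orbit of that element once $\Psi$ has normalized us to $p \geq q$ (the profile $(4,5)$ then arising as $\Psi$ of the $(5,4)$ survivor). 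The whole difficulty is that the left-hand side carries \emph{three} factors $u_4$ whereas the target $A_5^{(2)}$ carries only \emph{two}; the real content is therefore that, outside the exceptional case, one occurrence of $u_4$ can always be eliminated. The base case $q=0$ is immediate, since $s_4 u_{i_1}\cdots u_{i_p} s_4 \in u_4 A_4 u_4 \subset A_5^{(2)}$.

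The reduction machinery is the following. Because $s_4$ commutes with $s_1$ and $s_2$, at the two \emph{outer} interfaces any initial $u_1$ or $u_2$ of the first block can be carried past the leading $u_4$ into the left-hand $A_4$, strictly lowering $p$, and symmetrically any terminal $u_1$ or $u_2$ of the last block can be absorbed into the right-hand $A_4$, lowering $q$; the commutation $u_1 u_3 = u_3 u_1$ and the idempotency $u_i u_i = u_i$ are used to expose such letters. After these moves we may assume $i_1 = 3$ and $j_q = 3$ and that both blocks are reduced words in $u_1,u_2,u_3$. The engine for actually removing a $u_4$ is then the braid relation $s_4 s_3 s_4 = s_3 s_4 s_3$ together with Lemma \ref{lemsplusmoins}, which turn any $s_4^{\pm}s_3^{\epsilon}s_4^{\mp}$ and any $s_4 s_3 s_4$ into $u_3 u_4 u_3$, i.e. a single $u_4$; runs of the form $u_4 u_3 u_4 u_3 u_4$ are bounded using the $A_3$-structure of $\langle s_3,s_4\rangle$ (Theorem \ref{theodecA3}), the intervening $s_2$-parts are reorganized using the decomposition of $sh(A_4)=\langle s_2,s_3,s_4\rangle$ from Theorem \ref{theodecA4}, and Lemmas \ref{lemspmgross} and \ref{leminverse} are invoked to push the remaining $s_3$'s into position.

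With these tools one runs through all profiles with $q \leq p \leq 5$ and $(p,q)\neq(5,5)$. A first dichotomy is the signs of the three $s_4$'s: if they are not all equal, then after commuting the intervening $s_1,s_2$ letters one can create a pattern $s_4^{\pm}s_3^{\epsilon}s_4^{\mp}$, the product collapses by the merging step into $A_5^{(2)}$, and we are done. Hence we may assume all three $s_4$'s carry the same sign, and by $\Phi$ that this sign is $+$. The surviving positive-sign cases are then treated by decreasing $(p,q)$: for each profile one rewrites the two blocks into the canonical generators of $A_4$ provided by Theorem \ref{theodecA4} and checks that every resulting configuration either re-exposes an outer $u_1$ or $u_2$ (hence drops to a smaller profile, settled by induction) or directly merges two of the $s_4$'s (hence lands in $A_5^{(2)}$).

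The main obstacle is the profile $(5,4)$ (its mirror $(4,5)$ being recovered by $\Psi$). Here both blocks are of near-maximal length and begin, respectively end, in $s_3$, so essentially none of the reductions above is available, and one must verify by an exhaustive but finite check that the \emph{only} word escaping $A_5^{(2)}$ is $s_4 u_3 u_2 u_1 u_3 u_2 s_4 u_1 u_3 u_2 u_3 s_4$ (together with its $\Phi$-image). The delicate point is the asymmetry with the neighbouring profiles $(5,3)$ and $(4,4)$, which must be shown to reduce \emph{completely} while $(5,4)$ leaves a single irreducible generator: it is precisely at this length profile that the block pattern reproduces the central element of the braid group on strands $2,\dots,5$, which is exactly why no braid or cubic relation can lower the $u_4$-count. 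Isolating this one leftover and ruling out every competing $(5,4)$-word is the computational heart of the proposition, and this single central generator is what will later require the separate treatment of Section \ref{sectcrucial}.
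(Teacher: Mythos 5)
Your skeleton — induction on $(p,q)$ in lexicographic order after normalizing $p\geq q$ by $\Psi$, commuting $u_1,u_2$ past $u_4$ at the outer interfaces to force $i_1=j_q=3$, and using lemma \ref{lemsplusmoins} together with theorem \ref{theodecA3} applied to $\langle s_3,s_4\rangle$ as the device for eliminating a $u_4$ — is exactly the paper's. But your ``first dichotomy'' contains a genuine gap that swallows essentially all of the real work. You claim that whenever the three $s_4$'s do not all carry the same sign, ``after commuting the intervening $s_1,s_2$ letters one can create a pattern $s_4^{\pm}s_3^{\epsilon}s_4^{\mp}$'' and the product collapses into $A_4u_4A_4u_4A_4$. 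This is false: after the outer reductions the blocks typically \emph{begin and end} with $s_3$, with words involving $s_2$ in between, and since $s_2$ does not commute with $s_3$ no commutation can bring a single power of $s_3$ adjacent to two $s_4$'s of opposite sign. The cleanest way to see that the argument cannot work is that it proves too much: nothing in your mixed-sign branch uses the hypothesis $(p,q)\neq(5,5)$, so it would apply verbatim to $s_4 w^+ s_4^{-1} w^+ s_4 = s_4(s_3s_2^{-1}s_1s_2^{-1}s_3)s_4^{-1}(s_3s_2^{-1}s_1s_2^{-1}s_3)s_4$, a mixed-sign word of profile $(5,5)$, and would place it in $A_5^{(2)}$. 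Yet this element is precisely one of the three generators that the paper must adjoin to $A_5^{(2)}$ to obtain $A_5^{(3)}=A_5$ (theorem \ref{theodecA5raf}, proposition \ref{propdroitegaucheA53}); the entire later structure, including the computation of section \ref{sectcrucial}, exists because such words do \emph{not} reduce by any such mechanical collapse.

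In the paper, the mixed-sign cases are where the length of the proof lies, and they are handled profile by profile: case $(3,2)$ already needs two successive applications of theorem \ref{theodecA3} inside $\langle s_3,s_4\rangle$; case $(3,3)$ needs lemma \ref{leminverse}, lemma \ref{lemquasicom} and a page of braid manipulations; cases $(5,q)$ use the bimodule decomposition of theorem \ref{theodecA4} to rewrite whole blocks; and the mixed-sign configurations of profile $(5,4)$ (normalized by $\Phi,\Psi$ to $\beta=-1$, $\gamma=1$) require the longest computation of the proof, with the induction hypothesis invoked at several smaller profiles along the way. Your same-sign branch is similarly underspecified — e.g.\ the all-positive word $u_4u_3u_2u_1u_2u_3u_4u_2u_1u_2u_3u_4$ of profile $(5,4)$ is disposed of in the paper by expanding the middle $A_4$ via $A_4=A_3u_3A_3+A_3u_3u_2u_3A_3+A_3u_3u_2u_1u_2u_3$, not by ``re-exposing or merging.'' Finally, a smaller point of logic: the proposition does not assert that the exceptional words escape $A_5^{(2)}$, only that they are not covered by the claim, so no ``exhaustive check'' of their irreducibility is required or given; that question is deferred to the study of $A_5^{(3)}/A_5^{(2)}$.
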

\begin{proof}
Note that $sh(A_4) \subset  A_4 u_4 A_4 u_4 A_4$.
By application of $\Psi$ 
we may assume $p \geq q \geq 1$. We prove the statement by induction on $(p,q)$,
using lexicographic ordering. By commutation relations we can assume
$i_1 \not\in \{1,2 \}$ hence $i_1 = 3$, and similarly $j_{q} = 3$. In case $(p,q) = (1,1)$
we have then $u_4 u_3 u_4 u_3 u_4 \subset sh(A_4) \subset A_4 u_4 A_4 u_4 A_4$.
More generally, in the cases $(1,1)$, $(2,1)$, $(2,2)$, $(3,1)$, using only
commutation relations we check that the corresponding
algebras are necessarily included in $A_4 sh(A_4) A_4 \subset A_4 u_4 A_4 u_4 A_4$.

If $(p,q) = (3,2)$, the only case which is not clearly included in $A_4 sh(A_4) A_4$ is
$u_4 u_3 u_2 u_1 u_4 u_2 u_3 u_4 = u_4 u_3 u_4 u_2 u_1  u_2 u_3 u_4$,
and we have $u_4 u_3 u_4 \subset u_3 s_4 s_3^{-1} s_4 + u_3 u_4 u_3$
by theorem \ref{theodecA3}
hence $u_4 u_3 u_4 u_2 u_1  u_2 u_3 u_4 \subset 
u_3 s_4 s_3^{-1} s_4 u_2 u_1  u_2 u_3 u_4 + u_3 u_4 u_3 u_2 u_1  u_2 u_3 u_4
\subset u_3 s_4 s_3^{-1}  u_2 u_1  u_2 s_4 u_3 u_4 +A_4 u_4 A_4 u_4 A_4$.
Again $s_4 u_3 u_4 \subset s_4^{-1} s_3 s_4^{-1} u_3 + u_3 u_4 u_3$
by theorem \ref{theodecA3} hence
$u_3 s_4 s_3^{-1}  u_2 u_1  u_2 (s_4 u_3 u_4)
\subset 
u_3 s_4 s_3^{-1}  u_2 u_1  u_2 s_4^{-1} s_3 s_4^{-1} + A_4 u_4 A_4 u_4 A_4$
and 
$u_3 s_4 s_3^{-1}  u_2 u_1  u_2 s_4^{-1} s_3 s_4^{-1}
= u_3 (s_4 s_3^{-1}s_4^{-1})  u_2 u_1  u_2  s_3 s_4^{-1} = 
u_3 s_3^{-1} s_4^{-1}s_3  u_2 u_1  u_2  s_3 s_4^{-1}  \subset A_4 u_4 A_4 u_4 A_4$.

If $(p,q) = (3,3)$, the corresponding algebra is either included in $A_4 sh(A_4) A_4 \subset U$,
or can be reduced using commutation relations to the case $(3,2)$, or we
are dealing with the remaining case $u_4 u_3 u_2 u_1 u_4 u_3 u_2 u_3 u_4$
(or its image under the natural anti-isomorphism $u_4 u_3 u_2 u_3 u_4 u_1 u_2 u_3 u_4$).
We want to prove $u_4 u_3 u_2 u_1 u_4 u_3 u_2 u_3 u_4  \subset A_4 u_4 A_4 u_4 A_4$.
Up to using the natural isomorphism induced by $s_i \mapsto s_i^{-1}$, we only
need to prove $u_4 u_3 u_2 u_1 u_4 u_3 u_2 u_3 s_4  \subset A_4 u_4 A_4 u_4 A_4$.
We have $u_4 u_3 u_2 u_1 u_4 u_3 u_2 u_3 s_4 = 
 u_4 u_3 u_4 u_2 u_1  u_3 u_2 u_3 s_4$ and we know from theorem \ref{theodecA3} that
 $u_4 u_3 u_4 \subset A_4 s_4^{-1} s_3 s_4^{-1} + u_3 u_4 u_3$,
 hence $u_4 u_3 u_4 u_2 u_1  u_3 u_2 u_3 s_4
 \subset
 A_4s_4^{-1} s_3 s_4^{-1} u_2 u_1  u_3 u_2 u_3 s_4  +
 A_4 u_4 A_4 u_4 A_4$.
 Now, using $u_3 u_2 u_3 \subset s_3 s_2^{-1} s_3 u_2 + u_2 u_3 u_3$
 we have $$s_4^{-1} s_3 s_4^{-1} u_2 u_1  u_3 u_2 u_3 s_4 \subset
 s_4^{-1} s_3 s_4^{-1} u_2 u_1   s_3 s_2^{-1} s_3 u_2 s_4 + s_4^{-1} s_3 s_4^{-1} u_2 u_1  u_2 u_3 u_2 s_4.$$
 But  $s_4^{-1} s_3 s_4^{-1} u_2 u_1  u_2 u_3 u_2 s_4 = s_4^{-1} s_3 u_2 u_1 s_4^{-1}   u_2 u_3 s_4 u_2 \subset
 A_4 u_4 A_4 u_4 A_4$ by the induction assumption, hence  
 $s_4^{-1} s_3 s_4^{-1} u_2 u_1  u_3 u_2 u_3 s_4 \subset s_4^{-1} s_3 s_4^{-1} u_2 u_1   s_3 s_2^{-1} s_3 s_4u_2  + 
 A_4 u_4 A_4 u_4 A_4$. Now we need to prove 
 $s_4^{-1} s_3 s_4^{-1} s_2^{\alpha} s_1^{\beta}   s_3 s_2^{-1} s_3 s_4 \in A_4 u_4 A_4 u_4 A_4$ for $\alpha,\beta \in \{ -1,1 \}$.
 If $\alpha = 1$, this holds true because
$$
\begin{array}{lcll}
 s_4^{-1} s_3 s_4^{-1} s_2 u_1   s_3 s_2^{-1} s_3 s_4 
 &=&   s_4^{-1} s_3 s_4^{-1} s_2 u_1   s_3 s_2^{-1} (s_3 s_4 s_3) s_3^{-1} \\
 &=&   s_4^{-1} s_3 s_4^{-1} s_2 u_1   s_3 s_2^{-1} s_4 s_3 s_4 s_3^{-1} \\
 &=&   s_4^{-1} s_3  s_2 u_1 (s_4^{-1}  s_3  s_4) s_2^{-1} s_3 s_4 s_3^{-1} \\
 &=&   s_4^{-1} s_3  s_2 u_1 s_3  s_4  s_3^{-1} s_2^{-1} s_3 s_4 s_3^{-1} \\
 &=&   s_4^{-1} (s_3  s_2 s_3) u_1   s_4  (s_3^{-1} s_2^{-1} s_3) s_4 s_3^{-1} \\
 &=&   s_4^{-1} s_2  s_3 s_2 u_1   s_4  s_2 s_3^{-1} s_2^{-1} s_4 s_3^{-1} \\
 &=&   s_2 s_4^{-1}   s_3 s_2 u_1   s_4 s_2 s_3^{-1}  s_4 s_2^{-1} s_3^{-1} \\
 &\subset & A_4 u_4 A_4 u_4 A_4 \\
 \end{array}$$
 by the induction assuption. We thus assume $\alpha = -1$. 
If $\beta = 1$,
 then 
$$
\begin{array}{lcll}
 s_4^{-1} s_3 s_4^{-1} s_2^{-1} s_1  ( s_3 s_2^{-1} s_3) s_4
 &\subset &   s_4^{-1} s_3 s_4^{-1} s_2^{-1} s_1   s_3^{-1} s_2 s_3^{-1} s_4u_2  & + A_4 u_4 A_4 u_4 A_4 \ \mbox{(lemmas \ref{leminverse} + \ref{lemquasicom})} \\
 &\subset &  s_1 s_1^{-1} s_4^{-1} s_3 s_4^{-1} s_2^{-1} s_1   s_3^{-1} s_2 s_3^{-1} s_4u_2  & + A_4 u_4 A_4 u_4 A_4  \\
 &\subset &  s_1  s_4^{-1} s_3 s_4^{-1}(s_1^{-1} s_2^{-1} s_1)   s_3^{-1} s_2 s_3^{-1} s_4u_2  & + A_4 u_4 A_4 u_4 A_4  \\
 &\subset &  s_1  s_4^{-1} s_3 s_4^{-1}s_2 s_1^{-1} s_2^{-1}   s_3^{-1} s_2 s_3^{-1} s_4u_2  & + A_4 u_4 A_4 u_4 A_4  \\
 &\subset &  s_1  s_4^{-1} s_3 s_4^{-1}s_2 s_1^{-1} sh(A_3) s_4u_2  & + A_4 u_4 A_4 u_4 A_4  \\
 &\subset &  s_1  s_4^{-1} s_3 s_4^{-1}s_2 s_1^{-1} s_3 s_2^{-1} s_3 u_2 s_4u_2  & + A_4 u_4 A_4 u_4 A_4  \\
 \end{array}$$
 by theorem \ref{theodecA3} and the induction assumtion, and we already proved
 $s_4^{-1} s_3 s_4^{-1}s_2 s_1^{-1} s_3 s_2^{-1} s_3 u_2 s_4 = s_4^{-1} s_3 s_4^{-1}s_2 s_1^{-1} s_3 s_2^{-1} s_3  s_4 u_2 \subset  A_4 u_4 A_4 u_4 A_4  $.
 The remaining case is then $(\alpha,\beta) = (-1,-1)$,
 for which we have
$$
\begin{array}{lcll}
 s_4^{-1} s_3 s_4^{-1} s_2^{-1}s_1^{-1}  ( s_3 s_2^{-1} s_3) s_4
&\subset &    s_4^{-1} s_3s_4^{-1}  s_2^{-1}  s_1^{-1} s_3^{-1}  s_2 s_3^{-1} s_4 A_4  & + A_4 u_4 A_4 u_4 A_4 \  \mbox{(lemma \ref{leminverse})} \\
&\subset &  s_3 s_3^{-1} ( s_4^{-1} s_3s_4^{-1} ) s_2^{-1} s_3^{-1} s_1^{-1}   s_2 s_3^{-1} s_4 A_4  & + A_4 u_4 A_4 u_4 A_4  \\
&\subset &  s_3 (s_4^{-1} s_3s_4^{-1} )s_3^{-1}  s_2^{-1} s_3^{-1} s_1^{-1}  s_2 s_3^{-1} s_4 A_4  & + A_4 u_4 A_4 u_4 A_4 \  \mbox{(lemma \ref{lemquasicom})} \\
&\subset &  A_4  s_4^{-1} s_3s_4^{-1}(s_3^{-1}  s_2^{-1} s_3^{-1}) s_1^{-1}  s_2 s_3^{-1} s_4 A_4  & + A_4 u_4 A_4 u_4 A_4   \\
&\subset &  A_4  s_4^{-1} s_3s_4^{-1} s_2^{-1}  s_3^{-1} (s_2^{-1} s_1^{-1}  s_2) s_3^{-1} s_4 A_4  & + A_4 u_4 A_4 u_4 A_4   \\
&\subset &  A_4  s_4^{-1} s_3s_4^{-1}s_2^{-1}  s_3^{-1} s_1 s_2^{-1}  s_1^{-1} s_3^{-1} s_4 A_4  & + A_4 u_4 A_4 u_4 A_4   \\
&\subset &  A_4  s_4^{-1} s_3s_4^{-1} s_2^{-1}  s_1(s_3^{-1}  s_2^{-1}   s_3^{-1}) s_4 s_1^{-1} A_4  & + A_4 u_4 A_4 u_4 A_4   \\
&\subset &  A_4  s_4^{-1} s_3s_4^{-1}s_2^{-1}  s_1s_2^{-1}  s_3^{-1}   s_2^{-1} s_4 s_1^{-1} A_4  & + A_4 u_4 A_4 u_4 A_4   \\
&\subset &  A_4 s_4^{-1} s_3s_4^{-1}s_2^{-1}  s_1s_2^{-1}  s_3^{-1}    s_4 s_2^{-1} A_4  & + A_4 u_4 A_4 u_4 A_4   \\
&\subset &  A_4 s_4^{-1} s_3s_2^{-1} s_1 s_4^{-1}  s_2^{-1}  s_3^{-1}    s_4  A_4  & + A_4 u_4 A_4 u_4 A_4   \\
&\subset & A_4 u_4 A_4 u_4 A_4 & \mbox{(induction assumption)} \\ \end{array}$$
and this concludes the case $(p,q) = (3,3)$.

All cases $(4,q)$ for $q = 1,2,4$ can be easily reduced to smaller cases by using commutation relations and relations
$u_i u_{j} u_i u_j = u_j u_i u_j u_i$. Most cases for $(4,3)$ can also be
reduced this way, except for one remaining case $u_4 u_3 u_2 u_3 u_1 u_4 u_3 u_2 u_3 u_4$. Using $\Phi$,
we only need to prove $u_4 u_3 u_2 u_3 u_1 s_4 u_3 u_2 u_3 u_4 \subset A_4 u_4 A_4 u_4 A_4$.
Using the induction assumption and theorem \ref{theodecA3} on $sh(A_3)$, we get
$$
\begin{array}{lcll}
u_4 (u_3 u_2 u_3) u_1 s_4 (u_3 u_2 u_3) u_4 
& \subset & u_4 u_2 s_3 s_2^{-1} s_3 u_1 s_4  s_3 s_2^{-1} s_3 u_2  u_4 & + A_4 u_4 A_4 u_4 A_4 \\
& \subset & A_3 u_4 s_3 s_2^{-1}  u_1 (s_3s_4  s_3) s_2^{-1} s_3  u_4A_3  & + A_4 u_4 A_4 u_4 A_4 \\
& \subset & A_3u_4 s_3 s_2^{-1}  u_1 s_4s_3  s_4 s_2^{-1} s_3  u_4 A_3& + A_4 u_4 A_4 u_4 A_4 \\
& \subset & A_3(u_4 s_3 s_4) s_2^{-1}  u_1 s_3  s_2^{-1}  (s_4 s_3  u_4)A_3 & + A_4 u_4 A_4 u_4 A_4 \\
& \subset & A_3u_3 u_4 u_3 s_2^{-1}  u_1 s_3  s_2^{-1}  u_3 u_4  u_3A_3 & + A_4 u_4 A_4 u_4 A_4 \\
\end{array} $$
by lemma \ref{lemsplusmoins}, which proves the claim.

We now deal with the cases $(5,q)$ with $1 \leq q < 5$. We can
assume that $u_{i_1}\dots u_{i_p} = u_3 u_2 u_1 u_2 u_3$ or
$u_{i_1}\dots u_{i_p} = u_3 u_2 u_1 u_3 u_2$, because otherwise
we can reduce to smaller cases by using commutation relations and the relation
$u_a u_b u_a u_b = u_b u_a u_b u_a $. From this remark one easily
checks that the cases $(5,1)$ are readily reduced to smaller cases, and also the cases
$(5,2)$ except for the case $u_4 u_3 u_2 u_1 u_2 u_3  u_4 u_2 u_3 u_4 = u_4 u_3 u_2 u_1 u_2 u_3  u_2 u_4  u_3 u_4$ that we
tackle now :
we have $u_3 u_2 u_1 u_2 u_3  u_2 \subset A_4 = A_3 u_3 A_3 + A_3 u_3u_2u_3 A_3 + A_3 u_3u_2u_1u_2u_3$
by theorem \ref{theodecA4},
hence 
$$
\begin{array}{lcl}u_4 u_3 u_2 u_1 u_2 u_3  u_2 u_4  u_3 u_4 &\subset& 
u_4 A_3 u_3 A_3 u_4  u_3 u_4 + 
u_4A_3 u_3u_2u_3 A_3  u_4  u_3 u_4 +
u_4 A_3 u_3u_2u_1u_2u_3 u_4  u_3 u_4 \\
&\subset& A_3 u_4  u_3  u_4 A_3 u_3 u_4 + 
A_3 u_4 u_3u_2u_3   u_4 A_3 u_3 u_4 +
A_3 u_4  u_3u_2u_1u_2u_3 u_4  u_3 u_4 \\
&\subset& A_3 u_4  u_3  u_4 u_2 u_1 u_2 u_1 u_3 u_4 + 
A_3 u_4 u_3u_2u_3   u_4 u_2 u_1 u_2 u_1 u_3 u_4 \\ & & + 
A_3 u_4  u_3u_2u_1u_2u_3 u_4  u_3 u_4 \\
&\subset& A_3 u_4  u_3  u_4 u_2 u_1 u_2  u_3 u_4 A_2 + 
A_3 u_4 u_3u_2u_3   u_4 u_2 u_1 u_2  u_3 u_4 A_2 \\ & & +
A_3 u_4  u_3u_2u_1u_2u_3 u_4  u_3 u_4
\end{array}
$$
using $A_3 = u_2 u_1 u_2 u_1$, and we are thus reduced to smaller cases.

When $(p,q) = (5,3)$, the only nontrivial case (up to commutation and $u_au_bu_au_b = u_b u_a u_b u_a$ relations)
is $u_4 u_3 u_2 u_1 u_2 u_3 u_4 u_3 u_2 u_3 u_4$. We have $u_2 u_3 u_4 u_3 u_2 u_3 u_4 \subset sh(A_4)
\subset A_4 u_4 A_4 + sh(A_3) u_4 u_3 u_4 A_4 + u_4 u_3 u_2 u_3 u_4 A_4$ by theorem \ref{theodecA4},
hence 
$$
u_4 u_3 u_2 u_1 u_2 u_3 u_4 u_3 u_2 u_3 u_4 \subset A_4 u_4 A_4 u_4 A_4 + 
u_4 u_3 u_2 u_1 sh(A_3) u_4 u_3 u_4 A_4 +
u_4 u_3 u_2 u_1 u_4 u_3 u_2 u_3 u_4 A_4
$$
and we have $u_4 u_3 u_2 u_1 u_4 u_3 u_2 u_3 u_4  \subset A_4 u_4 A_4 u_4 A_4$
by the induction assumption,
and, since $sh(A_3)= u_2 u_3 u_2 u_3$ by theorem \ref{theodecA3},
$$
\begin{array}{lclcl}
u_4 u_3 u_2 u_1 sh(A_3) u_4 u_3 u_4& \subset&
u_4 u_3 u_2 u_1 u_2 u_3 u_2 (u_3 u_4 u_3 u_4)& =&
u_4 u_3 u_2 u_1 u_2 u_3 u_2 u_4 u_3 u_4 u_3 \\ & & &=&
u_4 u_3 u_2 u_1 u_2 u_3  u_4u_2 u_3 u_4 u_3 \\
\end{array}
$$
and we are reduced to case $(5,2)$.

When $(p,q) = (5,4)$, the only nontrivial cases
are
$$u_4 u_3 u_2 u_1 u_2 u_3 u_4    u_2 u_1 u_2 u_3 u_4
\mbox{ and }
u_4 u_3 u_2 u_1 u_3 u_2 u_4   u_3  u_1 u_2  u_3 u_4.$$
In the first case,
$u_4 u_3 u_2 u_1 u_2 u_3 u_4    u_2 u_1 u_2 u_3 u_4 = u_4 u_3 u_2 u_1 u_2 u_3 u_2 u_1 u_2 u_4     u_3 u_4 \subset u_4 A_4 u_4 u_3 u_4$.
By theorem \ref{theodecA4}, we have $A_4 = A_3u_3 A_3 + A_3 u_3 u_2 u_3 A_3 + A_3 u_3 u_2 u_1 u_2 u_3$
hence
$$
\begin{array}{lcl}
u_4 A_4 u_4 u_3 u_4 &\subset& u_4 A_3u_3 A_3 u_4 u_3 u_4 + u_4 A_3 u_3 u_2 u_3 A_3 u_4 u_3 u_4 + u_4 A_3 u_3 u_2 u_1 u_2 u_3 u_4 u_3 u_4 \\
& \subset&  A_3 u_4 u_3 A_3 u_4 u_3 u_4 + A_3 u_4  u_3 u_2 u_3  u_4 A_3 u_3 u_4 + A_3 u_4  u_3 u_2 u_1 u_2 u_3 u_4 u_3 u_4 \\
& \subset & A_4 u_4 A_4 u_4 A_4 \\
\end{array}
$$
by the induction assumption and $A_3 = u_2 u_1 u_2 u_1$.

In the second case, we need to consider the sets $s_4^{\alpha} u_3 u_2 u_1 u_3 u_2 s_4^{\beta}           u_3  u_1 u_2  u_3 s_4^{\gamma}$
with $\alpha,\beta, \gamma \in \{ -1, 1 \}$, and we can assume that two of them have distinct signs, otherwise we are in the exceptional
case of the statement. Up to using $\Phi$ and $\Psi$,
we can assume $\gamma = 1$ and $\beta = -1$.
We are thus considering expressions of the type  $u_4 u_3 u_2 u_1 u_3 u_2 s_4^{-1}           u_3  u_1 u_2  u_3 s_4 = u_4 u_3 u_2  u_3 u_1 u_2 u_1 s_4^{-1}           u_3   u_2  u_3 s_4.$
Notice that $$
\begin{array}{clcl}
& u_4 u_3 u_2  u_3 (u_2 u_1 u_2) s_4^{-1}           u_3   u_2  u_3 s_4 &=& u_4 (u_3 u_2  u_3 u_2) u_1 u_2 s_4^{-1}           u_3   u_2  u_3 s_4 \\
= &u_4 u_2 u_3  u_2 u_3 u_1 u_2 s_4^{-1}           u_3   u_2  u_3 s_4 
&=& u_2  u_4 u_3  u_2 u_3 u_1 u_2 s_4^{-1}           u_3   u_2  u_3 s_4 \\
\end{array}$$ hence reduces to smaller cases. As a consequence, among the natural spanning set of $u_1 u_2 u_1$,
only the $s_1^{\alpha} s_2^{-\alpha} s_1^{\alpha}$ do not reduce to smaller cases, and so we may restrict ourselves to these.
Moreover, using $u_3 u_2 u_3 \subset u_2 s_3^{-1} s_2 s_3^{-1} + u_2 u_3 u_2$ and
$u_3 u_2 u_3 \subset  s_3 s_2^{-1} s_3 u_2 + u_2 u_3 u_2$ we are reduced to expressions of
the form $u_4 s_3^{-1} s_2 s_3^{-1}s_1^{\alpha} s_2^{-\alpha} s_1^{\alpha} s_4^{-1} s_3 s_2^{-1} s_3  s_4$.
We then have
$$
\begin{array}{lcl}
u_4 s_3^{-1} s_2 s_3^{-1}s_1^{\alpha} s_2^{-\alpha} s_1^{\alpha} s_4^{-1} s_3 s_2^{-1} s_3  s_4 
& =& u_4 s_3^{-1} s_2 s_3^{-1}s_1^{\alpha} s_2^{-\alpha} s_1^{\alpha} s_4^{-1} s_3 s_2^{-1} (s_3  s_4 s_3) s_3^{-1} \\ 
& =& u_4 s_3^{-1} s_2 s_3^{-1}s_1^{\alpha} s_2^{-\alpha} s_1^{\alpha} s_4^{-1} s_3 s_2^{-1} s_4  s_3 s_4 s_3^{-1} \\ 
& =& u_4 s_3^{-1} s_2 s_3^{-1}s_1^{\alpha} s_2^{-\alpha} s_1^{\alpha} (s_4^{-1} s_3 s_4) s_2^{-1}   s_3 s_4 s_3^{-1} \\ 
& =& u_4 s_3^{-1} s_2 s_3^{-1}s_1^{\alpha} s_2^{-\alpha} s_1^{\alpha} s_3 s_4 s_3^{-1} s_2^{-1}   s_3 s_4 s_3^{-1} \\ 
& =& u_4 s_3^{-1} s_2 s_3^{-1}s_1^{\alpha} s_2^{-\alpha} s_1^{\alpha} s_3 s_4 (s_3^{-1} s_2^{-1}   s_3) s_4 s_3^{-1} \\ 
& =& u_4 s_3^{-1} s_2 s_3^{-1}s_1^{\alpha} s_2^{-\alpha} s_1^{\alpha} s_3 s_4 s_2 s_3^{-1}   s_2^{-1} s_4 s_3^{-1} \\ 
& =& u_4 s_3^{-1} s_2 s_3^{-1}s_1^{\alpha} s_2^{-\alpha} s_1^{\alpha} s_3 s_4 s_2 s_3^{-1}    s_4 s_2^{-1} s_3^{-1} \\ 
\end{array}
$$
so we now  need to prove that $u_4 s_3^{-1} s_2 s_3^{-1}s_1^{\alpha} s_2^{-\alpha} s_1^{\alpha} s_3 s_4 s_2 s_3^{-1}    s_4 
\subset A_4 u_4 A_4 u_4 A_4$.
When $\alpha = 1$ we get
$$
\begin{array}{lcl}
u_4 s_3^{-1} s_2 s_3^{-1}s_1 s_2^{-1} s_1 s_3 s_4 s_2 s_3^{-1}    s_4 
&=& u_4 s_3^{-1} s_2 s_1 (s_3^{-1} s_2^{-1} s_3) s_1  s_4 s_2 s_3^{-1}    s_4 \\
&=& u_4 s_3^{-1} (s_2 s_1 s_2) s_3^{-1} s_2^{-1} s_1  s_4 s_2 s_3^{-1}    s_4 \\
&=& u_4 s_3^{-1} s_1 s_2 s_1 s_3^{-1} s_2^{-1} s_1  s_4 s_2 s_3^{-1}    s_4 \\
&=& s_1 u_4 s_3^{-1}  s_2 s_1 s_3^{-1} s_2^{-1} s_1  s_4 s_2 s_3^{-1}    s_4 \\
&=& s_1 u_4 s_3^{-1}  s_2 s_1 s_3^{-1} s_2^{-1}   s_4 s_1 s_2 s_3^{-1}    s_4 \\
&\subset& A_4 u_4 A_4 u_4 A_4\\
\end{array}
$$
by the induction assumption.
When $\alpha = -1$ we get
$$
\begin{array}{lcl}
u_4 s_3^{-1} s_2 s_3^{-1}s_1^{-1} s_2 s_1^{-1} s_3 s_4 s_2 s_3^{-1}    s_4 
&=& u_4 s_3^{-1} s_2 s_1^{-1}(s_3^{-1} s_2 s_3)  s_1^{-1} s_4 s_2 s_3^{-1}    s_4  \\
&=& u_4 s_3^{-1} s_2 s_1^{-1}s_2 s_3 s_2^{-1}  s_1^{-1} s_4 s_2 s_3^{-1}    s_4  \\
&=& u_4 s_3^{-1} s_2 s_1^{-1}s_2 s_3 (s_2^{-1}  s_1^{-1} s_2) s_4  s_3^{-1}    s_4  \\
&=& u_4 s_3^{-1} s_2 s_1^{-1}s_2 s_3 s_1  s_2^{-1} s_1^{-1} s_4  s_3^{-1}    s_4  \\
&=& u_4 s_3^{-1} s_2 s_1^{-1}s_2 s_3 s_1  s_2^{-1}  s_4  s_3^{-1}    s_4s_1^{-1}  \\
&=& u_4 s_3^{-1} s_2 s_1^{-1}s_2 s_3 s_4 s_1  s_2^{-1}    s_3^{-1}    s_4s_1^{-1}  \\
&\subset& A_4 u_4 A_4 u_4 A_4\\
\end{array}
$$
by the induction assumption.

This concludes the case $(5,4)$ and the proof of the proposition.

\end{proof}

\begin{lemma}  \label{lemaux2AuAuA}
$$u_4 u_3 u_2 u_3 u_1 u_2 u_1 u_4 u_3 u_2 u_3 u_4 \subset A_4 (u_4 u_3 u_2 u_1 u_2 u_3u_4 u_3 u_2 u_1 u_2 u_3 u_4)u_2 + A_4 u_4 A_4 u_4 A_4$$
\end{lemma}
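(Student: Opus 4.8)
The plan is to reduce the left-hand side $L := u_4 u_3 u_2 u_3 u_1 u_2 u_1 u_4 u_3 u_2 u_3 u_4$ to the single ``central'' three-$u_4$ word $\Gamma := u_4 u_3 u_2 u_1 u_2 u_3 u_4 u_3 u_2 u_1 u_2 u_3 u_4$, throwing every term of smaller complexity into $A_4 u_4 A_4 u_4 A_4$ by Proposition \ref{propmoinsde55}. First I would use the commutation $s_1 s_3 = s_3 s_1$ to rewrite the first inter-$u_4$ block as $u_3 u_2 u_3 u_1 u_2 u_1 = u_3 u_2 u_1 u_3 u_2 u_1$, an element $X \in A_4$, and expand $X$ through theorem \ref{theodecA4}(1). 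This splits $X$ into a \emph{tame} part lying in $A_3 + A_3 s_3 A_3 + A_3 s_3^{-1} A_3 + A_3 s_3 s_2^{-1} s_3 A_3$ and the two \emph{exceptional} pieces $A_3 w^- A_3$ and $A_3 w^+ A_3$. Throughout, the key structural fact is that $A_3 = \langle s_1,s_2\rangle$ commutes with $u_4$, so flanking $A_3$-factors may be slid across the $u_4$'s and absorbed into the ambient $A_4$'s.

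For a tame summand $a\,\kappa\,a'$ with $a,a'\in A_3$ and core $\kappa\in\{1,s_3^{\pm},s_3 s_2^{-1}s_3\}$ (at most three syllables), sliding $a,a'$ through the neighbouring $u_4$'s yields $a\,u_4\kappa u_4\,(a'\,u_3u_2u_3)\,u_4$; re-expanding $a'\,u_3 u_2 u_3$ inside $A_4$ keeps the two inter-$u_4$ blocks of length at most $5$ while staying off the exceptional shapes $(5,5),(5,4),(4,5)$, so Proposition \ref{propmoinsde55} places these in $A_4 u_4 A_4 u_4 A_4$. For the exceptional part $A_3 w^{\pm} A_3$, pushing the outer $A_3$'s across the $u_4$'s gives $A_3\,u_4 w^{\pm} u_4\,(A_3\,u_3 u_2 u_3)\,u_4$; I then peel the final $u_3 u_2 u_3$ by theorem \ref{theodecA3} as $s_3 s_2^{-1}s_3\,u_2 + u_2 u_3 u_2$. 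The $u_2 u_3 u_2$ branch drops into $A_4 u_4 A_4 u_4 A_4$ after commuting the outer $u_2$'s through $u_4$ (using $u_2 u_4 = u_4 u_2$), and in the $s_3 s_2^{-1}s_3\,u_2$ branch the trailing $u_2$ commutes past the last $u_4$ to supply exactly the external $u_2$ of the target, leaving a left-$A_4$-multiple of $u_4 w^{\pm} u_4\,(s_3 s_2^{-1}s_3)\,u_4$.

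The heart of the argument is then to identify $u_4 w^{\pm} u_4 (s_3 s_2^{-1}s_3) u_4$, modulo $A_4 u_4 A_4 u_4 A_4$, with a left-$A_4$-multiple of $\Gamma$: here $w^{\pm}$ and $s_3 s_2^{-1}s_3 \in u_3 u_2 u_3$ are precisely the two halves of the palindromic block $u_3 u_2 u_1 u_2 u_3$, and flanked by the three $u_4$'s they reassemble the doubled central element $\Gamma$, originating (as in lemma \ref{lemw0carre}) from the center of the braid group. I would carry this out by the same positive-to-negative braid juggling as in lemma \ref{lemw0carre}, repeatedly invoking lemmas \ref{lemsplusmoins}, \ref{leminverse} and \ref{lemquasicom} to migrate the $u_4$'s across $s_3$ and $s_2$; at the outset $\Phi$ and $\Psi$ reduce the general sign pattern on the three $u_4$'s to the genuinely hard all-equal-sign case (the residue left uncovered by Proposition \ref{propmoinsde55}, exactly as in its $(5,4)$ analysis). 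The main obstacle is this final identification together with the recursive bookkeeping: one must check that every error term really has an inter-$u_4$ block of length strictly below $5$ on one side, so that Proposition \ref{propmoinsde55} applies and never returns a fresh, non-canonical irreducible $(5,5)$ word, and that the surviving irreducible residue is $\Gamma$ itself rather than a $\Phi$- or $\Psi$-variant of it.
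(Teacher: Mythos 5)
Your reduction is circular, and the circularity sits exactly where you declare the terms \emph{tame}. Take the tame summand with $a=1$, $\kappa=s_3s_2^{-1}s_3$, $a'=s_1^{-1}s_2s_1^{-1}\in A_3$: your plan must show that
$u_4\,(s_3s_2^{-1}s_3)(s_1^{-1}s_2s_1^{-1})\,u_4\,u_3u_2u_3\,u_4$ lies in $A_4u_4A_4u_4A_4$. Here the block $a'u_3u_2u_3$ has six factors $u_1u_2u_1u_3u_2u_3$, so proposition \ref{propmoinsde55} (which needs $q\le 5$) does not apply as it stands; and every move available to shorten it fails. Sliding $a'$ back left gives the $(6,3)$ block $u_3u_2u_3u_1u_2u_1=u_3u_2u_1u_3u_2\cdot u_1$, whose trailing $u_1$ passes through the middle $u_4$ to produce $u_4\,u_3u_2u_1u_3u_2\,u_4\,u_1u_3u_2u_3\,u_4$ --- \emph{exactly} the $(5,4)$ configuration that proposition \ref{propmoinsde55} excludes, i.e.\ the left-hand side of the lemma itself. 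Re-expanding either block through theorem \ref{theodecA4} only bounces the interior $A_3$ across the middle $u_4$ (it commutes with $u_4$ but not with the $u_3$-cores), regrowing a five- or six-factor block on the other side. So your claim that the tame terms ``stay off the exceptional shapes'' is false, the tame part with core $s_3s_2^{-1}s_3$ is the whole problem over again, and placing it in $A_4u_4A_4u_4A_4$ alone is precisely the assertion proposition \ref{propmoinsde55} declines to make --- it is what the $\Gamma$-term in the statement exists to repair. The same interior-$A_3$ issue also infects your exceptional branch: $A_3u_4w^{\pm}u_4(A_3\,u_3u_2u_3)u_4$ is \emph{not} a left $A_4$-multiple of $u_4w^{\pm}u_4(s_3s_2^{-1}s_3)u_4$. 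That gap, unlike the tame one, is repairable, but only by lemma \ref{lemA4droitegauche} ($w^{\pm}A_3\subset A_3w^{\pm}+U_0$, a quasi-centrality inherited from $w_0$ which the core $s_3s_2^{-1}s_3$ does not have), and you never invoke it.

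You have also mislocated the heart of the proof. The step you single out as the main obstacle --- identifying $u_4w^{\pm}u_4(s_3s_2^{-1}s_3)u_4$ with a multiple of $\Gamma=u_4u_3u_2u_1u_2u_3u_4u_3u_2u_1u_2u_3u_4$ modulo $A_4u_4A_4u_4A_4$ --- is trivial: since $w^{\pm}\in u_3u_2u_1u_2u_3$ and $s_3s_2^{-1}s_3\in u_3u_2u_3\subset u_3u_2u_1u_2u_3$, that set is literally contained in $\Gamma$; no braid juggling, no $\Phi,\Psi$ sign analysis, no lemma \ref{lemw0carre}. The genuine work, which your plan never meets because your decomposition hides it inside the ``tame'' part, is the single word $s_4(s_3s_2^{-1}s_3)(s_1^{-1}s_2s_1^{-1})s_4(s_3^{-1}s_2s_3^{-1})s_4$. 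The paper's proof attacks it directly: proposition \ref{propmoinsde55} reduces to all-positive $s_4$'s and to middle words $s_1^{\alpha}s_2^{-\alpha}s_1^{\alpha}$; the identity $s_1s_2^{-1}s_1=s_2^{-1}s_1^{-1}s_2s_1^2$ reduces $\alpha=1$ to $\alpha=-1$; theorem \ref{theodecA3} reduces the two outer $u_3u_2u_3$'s to $s_3s_2^{-1}s_3$ and $s_3^{-1}s_2s_3^{-1}$, extracting the $u_2$'s that become the external $u_2$ of the statement; and then a long explicit chain of braid relations converts the remaining word into $s_1^{-1}s_3^{-1}s_2^{-1}\cdot s_4s_3^{-1}s_2s_1^{-1}s_2s_3\,s_4\,s_3s_2s_1^{-1}s_2s_3^{-1}\,s_4\in A_4\Gamma$. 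No bimodule decomposition of the six-factor block and no central elements enter that argument; your proposal, by contrast, cannot close without proving the very statement it is supposed to establish.
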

\begin{proof}
By proposition \ref{propmoinsde55} it is enough to prove
$$s_4 u_3 u_2 u_3 u_1 u_2 u_1 s_4 u_3 u_2 u_3 s_4 \subset A_4 u_4 u_3 u_2 u_1 u_2 u_3u_4 u_3 u_2 u_1 u_2 u_3 u_4u_2 + A_4 u_4 A_4 u_4 A_4$$
and, as noted in the proof of proposition \ref{propmoinsde55}, we can restrict to the forms
$s_4 u_3 u_2 u_3 s_1^{\alpha} s_2^{-\alpha} s_1^{\alpha} s_4 u_3 u_2 u_3 s_4$. Moreover,
since $s_1 s_2^{-1} s_1 = s_2^{-1} s_1^{-1} s_2 s_1^2 \in s_2^{-1} s_1^{-1} s_2 u_1$,
and $s_1^{-1} s_2 u_1 \subset R s_1^{-1} s_2 s_1^{-1} + u_2 u_1 u_2$,
we get 
$$
\begin{array}{lcl}
s_4 u_3 u_2 u_3 s_1 s_2^{-1} s_1 s_4 u_3 u_2 u_3 s_4 &\subset&
s_4 u_3 u_2 u_3 s_2^{-1} s_1^{-1} s_2 u_1s_4 u_3 u_2 u_3 s_4 \\ &\subset& 
s_4 (u_3 u_2 u_3 u_2) s_1^{-1} s_2 u_1s_4 u_3 u_2 u_3 s_4 \\
&\subset& 
s_4 (u_2 u_3 u_2 u_3) s_1^{-1} s_2 u_1s_4 u_3 u_2 u_3 s_4 \\
&\subset&
u_2 s_4  u_3 u_2 u_3 s_1^{-1} s_2 u_1s_4 u_3 u_2 u_3 s_4 \\
&\subset&
u_2 s_4  u_3 u_2 u_3 s_1^{-1} s_2 s_1^{-1}s_4 u_3 u_2 u_3 s_4  + 
u_2 s_4  (u_3 u_2 u_3 u_2) u_1 u_2 s_4 u_3 u_2 u_3 s_4 \\
&\subset&
u_2 s_4  u_3 u_2 u_3 s_1^{-1} s_2 s_1^{-1}s_4 u_3 u_2 u_3 s_4  + 
u_2 s_4  u_2 u_3 u_2 u_3 u_1 u_2 s_4 u_3 u_2 u_3 s_4 \\
&\subset&
u_2 s_4  u_3 u_2 u_3 s_1^{-1} s_2 s_1^{-1}s_4 u_3 u_2 u_3 s_4  + 
u_2 s_4   u_3 u_2 u_3 u_1 u_2 s_4 u_3 u_2 u_3 s_4 \\
&\subset&
u_2 s_4  u_3 u_2 u_3 s_1^{-1} s_2 s_1^{-1}s_4 u_3 u_2 u_3 s_4  + 
A_4 u_4 A_4 u_4 A_4 \\
\end{array}
$$
by proposition \ref{propmoinsde55}. We can thus restrict
to $ s_4  u_3 u_2 u_3 s_1^{-1} s_2 s_1^{-1}s_4 u_3 u_2 u_3 s_4$.
Moreover, 
using that $u_3 u_2 u_3 \subset u_2 s_3 s_2^{-1} s_3 + u_2 u_3 u_2$
and  $u_3 u_2 u_3 \subset s_3^{-1} s_2 s_3^{-1}u_2  + u_2 u_3 u_2$
leads to
$s_4  u_3 u_2 u_3 s_1^{-1} s_2 s_1^{-1}s_4 u_3 u_2 u_3 s_4 \subset
u_2 s_4  s_3 s_2^{-1} s_3  s_1^{-1} s_2 s_1^{-1}s_4 s_3^{-1} s_2 s_3^{-1} s_4 u_2 + 
A_4 u_4 A_4 u_4 A_4 $ by proposition \ref{propmoinsde55}.
Now 
$$
\begin{array}{lcl}
s_4  s_3 s_2^{-1} s_3  s_1^{-1} s_2 s_1^{-1}s_4 s_3^{-1} s_2 s_3^{-1} s_4
&=& s_1^{-1} s_1 s_4  s_3 s_2^{-1} s_3  s_1^{-1} s_2 s_1^{-1}s_4 s_3^{-1} s_2 s_3^{-1} s_4\\
&=& s_1^{-1}  s_4  s_3 (s_1 s_2^{-1} s_1^{-1}) s_3   s_2 s_1^{-1}s_4 s_3^{-1} s_2 s_3^{-1} s_4\\
&=& s_1^{-1}  s_4  s_3 s_2^{-1} s_1^{-1} (s_2 s_3   s_2) s_1^{-1}s_4 s_3^{-1} s_2 s_3^{-1} s_4\\
&=& s_1^{-1}  s_4  s_3 s_2^{-1} s_1^{-1} s_3 s_2   s_3 s_1^{-1}s_4 s_3^{-1} s_2 s_3^{-1} s_4\\
&=& s_1^{-1}  s_4  s_3 s_2^{-1} s_1^{-1} s_3 s_2   s_1^{-1}(s_3 s_4 s_3^{-1}) s_2 s_3^{-1} s_4\\
&=& s_1^{-1}  s_4  s_3 s_2^{-1} s_1^{-1} s_3 s_2   s_1^{-1}s_4^{-1} s_3 s_4 s_2 s_3^{-1} s_4\\
&=& s_1^{-1}s_3^{-1} (s_3  s_4  s_3) s_2^{-1} s_1^{-1} s_3 s_2   s_1^{-1}s_4^{-1} s_3 s_4 s_2 s_3^{-1} s_4\\
&=& s_1^{-1}s_3^{-1} s_4  s_3  s_4 s_2^{-1} s_1^{-1} s_3 s_2   s_1^{-1}s_4^{-1} s_3 s_4 s_2 s_3^{-1} s_4\\
&=& s_1^{-1}s_3^{-1} s_4  s_3  s_2^{-1} s_1^{-1} (s_4 s_3 s_4^{-1})s_2   s_1^{-1} s_3 s_4 s_2 s_3^{-1} s_4\\
&=& s_1^{-1}s_3^{-1} s_4  s_3  s_2^{-1} s_1^{-1} s_3^{-1} s_4 s_3s_2   s_1^{-1} s_3 s_4 s_2 s_3^{-1} s_4\\
&=& s_1^{-1}s_3^{-1} s_4  (s_3  s_2^{-1} s_3^{-1}) s_1^{-1}  s_4 s_3s_2   s_1^{-1} s_3 s_4 s_2 s_3^{-1} s_4\\
&=& s_1^{-1}s_3^{-1} s_4  s_2 ^{-1} s_3^{-1} s_2 s_1^{-1}  s_4 s_3s_2   s_1^{-1} s_3 s_4 s_2 s_3^{-1} s_4\\
&=& s_1^{-1}s_3^{-1} s_2 ^{-1} s_4   s_3^{-1} s_2 s_1^{-1}  s_4 s_3s_2   s_1^{-1} s_3 s_4 s_2 s_3^{-1} s_4\\
&=& s_1^{-1}s_3^{-1} s_2 ^{-1} s_4   s_3^{-1} s_2 s_1^{-1}  s_4 (s_3s_2   s_3)s_1^{-1}  s_4 s_2 s_3^{-1} s_4\\
&=& s_1^{-1}s_3^{-1} s_2 ^{-1} s_4   s_3^{-1} s_2 s_1^{-1}  s_4 s_2s_3   s_2s_1^{-1}  s_4 s_2 s_3^{-1} s_4\\
&=& s_1^{-1}s_3^{-1} s_2 ^{-1} s_4   s_3^{-1} s_2 s_1^{-1}  s_2 (s_4s_3  s_4) s_2s_1^{-1}   s_2 s_3^{-1} s_4\\
&=& s_1^{-1}s_3^{-1} s_2 ^{-1} s_4   s_3^{-1} s_2 s_1^{-1}  s_2 s_3s_4  s_3 s_2s_1^{-1}   s_2 s_3^{-1} s_4\\
&\subset & A_4 s_4   s_3^{-1} s_2 s_1^{-1}  s_2 s_3s_4  s_3 s_2s_1^{-1}   s_2 s_3^{-1} s_4\\
\end{array}
$$
and this proves the claim.
\end{proof}

\begin{lemma}  \label{lemaux3AuAuA}
$$u_4 u_3 u_2 u_1 u_2 u_3  u_4 u_2 u_3 u_1 u_2 u_3 u_4 \subset A_4 (u_4 u_3 u_2 u_1 u_2 u_3u_4 u_3 u_2 u_1 u_2 u_3 u_4)A_4 + A_4 u_4 A_4 u_4 A_4$$
\end{lemma}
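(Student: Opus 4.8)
The plan is to treat this as the single $(5,5)$ configuration that Proposition \ref{propmoinsde55} deliberately leaves out, and to show that the only genuinely irreducible contribution is the palindromic word $\Sigma = u_4 u_3 u_2 u_1 u_2 u_3 u_4 u_3 u_2 u_1 u_2 u_3 u_4$, all other contributions being swept into $A_5^{(2)} = A_4 u_4 A_4 u_4 A_4$. I would write the left-hand side as $u_4 L\, u_4\, M\, u_4$ with $L = u_3 u_2 u_1 u_2 u_3$ already palindromic and $M = u_2 u_3 u_1 u_2 u_3$ the asymmetric block; the whole point is to reshape $M$ into the palindromic block $u_3 u_2 u_1 u_2 u_3$ occurring in $\Sigma$, modulo terms that Proposition \ref{propmoinsde55} places in $A_5^{(2)}$.

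First I would cut down the spanning set exactly as in the proof of Proposition \ref{propmoinsde55}: the commutations $[s_1,s_3]=[s_1,s_4]=[s_2,s_4]=0$, the relations $u_a u_b u_a u_b = u_b u_a u_b u_a$, and the decomposition $u_{i+1}u_i u_{i+1}\subset s_{i+1}s_i^{-1}s_{i+1}\,u_i + u_i u_{i+1}u_i$ (and its mirror) from Theorem \ref{theodecA3} let me restrict to words in which the innermost three-letter block appears only in the non-reducible pattern $s_i^{\alpha}s_{i+1}^{-\alpha}s_i^{\alpha}$; every other pattern yields a word of type $(p,q)\neq(5,5)$ and hence lies in $A_5^{(2)}$ by Proposition \ref{propmoinsde55}. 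This reduces the claim to a short explicit list of words $s_4^{\alpha}(\cdots)s_4^{\beta}(\cdots)s_4^{\gamma}$ with $\alpha,\beta,\gamma\in\{-1,1\}$.

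The core is then a braid computation on $u_4 M u_4 = u_4 u_2 u_3 u_1 u_2 u_3 u_4$. Using $u_4 u_2 = u_2 u_4$ and $u_4 u_3 u_1 = u_1 u_4 u_3$ I would first rewrite it as $u_2 u_1 u_4 u_3 u_2 u_3 u_4$, and then push the surviving $s_4^{\pm}$ inward through $s_4 s_3 s_4 = s_3 s_4 s_3$ (Lemma \ref{lemsplusmoins}) and its opposite-sign analogues, invoking Lemmas \ref{leminverse} and \ref{lemquasicom} whenever a reversed pattern $s_4^{-1}s_3 s_4^{-1}$ or $s_3^{-1}s_2 s_3^{-1}$ is produced. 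Each move either carries $M$ one step closer to the palindromic block attached to $\Sigma$, or emits an error word which—after expanding the ambient $A_4$ via Theorem \ref{theodecA4}—has length $(p,q)\neq(5,5)$ and is absorbed by Proposition \ref{propmoinsde55}. Where an intermediate word of the form $u_4 u_3 u_2 u_3 u_1 u_2 u_1 u_4 u_3 u_2 u_3 u_4$ appears, I would feed it directly into Lemma \ref{lemaux2AuAuA}, whose right-hand side $A_4\,\Sigma\,u_2\subset A_4\,\Sigma\,A_4$ already has the required shape.

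The main obstacle is bookkeeping rather than any single identity. One must check that every error term generated along the way genuinely falls under a case $(p,q)\neq(5,5)$ of Proposition \ref{propmoinsde55} (or one of its two admissible exceptional forms), and that the unique surviving genuine $(5,5)$ contribution is precisely $\Sigma$ up to left and right multiplication by $A_4$. Keeping the signs $\alpha,\beta,\gamma$ under control, so that pushing an $s_4$ across never silently manufactures a fresh asymmetric $(5,5)$ word, is the delicate point and is what makes the calculation long even though each individual step is elementary.
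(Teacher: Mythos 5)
You have assembled the right toolkit---Theorem \ref{theodecA3} to normalize blocks, Proposition \ref{propmoinsde55} as the absorber, Lemma \ref{lemaux2AuAuA} for the word $u_4u_3u_2u_3u_1u_2u_1u_4u_3u_2u_3u_4$---and this is indeed the paper's framework. But your proposal stops exactly where the lemma starts: the ``core braid computation'' is never carried out, and its termination is the entire content of the statement. Worse, the plan as described can cycle. Your own first move gives $u_4Lu_4Mu_4=u_4(Lu_2u_1)u_4(u_3u_2u_3)u_4$; the new first block $u_3u_2u_1u_2u_3u_2u_1$ has seven factors, so Proposition \ref{propmoinsde55} (which requires $p\le 5$) no longer applies, and shortening it via the branch $u_2u_1u_2\subset u_1s_2s_1^{-1}s_2+u_1u_2u_1$ of Theorem \ref{theodecA3} leaves a six-factor block $u_3s_2s_1^{-1}s_2u_3u_2$; pushing its trailing $u_2$ back across the middle $u_4$ regenerates the second block $u_2u_1u_3u_2u_3=M$, i.e.\ the original asymmetric $(5,5)$ word. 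Your safety net is also stated backwards: the classes $s_4^{\pm}u_3u_2u_1u_3u_2s_4^{\pm}u_1u_3u_2u_3s_4^{\pm}$ are precisely the words Proposition \ref{propmoinsde55} does \emph{not} cover, so an error term landing there is not ``admissible''---it needs a separate rewriting (by commutations) into the word of Lemma \ref{lemaux2AuAuA}.

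The mechanism that breaks the cycle in the paper is Lemma \ref{lemA4droitegauche}, which your proposal never invokes. In the equal-sign configuration one first reduces the first block, via $u_2u_1u_2\subset u_1s_2s_1^{-1}s_2+u_1u_2u_1$, to $u_1w^-$ with $w^-=s_3^{-1}s_2s_1^{-1}s_2s_3^{-1}$ (the $u_1u_2u_1$ branch is the one that commutes into Lemma \ref{lemaux2AuAuA}), then moves the head $u_2u_1$ of the second block across the middle $u_4$, and then applies the quasi-commutation $w^-u_2u_1\subset A_3w^-+A_3u_3A_3+A_3s_3s_2^{-1}s_3A_3$. This is what genuinely destroys the asymmetry: since $A_3=\langle s_1,s_2\rangle$ commutes with $s_4$, the branch $A_3w^-$ yields $A_3u_4w^-u_4u_3u_2u_3u_4$, a $(5,3)$ word absorbed by Proposition \ref{propmoinsde55}, and the remaining branches fall to Proposition \ref{propmoinsde55} or Lemma \ref{lemaux2AuAuA} after expanding $A_3=u_1u_2u_1u_2$. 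The opposite-sign configuration ($s_3^{\beta}\cdots s_3^{-\beta}$ around the first block) is not subsumed by this and needs its own chain of explicit identities, using $u_3u_2u_3\subset s_3^{\alpha}s_2^{-\alpha}s_3^{\alpha}u_2+u_2u_3u_2$ together with Lemmas \ref{lemsplusmoins}, \ref{leminverse} and \ref{lemquasicom}; your sketch does not address it at all. Until both cases are executed, with the excluded classes tracked and rerouted at every step, what you have is a plausible plan rather than a proof.
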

\begin{proof}
We consider the expression
$u_4 s_3^{\alpha} u_2 u_1 u_2 s_3^{\beta}  u_4 u_2 u_3 u_1 u_2 u_3 u_4$ and we first assume $\alpha = \beta$ ; by applying if necessary $\Phi$,
we can then assume $\alpha = \beta = -1$. Since $u_2 u_1 u_2 \subset u_1 s_2 s_1^{-1} s_2 + u_1 u_2 u_1$
we have 
$$
\begin{array}{lcl}
u_4 s_3^{-1} u_2 u_1 u_2 s_3^{-1}  u_4 u_2 u_3 u_1 u_2 u_3 u_4
&\subset& u_4 s_3^{-1} u_1 s_2 s_1^{-1} s_2 s_3^{-1}  u_4 u_2 u_3 u_1 u_2 u_3 u_4 + u_4 s_3^{-1} u_1 u_2 u_1 s_3^{-1}  u_4 u_2 u_3 u_1 u_2 u_3 u_4 \\
&\subset& u_1u_4 s_3^{-1}  s_2 s_1^{-1} s_2 s_3^{-1}  u_4 u_2 u_3 u_1 u_2 u_3 u_4 + u_1 u_4 s_3^{-1}  u_2 u_1 s_3^{-1}  u_4 u_2 u_3 u_1 u_2 u_3 u_4 \\
\end{array}
$$ and we are reduced to $u_4 s_3^{-1}  s_2 s_1^{-1} s_2 s_3^{-1}  u_4 u_2 u_3 u_1 u_2 u_3 u_4$ by lemmas \ref{propmoinsde55} and \ref{lemaux2AuAuA}.
Now 
$$u_4 s_3^{-1}  s_2 s_1^{-1} s_2 s_3^{-1}  u_4 u_2 u_3 u_1 u_2 u_3 u_4= u_4 (s_3^{-1}  s_2 s_1^{-1} s_2 s_3^{-1})  u_2u_1 u_4  u_3  u_2 u_3 u_4$$
and $(s_3^{-1}  s_2 s_1^{-1} s_2 s_3^{-1})  u_2u_1 \subset A_3(s_3^{-1}  s_2 s_1^{-1} s_2 s_3^{-1})   + A_3 u_3 A_3 + A_3 s_3 s_2^{-1} s_3 A_3$
by lemma \ref{lemA4droitegauche}.
We then have
$$
\begin{array}{lcl}
u_4  (A_3 u_3 A_3 + A_3 s_3 s_2^{-1} s_3 A_3)u_4 u_3 u_2 u_3 u_4
&\subset& A_3 u_4 u_3 A_3u_4 u_3 u_2 u_3 u_4 + A_3 u_4s_3 s_2^{-1} s_3 A_3 u_4 u_3 u_2 u_3 u_4 \\
&\subset & A_3 u_4 u_3 (u_1 u_2 u_1 u_2) u_4 u_3 u_2 u_3 u_4 \\ & & + A_3 u_4s_3 s_2^{-1} s_3 (u_1 u_2 u_1 u_2) u_4 u_3 u_2 u_3 u_4 \\
&\subset & A_3 u_4 u_3 u_1 u_2 u_1 u_2u_4 u_3 u_2 u_3 u_4 \\ & & + A_3 u_4s_3 s_2^{-1} s_3 u_1 u_2 u_1  u_4 (u_2 u_3 u_2 u_3) u_4 \\
&\subset & A_3 u_4 u_3 u_1 u_2 u_1 u_2u_4 u_3 u_2 u_3 u_4 \\ & & + A_3 u_4s_3 s_2^{-1} s_3 u_1 u_2 u_1  u_4 u_3 u_2 u_3 u_2 u_4 \\
&\subset & A_3 u_4 u_3 u_1 u_2 u_1 u_2u_4 u_3 u_2 u_3 u_4 \\ & & + A_3 u_4s_3 s_2^{-1} s_3 u_1 u_2 u_1  u_4 u_3 u_2 u_3  u_4u_2 \\
&\subset& A_4 (u_4 u_3 u_2 u_1 u_2 u_3u_4 u_3 u_2 u_1 u_2 u_3 u_4)A_4 \\ & &  + A_4 u_4 A_4 u_4 A_4 \\
\end{array}$$
by lemmas \ref{propmoinsde55} and \ref{lemaux2AuAuA},
and $u_4  A_3(s_3^{-1}  s_2 s_1^{-1} s_2 s_3^{-1})u_4 u_3 u_2 u_3 u_4 = A_3 u_4  s_3^{-1}  s_2 s_1^{-1} s_2 s_3^{-1}u_4 u_3 u_2 u_3 u_4 \subset A_4 u_4 A_4 u_4 A_4$
by proposition \ref{propmoinsde55}, so this solves the case $\alpha = \beta$.

We can thus assume $\alpha = - \beta$, that is
we consider the expression $u_4 s_3^{\beta} u_2 u_1 u_2 s_3^{-\beta} u_2 u_4 u_1 u_3 u_2 u_3 u_4$,
that we split in two cases $u_4 s_3^{\beta} u_2 u_1 u_2 s_3^{-\beta} s_2^{\gamma} u_4 u_1 u_3 u_2 u_3 u_4$
for $\gamma \in \{ -1,1 \}$. Up to applying $\Phi$,
we can restrict to $u_4 s_3^{\beta} u_2 u_1 u_2 s_3^{-\beta} s_2^{\gamma} s_4 u_1 u_3 u_2 u_3 s_4^{\alpha}$
for some $\alpha \in \{ -1,1 \}$, and using $u_3 u_2 u_3 \subset s_3^{\alpha} s_2^{-\alpha} s_3^{\alpha} u_2 + u_2 u_3 u_2$
we can restrict to $u_4 s_3^{\beta} u_2 u_1 u_2 s_3^{-\beta} s_2^{\gamma} s_4 u_1 s_3^{\alpha} s_2^{-\alpha} s_3^{\alpha} s_4^{\alpha}$
by proposition \ref{propmoinsde55}.

First assume $\gamma = -1$.
Using again $u_2 u_1 u_2 \subset u_1 s_2 s_1^{-1} s_2 + u_1 u_2 u_1$
we can restrict to
$$u_4 s_3^{\beta} s_2 s_1^{-1} s_2 s_3^{-\beta} s_2^{-1} s_4 u_1 s_3^{\alpha} s_2^{-\alpha} s_3^{\alpha} s_4^{\alpha}.$$
If $\beta = 1$, then we get 
$$
\begin{array}{lcl}
u_4 s_3 s_2 s_1^{-1} s_2 s_3^{-1} s_2^{-1} s_4 u_1 s_3^{\alpha} s_2^{-\alpha} s_3^{\alpha} s_4^{\alpha}
&\subset& u_4 s_3 s_2 s_1^{-1} (s_2 s_3^{-1} s_2^{-1}) s_4 u_1 s_3^{\alpha} s_2^{-\alpha} s_3^{\alpha} s_4^{\alpha} \\
&\subset& u_4 s_3 s_2 s_1^{-1} s_3^{-1} s_2^{-1} s_3 s_4 u_1 s_3^{\alpha} s_2^{-\alpha} s_3^{\alpha} s_4^{\alpha} \\
&\subset& u_4 (s_3 s_2 s_3^{-1})s_1^{-1}  s_2^{-1} s_3 s_4 u_1 s_3^{\alpha} s_2^{-\alpha} s_3^{\alpha} s_4^{\alpha} \\
&\subset& u_4 s_2^{-1} s_3 s_2s_1^{-1}  s_2^{-1} s_3 s_4 u_1 s_3^{\alpha} s_2^{-\alpha} s_3^{\alpha} s_4^{\alpha} \\
&\subset& s_2^{-1} u_4  s_3 s_2s_1^{-1}  s_2^{-1} s_3 s_4 u_1 s_3^{\alpha} s_2^{-\alpha} s_3^{\alpha} s_4^{\alpha} \\
&\subset& A_4 u_4 A_4 u_4 A_4
\end{array}
$$ 
by proposition \ref{propmoinsde55}. For the case $\beta = -1$, we can restrict
to an expression of the form 
$$u_4 s_3^{-1} s_2 s_1^{-1} s_2 s_3 s_2^{-1}s_4 u_1 s_3^{\alpha} s_2^{-\alpha} s_3^{\alpha} s_4^{\alpha},$$
and
we get
$$
\begin{array}{lcl}
u_4 s_3^{-1} s_2 s_1^{-1} (s_2 s_3 s_2^{-1}) s_4 u_1 s_3^{\alpha} s_2^{-\alpha} s_3^{\alpha} s_4^{\alpha}
&\subset& u_4 s_3^{-1} s_2 s_1^{-1} s_3^{-1} s_2 s_3 s_4 u_1 s_3^{\alpha} s_2^{-\alpha} s_3^{\alpha} s_4^{\alpha}\\
&\subset& u_4 s_3^{-1} s_2 s_1^{-1} s_3^{-1} s_2 s_3 s_4 u_1 s_3^{\alpha} s_2^{-\alpha} s_3^{\alpha} s_4^{\alpha}\\
&\subset& u_4 s_3^{-1} s_2 s_1^{-1} s_3^{-1} s_2 (s_3 s_4 s_3^{\alpha})u_1  s_2^{-\alpha} s_3^{\alpha} s_4^{\alpha}\\
&\subset& u_4 s_3^{-1} s_2 s_1^{-1} s_3^{-1} s_2 s_4^{\alpha} s_3 s_4 u_1  s_2^{-\alpha} s_3^{\alpha} s_4^{\alpha}\\
&\subset& u_4 s_3^{-1} s_2 s_1^{-1} s_3^{-1} s_2 s_4^{\alpha} s_3  u_1  s_2^{-\alpha}s_4 s_3^{\alpha} s_4^{\alpha})\\
&\subset& u_4 s_3^{-1} s_2 s_1^{-1} s_3^{-1} s_2 s_4^{\alpha} s_3  u_1  s_2^{-\alpha}s_3^{\alpha} s_4^{\alpha} s_3\\
&\subset&A_4 (u_4 u_3 u_2 u_1 u_2 u_3u_4 u_3 u_2 u_1 u_2 u_3 u_4)A_4 + A_4 u_4 A_4 u_4 A_4 \\
\end{array}
$$ 
by proposition \ref{propmoinsde55} and lemma \ref{lemaux2AuAuA}.

Now assume $\gamma = 1$. Using again $u_2 u_1 u_2 \subset u_1 s_2^{-1} s_1 s_2^{-1} + u_1 u_2 u_1$ we
can restrict to the form $u_4 s_3^{\beta} s_2^{-1} s_1 s_2^{-1} s_3^{-\beta} s_2 u_4 u_1 u_3 u_2 u_3 u_4$.
If $\beta = 1$ we get
$$
\begin{array}{lcl}
u_4 s_3 s_2^{-1} s_1 (s_2^{-1} s_3^{-1} s_2) u_4 u_1 u_3 u_2 u_3 u_4  
& \subset & u_4 s_3 s_2^{-1} s_1 s_3 s_2^{-1} s_3^{-1} u_4 u_1 u_3 u_2 u_3 u_4  \\
& \subset & u_4 (s_3 s_2^{-1} s_3) s_1  s_2^{-1} s_3^{-1} u_4 u_1 u_3 u_2 u_3 u_4  \\
& \subset & u_4 u_2(s_3^{-1} s_2 s_3^{-1} ) s_1  s_2^{-1} s_3^{-1} u_4 u_1 u_3 u_2 u_3 u_4  \\ & & +  u_4 u_2 u_3 u_2 s_1  s_2^{-1} s_3^{-1} u_4 u_1 u_3 u_2 u_3 u_4  \\
& \subset & u_2 u_4 s_3^{-1} s_2 s_3^{-1}  s_1  s_2^{-1} s_3^{-1} u_4 u_1 u_3 u_2 u_3 u_4  + \\ & &  u_2 u_4  u_3 u_2 s_1  s_2^{-1} s_3^{-1} u_4 u_1 u_3 u_2 u_3 u_4  \\
& \subset & u_2 u_4 s_3^{-1} s_2  s_1 (s_3^{-1}   s_2^{-1} s_3^{-1}) u_4 u_1 u_3 u_2 u_3 u_4  +   A_4 u_4 A_4 u_4 A_4\\ 
& \subset & u_2 u_4 s_3^{-1} (s_2  s_1 s_2^{-1})   s_3^{-1} s_2^{-1} u_4 u_1 u_3 u_2 u_3 u_4  +   A_4 u_4 A_4 u_4 A_4\\ 
& \subset & u_2 u_4 s_3^{-1} s_1^{-1}  s_2 s_1   s_3^{-1} s_2^{-1} u_4 u_1 u_3 u_2 u_3 u_4  +   A_4 u_4 A_4 u_4 A_4\\ 
& \subset & u_2 s_1^{-1} u_4 s_3^{-1}   s_2 s_1   s_3^{-1} s_2^{-1} u_4 u_1 u_3 u_2 u_3 u_4  +   A_4 u_4 A_4 u_4 A_4\\ 
&\subset&A_4 (u_4 u_3 u_2 u_1 u_2 u_3u_4 u_3 u_2 u_1 u_2 u_3 u_4)A_4 + A_4 u_4 A_4 u_4 A_4 \\
\end{array}
$$ 
by proposition \ref{propmoinsde55} and lemma \ref{lemaux2AuAuA}.

If $\beta = -1$ we get 
$$
\begin{array}{lcl}
u_4 s_3^{-1} s_2^{-1} s_1 (s_2^{-1} s_3 s_2) u_4 u_1 u_3 u_2 u_3 u_4
&\subset & u_4 s_3^{-1} s_2^{-1} s_1 s_3 s_2 s_3^{-1} u_4 u_1 u_3 u_2 u_3 u_4 \\
&\subset & u_4 (s_3^{-1} s_2^{-1} s_3) s_1  s_2 s_3^{-1} u_4 u_1 u_3 u_2 u_3 u_4 \\
&\subset & u_4 s_2 s_3^{-1} s_2^{-1} s_1  s_2 s_3^{-1} u_4 u_1 u_3 u_2 u_3 u_4 \\
&\subset & s_2 u_4  s_3^{-1} s_2^{-1} s_1  s_2 s_3^{-1} u_4 u_1 u_3 u_2 u_3 u_4 \\
&\subset & A_4 u_4 A_4 u_4 A_4 \\
\end{array}
$$ 
by proposition \ref{propmoinsde55}. This concludes the proof of the lemma.
\end{proof}

\begin{proposition} \label{propreducuuu}
$
u_4 A_4 u_4 A_4 u_4  \subset A_4 u_4 u_3 u_2 u_1 u_2 u_3u_4 u_3 u_2 u_1 u_2 u_3 u_4A_4 + A_4 u_4 A_4 u_4 A_4 
$, and thus $A_5^{(3)} \subset A_4 u_4 u_3 u_2 u_1 u_2 u_3u_4 u_3 u_2 u_1 u_2 u_3 u_4A_4 + A_4 u_4 A_4 u_4 A_4$.
\end{proposition}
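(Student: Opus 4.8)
The plan is to exploit that the right-hand side is a sub-bimodule. Write $V = u_4 u_3 u_2 u_1 u_2 u_3 u_4 u_3 u_2 u_1 u_2 u_3 u_4$ and $J = A_4 V A_4 + A_4 u_4 A_4 u_4 A_4$. Left- and right-multiplication by $A_4$ send each summand of $J$ into itself, so $J$ is an $A_4$-sub-bimodule of $A_5$. Consequently it suffices to establish the first inclusion $u_4 A_4 u_4 A_4 u_4 \subseteq J$ on a spanning set, after which the second assertion is automatic: $A_5^{(3)} = A_4\,(u_4 A_4 u_4 A_4 u_4)\,A_4 \subseteq A_4 J A_4 = J$, which is exactly the stated inclusion for $A_5^{(3)}$.

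First I would expand each interior $A_4$ by theorem \ref{theodecA4} in the form $A_4 = A_3 u_3 A_3 + A_3 u_3 u_2 u_3 A_3 + A_3 u_3 u_2 u_1 u_2 u_3$, and use that $A_3 = \langle s_1,s_2\rangle$ commutes with $u_4 = \langle s_4\rangle$ (since $s_4$ commutes with $s_1$ and $s_2$). Sliding the $A_3$-factors adjacent to the two outer copies of $u_4$ past them and extracting them from $J$ via the bimodule property, the problem reduces to proving $u_4 W u_4 (A_3 W') u_4 \in J$ for $W,W' \in \{u_3,\; u_3 u_2 u_3,\; u_3 u_2 u_1 u_2 u_3\}$, where the only $A_3$ that cannot be commuted away is the one trapped between the middle $u_4$ and $W'$. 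Using the relations $u_3 u_2 u_3 \subseteq s_3 s_2^{-1} s_3 u_2 + u_2 u_3 u_2$ (and its mirror, from theorem \ref{theodecA3}) to keep the reduced factor-counts $(p,q)$ within $p,q\le 5$, proposition \ref{propmoinsde55} then dispatches every configuration with $(p,q)\neq(5,5)$ lying outside its exceptional list straight into $A_4 u_4 A_4 u_4 A_4 \subseteq J$.

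What remains are exactly the exceptional $(5,4)/(4,5)$ words singled out by proposition \ref{propmoinsde55} and the genuine $(5,5)$ configuration $W=W'=u_3 u_2 u_1 u_2 u_3$ with its trapped central $A_3$; these are precisely the shapes treated in lemmas \ref{lemaux2AuAuA} and \ref{lemaux3AuAuA}, which place them in $A_4 V A_4 + A_4 u_4 A_4 u_4 A_4 = J$, completing the inclusion. The main obstacle is this immovable middle $A_3$: it commutes freely through the $u_4$'s but jams against the length-$5$ word $u_3 u_2 u_1 u_2 u_3$, producing words of length $6$ (as in lemma \ref{lemaux2AuAuA}) that lie beyond the reach of proposition \ref{propmoinsde55} and, when one attempts to shorten the two gaps alternately, threaten an infinite regress. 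The point rescued by the two auxiliary lemmas is that every such irreducible term collapses, modulo $A_5^{(2)} = A_4 u_4 A_4 u_4 A_4$, onto the single word $V$; since $V$ is the image of the generator of the center of $B_4$ it is stable enough to absorb these terms, and it is exactly this phenomenon that terminates the reduction.
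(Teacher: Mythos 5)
Your overall skeleton matches the paper's: reduce to the bimodule statement, expand the two interior copies of $A_4$ via theorem \ref{theodecA4}, slide the $A_3$-factors (which commute with $s_4$) outward, kill most cross-terms with proposition \ref{propmoinsde55}, and reserve lemmas \ref{lemaux2AuAuA} and \ref{lemaux3AuAuA} for the hard configurations. But there is a genuine gap in how you set up the expansion. You use the \emph{same} one-sided decomposition $A_4 = A_3 u_3 A_3 + A_3 u_3 u_2 u_3 A_3 + A_3\, u_3 u_2 u_1 u_2 u_3$ for both interior factors. Then in the long--long cross-term you are left with $u_4\,(u_3u_2u_1u_2u_3)\,u_4\,\bigl(A_3\, u_3u_2u_1u_2u_3\bigr)\,u_4$, i.e.\ an $A_3$ trapped between the middle $u_4$ and the second copy of $u_3u_2u_1u_2u_3$. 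Writing $A_3 = u_2u_1u_2u_1$ and using every available commutation ($u_1u_3=u_3u_1$, $u_1u_2u_1u_2=u_2u_1u_2u_1$, $u_1,u_2$ commuting past $u_4$), the best you can reach is a configuration like $u_4\,(u_3u_2u_1u_2u_3)\,u_4\,(u_2u_1u_2u_3u_2u_1u_2u_3)\,u_4$, of type $(5,8)$. This is outside the range of proposition \ref{propmoinsde55}, and it is \emph{not} the shape treated by either auxiliary lemma: lemma \ref{lemaux2AuAuA} handles $u_4 u_3u_2u_3u_1u_2u_1 u_4 u_3u_2u_3 u_4$ (which arises from the short--short case) and lemma \ref{lemaux3AuAuA} handles $u_4 u_3u_2u_1u_2u_3 u_4 u_2u_3u_1u_2u_3 u_4$ (which arises from a long--short case). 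So your assertion that the remaining configurations "are precisely the shapes treated" by those lemmas fails exactly where the danger of infinite regress that you yourself point out is most acute.

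The paper avoids this by a small but essential asymmetry: it expands the \emph{left} interior $A_4$ as $A_3 u_3 A_3 + A_3 u_3u_2u_3 A_3 + A_3\,u_3u_2u_1u_2u_3$ but the \emph{right} interior $A_4$ as $A_3 u_3 A_3 + A_3 u_3u_2u_3 A_3 + u_3u_2u_1u_2u_3\,A_3$ (both forms are provided by theorem \ref{theodecA4}). In the long--long term the two $A_3$'s then sit on the outside, slide past the outer $u_4$'s, and the term lands directly in $A_4\, u_4 u_3u_2u_1u_2u_3 u_4 u_3u_2u_1u_2u_3 u_4\, A_3$ with no trapped factor at all (the paper's case (9), which is immediate). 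Trapped $A_3$'s then only ever occur adjacent to the short words $u_3$ or $u_3u_2u_3$, where the relations $u_2u_3u_2u_3 = u_3u_2u_3u_2$ and $u_1u_2u_1u_2=u_2u_1u_2u_1$ shorten them into the shapes of lemmas \ref{lemaux2AuAuA} and \ref{lemaux3AuAuA} (together with their images under $\Psi$, the right-hand side being $\Psi$-stable since the central word is a palindrome). Your argument can be repaired along the same lines: when you meet the trapped product $A_3\,u_3u_2u_1u_2u_3$, observe that it lies in $A_4$ and re-expand it by the mirrored decomposition $A_4 = A_3u_3A_3 + A_3u_3u_2u_3A_3 + u_3u_2u_1u_2u_3A_3$; the three resulting terms then fall to proposition \ref{propmoinsde55}, to lemma \ref{lemaux3AuAuA}, and to the trivial inclusion respectively. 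But as written, without this extra move, your reduction does not terminate.
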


\begin{proof}
We will actually prove
$$
\begin{array}{lcl}
u_4 A_4 u_4 A_4 u_4 &\subset& A_4 u_4 u_3 u_2 u_1 u_2 u_3u_4 u_3 u_2 u_1 u_2 u_3 u_4A_3 + A_4 u_4 A_4 u_4 A_4 \\
& & + A_3u_4   u_3 u_2 u_3 u_1 u_2  u_4 u_3 u_2u_1 u_2  u_3 u_4A_3 +  A_3u_4  u_3  u_2 u_1 u_2 u_3 u_4  u_2 u_1 u_3 u_2 u_3   u_4A_3
\end{array}
$$
and the statement will then follow by lemmas \ref{lemaux2AuAuA} and \ref{lemaux3AuAuA}.

By theorem \ref{theodecA4} we have
$A_4 = A_3 u_3 A_3 + A_3 u_3 u_2 u_3 A_3 + A_3 u_3 u_2 u_1 u_2 u_3$
and $A_4 = A_3 u_3 A_3 + A_3 u_3 u_2 u_3 A_3 + u_3 u_2 u_1 u_2 u_3 A_3$,
whence
$$
\begin{array}{lcl}
u_4 A_4 u_4 A_4 u_4 &\subset&
u_4 A_3 u_3 A_3 u_4 A_3 u_3 A_3 u_4 + u_4 A_3 u_3 A_3 u_4 A_3 u_3 u_2 u_3 A_3 u_4 \\ & & + u_4 A_3 u_3 A_3 u_4 u_3 u_2 u_1 u_2 u_3 A_3u_4 +
u_4 A_3 u_3 u_2 u_3 A_3 u_4 A_3 u_3 A_3 u_4 \\ & & + u_4 A_3 u_3 u_2 u_3 A_3 u_4 A_3 u_3 u_2 u_3 A_3 u_4 + u_4 A_3 u_3 u_2 u_3 A_3 u_4 u_3 u_2 u_1 u_2 u_3 A_3u_4 \\ & & +
u_4 A_3 u_3 u_2 u_1 u_2 u_3 u_4 A_3 u_3 A_3 u_4 + u_4 A_3 u_3 u_2 u_1 u_2 u_3 u_4 A_3 u_3 u_2 u_3 A_3 u_4 \\ & & + u_4 A_3 u_3 u_2 u_1 u_2 u_3 u_4 u_3 u_2 u_1 u_2 u_3 A_3u_4 \\
&\subset&
A_3u_4  u_3 A_3 u_4  u_3  u_4A_3 + A_3u_4  u_3 A_3 u_4  u_3 u_2 u_3  u_4A_3 \\ & & + A_3u_4  u_3 A_3 u_4 u_3 u_2 u_1 u_2 u_3 u_4A_3 +
A_3u_4  u_3 u_2 u_3  u_4 A_3 u_3  u_4A_3 \\ & & + A_3u_4  u_3 u_2 u_3 A_3 u_4  u_3 u_2 u_3  u_4A_3 + A_3u_4  u_3 u_2 u_3 A_3 u_4 u_3 u_2 u_1 u_2 u_3 u_4A_3 \\ & & +
A_3u_4  u_3 u_2 u_1 u_2 u_3 u_4 A_3 u_3  u_4A_3 + A_3u_4  u_3 u_2 u_1 u_2 u_3 u_4 A_3 u_3 u_2 u_3  u_4A_3 \\ & & + A_3u_4  u_3 u_2 u_1 u_2 u_3 u_4 u_3 u_2 u_1 u_2 u_3 u_4 A_3\\
&\subset&
A_3u_4  u_3 A_3 u_4  u_3  u_4A_3 + A_3u_4  u_3 A_3 u_4  u_3 u_2 u_3  u_4A_3 \\ & & + A_3u_4  u_3 A_3 u_4 u_3 u_2 u_1 u_2 u_3 u_4A_3 +
A_3u_4  u_3 u_2 u_3  u_4 A_3 u_3  u_4A_3 \\ & & + A_3u_4  u_3 u_2 u_3 A_3 u_4  u_3 u_2 u_3  u_4A_3 + A_3u_4  u_3 u_2 u_3 A_3 u_4 u_3 u_2 u_1 u_2 u_3 u_4A_3 \\ & & +
A_3u_4  u_3 u_2 u_1 u_2 u_3 u_4 A_3 u_3  u_4A_3 + A_3u_4  u_3 u_2 u_1 u_2 u_3 u_4 A_3 u_3 u_2 u_3  u_4A_3 \\ & & + A_3u_4  u_3 u_2 u_1 u_2 u_3 u_4 u_3 u_2 u_1 u_2 u_3 u_4 A_3\\
\end{array}
$$
We have
\begin{enumerate}
\item $A_3u_4  u_3 A_3 u_4  u_3  u_4A_3 \subset A_3u_4  u_3 (u_2 u_1 u_2 u_1) u_4  u_3  u_4A_3 \subset A_4 u_4 A_4 u_4 A_4$ by proposition \ref{propmoinsde55}.
\item $A_3u_4  u_3 A_3 u_4  u_3 u_2 u_3  u_4A_3 \subset A_3u_4  u_3 u_2u_1u_2 u_1 u_4  u_3 u_2 u_3  u_4A_3\subset A_4 u_4 A_4 u_4 A_4$ by proposition \ref{propmoinsde55}.
\item We have
$$
\begin{array}{lcl}
A_3u_4  u_3 A_3 u_4 u_3 u_2 u_1 u_2 u_3 u_4A_3 &\subset&  A_3u_4  u_3 u_2 u_1 u_2 u_1 u_4 u_3 u_2 u_1 u_2 u_3 u_4A_3 \\
&\subset& A_3u_4  u_3 u_2 u_1 u_2  u_4 u_3 (u_1 u_2 u_1 u_2) u_3 u_4A_3 \\
&\subset& A_3u_4  u_3 u_2 u_1 u_2  u_4 u_3 u_2 u_1 u_2 u_1 u_3 u_4A_3 \\
&\subset& A_3u_4  u_3 u_2 u_1 u_2  u_4 u_3 u_2 u_1 u_2  u_3 u_4A_3 \\
&\subset & A_4 u_4 A_4 u_4 A_4 
\end{array}
$$
by proposition \ref{propmoinsde55}.
\item $A_3u_4  u_3 u_2 u_3  u_4 A_3 u_3  u_4A_3 \subset A_3u_4  u_3 u_2 u_3  u_4 u_2 u_1 u_2 u_1 u_3  u_4A_3 \subset A_4 u_4 A_4 u_4 A_4 $
by proposition \ref{propmoinsde55}. 
\item Using $A_3 = u_2 u_1 u_2 u_1$ we get
$$
\begin{array}{lcl}
A_3u_4  u_3 u_2 u_3 A_3 u_4  u_3 u_2 u_3  u_4A_3 &\subset& A_3u_4 ( u_3 u_2 u_3 u_2)u_1u_2u_1 u_4  u_3 u_2 u_3  u_4A_3 \\
&\subset& A_3u_4 u_2 u_3 u_2 u_3u_1u_2u_1 u_4  u_3 u_2 u_3  u_4A_3\\
&\subset& A_3u_4  u_3 u_2 u_3u_1u_2u_1 u_4  u_3 u_2 u_3  u_4A_3 \\ &\subset&  A_4 (u_4 u_3 u_2 u_1 u_2 u_3u_4 u_3 u_2 u_1 u_2 u_3 u_4)A_3 + A_4 u_4 A_4 u_4 A_4\\
\end{array}
$$
by lemma \ref{lemaux2AuAuA}.
\item Using $A_3 = u_2 u_1 u_2 u_1$ we get
$$
\begin{array}{lcl}
A_3u_4  u_3 u_2 u_3 A_3 u_4 u_3 u_2 u_1 u_2 u_3 u_4A_3
&\subset& A_3u_4  u_3 u_2 u_3 u_2 u_1 u_2 u_1 u_4 u_3 u_2 u_1 u_2 u_3 u_4A_3 \\
&\subset& A_3u_4  (u_3 u_2 u_3 u_2) u_1 u_2 u_1 u_4 u_3 u_2 u_1 u_2 u_3 u_4A_3 \\
&\subset& A_3u_4  u_2 u_3 u_2 u_3 u_1 u_2 u_1 u_4 u_3 u_2 u_1 u_2 u_3 u_4A_3 \\
&\subset& A_3u_4   u_3 u_2 u_3 u_1 u_2 u_1 u_4 u_3 u_2 u_1 u_2 u_3 u_4A_3 \\
&\subset& A_3u_4   u_3 u_2 u_3 u_1 u_2  u_4 u_3 u_1u_2 u_1 u_2 u_3 u_4A_3 \\
&\subset& A_3u_4   u_3 u_2 u_3 u_1 u_2  u_4 u_3 (u_1u_2 u_1 u_2) u_3 u_4A_3 \\
&\subset& A_3u_4   u_3 u_2 u_3 u_1 u_2  u_4 u_3 u_2u_1 u_2 u_1 u_3 u_4A_3 \\
&\subset& A_3u_4   u_3 u_2 u_3 u_1 u_2  u_4 u_3 u_2u_1 u_2  u_3 u_4A_3 \\
\end{array}
$$
\item Using $A_3 = u_1 u_2 u_1 u_2$ we get
$$
\begin{array}{lcl}
A_3u_4  u_3 u_2 u_1 u_2 u_3 u_4 A_3 u_3  u_4A_3 &\subset& A_3u_4  u_3 u_2 u_1 u_2 u_3 u_4 u_1 u_2 u_1 u_2 u_3  u_4A_3 \\
&\subset& A_3u_4  u_3 u_2 u_1 u_2 u_1u_3 u_4  u_2 u_1 u_2 u_3  u_4A_3  \\
&\subset& A_3u_4  u_3 (u_2 u_1 u_2 u_1)u_3 u_4  u_2 u_1 u_2 u_3  u_4A_3 \\
&\subset& A_3u_4  u_3 u_1 u_2 u_1 u_2 u_3 u_4  u_2 u_1 u_2 u_3  u_4A_3 \\
&\subset& A_3u_4  u_3  u_2 u_1 u_2 u_3 u_4  u_2 u_1 u_2 u_3  u_4A_3 \\
&\subset & A_4 u_4 A_4 u_4 A_4 \\
\end{array}
$$
by proposition \ref{propmoinsde55}.
\item Using $A_3 = u_1 u_2 u_1 u_2$ we get
$$
\begin{array}{lcl}
A_3u_4  u_3 u_2 u_1 u_2 u_3 u_4 A_3 u_3 u_2 u_3  u_4A_3
&\subset & A_3u_4  u_3 u_2 u_1 u_2 u_3 u_4 u_1 u_2 u_1 u_2 u_3 u_2 u_3  u_4A_3 \\
&\subset & A_3u_4  u_3 u_2 u_1 u_2 u_3 u_4 u_1 u_2 u_1 (u_2 u_3 u_2 u_3)  u_4A_3 \\
&\subset & A_3u_4  u_3 u_2 u_1 u_2 u_3 u_4 u_1 u_2 u_1 u_3 u_2 u_3 u_2  u_4A_3 \\
&\subset & A_3u_4  u_3 u_2 u_1 u_2 u_3 u_4 u_1 u_2 u_1 u_3 u_2 u_3   u_4A_3 \\
&\subset & A_3u_4  u_3 u_2 u_1 u_2 u_1u_3 u_4  u_2 u_1 u_3 u_2 u_3   u_4A_3 \\
&\subset & A_3u_4  u_3 (u_2 u_1 u_2 u_1)u_3 u_4  u_2 u_1 u_3 u_2 u_3   u_4A_3 \\
&\subset & A_3u_4  u_3 u_1 u_2 u_1 u_2 u_3 u_4  u_2 u_1 u_3 u_2 u_3   u_4A_3 \\
&\subset & A_3u_4  u_3  u_2 u_1 u_2 u_3 u_4  u_2 u_1 u_3 u_2 u_3   u_4A_3 \\
\end{array}
$$

\item the case $ A_3u_4  u_3 u_2 u_1 u_2 u_3 u_4 u_3 u_2 u_1 u_2 u_3 u_4 A_3$ is clear.
\end{enumerate}

\end{proof}


\subsection{The $A_4$-bimodule $A_5^{(3)}/A_5^{(2)}$ : a smaller set of generators.}

\begin{lemma} \label{lemsimplif212} For all $\alpha,\beta,\gamma,\dots \in \{ -1,1 \}$,
$$
\begin{array}{lcl}
s_4^{\alpha} s_3^{\beta} A_3 s_3^{\gamma} s_4^{\delta} s_3^{\eps} A_3 s_3^{\zeta} s_4^{\eta}  &\subset&
u_1 s_4^{\alpha} s_3^{\beta} (s_2 s_1^{-1} s_2) s_3^{\gamma} s_4^{\delta} s_3^{\eps} (s_2 s_1^{-1} s_2) s_3^{\zeta} s_4^{\eta} u_1 + A_5^{(2)} \\
s_4^{\alpha} s_3^{\beta} A_3 s_3^{\gamma} s_4^{\delta} s_3^{\eps} A_3 s_3^{\zeta} s_4^{\eta}  &\subset&
u_1 s_4^{\alpha} s_3^{\beta} (s_2^{-1} s_1 s_2^{-1}) s_3^{\gamma} s_4^{\delta} s_3^{\eps} (s_2 s_1^{-1} s_2) s_3^{\zeta} s_4^{\eta} u_1 + A_5^{(2)} \\
s_4^{\alpha} s_3^{\beta} A_3 s_3^{\gamma} s_4^{\delta} s_3^{\eps} A_3 s_3^{\zeta} s_4^{\eta}  &\subset&
u_1 s_4^{\alpha} s_3^{\beta} (s_2 s_1^{-1} s_2) s_3^{\gamma} s_4^{\delta} s_3^{\eps} (s_2^{-1} s_1 s_2^{-1}) s_3^{\zeta} s_4^{\eta} u_1 + A_5^{(2)} \\
s_4^{\alpha} s_3^{\beta} A_3 s_3^{\gamma} s_4^{\delta} s_3^{\eps} A_3 s_3^{\zeta} s_4^{\eta}  &\subset&
u_1 s_4^{\alpha} s_3^{\beta} (s_2^{-1} s_1 s_2^{-1}) s_3^{\gamma} s_4^{\delta} s_3^{\eps} (s_2^{-1} s_1 s_2^{-1}) s_3^{\zeta} s_4^{\eta} u_1 + A_5^{(2)} \\
\end{array}
$$

\end{lemma}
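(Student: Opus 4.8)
The plan is to decompose each of the two copies of $A_3$ by means of Theorem \ref{theodecA3}, in the form $A_3 = u_1 u_2 u_1 + u_1 s_2^{-1} s_1 s_2^{-1}$ (and its mirror image $A_3 = u_1 u_2 u_1 + s_2^{-1} s_1 s_2^{-1} u_1$), and to show that every resulting term in which \emph{at least one} copy contributes the ``short'' summand $u_1 u_2 u_1$ already lies in $A_5^{(2)}$. Only the term carrying $s_2^{-1} s_1 s_2^{-1}$ in both slots then survives modulo $A_5^{(2)}$, and the four displayed inclusions differ only in whether one presents each surviving middle factor as $s_2^{-1} s_1 s_2^{-1}$ or as $s_2 s_1^{-1} s_2$.

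The key reduction is the claim that, for all signs, $s_4^{\alpha} s_3^{\beta} (u_1 u_2 u_1) s_3^{\gamma} s_4^{\delta} s_3^{\eps} A_3 s_3^{\zeta} s_4^{\eta} \subset A_5^{(2)}$. Since $s_1$ commutes with $s_3$ and $s_4$, the leftmost factor of $u_1 u_2 u_1$ commutes past $s_3^{\beta} s_4^{\alpha}$ out to the far left, while the rightmost commutes past $s_3^{\gamma} s_4^{\delta} s_3^{\eps}$ and is absorbed into the second copy of $A_3$, using $u_1 A_3 = A_3$. This leaves $u_1 \cdot s_4^{\alpha} (u_3 u_2 u_3) s_4^{\delta} (u_3 A_3 u_3) s_4^{\eta}$. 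Writing $A_3 = u_2 u_1 u_2 u_1$ and floating the trailing $u_1$ out to the right past $s_4^{\eta}$, the block between the first two $s_4$'s is $u_3 u_2 u_3$, of length $p = 3$, and the block between the last two is $u_3 u_2 u_1 u_2 u_3$, of length $q = 5$. As $(p,q) = (3,5)$ avoids both the forbidden pair $(5,5)$ and the exceptional pairs $(5,4),(4,5)$, Proposition \ref{propmoinsde55} yields membership in $A_5^{(2)}$ (the outer $u_1$'s lie in $A_4$, and $A_5^{(2)}$ is an $A_4$-bimodule). Applying the skew-automorphism $\Psi$, which fixes each $u_i$, fixes $A_3$, fixes $A_5^{(2)}$, and only permutes the sign pattern, gives the symmetric statement in which the \emph{second} copy contributes $u_1 u_2 u_1$.

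Granting this, three of the four terms of the expansion die, and the survivor is $s_4^{\alpha} s_3^{\beta} (u_1 s_2^{-1} s_1 s_2^{-1}) s_3^{\gamma} s_4^{\delta} s_3^{\eps} (s_2^{-1} s_1 s_2^{-1} u_1) s_3^{\zeta} s_4^{\eta}$. The leftmost $u_1$ commutes past $s_3^{\beta} s_4^{\alpha}$ to the far left and the rightmost past $s_3^{\zeta} s_4^{\eta}$ to the far right, producing exactly the fourth displayed inclusion. For the other three, I would convert a chosen middle factor $s_2^{-1} s_1 s_2^{-1}$ into $s_2 s_1^{-1} s_2$ via Lemma \ref{leminverse}, which gives $s_2^{-1} s_1 s_2^{-1} \in R^{\times}(s_2 s_1^{-1} s_2) s_1 + u_1 u_2 u_1$; the quasi-commutation Lemma \ref{lemquasicom} then lets me slide the auxiliary factor $s_1 \in u_1$ to whichever end is required, while the $u_1 u_2 u_1$ error, now occupying one slot, is swallowed by $A_5^{(2)}$ through the reduction above. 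Selecting which of the two slots to convert realizes all four variants.

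The main obstacle is the factor count in the key reduction: one must see that replacing a full $A_3$ (four alternating factors) by $u_1 u_2 u_1$ collapses the corresponding inter-$s_4$ block to length $3$, which is precisely what keeps the pair $(p,q)$ out of the forbidden corner $\{(5,5),(5,4),(4,5)\}$ governed by Proposition \ref{propmoinsde55}. The subtle point is that the inner $u_1$ of $u_1 u_2 u_1$ must be absorbed into the adjacent $A_3$ rather than tracked as a separate factor; once this is observed, the genuine combinatorics is entirely contained in Proposition \ref{propmoinsde55}, and everything else is the routine bookkeeping of commuting the various $u_1$'s to the two ends and returning the $u_1 u_2 u_1$ error terms to $A_5^{(2)}$.
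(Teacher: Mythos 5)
Your proof is correct and takes essentially the same route as the paper's: expand each copy of $A_3$ via Theorem \ref{theodecA3}, kill every cross term containing a $u_1 u_2 u_1$ factor by Proposition \ref{propmoinsde55} (after floating the outer $u_1$'s across the $s_3^{\pm}, s_4^{\pm}$ to the two ends, where they are absorbed since $A_5^{(2)}$ is an $A_4$-bimodule), and keep the surviving middle factors. The only, cosmetic, difference is that the paper reads all four variants directly off the four decompositions $A_3 = u_1 u_2 u_1 + u_1 s_2 s_1^{-1} s_2 = u_1 u_2 u_1 + s_2 s_1^{-1} s_2 u_1 = u_1 u_2 u_1 + u_1 s_2^{-1} s_1 s_2^{-1} = u_1 u_2 u_1 + s_2^{-1} s_1 s_2^{-1} u_1$ of Theorem \ref{theodecA3}, so your conversion detour through Lemmas \ref{leminverse} and \ref{lemquasicom} is valid but unnecessary.
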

\begin{proof}
This is an easy consequence of the decompositions $A_3 = u_1 u_2 u_1 + u_1 s_2 s_1^{-1} s_2 = u_1 u_2 u_1 +  s_2 s_1^{-1} s_2u_1
=  u_1 u_2 u_1 + u_1 s_2^{-1} s_1 s_2^{-1}=  u_1 u_2 u_1 +  s_2^{-1} s_1 s_2^{-1}u_1$ of
theorem \ref{theodecA3} and of proposition \ref{propmoinsde55}.
\end{proof}

\begin{lemma} {\ } \label{lemreducspec} For $i,j,k,\alpha,\beta,\gamma \in \{ -1,1 \}$,
\begin{enumerate} 
\item $s_4^i s_3^{\alpha} A_3 s_3^{-\alpha} s_4^j s_3^{\beta} A_3 s_3^{\gamma} s_4^k \subset A_5^{(2)}$ unless $i=j=k$
\item $s_4^i s_3^{\alpha} A_3 s_3^{\beta} s_4^j s_3^{\gamma} A_3 s_3^{-\gamma} s_4^k \subset A_5^{(2)}$ unless $i=j=k$
\item $s_4^i s_3^{\alpha} A_3 s_3^{-\alpha} s_4^j s_3^{\beta} A_3 s_3^{\beta} s_4^k \subset A_5^{(2)}$
\item $s_4^i s_3^{\alpha} A_3 s_3^{\alpha} s_4^j s_3^{\beta} A_3 s_3^{-\beta} s_4^k \subset A_5^{(2)}$
\item $s_4^i s_3^{\alpha} A_3 s_3^{-\alpha} s_4^j s_3^{\alpha} A_3 s_3^{-\alpha} s_4^k\subset A_5^{(2)}$
 \end{enumerate}
\end{lemma}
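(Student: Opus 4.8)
The plan is to feed these expressions into Proposition \ref{propmoinsde55}. First I would apply Lemma \ref{lemsimplif212} to replace each factor $A_3$ by one of the specific elements $s_2^{\epsilon} s_1^{-\epsilon} s_2^{\epsilon}$ ($\epsilon \in \{-1,1\}$), working modulo $A_5^{(2)}$ and modulo the outer factors $u_1$, which commute with both $s_3$ and $s_4$ and are harmless. After this substitution each of the five expressions becomes a sum of terms of the shape $s_4^i\, s_3^a (s_2^{\epsilon}s_1^{-\epsilon}s_2^{\epsilon}) s_3^b\, s_4^j\, s_3^c (s_2^{\delta}s_1^{-\delta}s_2^{\delta}) s_3^d\, s_4^k$, that is $u_4 W_1 u_4 W_2 u_4$ with $W_1 = s_3^a (\cdots) s_3^b$ and $W_2 = s_3^c (\cdots) s_3^d$ each a word of length $5$ in $u_1,u_2,u_3$. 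This is a $(5,5)$ configuration, which is precisely the case excluded from Proposition \ref{propmoinsde55}; so the whole point is to drop the length of $W_1$ or $W_2$ below $5$.

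The key reduction exploits a flanking pair of opposite sign. When a factor sits between $s_3^{a}$ and $s_3^{-a}$, repeated use of the braid identities of Lemma \ref{lemsplusmoins} rewrites $s_3^{a}(s_2^{\epsilon}s_1^{-\epsilon}s_2^{\epsilon})s_3^{-a}$ as $z\,(s_3^{a}s_2^{-a}s_3^{a})\,z'$ with $z,z'\in\langle s_1,s_2\rangle = A_3$; for instance $s_3(s_2 s_1^{-1}s_2)s_3^{-1}=s_2^{-1}s_1^{-1}(s_3 s_2^{-1}s_3)s_1 s_2$. The core $s_3^{a}s_2^{-a}s_3^{a}$ is a word of length $3$, while $z,z'$ commute with $s_4$ and so slide out of the $u_4$'s to the left and right. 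This shortens the offending word from $5$ to $3$. Moreover, since Lemma \ref{lemsimplif212} lets me choose the variant of each factor independently, I can arrange that the right side-factor $z'$ of one block cancels the left side-factor of the next (for both factors chosen as $s_2 s_1^{-1}s_2$ the connector $z'=s_1 s_2$ cancels the next block's left factor $s_2^{-1}s_1^{-1}$), so that no length leaks into the neighbouring word.

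With this in hand the items follow by a short case check feeding the shortened configuration into Proposition \ref{propmoinsde55}. In item (5) both blocks have opposite flanking and collapse simultaneously, the connector cancels, and we are left with $u_4(u_3 u_2 u_3)u_4(u_3 u_2 u_3)u_4$, a $(3,3)$ configuration, hence in $A_5^{(2)}$. In items (3) and (4) exactly one block collapses, dropping us to a $(3,5)$ or $(5,3)$ configuration, again inside the safe range of Proposition \ref{propmoinsde55}. In items (1) and (2) one block collapses while the other is general: if it too has opposite flanking we argue as in (5), and otherwise the residual term lands in a $(5,4)$ or $(4,5)$ configuration, which Proposition \ref{propmoinsde55} places in $A_5^{(2)}$ except for its one exceptional pattern — and that exception should be realized only when all three exponents $i,j,k$ coincide, which is exactly the excluded case. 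The main obstacle I anticipate is the bookkeeping in this last step: keeping the reduced words within the length bound, tracking the leftover side-factors $z,z'$ through the braid rewrites (which may alter the $s_4$-signs), and checking that the single genuinely irreducible residue matches the pattern $s_4 u_3 u_2 u_1 u_3 u_2 s_4 u_1 u_3 u_2 u_3 s_4$ (and its inverse) with equal signs, so that it survives precisely when $i=j=k$.
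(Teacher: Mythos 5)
There is a genuine gap, and it sits exactly where you flagged your worry: the leftover side-factors. Your connector-cancellation trick works only when \emph{both} blocks are oppositely flanked, because only then does the collapsed second block begin with $s_2^{-\beta}s_1^{-\beta}$, ready to absorb the first block's tail $s_1^{\alpha}s_2^{\alpha}$. In items (3) and (4) (and in the sub-case $\beta=\gamma$ of items (1) and (2)) the second block has \emph{matching} flanking, $s_3^{\beta}A_3 s_3^{\beta}$; it is not collapsed and begins with $s_3^{\beta}$, so there is nothing to cancel against. The tail $s_2^{\alpha}$ of the collapsed block commutes with $s_4$ but not with $s_3$: if you keep it with the left word you get a word of length $5$ facing a word of length $5$, i.e.\ the $(5,5)$ configuration that Proposition \ref{propmoinsde55} explicitly excludes; if you push it into the right word you get length $6$ or more, outside the range of the proposition altogether. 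So your claim that items (3) and (4) drop to a $(3,5)$ or $(5,3)$ configuration is false as stated, and the "short case check" does not go through.

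The missing ingredient (which is the paper's key point) is a second $B_4$-identity of central type: $s_2^{\pm1}$ commutes with $s_3 s_2 s_1^{-1} s_2 s_3$ and with $s_3^{-1} s_2^{-1} s_1 s_2^{-1} s_3^{-1}$ (same computation as for $w_0=s_3s_2s_1^2s_2s_3$ centralizing $B_3$). One uses Lemma \ref{lemsimplif212} to choose the matching-flanked block in the form $s_3^{\beta}(s_2^{\beta}s_1^{-\beta}s_2^{\beta})s_3^{\beta}$, so that the stray $s_2^{\alpha}$ passes \emph{through} that entire block, then past $s_4^{k}$, and exits into $A_3$ on the far right. The left word is then $s_3 s_2^{-1} s_3 s_1^{\alpha}$ of length $4$ and the right word $s_3^{\beta}s_2^{\beta}s_1^{-\beta}s_2^{\beta}s_3^{\beta}$ of length $5$: a $(4,5)$ configuration which is not the exceptional pattern, hence in $A_5^{(2)}$ for all $i,j,k$. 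Note also that your case division for items (1)/(2) is inverted: the exceptional $(5,4)$ pattern (and hence the need for the exclusion $i=j=k$) arises precisely when \emph{both} blocks are oppositely flanked but with mismatched signs $\alpha=-\beta$ (the connector then survives as $s_1^{\alpha}s_2^{2\alpha}s_1^{\alpha}$), whereas the matching-flanked sub-case, handled by the commutation trick above, lands in $A_5^{(2)}$ unconditionally.
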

\begin{proof}
We use the formulas $s_3^{-1} (s_2 s_1^{-1} s_2) s_3 = s_2 s_1 (s_3  s_2^{-1} s_3) s_1^{-1}  s_2^{-1}$
and $s_3 (s_2 s_1^{-1} s_2) s_3^{-1} = s_2^{-1} s_1^{-1} (s_3  s_2^{-1} s_3) s_1  s_2$
which are easy to prove and which already hold in the braid group $B_4$, and can be summarized
as $s_3^{-\alpha} (s_2 s_1^{-1} s_2) s_3^{\alpha} = s_2^{\alpha} s_1^{\alpha} (s_3  s_2^{-1} s_3) s_1^{-\alpha}  s_2^{-\alpha}$
for $\alpha \in \{ -1 ,1 \}$. We also use the fact that
$s_2$ (and thus $s_2^{-1}$) commutes with $s_3 s_2 s_1^{-1} s_2 s_3$ (already in the braid group $B_4$),
and similarly $s_2^{-1}$ (and thus $s_2$) commutes with $s_3^{-1} s_2^{-1} s_1 s_2^{-1} s_3^{-1}$.
Together with lemma \ref{lemsimplif212}, this yields
$$
\begin{array}{lcl}
s_4^i s_3^{\alpha} A_3 s_3^{-\alpha} s_4^j s_3^{\beta} A_3 s_3^{\beta} s_4^k 
&\subset & s_4^i s_3^{\alpha} (s_2 s_1^{-1} s_2) s_3^{-\alpha} s_4^j s_3^{\beta} (s_2^{\beta} s_1^{-\beta} s_2^{\beta}) s_3^{\beta} s_4^k  + A_5^{(2)} \\
&\subset & s_4^i (s_3^{\alpha} s_2 s_1^{-1} s_2 s_3^{-\alpha}) s_4^j (s_3^{\beta} s_2^{\beta} s_1^{-\beta} s_2^{\beta} s_3^{\beta}) s_4^k  + A_5^{(2)} \\
&\subset & s_4^i s_2^{-\alpha} s_1^{-\alpha} (s_3  s_2^{-1} s_3) s_1^{\alpha}  s_2^{\alpha} s_4^j (s_3^{\beta} s_2^{\beta} s_1^{-\beta} s_2^{\beta} s_3^{\beta}) s_4^k  + A_5^{(2)} \\
&\subset & s_2^{-\alpha} s_1^{-\alpha} s_4^i  s_3  s_2^{-1} s_3 s_1^{\alpha}   s_4^j s_2^{\alpha} (s_3^{\beta} s_2^{\beta} s_1^{-\beta} s_2^{\beta} s_3^{\beta}) s_4^k  + A_5^{(2)} \\
&\subset & A_3 s_4^i  s_3  s_2^{-1} s_3 s_1^{\alpha}   s_4^j (s_3^{\beta} s_2^{\beta} s_1^{-\beta} s_2^{\beta} s_3^{\beta}) s_2^{\alpha} s_4^k  + A_5^{(2)} \\
&\subset & A_3 s_4^i  s_3  s_2^{-1} s_3 s_1^{\alpha}   s_4^j (s_3^{\beta} s_2^{\beta} s_1^{-\beta} s_2^{\beta} s_3^{\beta}) s_4^k A_3  + A_5^{(2)} \\
&\subset & A_5^{(2)} \\
\end{array}
$$
by proposition \ref{propmoinsde55}, and this proves (3), as well as the symmetric case (4). This also proves (1) in case $\beta = \gamma$. We thus deal with
$$
\begin{array}{lcl}
s_4^i s_3^{\alpha} A_3 s_3^{-\alpha} s_4^j s_3^{\beta} A_3 s_3^{-\beta} s_4^k 
&\subset & s_4^i s_3^{\alpha} (s_2 s_1^{-1} s_2) s_3^{-\alpha} s_4^j s_3^{\beta} (s_2 s_1^{-1} s_2) s_3^{-\beta} s_4^k  + A_5^{(2)} \\
&\subset & s_4^i (s_3^{\alpha} s_2 s_1^{-1} s_2 s_3^{-\alpha}) s_4^j (s_3^{\beta} s_2 s_1^{-1} s_2 s_3^{-\beta}) s_4^k  + A_5^{(2)} \\
&\subset & s_4^i s_2^{-\alpha} s_1^{-\alpha} (s_3  s_2^{-1} s_3) s_1^{\alpha}  s_2^{\alpha} s_4^j s_2^{-\beta} s_1^{-\beta} (s_3  s_2^{-1} s_3) s_1^{\beta}  s_2^{\beta} s_4^k  + A_5^{(2)} \\
&\subset & s_2^{-\alpha} s_1^{-\alpha} s_4^i  s_3  s_2^{-1} s_3 s_1^{\alpha}  s_2^{\alpha}s_2^{-\beta} s_4^j  s_1^{-\beta} (s_3  s_2^{-1} s_3)  s_4^ks_1^{\beta}  s_2^{\beta}  + A_5^{(2)} \\
&\subset & A_5^{(2)} \\
\end{array}
$$
if $\alpha = \beta$ by proposition \ref{propmoinsde55}, and we get (5). Otherwise, $\alpha = - \beta$,
 and
$$
\begin{array}{lcl}
s_4^i s_3^{\alpha} A_3 s_3^{-\alpha} s_4^j s_3^{\beta} A_3 s_3^{-\beta} s_4^k 
&\subset & s_2^{-\alpha} s_1^{-\alpha} s_4^i  s_3  s_2^{-1} s_3 s_1^{\alpha}  s_2^{2 \alpha} s_1^{\alpha} s_4^j   (s_3  s_2^{-1} s_3)  s_4^ks_1^{-\alpha}  s_2^{-\alpha}  + A_5^{(2)} \\
&\subset & A_5^{(2)} \\
\end{array}
$$
unless $i=j=k$ by proposition \ref{propmoinsde55}, and we get (1). (2) is proved symmetrically.

\end{proof}

\begin{corollary} \label{correducspec} {\ }
\begin{enumerate}
\item $s_4 s_3^{-1} (s_2 s_1^{-1} s_2) s_3 s_4^{-1} s_3 (s_2 s_1^{-1} s_2) s_3^{-1} s_4 \in A_4 u_4 A_4 u_4 A_4$
\item $s_4^{-1} s_3 (s_2 s_1^{-1} s_2) s_3 s_4^{-1} s_3 (s_2 s_1^{-1} s_2) s_3^{-1}s_4 \in A_4 u_4 A_4 u_4 A_4$
\end{enumerate}
\end{corollary}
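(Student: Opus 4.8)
The plan is to recognize both displayed elements as specific instances of the membership statements established in lemma \ref{lemreducspec}, exploiting that $s_2 s_1^{-1} s_2 \in A_3$ together with the identity $A_5^{(2)} = A_4 u_4 A_4 u_4 A_4$, which is immediate from the definition of the filtration $A_5^{(n+1)} = A_5^{(n)} u_4 A_4$. Thus in each case it will suffice to match the exponents of the powers of $s_3$ and $s_4$ against the patterns recorded in lemma \ref{lemreducspec} and read off the relevant conclusion; no genuine reduction or calculation remains to be done.

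For the first element I would view it as a member of
$$
s_4^{1}\, s_3^{-1}\, A_3\, s_3^{1}\; s_4^{-1}\; s_3^{1}\, A_3\, s_3^{-1}\, s_4^{1},
$$
which is exactly the shape $s_4^i s_3^{\alpha} A_3 s_3^{-\alpha} s_4^j s_3^{\beta} A_3 s_3^{\gamma} s_4^k$ of lemma \ref{lemreducspec}(1), with $(i,\alpha,j,\beta,\gamma,k)=(1,-1,-1,1,-1,1)$. Since here $i=k=1$ while $j=-1$, the excluded configuration $i=j=k$ does not occur, so lemma \ref{lemreducspec}(1) places the element in $A_5^{(2)}=A_4 u_4 A_4 u_4 A_4$, giving (1). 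For the second element I would similarly write it inside
$$
s_4^{-1}\, s_3^{1}\, A_3\, s_3^{1}\; s_4^{-1}\; s_3^{1}\, A_3\, s_3^{-1}\, s_4^{1},
$$
which is the shape $s_4^i s_3^{\alpha} A_3 s_3^{\alpha} s_4^j s_3^{\beta} A_3 s_3^{-\beta} s_4^k$ of lemma \ref{lemreducspec}(4), with $(i,\alpha,j,\beta,k)=(-1,1,-1,1,1)$. Case (4) carries no exceptional hypothesis, so the element lands in $A_5^{(2)}=A_4 u_4 A_4 u_4 A_4$ unconditionally, giving (2).

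The statement is therefore an immediate corollary, all the real work having been carried out in lemma \ref{lemreducspec}. The only point requiring care is the bookkeeping of signs: one must confirm that in the first element the $s_3$-exponent to the right of the first $A_3$ is indeed $-\alpha$ (so that case (1) applies rather than, say, case (3) or (5)), and that the triple $(i,j,k)=(1,-1,1)$ genuinely avoids $i=j=k$; and that in the second element the two $s_3$-exponents flanking the first $A_3$ coincide, matching case (4). Once these sign matchings are verified the proof is complete, so I expect no real obstacle beyond this routine verification.
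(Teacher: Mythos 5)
Your proposal is correct and takes essentially the same route as the paper: the statement is labelled a corollary of lemma \ref{lemreducspec} precisely because both elements are direct specializations of it — item (1) matches case (1) with $(i,\alpha,j,\beta,\gamma,k)=(1,-1,-1,1,-1,1)$, where $(i,j,k)=(1,-1,1)$ avoids the excluded configuration, and item (2) matches case (4), which carries no exception — combined with the identity $A_5^{(2)}=A_4u_4A_4u_4A_4$ that the paper records in lemma \ref{lemuAuDansU}. Your sign bookkeeping is accurate, so nothing further is required.
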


\begin{lemma} \label{lemsymswsws}
$s_4^{-1} w^+ s_4^{-1} w^+ s_4^{-1} \in A_4 s_4 w^- s_4 w^- s_4 A_4 + A_4 u_4 A_4 u_4 A_4$
\end{lemma}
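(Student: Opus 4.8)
The plan is to exploit the fact that, because $w^+$ and $w^-$ lie in $A_4$, conjugating the triple products by $s_4$ collapses their filtration degree; this is the concrete incarnation of the slogan that the surviving generators ``come from the centre''. Set $X = s_4^{-1} w^+ s_4^{-1} w^+ s_4^{-1}$ and $Y = s_4 w^- s_4 w^- s_4$, and note first that $Y = \Phi(X)$, since $\Phi(s_4) = s_4^{-1}$ and $\Phi(w^+) = w^-$, while $\Phi(A_5^{(2)}) = A_5^{(2)}$; so the two members of the statement are interchanged by $\Phi$.

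First I would compute $s_4 X s_4 = w^+ s_4^{-1} w^+$. Since $w^+ \in A_4$, the right-hand side lies in $A_4 s_4^{-1} A_4 \subset A_5^{(1)}$, whence $X = s_4^{-1}(s_4 X s_4) s_4^{-1}$ lies in $s_4^{-1} A_5^{(1)} s_4^{-1} \subseteq u_4 A_4 u_4 A_4 u_4$. By Proposition \ref{propreducuuu} this places $X$ in $A_4\, c_\ast\, A_4 + A_5^{(2)}$, where $c_\ast$ denotes the central word $u_4 u_3 u_2 u_1 u_2 u_3 u_4 u_3 u_2 u_1 u_2 u_3 u_4$. The same computation run on the other side, namely $s_4^{-1} Y s_4^{-1} = w^- s_4 w^- \in A_5^{(1)}$, shows likewise that $Y \in A_4\, c_\ast\, A_4 + A_5^{(2)}$.

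It then remains to match the two reductions, and for this I would use the band generator $\omega = s_4 s_3 s_2 s_1^2 s_2 s_3 s_4 = s_4 w_0 s_4$, with $w_0 = s_3 s_2 s_1^2 s_2 s_3$. Since $w_0$ commutes with $s_1,s_2$ and a short braid computation (as for $w_0$) shows $\omega$ commutes with $s_3$ as well, $\omega$ commutes with all of $A_4 = \langle s_1,s_2,s_3\rangle$; moreover $\omega^2$ expands as the word $c_\ast$. Because $\omega^2$ is central over $A_4$ one has $A_4\, \omega^2\, A_4 = A_4\,\omega^2$, so the crucial point is to show, modulo $A_5^{(2)}$, that the central word bimodule $A_4\, c_\ast\, A_4$ is the cyclic module $A_4\,\omega^2$ and that the reduction of $Y$ realises $\omega^2$ with an invertible coefficient in $A_4$ (a power of $c$, by analogy with the coefficient in Lemma \ref{lemw0carre}). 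Granting this, $\omega^2 \in A_4 Y A_4 + A_5^{(2)}$, and therefore $X \in A_4\,\omega^2 + A_5^{(2)} \subseteq A_4 Y A_4 + A_5^{(2)}$, which is the assertion.

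The main obstacle is precisely this last step: carrying the Proposition \ref{propreducuuu}-reduction of $w^- s_4 w^-$ far enough to see the central word $\omega^2$ appear with a unit coefficient. This is the lengthy calculation, the analogue one filtration level up of the identity $w_0^2 \in A_3^\times w_0^{-1} + U^+$ of Lemma \ref{lemw0carre}; all the intermediate remainder terms must be kept inside $A_5^{(2)}$ by repeated use of Proposition \ref{propmoinsde55} and Corollary \ref{correducspec}, relying throughout on the fact that the inner factors $s_2^{\pm 1} s_1^{\pm 1} s_2^{\pm 1}$ lie in $A_3 = \langle s_1,s_2\rangle$ and commute with $s_4$.
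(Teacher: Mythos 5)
Your preliminary reductions carry no content. The identity $s_4Xs_4 = w^+s_4^{-1}w^+ \in A_5^{(1)}$ is correct, but conjugation by $s_4$ is not compatible with the filtration, so undoing it only returns $X \in u_4A_4u_4A_4u_4$ --- which holds by the very definition of $X$. Likewise, Proposition \ref{propreducuuu} applied to $X$ and to $Y = s_4w^-s_4w^-s_4$ merely places both inside $A_4\,c_\ast\,A_4 + A_5^{(2)}$; note that your $c_\ast$ is not a single word but the product of subalgebras $u_4u_3u_2u_1u_2u_3u_4u_3u_2u_1u_2u_3u_4$, and $A_4\,c_\ast\,A_4 + A_5^{(2)}$ is all of $A_5^{(3)}$. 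That two elements lie in a common module says nothing about one lying in the bimodule generated by the other, so the entire burden falls on your final step.

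That step is not merely ``the main obstacle'': it is false, and the paper's own results refute it. Your $\omega$ is the paper's $\delta = y_5$, and the remark following lemma \ref{lemdroitegaucheA5} gives $\omega^2 = \delta^2 \equiv s_4w^+s_4^{-1}w^+s_4 \pmod{A_5^{(2)}}$. On the other hand, by theorem \ref{theofinal} combined with the freeness argument recalled in the introduction, the three elements $s_4w^-s_4w^-s_4$, $s_4w^+s_4^{-1}w^+s_4$, $s_4^{-1}w^-s_4w^-s_4^{-1}$ belong to an $A_4$-basis of $A_5$ whose remaining $237$ members span $A_5^{(2)}$, so they are $A_4$-linearly independent modulo $A_5^{(2)}$. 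Consequently $Y$ is not a unit multiple of $\omega^2$ modulo $A_5^{(2)}$, and $A_4\,c_\ast\,A_4 + A_5^{(2)} = A_5^{(3)}$ is not the cyclic module $A_4\omega^2 + A_5^{(2)}$: both of your crucial assertions contradict this independence. The underlying error is the choice of the power of the central element: $\delta^2$ reproduces the generator $s_4w^+s_4^{-1}w^+s_4$, whereas the power tied to $Y$ is $\delta^3$, and even that only modulo the two \emph{other} generators (lemma \ref{lemcrucial}) --- establishing which is the lengthy computation of section \ref{sectcrucial}, not a formal consequence of centrality. So your guiding slogan (centrality is what makes the filtration stop) is the right one, but it cannot be implemented at this lemma; the paper instead proves the statement directly: using $s_2^{-1}s_1s_2^{-1} \in u_1s_2s_1^{-1}s_2 + u_1u_2u_1$ and braid relations it rewrites $X$ inside $A_4\, s_4s_3^{-1}s_2s_1^{-1}s_2\,(s_4^{-1}s_3s_4^{-1}s_3s_4^{-1})\,s_2s_1^{-1}s_2s_3^{-1}s_4\, A_4 + A_5^{(2)}$, expands the middle word by the explicit identity of lemma \ref{lemdecomp212121} (shifted to the indices $3,4$), and checks term by term, via proposition \ref{propmoinsde55} and corollary \ref{correducspec}, that every contribution lies in $A_5^{(2)}$ except the one coming from $s_3^{-1}s_4s_3^{-1}$, which is exactly $s_4w^-s_4w^-s_4$.
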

\begin{proof}
We first use $s_2^{-1} s_1 s_2^{-1} \in u_1 s_2 s_1^{-1} s_2 + u_1 u_2 u_1$ and $s_2^{-1}s_1 s_2^{-1} \in s_2 s_1^{-1} s_2 u_1 + u_1 u_2 u_1$
together with proposition \ref{propmoinsde55}
to get
$$
\begin{array}{lcl}
s_4^{-1} w^+ s_4^{-1} w^+ s_4^{-1} 
&=& s_4^{-1} s_3 s_2^{-1} s_1 s_2^{-1} s_3 s_4^{-1} s_3 s_2^{-1} s_1 s_2^{-1} s_3 s_4^{-1} \\
&\subset & u_1 s_4^{-1} s_3 s_2 s_1^{-1} s_2 s_3 s_4^{-1} s_3 s_2^{-1} s_1 s_2^{-1} s_3 s_4^{-1} + u_1 s_4^{-1} s_3  u_2 u_1 s_3 s_4^{-1} s_3 s_2^{-1} s_1 s_2^{-1} s_3 s_4^{-1}\\
&\subset & u_1 s_4^{-1} s_3 s_2 s_1^{-1} s_2 s_3 s_4^{-1} s_3 (s_2^{-1} s_1 s_2^{-1}) s_3 s_4^{-1} +  A_4 u_4 A_4 u_4 A_4 \\
&\subset & u_1 s_4^{-1} s_3 s_2 s_1^{-1} s_2 s_3 s_4^{-1} s_3 s_2 s_1^{-1} s_2 s_3 s_4^{-1} u_1 +  A_4 u_4 A_4 u_4 A_4 \\
&\subset & A_4 (s_3^{-1} s_4^{-1} s_3) s_2 s_1^{-1} s_2 s_3 s_4^{-1} s_3 s_2 s_1^{-1} s_2 (s_3 s_4^{-1} s_3^{-1}) A_4 +  A_4 u_4 A_4 u_4 A_4 \\
&\subset & A_4 s_4 s_3^{-1} s_4^{-1} s_2 s_1^{-1} s_2 s_3 s_4^{-1} s_3 s_2 s_1^{-1} s_2 s_4^{-1} s_3^{-1} s_4 A_4 +  A_4 u_4 A_4 u_4 A_4 \\
&\subset & A_4 s_4 s_3^{-1}  s_2 s_1^{-1} s_2 (s_4^{-1} s_3 s_4^{-1} s_3 s_4^{-1})s_2 s_1^{-1} s_2  s_3^{-1} s_4 A_4 +  A_4 u_4 A_4 u_4 A_4 \\
\end{array}
$$
By lemma \ref{lemdecomp212121} $s_4^{-1} s_3 s_4^{-1} s_3 s_4^{-1}$ is a linear combination of terms of several kinds
\begin{enumerate}
\item elements $x$ of $u_3 u_4$ or $u_4 u_3$, for which we get $s_4 s_3^{-1}  s_2 s_1^{-1} s_2 x s_2 s_1^{-1} s_2  s_3^{-1} s_4 \subset A_4 u_4 A_4 u_4 A_4$
by a direct application of proposition \ref{propmoinsde55}.
\item elements $x$ that can be put in the the form $s_4^{\alpha} s_3^{\beta} s_4^{\gamma}$ with $\alpha = -1$ or $\gamma = -1$,
in which case we get  $s_4 s_3^{-1}  s_2 s_1^{-1} s_2 x s_2 s_1^{-1} s_2  s_3^{-1} s_4 \subset A_4 u_4 A_4 u_4 A_4$ through one application of 
the equation $s_4 s_3^{\dots} s_4^{-1} \in u_3 u_4 u_3$ or $s_4^{-1} s_3^{\dots} s_4 \in u_3 u_4 u_3$, and proposition \ref{propmoinsde55}.
\item the element $s_3^{-1} s_4 s_3^{-1}$, which provides  $s_4 s_3^{-1}  s_2 s_1^{-1} s_2  s_3^{-1} s_4 s_3^{-1} s_2 s_1^{-1} s_2  s_3^{-1} s_4 = s_4 w^-  s_4 w^- s_4 w^- $.
\item the element $x = s_3 s_4^{-1} s_3$, for
which we get $s_4 s_3^{-1}  s_2 s_1^{-1} s_2 x s_2 s_1^{-1} s_2  s_3^{-1} s_4 \subset A_4 u_4 A_4 u_4 A_4$
by  corollary \ref{correducspec} (1).
\item the element $x = s_4^{-1} s_3 s_4^{-1} s_3$, for
which we get 
$$
\begin{array}{lcl}
s_4 s_3^{-1}  s_2 s_1^{-1} s_2 x s_2 s_1^{-1} s_2  s_3^{-1} s_4 
&=& s_4 s_3^{-1}  s_2 s_1^{-1} s_2 s_4^{-1} s_3 s_4^{-1} s_3 s_2 s_1^{-1} s_2  s_3^{-1} s_4 \\
&=& (s_4 s_3^{-1}  s_4^{-1})s_2 s_1^{-1} s_2  s_3  s_4^{-1} s_3 s_2 s_1^{-1} s_2  s_3^{-1} s_4  \\
&=& s_3^{-1} s_4^{-1}  s_3s_2 s_1^{-1} s_2  s_3 s_4^{-1}  s_3 s_2 s_1^{-1} s_2 s_3^{-1} s_4 \\
&\subset& A_4 s_4^{-1}  s_3s_2 s_1^{-1} s_2  s_3 s_4^{-1}  s_3 s_2 s_1^{-1} s_2 s_3^{-1} s_4 \\
&\subset& A_4 u_4 A_4 u_4 A_4\\
\end{array}
$$
by  corollary \ref{correducspec} (2).
\end{enumerate}
This proves the inclusion.

\end{proof}

\begin{lemma} \label{lemreduc2} {\ }
\begin{enumerate}
\item $u_4 A_4 u_4 u_3 u_4 \subset A_4 u_4 A_4 u_4 A_4$
\item $u_4 u_3 u_4 A_4 u_4 \subset A_4 u_4 A_4 u_4 A_4$
\item $s_4^{\beta} u_3 u_2 u_1 u_2 s_3^{\alpha} s_4^{\gamma} s_3^{-\alpha} u_2 u_1 u_2 u_3 s_4^{\beta} \subset A_4 u_4 A_4 u_4 A_4$
\item $s_4^{\alpha} s_3^{\alpha} u_2 u_1 u_2 s_3^{\alpha} s_4^{\gamma} s_3^{-\alpha} u_2 u_1 u_2 u_3 s_4^{\beta} \subset A_4 u_4 A_4 u_4 A_4$
\item $s_4^{\beta} u_3 u_2 u_1 u_2 s_3^{\alpha} s_4^{\gamma} s_3^{-\alpha} u_2 u_1 u_2 s_3^{-\alpha} s_4^{-\alpha} \subset A_4 u_4 A_4 u_4 A_4$
\item $u_4 s_3^{\alpha} u_2 u_1 u_2 s_3^{\alpha} s_4^{\alpha} s_3^{\alpha} u_2 u_1 u_2 s_3^{\alpha} u_4 \subset A_4 u_4 A_4 u_4 A_4$
\item $s_4 w^+ s_4^{-1} w^+ s_4^{-1} \in A_4 u_4 A_4 u_4 A_4$
\item $s_4 w^- s_4 w^- s_4^{-1} \in A_4 u_4 A_4 u_4 A_4$
\end{enumerate}

\end{lemma}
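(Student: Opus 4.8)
The plan is to prove the eight inclusions in an order that lets the easier ones feed the harder ones, always keeping in mind that the target $A_5^{(2)}=A_4 u_4 A_4 u_4 A_4$ carries only two factors $u_4$ while each left-hand side carries three, so exactly one factor $s_4$ must be eliminated. Throughout I would use freely that $s_4$ commutes with $A_3=\langle s_1,s_2\rangle$ and that $u_4 A_4 u_4\subset A_5^{(2)}$ holds trivially. For (1) I would expand $A_4=A_3 u_3 A_3+A_3 u_3 u_2 u_3 A_3+A_3 u_3 u_2 u_1 u_2 u_3$ by Theorem \ref{theodecA4}, push every $A_3$-factor out past the neighbouring $u_4$, and note that each resulting term is $u_4(\cdots)u_4(\cdots)u_4$ whose two inner blocks are words in $u_1,u_2,u_3$ with factor-count pair $(1,5)$, $(3,5)$ or $(5,1)$; none of these is $(5,5)$ nor an exceptional pair, so Proposition \ref{propmoinsde55} finishes, and (2) is its mirror under $\Psi$.

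Statement (6) is the cleanest of the genuine $(5,5)$ cases because every $s_3$ and $s_4$ shares the exponent $\alpha$. I would apply the braid relation $s_3^{\alpha}s_4^{\alpha}s_3^{\alpha}=s_4^{\alpha}s_3^{\alpha}s_4^{\alpha}$ to the central triple, commute the two blocks $u_2u_1u_2$ past the liberated $s_4^{\alpha}$'s, and absorb the outer factors using $u_4 s_3^{\alpha}s_4^{\alpha}\subset s_3^{\alpha}s_4^{\alpha}u_3$ and its mirror (both consequences of Lemma \ref{lemsplusmoins} and the braid relation). What survives is $s_3^{\alpha}(s_4^{\alpha}N s_4^{\alpha})s_3^{\alpha}$ with $N=u_3u_2u_1u_2 s_3^{\alpha}u_2u_1u_2 u_3\in A_4$, and since $s_4^{\alpha}N s_4^{\alpha}\in u_4 A_4 u_4\subset A_5^{(2)}$ while $A_5^{(2)}$ is an $A_4$-bimodule, the whole expression lies in $A_5^{(2)}$.

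For (3),(4),(5) the two exponents flanking the central $s_4^{\gamma}$ are opposite, so the braid move of (6) is unavailable. Instead I would resolve each outer $u_3$ into $\{1,s_3^{\pm}\}$ and write both side blocks in the form $s_3^{\bullet}A_3 s_3^{\bullet}$. Sorting by the resulting sign patterns, Lemma \ref{lemreducspec} places every configuration in $A_5^{(2)}$ except one: its parts (3),(4),(5) handle the patterns where some side block has opposite end-exponents, and its parts (1),(2) handle all patterns with $\gamma\neq\beta$. The single residual configuration — both side blocks with equal end-exponents and all three $s_4$ equal — together with the collapsed subcases where an outer $u_3$ equals $1$ (which drop the count to an exceptional $(4,5)$ or $(5,4)$ pair) must be pushed through by hand: rewriting one block via $u_2u_1u_2\subset u_1u_2u_1+u_1 s_2 s_1^{-1}s_2$ from Theorem \ref{theodecA3}, then using the conjugation identity $s_3^{-\alpha}(s_2 s_1^{-1}s_2)s_3^{\alpha}=s_2^{\alpha}s_1^{\alpha}(s_3 s_2^{-1}s_3)s_1^{-\alpha}s_2^{-\alpha}$ and the centrality of $s_2$ relative to $w^{\pm}$ to lower the $s_3$-flanked factor count, which returns us to statements (1),(2) and to Proposition \ref{propmoinsde55}.

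Finally (7) and (8) follow the template of Lemma \ref{lemsymswsws}. Writing $w^{\pm}=s_3^{\pm}(s_2^{\mp}s_1^{\pm}s_2^{\mp})s_3^{\pm}$ and converting the inner braid word by $s_2^{\mp}s_1^{\pm}s_2^{\mp}\in u_1 s_2^{\pm}s_1^{\mp}s_2^{\pm}+u_1u_2u_1$ (Lemmas \ref{leminverse} and \ref{lemquasicom}), I would conjugate by the outer $s_4^{\pm}$ through Lemma \ref{lemsplusmoins} so as to expose a central five-term block $s_4^{\bullet}s_3^{\bullet}s_4^{\bullet}s_3^{\bullet}s_4^{\bullet}$. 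Decomposing this block with Lemma \ref{lemdecomp212121} yields a linear combination whose terms are of four kinds: those in $u_3 u_4$ or $u_4 u_3$ (finished by Proposition \ref{propmoinsde55}); those of shape $s_4^{\mp}s_3 s_4^{\pm}$ (reduced by Lemma \ref{lemsplusmoins} and then Proposition \ref{propmoinsde55}); the term $s_3 s_4^{-1}s_3$, killed by Corollary \ref{correducspec}; and terms of shape $s_4^{\pm}s_3 s_4^{\mp}s_3$, killed by Corollary \ref{correducspec} and by statements (3)--(6). The sign configuration of (7) and (8) is arranged precisely so that no surviving generator $s_4 w^{\mp}s_4 w^{\mp}s_4$ is produced, which is why the conclusion is membership in $A_5^{(2)}$ rather than in $A_5^{(2)}$ plus a generator module. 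The main obstacle is the bookkeeping in (3)--(5): isolating the unique sign pattern that escapes Lemma \ref{lemreducspec} and verifying that in that residual configuration the explicit braid manipulation genuinely transports a generator across the central $s_4^{\gamma}$ into $A_5^{(2)}$, in tandem with checking that every collapsed subcase steers clear of the truly exceptional words excluded by Proposition \ref{propmoinsde55}.
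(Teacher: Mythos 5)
Your treatments of (1), (2) and (6) are correct and essentially the paper's own: expand $A_4$ via Theorem \ref{theodecA4}, slide the $A_3$-factors (which commute with $s_4$) across the $u_4$'s, and finish with Proposition \ref{propmoinsde55}, respectively use the braid relation on the central triple and absorb the outer $u_4$'s. The genuine gap is in (3)--(5). Every single part of Lemma \ref{lemreducspec} requires at least one of the two side blocks to have \emph{opposite} end-exponents: parts (1) and (2) assume it outright, and parts (3), (4), (5) assume it for one of the blocks. Consequently, when both side blocks have \emph{equal} end-exponents --- that is, for the terms $s_4^{\beta} s_3^{\alpha}(u_2u_1u_2)s_3^{\alpha}\, s_4^{\gamma}\, s_3^{-\alpha}(u_2u_1u_2)s_3^{-\alpha} s_4^{\beta}$, which after Lemma \ref{lemsimplif212} are exactly the mixed words $s_4^{\beta} w^{+} s_4^{\gamma} w^{-} s_4^{\beta}$ and $s_4^{\beta} w^{-} s_4^{\gamma} w^{+} s_4^{\beta}$ --- Lemma \ref{lemreducspec} gives nothing for \emph{any} choice of $\beta,\gamma$, not only when ``all three $s_4$ are equal'' as you claim; your statement that parts (1),(2) ``handle all patterns with $\gamma\neq\beta$'' is true only when some block already has opposite ends. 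Your proposed by-hand repair cannot close this residual either: the conjugation identity $s_3^{-\alpha}(s_2 s_1^{-1}s_2)s_3^{\alpha}=s_2^{\alpha}s_1^{\alpha}(s_3 s_2^{-1}s_3)s_1^{-\alpha}s_2^{-\alpha}$ applies only to a block flanked by \emph{opposite} exponents, which is precisely what fails here; Proposition \ref{propmoinsde55} is unavailable because these are genuine $(5,5)$ words; and, incidentally, $s_2$ is not central relative to $w^{\pm}$ (it commutes with $s_3s_2s_1^{-1}s_2s_3$ and its inverse, not with $w^{\pm}$). In (4) and (5) the problem is even more visible, since there one side block has equal end-exponents by force.

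The missing idea --- which is the entire content of the paper's proof of (3)--(5) --- is to attack the \emph{middle} rather than the blocks: by Lemma \ref{lemsplusmoins}, $s_3^{\alpha} s_4^{\gamma} s_3^{-\alpha} = s_4^{-\alpha} s_3^{\gamma} s_4^{\alpha}$, and since $u_2u_1u_2$ commutes with $s_4$, the whole word becomes $s_4^{\beta} u_3 s_4^{-\alpha}\cdot\bigl(u_2u_1u_2\, s_3^{\gamma}\, u_2u_1u_2\bigr)\cdot s_4^{\alpha} u_3 s_4^{\beta}$; whichever of $\beta=\alpha$ or $\beta=-\alpha$ holds, one of the two end-triples $s_4^{\beta}u_3 s_4^{-\alpha}$, $s_4^{\alpha}u_3 s_4^{\beta}$ collapses into $u_3u_4u_3$, so the word lies in $A_4 u_4 A_4 u_4 u_3 u_4 A_4$ or $A_4 u_4 u_3 u_4 A_4 u_4 A_4$ and one concludes by your own items (1) or (2). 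Without this step, statements (3)--(5) are not proved, and the damage propagates: your route to (7) and (8), which follows the template of Lemma \ref{lemsymswsws} (Lemma \ref{lemdecomp212121} plus Corollary \ref{correducspec}) rather than the paper's much shorter argument (conjugate to expose $s_3^{\pm 2}$, expand it, and apply Lemma \ref{lemreducspec} (1),(3) together with Proposition \ref{propmoinsde55}), explicitly invokes statements (3)--(6) to kill some of the resulting terms, so it inherits the gap and cannot stand until (3)--(5) are repaired.
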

\begin{proof}
Since $A_4 = A_3 u_3 A_3 + A_3 u_3 u_2 u_3 A_3 + A_3 u_3 u_2 u_1 u_2 u_3$
we have $u_4 A_4 u_4 u_3 u_4 \subset
A_3u_4  u_3 A_3 u_4 u_3 u_4 +
A_3u_4  u_3 u_2 u_3 A_3 u_4 u_3 u_4 +
A_3u_4  u_3 u_2 u_1 u_2 u_3 u_4 u_3 u_4 $.
We have $u_4  u_3 A_3 u_4 u_3 u_4 \subset u_4  u_3 u_1 u_2 u_1 u_2  u_4 u_3 u_4 \subset  A_4 u_4 A_4 u_4 A_4$
by proposition \ref{propmoinsde55},  
$$
\begin{array}{clclcl}
& u_4  u_3 u_2 u_3 A_3 u_4 u_3 u_4 &\subset& u_4  (u_3 u_2 u_3 u_2) u_1 u_2 u_1 u_4 u_3 u_4 \\
= &u_4  u_2 u_3 u_2 u_3 u_1 u_2 u_1 u_4 u_3 u_4
&=& u_2 u_4   u_3 u_2 u_3 u_1 u_4 u_2 u_1  u_3 u_4 
\subset &A_4 u_4 A_4 u_4 A_4\\
\end{array}
$$
by proposition \ref{propmoinsde55}, and $u_4  u_3 u_2 u_1 u_2 u_3 u_4 u_3 u_4  \subset A_4 u_4 A_4 u_4 A_4$
by proposition \ref{propmoinsde55}. This proves (1). (2) is deduced from (1) by applying $\Psi$.
We turn to (3).
Since $s_4^{\beta} u_3 u_2 u_1 u_2 (s_3^{\alpha} s_4^{\gamma} s_3^{-\alpha}) u_2 u_1 u_2 u_3 s_4^{\beta} 
=
s_4^{\beta} u_3 u_2 u_1 u_2 s_4^{-\alpha} s_3^{\gamma} s_4^{\alpha} u_2 u_1 u_2 u_3 s_4^{\beta}
=
s_4^{\beta} u_3 s_4^{-\alpha} u_2 u_1 u_2  s_3^{\gamma}  u_2 u_1 u_2 s_4^{\alpha} u_3 s_4^{\beta}$
and either  $s_4^{\beta} u_3 s_4^{-\alpha} \subset u_3 u_4 u_3$ or $s_4^{\alpha} u_3 s_4^{\beta} \subset u_3 u_4 u_3$. In both
cases we get an element of $A_4 u_4 A_4 u_4 u_3 u_4 A_4 \subset A_4 u_4 A_4 u_4 A_4$ or  $A_4 u_4 u_3 u_4 A_4 u_4A_4 \subset A_4 u_4 A_4 u_4 A_4$ by (1) or (2), and this proves (3).
(4) and (5) are similar and left to the reader.

Now $$
\begin{array}{lclcl}
u_4 s_3^{\alpha} u_2 u_1 u_2 s_3^{\alpha} s_4^{\alpha} s_3^{\alpha} u_2 u_1 u_2 s_3^{\alpha} u_4 
&=& u_4 s_3^{\alpha} u_2 u_1 u_2 s_4^{\alpha} s_3^{\alpha} s_4^{\alpha} u_2 u_1 u_2 s_3^{\alpha} u_4 \\
&=& (u_4 s_3^{\alpha} s_4^{\alpha})u_2 u_1 u_2  s_3^{\alpha}  u_2 u_1 u_2 (s_4^{\alpha} s_3^{\alpha} u_4) \\
&\subset& u_3 u_4 u_3 u_2 u_1 u_2 u_3 u_2 u_1 u_2 u_3 u_4 u_3
&\subset &A_4 u_4 A_4 u_4 A_4\\
\end{array}$$ and this proves (6).

To prove (7), we compute, using $b,b'$ for elements in $u_2u_1 u_2$,
$$
\begin{array}{lcl}
s_4 w^+ s_4^{-1} w^+ s_4^{-1} 
&\subset & s_4 s_3 b s_3 s_4^{-1} s_3 b' s_3 s_4^{-1} \\
&\subset & s_3^{-1} (s_3 s_4 s_3) b s_3 s_4^{-1} s_3 b' s_3 s_4^{-1} \\
&\subset & s_3^{-1} s_4 s_3 s_4 b s_3 s_4^{-1} s_3 b' s_3 s_4^{-1} \\
&\subset & s_3^{-1} s_4 s_3  b (s_4 s_3 s_4^{-1}) s_3 b' s_3 s_4^{-1} \\
&\subset & s_3^{-1} s_4 s_3  b s_3^{-1} s_4 s_3^2 b' s_3 s_4^{-1} \\
\end{array}
$$
Now $s_3^2 \in R + R s_3 + R s_3^{-1}$, and 
$s_3^{-1} s_4 s_3  b s_3^{-1} s_4  b' s_3 s_4^{-1}  \in A_4 u_4 A_4 u_4 A_4$ by proposition \ref{propmoinsde55},
$$s_3^{-1} s_4 s_3  b s_3^{-1} s_4 s_3 b' s_3 s_4^{-1} \in A_4 u_4 A_4 u_4 A_4$$ by lemma \ref{lemreducspec} (3),
and $s_3^{-1} s_4 s_3  b s_3^{-1} s_4 s_3^{-1} b' s_3 s_4^{-1} \subset A_5^{(2)}$ by lemma \ref{lemreducspec} (1).
The proof of (8) is similar :
$$
\begin{array}{lcl}
s_4 w^- s_4 w^- s_4^{-1} 
&\subset & s_4 s_3^{-1} b s_3^{-1} s_4 s_3^{-1} b' s_3^{-1} s_4^{-1} \\
&\subset & s_4 s_3^{-1} b s_3^{-1} s_4 s_3^{-1} b' (s_3^{-1} s_4^{-1} s_3^{-1})\\
&\subset & s_4 s_3^{-1} b s_3^{-1} s_4 s_3^{-1} b' s_4^{-1} s_3^{-1} s_4^{-1}\\
&\subset & s_4 s_3^{-1} b s_3^{-1} (s_4 s_3^{-1} s_4^{-1}) b'  s_3^{-1} s_4^{-1}\\
&\subset & s_4 s_3^{-1} b s_3^{-1} s_3^{-1} s_4^{-1} s_3 b'  s_3^{-1} s_4^{-1}\\
&\subset & s_4 s_3^{-1} b s_3^{-2} s_4^{-1} s_3 b'  s_3^{-1} s_4^{-1}\\
\end{array}
$$
Now $s_3^{-2} \in R + R s_3 + R s_3^{-1}$, and we conclude similarly.

\end{proof}

\begin{lemma} {\ } \label{lemtransf1}
\begin{enumerate}
\item $s_4 s_3^{-1} A_3 s_3 s_4 s_3 A_3 s_3^{-1} s_4 \subset u_3 s_4^- w^+ s_4 w^- s_4 + A_5^{(2)}$
\item $(s_4 w^+ s_4^{-1} w^+ s_4)s_3^{-1} \in s_3^{-1} (s_4 w^+ s_4^{-1} w^+ s_4) + A_5^{(2)}$
\item $s_4 w^+ s_4^{-1} w^+ s_4 \in A_3^{\times} s_4 (s_3 s_2^{-1} s_3)(s_1 s_2^{-1} s_1) s_4 (s_3 s_2^{-1} s_3) s_4 A_4^{\times} + A_5^{(2)}$
\item $s_4 w^- s_4 w^+ s_4^{-1} \in A_4^{\times} s_4^{-1} w^+ s_4 w^- s_4 A_4^{\times} + A_5^{(2)} $
\item $s_4 w^- s_4^{-1} w^+ s_4^{-1} \in A_4^{\times} s_4^{-1} w^+ s_4^{-1} w^- s_4A_4^{\times} + A_5^{(2)} $ 
\item $s_4 s_3 A_3 s_3^{-1} s_4 s_3^{-1} A_3 s_3 s_4 \subset u_3 s_4 w^+ s_4^{-1} w^+ s_4 u_3 + A_5^{(2)}$
\end{enumerate}
\end{lemma}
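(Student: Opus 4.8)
The plan is to handle all six items with a single rewriting engine and to use the (skew-)automorphisms $\Phi$, $\Psi$ and $\Ad \Delta$ to halve the number of genuinely distinct computations. The engine runs as follows: reduce every factor $A_3$ to a \emph{core} $s_2^{\pm} s_1^{\mp} s_2^{\pm}$ by theorem \ref{theodecA3} and lemma \ref{lemsimplif212}; rewrite each conjugate $s_3^{-\alpha}(s_2 s_1^{-1} s_2) s_3^{\alpha}$ by the identity $s_3^{-\alpha}(s_2 s_1^{-1} s_2) s_3^{\alpha} = s_2^{\alpha} s_1^{\alpha}(s_3 s_2^{-1} s_3) s_1^{-\alpha} s_2^{-\alpha}$ used in the proof of lemma \ref{lemreducspec}; slide the resulting $s_1^{\pm}, s_2^{\pm}$ (which lie in $A_3$) past the outer $s_4^{\pm}$; rearrange the $s_3, s_4$ by the braid relations of lemma \ref{lemsplusmoins}; and discard every summand that falls into $A_5^{(2)} = A_4 u_4 A_4 u_4 A_4$ by proposition \ref{propmoinsde55}.

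I would first establish the two structural items (1) and (6), which form the backbone. For (6), lemma \ref{lemsimplif212} already rewrites $s_4 s_3 A_3 s_3^{-1} s_4 s_3^{-1} A_3 s_3 s_4$ as $u_1 s_4 s_3 (s_2 s_1^{-1} s_2) s_3^{-1} s_4 s_3^{-1}(s_2 s_1^{-1} s_2) s_3 s_4 u_1$ modulo $A_5^{(2)}$; applying the conjugation identity to both cores turns the inner blocks into $s_3 s_2^{-1} s_3$ flanked by $s_1^{\pm} s_2^{\pm}$, and sliding those flanks past the $s_4$'s together with $s_4 s_3 s_4 = s_3 s_4 s_3$ recollects the word as $u_3\, s_4 w^+ s_4^{-1} w^+ s_4\, u_3$, the flip of the middle $s_4$ being produced by the moves $s_4 s_3^{\pm} s_4^{\mp}\in u_3 u_4 u_3$. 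Item (1) is the same computation with the opposite outer exponents; there I would additionally interconvert the two cores through lemma \ref{leminverse} (so that $s_2 s_1^{-1} s_2 \equiv c(s_2^{-1} s_1 s_2^{-1}) s_1^{-1}$ modulo $u_1 u_2 u_1$), producing one factor $w^+$ and one factor $w^-$ and landing on $u_3 s_4^{-1} w^+ s_4 w^- s_4$.

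Item (3) is proved by running the same engine on the single element $s_4 w^+ s_4^{-1} w^+ s_4$: writing $w^+ = s_3 (s_2^{-1} s_1 s_2^{-1}) s_3$ and replacing each core $s_2^{-1} s_1 s_2^{-1}$ by $c^{-1}(s_2 s_1^{-1} s_2) s_1 + u_1 u_2 u_1$ (lemma \ref{leminverse}) puts it into the pattern of (6); the conjugation identity then exhibits the surviving term as $A_3^{\times}\, s_4 (s_3 s_2^{-1} s_3)(s_1 s_2^{-1} s_1) s_4 (s_3 s_2^{-1} s_3) s_4\, A_4^{\times}$, the scalar $c^{\pm 1}$ and the flank factors $s_1^{\pm}, s_2^{\pm}$ being absorbed into $A_3^{\times}$ on the left and $A_4^{\times}$ on the right, while the leftover $s_1 s_2^{-1} s_1$ is exactly the accumulated flank of the first core. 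The quasi-commutation (2) I would prove by direct sliding: since $w^+ s_3^{-1} = s_3 s_2^{-1} s_1 s_2^{-1}$ and $s_3^{-1} s_4 s_3 = s_4 s_3 s_4^{-1}$, one can push $s_3^{-1}$ from the right of $s_4 w^+ s_4^{-1} w^+ s_4$ to its left through lemma \ref{lemquasicom} and lemma \ref{lemsplusmoins}, every commutator error being swallowed by $A_5^{(2)}$ via proposition \ref{propmoinsde55}. Finally (4) and (5) are exchanged by $\Psi$, which sends $s_4 \mapsto s_4^{-1}$, swaps $w^+ \leftrightarrow w^-$, reverses products, and preserves both $A_4^{\times}$ and $A_5^{(2)}$; so it suffices to prove (4) by the core-interconversion used for (1) and to deduce (5) by applying $\Psi$.

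The step I expect to be the main obstacle is the bookkeeping in (3) rather than any individual braid move: one must pass between the two normal forms of the core, $w^+ = s_3(s_2^{-1} s_1 s_2^{-1}) s_3$ and the factorised $(s_3 s_2^{-1} s_3)(s_1 s_2^{-1} s_1)$, while keeping every stray factor strictly inside $A_3^{\times}$ and $A_4^{\times}$, and one must certify that each of the numerous discarded summands is genuinely of the form $u_4 \cdots u_4 \cdots u_4$ with strictly fewer than the maximal number of factors, so that proposition \ref{propmoinsde55} applies. Organising the reductions so that only the single longest word survives outside $A_5^{(2)}$ is where the care is needed.
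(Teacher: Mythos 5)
Two pieces of your plan are sound. Deducing (5) from (4) by the skew-automorphism $\Psi$ is correct, since $\Psi(w^{\pm})=w^{\mp}$ and $\Psi$ preserves $A_4^{\times}$ and $A_5^{(2)}$; indeed the paper itself exploits exactly this symmetry a little later (lemma \ref{lem2lignesout}), though it proves (5) of the present lemma by a second direct computation. Your plan for (2) is also essentially the paper's proof, with one caveat: you must use the sharper identity $s_3s_4^{-1}s_3s_4^{-1}-s_4^{-1}s_3s_4^{-1}s_3\in u_3u_4+u_4u_3$ coming from lemma \ref{lemdecomp1212}, not lemma \ref{lemquasicom}; the error term $u_3u_4u_3$ of the latter, once substituted, produces words of the $(5,5)$ type that proposition \ref{propmoinsde55} explicitly excludes, so "every commutator error being swallowed by $A_5^{(2)}$" is not something you are entitled to.

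The genuine gap is in (1), (3), (4) and (6), and it is structural. In (1) and (6), where each core is flanked by $s_3$'s of \emph{opposite} signs, your engine does apply, but it collapses the word to an all-positive core form; for (6) one gets
$$
s_2^{-1}s_1^{-1}\; s_4\,(s_3s_2^{-1}s_3)\; s_4\,(s_1s_2^{2}s_1)\,(s_3s_2^{-1}s_3)\; s_4\; s_1^{-1}s_2^{-1},
$$
in which all three $s_4$'s and both $s_3$-blocks carry exponent $+1$, whereas the targets $s_4w^+s_4^{-1}w^+s_4$ and $s_4^{-1}w^+s_4w^-s_4$ have mixed $s_4$-signs. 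Nothing in your toolkit can ever change the exponent of an $s_4$ here: the move $s_4s_3^{\pm}s_4^{\mp}\in u_3u_4u_3$ that you invoke for "the flip of the middle $s_4$" requires a mixed-sign subword, which the all-positive form never contains; flank sliding and the conjugation identity involve only $s_1,s_2,s_3$; the braid relations preserve signs; and working modulo $A_5^{(2)}$ does not transform the surviving word. So the "recollection" in (6) and the "landing" in (1) cannot be carried out. For (3) and (4) the engine does not even start: in $s_4w^+s_4^{-1}w^+s_4$ and $s_4w^-s_4w^+s_4^{-1}$ each core is flanked by $s_3$'s of the \emph{same} sign, so the conjugation identity is inapplicable, and interconverting cores by lemma \ref{leminverse} does not change those $s_3$-signs — such a word is never "in the pattern of (6)" as you claim. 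What is missing is precisely the paper's sign-flipping mechanism: create a square by braid moves (inserting $s_3s_3^{-1}$ and using $s_3s_4s_3=s_4s_3s_4$ yields subwords like $s_4s_3^{2}s_4$, $s_4s_3^{-2}s_4s_3$ or $s_3^{-1}s_4s_3^{-1}s_4$), expand it by the cubic relation $s_i^{\pm2}\in R^{\times}s_i^{\mp}+Rs_i^{\pm}+R$ — equivalently, apply lemma \ref{leminverse} or lemma \ref{lemdecomp1212} at the level of the pair $(s_3,s_4)$ rather than $(s_1,s_2)$ — and keep the unique term carrying the opposite sign. The price is that the discarded terms of each expansion are again borderline $(5,5)$ or exceptional $(5,4)$ words, which proposition \ref{propmoinsde55} cannot kill; disposing of them is exactly what lemmas \ref{lemreducspec}, \ref{correducspec} and \ref{lemreduc2} — none of which your proposal uses — were proved for, and that is where the real content of this lemma lies.
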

\begin{proof}
We first prove (1). By lemma \ref{lemsimplif212} we need to prove
$s_4 s_3^{-1} s_2 s_1^{-1} s_2 s_3 s_4 s_3 s_2 s_1^{-1} s_2  s_3^{-1} s_4 \subset u_3 s_4^- w^+ s_4 w^- s_4 + A_5^{(2)}$,
and we get, using proposition \ref{propmoinsde55}

$$
\begin{array}{lcl}
s_4 s_3^{-1} s_2 s_1^{-1} s_2 (s_3 s_4 s_3) s_2 s_1^{-1} s_2  s_3^{-1} s_4 
& = & s_4 s_3^{-1} s_2 s_1^{-1} s_2 s_4 s_3 s_4 s_2 s_1^{-1} s_2  s_3^{-1} s_4  \\
& = & (s_4 s_3^{-1} s_4) s_2 s_1^{-1} s_2  s_3 s_4 s_2 s_1^{-1} s_2  s_3^{-1} s_4  \\
& \subset & u_3s_4^{-1} s_3 s_4^{-1} s_2 s_1^{-1} s_2  s_3 s_4 s_2 s_1^{-1} s_2  s_3^{-1} s_4 
\\ & & +
u_3u_4 u_3 s_2 s_1^{-1} s_2  s_3 s_4 s_2 s_1^{-1} s_2  s_3^{-1} s_4 \\
& \subset & u_3s_4^{-1} s_3 s_4^{-1} s_2 s_1^{-1} s_2  s_3 s_4 s_2 s_1^{-1} s_2  s_3^{-1} s_4 + A_5^{(2)} \\
& \subset & u_3s_4^{-1} s_3  s_2 s_1^{-1} s_2 (s_4^{-1} s_3 s_4) s_2 s_1^{-1} s_2  s_3^{-1} s_4 + A_5^{(2)} \\
& \subset & u_3s_4^{-1} s_3  (s_2 s_1^{-1} s_2) s_3 s_4 s_3^{-1} s_2 s_1^{-1} s_2  s_3^{-1} s_4 + A_5^{(2)} \\
& \subset & u_3s_4^{-1} s_3  s_2^{-1} s_1 s_2^{-1} s_3 s_4 s_3^{-1} s_2 s_1^{-1} s_2  s_3^{-1} s_4 + A_5^{(2)} \\
& \subset & u_3s_4^{-1} w^+ s_4 w^- s_4 + A_5^{(2)}. \\ 
\end{array}
$$
We now prove (2). We have, using $s_3 s_4^{-1} s_3 s_4^{-1} \in R s_4 s_3^{-1} s_4 s_3^{-1}  + u_3 u_4 u_3 + u_4 u_3 u_4$
and $s_3 s_4^{-1} s_3 s_4^{-1} - s_4^{-1} s_3 s_4^{-1} s_3 \in u_3 u_4 + u_4 u_3$ by lemma \ref{lemdecomp1212}, we get
$$
\begin{array}{lcl}
s_4 w^+ s_4^{-1} w^+ s_4.s_3^{-1} 
&=&  s_4 w^+ s_4^{-1} s_3 s_2^{-1} s_1 s_2^{-1} (s_3 s_4s_3^{-1}) \\ 
&=&  s_4 w^+ s_4^{-1} s_3 s_2^{-1} s_1 s_2^{-1} s_4^{-1} s_3s_4 \\ 
&=&  s_4 w^+ s_4^{-1} s_3 s_4^{-1} s_2^{-1} s_1 s_2^{-1}  s_3s_4 \\ 
&=&  s_4 s_3 s_2^{-1} s_1 s_2^{-1} (s_3 s_4^{-1} s_3 s_4^{-1}) s_2^{-1} s_1 s_2^{-1}  s_3s_4 \\ 
&\in &  s_4 s_3 s_2^{-1} s_1 s_2^{-1} s_4^{-1} s_3 s_4^{-1} s_3 s_2^{-1} s_1 s_2^{-1}  s_3s_4 + A_5^{(2)}\\ 
&\subset&  (s_4 s_3 s_4^{-1}) s_2^{-1} s_1 s_2^{-1}  s_3 s_4^{-1} s_3 s_2^{-1} s_1 s_2^{-1}  s_3s_4+ A_5^{(2)} \\ 
&\subset&  s_3^{-1} s_4 s_3 s_2^{-1} s_1 s_2^{-1}  s_3 s_4^{-1} s_3 s_2^{-1} s_1 s_2^{-1}  s_3s_4 + A_5^{(2)}\\ 
&\subset&  s_3^{-1}. s_4 w^+ s_4^{-1} w^+ s_4 + A_5^{(2)} \\ 
\end{array}
$$
We now prove (3). We have
$$
\begin{array}{lcl}
s_4 w^+ s_4^{-1} w^+ s_4 &=& s_4 s_3 (s_2^{-1} s_1  s_2^{-1}) s_3 s_4^{-1} s_3 (s_2^{-1} s_1  s_2^{-1}) s_3 s_4 \\
&\in & s_4 s_3 s_2 s_1^{-1}  s_2 s_3 s_4^{-1} s_3 s_2 s_1^{-1}  s_2 s_3 s_4 + A_5^{(2)} \\
&\subset & s_3^{-1} (s_3 s_4 s_3) s_2 s_1^{-1}  s_2 s_3 s_4^{-1} s_3 s_2 s_1^{-1}  s_2 (s_3 s_4s_3 )s_3^{-1} + A_5^{(2)} \\
&\subset & s_3^{-1} s_4 s_3 s_4 s_2 s_1^{-1}  s_2 s_3 s_4^{-1} s_3 s_2 s_1^{-1}  s_2 s_4 s_3 s_4 s_3^{-1} + A_5^{(2)} \\
&\subset & s_3^{-1} s_4 s_3  s_2 s_1^{-1}  s_2 s_4 s_3 (s_4^{-1} s_3 s_4)s_2 s_1^{-1}  s_2  s_3 s_4 s_3^{-1} + A_5^{(2)} \\
&\subset & s_3^{-1} s_4 s_3  s_2 s_1^{-1}  s_2 s_4 (s_3^2) s_4 s_3^{-1})s_2 s_1^{-1}  s_2  s_3 s_4 s_3^{-1} + A_5^{(2)} \\
&\subset & Rs_3^{-1} s_4 s_3  s_2 s_1^{-1}  s_2 s_4^2 s_3^{-1}s_2 s_1^{-1}  s_2  s_3 s_4 s_3^{-1}  \\
& & +Rs_3^{-1} s_4 s_3  s_2 s_1^{-1}  s_2 (s_4 s_3 s_4) s_3^{-1}s_2 s_1^{-1}  s_2  s_3 s_4 s_3^{-1}  \\
& & +R^{\times} s_3^{-1} s_4 s_3  s_2 s_1^{-1}  s_2 s_4 s_3^{-1} s_4 s_3^{-1}s_2 s_1^{-1}  s_2  s_3 s_4 s_3^{-1} + A_5^{(2)} \\
&\subset &    Rs_3^{-1} s_4 s_3  s_2 s_1^{-1}  s_2 (s_3 s_4 s_3) s_3^{-1}s_2 s_1^{-1}  s_2  s_3 s_4 s_3^{-1}  \\
& & +R^{\times} s_3^{-1} s_4 s_3  s_2 s_1^{-1}  s_2 s_4 s_3^{-1} s_4 s_3^{-1}s_2 s_1^{-1}  s_2  s_3 s_4 s_3^{-1} + A_5^{(2)} \\
&\subset &    Rs_3^{-1} s_4 s_3  s_2 s_1^{-1}  s_2 s_3 s_4 s_2 s_1^{-1}  s_2  s_3 s_4 s_3^{-1}  \\
& & +R^{\times} s_3^{-1} s_4 s_3  s_2 s_1^{-1}  s_2 s_4 s_3^{-1} s_4 s_3^{-1}s_2 s_1^{-1}  s_2  s_3 s_4 s_3^{-1} + A_5^{(2)} \\
&\subset & R^{\times} s_3^{-1} s_4 s_3  s_2 s_1^{-1}  s_2 s_4 s_3^{-1} s_4 s_3^{-1}s_2 s_1^{-1}  s_2  s_3 s_4 s_3^{-1} + A_5^{(2)} \\
&\subset & R^{\times} s_3^{-1} (s_4 s_3  s_4)s_2 s_1^{-1}  s_2  s_3^{-1} s_4 s_3^{-1}s_2 s_1^{-1}  s_2  s_3 s_4 s_3^{-1} + A_5^{(2)} \\
&\subset & R^{\times} s_3^{-1} s_3 s_4  s_3s_2 s_1^{-1}  s_2  s_3^{-1} s_4 s_3^{-1}s_2 s_1^{-1}  s_2  s_3 s_4 s_3^{-1} + A_5^{(2)} \\
&\subset & R^{\times}  s_4  (s_3s_2 s_1^{-1}  s_2  s_3^{-1}) s_4 (s_3^{-1}s_2 s_1^{-1}  s_2  s_3) s_4 s_3^{-1} + A_5^{(2)} \\
&\subset & R^{\times}  s_4 s_2^{-1} s_1^{-1} (s_3 s_2^{-1} s_3) s_1 s_2  s_4 s_2 s_1 (s_3 s_2^{-1} s_3) s_1^{-1} s_2^{-1}  s_4 s_3^{-1} + A_5^{(2)} \\
&\subset & A_3^{\times}  s_4  (s_3 s_2^{-1} s_3) s_1 s_2 s_2 s_1 s_4  (s_3 s_2^{-1} s_3)   s_4 s_1^{-1} s_2^{-1} s_3^{-1} + A_5^{(2)} \\
\end{array}
$$
and then $s_1 s_2 s_2 s_1 =  s_1 s_2^2 s_1 \in R^{\times} s_1 s_2^{-1} s_1 + R s_1 s_2 s_1 + R s_1^2$. Since
$s_4  (s_3 s_2^{-1} s_3) s_1 s_2  s_1 s_4  (s_3 s_2^{-1} s_3)   s_4 =
s_4  (s_3 s_2^{-1} s_3) s_2 s_1  s_2 s_4  (s_3 s_2^{-1} s_3)   s_4 \subset s_4 (u_3 u_2 u_3 u_2) s_1 s_2 s_4  u_3 u_2 u_3 s_4=
s_4 u_2 u_3 u_2 u_3 s_1 s_2 s_4  u_3 u_2 u_3 s_4
= u_2s_4  u_3 u_2 u_3 s_1 s_2 s_4  u_3 u_2 u_3 s_4 \subset A_5^{(2)}$ by proposition \ref{propmoinsde55} and similarly
$s_4  (s_3 s_2^{-1} s_3) s_1 s_1 s_4  (s_3 s_2^{-1} s_3)   s_4 \in  s_4  (s_3 s_2^{-1} s_3) u_1 s_4  (s_3 s_2^{-1} s_3)   s_4 \subset A_5^{(2)}$, this proves
$$s_4 w^+ s_4^{-1} w^+ s_4 \in A_3^{\times} s_4  (s_3 s_2^{-1} s_3) (s_1 s_2^{-1} s_1) s_4  (s_3 s_2^{-1} s_3)   s_4 A_4^{\times} + A_5^{(2)}.$$
We now prove (4).
$$
\begin{array}{lcl}
s_4 w^- s_4 w^+ s_4^{-1} 
&=& s_4 s_3^{-1} (s_2 s_1^{-1} s_2) s_3^{-1} s_4 s_3 (s_2^{-1} s_1 s_2^{-1} s_3 s_4^{-1} \\
&=& s_3^{-1} (s_3 s_4 s_3^{-1}) (s_2 s_1^{-1} s_2) s_3^{-1} s_4 s_3 (s_2^{-1} s_1 s_2^{-1} (s_3 s_4^{-1} s_3^{-1} )s_3 \\
&=& s_3^{-1} (s_4^{-1} s_3 s_4) (s_2 s_1^{-1} s_2) s_3^{-1} s_4 s_3 (s_2^{-1} s_1 s_2^{-1} (s_4^{-1} s_3^{-1} s_4 )s_3 \\
&=& s_3^{-1} s_4^{-1} s_3  (s_2 s_1^{-1} s_2) s_4 s_3^{-1} s_4 s_3s_4^{-1} (s_2^{-1} s_1 s_2^{-1}  s_3^{-1} s_4 s_3 \\
\end{array}
$$
We now prove (4).
$$
\begin{array}{lcl}
s_4 w^- s_4 w^+ s_4^{-1} 
&=& s_4 s_3^{-1} (s_2 s_1^{-1} s_2) s_3^{-1} s_4 s_3 (s_2^{-1} s_1 s_2^{-1})s_3 s_4^{-1} \\
&=& s_4 s_3^{-1} (s_2 s_1^{-1} s_2) s_3^{-1} s_4 s_3 (s_2^{-1} s_1 s_2^{-1})s_3 s_4^{-1} \\
&=& s_3^{-1} (s_3 s_4 s_3^{-1}) (s_2 s_1^{-1} s_2) s_3^{-1} s_4 s_3 (s_2^{-1} s_1 s_2^{-1})(s_3 s_4^{-1} s_3^{-1}) s_3 \\
&=& s_3^{-1} s_4^{-1} s_3 s_4 (s_2 s_1^{-1} s_2) s_3^{-1} s_4 s_3 (s_2^{-1} s_1 s_2^{-1})s_4^{-1} s_3^{-1} s_4 s_3 \\
&=& s_3^{-1} s_4^{-1} s_3  (s_2 s_1^{-1} s_2) s_4 s_3^{-1} s_4 s_3 s_4^{-1}(s_2^{-1} s_1 s_2^{-1}) s_3^{-1} s_4 s_3 \\
\end{array}
$$
We have $s_4 s_3^{-1} (s_4 s_3 s_4^{-1})= 
s_4 s_3^{-1} s_3^{-1} s_4 s_3)
=
s_4 s_3^{-2}  s_4 s_3 \in R^{\times} s_4 s_3 s_4 s_3 + u_4 s_3 + R s_4 s_3^{-1} s_4 s_3
= R^{\times} (s_4 s_3 s_4) s_3 + u_4 s_3 + R s_4 s_3^{-1} s_4 s_3
= R^{\times} s_3 s_4 s_3^2 + u_4 s_3 + R s_4 s_3^{-1} s_4 s_3
\subset R^{\times} s_3 s_4 s_3^{-1} + R s_3 s_4 s_3 + R s_3 s_4 + u_4 s_3 + R s_4 s_3^{-1} s_4 s_3 $.
We have 
$$
s_4^{-1} s_3  (s_2 s_1^{-1} s_2) (u_4 s_3)(s_2^{-1} s_1 s_2^{-1}) s_3^{-1} s_4 \subset A_5^{(2)}
$$
by lemma \ref{lemreduc2} (2) ;
$$
s_4^{-1} s_3  (s_2 s_1^{-1} s_2) (s_3 s_4)(s_2^{-1} s_1 s_2^{-1}) s_3^{-1} s_4 \subset A_5^{(2)}
$$
by lemma \ref{lemreduc2} (1) ;
$$
s_4^{-1} s_3  (s_2 s_1^{-1} s_2) (s_3 s_4 s_3)(s_2^{-1} s_1 s_2^{-1}) s_3^{-1} s_4 \subset A_5^{(2)}
$$
by lemma \ref{lemreducspec} (4) ; 
$$
\begin{array}{lcl}
s_4^{-1} s_3  (s_2 s_1^{-1} s_2) (s_4 s_3^{-1} s_4 s_3)(s_2^{-1} s_1 s_2^{-1}) s_3^{-1} s_4 &=& 
s_4^{-1} s_3  (s_2 s_1^{-1} s_2) s_4 (s_3^{-1} s_4 s_3)(s_2^{-1} s_1 s_2^{-1}) s_3^{-1} s_4 \\
&=& s_4^{-1} s_3  (s_2 s_1^{-1} s_2) s_4 (s_4 s_3 s_4^{-1})(s_2^{-1} s_1 s_2^{-1}) s_3^{-1} s_4 \\
&=& s_4^{-1} s_3  (s_2 s_1^{-1} s_2) s_4^2 s_3 s_4^{-1}(s_2^{-1} s_1 s_2^{-1}) s_3^{-1} s_4 \\
&=& s_4^{-1} s_3  s_4^2 (s_2 s_1^{-1} s_2)  s_3 (s_2^{-1} s_1 s_2^{-1})(s_4^{-1}  s_3^{-1} s_4) \\
&=& s_4^{-1} s_3  s_4^2 (s_2 s_1^{-1} s_2)  s_3 (s_2^{-1} s_1 s_2^{-1})s_3  s_4^{-1} s_3^{-1} \\
& \in & A_5^{(2)} \\
\end{array}
$$
by lemma \ref{lemreduc2}. It follows that 
$s_4 w^- s_4 w^+ s_4^{-1}  \in u_3^{\times} s_4^{-1} s_3  (s_2 s_1^{-1} s_2)  s_3 s_4 s_3^{-1} (s_2^{-1} s_1 s_2^{-1}) s_3^{-1} s_4 u_3^{\times} + A_5^{(2)}
$
hence $s_4 w^- s_4 w^+ s_4^{-1} \in u_3^{\times} s_4^{-1} w^+ s_4 w^- s_4 u_3^{\times} + A_5^{(2)}$ 

The proof of (5) is similar : one first gets
$$s_4 w^- s_4^{-1} w^+ s_4^- = s_3^{-1} s_4^{-1} s_3 (s_2 s_1^{-1} s_2) s_4 s_3^{-1} s_4^{-1} s_3 s_4^{-1} (s_2^{-1} s_1 s_2^{-1}) s_3^{-1} s_4 s_3$$
and then writes down  $(s_4 s_3^{-1} s_4^{-1}) s_3 s_4^{-1} = s_3^{-1} s_4^{-1} s_3^2 s_4^{-1} \in R^{\times} s_3^{-1} (s_4^{-1} s_3^{-1} s_4^{-1}) +
 R (s_3^{-1} s_4^{-1} s_3) s_4^{-1} + R s_3^{-1} s_4^{-2} =
 R^{\times} s_3^{-2}  s_4^{-1} s_3^{-1} +
 R s_4 s_3^{-1} s_4^{-2} + R s_3^{-1} s_4^{-2} 
\subset R^{\times} s_3  s_4^{-1} s_3^{-1} +R s_3^{-1}  s_4^{-1} s_3^{-1}  + R  s_4^{-1} s_3^{-1}
+  R s_4 s_3^{-1} u_4 + R s_3^{-1} s_4^{-2} $ ; one then shows using the same arguments as before that all terms but $R^{\times} s_3  s_4^{-1} s_3^{-1}$ provide
an element of $A_5^{(2)}$, thus 
$$
\begin{array}{lcl}
s_4 w^- s_4^{-1} w^+ s_4^- &\in& u_3^{\times}  s_4^{-1} s_3 (s_2 s_1^{-1} s_2)s_3  s_4^{-1} s_3^{-1}  (s_2^{-1} s_1 s_2^{-1}) s_3^{-1} s_4 u_3^{\times} + A_5^{(2)} \\
& \in & u_3^{\times} s_4^{-1} w^+ s_4^{-1} w^-  s_4 u_3^{\times} + A_5^{(2)} \\
\end{array}
$$

We prove (6).
$$
\begin{array}{lclr}
s_4 s_3 A_3 s_3^{-1} s_4 s_3^{-1} A_3  s_3 s_4
& \subset &s_4 s_3 A_3 s_3^{-1} s_4 s_3^{-1} A_3  (s_3 s_4 s_3 )s_3^{-1}\\
& \subset &s_4 s_3 A_3 s_3^{-1} s_4 s_3^{-1} A_3  s_4 s_3 s_4 s_3^{-1}\\
& \subset &s_4 s_3 A_3 (s_3^{-1} s_4 s_3^{-1} s_4) A_3   s_3 s_4 s_3^{-1}\\
& \subset &s_4 s_3 A_3 s_4^{-1} s_3 s_4^{-1} s_3 A_3   s_3 s_4 s_3^{-1}+A_5^{(2)}& \mbox{(lemmas \ref{lemdecomp1212} and \ref{lemreduc2} (1))}\\ 
& \subset &(s_4 s_3 s_4^{-1}) A_3  s_3 s_4^{-1} s_3 A_3   s_3 s_4 s_3^{-1}+A_5^{(2)}\\ 
& \subset & s_3^{-1} s_4 s_3 A_3  s_3 s_4^{-1} s_3 A_3   s_3 s_4 s_3^{-1}+A_5^{(2)}\\ 
& \subset & u_3 s_4 w^+ s_4^{-1} w^+ s_4 u_3+A_5^{(2)} & \mbox{(lemma \ref{lemsimplif212})}\\ 
\end{array}
$$ 
\end{proof}

\begin{proposition} \label{propdecomp1bimoduuu}
$$ 
\begin{array}{lcl}
u_4 u_3 u_2 u_1 u_2 u_3 u_4u_3 u_2 u_1 u_2 u_3 u_4 &\subset&   A_4 s_4 w^- s_4 w^- s_4 A_4
+
A_4 s_4 w^+ s_4^{-1} w^+ s_4 A_4  
+
A_4 s_4^{-1} w^- s_4 w^- s_4^{-1} A_4 \\
& & +
A_4 s_4 w^- s_4 w^+ s_4^{-1} A_4 
+
A_4 s_4 w^- s_4^{-1} w^+ s_4^{-1} A_4 +A_5^{(2)}\\
\end{array}$$
\end{proposition}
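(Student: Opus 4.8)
The plan is to take an arbitrary spanning monomial $X = s_4^i z_1 s_4^j z_2 s_4^k$ of the left-hand side, with $z_1,z_2 \in u_3 u_2 u_1 u_2 u_3$ and $i,j,k \in \{0,\pm 1\}$, and to show it lies in the asserted span $V$ (the five listed bimodules together with $A_5^{(2)}$). First I would dispose of degenerate exponents: if $j=0$ then $z_1 z_2 \in A_4$ and $X \in A_4 u_4 A_4 \subset A_5^{(2)}$, while if $i=0$ or $k=0$ then $X \in A_4 u_4 A_4 u_4 A_4 = A_5^{(2)}$; similarly, if one of the outer $s_3$-powers inside some $z_m$ is trivial, that block has at most four factors and $X$ falls under Proposition \ref{propmoinsde55} (whose only excluded profile is $(5,5)$, which forces all seven exponents to be nonzero). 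So assume $i,j,k \in \{\pm 1\}$ and, by Lemma \ref{lemsimplif212}, write (modulo $A_5^{(2)}$ and modulo left/right multiplication by $u_1 \subset A_4$) $X = s_4^i s_3^{\beta} c_1 s_3^{\gamma} s_4^j s_3^{\eps} c_2 s_3^{\zeta} s_4^k$ with all $s_3$-exponents in $\{\pm 1\}$ and each $c_m$ freely chosen to be $s_2 s_1^{-1} s_2$ or $s_2^{-1} s_1 s_2^{-1}$.

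Next I separate the two blocks $s_3^{\beta} c_m s_3^{\gamma}$ according to whether $\beta=\gamma$ (a straight block, which becomes $w^{+}$ or $w^{-}$ after the right choice of $c_m$) or $\beta=-\gamma$ (a conjugate block). Lemma \ref{lemreducspec} shows that as soon as one block is conjugate we get $X \in A_5^{(2)}$, \emph{except} when $i=j=k$ and both blocks are conjugate with opposite conjugating signs. This residual case I would treat directly: reducing to $i=j=k=1$ by $\Phi$ and using the braid identity $s_3^{-\alpha}(s_2 s_1^{-1} s_2) s_3^{\alpha} = s_2^{\alpha} s_1^{\alpha}(s_3 s_2^{-1} s_3) s_1^{-\alpha} s_2^{-\alpha}$ from the proof of Lemma \ref{lemreducspec} together with the commutation of $s_4$ with $A_3$, the two conjugate blocks collapse and leave the central factor $s_1 s_2^{2} s_1$. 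Expanding $s_2^{2} \in R\,1 + R s_2 + R s_2^{-1}$ by the cubic relation, the two lower terms reduce through Proposition \ref{propmoinsde55}, while the surviving $s_1 s_2^{-1} s_1$ term produces exactly $s_4 (s_3 s_2^{-1} s_3)(s_1 s_2^{-1} s_1) s_4 (s_3 s_2^{-1} s_3) s_4$, which by Lemma \ref{lemtransf1}(3) equals $s_4 w^{+} s_4^{-1} w^{+} s_4$ modulo $A_5^{(2)}$, i.e. lies in the second listed term of $V$.

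There remains the principal case where both blocks are straight, so $X = s_4^i w^{\sigma} s_4^j w^{\tau} s_4^k$ with all five labels ranging over signs. Here I would first record that $V$ is stable under $\Phi$ and $\Psi$: indeed $\Phi(w^{\pm}) = w^{\mp}$, $\Phi(s_4)=s_4^{-1}$, $\Psi(w^{\pm}) = w^{\mp}$, $\Psi(s_4)=s_4^{-1}$, and Lemmas \ref{lemsymswsws} and \ref{lemtransf1}(4),(5) identify the images of the five generators under these maps with elements of $V$. Using $\Phi$ and $\Psi$ to cut the $2^5$ sign patterns down to a handful of representatives, each representative is then dispatched: those matching the listed generators are done by definition; the crossing-cancelling patterns are sent into $A_5^{(2)}$ by Lemma \ref{lemreduc2}(7),(8) (after transporting by $\Phi$/$\Psi$ when needed), using along the way the single-$s_3$ reductions of Lemma \ref{lemreduc2}(1),(2) and the symmetric identities of Corollary \ref{correducspec}; and the remaining balanced patterns are normalized onto one of the five generators by Lemma \ref{lemtransf1}(2),(3),(4),(5).

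The main obstacle I expect is the bookkeeping of this final sign analysis: one must check, for every admissible $(i,\sigma,j,\tau,k)$, that the relevant chain of the braid relation $s_4 s_3 s_4 = s_3 s_4 s_3$, the crossing relations $s_4^{\pm} s_3^{\eps} s_4^{\mp} \in u_3 u_4 u_3$, the cubic relation on $s_3$, Lemma \ref{lemreducspec} and Corollary \ref{correducspec} terminates either at one of the five generators or inside $A_5^{(2)}$, and in particular that no pattern escapes to a genuinely new sixth type. The subtle point is that the reductions are not sign-local: whether a middle interface $s_3^{\pm} s_4^{j} s_3^{\pm}$ can be straightened depends on the signs on both sides, so the symmetries $\Phi,\Psi$ must be applied precisely to prevent an unbounded cascade, which is exactly the role for which Lemmas \ref{lemsymswsws} and \ref{lemtransf1} have been engineered.
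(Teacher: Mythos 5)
Your skeleton is essentially the paper's own: degenerate exponents via Proposition \ref{propmoinsde55}, normalization of the blocks by Lemma \ref{lemsimplif212}, the straight/conjugate dichotomy from Lemma \ref{lemreducspec}, $\Phi,\Psi$-stability of the right-hand side via Lemmas \ref{lemsymswsws} and \ref{lemtransf1}(4),(5), and a final sign analysis. The genuine gap is in your treatment of the residual case where both blocks are conjugate with opposite signs and $i=j=k$. After you normalize $i=j=k=1$ by $\Phi$, the conjugating sign $\alpha$ is still a free parameter, and it cannot be normalized away: both $\Phi$ and $\Psi$ flip $\alpha$ together with the $s_4$-exponents, so the two patterns $s_4 s_3 c_1 s_3^{-1} s_4 s_3^{-1} c_2 s_3 s_4$ ($\alpha=1$) and $s_4 s_3^{-1} c_1 s_3 s_4 s_3 c_2 s_3^{-1} s_4$ ($\alpha=-1$) lie in different orbits. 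Your collapse identity produces the central factor $s_1^{\alpha} s_2^{2\alpha} s_1^{\alpha}$, not always $s_1 s_2^{2} s_1$. For $\alpha=1$ the cubic expansion indeed leaves $s_1 s_2^{-1} s_1$ and Lemma \ref{lemtransf1}(3) applies, as you say. But for $\alpha=-1$ the surviving term is $s_1^{-1} s_2 s_1^{-1}$, which matches neither Lemma \ref{lemtransf1}(3) nor its $\Phi$-image (whose $s_4$-exponents are all $-1$); and rewriting $s_1^{-1} s_2 s_1^{-1} \in c^{-1}(s_1 s_2^{-1} s_1)s_2 + u_2 u_1 u_2$ via Lemma \ref{leminverse} injects an extra $s_2$ that merges into the second block, turning $s_3 s_2^{-1} s_3$ into a four-factor block $s_2 s_3 s_2^{-1} s_3$ and restarting the whole reduction problem.

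This is not a cosmetic omission: the $\alpha=-1$ case is exactly why the fourth and fifth bimodules appear in the statement. The paper disposes of it by a different manipulation, Lemma \ref{lemtransf1}(1) (braid moves on the letters $s_3,s_4$, giving $s_4 s_3^{-1} A_3 s_3 s_4 s_3 A_3 s_3^{-1} s_4 \subset u_3 s_4^{-1} w^+ s_4 w^- s_4 + A_5^{(2)}$) followed by Lemma \ref{lemtransf1}(4), landing in $A_4 s_4 w^- s_4 w^+ s_4^{-1} A_4$ rather than in $A_4 s_4 w^+ s_4^{-1} w^+ s_4 A_4$. Your version, if it worked as written, would prove the proposition without its last two terms; that stronger statement is in fact true, but only via Lemma \ref{lem2lignesout}, which absorbs $s_4 w^- s_4 w^+ s_4^{-1}$ into $A_4^{\times} s_4 w^+ s_4^{-1} w^+ s_4 + A_5^{(2)}$ using the central element $w_0$ and Lemma \ref{lemw0carre} --- machinery established only after this proposition and not replaced by anything you cite. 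A secondary point: your straight-straight sign analysis, which you leave as bookkeeping, cites Lemma \ref{lemreduc2}(1),(2),(7),(8), but the mixed-sign interfaces actually require items (3), (4), (5) and (6) of that lemma, as in the paper's proof.
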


\begin{proof}

We first note that, by lemma \ref{lemsymswsws} and lemma \ref{lemtransf1} (4) and (5),
the right-hand side (RHS) of the statement is invariant under $\Phi$ and $\Psi$.
We now consider an expression of the form $s_4^{?} s_3^{\alpha} u_2 u_1 u_2 s_3^{\beta} s_4^{?} s_3^{\gamma}
u_2 u_1 u_2 s_3^{\delta} s_4^{?}$ with $\alpha,\beta,\gamma \in \{ -1 1 \}$. By lemma \ref{lemreducspec}
we can assume $\alpha = \beta$ and$\gamma = \delta$, except for the
expression $s_4^{\eps} s_3^{\alpha}u_2 u_1 u_2 s_3^{-\alpha} s_4^{\eps} s_3^{-\alpha}
u_2 u_1 u_2 s_3^{\alpha} s_4^{\eps}$.
Up to applying $\Phi$,
we can moreover assume $\eps = 1$, and we get the conclusion
by lemma \ref{lemtransf1} (6) for $\alpha = 1$, by lemma \ref{lemtransf1} (1) and (4) for $\alpha = -1$.

We can now assume $\alpha = \beta$, $\gamma = \delta$, and still $\eps = 1$. By lemma \ref{lemsimplif212}
this reduces our examination to expressions $x = s_3 w^{\alpha} s_4^{\eps} w^{\beta} s_4^{\eta}$
for new parameters $\alpha,\eps,\beta,\eta \in \{ -1,1 \}$. If $\alpha = \beta = \eps$,
we have $x \in A_5^{(2)}$ by lemma \ref{lemreduc2} (6) ; if $\alpha= \beta = -\eps$,
we get $x \in A_5^{(2)}$ if in addition $\eta = -1$, by lemma \ref{lemreduc2} (7) and (8),
and $x = s_4 w^{\alpha} s_4^{-\alpha} w^{\alpha} s_4 \in RHS$ otherwise.
As a consequence, we can reduce to the case $\alpha = -\beta$,
that is $x = s_4 w^{\alpha} s_4^{\eps} w^{-\alpha} s_4^{\eta}$. If $\alpha = 1$, $x \in A_5^{(2)}$ by lemma
\ref{lemreduc2} (4). If $\alpha = -1$, all the possibilities for $x$ clearly lie in the RHS, except for
$s_4 w^- s_4^{-1} w^+ s_4$, which belongs to $A_5^{(2)}$ by lemma \ref{lemreduc2} (3).
This concludes the proof.
\end{proof}

\subsection{Image of the center of the braid group in $A_5^{(3)}/ A_5^{(2)}$}

Recall that the center of the braid group $B_n$ is infinite cyclic, generated for $n \geq 3$  by
$c_n = (s_1 \dots s_{n-1})^n$, and that this generator can be written as
$c_n = c_{n-1} y_n = y_n c_{n-1} = y_n y_{n-1} \dots y_3 y_2$
where the $y_n \in B_n \setminus B_{n-1}$ under the usual inclusions $B_2 \subset B_3 \subset \dots \subset B_{n-1}$
form another family of commuting elements defined by $y_2 = s_1^2$ and $y_{n+1} = s_n y_n s_n = s_n s_{n-1} \dots s_2 s_1^2 s_2 \dots s_{n-1} s_n$.

We let $c = c_5 = (s_1 s_2 s_3 s_4)^5 = (s_4 s_3 s_2 s_1)^5$. The center of $G_{32}$ is cyclic
of order $6$ and is generated by the image of $c$. 
We let $w_0 = y_4 = s_3 s_2 s_1^2 s_2 s_3 = c_4 c_3^{-1}$, which by definition commutes with $B_3$,
and $\delta = y_5 = s_4 s_3 s_2 s_1^2 s_2 s_3 s_4=c_5 c_4^{-1}$ which commutes with $B_4$.

We first need a preparatory lemma.

\begin{lemma} {\ } \label{lemsimplifw0}
\begin{enumerate}
\item In $A_4$, $s_4^{\alpha} w_0^2 s_4^{\beta} w_0 s_4^{\gamma} \in A_3^{\times}s_4^{\alpha} w_0^{-1} s_4^{\beta} w_0 s_4^{\gamma} 
+ A_3 s_4^{\alpha} w_0 s_4^{\beta} w_0 s_4^{\gamma} + A_5^{(2)}$  
\item For all $\alpha,\beta,\gamma,\delta,\eps \in \{ -1 ,1 \} = \{ -,+ \}$,
$$
\begin{array}{lcl}
s_4^{\alpha} w^{\beta} s_4^{\gamma} w^{\delta} s_4^{\eps}
&\in& 
 s_4^{\alpha} w^{\beta} s_4^{\gamma} w_0^{\delta}  s_4^{\eps}A_3^{\times} +
 s_4^{\alpha} w^{\beta} s_4^{\gamma}u_1 u_3 u_2 u_3  s_4^{\eps}A_3 \\
&\subset& 
 s_4^{\alpha} w^{\beta} s_4^{\gamma} w_0^{\delta}  s_4^{\eps}A_3^{\times}  + A_5^{(2)} \\
s_4^{\alpha} w^{\beta} s_4^{\gamma} w^{\delta} s_4^{\eps}
&\in&   A_3^{\times}s_4^{\alpha} w_0^{\beta} s_4^{\gamma} w^{\delta} s_4^{\eps} +  A_3 s_4^{\alpha} u_3 u_2 u_3 u_1  s_4^{\gamma} w^{\delta} s_4^{\eps}  \\
&\subset &   A_3^{\times}s_4^{\alpha} w_0^{\beta} s_4^{\gamma} w^{\delta} s_4^{\eps} + A_5^{(2)} \\
s_4^{\alpha} w^{\beta} s_4^{\gamma} w^{\delta} s_4^{\eps}
&\in&  A_3^{\times} s_4^{\alpha} w_0^{\beta} s_4^{\gamma} w_0^{\delta} s_4^{\eps} + A_3 s_4^{\alpha} u_3 u_2 u_3  u_1 s_4^{\gamma} w^{\delta} s_4^{\eps} +
A_3 s_4^{\alpha} w_0^{\alpha} s_4^{\gamma} u_3 u_2 u_3 u_1 s_4^{\eps}  \\
&\subset&  A_3^{\times} s_4^{\alpha} w_0^{\beta} s_4^{\gamma} w_0^{\delta} s_4^{\eps} + A_5^{(2)} \\
\end{array}
$$
\end{enumerate}
\end{lemma}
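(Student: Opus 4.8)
The plan is to handle both parts by a single mechanism: trade each occurrence of $w^{\pm}$ for the corresponding power of $w_0$ — which, unlike $w^{\pm}$, already commutes with all of $A_3$ in $B_4$ — and then absorb every remainder into $A_5^{(2)}$ by a factor count fed into Proposition \ref{propmoinsde55}. The two input facts I would isolate first are: (i) Lemma \ref{lemw0carre} gives $w_0^2 \in A_3^{\times} w_0^{-1} + A_3 w^+ + U_0$; and (ii) a sharpened form of Lemma \ref{lemauxA4w0}, namely $w^{\delta} \in w_0^{\delta} A_3^{\times} + u_1 u_3 u_2 u_3 A_3$ and $w^{\beta} \in A_3^{\times} w_0^{\beta} + A_3\, u_3 u_2 u_3 u_1$ for $\beta,\delta \in \{+,-\}$. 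To get (ii) I would reread the proof of Lemma \ref{lemauxA4w0}: its $U_0$-remainder is spanned by $s_3 u_1 u_2 u_1 s_3$, $s_3 s_2^2 s_3$ and $s_1 s_3 s_2 s_3 s_1$, and since $s_1$ commutes with $s_3$ each of these lies in $u_1 u_3 u_2 u_3 A_3$; the right-unit version follows because $w_0$ commutes with $A_3$, the left-hand version by applying $\Psi$, and the $w^-$ statements by applying $\Phi$ (recall $\Phi(w^+)=w^-$, $\Phi(w_0)=w_0^{-1}$).

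For statement (1) I would substitute (i) into $s_4^{\alpha} w_0^2 s_4^{\beta} w_0 s_4^{\gamma}$, moving $A_3$-coefficients freely across $s_4$ (legitimate since $s_4$ commutes with $s_1,s_2$). This yields the first target summand $A_3^{\times} s_4^{\alpha} w_0^{-1} s_4^{\beta} w_0 s_4^{\gamma}$, a term $A_3 s_4^{\alpha} w^+ s_4^{\beta} w_0 s_4^{\gamma}$, and a term $s_4^{\alpha} U_0 s_4^{\beta} w_0 s_4^{\gamma}$. Feeding $w^+ \in A_3^{\times} w_0 + U_0$ into the middle term produces the second target summand $A_3 s_4^{\alpha} w_0 s_4^{\beta} w_0 s_4^{\gamma}$ plus a further $U_0$-term. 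It then remains to show $A_3 s_4^{\alpha} U_0 s_4^{\beta} w_0 s_4^{\gamma} \subset A_5^{(2)}$: writing $U_0 = A_3 u_3 A_3 + A_3 u_3 u_2 u_3 A_3$ and pushing the outer $A_3$'s past the $s_4$'s and past $w_0$ (again using $[s_4,A_3]=0$ and $[w_0,A_3]=0$), I reduce to $A_3 s_4^{\alpha}(u_3 \text{ or } u_3 u_2 u_3) s_4^{\beta} w_0 s_4^{\gamma} A_3$. Since $w_0 = s_3 s_2 s_1^2 s_2 s_3 \in u_3 u_2 u_1 u_2 u_3$ has five factors and $u_3 u_2 u_3$ has three, these are the cases $(p,q)=(1,5)$ and $(3,5)$ of Proposition \ref{propmoinsde55}, hence lie in $A_5^{(2)}$.

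For statement (2) I would argue inclusion by inclusion. In the first inclusion I substitute $w^{\delta} \in w_0^{\delta} A_3^{\times} + u_1 u_3 u_2 u_3 A_3$ into the trailing factor, obtaining the main term $s_4^{\alpha} w^{\beta} s_4^{\gamma} w_0^{\delta} s_4^{\eps} A_3^{\times}$ and the remainder $s_4^{\alpha} w^{\beta} s_4^{\gamma} u_1 u_3 u_2 u_3 s_4^{\eps} A_3$, exactly the intermediate term in the statement. To place the remainder in $A_5^{(2)}$ I would commute the leading $u_1$ of the block $u_1 u_3 u_2 u_3$ leftward across $s_4^{\gamma}$ (it commutes with $s_4$) and into $w^{\beta}$, then use the relation $u_2 u_1 u_2 u_1 = u_1 u_2 u_1 u_2$ (established after Theorem \ref{theodecA3}) to rewrite $w^{\beta} u_1$ so that one $u_1$ escapes to the far left while the first inter-$s_4$ block collapses back to the five-factor word $u_3 u_2 u_1 u_2 u_3$. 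This converts the delicate $(5,4)$ configuration into $(5,3)$, covered by Proposition \ref{propmoinsde55}. The second inclusion is the image of the first under $\Psi$, and the third follows by applying the first inclusion (to the trailing $w^{\delta}$) after the second (to the leading $w^{\beta}$); the first reduction runs verbatim with $w_0^{\beta}$ in front, since $w_0 \in u_3 u_2 u_1 u_2 u_3$ as well and commutes with $u_1$.

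The main obstacle I anticipate is exactly the exceptional $(5,4)$/$(4,5)$ configuration of Proposition \ref{propmoinsde55}: the naive count of the remainder in the first inclusion of (2) lands precisely on a $(5,4)$ word, so the whole argument rests on having the spare $u_1$ to commute out and on the identity $u_2 u_1 u_2 u_1 = u_1 u_2 u_1 u_2$ to keep the surviving block at five factors, thereby steering clear of the single forbidden word $s_4 u_3 u_2 u_1 u_3 u_2 s_4 u_1 u_3 u_2 u_3 s_4$. Checking that every sign pattern $(\alpha,\gamma,\eps)$ is either non-exceptional outright or reduces in this way is the one delicate point; everything else is routine bookkeeping with the commutations $[s_4,A_3]=0$ and $[w_0,A_3]=0$ and the relations of Theorem \ref{theodecA3}.
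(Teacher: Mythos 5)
Your proof is correct and follows essentially the same route as the paper's (very terse) one: part (1) via Lemma \ref{lemw0carre} together with the absorption $s_4^{\alpha}U_0 s_4^{\beta}w_0 s_4^{\gamma}\subset A_5^{(2)}$ from Proposition \ref{propmoinsde55}, and part (2) by trading $w^{\pm}$ for $w_0^{\pm}$ through (the proof of) Lemma \ref{lemauxA4w0} and absorbing the explicit $u_1u_3u_2u_3$-remainders, again by Proposition \ref{propmoinsde55}. Your explicit handling of the $(5,4)$ configuration — commuting the spare $u_1$ across $s_4$ and using $u_2u_1u_2u_1=u_1u_2u_1u_2$ to fall back to a $(5,3)$ case — is precisely the content hidden behind the paper's phrase ``easy variation in the proof of lemma \ref{lemsimplif212}'', so no gap remains.
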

\begin{proof} (1) is a straightforward consequence of lemma \ref{lemw0carre} and of the fact that 
$s_4^{\alpha} U_0 s_4^{\beta} w_0 s_4^{\gamma} \subset
s_4^{\alpha} A_3 u_3 A_3 s_4^{\beta} w_0 s_4^{\gamma}
+ s_4^{\alpha} A_3 u_3 u_2 u_3 s_4^{\beta} w_0 s_4^{\gamma}
= A_3 s_4^{\alpha}  u_3  s_4^{\beta} w_0 s_4^{\gamma}A_3
+ A_3 s_4^{\alpha}  u_3 u_2 u_3 s_4^{\beta} w_0 s_4^{\gamma} A_3
\subset A_5^{(2)}$ by proposition \ref{propmoinsde55}.
(2) follows from
an easy variation in the proof of lemma \ref{lemsimplif212} and from lemma \ref{lemauxA4w0}.

\end{proof}

We are then in position to prove the following.
\begin{lemma} {\ } \label{lem2lignesout}
\begin{enumerate}
\item $s_4 w^- s_4 w^+ s_4^{-1} \in A_4^{\times} s_4 w^+ s_4^{-1} w^+ s_4 + A_5^{(2)}$ 
\item $s_4 w^- s_4^{-1} w^+ s_4^{-1} \in s_4^{-1} w^- s_4 w^- s_4^{-1} A_4^{\times} + A_5^{(2)}$ 
\end{enumerate}
\end{lemma}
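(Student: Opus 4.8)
The plan is to route everything through the central element $\delta = y_5 = s_4 w_0 s_4$, which commutes with all of $A_4$, and to deduce both identities from the centrality of $\delta$ together with the collapse $w_0^2 \in A_3^\times w_0^{-1} + U^+$ of lemma \ref{lemw0carre}. First I would observe that (2) is the image of (1) under the skew-automorphism $\Psi$. Indeed $\Psi(s_i) = s_i^{-1}$ and $\Psi$ reverses products, so $\Psi(w^+) = w^-$ and $\Psi(w^-) = w^+$, $\Psi(A_4^\times) = A_4^\times$, and $\Psi(A_5^{(2)}) = \Psi(A_4 u_4 A_4 u_4 A_4) = A_5^{(2)}$. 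A direct check gives $\Psi(s_4 w^- s_4^{-1} w^+ s_4^{-1}) = s_4 w^- s_4 w^+ s_4^{-1}$ and $\Psi(s_4^{-1} w^- s_4 w^- s_4^{-1}) = s_4 w^+ s_4^{-1} w^+ s_4$, so applying $\Psi$ to (1) (and using that $\Psi$ turns a right $A_4^\times$-factor into a left one) yields exactly (2). Thus it suffices to prove (1).

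For (1), I would first invoke lemma \ref{lemsimplifw0} (2) to replace each $w^{\pm}$ by $w_0^{\pm}$: this puts $s_4 w^- s_4 w^+ s_4^{-1} \in A_3^\times\, s_4 w_0^{-1} s_4 w_0 s_4^{-1}\, A_3^\times + A_5^{(2)}$ and $s_4 w^+ s_4^{-1} w^+ s_4 \in A_3^\times\, s_4 w_0 s_4^{-1} w_0 s_4\, A_3^\times + A_5^{(2)}$. Since $A_3^\times \subseteq A_4^\times$ and $s_4$ commutes with $A_3 = \langle s_1, s_2\rangle$, these stray units are harmless and it is enough to relate the two $w_0$-expressions up to a left factor in $A_4^\times$ and modulo $A_5^{(2)}$. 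Now I would use that $\delta = s_4 w_0 s_4$ is central in $A_4$ (so commutes with $w_0$), together with $s_4 w_0 = \delta s_4^{-1}$ and $w_0 s_4 = s_4^{-1}\delta$, to rewrite the target as $s_4 w_0 s_4^{-1} w_0 s_4 = \delta s_4^{-3}\delta$ and the source as $s_4 w_0^{-1} s_4 w_0 s_4^{-1} = s_4^2 \delta^{-1} s_4 \delta s_4^{-2}$.

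The heart of the matter is then to reconcile these two: the cubic relation $s_4^{\pm 2}, s_4^{\pm 3} \in R s_4 + R + R s_4^{-1}$ turns the isolated central powers of $s_4$ into $R$-combinations of monomials of the shape $s_4^a w_0^{\pm} s_4^b w_0^{\pm} s_4^c$, and the ones carrying two adjacent $w_0$'s (a $w_0^2$) are contracted by lemma \ref{lemw0carre} through lemma \ref{lemsimplifw0} (1). This is exactly the mechanism that lets the $\delta$-degree $0$ expression on the left be matched with the $\delta$-degree $2$ expression on the right up to a unit: since $w_0 \in A_4^\times$, multiplication by $A_4^\times$ is not $w_0$-exponent preserving, and lemma \ref{lemw0carre} adjusts that exponent by $3$. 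The remaining monomials I would throw into $A_5^{(2)}$, using $sh(A_4) = \langle s_2, s_3, s_4\rangle \subseteq A_5^{(2)}$, $A_4 \subseteq A_5^{(2)}$, that $A_5^{(2)}$ is an $A_4$-bimodule, and proposition \ref{propmoinsde55}.

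The main obstacle is the bookkeeping of this final computation. The flanking $s_4^{\pm 1}$ do \emph{not} lie in $A_4$, so one cannot absorb an inner factor into the $A_4$-bimodule $A_5^{(2)}$ before first commuting $\delta$ (or $w_0$) past the relevant $s_4$-powers; one must also track which scalar coefficients are genuine units of $A_3^\times$ or $A_4^\times$ (required for the clean $A_4^\times$ in the conclusion) rather than arbitrary elements of $R$ spat out by the cubic relation. The structural reason the reduction terminates, instead of manufacturing ever longer $s_4 w_0 \cdots w_0 s_4$ blocks, is precisely the centrality of $\delta$: every application of the cubic relation to a central $s_4$-power feeds back into the same finite family of $\delta$- and $w_0$-monomials, which the $w_0^2$-contraction then shrinks.
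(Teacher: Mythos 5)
Your outer scaffolding matches the paper exactly: the reduction of (2) to (1) by the skew-automorphism $\Psi$ (with $\Psi(w^{\pm}) = w^{\mp}$, $\Psi(A_5^{(2)}) = A_5^{(2)}$, and the left unit factor becoming a right one) is precisely the paper's argument, and your first step — lemma \ref{lemsimplifw0} (2) to replace $w^{\pm}$ by $w_0^{\pm}$ — is also the paper's. Your rewriting $s_4 w_0 s_4^{-1} w_0 s_4 = \delta s_4^{-3}\delta$ and $s_4 w_0^{-1} s_4 w_0 s_4^{-1} = s_4^2 \delta^{-1} s_4 \delta s_4^{-2}$ is algebraically correct and captures the same structural idea the paper uses (the paper pulls the element $w_0 s_4 w_0 s_4 = y_4 y_5$, which commutes with $s_4$ and $w_0$, out of $s_4 w_0^2 s_4 w_0 s_4^2$ to get $A_4^{\times}\delta^2$), and you correctly identify lemma \ref{lemw0carre}, read through lemma \ref{lemsimplifw0} (1), as the device that bridges the $w_0$-degree mismatch.

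The genuine gap is in the step where you claim the non-leading monomials can be "thrown into $A_5^{(2)}$" using $sh(A_4)\subset A_5^{(2)}$, the bimodule property, and proposition \ref{propmoinsde55}. That toolkit is insufficient, and this is not a bookkeeping issue but the crux of the lemma. The cross terms produced by the cubic expansions are words such as $s_4 w_0^{-1} s_4 w_0 s_4$ and $s_4 w_0 s_4 w_0 s_4^{\pm 1}$, with all three outer $s_4$-exponents nonzero. Since $w_0 \in u_3u_2u_1u_2u_3$, these lie in $u_4(u_3u_2u_1u_2u_3)u_4(u_3u_2u_1u_2u_3)u_4$, i.e. they are exactly the $(5,5)$-type (and exceptional $(5,4)$-type) configurations that proposition \ref{propmoinsde55} \emph{excludes} from its conclusion. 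In the paper these terms are placed in $A_5^{(2)}$ only by invoking lemma \ref{lemreduc2} (5) and (6) — concretely, $s_4 w^- s_4 w^+ s_4 \in A_5^{(2)}$ and $s_4 w^+ s_4 w^+ s_4^{\pm 1} \in A_5^{(2)}$, combined with lemma \ref{lemsimplifw0} (2) — and those statements rest on the machinery of lemma \ref{lemreducspec} and corollary \ref{correducspec}, not on the generic reduction proposition. Without citing them, your reduction cannot close: every expansion keeps producing double-$w_0$ words that none of your listed lemmas absorb. Relatedly, you flag but never resolve the unit-tracking problem; in the paper it is settled because the expansions $s_4^{-1}\in R^{\times}s_4^2+Rs_4+R$ and $s_4^2\in R^{\times}s_4^{-1}+Rs_4+R$, as well as lemma \ref{lemsimplifw0} (1), each carry a unit coefficient exactly on the one term that is kept, while all the non-unit terms are among those disposed of by lemma \ref{lemreduc2}.
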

\begin{proof}
We have $s_4w^- s_4 w^+ s_4^{-1} \in A_3^{\times} s_4 w_0^{-1} s_4 w_0 s_4^{-1} + A_5^{(2)}$ by lemma \ref{lemsimplifw0} (2).
Since $s_4 w_0^{-1} s_4 w_0 s_4 \in A_3^{\times} s_4 w^- s_4 w^+ s_4 \subset A_5^{(2)}$ by lemma \ref{lemreduc2}  (5)
and since $s_4^{-1}\in R^{\times} s_4^2 + R s_4 + R$, we have
$s_4 w_0^{-1} s_4 w_0 s_4^{-1} \equiv s_4 w_0^{-1} s_4 w_0 s_4^{2}  \mod A_5^{(2)}$.
Then $s_4 w_0^{-1} s_4 w_0 s_4^{2} \in A_3^{\times}  s_4 w_0^2 s_4 w_0 s_4^{2} 
+ A_3  s_4 w_0 s_4 w_0 s_4^{2} + A_5^{(2)}$ by lemma \ref{lemsimplifw0} (1), and
$s_4 w_0 s_4 w_0 s_4^{2} \in 
R s_4 w_0 s_4 w_0 s_4 + Rs_4 w_0 s_4 w_0 s_4^{-1} + A_5^{(2)}$. Then
$$
\begin{array}{llclcl}
 & s_4 w_0 s_4 w_0 s_4^{-1} &\in& A_3^{\times} s_4 w^+ s_4 w^+ s_4^{-1} &\subset& A_5^{(2)}\\ 
\mbox{and} & s_4 w_0 s_4 w_0 s_4 &\in& A_3^{\times}  s_4 w^+ s_4 w^+ s_4  &\subset& A_5^{(2)}\\
\end{array}$$ by lemmas \ref{lemsimplifw0} (2) and \ref{lemreduc2} (6).
It follows that $s_4 w_0^{-1} s_4 w_0 s_4^2 \in A_3^{\times} s_4 w_0^2 s_4 w_0 s_4^2 + A_5^{(2)}$.

Now $s_4 w_0^2 s_4 w_0 s_4^2 = s_4 w_0 (w_0 (s_4 w_0 s_4)) s_4$ and
$w_0 (s_4 w_0 s_4) = c_2^{-1} c_4 \in A_3^{\times} c_4$ commutes with
$w_0$ and $s_4$. Thus   $s_4 w_0^2 s_4 w_0 s_4^2 =  (w_0 (s_4 w_0 s_4)) s_4 w_0 s_4 \in A_4^{\times} s_4 w_0 s_4^2 w_0 s_4$.
Now $s_4 w_0 s_4^2 w_0 s_4 \in R^{\times} s_4 w_0 s_4^{-1} w_0 s_4  + R s_4 w_0 s_4 w_0 s_4  + A_5^{(2)}$ ;
moreover we already noticed $ s_4 w_0 s_4 w_0 s_4   \in A_5^{(2)}$, hence
$s_4 w_0 s_4^2 w_0 s_4 \in R^{\times} s_4 w_0 s_4^{-1} w_0 s_4 + A_5^{(2)} \subset A_3^{\times} s_4 w^+ s_4^{-1} w^+ s_4+ A_5^{(2)}$
by lemma \ref{lemsimplifw0} (2) , and this proves (1).

Now we have $s_4 w^- s_4^{-1} w^+ s_4^{-1} = \Psi(   s_4 w^- s_4 w^+ s_4^{-1}) \in \Psi(A_4^{\times} s_4 w^+ s_4^{-1} w^+ s_4) + \Psi(A_5^{(2)})
=  s_4^{-1} w^- s_4 w^- s_4^{-1} A_4^{\times} + A_5^{(2)}$, and this proves (2).
\end{proof}

By a direct computation, we will prove the following lemma, which will turn
out to be crucial in the proof of the main theorem. We postpone this
(lengthy) calculation to section \ref{sectcrucial}.

\begin{lemma} {\ } \label{lemcrucial}
In $A_5$, $\delta^3$
belongs to 
$$
A_4^{\times} s_4 w^- s_4 w^- s_4 A_3^{\times}+
A_4 s_4 w^+ s_4^{-1} w^+ s_4 A_4+
A_4 s_4^{-1} w^- s_4 w^- s_4^{-1} A_4+
A_5^{(2)}
$$
\end{lemma}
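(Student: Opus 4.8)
The plan is to compute $\delta^3$ explicitly and reduce it, exploiting that $\delta$ centralizes $A_4$. Recall $\delta = y_5 = s_4 w_0 s_4$ with $w_0 = y_4 = s_3 s_2 s_1^2 s_2 s_3$ commuting with $A_3$, and that $\delta$ commutes with all of $A_4$ (in particular with $w_0$ and $s_3$). Hence
$$\delta^3 = s_4 w_0 s_4^2 w_0 s_4^2 w_0 s_4.$$
First I would expand the two interior squares via the cubic relation $s_4^2 = a s_4 + b + c s_4^{-1}$, writing $\delta^3$ as an $R$-linear combination of words $s_4 w_0 s_4^{i} w_0 s_4^{j} w_0 s_4$ with $i,j \in \{+1,0,-1\}$, where $s_4^{0}$ means the two flanking copies of $w_0$ merge into $w_0^2$ (resp. $w_0^3$ when $i=j=0$). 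Throughout I count the $u_4$-factors of a word and call them its \emph{syllables}; the target generators $s_4 w^- s_4 w^- s_4$, $s_4 w^+ s_4^{-1} w^+ s_4$, $s_4^{-1} w^- s_4 w^- s_4^{-1}$ all have three syllables, while $A_5^{(2)}=A_4u_4A_4u_4A_4$ consists of two-syllable words.

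Second, I would dispose of the terms in which some exponent vanishes. When $i=j=0$ the word is $s_4 w_0^3 s_4 \in A_4 u_4 A_4 u_4 A_4 = A_5^{(2)}$. When exactly one exponent vanishes a factor $w_0^2$ appears beside a single interior $s_4^{\pm}$; applying Lemma \ref{lemw0carre} ($w_0^2 \in A_3^\times w_0^{-1} + U^+$) together with $U^+ = A_3 w^+ + U_0$ and $w_0^{\pm 1} \in A_3^\times w^\pm + U_0$ (Lemma \ref{lemauxA4w0}), and noting that the $U_0$-contributions lie in $A_5^{(2)}$ (directly, or via Proposition \ref{propmoinsde55}), reduces these terms to three-syllable words of the form $s_4^{\pm} w_0^{\mp} s_4^{\pm} w_0^{\pm} s_4^{\pm}$ to be normalised below.

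Third, and this is the heart of the matter, I must reduce the genuinely four-syllable terms $s_4 w_0 s_4^{i} w_0 s_4^{j} w_0 s_4$ with $i,j \in \{+1,-1\}$ from three occurrences of $w_0$ down to the two occurrences present in the target generators. The crucial device is that $\delta = s_4 w_0 s_4$ commutes with $w_0$: for instance, grouping $s_4 w_0 (s_4 w_0 s_4) w_0 s_4 = s_4 w_0\,\delta\,w_0 s_4 = s_4 w_0^2 \delta s_4 = s_4 w_0^2 s_4 w_0 s_4^2$ manufactures a factor $w_0^2$, which is linearised by Lemma \ref{lemw0carre} exactly as in the proof of Lemma \ref{lem2lignesout}. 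Performing the analogous regrouping for each sign pattern produces a $w_0^2$, drops the word to three syllables, and after converting $w_0 \leftrightarrow w^+$, $w_0^{-1}\leftrightarrow w^-$ (Lemmas \ref{lemauxA4w0} and \ref{lemsimplifw0}) leaves words $s_4^{\pm} w^{\pm} s_4^{\pm} w^{\pm} s_4^{\pm}$. These are finally normalised to the three surviving generators by Lemmas \ref{lemsymswsws}, \ref{lemreduc2}, \ref{lem2lignesout}, \ref{lemtransf1} and Proposition \ref{propmoinsde55}, all mixed or lower-syllable words being absorbed into $A_5^{(2)}$.

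The main obstacle is twofold. On the bookkeeping side, the repeated expansions generate a large number of words and one must track the unit coefficients in $A_3^\times$ and $A_4^\times$ carefully — this is precisely why the first summand of the statement carries $A_3^\times$ on the right rather than $A_4$. More conceptually, one must check that \emph{every} four-syllable word really collapses; the only reason it does is that the central element $\delta$ commutes with $A_4$, which forces a $w_0^2$ to appear so that Lemma \ref{lemw0carre} can act and remove a $w_0$. This is exactly the mechanism that makes the filtration $\bigl(A_5^{(n)}\bigr)$ terminate: without the central origin of $\delta^3$ one would keep generating fresh four-syllable words indefinitely, and the reduction would never close.
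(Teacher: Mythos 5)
Your handling of the easy terms is correct and matches what the paper itself does in lemma \ref{lemN1}: when an interior exponent vanishes you get $w_0^2$ or $w_0^3$ directly, and for the sign patterns $(i,j)=(1,1)$, $(1,-1)$, $(-1,1)$ the word contains the subword $s_4 w_0 s_4 = \delta$, so centrality of $\delta$ over $A_4$ lets you slide it past $w_0$ and create a $w_0^2$ (indeed $s_4 w_0 s_4 w_0 s_4^{-1} w_0 s_4 = w_0 s_4 w_0^2 s_4$ and $s_4 w_0 s_4^{-1} w_0 s_4 w_0 s_4 = s_4 w_0^2 s_4 w_0$ already lie in $A_5^{(2)}$). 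The genuine gap is your claim that ``performing the analogous regrouping for each sign pattern produces a $w_0^2$'': it fails for the pattern $(-1,-1)$, i.e.\ for the word $X = s_4 w_0 s_4^{-1} w_0 s_4^{-1} w_0 s_4$. This word contains no occurrence of $\delta^{\pm 1}$: an occurrence of $\delta$ needs a positive $w_0$ flanked by two positive letters $s_4$, while $\delta^{-1} = s_4^{-1} w_0^{-1} s_4^{-1}$ needs a $w_0^{-1}$, and in $X$ every $w_0$ is positive and at least one of its neighbours is $s_4^{-1}$. Writing $s_4^{-1} w_0 s_4^{-1} = s_4^{-2}\delta s_4^{-2}$ does not help because $\delta$ does not commute with $s_4$, and re-expanding $s_4^{-1}$ through the cubic relation to manufacture positive letters is circular: among the terms produced one recovers $\delta^3$ itself with invertible coefficient, so modulo the terms you have already handled, $X$ is \emph{equivalent} to $\delta^3$ and the regrouping machinery has made no progress on it.

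This missing case is not a loose end; it is the entire content of the paper's section \ref{sectcrucial}. After disposing of all other terms essentially as you do (first two reduction steps, relying on lemma \ref{lemN1}, whose proof uses exactly your trick via the central elements $\delta = c_5 c_4^{-1}$ and $\delta w_0 = c_5 c_3^{-1}$), the paper is left with $s_4 w_0 s_4^{-1} w_0 s_4^{-1} w_0 s_4$, rewrites it by braid relations as $s_3^{-1}\, s_4 s_3 (s_2 s_1^2 s_2) s_3^{-1} s_4 s_3^2 (s_2 s_1^2 s_2) s_3 s_4^{-1} w_0 s_4$, and then needs the whole chain of reductions supported by lemmas \ref{lemN2} through \ref{lemN17}: the copies of $w_0$ must be broken open letter by letter, the pieces shuttled across the $s_4^{\pm 1}$'s by braid relations and repeated cubic expansions, and only at the very last step does one reach a word $s_4 w^- s_4 w_0^2 s_4$ to which lemma \ref{lemw0carre} finally applies. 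So your proposal proves the lemma modulo precisely the one term that is hard; as written, the third step does not close, and no soft regrouping argument can substitute for that explicit computation.
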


\subsection{Right actions are left actions}

\begin{lemma} {\ } \label{lemdroitegaucheA5}
\begin{enumerate}
\item 
For all $\alpha,\beta,\gamma,\dots \in \{ -1,1 \}$, $x,y \in A_3$,
$$
s_1(s_4^{\alpha} s_3^{\beta} x s_3^{\gamma} s_4^{\delta} s_3^{\eps} y  s_3^{\zeta} s_4^{\eta}) \equiv (s_4^{\alpha} s_3^{\beta} x s_3^{\gamma} s_4^{\delta} s_3^{\eps} y  s_3^{\zeta} s_4^{\eta})s_1
\mod A_5^{(2)}
$$
\item For all $x \in A_4$, $ (s_4 w^+ s_4^{-1} w^+ s_4) x \in A_4 (s_4 w^+ s_4^{-1} w^+ s_4) \mod A_5^{(2)} $
\item For all $x \in A_4$, $ (s_4^{-1} w^- s_4  w^- s_4^{-1}) x \in A_4 (s_4^{-1} w^- s_4  w^- s_4^{-1}) \mod A_5^{(2)} $
\item $(s_4 w^- s_4 w^+ s_4^{-1} )s_3^{-1} \in s_3^{-1} (s_4 w^- s_4 w^+ s_4^{-1} ) + A_5^{(2)}$
\item $(s_4 w^- s_4^{-1} w^+ s_4^{-1}) s_3^{-1} \in u_3 s_4^{-1} w^+ s_4^{-1} w^- s_4 + A_5^{(2)}$. 
\end{enumerate}

\end{lemma}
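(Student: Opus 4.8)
The common mechanism behind all five parts is that, modulo $A_5^{(2)}$, the relevant generators can be rewritten using the elements $w_0 = s_3 s_2 s_1^2 s_2 s_3 = y_4$ and $\delta = s_4 w_0 s_4 = y_5$, which centralise $A_3$ and $A_4$ respectively. I would begin with (2). Put $g_+ = s_4 w^+ s_4^{-1} w^+ s_4$ and let $S = \{ x \in A_4 : g_+ x \in A_4 g_+ + A_5^{(2)} \}$. As $A_5^{(2)}$ is a sub-bimodule of $A_5$, the set $S$ is an $R$-subalgebra of $A_4$ containing $1$; since $u_i \subset S$ as soon as $s_i \in S$, and $A_4$ is generated by $A_3$ together with $s_3$, it suffices to show $A_3 \subset S$ and $s_3 \in S$. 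For $x \in A_3$ I use lemma \ref{lemsimplifw0} (2), which gives $g_+ \equiv \lambda\, g_{0} \pmod{A_5^{(2)}}$ with $\lambda \in A_3^\times$ and $g_0 = s_4 w_0 s_4^{-1} w_0 s_4$; as $g_0$ is a word in $s_4$ and $w_0$, both commuting with $A_3$, I get $g_+ x \equiv \lambda x g_0 \equiv (\lambda x \lambda^{-1}) g_+$ with $\lambda x \lambda^{-1} \in A_3 \subset A_4$. For $x = s_3$ this is exactly lemma \ref{lemtransf1} (2). This proves (2); part (3) then follows by applying the automorphism $\Phi$, which fixes $A_4$ and $A_5^{(2)}$, exchanges $w^+$ and $w^-$, and carries $g_+$ to $s_4^{-1} w^- s_4 w^- s_4^{-1}$.

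For (1) I write $z = s_4^{\alpha} s_3^{\beta} x s_3^{\gamma} s_4^{\delta} s_3^{\eps} y s_3^{\zeta} s_4^{\eta}$ and note that, since $s_1$ commutes with $s_3$ and $s_4$, the claim is precisely $(\Ad s_1)(z) \equiv z \pmod{A_5^{(2)}}$, where $\Ad s_1$ is the algebra automorphism $t \mapsto s_1 t s_1^{-1}$ of $A_5$; it fixes $s_3, s_4$ and stabilises $A_4$ and $A_5^{(2)}$. By lemma \ref{lemsimplif212} I may, modulo $A_5^{(2)}$, replace $x$ and $y$ by standard blocks $s_2^{\pm} s_1^{\mp} s_2^{\pm}$ flanked by factors from $u_1$; these $u_1$-factors are fixed by $\Ad s_1$, so only the central block matters. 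The sign analysis of lemma \ref{lemreducspec} either throws the block into $A_5^{(2)}$ or assembles it into $z_0 = s_4^{\alpha} w^{\pm} s_4^{\delta} w^{\pm} s_4^{\eta}$, the $w^{\pm}$ appearing once the inner term is matched to the $s_3$-signs by choosing among the four forms of lemma \ref{lemsimplif212}. Now lemma \ref{lemsimplifw0} (2) gives $z_0 \equiv \lambda\, G_0\, \rho \pmod{A_5^{(2)}}$ with $G_0 = s_4^{\alpha} w_0^{\pm} s_4^{\delta} w_0^{\pm} s_4^{\eta}$ and units $\lambda, \rho$; since $G_0$ is built from $s_4$ and $w_0$ it commutes with $s_1$, so once one checks that $\lambda, \rho$ lie in $u_1$ (and are hence $\Ad s_1$-fixed) one concludes $(\Ad s_1)(z_0) \equiv z_0$, proving (1).

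Parts (4) and (5) are the residual $s_3$-transfers for the two ``mixed'' generators, which I would settle by direct rewriting in the style of lemma \ref{lemtransf1} (2). For (4), starting from $g_{(4)} = s_4 w^- s_4 w^+ s_4^{-1}$ and pushing $s_3^{-1}$ leftward using the braid moves of lemma \ref{lemsplusmoins} (such as $s_3 s_4^{-1} s_3^{-1} = s_4^{-1} s_3^{-1} s_4$ and $s_4 s_3 s_4^{-1} = s_3^{-1} s_4 s_3$) together with the fact that $s_2^{\pm} s_1^{\mp} s_2^{\pm} \in A_3$ commutes with $s_4$, one brings $g_{(4)} s_3^{-1}$ to a form containing $s_3^{-2}$; expanding $s_3^{-2} \in R + R s_3 + R s_3^{-1}$ by the cubic relation and discarding the terms that fall into $A_5^{(2)}$ via lemmas \ref{propmoinsde55}, \ref{lemreducspec} and \ref{lemreduc2} isolates the surviving term congruent to $s_3^{-1} g_{(4)}$. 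For (5), I observe that $\Psi(g_{(5)}) = g_{(4)}$ where $g_{(5)} = s_4 w^- s_4^{-1} w^+ s_4^{-1}$, so applying the anti-automorphism $\Psi$ to (4) yields $s_3 g_{(5)} \equiv g_{(5)} s_3$, hence $g_{(5)} s_3^{-1} \equiv s_3^{-1} g_{(5)}$; combining this with lemma \ref{lemtransf1} (5), which writes $g_{(5)} \in u_3^{\times}\, h\, u_3^{\times} + A_5^{(2)}$ for $h = s_4^{-1} w^+ s_4^{-1} w^- s_4$, and absorbing the residual right-hand $u_3$-factor past $h$ by a short computation of the same kind, gives the one-sided form $g_{(5)} s_3^{-1} \in u_3\, h + A_5^{(2)}$ of the statement.

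The main obstacle is the bookkeeping in (1): one must verify that the units produced by lemma \ref{lemsimplif212} and by the substitution $w^{\pm} \mapsto w_0^{\pm}$ of lemmas \ref{lemauxA4w0} and \ref{lemsimplifw0} genuinely lie in $u_1$ — so that $\Ad s_1$ fixes them — rather than only in the larger group $A_3^{\times}$, and to organise the sign casework so that every mismatched configuration is absorbed into $A_5^{(2)}$ by lemma \ref{lemreducspec}. Once this is secured, (2) and (3) are purely formal, and the computations in (4) and (5), though longer, are routine applications of the braid moves, the cubic relation and the reduction lemmas already established.
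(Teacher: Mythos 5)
Your treatment of (2) and (3) is correct, and it is organized a bit differently from the paper in a way that is genuinely attractive: by writing $g_+ = s_4 w^+ s_4^{-1} w^+ s_4 \equiv \lambda\, s_4 w_0 s_4^{-1} w_0 s_4 \pmod{A_5^{(2)}}$ with $\lambda \in A_3^{\times}$ (lemma \ref{lemsimplifw0} (2)) and using that $s_4$ and $w_0$ centralize $A_3$, you handle all of $A_3$ in one stroke -- and here $\lambda \in A_3^{\times}$ really does suffice, since you only need $\lambda x \lambda^{-1} \in A_4$ -- so that your (2) no longer depends on (1). The paper instead checks the three generators separately: $s_1$ via part (1), $s_3^{-1}$ via lemma \ref{lemtransf1} (2), and $s_2$ via the braid-group fact that $s_2$ commutes with $s_3 s_2 s_1^{-1} s_2 s_3$ and with $s_4$. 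Your subalgebra argument for $S = \{ x \in A_4 : g_+ x \in A_4 g_+ + A_5^{(2)} \}$ is sound because $A_5^{(2)}$ is an $A_4$-bimodule, and (3) follows by applying $\Phi$, exactly as in the paper.

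Part (1), however, has a concrete gap, and is also far more laborious than necessary. Your claim that lemma \ref{lemreducspec} absorbs \emph{every} mismatched sign configuration into $A_5^{(2)}$ is false: the alternating configurations $s_4^{\eps} s_3^{\alpha} A_3 s_3^{-\alpha} s_4^{\eps} s_3^{-\alpha} A_3 s_3^{\alpha} s_4^{\eps}$, with all three $s_4$-exponents equal, are precisely the cases excluded in lemma \ref{lemreducspec} (1), and they are not of your assembled form $s_4^{\alpha} w^{\pm} s_4^{\delta} w^{\pm} s_4^{\eta}$ either; one needs lemma \ref{lemtransf1} (1)/(6), which introduce flanking $u_3$-factors and flip an $s_4$-exponent. (The argument can be patched, since $u_3$ also commutes with $s_1$; and the verification you flag does succeed -- the units produced by lemma \ref{lemauxA4w0} (1) are $c^{\pm 1} s_1^{\mp 1} \in u_1^{\times}$.) But all of this machinery is beside the point: the paper proves (1) in three lines, by pushing $s_1$ across the word -- it commutes with $s_3$, $s_4$ and the $u_1$-flanks -- and quasi-commuting it past the blocks $s_2^{-1} s_1 s_2^{-1}$ using lemma \ref{lemquasicom} (or lemma \ref{lemdecomp1212}), the corrections in $u_1 u_2 + u_2 u_1$ dying in $A_5^{(2)}$ by proposition \ref{propmoinsde55}. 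No sign analysis and no $w_0$ are needed.

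Parts (4) and (5) are where real content is missing. For (4) you assert that after pushing $s_3^{-1}$ through and expanding a square, ``the surviving term is congruent to $s_3^{-1} g_{(4)}$''; that identification \emph{is} the lemma, and you never establish it. (Also, the square that actually appears is $s_4^{-2}$, not $s_3^{-2}$: the paper computes $(s_4 w^- s_4 w^+ s_4^{-1}) s_3^{-1} = s_4 s_3^{-1} s_2 s_1^{-1} s_2\, s_4 s_3 s_4^{-2} \cdots s_3^{-1} s_4$, kills the terms coming from $R s_4^{-1}$ and $R$, and handles the surviving $s_4$-term via lemma \ref{lemtransf1} (1).) For (5), your observation that $\Psi(s_4 w^- s_4^{-1} w^+ s_4^{-1}) = s_4 w^- s_4 w^+ s_4^{-1}$ is correct and is a genuinely nicer starting point than the paper's separate computation; but applying $\Psi$ to (4) only yields the two-sided commutation $s_3 g_{(5)} \equiv g_{(5)} s_3$, whereas statement (5) demands the one-sided form $g_{(5)} s_3^{-1} \in u_3\, s_4^{-1} w^+ s_4^{-1} w^- s_4 + A_5^{(2)}$. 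The conversion via lemma \ref{lemtransf1} (5) leaves a right-hand $u_3^{\times}$-factor, and the absorption $h s_3^{\pm 1} \subset u_3 h + A_5^{(2)}$ for $h = s_4^{-1} w^+ s_4^{-1} w^- s_4$ that you invoke is itself a transfer computation of the same difficulty as (4) and (5), covered by no lemma you cite. The paper avoids this by proving (5) directly, by a computation parallel to (4).
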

\begin{proof}
We first prove (1). By lemma \ref{lemsimplif212} and because $s_1$ commutes with $u_1$
we can assume $x =y = s_2^{-1} s_1 s_2^{-1}$. Since $(s_2^{-1} s_1 s_2^{-1}) s_1 \in s_1(s_2^{-1} s_1 s_2^{-1})  
+ u_1 u_2 u_1$ by lemma \ref{lemquasicom} (and even $(s_2^{-1} s_1 s_2^{-1}) s_1 \in s_1(s_2^{-1} s_1 s_2^{-1})  
+ u_1 u_2 + u_2u_1$, see lemma \ref{lemdecomp1212}), by proposition \ref{propmoinsde55} we get the conclusion.

We then prove (2). Because of (1), and because
we have the result for $x = s_3^{-1}$ by lemma \ref{lemtransf1} (2), we
need only consider $x = s_2$.
For $x = s_2$, we first use that
$$s_4 w^+ s_4^{-1} w^+ s_4 \in u_1^{\times} s_4 s_3 s_2 s_1^{-1} s_2 s_3 s_4^{-1} s_3 s_2 s_1^{-1} s_2 s_3 s_4 u_1^{\times} +
A_5^{(2)}$$ by lemma \ref{lemsimplif212} ; then, because of (1) we get that
$$u_1^{\times} s_4 s_3 s_2 s_1^{-1} s_2 s_3 s_4^{-1} s_3 s_2 s_1^{-1} s_2 s_3 s_4 u_1^{\times}
\subset
u_1^{\times} s_4 s_3 s_2 s_1^{-1} s_2 s_3 s_4^{-1} s_3 s_2 s_1^{-1} s_2 s_3 s_4  + A_5^{(2)}.$$
Then 
$$
\begin{array}{lcl}
s_4 w^+ s_4^{-1} w^+ s_4.s_2& \in& 
u_1^{\times} s_4 (s_3 s_2 s_1^{-1} s_2 s_3) s_4^{-1} (s_3 s_2 s_1^{-1} s_2 s_3 )s_4 s_2  + A_5^{(2)} \\
&\subset & u_1^{\times} s_2 s_4 (s_3 s_2 s_1^{-1} s_2 s_3) s_4^{-1} (s_3 s_2 s_1^{-1} s_2 s_3 )s_4   + A_5^{(2)} \\
\end{array}
$$
because $s_2$ commutes with both $s_4$ and $s_3 s_2 s_1^{-1} s_2 s_3$ and this proves (2). One
gets (3) by applying $\Phi$
to (2).

We prove (4).  One easily gets $(s_4 w^- s_4 w^+ s_4^{-1} )s_3^{-1} =s_4 s_3^{-1} s_2 s_1^{-1} s_2 s_4 s_3 s_4^{-2} s_2 s_1^{-1} s_2 s_3^{-1} s_4$.
Now $s_4^{-2} \in R^{\times} s_4 + R s_4^{-1} + R$, and it is easily checked that
the terms originating from  $ R s_4^{-1}$ and $R$ belong to $A_5^{(2)}$. We thus get
$s_4 s_3^{-1} s_2 s_1^{-1} s_2 s_4 s_3 s_4^{-2} s_2 s_1^{-1} s_2 s_3^{-1} s_4 \in 
s_4 s_3^{-1} s_2 s_1^{-1} s_2 (s_4 s_3 s_4) s_2 s_1^{-1} s_2 s_3^{-1} s_4 +A_5^{(2)}
\subset
s_4 s_3^{-1} s_2 s_1^{-1} s_2 s_3 s_4 s_3 s_2 s_1^{-1} s_2 s_3^{-1} s_4+ A_5^{(2)}$,
and 
$$s_4 s_3^{-1} s_2 s_1^{-1} s_2 s_3 s_4 s_3 s_2 s_1^{-1} s_2 s_3^{-1} s_4
\in u_3 s_4^- w^+ s_4 w^- s_4 + A_5^{(2)}$$ by lemma \ref{lemtransf1} (1). 
We prove (5). 
One easily gets 
$$(s_4 w^- s_4^{-1} w^+ s_4^{-1}) s_3^{-1} = s_4 s_3^{-1} s_2 s_1^{-1} s_2 s_4 s_3^{-1} s_4^{-2} s_2 s_1^{-1} s_2 s_3^{-1} s_4,$$
and $s_4 s_3^{-1} s_2 s_1^{-1} s_2 s_4 s_3^{-1} x s_2 s_1^{-1} s_2 s_3^{-1} s_4 \in A_5^{(2)}$ for $x \in 1, s_4^{-1}$,
hence $(s_4 w^- s_4^{-1} w^+ s_4^{-1}) s_3^{-1}$ belongs to
$$
\begin{array}{clcl}
& s_4 s_3^{-1} s_2 s_1^{-1} s_2 s_4 s_3^{-1} s_4s_2 s_1^{-1} s_2 s_3^{-1} s_4
A_5^{(2)} &
=& s_4 s_3^{-1}s_4  s_2 s_1^{-1} s_2 s_3^{-1} s_4s_2 s_1^{-1} s_2 s_3^{-1} s_4
A_5^{(2)} \\
\subset & u_3s_4^{-1} s_3s_4^{-1}  s_2 s_1^{-1} s_2 s_3^{-1} s_4s_2 s_1^{-1} s_2 s_3^{-1} s_4
A_5^{(2)} &
= &u_3s_4^{-1} s_3  s_2 s_1^{-1} s_2 (s_4^{-1}s_3^{-1} s_4)s_2 s_1^{-1} s_2 s_3^{-1} s_4
A_5^{(2)} \\
=& u_3s_4^{-1} s_3  s_2 s_1^{-1} s_2 s_3s_4^{-1} s_3^{-1}s_2 s_1^{-1} s_2 s_3^{-1} s_4
A_5^{(2)}& \subset & u_3 s_4^{-1} w^+ s_4^{-1} w^- s_4 + A_5^{(2)}\\
\end{array}$$ 
\end{proof}

\begin{remarq} Another proof of item (2). It is easily checked
that  $s_4 w^+ s_4^{-1} w^+ s_4 \equiv (s_4 s_3 s_2 s_1^{2} s_2 s_3 s_4)^2 \mod A_5^{(2)}$,
and the element $s_4 s_3 s_2 s_1^{2} s_2 s_3 s_4$ of the braid group $B_5$ is well-known
to centralize $B_4$.
\end{remarq}

\begin{proposition} \label{propdroitegaucheA53}
$$ 
\begin{array}{lcl}
A_5^{(3)} & = &   A_4 s_4 w^- s_4 w^- s_4
+
A_4 s_4 w^+ s_4^{-1} w^+ s_4  
+
A_4 s_4^{-1} w^- s_4 w^- s_4^{-1} +A_5^{(2)}\\
A_5^{(3)} & = &    s_4 w^- s_4 w^- s_4 A_4
+
 s_4 w^+ s_4^{-1} w^+ s_4  A_4
+
 s_4^{-1} w^- s_4 w^- s_4^{-1}A_4 +A_5^{(2)} \\

\end{array}$$
\end{proposition}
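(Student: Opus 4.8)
The plan is to deduce both displayed equalities from a single two-sided containment, and then to exploit the fact that the ``exceptional'' generator is, modulo the two others, essentially a central element. Throughout I abbreviate $X = s_4 w^- s_4 w^- s_4$, $Y = s_4 w^+ s_4^{-1} w^+ s_4$, $Z = s_4^{-1} w^- s_4 w^- s_4^{-1}$, and recall $A_5^{(2)} = A_4 u_4 A_4 u_4 A_4$. First I would record the bimodule containment. Proposition \ref{propreducuuu} gives $A_5^{(3)} \subseteq A_4(u_4 u_3 u_2 u_1 u_2 u_3 u_4 u_3 u_2 u_1 u_2 u_3 u_4)A_4 + A_5^{(2)}$, and Proposition \ref{propdecomp1bimoduuu} bounds the central factor by $A_4 X A_4 + A_4 Y A_4 + A_4 Z A_4$ together with the two exceptional terms $A_4 s_4 w^- s_4 w^+ s_4^{-1} A_4$ and $A_4 s_4 w^- s_4^{-1} w^+ s_4^{-1} A_4$, modulo $A_5^{(2)}$. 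Lemma \ref{lem2lignesout} absorbs these last two: part (1) places $s_4 w^- s_4 w^+ s_4^{-1}$ in $A_4 Y + A_5^{(2)}$ and part (2) places $s_4 w^- s_4^{-1} w^+ s_4^{-1}$ in $Z A_4 + A_5^{(2)}$. Hence
$$A_5^{(3)} \subseteq A_4 X A_4 + A_4 Y A_4 + A_4 Z A_4 + A_5^{(2)}.$$
It then remains only to convert the right $A_4$-actions into left ones for the first equality (and vice versa for the second).

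For $Y$ and $Z$ this is immediate from Lemma \ref{lemdroitegaucheA5}(2),(3), which give $Y A_4 \subseteq A_4 Y + A_5^{(2)}$ and $Z A_4 \subseteq A_4 Z + A_5^{(2)}$. Setting $V = A_4 Y + A_4 Z + A_5^{(2)}$, these inclusions say precisely that $V A_4 \subseteq V$, so $V$ is an $A_4$-sub-bimodule and $A_4 Y A_4 + A_4 Z A_4 \subseteq V$. The whole difficulty is therefore concentrated in showing $X A_4 \subseteq A_4 X + V$. This is where I would use that $\delta = s_4 s_3 s_2 s_1^2 s_2 s_3 s_4$, and hence $\delta^3$, commutes with $A_4$. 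By Lemma \ref{lemcrucial} and the previous inclusions, $\delta^3 \in A_4^\times X A_3^\times + V$, so I may fix a relation $\delta^3 = a X b + v$ with $a \in A_4^\times$, $b \in A_3^\times$, $v \in V$. Conjugating by the unit $b$ and using that $\delta^3$ is central and $V$ a bimodule,
$$\delta^3 = b\,\delta^3 b^{-1} = b(aXb+v)b^{-1} = (ba)X + bvb^{-1} \equiv (ba)X \pmod V,$$
which clears the right-hand factor: $ba \in A_4^\times$ and $X \equiv (ba)^{-1}\delta^3 \pmod V$. For any $z \in A_4$ centrality then gives
$$Xz \equiv (ba)^{-1}\delta^3 z = (ba)^{-1} z\,\delta^3 \equiv (ba)^{-1}z(ba)X \pmod V,$$
and $(ba)^{-1}z(ba) \in A_4$, so $Xz \in A_4 X + V$. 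Thus $A_4 X A_4 \subseteq A_4 X + V$, and combining everything $A_5^{(3)} \subseteq A_4 X + A_4 Y + A_4 Z + A_5^{(2)}$; the reverse inclusion is clear since $X,Y,Z \in A_5^{(3)}$, which proves the first equality.

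The second equality I would obtain by the mirror argument. Applying the skew-automorphism $\Psi$ (which fixes $A_4$ and $A_5^{(2)}$, sends $w^\pm$ to $w^\mp$, and exchanges $Y$ with $Z$) to Lemma \ref{lemdroitegaucheA5}(2),(3) yields $A_4 Y \subseteq Y A_4 + A_5^{(2)}$ and $A_4 Z \subseteq Z A_4 + A_5^{(2)}$, so that $V' = Y A_4 + Z A_4 + A_5^{(2)}$ is again a bimodule absorbing $A_4 Y A_4 + A_4 Z A_4$. For $X$ I would take the same relation, now written $\delta^3 = aXb + v'$ with $v' \in V'$, and conjugate by $a$ instead of $b$, obtaining $\delta^3 \equiv X(ba) \pmod{V'}$ and hence $A_4 X \subseteq X A_4 + V'$ by the symmetric computation. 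Combined with the bimodule containment this gives $A_5^{(3)} = X A_4 + Y A_4 + Z A_4 + A_5^{(2)}$.

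I expect the main obstacle to be exactly the step for $X$: naively commuting the right action of $A_4$ past $X$ leaves the stray unit $b \in A_3^\times$ produced by Lemma \ref{lemcrucial}, and an attempt to dispose of it directly runs in a circle (it reduces to commuting $X$ past $s_2$, which is not available). It is only the genuine centrality of $\delta^3$ in $A_4$ that unblocks this, by allowing one to conjugate the defining relation and move the unit entirely to one side. This is precisely the point at which the element originating from the centre of the braid group becomes indispensable, and explains why Lemma \ref{lemcrucial} was worth its lengthy proof.
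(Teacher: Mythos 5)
Your proof is correct and takes essentially the same route as the paper: the same bimodule bound (Propositions \ref{propreducuuu}, \ref{propdecomp1bimoduuu} together with Lemma \ref{lem2lignesout}), the same conversion of right actions into left ones via Lemma \ref{lemdroitegaucheA5}, and the same exploitation of Lemma \ref{lemcrucial} plus the fact that $\delta^3$ commutes with $A_4$ to handle the exceptional generator $s_4 w^- s_4 w^- s_4$. If anything, your explicit conjugation of the relation $\delta^3 = aXb + v$ by the unit $b$ to clear the right-hand factor in $A_3^{\times}$ is more careful than the paper's own write-up, which asserts the equality $A_4 s_4 w^- s_4 w^- s_4 A_3^{\times} + \cdots = A_4 s_4 w^- s_4 w^- s_4 + \cdots$ without spelling out this step (the paper phrases the centrality argument through $c^3 = \delta^3 c_4^3$ and $A_4 c^3 A_4 = A_4 c^3$ rather than working with $\delta^3$ directly, but the two are equivalent).
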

\begin{proof}
Clearly the RHS are included in $A_5^{(3)}$.
By propositions \ref{propreducuuu} and  \ref{propdecomp1bimoduuu} we have 
$A_5^{(3)} \subset  A_4 s_4 w^- s_4 w^- s_4 A_4
+
A_4 s_4 w^+ s_4^{-1} w^+ s_4 A_4  
+
A_4 s_4^{-1} w^- s_4 w^- s_4^{-1} A_4 
 +
A_4 s_4 w^- s_4 w^+ s_4^{-1} A_4 
+
A_4 s_4 w^- s_4^{-1} w^+ s_4^{-1} A_4 +A_5^{(2)}$.
Lemma \ref{lem2lignesout} then implies
$A_5^{(3)} \subset  A_4 s_4 w^- s_4 w^- s_4 A_4
+
A_4 s_4 w^+ s_4^{-1} w^+ s_4 A_4  
+
A_4 s_4^{-1} w^- s_4 w^- s_4^{-1} A_4 
+ A_5^{(2)}$. By lemma \ref{lemdroitegaucheA5}
this implies $A_5^{(3)} \subset  A_4 s_4 w^- s_4 w^- s_4 A_4
+
A_4 s_4 w^+ s_4^{-1} w^+ s_4   
+
A_4 s_4^{-1} w^- s_4 w^- s_4^{-1} 
+ A_5^{(2)}$
and 
$A_5^{(3)} \subset  A_4 s_4 w^- s_4 w^- s_4 A_4
+
 s_4 w^+ s_4^{-1} w^+ s_4 A_4  
+
 s_4^{-1} w^- s_4 w^- s_4^{-1} A_4 
+ A_5^{(2)}$. Now, by
lemma \ref{lemcrucial}, 
$s_4 w^- s_4 w^- s_4$ belongs
to $A_4^{\times} \delta^3 A_3^{\times} 
+  A_4 s_4 w^+ s_4^{-1} w^+ s_4 A_4  
+
A_4  s_4^{-1} w^- s_4 w^- s_4^{-1} A_4 
+ A_5^{(2)}$,
hence $A_4 s_4 w^- s_4 w^- s_4 A_4
\subset A_4  \delta^3 A_4 
+  A_4 s_4 w^+ s_4^{-1} w^+ s_4 A_4  
+
 A_4 s_4^{-1} w^- s_4 w^- s_4^{-1} A_4 
+ A_5^{(2)}
= A_4 c^3 A_4 +  A_4 s_4 w^+ s_4^{-1} w^+ s_4 A_4  
+
A_4 s_4^{-1} w^- s_4 w^- s_4^{-1} A_4 
+ A_5^{(2)}
= A_4 c^3 A_4 +  A_4 s_4 w^+ s_4^{-1} w^+ s_4 A_4  
+
 s_4^{-1} w^- s_4 w^- s_4^{-1} A_4 
+ A_5^{(2)}
$
and, since $c^3$ is central and by lemma \ref{lemdroitegaucheA5},
this latter expression can be written as
$A_4 c^3  +  A_4 s_4 w^+ s_4^{-1} w^+ s_4   
+
 s_4^{-1} w^- s_4 w^- s_4^{-1}  
+ A_5^{(2)} = 
A_4 \delta^3  +  A_4 s_4 w^+ s_4^{-1} w^+ s_4   
+
 s_4^{-1} w^- s_4 w^- s_4^{-1}  
+ A_5^{(2)}
=A_4 s_4 w^- s_4 w^- s_4 A_3^{\times}  +  A_4 s_4 w^+ s_4^{-1} w^+ s_4   
+
 s_4^{-1} w^- s_4 w^- s_4^{-1}  
+ A_5^{(2)}
=A_4 s_4 w^- s_4 w^- s_4   +  A_4 s_4 w^+ s_4^{-1} w^+ s_4   
+
 s_4^{-1} w^- s_4 w^- s_4^{-1}  
+ A_5^{(2)}$. The other expression is deduced from this
one by application of $\Phi \circ \Psi$.

\end{proof}

\begin{proposition} \label{propA5A53} $A_5 = A_5^{(3)}$.
\end{proposition}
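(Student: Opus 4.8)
The plan is to prove that $A_5^{(3)}$ is a right ideal of $A_5$ that contains $1$; since $A_5$ is unital this immediately gives $A_5 = A_5^{(3)}$. Note first that $1 \in A_4 = A_5^{(0)} \subset A_5^{(3)}$, and that $A_5^{(3)} = A_5^{(2)} u_4 A_4$ is by construction a right $A_4$-module (indeed an $A_4$-bimodule). Recalling that $s_4^{-1} = c^{-1}(s_4^2 - a s_4 - b)$ already lies in the subalgebra generated by $s_4$, the algebra $A_5$ is generated over $R$ by $A_4$ together with $s_4$. Hence the set $S = \{ x \in A_5 \mid A_5^{(3)} x \subset A_5^{(3)} \}$, which is plainly a unital $R$-subalgebra of $A_5$, will contain all of $A_5$ as soon as it contains $A_4$ and $s_4$; and then $A_5 = 1 \cdot A_5 \subset A_5^{(3)} \cdot A_5 \subset A_5^{(3)}$. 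Since $A_4 \subset S$ by the right $A_4$-module structure, everything reduces to the single inclusion $A_5^{(3)} s_4 \subset A_5^{(3)}$.

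To establish it I would invoke the left-hand description furnished by Proposition \ref{propdroitegaucheA53},
\[
A_5^{(3)} = A_4\, s_4 w^- s_4 w^- s_4 + A_4\, s_4 w^+ s_4^{-1} w^+ s_4 + A_4\, s_4^{-1} w^- s_4 w^- s_4^{-1} + A_5^{(2)},
\]
and multiply each summand on the right by $s_4$. Because $A_5^{(3)}$ is a left $A_4$-module, it is enough to see that right multiplication by $s_4$ sends each of the three displayed generators, as well as $A_5^{(2)}$, back into $A_5^{(3)}$; the left factors $A_4$ are then simply reabsorbed.

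The verifications are routine applications of the cubic relation $s_4^2 = a s_4 + b + c\, s_4^{-1}$ together with the depth bookkeeping $A_5^{(2)} = A_4 u_4 A_4 u_4 A_4$ and $A_5^{(3)} = A_4 u_4 A_4 u_4 A_4 u_4 A_4$ (recall $w^\pm \in A_4$). For the first generator, $(s_4 w^- s_4 w^- s_4)s_4 = a\, s_4 w^- s_4 w^- s_4 + b\, s_4 w^- s_4 w^- + c\, s_4 w^- s_4 w^- s_4^{-1}$, where the first term is a generator, $s_4 w^- s_4 w^- \in u_4 A_4 u_4 A_4 \subset A_5^{(2)}$, and $s_4 w^- s_4 w^- s_4^{-1} \in u_4 A_4 u_4 A_4 u_4 \subset A_5^{(3)}$. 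The second generator is treated identically, and for the third the outer inverse cancels, $(s_4^{-1} w^- s_4 w^- s_4^{-1})s_4 = s_4^{-1} w^- s_4 w^- \in u_4 A_4 u_4 A_4 \subset A_5^{(2)}$. Finally $A_5^{(2)} s_4 \subset A_4 u_4 A_4 u_4 A_4 u_4 \subset A_5^{(3)}$. This yields $A_5^{(3)} s_4 \subset A_5^{(3)}$ and hence the proposition.

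I do not expect a real obstacle at this final step: all the substance has been concentrated in Proposition \ref{propdroitegaucheA53}, and ultimately in Lemma \ref{lemcrucial}, where the centrality of $\delta^3 = c^3$ was what allowed the a priori two-sided module $A_4(\cdots)A_4$ to be rewritten one-sidedly. The only thing demanding attention here is the $u_4$-depth count: the cubic relation is precisely what prevents $(s_4 w^\pm s_4^{\mp} w^\pm s_4)s_4$ from producing a genuine fourth factor $s_4$ and thereby escaping into $A_5^{(4)}$, so that the filtration stabilises at level $3$.
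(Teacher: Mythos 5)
Your proposal is correct and follows essentially the same route as the paper: the paper also reduces the claim to right-multiplication stability (showing $A_5^{(4)} = A_5^{(3)}u_4A_4 \subset A_5^{(3)}$) and, exactly as you do, feeds the one-sided description of $A_5^{(3)}$ from Proposition \ref{propdroitegaucheA53} into a right factor of $u_4$, absorbing it into the terminal $s_4^{\pm1}$ of each generator. Your idealizer-set formulation and the explicit cubic-relation expansion are only cosmetic variants of the paper's observation that $u_4u_4 = u_4$, so the two arguments coincide in substance.
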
 
\begin{proof}
One only needs to prove $A_5^{(4)} \subset A_5^{(3)}$,
that is $u_4 a u_4 b u_4 c u_4  \subset A_5^{(3)}$
for all $a,b,c \in A_4$. We have
$ u_4 a u_4 b u_4 c \in A_5^{(3)} = A_5^{(2)}
A_4 s_4 w^- s_4 w^- s_4
+
A_4 s_4 w^+ s_4^{-1} w^+ s_4  
+
A_4 s_4^{-1} w^- s_4 w^- s_4^{-1}$
hence 
$$
u_4 a u_4 b u_4 c u_4
\subset 
A_5^{(2)}u_4
A_4 s_4 w^- s_4 w^- s_4 u_4
+
A_4 s_4 w^+ s_4^{-1} w^+ s_4  u_4
+
A_4 s_4^{-1} w^- s_4 w^- s_4^{-1} u_4
\subset A_5^{(3)}.
$$
This proves the claim.
\end{proof}

This proves theorem \ref{theodecA5}, and actually the following refinement :
\begin{theorem} \label{theodecA5raf}
$$
\begin{array}{lcl}
A_5 &=& A_4 + A_4 s_4 A_4 + A_4 s_4^{-1} A_4 + A_4 s_4 s_3^{-1} s_4 A_4 +
A_4 s_4^{-1} s_3 s_2^{-1} s_3 s_4^{-1} A_4 +
A_4 s_4 s_3^{-1} s_2 s_3^{-1} s_4 A_4 \\
& & + 
A_4 s_4^{-1} w^+ s_4^{-1} A_4 +
A_4 s_4 w^-s_4 A_4 + A_4 s_4^{-1} w^- s_4^{-1} A_4
+ A_4 s_4 w^+ s_4 A_4 +  s_4 w^- s_4 w^- s_4 A_4\\
& &  +
 s_4 w^+ s_4^{-1} w^+ s_4 A_4 +
 s_4^{-1} w^- s_4 w^- s_4^{-1} A_4
\end{array}
$$
\end{theorem}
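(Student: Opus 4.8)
The plan is to assemble the statement directly from Propositions \ref{propA5A53} and \ref{propdroitegaucheA53}, together with the analysis of $A_5^{(2)}$ already contained in Lemma \ref{lemuAuDansU}; no genuinely new computation is required, which is exactly why the refinement comes ``for free'' alongside Theorem \ref{theodecA5}. First I would invoke Proposition \ref{propA5A53} to replace $A_5$ by $A_5^{(3)}$, and then apply the \emph{second} displayed equality of Proposition \ref{propdroitegaucheA53}, namely
$$
A_5^{(3)} = s_4 w^- s_4 w^- s_4 A_4 + s_4 w^+ s_4^{-1} w^+ s_4 A_4 + s_4^{-1} w^- s_4 w^- s_4^{-1} A_4 + A_5^{(2)}.
$$
The three explicit summands here are precisely the last three terms of the claimed decomposition (those carrying a right factor $A_4$ but no left factor), so everything reduces to showing that $A_5^{(2)}$ is contained in the sum of the first ten terms.

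Write $U^\flat$ for the sum of the first ten terms of the statement. The key observation is that $U^\flat$ is exactly the collection of generators produced inside the proof of Lemma \ref{lemuAuDansU}. Indeed, applying the shift $s_i \mapsto s_{i+1}$ to Theorem \ref{theodecA4}(1) and flanking by $A_4$ gives
$$
A_4\, sh(A_4)\, A_4 = A_4 + A_4 s_4 A_4 + A_4 s_4^{-1} A_4 + A_4 s_4 s_3^{-1} s_4 A_4 + A_4 s_4^{-1} s_3 s_2^{-1} s_3 s_4^{-1} A_4 + A_4 s_4 s_3^{-1} s_2 s_3^{-1} s_4 A_4,
$$
which are the terms $1$--$6$ (here $sh(w^-)=s_4^{-1}s_3 s_2^{-1}s_3 s_4^{-1}$ and $sh(w^+)=s_4 s_3^{-1}s_2 s_3^{-1}s_4$ are absorbed), while the four sandwiches $A_4 s_4^{\alpha} w^{\beta} s_4^{\alpha} A_4$ with \emph{equal} signs on the two copies of $s_4$ are the terms $7$--$10$. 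Since $A_5^{(2)} = A_4 u_4 A_4 u_4 A_4 = A_4 (u_4 A_4 u_4) A_4$ and every one of the terms $1$--$10$ is an $A_4$-sub-bimodule, it suffices to check $u_4 A_4 u_4 \subset U^\flat$. This is what the proof of Lemma \ref{lemuAuDansU} gives: using $A_4 = A_3 + A_3 s_3 A_3 + A_3 s_3^{-1} A_3 + A_3 s_3 s_2^{-1} s_3 A_3 + A_3 w^- + A_3 w^+$, the first four pieces feed into $A_4\, sh(A_4)\, A_4$ (terms $1$--$6$), the equal-sign sandwiches $s_4^{\alpha} w^{\beta} s_4^{\alpha}$ are terms $7$--$10$ by definition, and the mixed-sign sandwiches $s_4^{\alpha} w^{\beta} s_4^{-\alpha}$ are rewritten via Lemma \ref{lemsplusmoins} back into $A_4 (s_4^{\alpha} A_4 s_4^{\alpha}) A_4$, hence again into terms $1$--$10$.

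Assembling the two steps yields $A_5 = A_5^{(3)} \subset U^\flat + (\text{the three right-factored double-}w\text{ terms}) = \mathrm{RHS}$, and the reverse inclusion is immediate since every summand is a product of elements of $A_5$. The one point that genuinely needs care — and the main (indeed only) obstacle — is the last assertion of the preceding paragraph: one must confirm that none of the reductions of $u_4 A_4 u_4$ in Lemma \ref{lemuAuDansU} ever creates a ``long'' generator of the form $s_4^{\pm} w^{\pm} s_4^{\mp} w^{\pm} s_4^{\pm}$, so that all of $A_5^{(2)}$ really lands in the \emph{low} part $U^\flat$. This holds because the double-$w$ generators occur only in the study of the quotient $A_5^{(3)}/A_5^{(2)}$ (Propositions \ref{propdecomp1bimoduuu} and \ref{propdroitegaucheA53}) and never in the treatment of $A_5^{(2)}$ itself; thus invoking the second rather than the first form of Proposition \ref{propdroitegaucheA53} upgrades Theorem \ref{theodecA5} to Theorem \ref{theodecA5raf} without any additional work.
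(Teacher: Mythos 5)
Your proposal is correct and follows essentially the same route as the paper: the paper obtains Theorem \ref{theodecA5raf} by combining Proposition \ref{propA5A53} with the second (right-factored) equality of Proposition \ref{propdroitegaucheA53}, and by observing — as you verify in detail — that the argument of Lemma \ref{lemuAuDansU} together with the shifted Theorem \ref{theodecA4} places all of $A_5^{(2)}$ inside the first ten (bimodule) summands, a decomposition the paper itself records later as $A_5^{(2)} = A_5^{(1\frac{1}{2})} + \sum_{\alpha,\beta} A_4 s_4^{\alpha} w^{\beta} s_4^{\alpha} A_4$. Your closing check that the mixed-sign sandwiches $s_4^{\alpha} w^{\beta} s_4^{-\alpha}$ reduce into $A_4(s_4^{\alpha}A_4 s_4^{\alpha})A_4$, so that no double-$w$ generator is needed for $A_5^{(2)}$, is exactly the point that makes the refinement immediate.
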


\subsection{$A_5$ as a $A_4$-module}

We need the following lemma on $A_3$ :
\begin{lemma} {\ } \label{lemAM1} 
\begin{enumerate}
\item $u_2 u_1 u_2 \subset u_1 s_2 s_1^2 s_2 + u_1 u_2 u_1$
\item $u_2 u_1 u_2 \subset u_1 s_2^{-1} s_1^{-2} s_2^{-1} + u_1 u_2 u_1$
\end{enumerate}
\end{lemma}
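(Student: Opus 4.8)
The plan is to prove (1) directly by reducing $u_2u_1u_2$ to a short list of generators, and then to obtain (2) for free by applying the automorphism $\Phi$. First I would recall that $u_i$ is $R$-spanned by $\{1,s_i,s_i^{-1}\}$, so $u_2u_1u_2$ is $R$-spanned by the elements $s_2^{\alpha}s_1^{\beta}s_2^{\gamma}$ with $\alpha,\beta,\gamma\in\{-1,0,1\}$. Whenever one of the three exponents vanishes, the corresponding product lies in $u_2$, in $u_1u_2$, or in $u_2u_1$, each of which is contained in $u_1u_2u_1$; so I would only need to treat the eight elements with $\alpha,\beta,\gamma\in\{-1,1\}$. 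Among these, the ones with opposite outer signs ($\gamma=-\alpha$) lie in $u_1u_2u_1$ by lemma \ref{lemsplusmoins}, while $s_2s_1s_2=s_1s_2s_1$ and $s_2^{-1}s_1^{-1}s_2^{-1}=s_1^{-1}s_2^{-1}s_1^{-1}$ lie in $u_1u_2u_1$ by the braid relation. This leaves exactly the two exceptional elements $s_2s_1^{-1}s_2$ and $s_2^{-1}s_1s_2^{-1}$ to be expressed in $u_1s_2s_1^2s_2+u_1u_2u_1$.

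The crux will be the identity $s_1\,(s_2s_1^2s_2)=(s_2s_1^2s_2)\,s_1$, which already holds in $B_3$ (indeed $s_1s_2s_1^2s_2=s_2s_1s_2s_1s_2=s_2s_1s_1s_2s_1=s_2s_1^2s_2s_1$ by two applications of the braid relation), so that $s_2s_1^2s_2$ commutes with all of $u_1$. Expanding $s_1^2=as_1+b+cs_1^{-1}$ gives $s_2s_1^2s_2=a\,s_2s_1s_2+b\,s_2^2+c\,s_2s_1^{-1}s_2$, and since $s_2s_1s_2=s_1s_2s_1\in u_1u_2u_1$ and $s_2^2\in u_2\subseteq u_1u_2u_1$, I get $s_2s_1^2s_2\in c\,s_2s_1^{-1}s_2+u_1u_2u_1$; inverting, $s_2s_1^{-1}s_2\in c^{-1}s_2s_1^2s_2+u_1u_2u_1\subseteq u_1s_2s_1^2s_2+u_1u_2u_1$. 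For $s_2^{-1}s_1s_2^{-1}$, lemma \ref{leminverse} yields $s_2^{-1}s_1s_2^{-1}\in c^{-1}(s_2s_1^{-1}s_2)s_1+u_1u_2u_1$; substituting the previous line and then pushing the trailing $s_1$ to the left through $s_2s_1^2s_2$ by commutation, $(s_2s_1^2s_2)s_1=s_1(s_2s_1^2s_2)\in u_1s_2s_1^2s_2$, while $(u_1u_2u_1)s_1\subseteq u_1u_2u_1$, so $s_2^{-1}s_1s_2^{-1}\in u_1s_2s_1^2s_2+u_1u_2u_1$ as well. Since the right-hand side is an $R$-module containing every $R$-generator of $u_2u_1u_2$, this proves (1).

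Finally, (2) would follow by applying the $\mathbf{Z}$-algebra automorphism $\Phi$ of $A_3$ to (1): because $\Phi(s_i)=s_i^{-1}$ still generates $u_i$, $\Phi$ fixes each $u_i$, hence fixes $u_2u_1u_2$ and $u_1u_2u_1$ setwise and sends $u_1s_2s_1^2s_2$ to $u_1s_2^{-1}s_1^{-2}s_2^{-1}$; applying the bijection $\Phi$ to the inclusion (1) gives exactly (2). The one genuinely non-formal step — and the main thing to get right — will be the commutativity of $s_2s_1^2s_2$ with $s_1$: it is precisely this that lets the stray $s_1$ produced by lemma \ref{leminverse} be absorbed into the left factor $u_1$, turning the a priori problematic $s_2s_1^2s_2\cdot s_1$ into a genuine member of $u_1s_2s_1^2s_2$ rather than a new term outside the claimed span.
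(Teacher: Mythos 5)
Your proof is correct, and its overall skeleton coincides with the paper's: the same reduction of $u_2u_1u_2$ to the two exceptional words $s_2s_1^{-1}s_2$ and $s_2^{-1}s_1s_2^{-1}$ (the paper quotes lemma \ref{lemspmgross}, you use lemma \ref{lemsplusmoins} plus the braid relations, which is what that lemma amounts to), the same use of the cubic relation to trade $s_2s_1^{-1}s_2$ for $s_2s_1^2s_2$ modulo $u_1u_2u_1$, the same appeal to lemma \ref{leminverse} for $s_2^{-1}s_1s_2^{-1}$, and the same derivation of (2) from (1) by $\Phi$. The one genuine divergence is how you absorb the trailing $s_1$ produced by lemma \ref{leminverse}: the paper invokes the quasi-commutation lemma \ref{lemquasicom}, which moves $x\in u_1$ past $s_2s_1^{-1}s_2$ only up to a correction term in $u_1u_2u_1$, whereas you first rewrite everything in terms of $s_2s_1^2s_2$ and then use the exact braid-group identity $s_1(s_2s_1^2s_2)=(s_2s_1^2s_2)s_1$. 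Your variant is slightly cleaner, replacing a deformed identity by an exact one; it rests on the centrality of $s_2s_1^2s_2$ over $\langle s_1\rangle$, which is precisely the phenomenon the paper itself exploits later for $w_0=s_3s_2s_1^2s_2s_3$ in lemma \ref{lemauxA4w0}. Either mechanism closes the computation, so there is nothing to fix.
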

\begin{proof}
(2) is a consequence of (1) by using $\Phi$, so it is enough to prove (1). 
We have $u_2 u_1 u_2 \subset u_1 u_2 u_1 + \sum_{\alpha \in \{ -1, 1 \}} R s_2^{\alpha} s_1^{-\alpha} s_2^{\alpha}$
because $u_i$ is $R$-spanned by $1,s_i,s_i^{-1}$ and because of lemma \ref{lemspmgross}. Moreover
$s_2^{-1} s_1 s_2^{-1} \in u_1 s_2 s_1^{-1} s_2 + u_1 u_2 u_1$ by lemmas \ref{leminverse} and \ref{lemquasicom}, hence
$u_2 u_1 u_2 \subset u_1 s_2 s_1^{-1} s_2 + u_1 u_2 u_1$. Since $s_1^{-1} \in R s_1^2 + R s_1 + R$
we get $s_2 s_1^{-1} s_2 \in R s_2 s_1^2 s_2 + R s_2 s_1 s_2 + R s_2^2 \subset R s_2 s_1^2 s_2 + u_1 u_2 u_1$
hence $u_2 u_1 u_2 \subset u_1 s_2 s_1^2 s_2 + u_1 u_2 u_1$.
\end{proof}

We introduce or re-introduce the following submodules of $A_5$ : 
$$
\begin{array}{lcl}
A_5^{(1)} &=& A_4 u_4 A_4 = A_4 + A_4 s_4 A_4 + A_4 s_4^{-1} A_4 \\
A_5^{(1 \frac{1}{4})} &=& A_5^{(1)} + A_4 s_4 s_3^{-1} s_4 A_4 ( = A_5^{(1)} + A_4 sh^2(A_3) A_4) \\
A_5^{(1 \frac{1}{2} ) } &=& A_5^{(1 \frac{1}{4})} + A_4 u_4 u_3 u_2 u_3 u_4 A_4 \\
 &=& A_5^{(1 \frac{1}{4})} + A_4 s_4 s_3^{-1} s_2 s_3^{-1} s_4 A_4 + A_4 s_4^{-1} s_3 s_2^{-1} s_3 s_4^{-1} A_4 \\
 A_5^{(2)} &=& A_4 u_4 A_4 u_4 A_4 = A_5^{(1 \frac{1}{2} ) } + \sum_{\alpha,\beta \in \{1,-1 \}} A_4 s_4^{\alpha} w^{\beta} s_4^{\alpha} A_4 
\end{array}
$$
and
$$A_5 = A_5^{(3)} = A_4 u_4A_4 u_4A_4 u_4 A_4 = A_5^{(2)} + A_4 s_4 w^- s_4 w^- s_4
+ A_4 s_4 w^+ s_4^{-1} w^+ s_4 + A_4 s_4^{-1} w^- s_4 w^- s_4^{-1}
$$

We let $\mathcal{B}$ denote the family of elements defined in corollary \ref{corA4B72},
which span $A_4$ as a left $B$-module, $\mathcal{A}$ the family spanning
$A_4$ as a $A_3$-module defined in proposition \ref{propA4A327}, and $\mathcal{A'}$
its image under the automorphism $\Ad \Delta$ of $A_4$ (that is $s_1 \leftrightarrow s_3$, $s_2 \leftrightarrow s_2$).
We prove the following.

\begin{lemma}{\ } \label{lemAM2}
\begin{enumerate}
\item $A_5^{(1)} = A_4 + \sum_{x \in \mathcal{A}} A_4 s_4 x + \sum_{x \in \mathcal{A}} A_4 s_4^{-1} x$
\item $A_5^{(1 \frac{1}{4})} = A_5^{(1)} + \sum_{x \in \mathcal{B}} A_4 s_4 s_3^{-1} s_4 x $
\end{enumerate}

\end{lemma}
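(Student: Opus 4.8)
The plan is to prove both equalities by the same device: to transport a suitable generating family of $A_4$ across the fixed ``central block'' (namely $s_4$ in (1), and $s_4 s_3^{-1} s_4$ in (2)) by means of commutation relations, absorbing any error terms into $A_5^{(1)}$. In both statements the inclusion $\supseteq$ is immediate, since each displayed summand already lies in $A_5^{(1)}$ (resp.\ $A_5^{(1 \frac{1}{4})}$); so only the inclusion $\subset$ requires work.

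For (1), I would start from proposition \ref{propA4A327}, which expresses $A_4 = \sum_{x \in \mathcal{A}} A_3 x$ as a left $A_3$-module. The key observation is that $s_4$ commutes with both $s_1$ and $s_2$, hence with all of $A_3$, so that $s_4 A_3 = A_3 s_4$ and $s_4^{-1} A_3 = A_3 s_4^{-1}$. Expanding the right-hand factor in $A_4 s_4 A_4$ then gives $A_4 s_4 A_4 = \sum_{x} A_4 A_3 s_4 x = \sum_{x} A_4 s_4 x$, and identically $A_4 s_4^{-1} A_4 = \sum_{x} A_4 s_4^{-1} x$; adding $A_4$ yields (1) with no error terms at all.

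For (2) it suffices to treat the extra summand $A_4 s_4 s_3^{-1} s_4 A_4$, expanding its right factor via $A_4 = \sum_{x \in \mathcal{B}} B x$ (corollary \ref{corA4B72}). First I would establish the quasi-commutation $s_4 s_3^{-1} s_4 B \subset B s_4 s_3^{-1} s_4 + A_5^{(1)}$. Writing $B = u_3 u_1$, lemma \ref{lemquasicom}(2) with $i = 3$ gives $s_4 s_3^{-1} s_4 u_3 \subset u_3 s_4 s_3^{-1} s_4 + u_3 u_4 u_3$; multiplying on the right by $u_1$, which commutes with $s_4 s_3^{-1} s_4$ because $s_1$ commutes with $s_3$ and $s_4$, keeps the main term in $B s_4 s_3^{-1} s_4$ while the error term $u_3 u_4 u_3 u_1 \subset A_4 u_4 A_4 = A_5^{(1)}$ is absorbed. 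Feeding this back in, and using $A_4 B = A_4$ together with the fact that $A_5^{(1)}$ is an $A_4$-bimodule, gives $A_4 s_4 s_3^{-1} s_4 A_4 \subset \sum_{x \in \mathcal{B}} A_4 s_4 s_3^{-1} s_4 x + A_5^{(1)}$, which is (2).

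The only genuinely non-formal step, and hence the main (albeit minor) obstacle, is this quasi-commutation $s_4 s_3^{-1} s_4 B \subset B s_4 s_3^{-1} s_4 + A_5^{(1)}$: one must verify that the error term produced by lemma \ref{lemquasicom} indeed lands in $A_5^{(1)}$, so that it can be discarded. Everything else follows directly from the module descriptions of proposition \ref{propA4A327} and corollary \ref{corA4B72} and from the elementary commutations of $s_4$ (resp.\ $s_4 s_3^{-1} s_4$) with $A_3$ (resp.\ $u_1$). Note in particular that lemma \ref{lemAM1} is not needed for these two items.
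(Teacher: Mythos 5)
Your proposal is correct and follows essentially the same route as the paper: item (1) via the commutation $s_4^{\pm 1} A_3 = A_3 s_4^{\pm 1}$ together with $A_4 = \sum_{x \in \mathcal{A}} A_3 x$, and item (2) via the quasi-commutation $(s_4 s_3^{-1} s_4) B \subset B (s_4 s_3^{-1} s_4) + A_5^{(1)}$ together with $A_4 = \sum_{x \in \mathcal{B}} B x$. The only (immaterial) difference is that you obtain the quasi-commutation from lemma \ref{lemquasicom}(2) applied to all of $u_3$, with error term $u_3 u_4 u_3 u_1 \subset A_5^{(1)}$, whereas the paper invokes lemma \ref{lemdecomp1212} on the generator $s_3^{-1}$ and gets the finer error $u_3 u_4 + u_4 u_3$, which is not needed here.
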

\begin{proof} (1) is a consequence of $A_3 s_4^{\pm 1 } = s_4^{\pm 1} A_3$, because
$$
A_5^{(1)} = A_4 + A_4 s_4 A_4 + A_4 s_4^{-1} A_4
= A_4 + A_4 s_4 \sum_{x \in \mathcal{A}} A_3 x
 + A_4 s_4^{-1} \sum_{x \in \mathcal{A}} A_3 x
= A_4 +  \sum_{x \in \mathcal{A}}A_4 s_4  x
 + \sum_{x \in \mathcal{A}} A_4 s_4^{-1}   x$$
We prove (2). We have $(s_4 s_3^{-1} s_4)s_1 = s_1 (s_4 s_3^{-1} s_4)$
and $(s_4 s_3^{-1} s_4) s_3^{-1} \in s_3^{-1} (s_4 s_3^{-1} s_4) + u_3 u_4 + u_4 u_3$ by lemma \ref{lemdecomp1212},
hence $(s_4 s_3^{-1} s_4) B \subset B (s_4 s_3^{-1} s_4)+ A_5^{(1)}$, where we recall $B = \langle s_1, s_3^{-1} \rangle = \langle s_1,s_3 \rangle$.
Thus
$$
A_5^{(1 \frac{1}{4})} = A_5^{(1)} + A_4 s_4 s_3^{-1} s_4 \sum_{x \in \mathcal{B}} B x
= A_5^{(1)} +  \sum_{x \in \mathcal{B}} A_4 s_4 s_3^{-1} s_4 B x
= A_5^{(1)} +  \sum_{x \in \mathcal{B}} A_4 s_4 s_3^{-1} s_4  x
$$
\end{proof}

\begin{lemma}{\ } \label{lemAM3}
\begin{enumerate}
\item $A_5^{(1 \frac{1}{2})} \subset A_5^{(1 \frac{1}{4})} + A_4 s_4 s_3 s_2^2 s_3 s_4 A_4 + A_4 s_4^{-1} s_3^{-1} s_2^{-2} s_3^{-1} s_4^{-1} A_4$
\item $A_5^{(1 \frac{1}{2})} \subset A_5^{(1 \frac{1}{4})} + \sum_{x \in \mathcal{A}'} A_4 s_4 s_3 s_2^2 s_3 s_4 x + \sum_{x \in \mathcal{A}'}A_4 s_4^{-1} s_3^{-1} s_2^{-2} s_3^{-1} s_4^{-1} x$
\end{enumerate}

\end{lemma}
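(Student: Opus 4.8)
The plan is to derive both inclusions from the already-established structure of $w_0$ inside $A_4$ (Lemma \ref{lemauxA4w0}) by transporting everything one strand to the right via the shift. Let $sh\colon s_i \mapsto s_{i+1}$ denote the $R$-algebra morphism $A_4 \to A_5$, which is well defined because the cubic relation is shift-invariant. Write $\tilde w_0 = sh(w_0) = s_4 s_3 s_2^2 s_3 s_4$, so that $\tilde w_0^{-1} = sh(w_0^{-1}) = s_4^{-1} s_3^{-1} s_2^{-2} s_3^{-1} s_4^{-1}$, and observe that $sh(w^+) = s_4 s_3^{-1} s_2 s_3^{-1} s_4$ and $sh(w^-) = s_4^{-1} s_3 s_2^{-1} s_3 s_4^{-1}$ are precisely the two elements occurring in the definition $A_5^{(1\frac{1}{2})} = A_5^{(1\frac{1}{4})} + A_4\, sh(w^+)\, A_4 + A_4\, sh(w^-)\, A_4$.

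For item (1) I would apply $sh$ to Lemma \ref{lemauxA4w0}(1), namely $w_0 \in A_3^{\times} w^+ + U_0$ and $w_0^{-1} \in A_3^{\times} w^- + U_0$ with $U_0 = A_3 u_3 A_3 + A_3 u_3 u_2 u_3 A_3$. This gives $\tilde w_0 \in sh(A_3)^{\times} sh(w^+) + sh(U_0)$ and $\tilde w_0^{-1} \in sh(A_3)^{\times} sh(w^-) + sh(U_0)$, where $sh(A_3) = \langle s_2, s_3 \rangle \subset A_4$ and $sh(U_0) = \langle s_2,s_3\rangle u_4 \langle s_2,s_3\rangle + \langle s_2,s_3\rangle u_4 u_3 u_4 \langle s_2,s_3\rangle$. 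The crucial verification is $sh(U_0) \subset A_5^{(1\frac{1}{4})}$: the first summand lies in $A_4 u_4 A_4 = A_5^{(1)}$, and the second lies in $A_4 u_4 u_3 u_4 A_4 \subset A_4\, sh^2(A_3)\, A_4 = A_5^{(1\frac{1}{4})}$ since $u_4 u_3 u_4 \subset \langle s_3,s_4\rangle = sh^2(A_3)$. Because the coefficient in $sh(A_3)^{\times}$ is a unit of $A_4$ and $A_5^{(1\frac{1}{4})}$ is an $A_4$-bimodule, each relation can be inverted modulo $A_5^{(1\frac{1}{4})}$; hence $A_4\, sh(w^{\pm})\, A_4 \subset A_4\, \tilde w_0^{\pm 1}\, A_4 + A_5^{(1\frac{1}{4})}$, which yields (1).

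For item (2) I would exploit that $w_0$ commutes with $s_1$ and $s_2$ already in $B_4$, so that after shifting $\tilde w_0$ and $\tilde w_0^{-1}$ commute with $s_2$ and $s_3$, hence with all of $\langle s_2, s_3\rangle$, in $B_5$ and so in $A_5$. By Proposition \ref{propA4A327} one has $A_4 = \sum_{x \in \mathcal{A}} A_3 x$ as a left $A_3$-module; applying the automorphism $\Ad \Delta$, which exchanges $s_1$ and $s_3$ and fixes $s_2$ so that $\Ad\Delta(A_3) = \langle s_2,s_3\rangle$, gives $A_4 = \sum_{x \in \mathcal{A}'} \langle s_2, s_3\rangle x$. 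Decomposing the right-hand factor of $A_4 \tilde w_0 A_4$ in this way and using the centralizing property to slide $\langle s_2,s_3\rangle$ across $\tilde w_0$ into the left $A_4$-factor, I obtain $A_4 \tilde w_0 A_4 = \sum_{x \in \mathcal{A}'} A_4 \tilde w_0 x$, and likewise $A_4 \tilde w_0^{-1} A_4 = \sum_{x \in \mathcal{A}'} A_4 \tilde w_0^{-1} x$. Substituting these into (1) gives exactly (2).

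Every step here is routine manipulation of the bimodule definitions; the one point needing genuine (if brief) checking is the inclusion $sh(U_0) \subset A_5^{(1\frac{1}{4})}$, which is where the identification $A_5^{(1\frac{1}{4})} = A_4\, sh^2(A_3)\, A_4$ is actually used. The shift and the centrality of $\tilde w_0$ over $\langle s_2, s_3\rangle$ do all the remaining work, so I do not expect any serious obstacle.
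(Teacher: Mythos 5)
Your proof is correct, and part (2) is exactly the paper's argument: the paper likewise writes $A_4 = \sum_{x \in \mathcal{A}'} \langle s_2,s_3\rangle\, x$ via $\Ad \Delta$ and slides $\langle s_2,s_3\rangle$ across $s_4 s_3 s_2^2 s_3 s_4$ using the fact that this element (the shift of $w_0$) centralizes $s_2$ and $s_3$. Part (1), however, rests on a different key lemma. The paper proves an ad hoc $A_3$-level statement (Lemma \ref{lemAM1}: $u_2 u_1 u_2 \subset u_1 s_2 s_1^2 s_2 + u_1 u_2 u_1$, together with its $\Phi$-image), shifts it to $u_3 u_2 u_3$, and sandwiches it between two $s_4$'s, disposing of the cross term via $s_4 u_2 u_3 u_2 s_4 = u_2 s_4 u_3 s_4 u_2 \subset A_5^{(1 \frac{1}{4})}$; you instead transport the already-available $A_4$-level Lemma \ref{lemauxA4w0}(1) through the shift morphism and invert it modulo $A_5^{(1 \frac{1}{4})}$, the error term $sh(U_0)$ being absorbed by the same kind of verification. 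The two routes bottom out in the same facts (Lemmas \ref{leminverse} and \ref{lemquasicom}, which underlie both \ref{lemAM1} and \ref{lemauxA4w0}), but yours is more economical: it makes Lemma \ref{lemAM1} unnecessary for this purpose and produces the $5$-strand statement functorially in one step, whereas the paper's version keeps the computation explicit and at the lowest strand level. Note also that the inversion you perform ($w^{\pm}$ expressed through $w_0^{\pm 1}$ modulo $U_0$) is already recorded in Lemma \ref{lemauxA4w0}(2) in the form $A_3 w^+ + U_0 = A_3 w_0 + U_0$, so that step can simply be quoted rather than re-argued.
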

\begin{proof}
We have $s_3^{-1} s_2 s_3^{-1} \subset u_2 s_3 s_2^2 s_3 + u_2 u_3 u_2$ by lemma \ref{lemAM1} hence
$s_4 s_3^{-1} s_2 s_3^{-1} s_4\subset s_4u_2 s_3 s_2^2 s_3s_4 + s_4u_2 u_3 u_2s_4
\subset  u_2 s_4s_3 s_2^2 s_3s_4 + u_2 s_4u_3 s_4u_2
\subset  u_2 s_4s_3 s_2^2 s_3s_4 + A_5^{(1 \frac{1}{4})}$. Applying $\Phi$ this
implies 
$s_4^{-1} s_3 s_2^{-1} s_3 s_4^{-1}\subset  u_2 s_4^{-1}s_3^{-1} s_2^{-2} s_3^{-1}s_4^{-1} + A_5^{(1 \frac{1}{4})}$ which
proves (1). Let $A'_3 = \langle s_2, s_3 \rangle$. We have $A_4 = \sum_{x \in \mathcal{A}'} x$. Since
$s_4 s_3 s_2^2 s_3 s_4$ commutes with $s_2$ and $s_3$ hence to $A'_3$, we get
$$
A_4 s_4s_3 s_2^2 s_3s_4 A_4 
\subset \sum_{x \in \mathcal{A}'} A_4 s_4s_3 s_2^2 s_3s_4 A'_3 x
\subset \sum_{x \in \mathcal{A}'} A_4 s_4s_3 s_2^2 s_3s_4  x
$$
and similarly $s_4^{-1}s_3^{-1} s_2^{-2} s_3^{-1}s_4^{-1}= (s_4s_3 s_2^2 s_3s_4)^{-1}$ commutes with $s_2$ and $s_3$ hence
$$A_4 s_4^{-1}s_3^{-1} s_2^{-2} s_3^{-1}s_4^{-1} A_4 \subset \sum_{x \in \mathcal{A}'} A_4 s_4^{-1}s_3^{-1} s_2^{-2} s_3^{-1}s_4^{-1} x$$
which proves (2).
\end{proof}

\begin{lemma}{\ }\label{lemAM4} 
$$
A_5^{(2)} = A_5^{(1 \frac{1}{2})} + \sum_{\alpha \in \{ -1,1 \} } A_4 s_4^{\alpha} w^{\alpha} s_4^{\alpha} + \sum_{\alpha \in \{ -1,1 \}} \sum_{x \in \mathcal{A}} A_4 s_4^{\alpha} w^{-\alpha} s_4^{\alpha} x
$$
\end{lemma}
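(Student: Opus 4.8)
The plan is to start from the bimodule decomposition $A_5^{(2)} = A_5^{(1\frac{1}{2})} + \sum_{\alpha,\beta \in \{-1,1\}} A_4 s_4^{\alpha} w^{\beta} s_4^{\alpha} A_4$ recorded just above, and to convert each of the four bimodules $A_4 s_4^{\alpha} w^{\beta} s_4^{\alpha} A_4$ into a left $A_4$-module, the behaviour depending on whether the signs match ($\beta = \alpha$) or not ($\beta = -\alpha$). The only preliminary fact needed is the inclusion $s_4^{\alpha} U_0 s_4^{\alpha} \subset A_5^{(1\frac{1}{2})}$ for $\alpha \in \{-1,1\}$: since $s_4$ commutes with $A_3$ and with $u_2$, and since $U_0 = A_3 u_3 A_3 + A_3 u_3 u_2 u_3 A_3$, one has $s_4^{\alpha} U_0 s_4^{\alpha} = A_3 (s_4^{\alpha} u_3 s_4^{\alpha}) A_3 + A_3 (s_4^{\alpha} u_3 u_2 u_3 s_4^{\alpha}) A_3 \subset A_3 u_4 u_3 u_4 A_3 + A_3 u_4 u_3 u_2 u_3 u_4 A_3$, and both $u_4 u_3 u_4$ and $u_4 u_3 u_2 u_3 u_4$ lie in $A_5^{(1\frac{1}{2})}$ by its very definition; as $A_5^{(1\frac{1}{2})}$ is an $A_4$-bimodule the claim follows.

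For the matched signs I would use the element $\delta = s_4 w_0 s_4 = s_4 s_3 s_2 s_1^2 s_2 s_3 s_4$, whose image in $A_5$ centralizes all of $A_4$. By lemma \ref{lemauxA4w0} (1) we have $w_0 \in A_3^{\times} w^+ + U_0$ and $w_0^{-1} \in A_3^{\times} w^- + U_0$; multiplying on both sides by $s_4^{\alpha}$, which commutes with $A_3$ and carries $U_0$ into $A_5^{(1\frac{1}{2})}$ by the preliminary step, gives $\delta \in A_3^{\times} s_4 w^+ s_4 + A_5^{(1\frac{1}{2})}$ and $\delta^{-1} \in A_3^{\times} s_4^{-1} w^- s_4^{-1} + A_5^{(1\frac{1}{2})}$. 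Since $A_3^{\times}$ consists of units and $\delta^{\pm 1}$ centralizes $A_4$, we obtain $A_4 s_4^{\alpha} w^{\alpha} s_4^{\alpha} A_4 \subset A_4 \delta^{\pm 1} A_4 + A_5^{(1\frac{1}{2})} = A_4 \delta^{\pm 1} + A_5^{(1\frac{1}{2})} \subset A_4 s_4^{\alpha} w^{\alpha} s_4^{\alpha} + A_5^{(1\frac{1}{2})}$ (with $+$ for $\alpha = 1$ and $-$ for $\alpha = -1$); the entire right action is thus absorbed, producing the single left generator $s_4^{\alpha} w^{\alpha} s_4^{\alpha}$ for each $\alpha$.

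For the mismatched signs the right action cannot be fully absorbed, but $A_3$ can still be pushed across. Since $s_4$ commutes with $A_3$ and $w^{-\alpha} A_3 \subset A_3 w^{-\alpha} + U_0$ by lemma \ref{lemA4droitegauche}, the preliminary inclusion yields $s_4^{\alpha} w^{-\alpha} s_4^{\alpha} A_3 \subset A_3 s_4^{\alpha} w^{-\alpha} s_4^{\alpha} + A_5^{(1\frac{1}{2})}$, hence $A_4 s_4^{\alpha} w^{-\alpha} s_4^{\alpha} A_3 \subset A_4 s_4^{\alpha} w^{-\alpha} s_4^{\alpha} + A_5^{(1\frac{1}{2})}$. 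Writing $A_4 = \sum_{x \in \mathcal{A}} A_3 x$ as a left $A_3$-module (proposition \ref{propA4A327}) and using that $A_5^{(1\frac{1}{2})}$ is a right $A_4$-module, we get $A_4 s_4^{\alpha} w^{-\alpha} s_4^{\alpha} A_4 \subset \sum_{x \in \mathcal{A}} A_4 s_4^{\alpha} w^{-\alpha} s_4^{\alpha} x + A_5^{(1\frac{1}{2})}$. Summing the matched and mismatched contributions over $\alpha \in \{-1,1\}$ gives exactly the asserted equality, the reverse inclusion being immediate since every displayed generator already lies in one of the bimodules $A_4 s_4^{\alpha} w^{\beta} s_4^{\alpha} A_4 \subset A_5^{(2)}$. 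The main point to watch is the matched-sign case: recognizing $\delta$ and its centralizing property is precisely what collapses the two-sided action to a one-sided one, whereas everything else is routine bookkeeping with the commutation lemmas already at hand.
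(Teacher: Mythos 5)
Your proof is correct and is essentially the paper's own argument: both rest on the preliminary inclusion $s_4^{\alpha} U_0 s_4^{\alpha} \subset A_5^{(1 \frac{1}{2})}$, on lemma \ref{lemauxA4w0} relating $w^{\pm}$ to $w_0^{\pm}$, on the fact that $s_4^{\alpha} w_0^{\alpha} s_4^{\alpha} = \delta^{\alpha}$ centralizes $A_4$ (which collapses the matched-sign bimodules to one-sided modules), and on the decomposition $A_4 = \sum_{x \in \mathcal{A}} A_3 x$ (which handles the mismatched signs). The only cosmetic difference is in the mismatched case: the paper first replaces $w^{-\alpha}$ by $w_0^{-\alpha}$, so that $s_4^{\alpha} w_0^{-\alpha} s_4^{\alpha}$ commutes exactly with $A_3$, whereas you keep $w^{-\alpha}$ and invoke the quasi-commutation $w^{\mp} A_3 \subset A_3 w^{\mp} + U_0$ of lemma \ref{lemA4droitegauche}; the two mechanisms are interchangeable and rest on the same underlying facts.
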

\begin{proof}
By lemma \ref{lemauxA4w0} (1), we have $w_0 \in A_3^{\times} w^+ + U_0$,
$w_0^{-1} \in A_3^{\times} w^- + U_0$,
hence $w^+ \in A_3^{\times} w_0 + U_0$,
$w^- \in A_3^{\times} w_0^{-1} + U_0$,
 with
$U_0 = A_3 u_3 A_3 + A_3 u_3 u_2 u_3 A_3 \subset A_4$.
As a consequence, for $\alpha,\beta \in \{ -1 , 1 \}$, we have $A_4 s_4^{\alpha} w^{\beta} s_4^{\alpha} A_4 \subset A_4 s_4^{\alpha} A_3^{\times} w_0^{\beta} s_4^{\alpha} A_4
+ A_4 s_4^{\alpha} U_0 s_4^{\alpha} A_4$
Moreover, $s_4^{\alpha} U_0 s_4^{\alpha} = s_4^{\alpha} A_3 u_3 A_3 s_4^{\alpha} + s_4^{\alpha} A_3 u_3 u_2 u_3 A_3 s_4^{\alpha}
= A_3s_4^{\alpha}  u_3  s_4^{\alpha}A_3 + A_3s_4^{\alpha}  u_3 u_2 u_3  s_4^{\alpha}A_3 \subset A_5^{(1 \frac{1}{4})}+A_5^{(1 \frac{1}{2})}=A_5^{(1 \frac{1}{2})}$,
hence $A_4 s_4^{\alpha} w^{\beta} s_4^{\alpha} A_4 \subset A_4 s_4^{\alpha}  w_0^{\beta} s_4^{\alpha} A_4 + A_5^{(1 \frac{1}{2})}$.
Since $w_0$ and $s_4$ commute with $s_1$ and $s_2$, we have
$$
A_4 s_4^{\alpha}  w_0^{\beta} s_4^{\alpha} A_4 \subset \sum_{x \in \mathcal{A}} A_4 s_4^{\alpha}  w_0^{\beta} s_4^{\alpha} A_3 x
\subset \sum_{x \in \mathcal{A}} A_4 s_4^{\alpha}  w_0^{\beta} s_4^{\alpha}  x.
$$
If moreover $\alpha = \beta$, $s_4^{\alpha} w_0^{\alpha} s_4^{\alpha} = (s_4 s_3 s_2 s_1^2 s_2 s_3 s_4)^{\alpha}$ commutes
with $\langle s_1,s_2,s_3 \rangle = A_4$, hence $A_4 s_4^{\alpha} w_0^{\alpha} s_4^{\alpha} A_4
= A_4 s_4^{\alpha} w_0^{\alpha} s_4^{\alpha} $, and this concludes the proof.
\end{proof}

From this one can conlude the following.

\begin{theorem}{\ } \label{theofinal}
\begin{enumerate}
\item $A_5 = A_5^{(3)}$ is generated as a $A_4$-module by $240$ elements.
\item $A_5 = A_5^{(3)}$ is generated as a $R$-module by $155,520$ elements.
\end{enumerate}
\end{theorem}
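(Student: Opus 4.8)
The plan is to assemble the final theorem from the structural results already established, by a clean bookkeeping of generators along the filtration
$$
A_4 = A_5^{(0)} \subset A_5^{(1)} \subset A_5^{(1\frac{1}{4})} \subset A_5^{(1\frac{1}{2})} \subset A_5^{(2)} \subset A_5^{(3)} = A_5.
$$
First I would prove (1) by counting, at each step of the filtration, how many new generators over $A_4$ are produced. For $A_5^{(1)}$, lemma \ref{lemAM2} (1) gives $1 + |\mathcal{A}| + |\mathcal{A}|$ generators; since $|\mathcal{A}| = 27$ by proposition \ref{propA4A327}, this is $1 + 27 + 27 = 55$. For $A_5^{(1\frac{1}{4})}$, lemma \ref{lemAM2} (2) adds $|\mathcal{B}| = 72$ generators (corollary \ref{corA4B72}), and for $A_5^{(1\frac{1}{2})}$, lemma \ref{lemAM3} (2) adds $2|\mathcal{A}'| = 2 \times 27 = 54$ more, since $\mathcal{A}'$ is the image of $\mathcal{A}$ under $\Ad\Delta$. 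For $A_5^{(2)}$, lemma \ref{lemAM4} adds $2$ generators (the central terms $s_4^{\alpha} w^{\alpha} s_4^{\alpha}$) together with $2|\mathcal{A}| = 54$ from the $s_4^{\alpha} w^{-\alpha} s_4^{\alpha} x$ terms. Finally, proposition \ref{propdroitegaucheA53} (in the left-module form) shows $A_5^{(3)} = A_5^{(2)}$ augmented by exactly $3$ generators, namely $s_4 w^- s_4 w^- s_4$, $s_4 w^+ s_4^{-1} w^+ s_4$, and $s_4^{-1} w^- s_4 w^- s_4^{-1}$, each contributing a single left-$A_4$ generator. Summing, $55 + 72 + 54 + (2 + 54) + 3 = 240$, which is $|G_{32}/G_{25}| = 155520/648 = 240$, giving (1).

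Next I would deduce (2) from (1) by transitivity of generation. The argument is the general parabolic-induction principle stated in the introduction: if $A_5$ is generated by $240$ elements as a left $A_4$-module and $A_4$ is free of rank $648 = |G_{25}|$ over $R$ (the fourth bullet of the second theorem, which follows from theorem \ref{theodecA4} and proposition \ref{propA4A327} via corollary \ref{corA4B72}), then $A_5$ is generated as an $R$-module by $240 \times 648 = 155520 = |G_{32}|$ elements. Composing the surjection $A_4^{240} \to A_5$ with the isomorphism $(R^{648})^{240} \simeq A_4^{240}$ yields a surjective $R$-module map $R^{155520} \to A_5$.

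The decisive input, which I would cite rather than reprove, is lemma \ref{lemcrucial}: without it the generator $s_4 w^- s_4 w^- s_4$ could not be traded for the central element $\delta^3$ (equivalently $c^3$), and the filtration would not terminate with the count above. It is precisely this identification of the most complicated bimodule generator with an image of the center of the braid group that collapses what would otherwise be an uncontrolled tower into the single extra generator $A_4 s_4 w^- s_4 w^- s_4$, as exploited in the proof of proposition \ref{propdroitegaucheA53}. The main obstacle in writing the proof is therefore not any of these lemmas individually but ensuring the generator counts at the $A_5^{(2)}$ and $A_5^{(3)}$ stages are not overcounted: one must check that the submodules $A_4 s_4^{\alpha} w^{\alpha} s_4^{\alpha}$ for $\alpha \in \{-1,1\}$ are genuinely central (so contribute one generator each, not $|\mathcal{A}|$), as recorded in lemma \ref{lemAM4}, and that the three exceptional $A_5^{(3)}$ generators are right-$A_4$-absorbed into left-$A_4$ generators via lemma \ref{lemdroitegaucheA5}, so that each counts once.

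Finally, I would invoke the argument of \cite{BMR} theorem 4.24, as recalled in the introduction: since $|G_{32}| = 155520$ and we have produced a surjection $R^{|G_{32}|} \to A_5$, the transcendental monodromy construction forces this surjection to be an isomorphism, so $A_5$ is free of rank $155520$ with the displayed basis of braid-group images. This simultaneously proves both items and completes the proof of theorem \ref{theofinal}.
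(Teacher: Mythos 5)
Your proposal is correct and takes essentially the same approach as the paper: the identical filtration $A_5^{(1)} \subset A_5^{(1\frac{1}{4})} \subset A_5^{(1\frac{1}{2})} \subset A_5^{(2)} \subset A_5^{(3)}$, the same supporting lemmas (\ref{lemAM2}, \ref{lemAM3}, \ref{lemAM4}, proposition \ref{propdroitegaucheA53}), the same count $55+72+54+56+3=240$, and the same multiplication by $648$ for part (2). Your closing remarks on lemma \ref{lemcrucial} and the monodromy argument of \cite{BMR} go beyond what the theorem as stated requires, but they accurately reflect why the filtration terminates and how freeness is ultimately deduced.
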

\begin{proof}
By lemma \ref{lemAM1}, $A_5^{(1)}$ is generated as an $A_4$-module by $1+2 \times 27 = 55$ elements,
$A_5^{(1 \frac{1}{4})}$ by $A_5^{(1)}$ and $|\mathcal{B}| = 72$ elements,
$A_5^{(1 \frac{1}{2})}$ after lemma \ref{lemAM3} by $A_5^{(1 \frac{1}{4})}$ and $2 \times | \mathcal{A}'| = 2 \times 27 = 54$
elements, $A_5^{(2)}$ by $A_5^{(1 \frac{1}{2})}$ and $2 + 2 \times | \mathcal{A}| = 56$ elements (lemma \ref{lemAM4}), and
$A_5^{(3)}$ by $A_5^{(2)}$ and $3$ elements. It follows that $A_5$ is $A_4$-generated
by $55+72+54+56+3 = 240$ elements, which proves (1). Since $A_4$ is $R$-generated by $648$ elements,
we get that $A_5$ is $R$-generated by $240 \times 648 = 155,520$ elements, which proves (2).

\end{proof}

\section{Proof of lemma \ref{lemcrucial}}

\label{sectcrucial}

For the sake of concision we denote $V_0 = A_5^{(2)}$ and $V^+ = A_5^{(2)} + A_4 s_4 w^+ s_4^{-1} w^+ s_4 A_4 = V_0 + A_4 s_4 w^+ s_4^{-1} w^+ s_4 A_4$.
We will prove that $X \in A_4^{\times} s_4 w^- s_4 w^- s_4 A_4^{\times} + V^+$, starting from $X = \delta^3 = s_4 w_0 s_4^2 w_0 s_4^{-1} w_0 s_4$
to $X = \delta^3 = s_4 w^- s_4 w^- s_4$ (for which the statement is trivial) through a sequence of reductions of the type $X \to X'$ where
$X' \in A_4^{\times} X A_3^{\times} +V^+$

\subsection{Reduction to $s_4 w_0 s_4^2 w_0 s_4^{-1} w_0 s_4$} 
Using $s_4^2 \in R^{\times} s_4^{-1} + R s_4 + R$ we get
$s_4 w_0 s_4^2 w_0 s_4^2 w_0 s_4 \in R^{\times} s_4 w_0 s_4^2 w_0 s_4^{-1} w_0 s_4 + R s_4 w_0 s_4^2 w_0 s_4 w_0 s_4 + R
s_4 w_0 s_4^2 w_0 ^2 s_4$. The fact that
$s_4 w_0 s_4^2 w_0 s_4 w_0 s_4, s_4 w_0 s_4^2 w_0 ^2 s_4 $ belongs to  $V^+$ is proved in the following lemma \ref{lemN1}

\begin{lemma}{\ } \label{lemN1}
\begin{enumerate}
\item $s_4 w_0 s_4^2 w_0^2 s_4 \in V^+$.
\item $s_4 w_0 s_4^2 w_0 s_4 w_0 s_4 \in V^+$
\item $s_4 w_0^2 s_4^{-1} w_0 s_4  \in V^+$
\item $s_4 w_0 s_4 w_0 s_4^{-1} w_0 s_4 \in V_0$
\end{enumerate}
\end{lemma}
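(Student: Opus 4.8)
The plan is to feed all four products into one reduction scheme resting on three facts. First, $\delta = s_4 w_0 s_4$ (the element $y_5$) commutes with every element of $A_4$. Second, $A_3 = \langle s_1, s_2\rangle$ commutes with both $s_4$ and $w_0$, so any $A_3^{\times}$-coefficient may be slid to an end and absorbed. Third, by lemmas \ref{lemw0carre} and \ref{lemauxA4w0} one has $w_0^2 \in A_3^{\times} w_0^{-1} + A_3 w_0 + U_0$ with $U_0 = A_3 u_3 A_3 + A_3 u_3 u_2 u_3 A_3 \subset A_4$. Combined with the cubic relations $s_4^{\pm 2}\in R^{\times}s_4^{\mp 1} + R s_4^{\pm 1} + R$, these lower the $w_0$-degree and the $s_4$-degree of a word. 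The guiding observation is that any word lying in $A_4 u_4 A_4 u_4$ already belongs to $A_5^{(2)} = V_0$, so I only need to track the letters $s_4^{\pm 1}$: whenever at most two of them survive, the term is in $V_0$.

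Before the four cases I would isolate the one genuinely new contribution, namely $\delta^2 = s_4 w_0 s_4^2 w_0 s_4$. An $s_4^2$-reduction gives $\delta^2 \in R^{\times} s_4 w_0 s_4^{-1} w_0 s_4 + V_0$, and $s_4 w_0 s_4^{-1}w_0 s_4 \in A_3^{\times} s_4 w^+ s_4^{-1} w^+ s_4 + V_0 \subset V^+$ by lemma \ref{lemsimplifw0}(2); hence $\delta^2 \in V^+$. Since $V^+$ is an $A_4$-bimodule, $A_3^{\times}\delta^2$ and $\delta^2 w_0$ also lie in $V^+$. The complementary principle is that blocks such as $s_4 w_0^{\pm} s_4 w_0^{\mp}$, $w_0^{\pm} s_4 w_0^{\mp} s_4$, or $s_4 w_0 s_4^{-1}w_0^{-1}s_4$ collapse: for instance, substituting $w_0^{-1}=s_4\delta^{-1}s_4$ and using that $\delta^{-1}$ commutes with $w_0$ turns $s_4 w_0 s_4^{-1}w_0^{-1}s_4$ into $w_0^{-1}s_4^{-1}w_0 s_4^2 \in A_4 u_4 A_4 u_4 \subset V_0$.

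The cleanest case is (4): writing the first three letters as $\delta$ and moving it past $w_0$, $s_4 w_0 s_4 w_0 s_4^{-1} w_0 s_4 = \delta w_0 s_4^{-1}w_0 s_4 = w_0(\delta s_4^{-1})w_0 s_4 = w_0 s_4 w_0^2 s_4$; the $w_0^2$-reduction then yields $A_3$-multiples of $w_0 s_4 w_0^{-1}s_4$ and $w_0 s_4 w_0 s_4$ plus $w_0 s_4 U_0 s_4$, all of type $A_4 u_4 A_4 u_4 \subset V_0$, proving (4). Case (3) matches the hypothesis of lemma \ref{lemsimplifw0}(1) with $(\alpha,\beta,\gamma)=(1,-1,1)$, reducing it to $s_4 w_0 s_4^{-1}w_0 s_4 \in V^+$ and $s_4 w_0^{-1}s_4^{-1}w_0 s_4$, the latter rewriting through $\delta$ as $s_4^2 w_0 s_4^{-1}w_0^{-1}\in V_0$. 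For (1) and (2) I would peel off the leading $s_4 w_0 s_4^2 = \delta s_4$ and run the $w_0^2$- and $s_4^2$-reductions; every summand is then either the $\delta^2$-term (in (2), a $\delta^2 w_0$-term), which lies in $V^+$, or a word with at most two surviving $s_4^{\pm 1}$-letters, which lies in $V_0$.

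The step I expect to be the real obstacle is the bookkeeping of the $U_0$-cross terms, specifically the summand $A_3 u_3 u_2 u_3 A_3$ of $U_0$ when an $s_4^{-1}$ sits on the wrong side. Conjugation gives $s_4^{\pm 1}u_3 s_4^{\mp 1}\subset u_3 u_4 u_3$ (lemma \ref{lemsplusmoins}), so $s_4^{-1}u_3 u_2 u_3 s_4$ sprouts an extra $s_4$-pair and the ambient word acquires three interior $s_4$-letters; collapsing these back into $A_5^{(2)}$ is precisely the role of proposition \ref{propmoinsde55} (and, in the densest spots, proposition \ref{propreducuuu}), but it is where the argument ceases to be purely formal and one must check that one does not land in the exceptional central configuration. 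A secondary point of care is to confirm, via the $\delta$-substitutions above and lemma \ref{lemreduc2}, that the sign-mixed residues such as $s_4 w_0 s_4^{-1}w_0^{-1}s_4$ really fall into $V_0$ rather than producing an uncontrolled $w^-$-generator.
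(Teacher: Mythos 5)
Your reductions for items (1), (3) and (4), and your argument that $\delta^2 \in V^+$, are correct, and where they differ from the paper they are actually simpler: for (4) your identity $s_4 w_0 s_4 w_0 s_4^{-1} w_0 s_4 = w_0 s_4 w_0^2 s_4 \in A_4 u_4 A_4 u_4 \subset V_0$ replaces the paper's appeal to lemma \ref{lemreduc2}(8), and the collapses $s_4 w_0 s_4^{-1} w_0^{-1} s_4 = w_0^{-1} s_4^{-1} w_0 s_4^2$ and $s_4 w_0^{-1} s_4^{-1} w_0 s_4 = s_4^2 w_0 s_4^{-1} w_0^{-1}$ do the same work as several lemma citations in the paper's proofs of (1) and (3). (Note in passing that $s_4 w_0 s_4 w_0 s_4 = \delta w_0 s_4 = w_0 s_4 w_0 s_4^2 \in V_0$ also collapses by centrality of $\delta$, even though its $w_0$'s have equal signs and it is not on your list of collapsing blocks; you need this for your claim $\delta^2 \in R^{\times} s_4 w_0 s_4^{-1} w_0 s_4 + V_0$.)

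The gap is item (2). You reduce it to ``$\delta^2 w_0 s_4$, which lies in $V^+$,'' but the only justification your scheme offers is $\delta^2 \in V^+$, and that is not enough: neither $V_0 = A_5^{(2)}$ nor $V^+$ is stable under right multiplication by $s_4$ (this stability is exactly what fails and what forces the paper to introduce $A_5^{(3)}$; were $A_5^{(2)}$ a right ideal, $A_5$ would be $A_4$-generated by fewer than $240$ elements, contradicting the freeness of rank $240$). Concretely, if you carry out the expansion your scheme dictates, $\delta^2 w_0 s_4 = w_0\, s_4 w_0 s_4^2 w_0 s_4^2$ produces the summand $s_4 w_0 s_4^{-1} w_0 s_4^{-1}$, which breaks your dichotomy: it contains no subword equal to $\delta^{\pm 1}$ (the two $w_0$'s have the same sign while the inner $s_4$-letters are negative), so no centrality collapse applies; it is not the $\delta^2$-term; and it retains three $s_4^{\pm 1}$-letters. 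This is precisely the term for which the paper needs its hard computational input, lemma \ref{lemreduc2}(7), i.e. $s_4 w^+ s_4^{-1} w^+ s_4^{-1} \in A_5^{(2)}$, applied after trading $w_0$ for $w^+$ modulo cross terms. Your framework can be repaired without that lemma, but only by postponing every ``mod $V_0$'' step until after the last right multiplication: write $\delta^2 = \lambda\, s_4 w_0 s_4^{-1} w_0 s_4 + \mu\, s_4 w_0 s_4 w_0 s_4 + \nu\, s_4 w_0^2 s_4$ with $\lambda \in R^{\times}$, $\mu,\nu \in R$, and multiply each explicit word by $w_0 s_4$ on the right; each product then collapses by centrality of $\delta$, namely $(s_4 w_0 s_4^{-1} w_0 s_4) w_0 s_4 = s_4 w_0 s_4^{-1} w_0 \delta = s_4 w_0^2 s_4 w_0 \in V_0$, $(s_4 w_0 s_4 w_0 s_4) w_0 s_4 = \delta w_0 \delta = w_0 \delta^2 \in V^+$, and $(s_4 w_0^2 s_4) w_0 s_4 = s_4 w_0^2 \delta = s_4^2 w_0 s_4 w_0^2 \in V_0$. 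As written, however, your proof of (2) does not go through.
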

\begin{proof}
We prove (1). By lemma \ref{lemw0carre} we have $s_4 w_0 s_4^2 w_0^2 s_4 \in
A_3^{\times} s_4 w_0 s_4^2 w_0^{-1}s_4 + 
A_3 s_4 w_0 s_4^2 w_0 s_4 + A_3 s_4 w_0   s_4^3$. Clearly $s_4 w_0   s_4^3 \in V_0$,
hence, expanding $s_4^2$,
$$s_4 w_0   s_4^2 w_0^2 s_4 \in A_3 s_4 w_0 s_4^{-1} w_0^{-1} s_4 +A_3 s_4 w_0 s_4 w_0^{-1} s_4
+ A_3 s_4 w_0 s_4^{-1} w_0 s_4 + A_3 s_4 w_0 s_4 w_0 s_4 + V_0.$$
We already know $s_4 w_0 s_4 w_0 s_4 \in V_0$ by lemma \ref{lemreduc2} (6). Moreover
$s_4 w_0s_4^{-1} w_0^{-1} s_4 \in A_3 s_4 w^+ s_4^{-1} w^- s_4+ V_0 \subset V_0$ 
and
$s_4 w_0s_4 w_0^{-1} s_4 \in A_3 s_4 w^+ s_4  w^- s_4+ V_0 \subset V_0$ 
by lemma \ref{lemtransf1} (4), and finally 
$$s_4 w_0s_4^{-1} w_0^{-1} s_4 \in A_3 s_4 w^+ s_4^{-1} w^+ s_4+ V_0 \subset V_+.$$ 
We now prove (2). We have 
$$s_4 w_0 s_4^2 (w_0 s_4 w_0 s_4) = 
(w_0 s_4 w_0 s_4) s_4 w_0 s_4^2 \subset A_4 s_4 w_0 s_4^2 w_0 s_4^2,$$
as $w_0 s_4 w_0 s_4 = c_5 c_3^{-1}$ commutes with $s_4$ and $w_0$. The
term $s_4 w_0 s_4^2 w_0 s_4^2$ is a linear combinations
of terms of the form $s_4 w_0 s_4^{\alpha} w_0 s_4^{\beta}$ for $\alpha,\beta \in \{ 0,-1,1 \}$,
and we have (lemma \ref{lemreduc2} (6) and (7))
$s_4 w_0 s_4^{\alpha} w_0 s_4^{\beta} \in A_3 s_4 w^+ s_4^{\alpha} w^+ s_4^{\beta}
+ V_0 \subset V_0$, unless $(\alpha,\beta) = (-1,1)$, in which
case $s_4 w_0 s_4^{-1} w_0 s_4 \in A_3 s_4 w^+ s_4^{-1} w^+ s_4 + V_0 \subset V^+$.
For proving (3) we use that $s_4 w_0^2 s_4^{-1} w_0 s_3 \in V_0 + A_3 \sum_{\alpha \in \{ \pm \} } s_4 w^{\alpha} s_4^-
w^+ s_4$, and that $s_4 w^- s_4^{-1} w^+ s_4 \in V_0$ by lemma \ref{lemreduc2} (5).
We prove (4). We have $(s_4 w_0 s_4 w_0) s_4^{-1} w_0 s_4 = 
 s_4^{-1} w_0 s_4(s_4 w_0 s_4 w_0) \in s_4^{-1} w_0 s_4^2 w_0 s_4A_4$
 and  $s_4^{-1} w_0 s_4^2 w_0 s_4 \in R s_4^{-1} w_0^2 s_4 + R s_4^{-1} w_0 s_4^{-1} w_0 s_4 + R s_4^{-1} w_0 s_4 w_0 s_4$.
 Now, $s_4^{-1} w_0^2 s_4 \in V_0$, $s_4^{-1} (w_0 s_4 w_0 s_4) =  (w_0 s_4 w_0 s_4)s_4^{-1} \in V_0$,
 and $s_4^{-1} w_0 s_4^{-1} w_0 s_4 \in A_3 s_4^{-1} w^+ s_4^{-1} w^+ s_4 + V_0$. Since
 $\Phi(s_4^{-1} w^+ s_4^{-1} w^+ s_4) \in V_0$ by lemma \ref{lemreduc2} (8) we get (4).
\end{proof}

\subsection{Reduction to $s_4 s_3 (s_2 s_1^2 s_2) s_3^{-1} s_4 s_3^2 (s_2 s_1^2 s_2) s_3 s_4^{-1} w_0 s_4$} 
Using $s_4^2 \in R^{\times} s_4^{-1} + R s_4 + R$ we get
$s_4 w_0 s_4^2 w_0 s_4^{-1} w_0 s_4 \in R^{\times} s_4 w_0 s_4^{-1} w_0 s_4^{-1} w_0 s_4 + R s_4 w_0^2 s_4^{-1} w_0 s_4+ R
s_4 w_0 s_4 w_0 s_4^{-1} w_0 s_4$. The fact that $ R s_4 w_0^2 s_4^{-1} w_0 s_4+ R
s_4 w_0 s_4 w_0 s_4^{-1} w_0 s_4 \subset V^+$ has been proved in lemma \ref{lemN1}. Finally,
$s_4 w_0 s_4^{-1} w_0 s_4^{-1} w_0 s_4 = s_3^{-1} . s_4 s_3 (s_2 s_1^2 s_2) s_3^{-1} s_4 s_3^2 (s_2 s_1^2 s_2) s_3 s_4^{-1} w_0 s_4$
is easily checked to hold in the braid group $B_5$.

Before going further, we first need to establish several lemmas.

\begin{lemma} {\ }  \label{lemN2}
\begin{enumerate}
\item For all $\alpha,\beta \in \Z$, $s_4 w_0 s_4^{\alpha} s_3^{\beta} w_0 s_4 \in V^+$.
\item For all $\alpha,\beta \in \Z$, $s_4 w_0  s_3^{\beta} s_4^{\alpha} w_0 s_4 \in V^+$.
\end{enumerate}
\end{lemma}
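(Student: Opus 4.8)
The plan is to reduce everything to part (1) and to a short list of cases. Since $u_4$ is $R$-spanned by $1,s_4,s_4^{-1}$ and $u_3$ by $1,s_3,s_3^{-1}$, and $V^+$ is an $R$-module, the expression $s_4 w_0 s_4^{\alpha}s_3^{\beta}w_0 s_4$ is $R$-linear in $s_4^{\alpha}\in u_4$ and in $s_3^{\beta}\in u_3$; hence it suffices to treat $\alpha,\beta\in\{-1,0,1\}$. Moreover part (2) follows from part (1): letting $\rho=\Phi\circ\Psi$ be the $R$-algebra anti-automorphism of $A_5$ fixing each $s_i$, one has $\rho(w_0)=w_0$ and $\rho(w^+)=w^+$ (both are palindromes), so $\rho(V^+)=V^+$, while $\rho$ interchanges $s_4 w_0 s_4^{\alpha}s_3^{\beta}w_0 s_4$ and $s_4 w_0 s_3^{\beta}s_4^{\alpha}w_0 s_4$. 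So I only prove (1).

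Throughout write $m=s_2 s_1^2 s_2$, so that $w_0=s_3 m s_3$, with $m$ commuting with $s_4$ and with $w_0$; I record the reforming identities $m s_3=s_3^{-1}w_0$ and $s_3 m=w_0 s_3^{-1}$. Recall $V_0=A_5^{(2)}=A_4 u_4 A_4 u_4 A_4$ is an $A_4$-bimodule and $A_5^{(1)}=A_4 u_4 A_4\subset V_0$. The degenerate cases are immediate. If $\alpha=0$ then $w_0 s_3^{\beta}w_0\in A_4$, so $s_4 w_0 s_3^{\beta}w_0 s_4\in u_4 A_4 u_4\subset V_0$. If $\beta=0$, then by lemma \ref{lemsimplifw0} we have $s_4 w_0 s_4^{\alpha}w_0 s_4\in A_3^{\times}s_4 w^+ s_4^{\alpha}w^+ s_4+V_0$; for $\alpha=1$ the first term lies in $V_0$ by lemma \ref{lemreduc2}(6), and for $\alpha=-1$ it is a left/right $A_4$-multiple of $s_4 w^+ s_4^{-1}w^+ s_4$, hence in $V^+$ by definition. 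Finally, for $\beta=-1$ I use $s_3^{-1}w_0=m s_3$ and pull $m$ to the far left, then apply $s_4^{\alpha}s_3 s_4=s_3 s_4 s_3^{\alpha}$ (lemma \ref{lemsplusmoins}):
\[
s_4 w_0 s_4^{\alpha}s_3^{-1}w_0 s_4=m\,s_4 w_0 s_4^{\alpha}s_3 s_4=m\,s_4 (w_0 s_3)s_4 s_3^{\alpha}\in A_4 u_4 A_4 u_4 A_4=V_0 .
\]

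The case $\beta=1$ is the crux. For $\alpha=1$ a single braid move suffices: using $s_3 s_4 s_3=s_4 s_3 s_4$ and $s_4 s_3 s_4=s_3 s_4 s_3$ one reforms the inner $s_3 m s_3=w_0$ and obtains $s_4 w_0 s_4 s_3 w_0 s_4=s_3\,(s_4 w_0 s_4 w_0 s_4)$, and $s_4 w_0 s_4 w_0 s_4\in V_0$ exactly as in the $\beta=0,\alpha=1$ case, so the whole expression lies in $A_4 V_0\subset V_0$. For $\alpha=-1$ there is no braid move reforming a $w_0$, so I instead expand $s_3 w_0=s_3^2 m s_3$ with $s_3^2=a s_3+b+c s_3^{-1}$, giving $s_3 w_0=a w_0+b\,m s_3+c\,s_3^{-1}m s_3$ and three contributions to $s_4 w_0 s_4^{-1}s_3 w_0 s_4$. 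The $a$-term is $a\,s_4 w_0 s_4^{-1}w_0 s_4$, the already-treated $\beta=0,\alpha=-1$ case, hence in $V^+$. The $b$-term is $b\,s_4 w_0 s_4^{-1}(m s_3)s_4$; pulling $m$ out on the left and using $s_4^{-1}s_3 s_4=s_3 s_4 s_3^{-1}$ it becomes $b\,m\,s_4(w_0 s_3)s_4 s_3^{-1}\in V_0$. For the $c$-term $s_4 w_0 s_4^{-1}s_3^{-1}m s_3 s_4$, I apply the braid identities $s_3 s_4^{-1}s_3^{-1}=s_4^{-1}s_3^{-1}s_4$, $s_4 s_3 s_4^{-1}=s_3^{-1}s_4 s_3$ and $s_4 s_3 s_4=s_3 s_4 s_3$ of lemma \ref{lemsplusmoins} to move the two surviving $s_4^{\pm}$ to the outside, then use the reforming identities to collapse the middle $s_3 m s_3^{-1}m s_3$ to $w_0 s_3^{-3}w_0$; since $s_3^{-3}\in u_3$ is an $R$-combination of $1,s_3,s_3^{-1}$, this yields $R$-combinations of $s_3^{-1}(s_4 w_0 s_3^{\gamma}w_0 s_4)s_3$ with $\gamma\in\{0,1,-1\}$, each an $A_4$-conjugate of a $u_4 A_4 u_4$-element, hence in $V_0$. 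Combining the three contributions gives the claim, and part (2) then follows by $\rho$.

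The main obstacle is precisely this $\alpha=-1,\beta=1$ subcase: the absence of a crossing-reducing braid move forces the $s_3^2$-expansion, and the bookkeeping must track that $m=s_2 s_1^2 s_2$ commutes with $s_4$ and $w_0$ but not with $s_3$, so that the identities $m s_3=s_3^{-1}w_0$ and $s_3 m=w_0 s_3^{-1}$ can be used to recombine fragments into genuine $w_0$'s and thereby drop one $u_4$-crossing into $V_0$. Once this is in place, the remaining steps are routine applications of proposition \ref{propmoinsde55} and the bimodule structure of $V_0$.
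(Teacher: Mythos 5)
Your proof is correct, and I checked the two computations it hinges on: the braid identity $s_4 w_0 s_4 s_3 w_0 s_4 = s_3\,(s_4 w_0 s_4 w_0 s_4)$ and the collapse $s_4 w_0 s_4^{-1} s_3^{-1} m s_3 s_4 = s_3^{-1} s_4 (w_0 s_3^{-3} w_0) s_4 s_3$ (with your $m = s_2 s_1^2 s_2$); every resulting term does land in $V_0$ or reduce to the configuration $s_4 w_0 s_4^{\pm 1} w_0 s_4$. The core is, however, organized genuinely differently from the paper. The paper opens by absorbing one letter of $w_0$, rewriting $s_4 w_0 s_4^{\alpha} s_3^{\beta} w_0 s_4 = s_4 w_0 s_4^{\alpha} s_3^{\beta+1} (s_2 s_1^2 s_2) s_3 s_4$; after reducing the shifted exponent to $\{-1,0,1\}$, its single hard configuration has middle $s_3^{-1}(s_2 s_1^2 s_2)s_3$, which it treats uniformly in $\alpha$ by expanding $s_1^2$ via the cubic relation and disposing of the worst term through the conjugation identity $s_3^{-1}(s_2 s_1^{-1} s_2)s_3 = s_2 s_1 (s_3 s_2^{-1} s_3) s_1^{-1} s_2^{-1}$ followed by proposition \ref{propmoinsde55}. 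You instead keep the original exponents and apply the cubic relation to $s_3^2$ (inside $s_3 w_0 = s_3^2 m s_3$); your reforming identities $m s_3 = s_3^{-1} w_0$, $s_3 m = w_0 s_3^{-1}$ together with lemma \ref{lemsplusmoins} then place each term in an explicit $u_4 A_4 u_4$-sandwich, so your hard case needs no direct appeal to proposition \ref{propmoinsde55}; the price is a split on the sign of $\alpha$, since your $(\alpha,\beta)=(1,1)$ and $(-1,1)$ cases use different tricks where the paper has one argument. The remaining skeleton coincides: the exchange $w_0 \leftrightarrow w^+$ (you quote lemma \ref{lemsimplifw0} in the reversed direction, which is harmless because its coefficients lie in $A_3^{\times}$ and $V_0$ is an $A_4$-bimodule --- the paper uses the same reversal), then lemma \ref{lemreduc2} (6) and the definition of $V^+$ for $s_4 w_0 s_4^{\pm 1} w_0 s_4$, and $\Phi\circ\Psi$ for part (2).
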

\begin{proof}
Since $s_4 w_0 s_4^{\alpha} s_3^{\beta} w_0 s_4 = s_4 w_0 s_4^{\alpha} s_3^{\beta+1} s_2 s_1^2 s_2 s_3 s_4$,
we need to consider $s_4 w_0 s_4^{\alpha} s_3^{\beta} s_2 s_1^2 s_2s_3 s_4$ , where we can assume $\alpha, \beta \in \{ 1, -1\}$,
the cases $\alpha = 0$ and $\beta = 0$ being obvious by proposition \ref{propmoinsde55}. If $\beta = 1$
we have $s_4 w_0 s_4^{\alpha} w_0 s_4 \in A_3s_4 w^+ s_4^{\alpha} w^+ s_4  +V_0 \subset V^+$,
so we can assume $\beta = -1$. Expanding $s_1^2$ we get a linear combinations of 
$s_4 w_0 s_4^{\alpha} s_3^{-1} s_2 s_1^{-1} s_2s_3 s_4$,
$s_4 w_0 s_4^{\alpha} s_3^{-1} s_2 s_1 s_2s_3 s_4$
and $s_4 w_0 s_4^{\alpha} s_3^{-1} s_2^2 s_3 s_4$.  We have $s_4 w_0 s_4^{\alpha} s_3^{-1} s_2^2 s_3 s_4 \in V_0$
by \ref{propmoinsde55} and
$s_4 w_0 s_4^{\alpha} s_3^{-1}( s_2 s_1 s_2)s_3 s_4 = 
s_4 w_0 s_4^{\alpha} s_3^{-1} s_1 s_2 s_1s_3 s_4=
s_4 w_0 s_4^{\alpha} s_1 s_3^{-1}  s_2 s_3 s_4s_1 
  \in V_0$ by proposition \ref{propmoinsde55}.
There remains to consider $s_4 w_0 s_4^{\alpha} s_3^{-1} (s_2 s_1^{-1} s_2)s_3 s_4 =  
s_4 w_0 s_4^{\alpha} s_2 s_1 (s_3 s_2^{-1} s_3) s_1^{-1} s_2^{-1}  s_4 = 
s_2 s_1 s_4 w_0 s_4^{\alpha}  (s_3 s_2^{-1} s_3)   s_4 s_1^{-1} s_2^{-1}  \in V_0$
by proposition \ref{propmoinsde55}. This concludes the proof of (1). Then (2) is an immediate
consequence of (1) by application of $\Phi \circ \Psi$.

 \end{proof}

\begin{lemma}{\ } \label{lemN4}
\begin{enumerate}
\item $s_4 w_0 s_4^{-1} s_3 s_4^{-1} w_0 s_4 \in V^+$
\item $s_4 s_3(s_2 s_1^2 s_2) s_3^{-1} s_4 w_0 s_4^{-1} w_0 s_4 \in V^+$
\end{enumerate}
\end{lemma}
\begin{proof}
By using braid relations one gets
$s_4 s_3(s_2 s_1^2 s_2) s_3^{-1} s_4 w_0 s_4^{-1} w_0 s_4 = s_3 (s_2 s_1^2 s_2) s_4 w_0 s_4^{-1} s_3 s_4^{-1} w_0 s_4$,
hence (2) reduces to (1).  We now prove (1). Expanding $s_4^{-1}$ as a linear combination of $s_4^2$, $s_4$ and $1$,
we get that $s_4 w_0 (s_4^{-1}) s_3 s_4^{-1} w_0 s_4$
is a linear combination of $s_4 w_0 s_3 s_4^{-1} w_0 s_4$,
$s_4 w_0 s_4 s_3 s_4^{-1} w_0 s_4$ and $s_4 w_0 s_4^2 s_3 s_4^{-1} w_0 s_4$.
We have $s_4 w_0 s_3 s_4^{-1} w_0 s_4 \in V_0$ by lemma \ref{lemN2} (2),
$(s_4 w_0 s_4) s_3 s_4^{-1} w_0 s_4 = 
 s_3 (s_4 w_0 s_4) s_4^{-1} w_0 s_4
 =s_3 s_4 w_0^2  s_4 \in V_0$ because $s_4 w_0 s_4 = c_5 c_4^{-1}$ commutes with $B_4$ and in particular $s_3$,
 and similarly $s_4 w_0 s_4^2 s_3 s_4^{-1} w_0 s_4 = s_4 w_0 s_4 (s_4 s_3 s_4^{-1}) w_0 s_4
=  (s_4 w_0 s_4) s_3^{-1}  s_4 s_3 w_0 s_4
=  s_3^{-1} (s_4 w_0 s_4)  s_4 s_3 w_0 s_4 \in A_4 s_4 w_0 s_4^2 s_3 w_0 s_4 \in V^+$ by lemma \ref{lemN2} (2).
\end{proof}

\begin{lemma}{\ }  \label{lemN5}
\begin{enumerate}
\item For all $\beta$, $s_4 A_4 s_4 s_3^{\beta} w_0 s_4 \subset V^+$
\item $s_4 s_3 s_2^{-1} s_3 u_1 u_2 s_4 u_3 w_0 s_4 \subset V^+$
\item $s_4 s_3 (s_2 s_1^2 s_2)s_3^{-1} s_4 (s_2 s_1^2 s_2) s_3 s_4^{-1} w_0 s_4 \in V^+$ 
\item $u_4 u_3 u_2 u_3 u_4 A_4 s_4 \subset V^+$ ; moreover $s_4^{\alpha}u_3 u_2 u_3 s_4^{\beta} A_4 s_4 \subset V_0$ when $\alpha,\beta \in \{ -1,1 \}$ with $(\alpha,\beta) \neq (1,1)$  
\item $u_4 U_0 u_4 A_4 s_4 \subset V^+$ 
\item $u_4 s_3^{\pm} A_3  s_3^{\mp} u_4 A_4 s_4 \subset V^+$ 
\end{enumerate}
\end{lemma}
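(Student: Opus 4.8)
The plan is to prove the six assertions in the order $(1),(2),(4),(5),(6),(3)$, using throughout that both $V_0=A_5^{(2)}$ and $V^+$ are $A_4$-subbimodules of $A_5$, so it suffices to reduce each generator modulo $V^+$ (or $V_0$) and one may multiply freely on either side by elements of $A_4$. The two reduction engines are proposition \ref{propmoinsde55}, which swallows any product $u_4(\cdots)u_4(\cdots)u_4$ whose two middle blocks are short (at most five letters and not simultaneously of the forbidden $(5,5)$ or exceptional $(5,4)$, $(4,5)$ shape) into $V_0$, and the $w_0$-sandwich lemmas \ref{lemN2} and \ref{lemN4}, which push the patterns $s_4 w_0 s_4^{\gamma} s_3^{\beta} w_0 s_4$ and $s_4 w_0 s_4^{-1} s_3 s_4^{-1} w_0 s_4$ into $V^+$. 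The structural input is the decomposition $A_4 = U_0 + A_3 w_0 + A_3 w_0^{-1}$ from theorem \ref{theodecA4variante} (using that $w_0$ is central in $A_3=\langle s_1,s_2\rangle$, so $A_3 w_0 A_3 = A_3 w_0$), in which $U_0 = A_3\,sh(A_3)\,A_3$ contributes only short factors. The centrality facts I rely on are that $\delta = s_4 w_0 s_4$ and $\delta^{-1}=s_4 w_0^{-1} s_4$ are central in $A_4=\langle s_1,s_2,s_3\rangle$, that $w_0$ is central in $A_3$, and that $w_0 s_4 w_0 s_4 = c_5 c_3^{-1}$ commutes with both $s_4$ and $w_0$.

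First I would prove (1). Substituting $A_4 = U_0 + A_3 w_0 + A_3 w_0^{-1}$ into $s_4 A_4 s_4 s_3^{\beta} w_0 s_4$, the $A_3 w_0$-summand becomes $A_3\,(s_4 w_0 s_4 s_3^{\beta} w_0 s_4)$, which lies in $V^+$ by lemma \ref{lemN2}(1). In the $A_3 w_0^{-1}$-summand the block $s_4 w_0^{-1} s_4 = \delta^{-1}$ is central, so $s_4 w_0^{-1} s_4 s_3^{\beta} w_0 s_4 = s_3^{\beta}\,(s_4 w_0^{-1} s_4 w_0 s_4)$, and $s_4 w_0^{-1} s_4 w_0 s_4 \in A_3^{\times} s_4 w^- s_4 w^+ s_4 \subset V_0$, exactly the identity already exploited in the proof of lemma \ref{lem2lignesout} (via lemmas \ref{lemsimplifw0} and \ref{lemreduc2}). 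For the $U_0 = A_3\,sh(A_3)\,A_3$-summand one pulls the outer $\langle s_1\rangle$- and $\langle s_2\rangle$-letters past $s_4$ (they commute with it), shortens the $sh(A_3)$-block by one application of $u_a u_b u_a u_b = u_b u_a u_b u_a$, and invokes proposition \ref{propmoinsde55}: the two middle blocks then have lengths avoiding the forbidden shapes, so the summand lies in $V_0\subset V^+$.

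Statements (2) and (4)--(6) then follow by short reductions. Item (2) is immediate once one writes $u_3 w_0 \subset \sum_{\beta\in\{-1,0,1\}} R\,s_3^{\beta} w_0$ and notes $s_3 s_2^{-1} s_3 u_1 u_2 \subset A_4$, so the left side lies in $s_4 A_4 s_4 s_3^{\beta} w_0 s_4 \subset V^+$ by (1). For (4) I would run the same decomposition on the free factor $A_4 s_4$, with the short middle $u_3 u_2 u_3\subset sh(A_3)$; the $U_0$-part collapses by proposition \ref{propmoinsde55}, while for the $A_3 w_0^{\pm1}$-parts one slides the central block $s_4 w_0^{\pm1} s_4 = \delta^{\pm1}$ through $A_4$, dropping the effective $u_4$-count, and finds for every sign pattern with $(\alpha,\beta)\neq(1,1)$ an element of $V_0$ (lemma \ref{lemreduc2} mops up the residual $u_4 u_3 u_4$-pieces). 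Item (5) reduces to (4) because $u_4 U_0 u_4 = A_3\,u_4\,sh(A_3)\,u_4\,A_3 \subset A_3\,u_4 u_3 u_2 u_3 u_4\,A_3$ after extracting the commuting letters; and (6) reduces to (5) because $s_3^{\pm} A_3 s_3^{\mp}\subset U_0$ by lemma \ref{lemauxA4w0}(3). Finally (3) is rewritten by the braid identities $s_3(s_2 s_1^2 s_2)s_3^{-1} = w_0 s_3^{-2}$ and $(s_2 s_1^2 s_2)s_3 = s_3^{-1} w_0$ into an $s_3$-power decoration of the $w_0$-sandwich of lemma \ref{lemN4}(2); expanding the resulting $s_3$-powers inside $u_3$ and sliding out the central $\delta^{\pm1}$ and $w_0 s_4 w_0 s_4$ blocks, proposition \ref{propmoinsde55} and lemma \ref{lemN4}(2) finish it.

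The main obstacle is the bookkeeping for the double-$w_0$ contributions of (1) and (4) (and the triple-$w_0$ one of (3)): a priori a product $s_4 w_0^{\pm} s_4^{\gamma} w_0^{\pm} s_4$ is a genuine threefold $u_4$-product, i.e. an $A_5^{(3)}$-level expression, and such need not lie in $V^+$ --- the central term $s_4 w^- s_4 w^- s_4 \sim \delta^3$ is precisely what escapes. What rescues every case here is that one of the two $w_0$'s always occurs inside a full block $s_4 w_0 s_4 = \delta$ (central in $A_4$) or inside $w_0 s_4 w_0 s_4 = c_5 c_3^{-1}$ (commuting with $s_4$ and $w_0$); extracting that block lowers the effective $u_4$-count by one and puts the expression in range of proposition \ref{propmoinsde55} or of lemmas \ref{lemN2}, \ref{lemN4}. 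The delicate point is to check that only the sign pattern $(\alpha,\beta)=(1,1)$ in (4) can yield a genuine $w^+$-contribution in $V^+\setminus V_0$, all other patterns collapsing to $V_0$; this is dictated by the centrality identities and not by any further computation.
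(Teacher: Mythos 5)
Your reductions of (5) to (4) and of (6) to (5) are exactly the paper's, and your derivation of (2) from (1) would be valid \emph{if} (1) were established independently. But the proof of (1) itself has two genuine gaps, and they are precisely where the paper has to work. First, the identification $s_4 w_0^{-1} s_4 = \delta^{-1}$ is false: since $\delta = s_4 w_0 s_4$, one has $\delta^{-1} = s_4^{-1} w_0^{-1} s_4^{-1}$, while $s_4 w_0^{-1} s_4 = s_4^2 \delta^{-1} s_4^2$ is \emph{not} central in $A_4$. So you cannot slide $s_3^{\beta}$ across it, neither in (1) nor in (4). The paper instead expands $s_3^{\beta} w_0$ into the three cases $s_3^{\beta'} s_2 s_1^2 s_2 s_3$, $\beta' \in \{0,\pm 1\}$, and the case $\beta' = -1$ needs lemma \ref{lemreducspec} (4) --- a case your argument never meets.

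Second, and more seriously, the $U_0$-summand of $s_4 A_4 s_4 s_3^{\beta} w_0 s_4$ does not collapse under proposition \ref{propmoinsde55}. For the $A_3 u_3 u_2 u_3 A_3$-part one gets, after pulling $A_3$ through $s_4$, the shape $s_4 (u_3 u_2 u_3) s_4 (A_3 s_3^{\beta} w_0) s_4$, whose middle blocks carry $3 + 9$ letters in total; since letters can only migrate across $s_4$ one at a time and no commutation shortens either block, every redistribution leaves a pair $(p,q)$ with $\max(p,q) \geq 6$, outside the proposition's range (and the borderline $(4,5)$/$(5,4)$ shapes are exactly its exceptional cases). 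This is why the paper proves (2) \emph{first}, by a genuine braid computation (reducing to $s_4 s_3 s_2^{-1} s_3 s_1^{\alpha} s_2^{\beta} s_4 s_3^{\gamma} s_2 s_1^2 s_2 s_3 s_4$ and invoking lemmas \ref{lemreducspec}, \ref{lemreduc2}, \ref{lemsimplif212}), and then uses (2) \emph{inside} the proof of (1), together with the split $A_3 \subset s_2^{-1} s_1 s_2^{-1} u_1 + u_1 u_2 u_1$ and lemma \ref{lemdecomp1212}. Your order (1) $\Rightarrow$ (2) is therefore circular as it stands: (2) is not a corollary of (1), it is the missing ingredient of (1). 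The same gap recurs in (4): the case $\alpha = \beta = 1$ of the $u_3 u_2 u_3$-part is an exceptional shape for proposition \ref{propmoinsde55}, and the paper must invoke the proof of lemma \ref{lemaux2AuAuA}, lemma \ref{lemtransf1} (1) and lemma \ref{lem2lignesout} (2) to place it in $V^+$; your claim that this case is settled by centrality identities ``and not by any further computation'' is exactly where the real work lies. (Item (3), by contrast, needs none of your machinery: it is a two-line consequence of (1) via $s_4 s_3 s_4^{-1} = s_3^{-1} s_4 s_3$, which is how the paper does it.)
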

\begin{proof}
From theorem \ref{theodecA4} one easily deduces $A_4 = A_3 u_3 A_3 + A_3 s_3 s_2^{-1} s_3 A_3 +  A_3 w_0
+ A_3 w_0^{-1}$. Then $s_4 (A_3 u_3 A_3)  s_4 s_3^{\beta} w_0 s_4 =
A_3 s_4  u_3  s_4 A_3 s_3^{\beta} w_0 s_4 \subset V_0$ by lemma \ref{lemreduc2} (2) ;
$s_4 (A_3 w_0)  s_4 s_3^{\beta} w_0 s_4 =
A_3 s_4 w_0   s_4 s_3^{\beta} w_0 s_4 \subset V^+$ by lemma \ref{lemN2} (2) ;
$s_4 (A_3 w_0^{-1})  s_4 s_3^{\beta} w_0 s_4 =
A_3 s_4 w_0 ^{-1}  s_4 s_3^{\beta} w_0 s_4 $ and
$s_4 w_0 ^{-1}  s_4 s_3^{\beta} w_0 s_4$ is
a linear combination of 
$s_4 w_0 ^{-1}  s_4 s_3^{\beta'} s_2 s_1^2 s_2  s_3 s_4$ for $\beta' \in \{ 0,1,-1 \}$.
For $\beta' = 1$,
$s_4 w_0 ^{-1}  s_4 s_3 s_2 s_1^2 s_2 s_3  s_4 = s_4 w_0^{-1} s_4 w_0 s_4
\in A_3  s_4 w^- s_4 w^+ s_4 + V_0 \subset V_0$ by lemma \ref{lemreduc2} (5).
For $\beta' = -1$, $s_4 w_0 ^{-1}  s_4 s_3^{-1} s_2 s_1^2 s_2 s_3  s_4 \in V_0$
by lemma \ref{lemreducspec} (4), and the case $\beta = 0$ also lies in $V_0$
by proposition \ref{propmoinsde55}.
It then remains to prove $s_4 A_3 s_3 s_2^{-1}s_3 A_3 s_4 s_3^{\beta} w_0 s_4 \subset V^+$,
that is $s_4 s_3 s_2^{-1} s_3 A_3 s_4 s_3^{\beta} w_0 s_4 \subset V^+$.
We use $A_3 \subset s_2^{-1} s_1 s_2^{-1} u_1 + u_1 u_2 u_1$ to get 
$s_4 s_3 s_2^{-1} s_3 A_3 s_4 s_3^{\beta} w_0 s_4\subset
s_4 s_3 s_2^{-1} s_3 s_2^{-1} s_1 s_2^{-1} u_1 s_4 s_3^{\beta} w_0 s_4+ s_4 s_3 s_2^{-1} s_3 u_1 u_2 u_1 s_4 s_3^{\beta} w_0s_4$.
Now, for $s_4 s_3 s_2^{-1} s_3 u_1 u_2 u_1 s_4 s_3^{\beta} w_0 s_4= s_4 s_3 s_2^{-1} s_3 u_1 u_2  s_4 s_3^{\beta} w_0s_4u_1$
we are reduced to proving (2), while 
the expression $s_4 (s_3 s_2^{-1} s_3 s_2^{-1}) s_1 s_2^{-1} u_1 s_4 s_3^{\beta} w_0 s_4 =
s_4 (s_3 s_2^{-1} s_3 s_2^{-1}) s_1 s_2^{-1} s_4 s_3^{\beta} w_0 s_4  u_1$ is 
$$
s_4  s_2^{-1} s_3 s_2^{-1}s_3 s_1 s_2^{-1}  s_4 s_3^{\beta} w_0 s_4 u_1+ s_4  u_2 u_3  s_1 s_2^{-1} u_1 s_4 s_3^{\beta} w_0 s_4
+ s_4  u_3 u_2 s_1 s_2^{-1} s_4  s_3^{\beta} w_0 s_4 u_1$$
 by lemma \ref{lemdecomp1212}. We have $ s_4  u_2 u_3  s_1 s_2^{-1} s_4 s_3^{\beta} w_0 s_4
+ s_4  u_3 u_2 s_1 s_2^{-1} s_4  s_3^{\beta} w_0 s_4 \subset V_0$ by proposition \ref{propmoinsde55},
while $s_4  s_2^{-1} s_3 s_2^{-1}s_3 s_1 s_2^{-1}  s_4 s_3^{\beta} w_0 s_4 =
 s_2^{-1} s_4  s_3 s_2^{-1}s_3 s_1 s_2^{-1}  s_4 s_3^{\beta} w_0 s_4  $
$s_4  s_3 s_2^{-1}s_3 s_1 s_2^{-1}  s_4 s_3^{\beta} w_0 s_4$
is a linear combination of $s_4  s_3 s_2^{-1}s_3 s_1 s_2^{-1}  s_4 s_3^{\beta'} s_2 s_1^2 s_2 s_3 s_4$ for $\beta' \in \{ 0,1,-1 \}$.
Now 
$$
\begin{array}{llclc}
& s_4  s_3 s_2^{-1}s_3 s_1 s_2^{-1}  s_4 s_3^{\beta'} s_2 s_1^2 s_2 s_3 s_4 &=&
s_4  s_3 s_2^{-1}s_3 s_1   s_4 (s_2^{-1}s_3^{\beta'} s_2) s_1^2 s_2 s_3 s_4 \\
=&s_4  s_3 s_2^{-1}s_3 s_1   s_4 s_3s_2^{\beta'} s_3^{-1} s_1^2 s_2 s_3 s_4
&=& s_4  s_3 s_2^{-1}s_3 s_1   s_4 s_3s_2^{\beta'}  s_1^2 (s_3^{-1}s_2 s_3) s_4 \\
=&s_4  s_3 s_2^{-1}s_3 s_1   s_4 s_3s_2^{\beta'}  s_1^2 s_2s_3 s_2^{-1} s_4
&=&s_4  s_3 s_2^{-1}s_3 s_1   s_4 s_3s_2^{\beta'}  s_1^2 s_2s_3  s_4s_2^{-1}  \in V_0
\end{array}$$ by proposition \ref{propmoinsde55}. This proves (1).
For proving (2), we can reduce to an expression of the form
$s_4 s_3 s_2^{-1} s_3 s_1^{\alpha} s_2^{\beta} s_4 s_3^{\gamma} s_2 s_1^2 s_2 s_3 s_4$ (with $\alpha,\beta,\gamma \in \{0,1,-1 \}$).
Using only braid relations, one gets
$$
\begin{array}{lccl} 
& s_4 s_3 s_2^{-1} s_3 s_1^{\alpha} s_2^{\beta} s_4 s_3^{\gamma} s_2 s_1^2 s_2 s_3 s_4  
&= & s_4 s_3 s_2^{-1} s_3 s_1^{\alpha} s_2^{\beta} s_4 s_3^{\gamma} s_2 s_1^2 s_2 (s_3 s_4 s_3^{-1}) s_3 \\
= & s_4 s_3 s_2^{-1} s_3 s_1^{\alpha} s_2^{\beta} s_4 s_3^{\gamma} s_2 s_1^2 s_2 s_4^{-1} s_3 s_4 s_3 &
= & s_4 s_3 s_2^{-1} s_3 s_1^{\alpha} s_2^{\beta} (s_4 s_3^{\gamma} s_4^{-1}) s_2 s_1^2 s_2  s_3 s_4 s_3 \\
= & s_4 s_3 s_2^{-1} s_1^{\alpha}(s_3  s_2^{\beta} s_3^{-1}) s_4^{\gamma} s_3 s_2 s_1^2 s_2  s_3 s_4 s_3 &
= & s_4 s_3 s_2^{-1} s_1^{\alpha}s_2^{-1}  s_3^{\beta} s_2 s_4^{\gamma} s_3 s_2 s_1^2 s_2  s_3 s_4 s_3 \\
= & s_4 s_3 s_2^{-1} s_1^{\alpha}s_2^{-1}  s_3^{\beta}  s_4^{\gamma} (s_2 s_3 s_2) s_1^2 s_2  s_3 s_4 s_3 &
= & s_4 s_3 s_2^{-1} s_1^{\alpha}s_2^{-1}  s_3^{\beta}  s_4^{\gamma} s_3 s_2 s_3 s_1^2 s_2  s_3 s_4 s_3 \\
= & s_4 s_3 s_2^{-1} s_1^{\alpha}s_2^{-1}  s_3^{\beta}  s_4^{\gamma} s_3 s_2  s_1^2 (s_3 s_2  s_3) s_4 s_3 &
= & s_4 s_3 s_2^{-1} s_1^{\alpha}s_2^{-1}  s_3^{\beta}  s_4^{\gamma} s_3 s_2  s_1^2 s_2 s_3  s_2 s_4 s_3 \\
= & s_4 s_3 s_2^{-1} s_1^{\alpha}s_2^{-1}  s_3^{\beta}  s_4^{\gamma} s_3 s_2  s_1^2 s_2 s_3   s_4 .s_2 s_3 \\
\end{array}
$$
which belongs to $V_0$ by lemma \ref{lemreducspec} (3) as soon as $\beta = -1$. If $\beta = 0$, it
is equal to 
$$s_4 s_3 s_2^{-1} s_1^{\alpha} s_2^{-1}  s_4^{\gamma} s_3  s_2 s_1^2 s_2 s_3 s_4. s_2 s_3 =
s_4 s_3 s_4^{\gamma}  s_2^{-1} s_1^{\alpha} s_2^{-1}  s_3  s_2 s_1^2 s_2 s_3 s_4. s_2 s_3 \in V_0$$
by lemma \ref{lemreduc2} (2).
Otherwise, considering all possibilities for $\alpha$ and applying lemmas \ref{lemsimplif212} and \ref{lemreducspec}
it lies inside
$V_0 + s_4 s_3 s_2^{-1} s_1 s_2^{-1}  s_3 s_4^{\gamma} s_3  s_2 s_1^2 s_2 s_3 s_4 A_4 
\subset s_4 w^+ s_4^{\gamma} w^+ s_4 A_4 + V_0$. In cases $\gamma \in \{ -1,0 \}$ this clearly belongs to $V^+$,
while $s_4 w^+ s_4 w^+ s_4 \in V_0$ by lemma \ref{lemreduc2} (6).

We now prove (3). We have
$$
\begin{array}{llcl}
&s_4 s_3 (s_2 s_1^2 s_2)s_3^{-1} s_4 (s_2 s_1^2 s_2) s_3 s_4^{-1} w_0 s_4 &=& 
s_4 s_3 (s_2 s_1^2 s_2)s_3^{-1}  (s_2 s_1^2 s_2) (s_4s_3 s_4^{-1}) w_0 s_4\\
=&
s_4 s_3 (s_2 s_1^2 s_2)s_3^{-1}  (s_2 s_1^2 s_2) s_3^{-1}s_4 s_3 w_0 s_4
&\in&   s_4 A_4s_4 s_3 w_0 s_4 \subset V^+\end{array}$$ by (1).

We prove (4), considering an expression of the form 
$s_4^{\alpha} u_3 u_2 u_3 s_4^{\beta} A_4 s_4$ for $\alpha , \beta \in \{ -1, 1 \}$,
the case $\alpha = 0$ or $\beta = 0$ being obvious. We use the decomposition
$A_4 = A_3 u_3 A_3 + A_3 u_3 u_2 u_3 A_3 + u_3 u_2 u_1 u_2 u_3 A_3$.
We have $$ \begin{array}{llcl} & s_4^{\alpha} u_3 u_2 u_3 s_4^{\beta} A_3 u_3 A_3 s_4
&=& s_4^{\alpha} u_3 u_2 u_3 s_4^{\beta} A_3 u_3 s_4 A_3\\
= &s_4^{\alpha} u_3 u_2 u_3 s_4^{\beta} u_2 u_1 u_2 u_1 u_3 s_4 A_3
&=& s_4^{\alpha} u_3 u_2 u_3 s_4^{\beta} u_2 u_1 u_2  u_3 s_4 u_1 A_3 \subset V_0\end{array}$$
by proposition \ref{propmoinsde55},
and 
$s_4^{\alpha} u_3 u_2 u_3 s_4^{\beta} u_3 u_2 u_1 u_2 u_3  A_3 s_4 =
s_4^{\alpha} u_3 u_2 u_3 s_4^{\beta} u_3 u_2 u_1 u_2 u_3   s_4 A_3  \subset V_0$
by proposition \ref{propmoinsde55}.
There remains to consider
$$  \begin{array}{llcl} & s_4^{\alpha} u_3 u_2 u_3 s_4^{\beta} A_3 u_3 u_2 u_3  A_3 s_4 &=&
s_4^{\alpha} u_3 u_2 u_3 s_4^{\beta} A_3 u_3 u_2 u_3 s_4A_3\\
=&
s_4^{\alpha} u_3 u_2 u_3 s_4^{\beta} u_1 u_2 u_1 (u_2  u_3 u_2 u_3) s_4A_3&
=&
s_4^{\alpha} u_3 u_2 u_3 s_4^{\beta} u_1 u_2 u_1 u_3  u_2 u_3 u_2 s_4A_3\\
=&
s_4^{\alpha} u_3 u_2 u_3 s_4^{\beta} u_1 u_2 u_1 u_3  u_2 u_3  s_4 u_2A_3 &\subset&
V_0 \end{array}$$ by proposition \ref{propmoinsde55}, unless $\alpha = \beta = 1$.
In that case the proof of lemma \ref{lemaux2AuAuA}, lemma \ref{lemtransf1} (1) and lemma
\ref{lem2lignesout} (2) together yield $s_4 u_3 u_2 u_3 s_4 u_1 u_2 u_1 u_3  u_2 u_3  s_4
\in V^+$.

Now (5) is a consequence of (4) and proposition \ref{propmoinsde55},
 as $U_0 = A_3 u_3 A_3 + A_3 u_3 u_2 u_3 A_3$ and 
 $$
 \begin{array}{lcl}
 u_4 U_0 u_4 A_4 s_4 &\subset& u_4 A_3 u_3 A_3 u_4 A_4 s_4
 + u_4 A_3 u_3 u_2 u_3 A_3 u_4 A_4 s_4 \\
& = & A_3 u_4  u_3  u_4 A_4 s_4
 + u_4 A_3 u_3 u_2 u_3  u_4 A_4 s_4 \\
 \end{array}
 $$
 and both terms belong to $V^+$, by lemma \ref{lemreduc2} (2) and by (4).
Then (6) is an immediate consequence of (5) and lemma \ref{lemauxA4w0} (3).

\end{proof}

\subsection{Reduction to $s_4 s_3 (s_2 s_1^2 s_2) s_3^{-1} s_4 s_3^{-1} (s_2 s_1^2 s_2) s_3 s_4^{-1} w_0 s_4$} 
Expanding $s_3^2$, we get 
$$
\begin{array}{lcl}
s_4 s_3 (s_2 s_1^2 s_2) s_3^{-1} s_4 s_3^2 (s_2 s_1^2 s_2) s_3 s_4^{-1} w_0 s_4 & \in
& R^{\times} 
s_4 s_3 (s_2 s_1^2 s_2) s_3^{-1} s_4 s_3^{-1} (s_2 s_1^2 s_2) s_3 s_4^{-1} w_0 s_4 \\
&& + R s_4 s_3 (s_2 s_1^2 s_2) s_3^{-1} s_4 s_3 (s_2 s_1^2 s_2) s_3 s_4^{-1} w_0 s_4\\
&&+ R s_4 s_3 (s_2 s_1^2 s_2) s_3^{-1} s_4  (s_2 s_1^2 s_2) s_3 s_4^{-1} w_0 s_4.
\end{array}$$
We have 
$$
\begin{array}{lr}
s_4 s_3 (s_2 s_1^2 s_2) s_3^{-1} s_4  (s_2 s_1^2 s_2) s_3 s_4^{-1} w_0 s_4 \in V^+&
\mbox{by lemma \ref{lemN5} (3)} \\
s_4 s_3 (s_2 s_1^2 s_2) s_3^{-1} s_4 s_3 (s_2 s_1^2 s_2) s_3 s_4^{-1} w_0 s_4
\in V^+& \mbox{by lemma \ref{lemN4} (2).}
\end{array}
$$

\begin{lemma}{\ } \label{lemN6}
\begin{enumerate}
\item For all $\alpha \in \Z$ $s_4 s_3^{-1} s_2^{\alpha} s_3^{-1} s_4^{-1} s_3 w_0 s_4 \in V^+$
\item $s_4 s_3^{-1} s_2^2 s_3^{-1} s_4^2 s_3 w_0 s_4 \in V^+$
\item $s_4 s_3^{-1} s_2 s_4 s_3^{-1} s_2^2 s_3 s_4^{-1} w_0 s_4 \in V^+$
\item $s_4 s_3^{-1} s_2 s_4 s_3^{-1} s_2 s_1 s_2 s_3 s_4^{-1} w_0 s_4 \in V^+$ 
\end{enumerate}
\end{lemma}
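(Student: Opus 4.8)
The plan is to reduce each of the four products, using braid and commutation relations together with the cubic relation $s_i^{\pm2}\in R+Rs_i+Rs_i^{-1}$, either to a two-gap product covered by Proposition~\ref{propmoinsde55}, or to one of the shapes already shown to lie in $V^+$, namely $s_4\,A_4\,s_4\,s_3^{\beta}w_0s_4$ (Lemma~\ref{lemN5}(1)) and $s_4w_0s_4^{-1}s_3s_4^{-1}w_0s_4$ (Lemma~\ref{lemN4}(1)). Two elementary facts will be used throughout: first, both $V_0=A_5^{(2)}$ and $V^+$ are $A_4$-bimodules, so any factor lying in $A_4$ at the extreme left or right may be discarded for free; second, $u_4A_4u_4\subset A_4u_4A_4u_4A_4=A_5^{(2)}=V_0$, so that $s_4^a x s_4^b\in V_0$ for every $x\in A_4$ and $a,b\in\{-1,0,1\}$. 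I shall also use that $s_1,s_2,s_2^{-1},s_2s_1^2s_2$ commute with $s_4$, that $w_0=s_3s_2s_1^2s_2s_3$ commutes with $s_1,s_2$, and the braid identities $s_4s_3s_4^{-1}=s_3^{-1}s_4s_3$, $s_4s_3^{-1}s_4^{-1}=s_3^{-1}s_4^{-1}s_3$ and $s_3^{-1}s_2s_3=s_2s_3s_2^{-1}$ (Lemma~\ref{lemsplusmoins}).

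For (1) I first write $s_2^{\alpha}\in R+Rs_2+Rs_2^{-1}$, so it suffices to treat $s_4s_3^{-1}s_2^{\eps}s_3^{-1}s_4^{-1}s_3w_0s_4$ for $\eps\in\{-1,0,1\}$; using $s_3w_0=s_3^2s_2s_1^2s_2s_3$, each of these is a product $u_4\,(s_3^{-1}s_2^{\eps}s_3^{-1})\,u_4\,(s_3^2s_2s_1^2s_2s_3)\,u_4$ whose two interior words have respectively at most $3$ and exactly $5$ factors (when $\eps=0$ the first word is $s_3^{-2}$, a single factor). We are thus in the $(3,5)$, resp. $(1,5)$, case of Proposition~\ref{propmoinsde55} and land in $V_0\subset V^+$. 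Statement (2) follows by expanding the central $s_4^2\in R^{\times}s_4^{-1}+Rs_4+R$: the constant term is $s_4(s_3^{-1}s_2^2w_0)s_4\in u_4A_4u_4\subset V_0$; the $s_4$-term $s_4s_3^{-1}s_2^2s_3^{-1}s_4s_3w_0s_4$ is again a $(3,5)$ product, hence in $V_0$ by Proposition~\ref{propmoinsde55}; and the $s_4^{-1}$-term is exactly the case $\alpha=2$ of (1), hence in $V^+$.

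For (3) the key is the identity $s_3^{-1}s_2^2s_3=s_2s_3^2s_2^{-1}$ (a double application of $s_3^{-1}s_2s_3=s_2s_3s_2^{-1}$). Substituting it and commuting the resulting $s_2^{-1}$ past $s_4^{-1}$ and $w_0$ to the far right (where it is discarded), and using $s_2s_4s_2=s_2^2s_4$, the product becomes $s_4s_3^{-1}s_2^2s_4s_3^2s_4^{-1}w_0s_4$. Now I expand $s_3^2\in R^{\times}s_3^{-1}+Rs_3+R$: the constant term is $s_4(s_3^{-1}s_2^2w_0)s_4\in u_4A_4u_4\subset V_0$; the $s_3$-term, after $s_4s_3s_4^{-1}=s_3^{-1}s_4s_3$, is the $(3,5)$ product $s_4s_3^{-1}s_2^2s_3^{-1}s_4s_3w_0s_4\in V_0$; and the $s_3^{-1}$-term, after $s_4s_3^{-1}s_4^{-1}=s_3^{-1}s_4^{-1}s_3$, equals $s_4s_3^{-1}s_2^2s_3^{-1}s_4^{-1}s_3w_0s_4$, which is the case $\alpha=2$ of (1). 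Hence (3) lies in $V^+$.

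Statement (4) is handled in the same spirit but lands, more efficiently, on Lemma~\ref{lemN5}(1). Using $s_2s_1s_2=s_1s_2s_1$ and conjugation by $s_3$ one gets $s_3^{-1}s_2s_1s_2s_3=s_1s_2s_3s_2^{-1}s_1$; substituting this and commuting the two outer factors $s_1$ and $s_2^{-1}$ (which commute with $s_4^{\pm1}$ and $w_0$) to the far right, where they are discarded, reduces the product to $s_4s_3^{-1}s_2s_4s_1s_2s_3s_4^{-1}w_0s_4$. Moving $s_1$ past $s_4$ and applying $s_4s_2s_3s_4^{-1}=s_2s_3^{-1}s_4s_3$ then yields $s_4\,(s_3^{-1}s_2s_1s_2s_3^{-1})\,s_4\,s_3\,w_0\,s_4$, which has exactly the shape $s_4\,A_4\,s_4\,s_3^{\beta}w_0s_4$ with $\beta=1$ and interior factor $s_3^{-1}s_2s_1s_2s_3^{-1}\in A_4$; Lemma~\ref{lemN5}(1) then places it in $V^+$. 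I expect the main subtlety to be precisely this last point: the reduced word in (4) is a priori a $(5,5)$ product, which is the one case \emph{excluded} from Proposition~\ref{propmoinsde55}, so a naive factor count is useless; the reduction succeeds only because, by keeping $w_0$ intact and isolating a single interior $s_4$, the two length-five blocks reorganize into the special form controlled by Lemma~\ref{lemN5}(1). Throughout, care is needed to commute a factor to an extreme end before discarding it, and to track which expansions of $s_3^{\pm2},s_4^{\pm2}$ feed back into cases (1)--(3).
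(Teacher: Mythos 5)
Your argument is correct, and for items (3) and (4) it is essentially the paper's own proof: the same identities $s_3^{-1}s_2^2s_3=s_2s_3^2s_2^{-1}$ and $s_3^{-1}(s_2s_1s_2)s_3=s_1s_2s_3s_2^{-1}s_1$, the same transfer of $s_1,s_2^{\pm1}$ to the far right (note that in (4) the factors you discard are the two \emph{trailing} ones, $s_2^{-1}s_1$, not an ``outer'' pair, though your displayed reduction $s_4s_3^{-1}s_2s_4s_1s_2s_3s_4^{-1}w_0s_4$ is exactly right), and the same terminal appeal to Lemma \ref{lemN5}(1) applied to $s_4(s_3^{-1}s_2s_1s_2s_3^{-1})s_4s_3w_0s_4$. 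Items (1) and (2) are where you genuinely diverge. The paper proves (1) by merging the two interior $s_4^{\pm1}$, rewriting the element as $s_4s_3^{-1}s_4^2\,s_2^{\alpha}s_3^{-1}s_2s_1^2s_2s_3\,s_4s_3^{-1}$ and invoking Lemma \ref{lemreduc2}(2), namely $u_4u_3u_4A_4u_4\subset V_0$, and it disposes of the $s_4$-term of (2) via Lemma \ref{lemN5}(1); you instead observe that $s_4(s_3^{-1}s_2^{\eps}s_3^{-1})s_4^{\pm1}(s_3w_0)s_4$ is a $(3,5)$ (or $(1,5)$) product in the sense of Proposition \ref{propmoinsde55}, hence lies in $V_0$ outright, the only excluded configurations there being $(5,5)$ and the two special $(5,4)/(4,5)$ patterns. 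That factor count is legitimate and slightly more economical, since it confines the use of Lemma \ref{lemN5}(1) to item (4) alone; and your closing remark is precisely the right diagnosis of why (4) cannot be handled the same way: after reduction it is the $(5,5)$ product $s_4(s_3^{-1}s_2s_1s_2s_3^{-1})s_4(s_3w_0)s_4$, the one case Proposition \ref{propmoinsde55} excludes, so the special shape $s_4A_4s_4s_3^{\beta}w_0s_4$ of Lemma \ref{lemN5}(1) is indispensable there.
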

\begin{proof}
We prove (1). We get
$$ \begin{array}{llcl}
& s_4 s_3^{-1} s_2^{\alpha} (s_3^{-1} s_4^{-1} s_3) w_0 s_4
 &=& 
s_4 s_3^{-1} s_2^{\alpha} s_4 (s_3^{-1} s_4^{-1} s_3) s_2 s_1^2 s_2 s_3 s_4 \\
=&
s_4 s_3^{-1} s_2^{\alpha} s_4 s_4 s_3^{-1} s_4^{-1} s_2 s_1^2 s_2 s_3 s_4&
=&
s_4 s_3^{-1} s_2^{\alpha} s_4^2 s_3^{-1}  s_2 s_1^2 s_2 (s_4^{-1}s_3 s_4) \\
=&
s_4 s_3^{-1} s_4^2 s_2^{\alpha}  s_3^{-1}  s_2 s_1^2 s_2 s_3s_4 s_3^{-1}
&\in& V_0\end{array}$$ by lemma \ref{lemreduc2} (2). 
Part (2) is obtained by expanding $s_4^2$ and using (1) and lemma
\ref{lemN5} (1).
For (3), we use
$$
\begin{array}{ llcl}
& s_4 s_3^{-1} s_2 s_4 (s_3^{-1} s_2^2 s_3) s_4^{-1} w_0 s_4 & = & 
s_4 s_3^{-1} s_2 s_4 s_2 s_3^2 s_2^{-1} s_4^{-1} w_0 s_4\\ =
& s_4 s_3^{-1} s_2^2 (s_4  s_3^2  s_4^{-1}) w_0 s_4s_2^{-1}& =& 
s_4 s_3^{-1} s_2^2 s_3^{-1}  s_4^2  s_3 w_0 s_4s_2^{-1} \in V^+
\end{array} $$ because of (2).
For (4) we use 
$s_4 s_3^{-1} s_2 s_4 s_3^{-1} (s_2 s_1 s_2) s_3 s_4^{-1} w_0 s_4
= s_4 s_3^{-1} s_2 s_4 s_3^{-1} s_1 s_2 s_1 s_3 s_4^{-1} w_0 s_4
= s_4 s_3^{-1} s_2 s_4  s_1 (s_3^{-1}s_2  s_3) s_4^{-1} w_0 s_4s_1
= s_4 s_3^{-1} s_2 s_4  s_1 s_2s_3  s_2^{-1} s_4^{-1} w_0 s_4s_1
= s_4 s_3^{-1} s_2   s_1 s_2 s_4 s_3   s_4^{-1} w_0 s_4s_2^{-1}s_1
= s_4 s_3^{-1} s_2   s_1 s_2 s_3^{-1} s_4   s_3 w_0 s_4s_2^{-1}s_1 \in V^+$
by lemma \ref{lemN5} (1).


\end{proof}

\begin{lemma}{\ }  \label{lemN7}
\begin{enumerate}
\item $s_4 s_3^{-1} A_3 s_4 u_3 s_4^{-1} w_0 s_4 \subset V^+$
\item$s_4 s_3^{-1} s_2 s_4 s_3^{-1} u_1 u_2 u_1 s_3 s_4^{-1} w_0 s_4 \subset V^+$
\item $s_4 s_3^{-1} s_2 s_4 s_3^{-1} (s_2 s_1^{-1} s_2) s_3 s_4^{-1} w_0 s_4 \in V^+$
\end{enumerate}
\end{lemma}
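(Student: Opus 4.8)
The plan is to treat all three items by the reduction template already used for lemmas \ref{lemN5} and \ref{lemN6}: expand the inner factor ($u_3$, then $u_1 u_2 u_1$, then $s_2 s_1^{-1} s_2$) into its spanning monomials, and then exploit the braid identities of lemma \ref{lemsplusmoins} in the form $s_4 s_3^{\pm} s_4^{-1} = s_3^{\mp} s_4^{\pm} s_3^{\pm}$ together with the two commutation facts that do all the real work: $s_4$ commutes with $\langle s_1, s_2 \rangle = A_3$, and $w_0 = s_3 s_2 s_1^2 s_2 s_3$ also commutes with $A_3$. These let me slide every $s_1^{\pm}, s_2^{\pm}$ that gets liberated past the outer $s_4$'s and past $w_0 s_4$, so that, up to left and right multiplication by units of $A_4$ (which preserve both $V_0$ and $V^+$), I only ever have to control a core word in $s_3, s_4, w_0$, which I then match against proposition \ref{propmoinsde55} (landing in $V_0$) or lemmas \ref{lemN5}(1) and \ref{lemN6} (landing in $V^+$).

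For item (1) I write $u_3 = R \oplus R s_3 \oplus R s_3^{-1}$ and treat the three slots. In the $s_3^0$ slot, using $s_3^{-1} w_0 = (s_2 s_1^2 s_2) s_3$ and commuting the $A_3$-factor through the trailing $s_4$, the expression becomes $(s_2 s_1^2 s_2)\, s_4 s_3 s_4 \cdot A_4 \subseteq A_4 u_4 u_3 u_4 A_4 \subseteq V_0$. In the $s_3^{+1}$ slot I rewrite $s_4 s_3 s_4^{-1} = s_3^{-1} s_4 s_3$, which turns the term into $s_4 A_4 s_4 s_3 w_0 s_4$, hence into $V^+$ by lemma \ref{lemN5}(1). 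The $s_3^{-1}$ slot is the substantial one: after $s_4 s_3^{-1} s_4^{-1} = s_3^{-1} s_4^{-1} s_3$ it has the shape $s_4\, s_3^{-1} x s_3^{-1}\, s_4^{-1} s_3 w_0 s_4$ with $x \in A_3$, so I decompose $A_3 = u_1 u_2 u_1 + (s_2 s_1^{-1} s_2) u_1$ (theorem \ref{theodecA3}), strip the $s_1$-powers (which commute with $s_3, s_4, w_0$) to the outside, and expand the residual $s_2$- and $s_3$-squares through the cubic relation; each monomial then falls either into $V_0$ by proposition \ref{propmoinsde55} or into an instance of lemma \ref{lemN6}(1), the exceptional residue $s_2 s_1^{-1} s_2$ being disposed of exactly as in item (3).

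For item (2) I expand $u_1 u_2 u_1$ into its spanning monomials $s_1^{\alpha} s_2^{\beta} s_1^{\gamma}$ and move the commuting $s_3$ past the $s_1$-powers, reducing the inner conjugate by $s_3^{-1} s_2 s_3 = s_2 s_3 s_2^{-1}$ and $s_3^{-1} s_2^{-1} s_3 = s_2 s_3^{-1} s_2^{-1}$ (lemma \ref{lemsplusmoins}). Sliding the liberated $s_1, s_2$ factors out through $s_4$ and $w_0$ leaves, up to units of $A_4$ on both sides, a core $s_4 s_3^{-1} s_2 s_1^{\alpha} s_2^{\beta} (s_4 s_3^{\pm} s_4^{-1}) w_0 s_4$: the $\beta = 0$ core collapses to $u_4 A_4 u_4 \subseteq V_0$; the $\beta = +1$ core, after $s_4 s_3 s_4^{-1} = s_3^{-1} s_4 s_3$, becomes $s_4 A_4 s_4 s_3 w_0 s_4 \subseteq V^+$ by lemma \ref{lemN5}(1); and the $\beta = -1$ core, after $s_4 s_3^{-1} s_4^{-1} = s_3^{-1} s_4^{-1} s_3$ and conjugating the surviving $s_1$ to the outside, is exactly lemma \ref{lemN6}(1). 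For item (3) I substitute the braid-group identity $s_3^{-1}(s_2 s_1^{-1} s_2) s_3 = s_2 s_1 (s_3 s_2^{-1} s_3) s_1^{-1} s_2^{-1}$ established in the proof of lemma \ref{lemreducspec}; commuting $s_2 s_1$ to the left and $s_1^{-1} s_2^{-1}$ to the right through $s_4$ and $w_0$ rewrites the expression, up to $A_4^{\times}$ on each side, as $s_4 s_3^{-1} s_2^2 s_1\, s_4 (s_3 s_2^{-1} s_3) s_4^{-1} w_0 s_4$, which I then drive into lemmas \ref{lemN5}(1) and \ref{lemN6} using $s_4 s_3^{\pm} s_4^{-1} = s_3^{\mp} s_4^{\pm} s_3^{\pm}$ and a final expansion of $s_2^2$ through the cubic relation.

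The main obstacle will be precisely the appearance of the exceptional factor $s_2 s_1^{-1} s_2$, equivalently the $s_3^{-1}$ slot of (1), the $\beta = -1$ case of (2), and all of (3): there the word is genuinely of depth four in $s_4$ and cannot be collapsed by commutation alone. Controlling it relies on the explicit conjugation identity for $s_3^{-1}(s_2 s_1^{-1} s_2) s_3$ together with careful sign bookkeeping, since this is the only place where the genuine term $s_4 w^+ s_4^{-1} w^+ s_4$ of $V^+$ is produced rather than a term of $V_0$; everything else is routine sliding and cubic-relation expansion already covered by proposition \ref{propmoinsde55}.
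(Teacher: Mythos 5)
Your routine cases agree with the paper: the $s_3^{0}$ and $s_3^{+1}$ slots of (1), the $b=0,+1$ slots of (2), and even the rewriting of (3) via $s_3^{-1}(s_2s_1^{-1}s_2)s_3=s_2s_1(s_3s_2^{-1}s_3)s_1^{-1}s_2^{-1}$ to the core $s_4s_3^{-1}s_2^2s_1\,s_4(s_3s_2^{-1}s_3)s_4^{-1}\,w_0s_4$ are exactly the paper's moves. The gap is in everything you defer to the ``exceptional factor'' $s_2s_1^{-1}s_2$. First, your reduction of (2)'s $b=-1$ slot to lemma \ref{lemN6}(1) fails when the surviving $u_1$-exponent is $-1$: the core is then $s_4s_3^{-1}(s_2s_1^{-1}s_2)s_3^{-1}s_4^{-1}s_3w_0s_4$, and the $s_1^{-1}$ cannot be ``conjugated to the outside'' because it does not commute with the flanking $s_2$'s; lemma \ref{lemN6}(1) only covers a pure power $s_2^{\alpha}$ in that position. (The paper kills this case by noting, via $s_3^{-1}s_4^{-1}s_3=s_4s_3^{-1}s_4^{-1}$, that it is an instance of item (1).) Second, and more seriously, your item (3) is not a proof. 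Expanding $s_2^2$ through the cubic relation does nothing to the actual obstruction, which is the inner conjugate $s_4(s_3s_2^{-1}s_3)s_4^{-1}$: it is independent of the $s_2$-power on its left, and collapsing it by $s_4s_3^{\pm}s_4^{-1}=s_3^{\mp}s_4^{\pm}s_3^{\pm}$ produces $(s_3^{-1}s_4s_3)s_2^{-1}(s_3^{-1}s_4s_3)$, hence a word of depth four in $s_4$ that falls under none of proposition \ref{propmoinsde55}, lemma \ref{lemN5}(1), or lemma \ref{lemN6}. The paper instead splits $s_3s_2^{-1}s_3\in s_3^{-1}s_2s_3^{-1}u_2+u_2u_3u_2$ by lemma \ref{leminverse}, absorbs the $u_2u_3u_2$ piece using item (1), and then needs a further page of braid manipulation whose endpoint is lemma \ref{lemN5}(4), namely $u_4u_3u_2u_3u_4A_4s_4\subset V^+$ --- a strictly stronger input than the lemma \ref{lemN5}(1) you allow yourself, and one whose own proof rests on lemmas \ref{lemaux2AuAuA}, \ref{lemtransf1}(1) and \ref{lem2lignesout}(2). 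None of this is supplied, or even signalled, in your sketch.

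There is also a structural problem: you dispose of the hard case of (1) ``exactly as in item (3)'', while the only available proof of (3) (the paper's) uses item (1) to absorb its $u_2u_3u_2$ error term; so your plan is circular as stated unless you can prove (3) without (1), which you have not done. You have moreover missed that (1) needs none of this case analysis: writing $w_0=s_3(s_2s_1^2s_2)s_3$, the $s_3^{-1}$ slot equals $s_4s_3^{-1}A_3s_4\cdot(s_3^{-1}s_4^{-1}s_3)(s_2s_1^2s_2)s_3s_4$, and applying $s_3^{-1}s_4^{-1}s_3=s_4s_3^{-1}s_4^{-1}$ and then commuting $A_3$ and $s_2s_1^2s_2$ past the powers of $s_4$ gives $s_4s_3^{-1}s_4^2\,A_3\,s_3^{-1}(s_2s_1^2s_2)s_3\,s_4\,s_3^{-1}\subset u_4u_3u_4A_4u_4A_4\subset V_0$ by lemma \ref{lemreduc2}, uniformly in the $A_3$-factor, with no decomposition of $A_3$ and no appeal to $V^+$ at all. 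Proving (1) this way first, and only then deducing (2) and (3) from it, is the order in which the argument actually closes; your ordering inverts the dependency at precisely the point where the real work lies.
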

\begin{proof}
We consider $s_4 s_3^{-1} A_3 s_4 s_3^{\alpha} s_4^{-1} w_0 s_4$
for $\alpha \in \{ -1,0,1 \}$. When $\alpha = 0$ this
expression clearly belongs to $V_0$, when $\alpha = 1$
we get $s_4 s_3^{-1} A_3 (s_4 s_3 s_4^{-1}) w_0 s_4
=s_4 s_3^{-1} A_3 s_3^{-1} s_4 s_3 w_0 s_4 \subset V^+$
by lemma \ref{lemN5} (1). When $\alpha = -1$,
we get
 $
 s_4 s_3^{-1} A_3 s_4 (s_3^{-1} s_4^{-1} s_3) s_2 s_1^2 s_2 s_3 s_4
=
s_4 s_3^{-1} A_3 s_4 s_4 s_3^{-1} s_4^{-1} s_2 s_1^2 s_2 s_3 s_4
=
s_4 s_3^{-1} A_3 s_4^2 s_3^{-1}  s_2 s_1^2 s_2 (s_4^{-1}s_3 s_4)
=
s_4 s_3^{-1}  s_4^2 A_3 s_3^{-1}  s_2 s_1^2 s_2 s_3s_4 s_3^{-1}
\subset V_0$ by lemma \ref{lemreduc2} (1). This proves (1).
We consider now
$s_4 s_3^{-1} s_2 s_4 s_3^{-1} u_1 u_2 u_1 s_3 s_4^{-1} w_0 s_4
=
s_4 s_3^{-1} s_2 u_1 s_4 (s_3^{-1}  u_2  s_3) s_4^{-1} w_0 s_4 u_1
=
s_4 s_3^{-1} s_2 u_1 s_4 s_2  u_3  s_2^{-1} s_4^{-1} w_0 s_4 u_1
=
s_4 s_3^{-1} s_2 u_1 s_2 (s_4   u_3   s_4^{-1}) w_0 s_4 s_2^{-1}u_1
=
s_4 s_3^{-1} s_2 u_1 s_2 s_3^{-1}   u_4   s_3 w_0 s_4 s_2^{-1}u_1$
which is a linear combination of the
$Y = s_4 s_3^{-1} s_2 u_1 s_2 s_3^{-1}   s_4^{\alpha}   s_3 w_0 s_4 s_2^{-1}u_1$
for $\alpha \in \{ -1,0,1 \}$. When $\alpha = 0$ clearly $Y \subset V_0$,
when $\alpha = 1$ we have $Y \subset V^+$ by lemma \ref{lemN5} (1),
and when $\alpha = -1$ we get
$s_4 s_3^{-1} s_2 u_1 s_2 (s_3^{-1}   s_4^{-1}   s_3) w_0 s_4 s_2^{-1}u_1
=s_4 s_3^{-1} s_2 u_1 s_2 s_4   s_3^{-1}   s_4^{-1} w_0 s_4 s_2^{-1}u_1 \subset V^+$
by (1). This proves (2).
We consider now $s_4 s_3^{-1} s_2 s_4 s_3^{-1} (s_2 s_1^{-1} s_2) s_3 s_4^{-1} w_0 s_4
= s_4 s_3^{-1} s_2 s_4 s_2 s_1 (s_3 s_2^{-1} s_3) s_1^{-1} s_2^{-1} s_4^{-1} w_0 s_4
=s_4 s_3^{-1} s_2^2 s_1 s_4  (s_3 s_2^{-1} s_3)  s_4^{-1} w_0 s_4s_1^{-1} s_2^{-1}
$. By lemma \ref{leminverse} it belongs to
$$s_4 s_3^{-1} s_2^2 s_1 s_4  s_3^{-1} s_2 s_3^{-1}  u_2 s_4^{-1} w_0 s_4s_1^{-1} s_2^{-1}
+ s_4 s_3^{-1} s_2^2 s_1 s_4 u_2 u_3 u_2  s_4^{-1} w_0 s_4s_1^{-1} s_2^{-1}.$$
We have 
$$s_4 s_3^{-1} s_2^2 s_1 s_4 u_2 u_3 u_2  s_4^{-1} w_0 s_4s_1^{-1} s_2^{-1} = 
s_4 s_3^{-1} s_2^2 s_1 u_2  s_4 u_3   s_4^{-1} w_0 s_4 u_2s_1^{-1} s_2^{-1} \subset V^+$$
by (1), and 
$$
\begin{array}{lcl}
 s_4 s_3^{-1} s_2^2 s_1 s_4  s_3^{-1} s_2 s_3^{-1}  u_2 s_4^{-1} w_0 s_4s_1^{-1} s_2^{-1}
&=& s_4 s_3^{-1} s_2^2 s_1 s_4  s_3^{-1} s_2 (s_3^{-1}   s_4^{-1} s_3) s_2 s_1^2 s_2 s_3  s_4u_2 s_1^{-1} s_2^{-1}\\
&= &s_4 s_3^{-1} s_2^2 s_1 s_4  s_3^{-1} s_2 s_4   s_3^{-1} s_4^{-1} s_2 s_1^2 s_2 s_3  s_4u_2 s_1^{-1} s_2^{-1}\\
&=& s_4 s_3^{-1} s_2^2 s_1 s_4  s_3^{-1} s_2 s_4   s_3^{-1}  s_2 s_1^2 s_2 (s_4^{-1} s_3  s_4)u_2 s_1^{-1} s_2^{-1}\\
&=& (s_4 s_3^{-1} s_4)s_2^2 s_1   s_3^{-1} s_2 s_4   s_3^{-1}  s_2 s_1^2 s_2 s_3 s_4  s_3^{-1}u_2 s_1^{-1} s_2^{-1}\\
\end{array}$$
belongs to 
$$u_3 s_4^{-1} s_3 s_4^{-1}s_2^2 s_1   s_3^{-1} s_2 s_4   s_3^{-1}  s_2 s_1^2 s_2 s_3 s_4  s_3^{-1}u_2 s_1^{-1} s_2^{-1}+
u_3 u_4 u_3s_2^2 s_1   s_3^{-1} s_2 s_4   s_3^{-1}  s_2 s_1^2 s_2 s_3 s_4  s_3^{-1}u_2 s_1^{-1} s_2^{-1}.$$
We have 

$$
\begin{array}{llcl}
 &s_4^{-1} s_3 s_4^{-1}s_2^2 s_1   s_3^{-1} s_2 s_4   s_3^{-1}  s_2 s_1^2 s_2 s_3 s_4&=&
s_4^{-1} s_3 s_2^2 s_1 (s_4^{-1}  s_3^{-1} s_4 )s_2    s_3^{-1}  s_2 s_1^2 s_2 s_3 s_4 \\ =
& s_4^{-1} s_3 s_2^2 s_1 s_3  s_4^{-1} s_3^{-1} s_2    s_3^{-1}  s_2 s_1^2 s_2 s_3 s_4 &=& 
s_4^{-1} s_3 s_2^2 s_3  s_4^{-1} s_1   s_3^{-1} s_2    s_3^{-1}  s_2 s_1^2 s_2 s_3 s_4   \in u_4 u_3 u_2 u_3 u_4 A_4 s_4  \\ 
\end{array}
$$
and also
$u_4 u_3s_2^2 s_1   s_3^{-1} s_2 s_4   s_3^{-1}  s_2 s_1^2 s_2 s_3 s_4  s_3^{-1}u_2 s_1^{-1} s_2^{-1}
\subset u_4 u_3 u_2 u_3 u_4 A_4 s_4 A_4$. The conclusion follows from \ref{lemN5} (4).


\end{proof}

\subsection{Reduction to $s_4 s_3 (s_2 s_1^{-1} s_2) s_3^{-1} s_4 s_3^{-1} (s_2 s_1^2 s_2) s_3 s_4^{-1} w_0 s_4$} 
Expanding $s_1^2$, we get
$$
\begin{array}{lcl}
s_4 s_3 (s_2 s_1^2 s_2) s_3^{-1} s_4 s_3^{-1} (s_2 s_1^2 s_2) s_3 s_4^{-1} w_0 s_4 & \in & 
R^{\times} s_4 s_3 (s_2 s_1^{-1} s_2) s_3^{-1} s_4 s_3^{-1} (s_2 s_1^2 s_2) s_3 s_4^{-1} w_0 s_4 \\
 & & + R s_4 s_3 (s_2 s_1 s_2) s_3^{-1} s_4 s_3^{-1} (s_2 s_1^2 s_2) s_3 s_4^{-1} w_0 s_4 \\
 & & + R s_4 s_3 s_2^2 s_3^{-1} s_4 s_3^{-1} (s_2 s_1^2 s_2) s_3 s_4^{-1} w_0 s_4 \\
 \end{array}
 $$
 Since $$
 \begin{array}{lcl}
 s_4 s_3 (s_2 s_1 s_2) s_3^{-1} s_4 s_3^{-1} (s_2 s_1^2 s_2) s_3 s_4^{-1} w_0 s_4 &=& 
 s_4 s_3 s_1 s_2 s_1 s_3^{-1} s_4 s_3^{-1} (s_2 s_1^2 s_2) s_3 s_4^{-1} w_0 s_4\\
&=&
 s_1 s_4 s_3  s_2  s_3^{-1} s_4 s_3^{-1} (s_2 s_1^2 s_2) s_3 s_4^{-1} w_0 s_4s_1\\
 \end{array}$$ (as $s_1$ commutes with
 $s_2 s_1^2 s_2 = c_3 c_2^{-1}$), the latter two terms belong to
 $$
 \begin{array}{lcl}
 A_2 s_4 (s_3 u_2 s_3^{-1}) s_4 s_3^{-1} (s_2 s_1^2 s_2) s_3 s_4^{-1} w_0 s_4 A_2
& =& A_2 s_4 s_2^{-1} u_3 s_2 s_4 s_3^{-1} (s_2 s_1^2 s_2) s_3 s_4^{-1} w_0 s_4 A_2 \\
& \subset& A_3 s_4  u_3 s_4 s_2  s_3^{-1} (s_2 s_1^2 s_2) s_3 s_4^{-1} w_0 s_4 A_2\end{array}.$$
 We thus only need to prove that the
 $s_4  s_3^{\alpha} s_4 s_2  s_3^{-1} (s_2 s_1^2 s_2) s_3 s_4^{-1} w_0 s_4$
 belong to $V^+$ for $\alpha \in \{ -1,0,1 \}$. When $\alpha = 0$ this is a consequence of lemma \ref{lemN5} (6) ;
 when $\alpha = 1$ we get 
$$
 \begin{array}{lcl}
(s_4  s_3 s_4) s_2  s_3^{-1} (s_2 s_1^2 s_2) s_3 s_4^{-1} w_0 s_4
&=&
s_3  s_4 (s_3 s_2  s_3^{-1}) (s_2 s_1^2 s_2) s_3 s_4^{-1} w_0 s_4\\
&=&
s_3  s_4 s_2^{-1} s_3  (s_2^2 s_1^2 s_2) s_3 s_4^{-1} w_0 s_4.
\end{array}$$ Since 
$s_2^2 s_1^2 s_2 \in u_1 s_2^{-1} s_1 s_2^{-1} + u_1 u_2 u_1$
the conclusion follows from proposition \ref{propmoinsde55}.
When $\alpha = -1$, expanding $s_1^2$ we only need to consider the 
$s_4  s_3^{-1} s_4 s_2  s_3^{-1} s_2 s_1^{\beta} s_2 s_3 s_4^{-1} w_0 s_4$
for $\beta \in \{-1,0,1 \}$. The case $\beta = -1$ is a consequence of lemma \ref{lemN7} (3), while
the other two cases follow from lemma \ref{lemN6} (3) and (4).

\begin{lemma}{\ } \label{lemN8} 
\begin{enumerate}
\item $s_4 u_3 s_2 u_1 s_2 s_3^{-1} u_4 s_3 w_0 s_4 \subset V^+$
\item $s_4 s_3 s_2 s_1^{-1} u_3 u_2 u_4 s_3 w_0 s_4 \subset V_0$
\item $s_4 s_3 s_2 s_1^{-1} u_2 u_3 u_4 s_3 w_0 s_4 \subset V^+$
\end{enumerate}
\end{lemma}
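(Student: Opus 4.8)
The plan is to treat all three inclusions uniformly: each has the shape $s_4\,P\,u_4\,s_3w_0\,s_4$ with $P\in A_4$, and both $s_3w_0=s_3^2s_2s_1^2s_2s_3$ and $P$ lie in $A_4$. First I would expand every $u_i$ as the $R$-span of $\{1,s_i,s_i^{-1}\}$ and classify the resulting monomials $s_4\cdots s_4^{c}\cdots s_4$ by their interior exponents. The degenerate patterns go at once: if the middle exponent $c=0$ the whole interior sits in $A_4$, so the monomial lies in $u_4A_4u_4\subset A_5^{(1)}\subset V_0$; and whenever one of the interior $u_3$- or $u_2$-exponents vanishes, one of the two blocks flanking the central $s_4^{c}$ drops below five braid factors, so proposition \ref{propmoinsde55} places the monomial in $V_0$ directly. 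This leaves only the monomials in which all displayed generators occur with exponent $\pm1$, i.e. the genuinely $(5,5)$-type words that \ref{propmoinsde55} cannot reach on its own.

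For item (1) I would split on the leading $u_3$-exponent $a$. When $a=1$ the left block is $s_3(s_2s_1^bs_2)s_3^{-1}\in s_3^{+}A_3s_3^{-}$ and the tail is $s_3w_0\,s_4\in A_4s_4$, so lemma \ref{lemN5}(6) applies verbatim and yields membership in $V^{+}$; this is exactly the source of the $V^{+}$ (rather than $V_0$) in item (1). When $a=0$ the factor $s_2s_1^bs_2\in A_3$ commutes past $s_4$, and the braid moves $s_4s_3^{-1}s_4^{-1}=s_3^{-1}s_4^{-1}s_3$ and $s_4s_3s_4^{-1}=s_3^{-1}s_4s_3$ of lemma \ref{lemsplusmoins} absorb one $s_4$ into $A_4$, leaving $A_4u_4A_4u_4\subset V_0$ up to a single residual term $s_4^{-1}s_3s_4^{-1}w_0s_4$ that is cleared by lemma \ref{lemauxA4w0} together with \ref{propmoinsde55}. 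When $a=-1$ I would expand $s_2s_1^bs_2$ (using $s_2s_1s_2=s_1s_2s_1$ and $s_2s_1^{-1}s_2\in A_2^{\times}s_2^{-1}s_1s_2^{-1}+u_1u_2u_1$ from lemmas \ref{leminverse} and \ref{lemquasicom}) and bring the word into the shape of items (2) and (3).

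For items (2) and (3) I would exploit the commutations $u_2u_4=u_4u_2$ and $[s_1,w_0]=[s_2,w_0]=[s_1,s_3]=[s_1,s_4]=0$ to free the $s_1^{-1}$ and the $s_2$-factors, then run the same braid chain as in the proofs of lemmas \ref{lemN5}(2), \ref{lemN6} and \ref{lemN7}: repeatedly apply $s_{i+1}^{\pm}s_is_{i+1}^{\mp}\in u_iu_{i+1}u_i$ and $s_4s_3^{\pm}s_4^{\mp}\in u_3u_4u_3$ to migrate one $s_4$ out of the central block and land in a word covered by \ref{propmoinsde55} or \ref{lemreduc2}. In item (2), after $u_2u_4=u_4u_2$ the $u_3u_2$-block collapses to a $w_0$-decorated word with a flanking factor of length at most four, which \ref{propmoinsde55} annihilates outright, whence the stronger conclusion $V_0$. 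In item (3) the non-commuting $u_2u_3$-block forces, in exactly one sign sub-case, an $s_3^{+}A_3s_3^{-}$ pattern and hence via lemma \ref{lemN5}(6) an irreducible $s_4w^{+}s_4^{-1}w^{+}s_4$ term, which is precisely why only $V^{+}$ can be claimed there.

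The main obstacle is the bookkeeping in this last step: among the eight or sixteen sign-patterns one must track precisely which braid chain terminates inside $V_0$ and which leaves behind the central block $s_4w^{+}s_4^{-1}w^{+}s_4$. Each individual reduction is routine — a finite sequence of the moves in lemmas \ref{lemsplusmoins}, \ref{leminverse}, \ref{lemquasicom} and \ref{lemdecomp1212}, followed by a single application of \ref{propmoinsde55} or of lemmas \ref{lemN5}--\ref{lemN7} — but organizing the case split so that every pattern is covered, and isolating the one pattern responsible for the $V^{+}$ (as opposed to $V_0$) in items (1) and (3), is where the care is genuinely needed.
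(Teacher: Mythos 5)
Your skeleton (expand into monomials, kill all degenerate and short-block sign patterns with proposition \ref{propmoinsde55}, isolate the genuine $(5,5)$-type words) is the paper's, and your use of lemma \ref{lemN5}(6) for the leading-exponent-$+1$ case of item (1), and for the mixed-sign sub-case of item (3), is a legitimate shortcut (the paper instead cites lemmas \ref{lemreducspec} and \ref{lemtransf1}(6) there, whose content \ref{lemN5}(6) repackages). The genuine gap is your case $a=-1$ in item (1). The rewritings you invoke there, $s_2s_1s_2=s_1s_2s_1$ and $s_2s_1^{-1}s_2\in A_2^{\times}s_2^{-1}s_1s_2^{-1}+u_1u_2u_1$ (lemmas \ref{leminverse}, \ref{lemquasicom}), act \emph{inside} the $A_3$-block and never change the flanking $s_3$-signs: after them the hard word still has the shape $s_4\,s_3^{-1}A_3s_3^{-1}\,s_4^{\pm1}\,s_3^{\delta}A_3s_3\,s_4$ (indeed the main term is $u_1\,s_4w^{-}u_4\,s_3w_0s_4\,u_1$, since $s_3^{-1}s_2^{-1}s_1s_2^{-1}s_3^{-1}\in c^{-1}s_1w^{-}+U_0$). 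Items (2) and (3), by contrast, have left blocks $s_3^{+1}s_2s_1^{-1}u_3u_2$ resp. $s_3^{+1}s_2s_1^{-1}u_2u_3$ with a \emph{positive} leading $s_3$, so "bring the word into the shape of items (2) and (3)" has no mechanism behind it; this is exactly one of the exceptional $(5,5)$-words that \ref{propmoinsde55} cannot reach, and it needs a dedicated argument. One that works with the tools available at this point: for $\delta=-1$ the word dies by lemma \ref{lemreducspec}(4) (which has no exceptional case); for $\delta=+1$ write $s_3(s_2s_1^{-1}s_2)s_3\in A_3^{\times}w^{+}+U_0$ (proof of lemma \ref{lemauxA4w0}), dispose of the $U_0$-part by \ref{propmoinsde55}, and observe that $s_4w^{-}s_4^{-1}w^{+}s_4$ and $s_4w^{-}s_4w^{+}s_4$ lie in $V_0$ by lemma \ref{lemreduc2}(3) and (5) respectively. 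Nothing in your proposal supplies this step.

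A secondary gap of the same nature sits in item (3): you only treat the sub-case where the left block becomes $s_3A_3s_3^{-1}$ (where \ref{lemN5}(6) indeed applies), but the same-sign sub-case $s_4s_3(s_2s_1^{-1}s_2)s_3s_4^{\gamma}\cdots$ is delegated to "the same braid chain as in lemmas \ref{lemN5}(2), \ref{lemN6} and \ref{lemN7}", which is not an argument; it again requires the substitution $s_3(s_2s_1^{-1}s_2)s_3\in A_3^{\times}w^{+}+U_0$ followed by lemmas \ref{lemreducspec} and \ref{lemreduc2}, as above. Item (2) is fine: your commutation idea is essentially the paper's one-line proof, $s_3s_2u_3\subset u_2s_3s_2$ followed by proposition \ref{propmoinsde55}, which shortens the left block below the exceptional length.
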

\begin{proof}
For proving (1), we consider
the expression $s_4 s_3^{\alpha} s_2 s_1^{\beta} s_2 s_3^{\gamma} u_4 s_3 w_0 s_4$
for $\alpha,\beta,\gamma \in \{ -1,0,1 \}$. If one of these is $0$, it lies
in $V_0$ by proposition \ref{propmoinsde55}. If $\beta = 1$,
using $s_2 s_1 s_2 = s_1 s_2 s_1$ we get the same conclusion, so we
can assume $\beta = -1$.  By expanding if necessary $s_3^2$, we are then reduced to considering expressions of
the form $s_4 s_3^{\alpha} s_2 s_1^{\beta} s_2 s_3^{\gamma} u_4 s_3^{\delta} s_2 s_1^2 s_2 s_3 s_4$
for $\delta \in \{0,1,-1 \}$, the case $\delta = 0$ being again trivial. We then get the conclusion
from lemmas \ref{lemreducspec} and \ref{lemtransf1} (6).

We now prove (2).  We have
$$
\begin{array}{clclcl}
& s_4 s_3 s_2 s_1^{-1} u_3 u_2 u_4 s_3 w_0 s_4 &=& 
s_4 (s_3 s_2 u_3 )s_1^{-1}  u_2 u_4 s_3 w_0 s_4\\
\subset & 
s_4 u_2 s_3 s_2 s_1^{-1}  u_2 u_4 s_3 w_0 s_4&=&
u_2s_4  s_3 s_2 s_1^{-1}  u_2 u_4 s_3 w_0 s_4& \subset & V_0 \\
\end{array}$$
by proposition \ref{propmoinsde55}. Finally, (3) is similar to (1) : considering
$s_4 s_3 s_2 s_1^{-1} s_2^{\alpha} s_3^{\beta} s_4^{\gamma} s_3^{\delta} s_2 s_1^2 s_2 s_3 s_4$,
if one of the exponents is zero we get trivially the conclusion by proposition \ref{propmoinsde55} ; if $\alpha = -1$
it lies inside $V_0$ by $s_2 s_1^{-1} s_2^{-1} = s_1^{-1} s_2^{-1} s_1$ and proposition \ref{propmoinsde55}, so
we can assume $\alpha = 1$. By studying separately the cases $\beta = -1$ and $\beta = 1$ one easily
gets the conclusion from lemmas \ref{lemreducspec}, \ref{lemreduc2} and \ref{lemtransf1}.

\end{proof}

\subsection{Reduction to $s_4 s_3 (s_2 s_1^{-1} s_2) s_3^{-1} s_4 s_3^{-1} (s_2^{-1} s_1 s_2^{-1}) s_3 s_4^{-1} w_0 s_4$} 

Using $s_2 s_1^2 s_2 \in s_2^{-1} s_1 s_2^{-1} u_1^{\times} + u_1 u_2 u_1$ we
get
$$
\begin{array}{lcl}
s_4 s_3 (s_2 s_1^{-1} s_2) s_3^{-1} s_4 s_3^{-1} (s_2 s_1^2 s_2) s_3 s_4^{-1} w_0 s_4 & \in &
R^{\times} s_4 s_3 (s_2 s_1^{-1} s_2) s_3^{-1} s_4 s_3^{-1} s_2^{-1} s_1 s_2^{-1} s_3 s_4^{-1} w_0 s_4 u_1^{\times} \\
& & + R s_4 s_3 (s_2 s_1^{-1} s_2) s_3^{-1} s_4 s_3^{-1} u_1 u_2 u_1 s_3 s_4^{-1} w_0 s_4. \\
\end{array}
$$

We have 
$$
 \begin{array}{llcl}
& s_4 s_3 (s_2 s_1^{-1} s_2) s_3^{-1} s_4 s_3^{-1} u_1 u_2 u_1 s_3 s_4^{-1} w_0 s_4 &=&
s_4 s_3 (s_2 s_1^{-1} s_2) u_1 s_3^{-1} s_4 (s_3^{-1}  u_2  s_3) s_4^{-1} w_0 s_4u_1\\
=&
s_4 s_3 (s_2 s_1^{-1} s_2) u_1 s_3^{-1} s_4 s_2  u_3  s_2^{-1} s_4^{-1} w_0 s_4u_1
&=&
s_4 s_3 (s_2 s_1^{-1} s_2) u_1 s_3^{-1} s_2 ( s_4  u_3   s_4^{-1}) w_0 s_4 s_2^{-1}u_1\\
=&
s_4 s_3 (s_2 s_1^{-1} s_2) u_1 s_3^{-1} s_2  s_3^{-1}  u_4   s_3 w_0 s_4 s_2^{-1}u_1.
\end{array}$$
Using $(s_2 s_1^{-1} s_2) u_1  \in u_1 s_2 s_1^{-1} s_2 + u_1 u_2 u_1$ we get that
$s_4 s_3 (s_2 s_1^{-1} s_2) u_1 s_3^{-1} s_2  s_3^{-1}  u_4   s_3 w_0 s_4$ belongs to
$$ 
u_1 s_4 s_3 (s_2 s_1^{-1} s_2) s_3^{-1} s_2  s_3^{-1}  u_4   s_3 w_0 s_4 +
s_4 s_3 u_1 u_2 u_1 s_3^{-1} s_2  s_3^{-1}  u_4   s_3 w_0 s_4.
$$
Now
$$
 \begin{array}{llcl}
&s_4 s_3 u_1 u_2 u_1 s_3^{-1} s_2  s_3^{-1}  u_4   s_3 w_0 s_4 
&=&
u_1 s_4 (s_3  u_2  s_3^{-1}) u_1s_2  s_3^{-1}  u_4   s_3 w_0 s_4\\
=&
u_1 s_4 s_2^{-1}  u_3  s_2 u_1s_2  s_3^{-1}  u_4   s_3 w_0 s_4
&=&
u_1 s_2^{-1} s_4   u_3  s_2 u_1s_2  s_3^{-1}  u_4   s_3 w_0 s_4 \subset V^+
\end{array}$$
by lemma \ref{lemN8} (1),
and
$s_4 s_3 s_2 s_1^{-1} (s_2 s_3^{-1} s_2  s_3^{-1})  u_4   s_3 w_0 s_4$ belong to
$$ s_4 s_3 s_2 s_1^{-1}  s_3^{-1} s_2  s_3^{-1}s_2 u_4   s_3 w_0 s_4 + 
s_4 s_3 s_2 s_1^{-1} u_2 u_3 u_4   s_3 w_0 s_4
+
s_4 s_3 s_2 s_1^{-1} u_3 u_2 u_4   s_3 w_0 s_4$$ by lemma \ref{lemdecomp1212}.
The latter two terms belong to $V+$ by lemma \ref{lemN8} (1) and (2),
and 
$$
 \begin{array}{llcl}
& s_4 s_3 s_2 s_1^{-1}  s_3^{-1} s_2  s_3^{-1}s_2 u_4   s_3 w_0 s_4 &=& 
s_4 (s_3 s_2 s_3^{-1} )s_1^{-1}   s_2  s_3^{-1}s_2 u_4   s_3 w_0 s_4\\
=&
s_4 s_2^{-1} s_3 s_2s_1^{-1}   s_2  s_3^{-1}s_2 u_4   s_3 w_0 s_4
&=&
s_2^{-1} s_4  s_3 s_2 s_1^{-1}   s_2  s_3^{-1} u_4 s_2   s_3 w_0 s_4 \subset V^+
\end{array}$$
by lemma \ref{lemN5} (6).

\begin{lemma}{\ } \label{lemN11}
$s_4 (s_3 s_2^{-1} s_3) u_1 s_4 (s_3^{-1} s_2 s_3^{-1}) s_4^{-1} w_0 s_4 \subset V^+$
\end{lemma}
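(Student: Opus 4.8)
The plan is to reduce the statement to the auxiliary results already proved in this section, treating separately the three contributions coming from the spanning set $\{1,s_1,s_1^{-1}\}$ of $u_1$; that is, I would show $s_4 (s_3 s_2^{-1} s_3) s_1^{\eps} s_4 (s_3^{-1} s_2 s_3^{-1}) s_4^{-1} w_0 s_4 \in V^+$ for each $\eps \in \{-1,0,1\}$. Throughout I would freely use that $s_1$ commutes with $s_3$, $s_4$ and $w_0$, that $s_2$ commutes with $s_4$ and $w_0$, and the braid identities $s_4 s_3 s_4^{-1} = s_3^{-1} s_4 s_3$, $s_4 s_3^{-1} s_4^{-1} = s_3^{-1} s_4^{-1} s_3$, $s_3 s_4 s_3^{-1} = s_4^{-1} s_3 s_4$ and $s_3 s_2 s_3^{-1} = s_2^{-1} s_3 s_2$ of lemma \ref{lemsplusmoins}.

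The first move is to lower the number of $s_4$-letters, which is four in the raw expression, one too many to feed directly into the three-$s_4$ results of this section. I would rewrite the central factor $s_3^{-1} s_2 s_3^{-1}$ by lemma \ref{leminverse}, namely $s_3^{-1} s_2 s_3^{-1} \in c^{-1}(s_3 s_2^{-1} s_3) s_2 + u_2 u_3 u_2$. The error term then reads $s_4 (s_3 s_2^{-1} s_3) s_1^{\eps} s_4 \, u_2 u_3 u_2 \, s_4^{-1} w_0 s_4$; commuting the inner $u_2$-factors past $s_4$ and collapsing the two $s_4$-letters via $s_4 u_3 s_4^{-1} \subset u_3 u_4 u_3$ (lemma \ref{lemsplusmoins}) brings it to the shape $s_4 (s_3 s_2^{-1} s_3)\, u_1 u_2\, u_4\, u_3 w_0\, u_2 s_4$, which matches the hypothesis of lemma \ref{lemN5} (2) (the trailing $u_2$ being harmless since $V^+$ is a right $A_4$-module, and the $u_4=1,s_4^{-1}$ components reducing by proposition \ref{propmoinsde55}). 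In the surviving main term the freed $s_2$ can be pushed to the far right through $s_4^{-1}$ and $w_0$, leaving $c^{-1}\, s_4 (s_3 s_2^{-1} s_3) s_1^{\eps} s_4 (s_3 s_2^{-1} s_3) s_4^{-1} w_0 s_4 \cdot s_2$, where now both inner brackets are the conjugation-free word $s_3 s_2^{-1} s_3$.

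On this main term I would run braid relations to expel the inner $s_4$, so that the block reorganises into the form $s_4 A_4 s_4 s_3^{\beta} w_0 s_4$ governed by lemma \ref{lemN5} (1), or into the forms treated by lemma \ref{lemN5} (6) and lemma \ref{lemN7} (3). For $\eps = 0$ this should land cleanly in lemma \ref{lemN5} (1) or (6). For $\eps = \pm 1$ the factor $s_1^{\eps}$ cannot be commuted past the $s_2$'s, so I would instead move it into a central block $s_2^{-1} s_1^{\eps} s_2^{-1}$ by means of the relation $s_3 s_2 s_3^{-1} = s_2^{-1} s_3 s_2$, after which it is handled exactly as in the reductions of lemmas \ref{lemN7} and \ref{lemN8}; its genuinely irreducible residue is a conjugate of $s_4 w^+ s_4^{-1} w^+ s_4$ and so lies in $V^+$ by the very definition $V^+ = V_0 + A_4 s_4 w^+ s_4^{-1} w^+ s_4 A_4$.

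The main obstacle I anticipate is the bookkeeping of the four $s_4$-letters: one of them must be eliminated (via lemma \ref{leminverse} and the collapse $s_4 u_3 s_4^{-1}\subset u_3 u_4 u_3$) before any three-$s_4$ lemma applies, and this elimination must be arranged so that every residual term is multiplied on the \emph{right} by an element of $A_4$, where $V_0$ and $V^+$ are stable, and never on the left by $s_4$, since left multiplication by $s_4$ sends $A_5^{(2)}$ out of $A_5^{(2)}$. Concretely, the delicate point is to keep all reductions of the form $X \to X'$ with $X' \in A_4^{\times} X A_4 + V^+$, so that the outer left $s_4$ is never disturbed, and to verify that in the $\eps = \pm 1$ case the surviving three-$s_4$ contribution matches the summand $A_4 s_4 w^+ s_4^{-1} w^+ s_4 A_4$ rather than escaping $V^+$.
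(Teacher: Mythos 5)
Your opening move — replacing the middle block $s_3^{-1} s_2 s_3^{-1}$ by $c^{-1}(s_3 s_2^{-1} s_3)s_2 + u_2 u_3 u_2$ via lemma \ref{leminverse} — is precisely the step that breaks the proof, because it destroys the structure the lemma lives on. The paper never touches that block: it commutes $u_1$ outward, applies $s_3 s_4 s_3^{-1} = s_4^{-1} s_3 s_4$, slides the resulting $s_4^{\mp1}$ past $u_1u_2$, and collapses $s_4 s_3 s_4^{-1} = s_3^{-1} s_4 s_3$ and $s_4 s_3^{-1} s_4^{-1} = s_3^{-1} s_4^{-1} s_3$, rewriting the whole word as $s_3^{-1} s_4\, s_3 s_2^{-1} u_1 s_2^{-1} s_3 s_2\, s_4^{-1}\, s_3 w_0\, s_4$ — four $s_4$-letters become three, and the two $s_3^{-1}$'s of the \emph{original} middle block are exactly what feed the collapses; only then does it split $u_1$ and finish with proposition \ref{propmoinsde55}, lemma \ref{lemN5} (4) and lemma \ref{lemreducspec}. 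After your substitution, your main term $s_4 (s_3 s_2^{-1} s_3) s_1^{\eps} s_4 (s_3 s_2^{-1} s_3) s_4^{-1} w_0 s_4\cdot s_2$ has every inner $s_4^{\pm 1}$ flanked on both sides by $s_3$ with \emph{positive} exponent, so no collapse of type $s_4 s_3^{\mp1} s_4^{\mp 1}$ is available: ``expelling the inner $s_4$'' by braid relations is impossible, and the word stays a four-letter word whose tail $s_4(s_3 s_2^{-1} s_3)s_4^{-1} w_0 s_4$ is of the same hard type as $s_4 w^+ s_4^{-1} w^+ s_4$ (same-sign $s_3$-blocks, so lemma \ref{lemreducspec} is silent), prefixed by an extra $s_4 A_4$ that you have no mechanism to absorb. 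Lemma \ref{lemN5} (1) does not apply (the segment between your last two $s_4$'s contains $s_4^{-1}$, not an element of $s_3^{\beta} w_0$), lemma \ref{lemN5} (6) needs opposite-sign $s_3$'s around its $A_3$-block, and lemma \ref{lemN7} (3) has a different shape altogether; the claim that the residue ``is a conjugate of $s_4 w^+ s_4^{-1} w^+ s_4$'' is exactly what would need to be proved, for every $\eps$ including $\eps=0$.

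The error term is not under control either. Computed correctly it is $s_4 (s_3 s_2^{-1} s_3) s_1^{\eps} u_2\,(u_3 u_4 u_3)\, w_0 s_4 u_2$ — you dropped the left $u_3$ produced by $s_4 u_3 s_4^{-1} \subset u_3 u_4 u_3$, so the shape never matches lemma \ref{lemN5} (2). Splitting the middle $u_4$: the component $1$ lies in $V_0$ trivially, and the component $s_4$ can be rescued by lemma \ref{lemN5} (1) applied to $s_4 A_4 s_4 [u_3 w_0] s_4$; but the component $s_4^{-1}$ gives $s_4\,[(s_3 s_2^{-1} s_3) s_1^{\eps} u_2 s_3^{-1}]\, s_4^{-1}\, [s_3 w_0]\, s_4$, and for the sub-terms with $u_2 = s_2^{\pm 1}$ both blocks need at least five $u_i$-factors (e.g. $s_3 s_2^{-1} s_1^{\eps} s_2^{-1} s_3^{\pm 1}$ on the left and, after pushing the stray $s_2$ across $s_4^{-1}$, words like $w_0 s_2$ on the right), so proposition \ref{propmoinsde55}, which excludes $(p,q)=(5,5)$, does not dispatch them, contrary to your claim. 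What you actually need here is the statement $u_4 A_4 s_4^{-1} s_3^{\beta} w_0 s_4 \subset V^+$, which is lemma \ref{lemN17} (1): the paper proves it later by a separate argument (no circularity, since its proof does not use lemma \ref{lemN11}), but it is a genuinely nontrivial lemma that you neither cite nor prove. So both the main term and part of your error term fall outside the reach of the tools you invoke.
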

\begin{proof}
Using braid relations one gets
$$
\begin{array}{clcl}
& s_4 (s_3 s_2^{-1} s_3) u_1 s_4 (s_3^{-1} s_2 s_3^{-1}) s_4^{-1} w_0 s_4 &
= & s_4 s_3 s_2^{-1}  u_1 (s_3s_4 s_3^{-1}) s_2 s_3^{-1} s_4^{-1} w_0 s_4 \\
= & s_4 s_3 s_2^{-1}  u_1 s_4^{-1}s_3 s_4 s_2 s_3^{-1} s_4^{-1} w_0 s_4 &
= & (s_4 s_3 s_4^{-1}) s_2^{-1}  u_1 s_3  s_2 (s_4s_3^{-1} s_4^{-1}) w_0 s_4 \\
= & s_3^{-1} s_4 s_3 s_2^{-1}  u_1 (s_3  s_2 s_3^{-1})s_4^{-1} s_3 w_0 s_4 &
= & s_3^{-1} s_4 s_3 s_2^{-1}  u_1 s_2^{-1}  s_3 s_2s_4^{-1} s_3 w_0 s_4 \\
\end{array}
$$
which is a linear combination of 
$s_3^{-1} s_4 s_3 s_2^{-1} s_1^{\alpha} s_2^{-1} s_3 s_2 s_4^{-1} s_3 w_0 s_4$
for $\alpha \in \{ -1,0,1 \}$. For $\alpha = -1$ we get 
$$
 \begin{array}{lcl}
s_3^{-1} s_4 s_3 (s_2^{-1} s_1^{-1} s_2^{-1}) s_3 s_2 s_4^{-1} s_3 w_0 s_4
&=&
s_3^{-1} s_4 s_3 s_1^{-1} s_2^{-1} s_1^{-1} s_3 s_2 s_4^{-1} s_3 w_0 s_4\\
&=&
s_3^{-1}s_1^{-1} s_4 s_3  s_2^{-1}  s_3 s_4^{-1} s_1^{-1}s_2  s_3 w_0 s_4 \in V^+
\end{array}$$
by lemma \ref{lemN5} (4) ; for $\alpha  =0$ we get
$s_3^{-1} s_4 s_3 s_2^{-2} s_3 s_2 s_4^{-1} s_3 w_0 s_4 \in V_0$
by proposition \ref{propmoinsde55} ; for $\alpha = 1$
it remains to consider $s_3^{-1} s_4 s_3 (s_2^{-1} s_1 s_2^{-1}) s_3 s_2 s_4^{-1} s_3 w_0 s_4$.
Using $s_2^{-1} s_1 s_2^{-1} \in u_1 s_2 s_1^{-1} s_2 + u_1 u_2 u_1$ we get
$s_4 s_3 (s_2^{-1} s_1 s_2^{-1}) s_3 s_2 s_4^{-1} s_3 w_0 s_4 \in u_1 
s_4 s_3 s_2 s_1^{-1} s_2 s_3 s_2 s_4^{-1} s_3 w_0 s_4 +
u_1 s_4 s_3  u_2 u_1 s_3 s_2 s_4^{-1} s_3 w_0 s_4$. Now
$s_4 s_3  u_2 u_1 s_3 s_2 s_4^{-1} s_3 w_0 s_4 = s_4 s_3  u_2 s_3 s_4^{-1} u_1  s_2  s_3 w_0 s_4\subset V^+ $ by lemma \ref{lemN5} (4)
while 
$$
 \begin{array}{llcl}
& s_4 s_3 s_2 s_1^{-1} (s_2 s_3 s_2) s_4^{-1} s_3 w_0 s_4
&=&
s_4 s_3 s_2 s_1^{-1} s_3 s_2 s_3 s_4^{-1} s_3 w_0 s_4 \\
=&
s_4 (s_3 s_2  s_3) s_1^{-1} s_2 s_3 s_4^{-1} s_3 w_0 s_4
&=&
s_4 s_2 s_3  s_2 s_1^{-1} s_2 s_3 s_4^{-1} s_3 w_0 s_4\\
=&
s_2 s_4  s_3  s_2 s_1^{-1} s_2 s_3 s_4^{-1} s_3 w_0 s_4\end{array}$$
lies in $V^+$ by lemma \ref{lemreducspec}. This concludes the proof.
\end{proof}

\begin{lemma}{\ } \label{lemN13}
$s_4 (s_3 s_2^{-1} s_3) s_2 s_1 s_4 u_2 u_3 s_4^{-1} w_0 s_4 \subset V^+$
\end{lemma}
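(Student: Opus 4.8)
The plan is to parametrize the factor $u_2u_3$, writing a general element as $s_2^e s_3^b$ with $e,b\in\{-1,0,1\}$, so that it suffices to show $s_4 (s_3 s_2^{-1} s_3) s_2 s_1\, s_4\, s_2^e s_3^b\, s_4^{-1} w_0 s_4\in V^+$ for each pair. The first move is purely braid-theoretic: using the identity $(s_3 s_2^{-1} s_3)s_2 = s_3^2 s_2 s_3^{-1}$ (a one-line consequence of lemma \ref{lemsplusmoins} via $s_3 s_2 s_3^{-1}=s_2^{-1}s_3 s_2$), the commutation of $s_4$ with $s_1$ and $s_2$, and the relation $s_4 s_3^b s_4^{-1}=s_3^{-1}s_4^b s_3$ of lemma \ref{lemsplusmoins}, I would rewrite the expression as $s_4\,B\,s_4^{b}\,(s_3 w_0)\,s_4$ where $B=(s_3 s_2^{-1} s_3)s_2 s_1 s_2^e s_3^{-1}\in A_4$ and $s_3 w_0 = s_3^2 s_2 s_1^2 s_2 s_3\in A_4$. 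The virtue of this rewriting is that the two adjacent crossings $s_4\cdots s_4^{-1}$ collapse into the single crossing $s_4^{b}$, bringing the word down from four $s_4$-crossings to three.

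I would then split on $b$. For $b=0$ the middle crossing disappears and $W=s_4(B s_3 w_0)s_4\in u_4 A_4 u_4\subseteq V_0$. For $b=+1$ the tail is exactly $s_4\,(s_3^{1}w_0)\,s_4$, so the whole expression has the shape $s_4\,A_4\,s_4\,s_3^{\beta}w_0\,s_4$ with $\beta=1$ and $B\in A_4$, hence lies in $V^+$ by lemma \ref{lemN5}(1); this is the clean case, and it is essentially why $w_0$ was introduced to close off these reductions. The remaining case $b=-1$ is the genuine content: there $W=s_4\,B\,s_4^{-1}\,(s_3 w_0)\,s_4$ has the three $s_4$-exponents $(+,-,+)$, the signature of the exceptional generator $s_4 w^+ s_4^{-1}w^+ s_4$, so it cannot be dispatched to $V_0$ a priori and must be analyzed at the top $A_5^{(3)}/A_5^{(2)}$ level.

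For $b=-1$ my plan is to expand the two surplus quadratic factors — the $s_3^2$ inside $B=s_3^2 s_2 s_1 s_3^{-1}s_2^e s_3^{-1}$ and the $s_3^2$ inside $s_3 w_0$ — by the cubic relation $s_3^2=a s_3+b+c s_3^{-1}$ into single-$s_3$ sandwiches, and to route each of the resulting cross-terms through lemma \ref{lemsimplif212} (to normalize the inner $A_3$-blocks to the shapes $s_2^{\pm}s_1^{\mp}s_2^{\pm}$) followed by the appropriate part of lemma \ref{lemreducspec}. Concretely, the $s_3$-summand of $B$ gives $s_2^{-1}\,s_3(s_2 s_1 s_2^e)s_3^{-1}$, an \emph{opposite-sign} left sandwich, so by lemma \ref{lemreducspec}(1) the associated terms fall into $V_0$ (the exceptional constraint $i=j=k$ fails, since the middle crossing is $s_4^{-1}$); the mixed same/opposite combinations are handled by lemma \ref{lemreducspec}(3),(4) and proposition \ref{propmoinsde55}; and the one residual "all-equal" sandwich configuration, after choosing the $A_3$-normal forms $s_2 s_1^{-1}s_2$ and $s_2^{-1}s_1 s_2^{-1}$, presents itself as $s_4 w^- s_4^{-1}w^+ s_4$, which lies in $V_0$ by lemma \ref{lemreduc2}(3). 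The $w^+ s_4^{-1}w^+$ contribution that genuinely survives (if any) is then absorbed directly into $A_4\,s_4 w^+ s_4^{-1}w^+ s_4\,A_4\subseteq V^+$, matched via lemma \ref{lemtransf1}.

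The step I expect to be the main obstacle is exactly the diagonal $c\cdot c$ cross-term of this last case. The naive instinct is to reduce the factor $s_3^{-1}s_2 s_3^{-1}$ it contains by lemma \ref{leminverse}; but doing so reproduces the original term $W$ with coefficient precisely $c\cdot c^{-1}=1$, yielding only a vacuous identity, and the same circularity appears if one instead expands $s_4^{-1}$ by the cubic relation. The real work is therefore to break this loop by a \emph{structural} reorganization of the surplus $s_3$ — for instance using $s_3^{-1}s_4^{-1}s_3=s_4 s_3^{-1}s_4^{-1}$ to move the extra $s_3$ across the central crossing and thereby recast the term as an already-established configuration of lemma \ref{lemN5} or lemma \ref{lemtransf1} — rather than trying to reduce it in place. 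Checking that every subsidiary cross-term truly lands in $V_0$ and that none of them re-enters the loop is where the careful bookkeeping will lie.
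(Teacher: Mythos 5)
Your opening reduction coincides with the paper's first step: writing the element as $s_4\,B\,s_4^{b}(s_3w_0)s_4$ with $B=(s_3s_2^{-1}s_3)s_2s_1s_2^{e}s_3^{-1}\in A_4$, and your cases $b=0$ (inside $u_4A_4u_4\subset V_0$) and $b=+1$ (lemma \ref{lemN5}(1)) are correct. But for $b=-1$, which is where the real work of the lemma lies, what you give is a plan with an acknowledged hole, not a proof. The hole is genuine and in fact wider than you describe: after expanding the two $s_3^2$ factors, \emph{every} cross-term coming from the $c\,s_3^{-1}$-summand of $B$ (not only the diagonal $c\cdot c$ term) contains, for $e=\pm1$, the block $s_3^{-1}s_2s_3^{-1}s_1s_2^{e}s_3^{-1}$, which is not of the sandwich shape $s_3^{\pm}A_3s_3^{\pm}$ treated by lemmas \ref{lemsimplif212} and \ref{lemreducspec}, so your reducspec bookkeeping does not reach it. Your circularity observation for the diagonal term is correct, but your proposed rescue via $s_3^{-1}s_4^{-1}s_3^{-1}=s_4^{-1}s_3^{-1}s_4^{-1}$ only trades it for a $(5,5)$-configuration, which is exactly the case proposition \ref{propmoinsde55} excludes. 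Symptomatically, in your expansion no term of the shape $s_4w^+s_4^{-1}w^+s_4$ ever arises (every summand of $B$ carries at least one $s_3^{-1}$), so the $V^+$-absorption you invoke has nothing to act on.

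The missing idea --- and it is how the paper's proof actually runs --- is that the positive sandwich $w^+$ must be \emph{produced} by lemma \ref{leminverse}, applied not to the leading $s_3^{-1}s_2s_3^{-1}$ (which, as you note, merely undoes the cubic expansion) but to the inner sandwich formed around the $u_2$-letter: one writes $s_3^{-1}s_2s_3^{-1}s_1s_2s_3^{-1}=s_3^{-1}s_2s_1(s_3^{-1}s_2s_3^{-1})$ and replaces the bracket by $u_2(s_3s_2^{-1}s_3)+u_2u_3u_2$. Then $s_3^{-1}(s_2s_1u_2)s_3s_2^{-1}s_3\subset u_1s_3^{-1}s_2s_3s_1s_2^{-1}s_3=u_1s_2s_3(s_2^{-1}s_1s_2^{-1})s_3=u_1u_2\,w^+$, so the resistant block becomes $u_1u_2\,s_4w^+s_4^{\pm1}(\cdots)s_4$, and this family is absorbed into $V^+$ --- this is precisely where $A_4s_4w^+s_4^{-1}w^+s_4A_4\subset V^+$ is needed --- while the $u_2u_3u_2$-remainder and the cases $e\in\{0,-1\}$ fall to proposition \ref{propmoinsde55} and lemma \ref{lemN5}(4) (for $e=-1$ one shuttles the trailing $A_3$-block across the middle $s_4^{\pm1}$ to reach $s_4u_3u_2u_3s_4^{-1}A_4s_4\subset V_0$). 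The paper organizes all this by splitting on the exponent of $s_2$ rather than of $s_3$, which is a cosmetic difference; the decisive $w^+$-production step is the same in either organization, and it is the step your proposal leaves out, so as written the case $b=-1$, and hence the lemma, is not proved.
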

\begin{proof}
We have 
$$
 \begin{array}{lcl}
 s_4 (s_3 s_2^{-1} s_3) s_2 s_1 s_4 u_2 u_3 s_4^{-1} w_0 s_4
&=&
s_4 (s_3 s_2^{-1} s_3) s_2 s_1 u_2 (s_4  u_3 s_4^{-1}) w_0 s_4\\
&\subset&
s_4 s_3 (s_2^{-1} s_3 s_2) s_1 u_2 s_3^{-1}  u_4 s_3 w_0 s_4 \\
&=&
s_4 s_3^2 s_2 s_3^{-1} s_1 u_2 s_3^{-1}  u_4 s_3 w_0 s_4,
\end{array}$$
whose elements are linear combinations of the
$s_4 s_3^2 s_2 s_3^{-1} s_1 s_2^{\alpha} s_3^{-1}  u_4 s_3 w_0 s_4$
for $\alpha \in \{0,1,-1 \}$. When $\alpha = 0$,
such an element belongs to $V_0$ by proposition \ref{propmoinsde55} ;
when $\alpha = -1$, we have 
$s_4 s_3^2 s_2 s_3^{-1} s_1 s_2^{-1} s_3^{-1}  u_4 s_3 w_0 s_4=
s_4 s_3^2 s_2  s_1 (s_3^{-1}s_2^{-1} s_3^{-1})  u_4 s_3 w_0 s_4=
s_4 s_3^2( s_2  s_1 s_2^{-1})s_3^{-1} s_2^{-1}  u_4 s_3 w_0 s_4=
s_4 s_3^2 s_1^{-1}  s_2 s_1s_3^{-1} s_2^{-1}  u_4 s_3 w_0 s_4=
s_1^{-1}s_4 s_3^2   s_2 s_3^{-1} u_4 s_1 s_2^{-1}   s_3 w_0 s_4 \in V^+$
by lemma \ref{lemN5} (4) ; when $\alpha = 1$,
expanding $s_3^2$ we get a linear combination of 
$s_4 s_3^{\beta} s_2 s_3^{-1} s_1 s_2 s_3^{-1}  u_4 s_3 w_0 s_4$ for $\beta \in \{ 0,1,-1 \}$.
When $\beta =0$ such an element lies in $V_0$ by commutation relations and proposition \ref{propmoinsde55} ;
when $\beta = 1$ we get
$s_4 (s_3 s_2 s_3^{-1}) s_1 s_2 s_3^{-1}  u_4 s_3 w_0 s_4=
s_4 s_2^{-1} s_3 s_2 s_1 s_2 s_3^{-1}  u_4 s_3 w_0 s_4=
s_2^{-1} s_4  s_3 (s_2 s_1 s_2) s_3^{-1}  u_4 s_3 w_0 s_4=
s_2^{-1} s_4  s_3 s_1 s_2 s_1 s_3^{-1}  u_4 s_3 w_0 s_4=
s_2^{-1}s_1 s_4  s_3  s_2 s_1 s_3^{-1}  u_4 s_3 w_0 s_4 \in V_0$
by proposition \ref{propmoinsde55} ;
when $\beta = -1$ we get that
$s_4 s_3^{-1} s_2 s_3^{-1} s_1 s_2 s_3^{-1}  u_4 s_3 w_0 s_4 
=s_4 s_3^{-1} s_2 s_1 (s_3^{-1}  s_2 s_3^{-1})  u_4 s_3 w_0 s_4$ belongs to
$$s_4 s_3^{-1} s_2 s_1 u_2 s_3  s_2^{-1} s_3   u_4 s_3 w_0 s_4 +
s_4 s_3^{-1} s_2 s_1 u_2 u_3 u_2 u_4 s_3 w_0 s_4 $$ by lemma \ref{leminverse}.
Now 
$$
 \begin{array}{lcl}
 s_4 s_3^{-1} (s_2 s_1 u_2) u_3 u_2 u_4 s_3 w_0 s_4&
\subset &s_4 s_3^{-1} u_1 s_2 s_1 u_3 u_2 u_4 s_3 w_0 s_4\\
&\subset & u_1 s_4 s_3^{-1}  s_2 u_3 u_4 s_1  u_2  s_3 w_0 s_4 \subset V^+
\end{array}$$ by lemma \ref{lemN5} (4) and
$$
 \begin{array}{llcl}
& s_4 s_3^{-1} (s_2 s_1 u_2) s_3  s_2^{-1} s_3   u_4 s_3 w_0 s_4 &=&
s_4 s_3^{-1} u_1 s_2 s_1  s_3  s_2^{-1} s_3   u_4 s_3 w_0 s_4 \\ =&
u_1 s_4 s_3^{-1}  s_2 s_1  s_3  s_2^{-1} s_3   u_4 s_3 w_0 s_4 &=&
u_1 s_4 (s_3^{-1}  s_2 s_3)s_1    s_2^{-1} s_3   u_4 s_3 w_0 s_4\\ =&
u_1 s_4 s_2  s_3 s_2^{-1}s_1    s_2^{-1} s_3   u_4 s_3 w_0 s_4 &=&
u_1  s_2  s_4 s_3 s_2^{-1}s_1    s_2^{-1} s_3   u_4 s_3 w_0 s_4.\\
\end{array}$$
Since $s_4 s_3 s_2^{-1}s_1    s_2^{-1} s_3   u_4 s_3 w_0 s_4$
is spanned by the  $s_4 w^+   s_4^{\alpha} s_3^{\beta} s_2 s_1^2 s_2 s_3 s_4$
one readily gets 
$$s_4 s_3 s_2^{-1}s_1    s_2^{-1} s_3   u_4 s_3 w_0 s_4 \subset V^+$$ and the conclusion.

\end{proof}

\begin{lemma}{\ } \label{lemN14}
\begin{enumerate}
\item $s_4 (s_3 s_2^{-1} s_3) s_2 s_1 s_4 u_3 u_2 s_4^{-1} w_0 s_4 \subset V_0$
\item $u_4 u_3 u_2 u_1 s_3^{-1} s_2 s_3^{-1} s_4^{-1} w_0 s_4 \subset V^+$
\end{enumerate}
\end{lemma}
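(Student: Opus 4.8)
The plan is to prove both inclusions by the mechanism used throughout this section: rewrite each specific braid word, using only the braid relations of lemma \ref{lemsplusmoins}, the exchange lemma \ref{leminverse}, and the fact that $s_4$ and $w_0$ (indeed all of $\langle s_1,s_2\rangle$) commute, into a word in $u_4$ and $u_1,u_2,u_3$ to which proposition \ref{propmoinsde55} and lemma \ref{lemN5} apply. The only real subtlety, which decides whether we land in $V_0$ or only in $V^+$, is whether the reduced word meets the single exceptional configuration of proposition \ref{propmoinsde55}.

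For (1): since $s_2$ commutes with $s_4^{\pm}$ and with $w_0$, I would pull the trailing $u_2$ to the far right; as $V_0$ is a right $A_4$-module it then suffices to treat $s_4 (s_3 s_2^{-1} s_3) s_2 s_1 (s_4 u_3 s_4^{-1}) w_0 s_4$. By lemma \ref{lemsplusmoins} one has $s_4 u_3 s_4^{-1}\subset s_3^{-1} u_4 s_3$ and $(s_3 s_2^{-1} s_3)s_2 = s_3(s_2^{-1}s_3 s_2) = s_3^2 s_2 s_3^{-1}$; combining these with $s_1 s_3^{-1}=s_3^{-1}s_1$ and $s_3^{-2}\in u_3$ collapses the expression to $s_4 s_3^2 s_2 s_1\, u_3 u_4\, s_3 w_0 s_4$, i.e.\ to the module $s_4\,[u_3 u_2 u_1 u_3]\,u_4\,[u_3 u_2 u_1 u_2 u_3]\,s_4$ (using $s_3 w_0 = s_3^2 s_2 s_1^2 s_2 s_3$). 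This is a word of type $(p,q)=(4,5)$ for proposition \ref{propmoinsde55}. I would then verify it avoids the exceptional family there: for the central letter equal to $1$ or $s_4^{-1}$ the three $s_4$-exponents already carry mixed signs, and for the central letter $s_4$ the second block $u_3 u_2 u_1 u_2 u_3$ fails to match the required exceptional pattern (its leading $u_3 u_2$ should be $u_2 u_3$), even though the first block $u_3 u_2 u_1 u_3 = u_3 u_2 u_3 u_1$ does match. Hence the whole expression lies in $V_0$.

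For (2): I would first apply lemma \ref{leminverse} to the core, $s_3^{-1} s_2 s_3^{-1}\in c^{-1}(s_3 s_2^{-1} s_3)s_2 + u_2 u_3 u_2$. In the $u_2 u_3 u_2$ summand, commuting the final $u_2$ through $s_4^{-1} w_0 s_4$ yields a word $u_4 u_3 u_2 u_1 u_2 u_3\, s_4^{-1} w_0 s_4$ (times a right $A_4$-factor) handled directly by proposition \ref{propmoinsde55}. In the main summand $u_4 u_3 u_2 u_1 (s_3 s_2^{-1} s_3)\,s_2 s_4^{-1} w_0 s_4$ I would again move $s_2$ to the right (it commutes with $s_4$ and $w_0$) and reduce $u_4 u_3 u_2 u_1(s_3 s_2^{-1} s_3)s_4^{-1} w_0 s_4$ via proposition \ref{propmoinsde55} and lemma \ref{lemN5}; any surviving central, non-reducible contribution must be identified with the generator $s_4 w^+ s_4^{-1} w^+ s_4$, landing in $V^+$. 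This runs parallel to the companion computation of lemma \ref{lemN13}, the difference $u_3 u_2$ versus $u_2 u_3$ being exactly what strengthens the conclusion of (1) to $V_0$.

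I expect the main obstacle to be the bookkeeping around the exceptional case of proposition \ref{propmoinsde55}. The exceptional (central) contribution of a $u_4 A_4 u_4 A_4 u_4$ word can a priori be any of the three double-$w$ generators, including the $\delta^3$-type term $s_4 w^- s_4 w^- s_4$, which is \emph{not} in $V^+$; creating such a term would make the overall argument for lemma \ref{lemcrucial} circular. I must therefore track the signs of the three $s_4$-letters and the precise factor patterns carefully enough to guarantee that no $s_4 w^- s_4 w^- s_4$ contribution arises: part (1) must avoid the exceptional family entirely, and part (2) must route every surviving central term into the $w^+ s_4^{-1} w^+$ generator of $V^+$.
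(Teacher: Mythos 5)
Your part (1) is essentially the paper's own proof: the same commutation of the trailing $u_2$ through $s_4^{-1}w_0s_4$, the same use of lemma \ref{lemsplusmoins} to turn $s_4u_3s_4^{-1}$ into $s_3^{-1}u_4s_3$ and to rewrite $(s_3s_2^{-1}s_3)s_2$, ending in the same $(4,5)$ word $s_4(u_3u_2u_3u_1)u_4(u_3u_2u_1u_2u_3)s_4$ whose second block fails to match the exceptional pattern of proposition \ref{propmoinsde55}; that part is correct.

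Part (2), however, has a genuine gap, and it is exactly the $(5,5)$ trap you yourself flagged as the main obstacle. After applying lemma \ref{leminverse} to the core, your $u_2u_3u_2$ summand is $u_4u_3u_2u_1(u_2u_3u_2)s_3^{?}\cdots$, i.e.\ after commuting the trailing $u_2$ out it becomes $u_4\,(u_3u_2u_1u_2u_3)\,s_4^{-1}\,w_0\,s_4\,u_2$; since $w_0=s_3s_2s_1^2s_2s_3\in u_3u_2u_1u_2u_3$, this is a $(5,5)$ configuration, which proposition \ref{propmoinsde55} explicitly excludes, so it cannot be ``handled directly'' by it. Worse, the claimed conclusion ($V_0$) is false for this summand: taking the element $s_3s_2s_1^{-1}s_2s_3$ of the middle block and using lemma \ref{lemauxA4w0} (both $s_3(s_2s_1^{-1}s_2)s_3$ and $w_0$ lie in $A_3^{\times}w^++U_0$), one produces elements congruent modulo $V_0$ to unit multiples of $s_4w^+s_4^{-1}w^+s_4$, the defining generator of $V^+$ -- so this summand lies only in $V^+$, not in $V_0$. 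The paper avoids this precisely by decomposing differently: it splits $u_3u_2s_3^{-1}\subset u_2s_3s_2^{-1}s_3+u_2u_3u_2$ inside $\langle s_2,s_3\rangle$ (shifted theorem \ref{theodecA3}), so that in its remainder summand the first $s_3$-sandwich is forced to end in $s_3^{-1}$; the only same-sign configurations there are then of type $s_4^{\pm}w^-s_4^{-1}w^+s_4$, which do lie in $V_0$ by lemma \ref{lemreduc2} (3) and (4) (note: not by proposition \ref{propmoinsde55}), while the $w^+s_4^{-1}w^+$ contribution arises only from the controlled $s_3s_2^{-1}s_3$ summand, where the residual term $s_4^{-1}w^+s_4^{-1}w^+s_4$ is killed by lemma \ref{lemreduc2} (8) via $\Phi$. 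For the same reason your treatment of the main summand is not viable as stated: $u_4u_3u_2u_1(s_3s_2^{-1}s_3)s_4^{-1}w_0s_4$ has a six-factor first block, and every way of shortening it lands again in $(5,5)$ configurations, so proposition \ref{propmoinsde55} and lemma \ref{lemN5} are not the applicable tools; one needs the sign-pattern lemmas \ref{lemreducspec} and \ref{lemreduc2} together with the decomposition $s_2^{-1}u_1s_2^{-1}\subset u_1s_2s_1^{-1}s_2+u_1u_2u_1$ and the identification through lemma \ref{lemauxA4w0}, which is what the paper does.
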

\begin{proof}
We prove (1). 
$$
 \begin{array}{llcl}
& s_4 (s_3 s_2^{-1} s_3) s_2 s_1 s_4 u_3 u_2 s_4^{-1} w_0 s_4 &=&
s_4 (s_3 s_2^{-1} s_3) s_2 s_1 (s_4 u_3  s_4^{-1}) w_0 s_4u_2  \\ \subset
& s_4 (s_3 s_2^{-1} s_3) s_2 s_1 s_3^{-1} u_4  s_3 w_0 s_4u_2 &=&
s_4 s_3 s_2^{-1} (s_3 s_2 s_3^{-1})s_1  u_4  s_3 w_0 s_4u_2 \\=&
s_4 s_3 s_2^{-2}  s_3 s_2 s_1  u_4  s_3 w_0 s_4u_2 & \subset&
s_4 u_3 u_2 u_3 u_2    s_1u_4   s_3 w_0 s_4u_2 \\
=&
s_4 u_2 u_3 u_2 u_3    s_1 u_4  s_3 w_0 s_4u_2 
&=&
 u_2s_4 u_3 u_2 u_3    s_1 u_4  s_3 w_0 s_4u_2 \subset V_0\end{array}$$
 by proposition \ref{propmoinsde55}.
We now prove (2). We have
$$
 \begin{array}{lcl}
 u_4 u_3 u_2 u_1 s_3^{-1} s_2 s_3^{-1} s_4^{-1} w_0 s_4 &=& 
u_4 (u_3 u_2  s_3^{-1}) u_1 s_2 s_3^{-1} s_4^{-1} w_0 s_4 \\
&\subset &u_4 u_2 s_3 s_2^{-1} s_3 
u_1 s_2 s_3^{-1} s_4^{-1} w_0 s_4
+u_4 u_2 u_3 u_2 u_1 s_2 s_3^{-1} s_4^{-1} w_0 s_4,
\end{array}$$
and 
$u_4 u_2 u_3 u_2 u_1 s_2 s_3^{-1} s_4^{-1} w_0 s_4
=
 u_2 u_4  u_3 u_2 u_1 s_2 s_3^{-1} s_4^{-1} w_0 s_4$
with  $u_4  u_3 u_2 u_1 s_2 s_3^{-1} s_4^{-1} w_0 s_4 \subset
V_0 + A_4 s_4 w^- s_4^{-1} w^+ s_4 A_4
+ A_4 s_4^{-1} w^- s_4^{-1} w^+ s_4 A_4 \subset V_0$
by lemma \ref{lemreduc2} (3) and (4).
Moreover
$$
 \begin{array}{llcl}
& u_4 u_2 s_3 s_2^{-1} s_3 
u_1 s_2 s_3^{-1} s_4^{-1} w_0 s_4 &=& 
u_2 u_4  s_3 s_2^{-1}  u_1 (s_3  s_2 s_3^{-1}) s_4^{-1} w_0 s_4 \\ = &
u_2 u_4  s_3 s_2^{-1}  u_1 s_2^{-1}  s_3 s_2 s_4^{-1} w_0 s_4 &=& 
u_2 u_4  s_3 s_2^{-1}  u_1 s_2^{-1}  s_3  s_4^{-1} w_0 s_4s_2 ,
\end{array}$$
so we are reduced to studying
$$
\begin{array}{lcl}
u_4  s_3 (s_2^{-1}  u_1 s_2^{-1})  s_3  s_4^{-1} w_0 s_4
&\subset&
u_4  s_3 u_1 s_2  s_1^{-1} s_2  s_3  s_4^{-1} w_0 s_4
+
u_4  s_3 u_1 u_2 u_1  s_4^{-1} w_0 s_4 \\
&\subset&u_1 u_4  s_3  s_2  s_1^{-1} s_2  s_3  s_4^{-1} w_0 s_4
+
u_1 u_4  s_3  u_2 u_1  s_4^{-1} w_0 s_4\\
&\subset& V_0 + A_4 s_4 w^+ s_4^{-1} w^+ s_4 A_4 + A_4 s_4^{-1} w^+ s_4^{-1} w^+ s_4 A_4.
\end{array}$$
Since $s_4^{-1} w^+ s_4^{-1} w^+ s_4 \in V_0$ by lemma \ref{lemreduc2} (8) (after applying $\Phi$),
this concludes the proof of (2).
\end{proof}

\begin{lemma}{\ } \label{lemN12}
$s_4 (s_3 s_2^{-1} s_3) s_1 s_2 s_1 s_4 (s_3^{-1} s_2 s_3^{-1}) s_4^{-1} w_0 s_4 \in V^+$
\end{lemma}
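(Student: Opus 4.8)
The plan is to reduce the statement to the already-established lemmas \ref{lemN11}, \ref{lemN13} and \ref{lemN14}, exploiting throughout that $V^+ = V_0 + A_4 s_4 w^+ s_4^{-1} w^+ s_4 A_4$ is a two-sided $A_4$-submodule of $A_5$ (so in particular stable under right multiplication by any element of $A_4$). Write $X = s_4 (s_3 s_2^{-1} s_3) s_1 s_2 s_1 s_4 (s_3^{-1} s_2 s_3^{-1}) s_4^{-1} w_0 s_4$ for the element to be controlled.

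First I would rewrite the central factor using the braid relation $s_1 s_2 s_1 = s_2 s_1 s_2$ and peel off the rightmost $s_2$ across the middle $s_4$ (legitimate since $s_2$ commutes with $s_4$), obtaining $X = s_4 (s_3 s_2^{-1} s_3) s_2 s_1 s_4 \, [\,s_2 (s_3^{-1} s_2 s_3^{-1})\,] \, s_4^{-1} w_0 s_4$. The shifted form of lemma \ref{lemdecomp1212} gives $s_2 s_3^{-1} s_2 s_3^{-1} \in s_3^{-1} s_2 s_3^{-1} s_2 + u_2 u_3 + u_3 u_2$, so $s_2(s_3^{-1} s_2 s_3^{-1})$ splits as a leading term $s_3^{-1} s_2 s_3^{-1} s_2$ carrying a trailing $s_2$, plus corrections in $u_2 u_3 + u_3 u_2$. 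The correction families produce exactly $s_4 (s_3 s_2^{-1} s_3) s_2 s_1 s_4 \, u_2 u_3 \, s_4^{-1} w_0 s_4 \subset V^+$ (lemma \ref{lemN13}) and $s_4 (s_3 s_2^{-1} s_3) s_2 s_1 s_4 \, u_3 u_2 \, s_4^{-1} w_0 s_4 \subset V_0$ (lemma \ref{lemN14}(1)). In the leading term the trailing $s_2$ commutes through $s_4^{-1} w_0 s_4$ (both $s_4$ and $w_0$ commute with $s_2$), so it factors out on the right; since $V^+$ is a right $A_4$-module it then suffices to prove $Y := s_4 (s_3 s_2^{-1} s_3) s_2 s_1 s_4 (s_3^{-1} s_2 s_3^{-1}) s_4^{-1} w_0 s_4 \in V^+$.

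The element $Y$ is the two-factor analogue ($s_2 s_1$ in the middle) of the situation in lemma \ref{lemN11} (which has only $u_1$), so I would establish $Y \in V^+$ by a direct braid computation of exactly the type carried out there. Concretely I would absorb the left $s_2$ of $s_2 s_1$ into the head via $(s_3 s_2^{-1} s_3) s_2 = s_3^2 s_2 s_3^{-1}$ (from lemma \ref{lemsplusmoins}), giving $Y = s_4 s_3^2 s_2 s_1 s_3^{-1} s_4 (s_3^{-1} s_2 s_3^{-1}) s_4^{-1} w_0 s_4$, expand $s_3^2 = a s_3 + b + c s_3^{-1}$ using the cubic relation, and treat the three summands separately: the $a$- and $b$-summands have strictly smaller $s_4$-content and, after elementary commutations and one use of $s_3 s_4 s_3^{-1} = s_4^{-1} s_3 s_4$, fall under proposition \ref{propmoinsde55} and lemma \ref{lemN5}; the $c$-summand is the delicate one and is brought, by the same surgery as in lemma \ref{lemN11}, to a form governed by lemmas \ref{lemreducspec}, \ref{lemN5} and \ref{lemN6}.

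I expect the main obstacle to be precisely this last $c$-summand of $Y$. Naive applications of lemma \ref{leminverse} that merely swap $s_3^{-1} s_2 s_3^{-1}$ and $(s_3 s_2^{-1} s_3) s_2$ are circular and yield no reduction in complexity, so the manipulation must be arranged so that the irreducible piece is forced into the $w^+$-shaped generator $s_4 w^+ s_4^{-1} w^+ s_4$ rather than being left in $V_0$; this is exactly the exceptional maximal-length configuration $s_4 u_3 u_2 u_1 u_3 u_2 s_4 u_1 u_3 u_2 u_3 s_4$ isolated in proposition \ref{propmoinsde55}, and the whole point is to steer the computation onto that case as engineered in lemma \ref{lemN11}.
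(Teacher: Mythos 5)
The first half of your argument coincides exactly with the paper's proof: the rewriting $s_1 s_2 s_1 = s_2 s_1 s_2$, pushing the resulting $s_2$ across the middle $s_4$, the shifted lemma \ref{lemdecomp1212}, and the disposal of the two correction families by lemma \ref{lemN13} and lemma \ref{lemN14}(1) are all as in the paper, and factoring the trailing $s_2$ out to the right to reduce to $Y = s_4 (s_3 s_2^{-1} s_3) s_2 s_1 s_4 (s_3^{-1} s_2 s_3^{-1}) s_4^{-1} w_0 s_4$ is legitimate since $V^+$ is an $(A_4,A_4)$-bimodule. The genuine gap is in your treatment of $Y$. Your decomposition via the exact identity $(s_3 s_2^{-1} s_3) s_2 = s_3^2 s_2 s_3^{-1}$ and the expansion $s_3^2 = a s_3 + b + c s_3^{-1}$ is circular: writing $T_a, T_b, T_c$ for the three summands, one has $Y = a T_a + b T_b + c T_c$ \emph{exactly}, so once you establish $T_a, T_b \in V_0$ (which can be done, though not merely because of "smaller $s_4$-content" --- all three summands contain the same four $u_4$-blocks), you obtain $T_c \equiv c^{-1} Y \pmod{V_0}$; the "delicate" $c$-summand is not a reduced instance of the problem, it \emph{is} the problem. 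Moreover the tool you propose for it cannot be run: the surgery of lemma \ref{lemN11} starts by sliding the head's trailing $s_3$ across the $u_1$-middle and applying $s_3 s_4 s_3^{-1} = s_4^{-1} s_3 s_4$ against the tail's leading $s_3^{-1}$, but in $T_c = s_4 (s_3^{-1} s_2 s_3^{-1}) s_1 s_4 (s_3^{-1} s_2 s_3^{-1}) s_4^{-1} w_0 s_4$ the head ends in $s_3^{-1}$, so the pattern that appears is $s_3^{-1} s_4 s_3^{-1}$, which no braid relation reduces (it is precisely the irreducible word in $\langle s_3, s_4 \rangle$, and lemma \ref{leminverse} converts it only at the cost of reinstating $Y$). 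Lemma \ref{lemreducspec} is also out of reach, since it governs words with three $u_4$-blocks while $T_c$ has four.

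The missing idea is lemma \ref{lemquasicom}(2): $(s_3 s_2^{-1} s_3) s_2 \in s_2 (s_3 s_2^{-1} s_3) + u_2 u_3 u_2$. This is the non-circular way to deal with the obstructing $s_2$: it sends it to the \emph{left}, where it escapes past $s_4$, and the main term becomes $s_2 \cdot s_4 (s_3 s_2^{-1} s_3) s_1 s_4 (s_3^{-1} s_2 s_3^{-1}) s_4^{-1} w_0 s_4$, which is literally an instance of lemma \ref{lemN11} (middle in $u_1$) and hence lies in $V^+$. The price is the error term $s_4 u_2 u_3 u_2 s_1 s_4 (s_3^{-1} s_2 s_3^{-1}) s_4^{-1} w_0 s_4$, and that is where the residual work of the paper's proof sits: after pulling the $u_2$'s out, one expands $u_3$ and treats the exponents $0$ and $1$ by braid relations and proposition \ref{propmoinsde55}, and the exponent $-1$ by lemma \ref{leminverse} applied to $s_4 s_3^{-1} s_4$, whose two resulting pieces are handled by lemma \ref{lemN14}(2) and, after a further computation, by lemma \ref{lemN5}(4). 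You have most of the right toolbox in view (you cite \ref{lemN5} and \ref{lemN14}), but without the quasi-commutation step of lemma \ref{lemquasicom} the leading term cannot be unlocked, and your proposal as written does not close.
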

We have
$s_4 (s_3 s_2^{-1} s_3) (s_1 s_2 s_1) s_4 (s_3^{-1} s_2 s_3^{-1}) s_4^{-1} w_0 s_4  
= s_4 (s_3 s_2^{-1} s_3) s_2 s_1 s_2 s_4 (s_3^{-1} s_2 s_3^{-1}) s_4^{-1} w_0 s_4 
= s_4 (s_3 s_2^{-1} s_3) s_2 s_1  s_4 (s_2 s_3^{-1} s_2 s_3^{-1}) s_4^{-1} w_0 s_4 $
which belongs to $s_4 (s_3 s_2^{-1} s_3) s_2 s_1  s_4  s_3^{-1} s_2 s_3^{-1}s_2 s_4^{-1} w_0 s_4  +
s_4 (s_3 s_2^{-1} s_3) s_2 s_1  s_4 u_2 u_3 s_4^{-1} w_0 s_4 +
s_4 (s_3 s_2^{-1} s_3) s_2 s_1  s_4 u_3 u_2 s_4^{-1} w_0 s_4 $ by lemma \ref{lemdecomp1212}.
Now $$s_4 (s_3 s_2^{-1} s_3) s_2 s_1  s_4 u_2 u_3 s_4^{-1} w_0 s_4 \subset V^+$$
by lemma \ref{lemN13} 
, while 
$s_4 (s_3 s_2^{-1} s_3) s_2 s_1  s_4 u_3 u_2 s_4^{-1} w_0 s_4 \subset V^+$ by lemma
\ref{lemN14} (1). We are thus reduced to considering
$$
 \begin{array}{lcl}
 s_4 (s_3 s_2^{-1} s_3) s_2 s_1  s_4  s_3^{-1} s_2 s_3^{-1}s_2 s_4^{-1} w_0 s_4 &\in&
s_4 s_2(s_3 s_2^{-1} s_3)  s_1  s_4  s_3^{-1} s_2 s_3^{-1}s_2 s_4^{-1} w_0 s_4\\ &&+
s_4u_2 u_3 u_2 s_1  s_4  s_3^{-1} s_2 s_3^{-1}s_2 s_4^{-1} w_0 s_4\end{array}$$ by lemma \ref{lemquasicom}.
We have $$s_4 s_2(s_3 s_2^{-1} s_3)  s_1  s_4  s_3^{-1} s_2 s_3^{-1}s_2 s_4^{-1} w_0 s_4=
s_2 s_4 (s_3 s_2^{-1} s_3)  s_1  s_4  s_3^{-1} s_2 s_3^{-1} s_4^{-1} w_0 s_4s_2 \in V^+$$
by lemma \ref{lemN11}. We have
$s_4u_2 u_3 u_2 s_1  s_4  s_3^{-1} s_2 s_3^{-1}s_2 s_4^{-1} w_0 s_4=
u_2 s_4 u_3 u_2 s_1  s_4  s_3^{-1} s_2 s_3^{-1} s_4^{-1} w_0 s_4s_2$
and $s_4 u_3 u_2 s_1  s_4  s_3^{-1} s_2 s_3^{-1} s_4^{-1} w_0 s_4$
is a linear combination of the
$s_4 s_3^{\alpha} u_2 s_1  s_4  s_3^{-1} s_2 s_3^{-1} s_4^{-1} w_0 s_4$
for $\alpha \in \{ 0,1,-1 \}$. When $\alpha = 0$ we get 
$s_4  u_2 s_1  s_4  s_3^{-1} s_2 s_3^{-1} s_4^{-1} w_0 s_4
= u_2 s_1  s_4^2  s_3^{-1} s_2 s_3^{-1} s_4^{-1} w_0 s_4 \subset V^+$ by proposition \ref{propmoinsde55} ;
for $\alpha = 1$ we have
$$
 \begin{array}{llcl}
& s_4 s_3 u_2 s_1  s_4  s_3^{-1} s_2 s_3^{-1} s_4^{-1} w_0 s_4&=&
( s_4 s_3 s_4)u_2 s_1    s_3^{-1} s_2 s_3^{-1} s_4^{-1} w_0 s_4\\
=&
 s_3 s_4 s_3 u_2 s_1    s_3^{-1} s_2 s_3^{-1} s_4^{-1} w_0 s_4
&=&
 s_3 s_4 (s_3 u_2 s_3^{-1}) s_1     s_2 s_3^{-1} s_4^{-1} w_0 s_4\\
=&
 s_3 s_4 s_2^{-1} u_3 s_2 s_1     s_2 s_3^{-1} s_4^{-1} w_0 s_4
&=&
 s_3 s_2^{-1} s_4  u_3 (s_2 s_1     s_2) s_3^{-1} s_4^{-1} w_0 s_4 \\
=&
 s_3 s_2^{-1} s_4  u_3s_1 s_2     s_1 s_3^{-1} s_4^{-1} w_0 s_4 
&=&
 s_3 s_2^{-1} s_1 s_4  u_3 s_2      s_3^{-1} s_4^{-1} w_0 s_4s_1  
 \subset V^+\end{array}$$ by proposition \ref{propmoinsde55} ;
 for $\alpha = -1$ we have
$$
 \begin{array}{lcl}
 s_4 s_3^{-1} u_2 s_1  s_4  s_3^{-1} s_2 s_3^{-1} s_4^{-1} w_0 s_4
&=&(s_4 s_3^{-1} s_4) u_2 s_1    s_3^{-1} s_2 s_3^{-1} s_4^{-1} w_0 s_4\\
&\subset &u_3^{\times} s_4^{-1} s_3 s_4^{-1} u_2 s_1    s_3^{-1} s_2 s_3^{-1} s_4^{-1} w_0 s_4 \\ & & 
+u_3 u_4 u_3 u_2 s_1    s_3^{-1} s_2 s_3^{-1} s_4^{-1} w_0 s_4 
\end{array}$$
by lemma \ref{leminverse},
and $u_4 u_3 u_2 s_1    s_3^{-1} s_2 s_3^{-1} s_4^{-1} w_0 s_4 \subset V^+$
by lemma \ref{lemN14} (2). Finally,
$$
 \begin{array}{llcl}
& s_4^{-1} s_3 s_4^{-1} u_2 s_1    s_3^{-1} s_2 s_3^{-1} s_4^{-1} w_0 s_4
&=&
s_4^{-1} s_3  u_2 s_1  s_4^{-1}  s_3^{-1} s_2 (s_3^{-1} s_4^{-1}s_3) s_2 s_1^2 s_2 s_3 s_4\\
=&
s_4^{-1} s_3  u_2 s_1  s_4^{-1}  s_3^{-1} s_2 s_4 s_3^{-1}s_4^{-1} s_2 s_1^2 s_2 s_3 s_4
&=&
s_4^{-1} s_3  u_2 s_1  (s_4^{-1}  s_3^{-1} s_4) s_2  s_3^{-1} s_2 s_1^2 s_2(s_4^{-1} s_3 s_4)\\
=&
s_4^{-1} s_3  u_2 s_1  s_3  s_4^{-1} s_3^{-1} s_2  s_3^{-1} s_2 s_1^2 s_2s_3 s_4 s_3^{-1}
&=&
s_4^{-1} s_3  u_2 s_3  s_4^{-1} s_1   s_3^{-1} s_2  s_3^{-1} s_2 s_1^2 s_2s_3 s_4 s_3^{-1}
\subset V^+
\end{array}
$$ by lemma \ref{lemN5} (4).

\subsection{Reduction to $s_4 (s_3s_2^{-1} s_3) s_1 s_2^{-1} s_1 s_4 (s_3^{-1} s_2 s_3^{-1}) s_4^{-1} w_0 s_4$}
We apply the following relations of $B_4$  :
$$
\left\lbrace \begin{array}{lcl}
s_3(s_2 s_1^{-1} s_2)s_3^{-1} &=& s_2^{-1} s_1^{-1} (s_3 s_2^{-1} s_3) s_1 s_2 \\
s_3^{-1}(s_2^{-1} s_1 s_2^{-1})s_3 &=& s_2 s_1 (s_3^{-1} s_2 s_3^{-1}) s_1^{-1} s_2^{-1} \\
 \end{array} \right.$$
 This yields
$$
 \begin{array}{ll}
& s_4 s_3 (s_2 s_1^{-1} s_2) s_3^{-1} s_4 s_3^{-1} (s_2^{-1} s_1 s_2^{-1}) s_3 s_4^{-1} w_0 s_4\\ =&
s_4s_2^{-1} s_1^{-1} (s_3 s_2^{-1} s_3) s_1 s_2 s_4 s_2 s_1 (s_3^{-1} s_2 s_3^{-1}) s_1^{-1} s_2^{-1} s_4^{-1} w_0 s_4\\ =&
s_2^{-1} s_1^{-1}s_4 (s_3 s_2^{-1} s_3) s_1 s_2^2s_1s_4  (s_3^{-1} s_2 s_3^{-1}) s_4^{-1} w_0 s_4s_1^{-1} s_2^{-1}.
\end{array}$$
Expanding $s_2^2$ we get 
$$\begin{array}{lcl}
s_4 (s_3 s_2^{-1} s_3) s_1 s_2^2s_1s_4  (s_3^{-1} s_2 s_3^{-1}) s_4^{-1} w_0 s_4
& \in & R^{\times}
s_4 (s_3 s_2^{-1} s_3) s_1 s_2^{-1}s_1s_4  (s_3^{-1} s_2 s_3^{-1}) s_4^{-1} w_0 s_4 \\
&&+R s_4 (s_3 s_2^{-1} s_3) s_1 s_2s_1s_4  (s_3^{-1} s_2 s_3^{-1}) s_4^{-1} w_0 s_4 \\
&& + R s_4 (s_3 s_2^{-1} s_3) s_1^2s_4  (s_3^{-1} s_2 s_3^{-1}) s_4^{-1} w_0 s_4 
\end{array}
$$
We have 
$s_4 (s_3 s_2^{-1} s_3) s_1^2s_4  (s_3^{-1} s_2 s_3^{-1}) s_4^{-1} w_0 s_4 \in V^+$ by lemma \ref{lemN11},
and $$s_4 (s_3 s_2^{-1} s_3) s_1 s_2s_1s_4  (s_3^{-1} s_2 s_3^{-1}) s_4^{-1} w_0 s_4 \in V^+$$by
lemma \ref{lemN12}.

\begin{lemma}{\ } \label{lemN15}
\begin{enumerate}
\item $s_4 s_3 s_1^{-1} s_2 s_3^{-1} s_4^{-1} s_2 s_3 s_4^{-1} s_1 s_2 s_3 w_0 s_4 \in V^+$
\item$s_4 s_3 s_2^{-1} s_1^{-1} s_2 s_3^{-1} s_4^{-1} s_2 s_3 s_4^{-1} s_1 s_2 s_3 w_0 s_4 \in V_0$
\end{enumerate}
\end{lemma}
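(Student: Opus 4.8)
The plan is to reduce each product---both of which contain four occurrences of $s_4^{\pm1}$---to a combination of shapes already shown to lie in $V_0$ or $V^+$, using only the braid relations of Lemma \ref{lemsplusmoins} (chiefly $s_4s_3s_4^{-1}=s_3^{-1}s_4s_3$, $s_4s_3^{-1}s_4^{-1}=s_3^{-1}s_4^{-1}s_3$, $s_3s_2s_3^{-1}=s_2^{-1}s_3s_2$) together with the commutations $s_4s_1=s_1s_4$, $s_4s_2=s_2s_4$, $s_3s_1=s_1s_3$ and the fact that $w_0$ commutes with $s_1,s_2$. The strategy throughout is to push every $s_1^{\pm},s_2^{\pm}$ adjacent to an $s_4^{\pm}$ out to the two ends, where it is absorbed into the ambient $A_4$, and to use the $s_4s_3^{\pm}s_4^{-1}$ relations to drop the number of $s_4$-crossings from four to three; then Proposition \ref{propmoinsde55} and the reduction Lemmas \ref{lemN2}, \ref{lemN4}, \ref{lemN5} finish each resulting term.

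For (1), commuting $s_1^{-1}$ to the far left, rewriting $s_3s_2s_3^{-1}=s_2^{-1}s_3s_2$, sliding the freed $s_2^{\pm}$ past the neighbouring $s_4^{\pm}$ and applying $s_4s_3s_4^{-1}=s_3^{-1}s_4s_3$ yields
$$
s_1^{-1}s_2^{-1}s_3^{-1}\,s_4\,(s_3s_2^2s_3)\,s_4^{-1}\,(s_1s_2s_3w_0)\,s_4,
$$
which has only three $s_4$'s and a leading factor in $A_4$. Expanding $s_2^2=as_2+b+c\,s_2^{-1}$, the $s_2$- and constant-terms each collapse (after one more $s_3s_2s_3=s_2s_3s_2$ or $s_4s_3^{\pm}s_4^{-1}$ move) to a product with only two $s_4$'s, hence lie in $A_4u_4A_4u_4A_4=V_0$; the $s_2^{-1}$-term is the genuine survivor, and after replacing $s_3s_2^{-1}s_3$ by $u_2^{\times}s_3^{-1}s_2s_3^{-1}+u_2u_3u_2$ it is absorbed by Lemma \ref{lemN5} into a term of shape $A_4s_4w^+s_4^{-1}w^+s_4A_4\subset V^+$.

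For (2), the extra $s_2^{-1}$ is precisely what triggers the braid identity $s_3s_2^{-1}s_1^{-1}s_2s_3^{-1}=s_1s_2^{-1}s_3^{-1}s_2s_1^{-1}$ (Lemma \ref{lemsplusmoins}); after it, commuting the liberated $s_1^{\pm},s_2^{\pm}$ outward and using $s_4s_3^{-1}s_4^{-1}=s_3^{-1}s_4^{-1}s_3$ brings two of the four $s_4$'s adjacent and leaves
$$
s_1s_2^{-1}s_3^{-1}\,s_4^{-1}\,(s_3s_2s_1^{-1}s_2s_3)\,s_4^{-1}\,(s_1s_2s_3w_0)\,s_4.
$$
Now $s_3s_2s_1^{-1}s_2s_3\in c\,w^+s_1^{-1}+U_0$ (as in the proof of Lemma \ref{lemauxA4w0}), and the decisive point is that the trailing $s_1^{-1}$ cancels the leading $s_1$ of $s_1s_2s_3w_0$, so the $w^+$-part reduces to the single-$w^+$ sandwich $s_4^{-1}w^+s_4^{-1}(s_2s_3w_0)s_4$ rather than a product of two independent $w^+$'s. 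Together with the $U_0$-part---which lands in $V_0$ by Lemmas \ref{lemreduc2} and \ref{lemN5} (4),(5)---this keeps the whole expression inside $V_0$, the sharper conclusion.

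The main obstacle is exactly this $V_0$-versus-$V^+$ accounting. The braid reductions above are mechanical, but once one reaches the three-$s_4$ forms one must control, term by term, which earlier result (Proposition \ref{propmoinsde55}, or one of Lemmas \ref{lemN2}, \ref{lemN4}, \ref{lemN5}, \ref{lemreducspec}, \ref{lemreduc2}) absorbs each summand, and in particular verify that the $s_1^{-1}s_1$ cancellation in (2) really removes the second $w^+$ so that no $s_4w^+s_4^{-1}w^+s_4$ survivor is produced. Keeping the factor counts within the range where Proposition \ref{propmoinsde55} applies, and recognizing the $w_0$- and $w^+$-patterns at each stage, is the part demanding care rather than any single hard identity.
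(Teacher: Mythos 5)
Your braid-relation reductions are correct, and in part (1) you arrive at exactly the paper's intermediate form $s_1^{-1}s_2^{-1}s_3^{-1}\,s_4(s_3s_2^2s_3)s_4^{-1}(s_1s_2s_3w_0)s_4$. But your finish there is both unnecessary and misdescribed: the paper concludes in one stroke, since this element lies in $u_4u_3u_2u_3u_4A_4s_4\subset V^+$ by Lemma \ref{lemN5} (4), with no expansion of $s_2^2$. Your expansion is harmless, but your claim that the $s_2^{-1}$-term is ``absorbed into a term of shape $A_4s_4w^+s_4^{-1}w^+s_4A_4$'' is false: after Lemma \ref{leminverse} that term is of the form $s_4u_3u_2u_3s_4^{-1}A_4s_4$ plus $u_2s_4u_3s_4^{-1}u_2A_4s_4$, and the second half of Lemma \ref{lemN5} (4) (sign pattern $(1,-1)\neq(1,1)$) together with $s_4u_3s_4^{-1}=u_2$-conjugation-free $s_3^{-1}u_4s_3$ puts both in $V_0$; no $w^{\pm}$ ever appears.

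The genuine gap is in part (2). Your route --- the identity $s_3s_2s_1^{-1}s_2s_3\in c\,w^+s_1^{-1}+U_0$ (correct: it is the word-reversed form of the computation in Lemma \ref{lemauxA4w0}, or follows directly from Lemma \ref{leminverse}) followed by the cancellation $s_1^{-1}s_4^{-1}s_1=s_4^{-1}$ --- is genuinely different from, and shorter than, the paper's, which instead performs a case analysis over $s_3^{\alpha}$, $\alpha\in\{-1,0,1\}$, and then over a second exponent. But you then assert, with no argument, that the surviving term $s_4^{-1}w^+s_4^{-1}(s_2s_3w_0)s_4$ lies in $V_0$ ``because it is a single-$w^+$ sandwich.'' That criterion is not valid: three $u_4$-blocks separated by $w$-type factors is precisely the shape of the generators of $A_5^{(3)}$ modulo $A_5^{(2)}$, so nothing about the count of $w^+$'s forces membership in $V_0$. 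Concretely, writing $s_2s_3w_0=s_2s_3^2s_2s_1^2s_2s_3$, pushing $s_2$ past $s_4^{-1}$, expanding $s_3^2$ and using $w^+A_3\subset A_3w^+ +U_0$ (Lemma \ref{lemA4droitegauche} (3)), the leading term is $s_4^{-1}w^+s_4^{-1}w_0s_4$, in which a second $w^+$ reappears through $w_0\in A_3^{\times}w^+ +U_0$ (Lemma \ref{lemauxA4w0} (1)): one lands on $s_4^{-1}w^+s_4^{-1}w^+s_4$, which lies in $V_0$ only by Lemma \ref{lemreduc2} (8) transported by $\Phi$ --- that is, because of the sign pattern $(-1,-1,+1)$, not because only one $w^+$ was visible. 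The remaining terms need Lemma \ref{lemreducspec} (2) (again the non-constant sign pattern is what saves you), Lemma \ref{lemreduc2} (1)--(2), and Lemma \ref{lemN5} (4) with the correct signs. All of this does go through, so your reduction can be completed into a valid (and attractive) alternative proof; but this verification is the actual content of the lemma --- it is the same work the paper's case analysis performs --- and you explicitly acknowledge it is needed without carrying it out. As written, the proof of (2) is incomplete.
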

\begin{proof}
We prove (1). Using braid relations we get 
$$
\begin{array}{clcl}
& s_4 s_3 s_1^{-1} s_2 s_3^{-1} s_4^{-1} s_2 s_3 s_4^{-1} s_1 s_2 s_3 w_0 s_4 &
=&  s_1^{-1} s_4 (s_3  s_2 s_3^{-1}) s_4^{-1} s_2 s_3 s_4^{-1} s_1 s_2 s_3 w_0 s_4 \\
=&  s_1^{-1} s_4 s_2^{-1}  s_3 s_2 s_4^{-1} s_2 s_3 s_4^{-1} s_1 s_2 s_3 w_0 s_4 &
=&  s_1^{-1} s_2^{-1} (s_4   s_3  s_4^{-1}) s_2^2 s_3 s_4^{-1} s_1 s_2 s_3 w_0 s_4 \\
=&  s_1^{-1} s_2^{-1} s_3^{-1}   s_4  s_3 s_2^2 s_3 s_4^{-1} s_1 s_2 s_3 w_0 s_4 \\
\end{array} 
$$
and this latter term $s_1^{-1} s_2^{-1} s_3^{-1} s_4 s_3 s_2^2 s_3 s_4^{-1} s_1 s_2 s_3 w_0 s_4$
belongs to $V^+$ by lemma \ref{lemN5} (4). We now prove (2).
By using braid relations we get 
$$
\begin{array}{clcl}
& s_4 s_3 (s_2^{-1} s_1^{-1} s_2) s_3^{-1} s_4^{-1} s_2 s_3 s_4^{-1} s_1 s_2 s_3 w_0 s_4  &=& 
s_4 s_3 s_1 s_2^{-1} s_1^{-1} s_3^{-1} s_4^{-1} s_2 s_3 s_4^{-1} s_1 s_2 s_3 w_0 s_4 \\
=& s_1 s_4 (s_3  s_2^{-1}  s_3^{-1}) s_4^{-1} s_1^{-1} s_2 s_3 s_4^{-1} s_1 s_2 s_3 w_0 s_4 &
=& s_1 s_4 s_2^{-1}  s_3^{-1}  s_2 s_4^{-1} s_1^{-1} s_2 s_3 s_4^{-1} s_1 s_2 s_3 w_0 s_4 \\
=& s_1 s_2^{-1} (s_4   s_3^{-1}   s_4^{-1}) s_2 s_1^{-1} s_2 s_3 s_4^{-1} s_1 s_2 s_3 w_0 s_4 &
=& s_1 s_2^{-1} s_3^{-1}   s_4^{-1}   s_3 s_2 s_1^{-1} s_2 s_3 s_4^{-1} s_1 s_2 s_3 w_0 s_4 \\
=& s_1 s_2^{-1} s_3^{-1}   s_4^{-1}   s_3 s_2 (s_1^{-1} s_2 s_1)s_3 s_4^{-1}  s_2 s_3 w_0 s_4 &
=& s_1 s_2^{-1} s_3^{-1}   s_4^{-1}   s_3 s_2 s_2 s_1 s_2^{-1}s_3 s_4^{-1}  s_2 s_3 w_0 s_4 \\
=& s_1 s_2^{-1} s_3^{-1}   s_4^{-1}   s_3 s_2 s_2 s_1 s_2^{-1}s_3 s_2s_4^{-1}   s_3 w_0 s_4 &
=& s_1 s_2^{-1} s_3^{-1}   s_4^{-1}   s_3 s_2 s_2 s_1 s_2^{-1}s_3 s_2s_4^{-1}   s_3^2s_2 s_1^2 s_2 s_3  s_4 \\
\end{array}
$$
and $s_4^{-1} s_3 s_2^2 s_1 s_2^{-1} s_3 s_2 s_4^{-1} s_3^2 s_2 s_1^2 s_2 s_3 s_4$
is a linear combination of
$s_4^{-1} s_3 s_2^2 s_1 s_2^{-1} s_3 s_2 s_4^{-1} s_3^{\alpha} s_2 s_1^2 s_2 s_3 s_4$
for $\alpha \in \{ -1,1,0 \}$. When $\alpha = 1$,
we get $s_4^{-1} s_3 s_2^2 s_1 s_2^{-1} s_3 s_2 s_4^{-1} s_3 s_2 s_1^2 s_2 s_3 s_4
= s_4^{-1} s_3 s_2^2 s_1 s_2^{-1} s_3 s_2 s_4^{-1} w_0 s_4 
= s_4^{-1} s_3 s_2^2 s_1 s_2^{-1} s_3  s_4^{-1} w_0 s_4s_2 $
which clearly (by expanding $s_2^2$)  belongs to $V_0 + A_4 s_4^{-1} w^+ s_4^{-1} w_0 s_4 A_4
= V_0 + A_4 s_4^{-1} w^+ s_4^{-1} w^+s_4 A_4
 \subset V^+$ by  lemma \ref{lemreduc2} (8) (take the image by $\Phi$ of the identity there).
 When $\alpha \in \{ 0,-1 \}$ we write 
$s_4^{-1} s_3 s_2^2 s_1 (s_2^{-1} s_3 s_2) s_4^{-1} s_3^{\alpha} s_2 s_1^2 s_2 s_3 s_4 =
s_4^{-1} s_3 s_2^2 s_1 s_3 s_2 s_3^{-1} s_4^{-1} s_3^{\alpha} s_2 s_1^2 s_2 s_3 s_4$.
When $\alpha = 0$, we get 
$s_4^{-1} s_3 s_2^2 s_1 s_3 s_2 s_3^{-1} s_4^{-1} (s_2 s_1^2 s_2) s_3 s_4=
s_4^{-1} s_3 s_2^2 s_1 s_3 s_2 s_3^{-1}  s_2 s_1^2 s_2(s_4^{-1} s_3 s_4) =
s_4^{-1} s_3 s_2^2 s_1 s_3 s_2 s_3^{-1}  s_2 s_1^2 s_2s_3 s_4 s_3^{-1} \in V_0$, so
we can assume $\alpha = -1$. Then
$$
 \begin{array}{lcl}
 s_4^{-1} s_3 s_2^2 s_1 s_3 s_2 (s_3^{-1} s_4^{-1} s_3^{-1}) s_2 s_1^2 s_2 s_3 s_4 &=&
s_4^{-1} s_3 s_2^2 s_1 s_3 s_2 s_4^{-1} s_3^{-1} s_4^{-1} s_2 s_1^2 s_2 s_3 s_4\\
&=& s_4^{-1} s_3 s_2^2 s_1 s_3 s_2 s_4^{-1} s_3^{-1}  s_2 s_1^2 s_2 (s_4^{-1} s_3 s_4)\\
&=& s_4^{-1} s_3 s_2^2 s_1 s_3 s_2 s_4^{-1} s_3^{-1}  s_2 s_1^2 s_2 s_3 s_4 s_3^{-1}\end{array}$$
is a linear combination of 
$s_4^{-1} s_3 s_2^2 s_1 s_3 s_2 s_4^{-1} s_3^{-1}  s_2 s_1^{\beta} s_2 s_3 s_4 s_3^{-1}$
for $\beta \in \{ -1,0,1 \}$. When $\beta = 0$ we get
$s_4^{-1} s_3 s_2^2 s_1 s_3 s_2 s_4^{-1} s_3^{-1}  s_2^2 s_3 s_4 s_3^{-1} \in V_0$
by lemma \ref{lemN5} (4) (taking the image by $\Psi$ of the second identity there).
When $\beta = 1$ we get 
$$
 \begin{array}{lcl}
 s_4^{-1} s_3 s_2^2 s_1 s_3 s_2 s_4^{-1} s_3^{-1}  (s_2 s_1 s_2) s_3 s_4 s_3^{-1}
&=&s_4^{-1} s_3 s_2^2 s_1 s_3 s_2 s_4^{-1} s_3^{-1}  s_1 s_2 s_1 s_3 s_4 s_3^{-1}\\
&=&s_4^{-1} s_3 s_2^2 s_1 s_3 s_2 s_1 s_4^{-1} s_3^{-1}  s_2  s_3 s_4 s_3^{-1}s_1
\in V_0\end{array}$$
by the same argument. When $\beta = -1$ we get
$$
 \begin{array}{lcl}
s_4^{-1} s_3 s_2^2 s_1 s_3 s_2 s_4^{-1} s_3^{-1}  (s_2 s_1^{-1} s_2) s_3 s_4 s_3^{-1}
&=&
s_4^{-1} s_3 s_2^2 s_1 s_3 s_2 s_4^{-1} (s_2 s_1) (s_3 s_2^{-1} s_3) (s_1^{-1} s_2^{-1}) s_4 s_3^{-1}\\
&=&
s_4^{-1} s_3 s_2^2 s_1 s_3 s_2 (s_2 s_1)  s_4^{-1} (s_3 s_2^{-1} s_3)  s_4 (s_1^{-1} s_2^{-1})s_3^{-1} \in V_0\end{array}$$
again by the same argument, and this concludes the proof.

\end{proof}

\subsection{Reduction to $s_4 s_3 s_2 s_1^{-1} s_2 s_3^{-1} s_4^{-1} s_2 s_3 s_4^{-1} s_1 s_2 s_3  w_0 s_4$}

We have
$$
\begin{array}{lcl}
s_4 (s_3s_2^{-1} s_3) s_1 s_2^{-1} s_1 s_4 (s_3^{-1} s_2 s_3^{-1}) s_4^{-1} w_0 s_4
 &=& s_4 (s_3s_2^{-1} s_3) s_1 s_2^{-1} s_1 s_4 s_3^{-1} s_2 (s_3^{-1} s_4^{-1} s_3^{-1}) s_3  w_0 s_4 \\
 &=& s_4 (s_3s_2^{-1} s_3) s_1 s_2^{-1} s_1 s_4 s_3^{-1} s_2 s_4^{-1} s_3^{-1} s_4^{-1} s_3  w_0 s_4 \\
 &=& s_4 (s_3s_2^{-1} s_3) s_1 s_2^{-1} s_1 (s_4 s_3^{-1} s_4^{-1}) s_2  s_3^{-1} s_4^{-1} s_3  w_0 s_4 \\
 &=& s_4 (s_3s_2^{-1} s_3) s_1 s_2^{-1} s_1 s_3^{-1} s_4^{-1} (s_3 s_2  s_3^{-1}) s_4^{-1} s_3  w_0 s_4 \\
 &=& s_4 (s_3s_2^{-1} s_3) s_1 s_2^{-1} s_1 s_3^{-1} s_4^{-1} s_2^{-1} s_3  s_2 s_4^{-1} s_3  w_0 s_4 \\
 &=& s_4 s_3s_2^{-1}  s_1 (s_3s_2^{-1}  s_3^{-1}) s_1 s_4^{-1} s_2^{-1} s_3   s_4^{-1} s_2 s_3  w_0 s_4 \\
 &=& s_4 s_3s_2^{-1}  s_1 s_2^{-1}s_3^{-1}  s_2 s_1 s_4^{-1} s_2^{-1} s_3   s_4^{-1} s_2 s_3  w_0 s_4 \\
 &=& s_4 s_3s_2^{-1}  s_1 s_2^{-1}s_3^{-1}  (s_2 s_1s_2^{-1} ) s_4^{-1} s_3   s_4^{-1} s_2 s_3  w_0 s_4 \\
 &=& s_4 s_3s_2^{-1}  s_1 s_2^{-1}s_3^{-1}  s_1^{-1} s_2s_1  s_4^{-1} s_3   s_4^{-1} s_2 s_3  w_0 s_4 \\
 &=& s_4 s_3s_2^{-1} ( s_1 s_2^{-1}s_1^{-1})s_3^{-1}   s_2 s_4^{-1} s_3   s_4^{-1} s_1 s_2 s_3  w_0 s_4 \\
 &=& s_4 s_3s_2^{-1} s_2^{-1} s_1^{-1}s_2 s_3^{-1}   s_2 s_4^{-1} s_3   s_4^{-1} s_1 s_2 s_3  w_0 s_4 \\
 &=& s_4 s_3s_2^{-2} s_1^{-1}s_2 s_3^{-1}    s_4^{-1}s_2 s_3   s_4^{-1} s_1 s_2 s_3  w_0 s_4 \\
 \end{array}
 $$
 and, expanding $s_2^{-2}$, we get
 $$
 \begin{array}{lcl} s_4 s_3s_2^{-2} s_1^{-1}s_2 s_3^{-1}    s_4^{-1}s_2 s_3   s_4^{-1} s_1 s_2 s_3  w_0 s_4 
& \in& R^{\times} s_4 s_3s_2 s_1^{-1}s_2 s_3^{-1}    s_4^{-1}s_2 s_3   s_4^{-1} s_1 s_2 s_3  w_0 s_4 \\
&&+ R s_4 s_3s_2^{-1} s_1^{-1}s_2 s_3^{-1}    s_4^{-1}s_2 s_3   s_4^{-1} s_1 s_2 s_3  w_0 s_4 \\
&&+ R s_4 s_3 s_1^{-1}s_2 s_3^{-1}    s_4^{-1}s_2 s_3   s_4^{-1} s_1 s_2 s_3  w_0 s_4 \end{array}$$
and the last two terms belong to $V_0$ by lemma \ref{lemN15} (1) and (2).

\begin{lemma}{\ } \label{lemN16}
\begin{enumerate}
\item $s_4 (s_3 s_2^{-1} s_3) s_1 s_4^{-1} s_3 s_4^{-1} s_1 s_2 s_3 w_0 s_4 \in V^+$
\item $s_4 (s_3 s_2^{-1} s_3) s_1 s_2 s_4^{-1} s_3 s_4^{-1} s_1 s_2 s_3 w_0 s_4 \in V^+$
\item $s_4 s_3 s_2^{-1} s_3 s_1 s_2 s_4^{-1} s_3 s_4^{-1} \in A_4^{\times} s_4 s_3 s_2^{-1} s_3 s_4^{-2} A_3^{\times}$
\item $s_4 u_2 u_3 s_1^{-1} s_2 s_4^{-1} s_3 s_4^{-1} s_1^2 s_2 s_3 w_0 s_4 \in V_0$
\end{enumerate}
\end{lemma}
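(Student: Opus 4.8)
The four statements are established by the mechanism used throughout this section: rewrite the braid word using the relations of $B_5$, expand the squares $s_i^{\pm 2}$ by the cubic relation (so that $s_i^2\in R^\times s_i^{-1}+Rs_i+R$ and $s_i^{-2}\in R^\times s_i+Rs_i^{-1}+R$), and then dispatch each resulting term either into $V_0$ by proposition \ref{propmoinsde55} or into $V^+$ by the inclusions of lemma \ref{lemN5}. The only internal dependency is $(3)\Rightarrow(2)$, so I would prove $(3)$ first; the other items are independent.

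For $(1)$, first observe that $s_1$ commutes with $s_3$ and $s_4$, so the two isolated occurrences of $s_1$ coalesce and the word becomes $s_4(s_3s_2^{-1}s_3)s_4^{-1}s_3s_4^{-1}s_1^2s_2s_3w_0s_4$. Inserting $s_4^{-1}s_4$ around the central $s_2^{-1}$ and using $s_4s_3s_4^{-1}=s_3^{-1}s_4s_3$ (lemma \ref{lemsplusmoins}) rewrites the leading block $s_4(s_3s_2^{-1}s_3)s_4^{-1}$ as $(s_3^{-1}s_4s_3)s_2^{-1}(s_3^{-1}s_4s_3)$; absorbing the stray $s_3$ turns the whole expression into $s_3^{-1}s_4s_3s_2^{-1}s_3^{-1}(s_4s_3^2s_4^{-1})s_1^2s_2s_3w_0s_4$. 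The point of this shape is that expanding $s_3^2$, and then the $s_3^{-2}$ it produces, by the cubic relation collapses each $s_4s_3^{\pm}s_4^{-1}$ to an element of $u_3u_4u_3$, cutting the number of occurrences of $s_4$ from four to three. Every resulting term then has the form $s_4^{1}(u_3u_2u_3)s_4^{\varepsilon}A_4s_4$, which lies in $V_0$ when $\varepsilon=-1$ and in $V^+$ when $\varepsilon=+1$ by lemma \ref{lemN5}(4) (together with the trivial $s_4A_4s_4\subset V_0$ for the $\varepsilon=0$ middle terms). Part $(4)$ is handled by the same tools: one runs over the generators of $u_2u_3$, expands $s_1^2$, and treats the central $s_4^{-1}s_3s_4^{-1}$ either by the identity $s_4^{-1}s_3s_4^{-1}=s_4^{-2}s_3^{-1}s_4s_3$ or by lemma \ref{leminverse}, bringing each term to three occurrences of $s_4$; the claim is that, because of the two negative powers in the central block, every term then falls under proposition \ref{propmoinsde55} or the $V_0$-clause of lemma \ref{lemN5}(4), so that no $V^+$-contribution survives.

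Statement $(3)$ is a purely structural identity, and I would prove it by a direct computation in $B_5$: pushing $s_1s_2$ (which commute with $s_4$) to the right and rewriting the $s_4^{-1}s_3s_4^{-1}$ block via $s_4^{-1}s_3s_4^{-1}=s_4^{-2}s_3^{-1}s_4s_3$, arranging the residual generators so that they are absorbed into the flanking factors $A_4^\times$ and $A_3^\times$. Granting $(3)$, statement $(2)$ follows quickly: its word is exactly the left-hand side of $(3)$ multiplied on the right by $s_1s_2s_3w_0s_4$, so it lies in $A_4^\times s_4s_3s_2^{-1}s_3s_4^{-2}A_3^\times s_1s_2s_3w_0s_4$. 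Since $A_3^\times s_1s_2\subset A_3$ this is contained in $A_4^\times s_4s_3s_2^{-1}s_3s_4^{-2}A_3s_3w_0s_4$, and expanding $s_4^{-2}\in R^\times s_4+Rs_4^{-1}+R$ produces three terms of the form $s_4(s_3s_2^{-1}s_3)s_4^{\gamma}A_4s_4$ with $\gamma\in\{1,-1,0\}$. By lemma \ref{lemN5}(4) the $\gamma=1$ term lies in $V^+$, the $\gamma=-1$ term in $V_0$, and the $\gamma=0$ term in $s_4A_4s_4\subset V_0$, whence $(2)\in V^+$.

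The braid rewriting and the cubic expansions are mechanical; the genuinely delicate points are (a) pinning down the exact braid identity in $(3)$ so that everything beyond the core $s_4s_3s_2^{-1}s_3s_4^{-2}$ is absorbed by $A_4^\times$ on the left and $A_3^\times$ on the right, and (b) controlling in the case analyses for $(1)$ and $(4)$ the single configuration $s_4^{1}u_3u_2u_3s_4^{1}A_4s_4$ of lemma \ref{lemN5}(4) — the only one yielding a $V^+$ rather than a $V_0$ contribution — verifying that it is allowed for $(1)$ but never arises for $(4)$. These are exactly the places where a sign slip would break the conclusion, so I would organize the computation to track the powers of $s_4$ carefully at each rewriting.
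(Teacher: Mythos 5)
Items (1) and (2) of your proposal are correct. For (1) your route is genuinely different from the paper's (which applies lemma \ref{lemdecomp1212} to the block $s_3s_4^{-1}s_3s_4^{-1}$ and then lemmas \ref{lemreduc2}(2) and \ref{lemN5}(4)): conjugating to $s_3^{-1}s_4(s_3s_2^{-1}s_3^{-1})(s_4s_3^{2}s_4^{-1})s_1^2s_2s_3w_0s_4$, expanding $s_3^2$ and collapsing $s_4s_3^{\pm1}s_4^{-1}=s_3^{-1}s_4^{\pm1}s_3$ really does put every term in the form $A_4\,s_4(u_3u_2u_3)s_4^{\varepsilon}A_4s_4$ covered by lemma \ref{lemN5}(4), and is arguably cleaner. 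Your deduction of (2) from (3) also agrees with the paper (the expansion of $s_4^{-2}$ is unnecessary, since $s_4^{-2}\in u_4$ and the first clause of lemma \ref{lemN5}(4) already gives $u_4u_3u_2u_3u_4A_4s_4\subset V^+$). The problem is that (3), on which (2) entirely rests, is left as a sketch whose two concrete ingredients do not work: $s_2$ does not commute with $s_3$, so ``pushing $s_1s_2$ to the right'' past $s_4^{-1}s_3s_4^{-1}$ is not a legal move for the $s_2$; and substituting $s_4^{-1}s_3s_4^{-1}=s_4^{-2}s_3^{-1}s_4s_3$ leaves a trailing factor $s_4s_3$, which is not in $A_3^{\times}=\langle s_1,s_2\rangle^{\times}$, so the target shape $A_4^{\times}s_4s_3s_2^{-1}s_3s_4^{-2}A_3^{\times}$ is never reached this way. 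The paper's proof of (3) is a nontrivial chain of braid moves --- insert $s_1s_1^{-1}$, use $s_1^{-1}s_2^{-1}s_1=s_2s_1^{-1}s_2^{-1}$, $s_2^{-1}s_3s_2=s_3s_2s_3^{-1}$, $s_3s_2s_3=s_2s_3s_2$, $s_3^{-1}s_4^{-1}s_3=s_4s_3^{-1}s_4^{-1}$ (so that the two $s_4^{-1}$'s merge into $s_4^{-2}$ at the right end), and finally $s_3(s_2s_1^{-1}s_2)s_3^{-1}=s_2^{-1}s_1^{-1}(s_3s_2^{-1}s_3)s_1s_2$ --- ending with the factorization $(s_1s_2s_3s_2^{-1}s_1^{-1})\,s_4s_3s_2^{-1}s_3s_4^{-2}\,(s_1s_2)$. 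None of this is in your proposal, and it is the heart of the lemma.

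Item (4) as you describe it would actually fail. Rewriting the central block by $s_4^{-1}s_3s_4^{-1}=s_4^{-2}s_3^{-1}s_4s_3$ (or by lemma \ref{leminverse}) and expanding produces dominant terms of the form $s_4\,X\,s_4^{\pm1}s_3^{-1}s_4\,Y\,s_4$ containing \emph{four} $u_4$-blocks; proposition \ref{propmoinsde55} only treats elements of $u_4(\cdots)u_4(\cdots)u_4$ with three $u_4$-blocks (and with at most five $u_i$-factors in each gap), and lemma \ref{lemN5}(4) likewise requires the three-block shape $s_4^{\alpha}u_3u_2u_3s_4^{\beta}A_4s_4$, so neither applies, and your claim that ``every term falls under'' them is unsubstantiated. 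The missing idea is the opposite of expansion: leave the central block alone, commute $u_2$ and $s_1^{-1}$ (and then $s_2$) out of the way so that the leading $s_4$ meets the first $s_4^{-1}$, collapse $s_4u_3s_4^{-1}\subset s_3^{-1}u_4s_3$ (merging two $u_4$-blocks into one), and after a braid move land on $u_2s_1^{-1}s_3^{-1}s_2\,u_4s_3s_4^{-1}\,(s_2s_1^2s_2s_3w_0)\,s_4\subset A_4\,u_4u_3u_4A_4u_4\subset V_0$ by lemma \ref{lemreduc2}(2). That merging step, which reduces four $u_4$-blocks to three and hands the whole tail to a lemma with full $A_4$-coefficients, is exactly what your dispatching argument lacks; it cannot be replaced by proposition \ref{propmoinsde55}, whose hypotheses bound the number of $u_i$-factors between the $u_4$'s, a bound violated here by $s_2s_1^2s_2s_3w_0$.
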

\begin{proof} We prove (1).
$
s_4 s_3 s_2^{-1} s_3 s_1 s_4^{-1} s_3 s_4^{-1} s_1 s_2 s_3 w_0 s_4 
=s_4 s_3 s_2^{-1} s_1(s_3  s_4^{-1} s_3 s_4^{-1}) s_1 s_2 s_3 w_0 s_4 $ belongs to
$$s_4 s_3 s_2^{-1} s_1  s_4^{-1} s_3 s_4^{-1}s_3 s_1 s_2 s_3 w_0 s_4  +
s_4 s_3 s_2^{-1} s_1 u_3 u_4  s_1 s_2 s_3 w_0 s_4 
+ s_4 s_3 s_2^{-1} s_1 u_4 u_3  s_1 s_2 s_3 w_0 s_4 $$
by lemma \ref{lemdecomp1212}.
We have $s_4 s_3 s_2^{-1} s_1 u_3 u_4  s_1 s_2 s_3 w_0 s_4 =
s_4 s_3 s_2^{-1} u_3 u_4   s_1^2 s_2 s_3 w_0 s_4 \subset V^+$
by lemma \ref{lemN5} (4), and
$s_4 s_3 s_2^{-1} s_1 u_4 u_3  s_1 s_2 s_3 w_0 s_4=
s_4 s_3 u_4 s_2^{-1} s_1  u_3  s_1 s_2 s_3 w_0 s_4 \subset V_0$
by lemma \ref{lemreduc2} (2). Moreover 
$s_4 s_3 s_2^{-1} s_1  s_4^{-1} s_3 s_4^{-1}s_3 s_1 s_2 s_3 w_0 s_4 =
(s_4 s_3 s_4^{-1} )s_2^{-1} s_1   s_3 s_4^{-1}s_3 s_1 s_2 s_3 w_0 s_4 
=
s_3^{-1} s_4 s_3s_2^{-1}   s_3 s_4^{-1}s_3  s_1^2 s_2 s_3 w_0 s_4 
=
s_3^{-1} s_4 s_3s_2^{-1}   s_3 s_4^{-1}s_3  s_1^2 s_2 s_3 w_0 s_4 \in V^+$
by lemma \ref{lemN5} (4). This proves (1).
Using only braid relations we get 
$$
\begin{array}{clcl}
&s_4 s_3 s_2^{-1} s_3 s_1 s_2 s_4^{-1} s_3 s_4^{-1}&
= & s_1 s_1^{-1} s_4 s_3 s_2^{-1} s_1 s_3  s_2 s_4^{-1} s_3 s_4^{-1} \\
= & s_1  s_4 s_3 (s_1^{-1} s_2^{-1} s_1) s_3  s_2 s_4^{-1} s_3 s_4^{-1} &
= & s_1  s_4 s_3 s_2 s_1^{-1} (s_2^{-1} s_3  s_2) s_4^{-1} s_3 s_4^{-1} \\
= & s_1  s_4 s_3 s_2 s_1^{-1} s_3 s_2  s_3^{-1} s_4^{-1} s_3 s_4^{-1} &
= & s_1  s_4 (s_3 s_2 s_3)s_1^{-1}  s_2  s_3^{-1} s_4^{-1} s_3 s_4^{-1} \\
= & s_1  s_4 s_2 s_3 s_2s_1^{-1}  s_2  s_3^{-1} s_4^{-1} s_3 s_4^{-1} &
= & s_1  s_2 s_4  s_3 s_2s_1^{-1}  s_2  (s_3^{-1} s_4^{-1} s_3) s_4^{-1} \\
= & s_1  s_2 s_4  s_3 s_2s_1^{-1}  s_2  s_4 s_3^{-1} s_4^{-1} s_4^{-1} &
= & s_1  s_2 (s_4  s_3 s_4 ) s_2s_1^{-1}  s_2  s_3^{-1} s_4^{-2}  \\
= & s_1  s_2 s_3  s_4 (s_3  (s_2s_1^{-1}  s_2)  s_3^{-1}) s_4^{-2} &
= & s_1  s_2 s_3  s_4 (s_2^{-1} s_1^{-1})(s_3 s_2^{-1} s_3) (s_1 s_2) s_4^{-2} \\
= & (s_1  s_2 s_3  s_2^{-1} s_1^{-1}) s_4 s_3 s_2^{-1} s_3  s_4^{-2}(s_1 s_2) \\
\end{array}
$$
which proves (3). From this we deduce 
that $s_4 (s_3 s_2^{-1} s_3) s_1 s_2 s_4^{-1} s_3 s_4^{-1} s_1 s_2 s_3 w_0 s_4
\in A_4 s_4 u_3 u_2 u_3 u_4 A_4 s_4 \subset V^+$ by lemma \ref{lemN5} (4). This proves (2).
Finally 
$$
 \begin{array}{lcl}
 s_4 u_2 u_3 s_1^{-1} s_2 s_4^{-1} s_3 s_4^{-1} s_1^2 s_2 s_3 w_0 s_4
&=&u_2s_1^{-1}s_4  u_3  s_4^{-1}s_2  s_3 s_4^{-1} s_1^2 s_2 s_3 w_0 s_4 \\
&\subset &u_2s_1^{-1}s_3^{-1}  u_4  (s_3 s_2  s_3) s_4^{-1} s_1^2 s_2 s_3 w_0 s_4 \\
&\subset& u_2s_1^{-1}s_3^{-1}  u_4  s_2 s_3  s_2 s_4^{-1} s_1^2 s_2 s_3 w_0 s_4 \\
&\subset& u_2s_1^{-1}s_3^{-1}   s_2 u_4  s_3  s_4^{-1} s_2  s_1^2 s_2 s_3 w_0 s_4  \subset V_0
\end{array}$$
by lemma \ref{lemreduc2} (2). This proves (4).

\end{proof}

\begin{lemma}{\ }  \label{lemN17}
\begin{enumerate}
\item $u_4 A_4 s_4^{-1} s_3^{\beta} w_0 s_4 \subset V^+$
\item $s_4 u_3 u_2 s_1^{-1} s_2 s_4^{-1} s_3 s_4^{-1} s_1^2 s_2 s_3 w_0 s_4 \subset V^+$
\end{enumerate}
\end{lemma}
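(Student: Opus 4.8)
The two statements assert that words carrying several $s_4^{\pm 1}$-factors lie in $V^+$, and I would attack them by the method used throughout section~\ref{sectcrucial}: lower the $s_4$-degree with the braid relations of lemma~\ref{lemsplusmoins}, dispose of the low-degree remainders through proposition~\ref{propmoinsde55} (so that most terms fall into $V_0=A_5^{(2)}$), and treat the genuinely degree-three leftovers with the reduction lemmas already available (\ref{lemreduc2}, \ref{lemreducspec}, \ref{lemN2}, \ref{lemN5}, \ref{lemN16}) together with the centrality of $\delta=s_4 w_0 s_4$ over $A_4$.

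For part (1) I would split $A_4$ by the decomposition $A_4=A_3 u_3 A_3 + A_3 s_3 s_2^{-1} s_3 A_3 + A_3 w_0 + A_3 w_0^{-1}$ used in the proof of lemma~\ref{lemN5}, and commute the outer $A_3$-factors through the $s_4^{\pm}$ (they commute with $s_4$). The piece $A_3 u_3 A_3$ becomes $A_4 u_4 u_3 u_4 A_4 u_4\subset V_0$ by lemma~\ref{lemreduc2}(2), and $A_3 s_3 s_2^{-1} s_3 A_3$ becomes $u_4 u_3 u_2 u_3 u_4 A_4 s_4\subset V^+$ by lemma~\ref{lemN5}(4). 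For $A_3 w_0^{-1}$ I would invoke $\delta=s_4 w_0 s_4$ central over $A_4$, so that $w_0^{-1}=s_4\delta^{-1}s_4$ and $\delta^{-1}s_4=s_4^{-1}w_0^{-1}$: for each $u_4\in\{1,s_4,s_4^{-1}\}$ this collapses $u_4\,w_0^{-1}s_4^{-1}s_3^{\beta}w_0 s_4$ to a word with only two $s_4$-factors (for instance $s_4 w_0^{-1}s_4^{-1}s_3^{\beta}w_0 s_4=s_4^2 s_3^{\beta}w_0 s_4^{-1}w_0^{-1}\in A_4 u_4 A_4 u_4 A_4=V_0$), hence into $V_0$. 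Finally $A_3 w_0$ gives $u_4\,w_0 s_4^{-1}s_3^{\beta}w_0 s_4$: for $u_4=1$ it is in $A_4 u_4 A_4 u_4\subset V_0$, and for $u_4=s_4$ it is $s_4 w_0 s_4^{-1}s_3^{\beta}w_0 s_4\in V^+$ by lemma~\ref{lemN2}(1).

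The remaining case $s_4^{-1}w_0 s_4^{-1}s_3^{\beta}w_0 s_4$ (the $A_3 w_0$ piece with $u_4=s_4^{-1}$) is the main obstacle. For $\beta=-1$ it is easy: writing $w_0=s_3 P s_3$ with $P=s_2 s_1^2 s_2\in A_3$ and using $s_4^{-1}s_3 s_4=s_3 s_4 s_3^{-1}$, it reduces to $s_4^{-1}(s_3 P s_3 P s_3)s_4 s_3^{-1}\in u_4 A_4 u_4 A_4\subset V_0$. For $\beta=1$ one genuinely has a degree-three word; here I would replace the leftmost $w_0$ by $A_3^{\times}w^+ + U_0$ (lemma~\ref{lemauxA4w0}), send the $U_0$-part to $V_0$ via lemmas~\ref{lemreduc2}(2) and \ref{lemN5}(4), and reduce the leading term to the shape $s_4^{-1}w^+ s_4^{-1}(\cdots)w^+ s_4$. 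Since the image under $\Phi$ of lemma~\ref{lemreduc2}(8) gives $s_4^{-1}w^+ s_4^{-1}w^+ s_4\in V_0$, this is exactly where the generator $s_4 w^+ s_4^{-1}w^+ s_4$ of $V^+$ is produced, and a short explicit braid manipulation of the middle insertion finishes the case.

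For part (2) I would expand $u_3\in\{1,s_3,s_3^{-1}\}$. The term $u_3=1$ is literally lemma~\ref{lemN16}(4), hence in $V_0$. For $u_3=s_3^{\pm}$, I would commute the $A_3$-block $u_2 s_1^{-1}s_2$ (which commutes with $s_4$) and apply $s_4 s_3^{\pm}s_4^{-1}=s_3^{\mp}s_4^{\pm}s_3^{\pm}$, turning the word into $s_3^{-1}s_4^{\pm}(s_3\,u_2 s_1^{-1}s_2\,s_3)\,s_4^{-1}(s_1^2 s_2 s_3 w_0)\,s_4$, a degree-three form. The cleanest finish is to recognize this through lemma~\ref{lemN16}(3) (which rewrites $s_4 s_3 s_2^{-1}s_3 s_1 s_2 s_4^{-1}s_3 s_4^{-1}$ into a $u_4 u_3 u_2 u_3 u_4$ shape) and then invoke lemma~\ref{lemN5}(4) and proposition~\ref{propmoinsde55}, the few surviving configurations reducing to lemma~\ref{lemN16}(1),(2). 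As everywhere in this section, the real difficulty is not the braid bookkeeping but the final step: after the dust settles one is left with one or two irreducible degree-three words, and one must track the signs of the three $s_4$-factors through lemma~\ref{lemsplusmoins} and match them precisely to the hypotheses of lemmas~\ref{lemreduc2}, \ref{lemreducspec}, \ref{lemN5}(4) and \ref{lemN16} to decide whether they vanish into $V_0$ or only into $V^+$.
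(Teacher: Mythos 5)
Your part (1) is correct and follows a genuinely different route from the paper's. The paper splits $A_4 = A_3 u_3 A_3 + A_3 u_3 u_2 u_3 + A_3 w^+ + A_3 w^-$, uses lemma \ref{lemreducspec}(4) and proposition \ref{propmoinsde55} to replace $s_3^{\beta} w_0$ by $w^+$ modulo $V_0$, and finishes the $w^{\pm}$ branches with lemma \ref{lemreduc2}(5) and (7); your splitting through $w_0^{\pm 1}$ instead, with the centrality of $\delta = s_4 w_0 s_4$ collapsing the entire $A_3 w_0^{-1}$ branch into $V_0$, is slicker there, and invoking lemma \ref{lemN2}(1) for $s_4 w_0 s_4^{-1} s_3^{\beta} w_0 s_4$ is exactly right. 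Two repairs: the hard sub-case $s_4^{-1} w_0 s_4^{-1} s_3^{\beta} w_0 s_4$ must also be treated for $\beta = 0$ (it goes into $V_0$ by lemma \ref{lemsimplif212} together with the image of lemma \ref{lemreduc2}(8) under $\Phi$, i.e.\ the same tools as your $\beta = 1$ sketch); and your remark that the $\beta = 1$ sub-case is "where the generator $s_4 w^+ s_4^{-1} w^+ s_4$ is produced" is off --- after conversion by lemma \ref{lemauxA4w0} that sub-case lands entirely in $V_0$ (lemma \ref{lemreducspec}(4), proposition \ref{propmoinsde55}, lemma \ref{lemreduc2}(8) under $\Phi$); the $V^+$ contributions come from lemma \ref{lemN2}(1) and lemma \ref{lemN5}(4). (Also, your inline formula $s_4 s_3^{\pm} s_4^{-1} = s_3^{\mp} s_4^{\pm} s_3^{\pm}$ is wrong for the minus sign; lemma \ref{lemsplusmoins} gives $s_4 s_3^{\pm} s_4^{-1} = s_3^{-1} s_4^{\pm} s_3$, which is what your displayed word actually uses.)

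Part (2) is where you have a genuine gap. Your reduction is correct: commuting the block $u_2 s_1^{-1} s_2$ past $s_4^{-1}$ and using $s_4 u_3 s_4^{-1} \subset s_3^{-1} u_4 s_3$ turns the word into $s_3^{-1} s_4^{\pm} (s_3 u_2 s_1^{-1} s_2 s_3) s_4^{-1} (s_1^2 s_2 s_3 w_0) s_4$. But you then abandon the natural finish: since $s_1^2 s_2$ commutes with $s_4$, this word lies in $s_3^{-1} u_4 A_4 s_4^{-1} s_3 w_0 s_4$, and part (1) with $\beta = 1$ gives the conclusion at once. This one-line reduction to (1) \emph{is} the paper's whole proof of (2), and it is the very reason (1) is stated in that form before (2). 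Instead, you propose pushing the three-$s_4$ word through lemma \ref{lemN16}(3), lemma \ref{lemN5}(4), proposition \ref{propmoinsde55} and lemma \ref{lemN16}(1),(2). That route is not carried out and does not obviously work: lemma \ref{lemN16}(3) is an identity for the specific word $s_4 s_3 s_2^{-1} s_3 s_1 s_2 s_4^{-1} s_3 s_4^{-1}$, which does not occur here, and proposition \ref{propmoinsde55} requires at most five $u_i$-factors between consecutive $s_4$'s, whereas your block $s_1^2 s_2 s_3 w_0 = s_1^2 s_2 s_3^2 s_2 s_1^2 s_2 s_3$ has seven, so invoking it would require exactly the kind of reduction work you leave unperformed. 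As written, the decisive step of (2) is missing, even though your own computation leaves the word one commutation away from an application of (1).
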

\begin{proof}
We prove (1). Using $A_4  = A_3 u_3 A_3 + A_3 u_3 u_2 u_3 + A_3 w^+ + A_3 w^-$
we get that $u_4 A_4 s_4^{-1} s_3^{\beta} w_0 s_4$ is the sum of
the following abelian groups :
\begin{itemize}
\item $u_4 A_3 u_3 A_3 s_4^{-1} s_3^{\beta} w_0 s_4= A_3 u_4  u_3  s_4^{-1}A_3 s_3^{\beta} w_0 s_4 \subset V_0$ by lemma \ref{lemreduc2} (2).
\item $u_4 A_3 w^+ s_4^{-1} s_3^{\beta} w_0 s_4= A_3 u_4  w^+ s_4^{-1} s_3^{\beta} w_0 s_4$, which is included
in $V_0 + A_4 u_4 w^+ s_4^{-1} w^+ s_4 A_4$ by lemma \ref{lemreducspec} (4) and proposition \ref{propmoinsde55}.
Now $u_4 w^+ s_4^{-1} w^+ s_4 \subset V_0 +R s_4 w^+ s_4^{-1} w^+ s_4+ R s_4^{-1}w^+ s_4^{-1} w^+ s_4$ and we have 
$s_4^{-1}w^+ s_4^{-1} w^+ s_4 \in V_0$ by lemma \ref{lemreduc2} (7) (apply $\Phi \circ \Psi$ to the identity there),
so $u_4 A_3 w^+ s_4^{-1} s_3^{\beta} w_0 s_4 \subset V^+$
\item $u_4 A_3 w^- s_4^{-1} s_3^{\beta} w_0 s_4= A_3 u_4  w^- s_4^{-1} s_3^{\beta} w_0 s_4$, which is included
in $V_0 + A_4 u_4 w^- s_4^{-1} w^+ s_4 A_4$ by lemma \ref{lemreducspec} (4) and proposition \ref{propmoinsde55}.
Since $u_4 w^- s_4^{-1} w^+ s_4 \subset V_0$ by lemma \ref{lemreduc2} (5) we get $u_4 A_3 w^- s_4^{-1} s_3^{\beta} w_0 s_4 \subset V^+$
\item $u_4 A_3 u_3 u_2 u_3  A_3 s_4^{-1} s_3^{\beta} w_0 s_4 = A_3 u_4  u_3 u_2 u_3  s_4^{-1} A_3 s_3^{\beta} w_0 s_4 \subset V^+$
by lemma \ref{lemN5} (4).
\end{itemize}
This proves (1).  Since $s_4 u_3 u_2 s_1^{-1} s_2 s_4^{-1} s_3 s_4^{-1} s_1^2 s_2 s_3 w_0 s_4 
= s_4 u_3 s_4^{-1}u_2 s_1^{-1} s_2  s_3 s_4^{-1} s_1^2 s_2 s_3 w_0 s_4 
= s_3^{-1} u_4 s_3 u_2 s_1^{-1} s_2  s_3 s_1^2 s_2 s_4^{-1}  s_3 w_0 s_4  \subset s_3^{-1} u_4 A_4 s_4^{-1} s_3 w_0 s_4 \subset V^+$
by (1), and this proves (2).

\end{proof}

\subsection{Reduction to $s_4   w^-   s_4    w_0^2 s_4$}
We have
$$
 \begin{array}{lcl}
 s_4 s_3 (s_2 s_1^{-1} s_2) s_3^{-1} s_4^{-1} s_2 s_3 s_4^{-1} s_1 s_2 s_3  w_0 s_4
&=& s_4 (s_2^{-1} s_1^{-1})(s_3 s_2^{-1} s_3)s_1 s_2 s_4^{-1} s_2 s_3 s_4^{-1} s_1 s_2 s_3  w_0 s_4\\
&=& (s_2^{-1} s_1^{-1}) s_4 (s_3 s_2^{-1} s_3)s_1 s_2^2 s_4^{-1}  s_3 s_4^{-1} s_1 s_2 s_3  w_0 s_4\\
\end{array}$$
and, expanding $s_2^2$, we get that this last element belongs
to $$\begin{array}{ll} & R^{\times} (s_2^{-1} s_1^{-1}) s_4 (s_3 s_2^{-1} s_3)s_1 s_2^{-1} s_4^{-1}  s_3 s_4^{-1} s_1 s_2 s_3  w_0 s_4 \\+&
R(s_2^{-1} s_1^{-1}) s_4 (s_3 s_2^{-1} s_3)s_1 s_2 s_4^{-1}  s_3 s_4^{-1} s_1 s_2 s_3  w_0 s_4 \\ +&
R(s_2^{-1} s_1^{-1}) s_4 (s_3 s_2^{-1} s_3)s_1  s_4^{-1}  s_3 s_4^{-1} s_1 s_2 s_3  w_0 s_4 . \end{array}$$
Now $s_4 (s_3 s_2^{-1} s_3)s_1  s_4^{-1}  s_3 s_4^{-1} s_1 s_2 s_3  w_0 s_4 \in V^+$ by lemma \ref{lemN16} (1)
and $$s_4 (s_3 s_2^{-1} s_3)s_1 s_2 s_4^{-1}  s_3 s_4^{-1} s_1 s_2 s_3  w_0 s_4  \in V^+$$ by lemma \ref{lemN16} (2).
We are thus reduced to 
$$
\begin{array}{lcl}
s_4 (s_3 s_2^{-1} s_3)s_1 s_2^{-1} s_4^{-1}  s_3 s_4^{-1} s_1 s_2 s_3  w_0 s_4
 &=& s_4 (s_3 s_2^{-1} s_3)s_1 s_2^{-1} s_4^{-1}  s_3 s_4^{-1} s_1^{-1} s_1^2 s_2 s_3  w_0 s_4\\
 &=& s_4 (s_3 s_2^{-1} s_3)(s_1 s_2^{-1} s_1^{-1}) s_4^{-1}  s_3 s_4^{-1}  s_1^2 s_2 s_3  w_0 s_4\\
 &=& s_4 (s_3 s_2^{-1} s_3s_2^{-1}) s_1^{-1} s_2 s_4^{-1}  s_3 s_4^{-1}  s_1^2 s_2 s_3  w_0 s_4\\
\end{array}
$$
which, by lemma \ref{lemdecomp1212}, lies in 
$$\begin{array}{ll} & R^{\times} s_4  s_2^{-1} s_3s_2^{-1}s_3 s_1^{-1} s_2 s_4^{-1}  s_3 s_4^{-1}  s_1^2 s_2 s_3  w_0 s_4\\
+ & s_4 u_2 u_3 s_1^{-1} s_2 s_4^{-1}  s_3 s_4^{-1}  s_1^2 s_2 s_3  w_0 s_4\\ +&s_4u_3u_2  s_1^{-1} s_2 s_4^{-1}  s_3 s_4^{-1}  s_1^2 s_2 s_3  w_0 s_4 .\end{array}$$
The two latter terms lie in $V^+$ by lemma \ref{lemN16} (4) and \ref{lemN17} (2), so we are reduced to
$$
\begin{array}{lcl}
s_4 s_2^{-1} s_3s_2^{-1}s_3 s_1^{-1} s_2 s_4^{-1}  s_3 s_4^{-1}  s_1^2 s_2 s_3  w_0 s_4
&=&s_2^{-1} s_4  s_3s_2^{-1}s_3 s_1^{-1} s_2 s_4^{-1}  s_3 s_4^{-1}  s_1^2 s_2 s_3  w_0 s_4\\
&=&s_2^{-1}s_3^{-1} (s_3 s_4  s_3)s_2^{-1}s_3 s_1^{-1} s_2 s_4^{-1}  s_3 s_4^{-1}  s_1^2 s_2 s_3  w_0 s_4\\
&=&s_2^{-1}s_3^{-1} s_4 s_3  s_4s_2^{-1}s_3 s_1^{-1} s_2 s_4^{-1}  s_3 s_4^{-1}  s_1^2 s_2 s_3  w_0 s_4\\
&=&s_2^{-1}s_3^{-1} s_4 s_3  s_2^{-1}(s_4s_3 s_4^{-1})s_1^{-1} s_2   s_3 s_4^{-1}  s_1^2 s_2 s_3  w_0 s_4\\
&=&s_2^{-1}s_3^{-1} s_4 (s_3  s_2^{-1}s_3^{-1})s_4 s_3 s_1^{-1} s_2   s_3 s_4^{-1}  s_1^2 s_2 s_3  w_0 s_4\\
&=&s_2^{-1}s_3^{-1} s_4 s_2^{-1}  s_3^{-1}s_2s_4 s_3 s_1^{-1} s_2   s_3 s_4^{-1}  s_1^2 s_2 s_3  w_0 s_4\\
&=&s_2^{-1}s_3^{-1} s_2^{-1}s_4   s_3^{-1}s_2s_1^{-1}s_4( s_3  s_2   s_3) s_4^{-1}  s_1^2 s_2 s_3  w_0 s_4\\
&=&s_2^{-1}s_3^{-1} s_2^{-1}s_4   s_3^{-1}s_2s_1^{-1}s_4 s_2  s_3   s_2 s_4^{-1}  s_1^2 s_2 s_3  w_0 s_4\\
&=&s_2^{-1}s_3^{-1} s_2^{-1}s_4   s_3^{-1}s_2s_1^{-1}s_2(s_4   s_3   s_4^{-1})  s_2 s_1^2 s_2 s_3  w_0 s_4\\
&=&s_2^{-1}s_3^{-1} s_2^{-1}s_4   s_3^{-1}s_2s_1^{-1}s_2s_3^{-1}   s_4   s_3  s_2 s_1^2 s_2 s_3  w_0 s_4\\
&=&s_2^{-1}s_3^{-1} s_2^{-1}s_4   w^-   s_4    w_0^2 s_4\\
\end{array}
$$
 hence to $s_4   w^-   s_4    w_0^2 s_4$.
 
 \subsection{Conclusion of the computation}
 
 By lemma \ref{lemw0carre}, we have $w_0^2 \in A_3^{\times} w_0^{-1} + U^+ = w_0^{-1} A_3^{\times} + U^+$,
 and $U^+ = A_3 w_0 + U_0 = w_0 A_3 + U_0$. We then have
 $s_4   w^-   s_4    w_0^2 s_4 \in s_4   w^-   s_4    w_0^{-1} s_4 A_3^{\times} + s_4   w^-   s_4    w_0 s_4 A_3
 + s_4   w^-   s_4    U_0s_4$. 
 
On the one hand, we know that $s_4   w^-   s_4    A_3 u_3 A_3 s_4 = s_4   w^-   s_4    A_3 u_3  s_4 A_3 =
 s_4   w^-   s_4  u_2  u_1 u_2 u_1 u_3  s_4 A_3 =  
s_4   w^-   s_4  u_2  u_1 u_2  u_3  s_4 u_1A_3  \subset V_0$ by proposition \ref{propmoinsde55},
and that  
$$
 \begin{array}{lclcr}
 s_4   w^-   s_4    A_3 u_3 u_2 u_3 A_3 s_4
&=& s_4   w^-   s_4    A_3 u_3 u_2 u_3  s_4 A_3\\
&=& s_4   w^-   s_4    u_1 u_2 u_1 (u_2 u_3 u_2 u_3)  s_4 A_3\\
&=& s_4   w^-   s_4    u_1 u_2 u_1 u_3 u_2 u_3 u_2  s_4 A_3\\
&=& s_4   w^-   s_4    u_1 u_2 u_1 u_3 u_2 u_3  s_4 u_2 A_3 &\subset& V^+
\end{array}$$
by lemma \ref{lemN5} (4) (apply $\Phi \circ \Psi$ to the identity there).
 From $U_0 = A_3 u_3 A_3 + A_3 u_3 u_2 u_3 A_3$ one thus gets
 $s_4   w^-   s_4    U_0s_4 \subset V^+$.
 
 On the other hand, we have $s_4   w^-   s_4    w_0 s_4 \in V_0 + s_4 w^- s_4 w^+ s_4 A_3 \subset V_0$
 by  lemma \ref{lemreduc2} (5). We are thus reduced to $s_4 w^- s_4 w_0^{-1} \in s_4 w^- s_4 w^- s_4 A_3^{\times} + V_0$,
 which concludes the proof.


\tableofcontents

\end{document}